\newcommand\tab[1][0.5cm]{\hspace*{#1}}
\newtheorem{theorem}{Theorem}[section]%
\newtheorem{definition}[theorem]{Definition}
\newtheorem{corollary}[theorem]{Corollary}%
\newtheorem{prop}[theorem]{Proposition}%
\newtheorem{lemma}[theorem]{Lemma}%
\newtheorem{Remark}[theorem]{Remark}%
\newtheorem{example}[theorem]{Example}
\let\ensembleNombre\mathbb
\newcommand*\mes{\mathcal P(\R)}
\newcommand*\N{\ensembleNombre{N}}
\newcommand*\Z{\ensembleNombre{Z}}
\newcommand*\R{\ensembleNombre{R}}
\newcommand*\C{\ensembleNombre{C}}
\newcommand*\dis{\displaystyle}
\newcommand*\E{\mathbb{E}}
\newcommand*\mP{\mathbb{P}}
\newcommand*\conv{\underset{n\to\infty}{\lim}}
\newcommand*\limite{\underset{\varepsilon\to 0}{\lim}}
\newcommand*\convf{\underset{n\to+\infty}{\longrightarrow}}
\newcommand*\convlaw{\overunderset{\mathcal L}{n\to+\infty}{\longrightarrow}}
\newcommand*\equalaw{\underset{law}{=}}
\DeclareMathOperator{\Var}{Var} 
\DeclareMathOperator{\diag}{diag} 
\DeclareMathOperator{\Tr}{Tr} 
\DeclareMathOperator{\leb}{Leb} 
\newcommand{\SPAN}{\mathrm{span}}
\renewcommand{\Im}{\mathfrak{Im}}
\renewcommand{\Re}{\mathfrak{Re}}
\numberwithin{equation}{section}
\title{From random matrices to systems of particles in interaction}
\author{Valentin Pesce \footnote[1]{address: CMAP, Ecole Polytechnique, UMR 7641, 91120 Palaiseau, France
\newline
\url{mailto:valentin.pesce@polytechnique.edu}}}
\date{\today}
\providecommand{\keywords}[1]{\textbf{\textit{Keywords---}} #1}
\begin{document}

\maketitle
\thispagestyle{empty}

\begin{abstract}
The goal of these expository notes is to give an introduction to random matrices for non-specialist of this topic focusing on the link between random matrices and systems of particles in interaction. We first recall some general results about the random matrix theory that create a link between random matrices and systems of particles through the knowledge of the law of the eigenvalues of certain random matrices models. We next focus on a continuous in time approach of random matrices called the Dyson Brownian motion. We detail some general methods to study the existence of system of particles in singular interaction and the existence of a mean field limit for these systems of particles. Finally, we present the main result of large deviations when studying the eigenvalues of random matrices. This method is based on the fact that the eigenvalues of certain models of random matrices can be viewed as log gases in dimension 1 or 2. 
\end{abstract}
\keywords{Random matrices, Ginibre ensemble, Gaussian unitary ensemble, Wigner theorem, Circular law, Dyson Brownian motion, systems of particles in interaction, large deviations, Coulomb and Riesz gases}

\newpage 

{\footnotesize\tableofcontents}

\begin{table}
  \footnotesize
  \begin{center}
    \begin{tabular}[c]{|r|l|} \hline
    $x:=y$
    & $x$ is equal to $y$ as a definition \\
    $\N$
    & set of non negative integers $\{0,1,...\}$ \\
    $\N_{\ge 1}$
    & set of positive integers $\{1,2,...\}$ \\
     $\mathds 1_A$
    & indicator function of the set $A$ \\
    $A[X]$
    & set of polynomials with coefficients in the ring $A$\\
    $M_{n,m}(A)$
     & set of matrices of size $n\times m$ with coefficients in the ring $A$\\
     $M_{N}(A)$
     & set of matrices of size $N\times N$ with coefficients in the ring $A$\\
     $\Delta_{i,j}$
     & matrix with a 1 in $(i,j)$ and 0 otherwise\\
     $\Tr,\,\det$
     & trace and determinant of a matrix\\
     $\det[a_{i,j}]_{1\le i,j\le N}$
     & determinant of the matrix $A=(a_{i,j})_{1\le i,j\le N}$\\
     $\overline z, \overline A$
     & conjugate of a complex number $z$, conjugate matrix of matrix $A$\\
     $A^{-1},\, A^T,\, A^*$
     & inverse matrix of matrix $A$, transpose matrix of matrix $A$, conjugate transpose matrix of matrix $A$\\
     $I_N$
     & identity matrix of size $N\times N$ \\
     $\chi_A(X)$
     & characteristic polynomial of $A$\\
     $\mathcal H_N(\C)$
     & set of hermitian complex matrices of size $N\times N$\\
     $S_N(\R)$
     & set of symmetric real matrices of size $N\times N$\\
	$GL_N(\C),\,\mathcal U_N(\C)$
	& set of inversible matrices of $M_N(\C)$, set of unitary matrices of $M_N(\C)$\\    
	$\mathbb H, \,\C-\R$
	& set of complex numbers with positive imaginary part, set of complex numbers that are not real\\
	$\langle u|v\rangle,\,\, ||.||_2$
	& canonical scalar product between two vectors of $\R^N$ or  $\C^N$ and its associated norm\\
	$C^k(I,E)$
	& set of functions from an interval $I$ to a Banach space $E$ with $k$ continuous derivatives\\
	$L^k(E,\mu)$
	& set of complex valued functions such that $|f|^k$ is integrable on $E$ for the measure $\mu$\\
	$d\phi_a$
     & Frechet derivative of $\phi$ in $a$\\
     $\partial_i f, \partial f/\partial x_i$
     & partial derivative according to the $i^{th}$ vector of the canonical basis of the function $f:\R^N\to \R$\\
     $\partial_t f$
     & partial derivative of $f: I\times \R^N$ with $I$ an interval, with respect to the time variable\\
     $\nabla f,\, \nabla^2 f$
     & gradient vector of $f:\R^N\to \R$, Hessian matrix of $f:\R^N\to \R$\\
     $\Delta f$
     & Laplacian of $f$ (define as the trace of the Hessian matrix of $f$)\\
     $\mathcal F(f)$
     & Fourier transform of the function $f: \R^N\to \C$\\
     $\mes,\, \mathcal P(A)$
     & set of probability measures on $\R$, set of probability measures with support included in $A\subset\R$\\
 
     $\leb$
     & Lebesgue measure on $\R^N$\\
     $\mu(dx)\text{ or } d\mu(x)$
     & integration with respect to the measure $\mu$\\
     $supp(f)$, $supp(\mu)$
     & support of the function $f$ or the measure $\mu$ \\
     $\mu\otimes\nu$
     &product measure of $\mu$ and $\nu$\\
     $\mu_n\convlaw\mu$
     & the sequence of probability measures on a metric space $\mathbb X$ $(\mu_n)_{n\ge 1}$ converges in law (or weakly)\\
     $\,$ 
     & to $\mu$ which means that for all $f$ continuous and bounded on $\mathbb X$ we have $\int_\mathbb X fd\mu_n\underset{n\to+\infty}{\rightarrow}\int_\mathbb X fd\mu$\\
     $iid$ 
     & independent and identically distributed\\
     $\mP$
     & a probability\\
     $\E, \Var$
     & expectation and variance \\
	$L^p$
	& in the context of random variables: random variables such that $\E(|X|^p)<+\infty$  \\   
     $X\sim \mu$
     & the law of the random variable $X$ is $\mu$\\
     $\mathcal N(m,\sigma^2)$
     & Normal law of mean $m$ and variance $\sigma^2$\\
     $\E(X|\mathcal F)$
     & conditional expectation of the random variable $X$ according to the filtration $\mathcal F$\\
     $GUE_N/GOE_N$
     & Gaussian Unitary/Orthogonal ensemble of $N\times N$ matrix \\
     $B(t)$
    &  most of the time a standard Brownian motion\\
    $|A|$
    & cardinal of the set $A$\\
    $\mathcal P(E)$
    & in the context of set: the set of subsets of $E$\\
     $A^c$
     & complement of the subset $A$\\
     $\overline A,\, \overset{\circ} {A},\, \partial A$
    & closure of the set $A$, interior of the set $A$, frontier of the set $A$\\
     $i$
     &the element (0,1) of $\C$ or an index of summation if it is an index\\ 
     $\Re(z),\,\Im(z)$
     & real and imaginary part of the complex number $z$\\
     $\Gamma$
     & Euler Gamma function\\
     $\log$
      & natural Neperian logarithm function\\
      $x_+,x_-$
      & positive part, negative part of the real number $x$\\
      $\overline{\lim}$ or $\limsup_N$
     & superior limit \\
      $\underline{\lim}$ or $\liminf_N$
     & inferior limit \\
     $\Delta_N(\lambda_1,...,\lambda_N)$
    & most of the times Vandermonde determinant of the family $(\lambda_1,...,\lambda_N)$\\
    $\mathfrak{S}_N$
    & symmetric group of cardinal $N!$\\
    $\varepsilon(\sigma)$
    & signature of the permutation $   	\sigma\in\mathfrak{S}_N$\\ 
      \hline
    \end{tabular}
    \smallskip
    \caption{Main frequently used notations.}
    \label{tb:notations}
  \end{center}
\end{table}
\newpage
\section{Introduction}
\tab These notes are intended for beginners in random matrices or for people interested in systems of particles in interaction as the Dyson Brownian motion or Coulomb and Riesz gases. It is recommended to have a graduate level in probability and to be familiar with the Itô calculus for Part \ref{part:dyson}. There are many reference textbooks on the general topic of random matrices as for instance \cite{Alicelivre,Tao,Mehta,compactgroup} and the literature is already very wide. These notes shall focus on a very particular aspect of the study of large random matrices which is their links with systems of particles in interaction. For instance the Dyson Brownian motion is often presented as a related topic of random matrices but here we wanted to present the most possible self contained and accessible approach on this topic for people that are non familiar with the background of random matrices.
\newline
\newline
\tab We can consider the works of Wishart in 1928 \cite{wishart1928generalised} as the starting point of the theory of large random matrices. The original motivation of Wishart was a statistical one. Given independent and identically distributed centred random vectors $X_1,...,X_N$ of $\R^d$, he wanted to estimate the covariance matrix $\Sigma=\E[X_1 X_1^T]$. Wishart proposed $\Sigma_N=\frac{1}{N}\sum_{k=1}^N X_kX_k^T=:\frac{1}{N}\mathfrak X_N \mathfrak X_N^T$ as an estimator the empirical covariance matrix where $\mathfrak X_N=(X_1,...,X_N)\in M_{d,N}(\R)$. As an application of the law of large numbers, $\Sigma_N$ converges almost surely towards $\Sigma$ when $N$ becomes large. In order to understand the behaviour of $\Sigma_N$ when for example the size $d_N$ of the data also increases with $N$, one can for instance study the behaviour of its spectrum as in the principal component analysis which is an important topic in statistics. In this sense Wishart was the first one to ask questions about large random matrices.
\newline
To understand the distribution of the spectrum of matrices a natural quantity to introduce is the so-called empirical distribution of the eigenvalues. This is the probability measure given by $\mu_N:=\frac{1}{d_N}\sum_{k=1}^{d_N} \delta_{\lambda_k^N}(dx)$ where $(\lambda_k^N)_{1\le k\le d_N}$ is the spectrum of $\Sigma_N$ and $\delta_z(dx)$ is the Dirac measure in $z$.
Indeed, if we want to know how many eigenvalues of $\Sigma_N$ are in an interval $I=[a,b]\subset\R$, we have $$\int_{a}^b\mu_N(dx)=\cfrac{|\{1\le i\le d_N,\,\lambda_i^N\in[a,b]\}|}{d_N}.$$
In Figure 1, we illustrate the histogram of the eigenvalues of $\Sigma_N$ when $N$ and $d_N$ are large. 
\begin{center}
\includegraphics[scale=0.5]{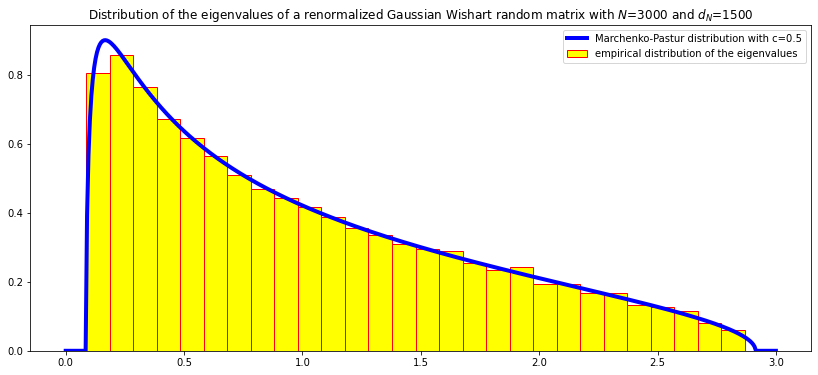}
Figure 1: Illustration of the Marchenko-Pastur theorem.
\end{center}
The notion of convergence of probability measures that corresponds to the convergence of the histogram is the convergence in law (or also called the weak convergence of measures). We recall that $\mu_N$ converges in law towards a probability measure $\mu\in\mes$ if for all $f$ continuous and bounded real function $$\int_\R f d\mu_N \underset{N\to+\infty}{\rightarrow}\int_\R fd\mu$$ and we shall write $\mu_N\overunderset{\mathcal L}{N\to+\infty}{\longrightarrow}\mu$ in this case.
A central result on this topic was obtained by Marchenko and Pastur at the end of the 1960s \cite{marchenko1967distribution}. For the following theorem assume that the entries of $\mathfrak X_N$ are independent, identically distributed, centred and of variance $1$.
\begin{theorem}[Marchenko-Pastur, Figure 1]
Assume that there exists $c>0$ such that $d_N$ satisfies $\frac{d_N}{N}\underset{N\to+\infty}{\longrightarrow} c$. Let $(\lambda_1^N,...,\lambda_{d_N}^N)$ be the $d_N$ eigenvalues of $\Sigma_N$ and $\mu_N:=\frac{1}{d_N}\sum_{k=1}^{d_N} \delta_{\lambda_k^N}(dx)$ be the empirical distribution of the eigenvalues of $\Sigma_N$.
Then, almost surely we have the following convergence $$\mu_N\overunderset{\mathcal L}{N\to+\infty}{\longrightarrow} \rho_c(dx),$$
where $\rho_c$ is an explicit distribution called the Marchenko-Pastur law of parameter $c$ given by 
\begin{equation} 
\label{eq:marchenko}
\rho_c(dx)=\left(1-\cfrac{1}{c}\right)_+\delta_0(dx)+\cfrac{1}{2\pi c x}\,\sqrt{(c^+-x)(x-c^-)}\,\mathds{1}_{[c^-,c^+]}(x)dx,
\end{equation} 
with $c^-=(1-\sqrt{c})^2$ and $c^+=(1+\sqrt{c})^2$.
\end{theorem}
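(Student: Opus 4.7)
The plan is to use the Stieltjes transform method, which is the standard and most robust route for convergence of spectral distributions. For $z \in \mathbb{H}$, define
$$g_N(z) = \int_\R \frac{d\mu_N(\lambda)}{\lambda - z} = \frac{1}{d_N}\Tr\bigl((\Sigma_N - z I_{d_N})^{-1}\bigr),$$
and let $m(z)$ denote the Stieltjes transform of $\rho_c$. Since $\rho_c$ is compactly supported and weak convergence of probability measures on $\R$ is characterised by pointwise convergence of Stieltjes transforms on $\mathbb{H}$, it suffices to show that $g_N(z) \to m(z)$ almost surely for each $z \in \mathbb{H}$.

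The core step is to derive a self-consistent equation for $g_N(z)$ in expectation. Writing $\Sigma_N = \frac{1}{N}\sum_{k=1}^N X_k X_k^T$ with $X_k$ the $k$-th column of $\mathfrak{X}_N$, and using the resolvent identity $\Sigma_N R_N(z) = I_{d_N} + z R_N(z)$ with $R_N(z) = (\Sigma_N - z I_{d_N})^{-1}$, one obtains
$$1 + z g_N(z) = \frac{1}{N d_N}\sum_{k=1}^N X_k^T R_N(z) X_k.$$
Sherman-Morrison isolates the rank-one contribution of $X_k$: writing $R_N^{(k)}(z)$ for the resolvent of $\Sigma_N - \frac{1}{N}X_k X_k^T$ (independent of $X_k$),
$$X_k^T R_N(z) X_k = \frac{X_k^T R_N^{(k)}(z) X_k}{1 + \frac{1}{N} X_k^T R_N^{(k)}(z) X_k}.$$
Conditionally on $R_N^{(k)}(z)$, a second-moment computation using $\E[X_{ij}] = 0$ and $\E[X_{ij}^2] = 1$ gives $\frac{1}{N} X_k^T R_N^{(k)}(z) X_k \approx \frac{1}{N}\Tr(R_N^{(k)}(z))$, and a rank-one interlacing bound yields $|\Tr R_N(z) - \Tr R_N^{(k)}(z)| \le 1/\Im(z)$. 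Substituting and passing to the limit $N \to \infty$ with $d_N/N \to c$ produces the Marchenko-Pastur fixed-point equation
$$c z\, m(z)^2 + (z + c - 1)\, m(z) + 1 = 0,$$
whose unique $\mathbb{H}$-valued solution is the Stieltjes transform of $\rho_c$, as can be verified by Stieltjes inversion applied to the density in \eqref{eq:marchenko}.

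To upgrade expectation-level convergence to almost sure convergence, I would view $g_N(z)$ as a function of the independent entries of $\mathfrak{X}_N$. Modifying a single entry perturbs $\Sigma_N$ by a rank-two matrix, so the resolvent identity yields a deterministic bound $|g_N(z) - g_N'(z)| = O(1/(N \Im(z)))$ for the modified empirical measure. A martingale-difference inequality along the Doob filtration generated by the columns of $\mathfrak{X}_N$ then gives $\Var(g_N(z)) = O(1/(N \Im(z)^2))$, and Borel-Cantelli yields almost sure pointwise convergence on a countable dense subset of $\mathbb{H}$. Equicontinuity of the family $(g_N)$ on compact subsets of $\mathbb{H}$ extends this to all $z \in \mathbb{H}$, hence to weak convergence of $\mu_N$ towards $\rho_c$ almost surely.

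The main obstacle will be the concentration of the quadratic form $\frac{1}{N}X_k^T R_N^{(k)}(z) X_k$ around $\frac{1}{N}\Tr R_N^{(k)}(z)$ under the minimal hypothesis $\E[X_{ij}^2] = 1$: without higher moment assumptions, a truncation argument (replacing $X_{ij}$ by $X_{ij}\mathds{1}_{|X_{ij}| \le B_N}$ for some slowly growing $B_N$) together with a removal-of-truncation step is needed to obtain the quantitative bounds used in the preceding display. Additional care is required when $c > 1$, since $\Sigma_N$ is then rank-deficient and produces the Dirac mass $(1 - 1/c)\delta_0$ in $\rho_c$; this is handled via the classical identity relating the spectra of $\frac{1}{N}\mathfrak{X}_N \mathfrak{X}_N^T$ and $\frac{1}{N}\mathfrak{X}_N^T \mathfrak{X}_N$, which share the same nonzero eigenvalues and reduce the analysis to the case $c \le 1$ on the atomless part.
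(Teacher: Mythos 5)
You should note at the outset that the paper states the Marchenko--Pastur theorem in the Introduction as a motivating result (citing the original 1967 article of Marchenko and Pastur) and gives no proof, so there is no in-text argument to compare against. That said, your sketch is a recognizable and essentially correct outline of the resolvent (Stieltjes-transform) route in the style of Bai and Silverstein: the resolvent identity $\Sigma_N R_N(z)=I+zR_N(z)$, the Sherman--Morrison isolation of each rank-one contribution, the concentration of the quadratic form $\frac{1}{N}X_k^T R_N^{(k)}(z) X_k$ around $\frac{1}{N}\Tr R_N^{(k)}(z)$, and the limiting self-consistent equation $czm(z)^2+(z+c-1)m(z)+1=0$ are all sound, and you correctly identify the two genuine technical bottlenecks, namely the truncation needed under a second-moment hypothesis only, and the atom at $0$ when $c>1$ handled through the spectral link between $\frac{1}{N}\mathfrak X_N\mathfrak X_N^T$ and its companion $\frac{1}{N}\mathfrak X_N^T\mathfrak X_N$.

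One step in the almost-sure upgrade does not quite go through as written. The variance bound $\Var(g_N(z))=O(1/(N\Im(z)^2))$ combined with Chebyshev only gives $\mP(|g_N(z)-\E g_N(z)|>\varepsilon)=O(1/N)$, which is not summable, so Borel--Cantelli cannot be invoked at that point. The martingale-difference structure you set up actually delivers more: replacing a single column $X_k$ by an independent copy perturbs $\Sigma_N$ by a matrix of rank at most two, so each Doob increment along the column filtration is almost surely bounded by $2/(d_N\,|\Im(z)|)$, and Azuma--Hoeffding then yields $\mP(|g_N(z)-\E g_N(z)|>t)\le 2\exp(-c_z N t^2)$ for some $c_z>0$ depending on $\Im(z)$, which is summable and makes the Borel--Cantelli step legitimate. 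Alternatively one can control the fourth centred moment by $O(1/N^2)$, which is exactly how the paper handles the analogous almost-sure step for the circular law. Finally, since the paper proves Wigner's theorem both by the moment method (Appendix) and via Stieltjes transforms, it is worth recording that Marchenko--Pastur admits a parallel moment proof in which the Catalan numbers are replaced by Narayana numbers counting non-crossing pair partitions of a bipartite type; your resolvent route is the more robust of the two under the minimal second-moment hypothesis of the statement.
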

Then, in the 1950s, random matrices became an active field particularly of physic at the instigation of for instance Wigner and Dyson \cite{wigner1958distribution,Dyson}. Random matrices were used to model the levels of energies of complex quantum systems or systems of particles in interactions as Coulomb gases. 
In order to understand the energy levels of atoms physicists study the eigenvalues of Hamiltonian operators that are self-adjoint operators (in finite or infinite dimensions). An idea that emerged in the 1950s is that we could try to approximate the spectrum of such operators by the spectrum of large random matrices. This leads for instance to the question of the behaviour of the spectrum of large random symmetric matrices. The main result obtained by Wigner in \cite{wigner1958distribution} in this direction is the following one.
\begin{theorem}[Wigner, Figure 2]
For all $N\ge 1$, let $M^N\in S_N(\R)$ be a symmetric matrix such that $(M^N_{i,j})_{1\le i\le j\le N}$ is a family of independent centred random variables of variance 1. Let $(\lambda_1^N,...,\lambda_N^N)$ be the real spectrum of $\frac{M^N}{\sqrt{N}}$ and $\mu_N:=\frac{1}{N}\sum_{k=1}^{N} \delta_{\lambda_k^N}(dx)$ be the empirical distribution of the eigenvalues of $\frac{M^N}{\sqrt{N}}$.
Then, almost surely we have the following convergence $$\mu_N\overunderset{\mathcal L}{N\to+\infty}{\longrightarrow} \sigma(dx),$$
where $\sigma$ is an explicit distribution called the Wigner semicircle law given by 
\begin{equation}
\label{eq:semicircledistrib}
\sigma(dx)=\cfrac{1}{2\pi}\,\sqrt{(4-x^2)_+}\,\mathds{1}_{[-2,2]}(x)dx.
\end{equation} 
\end{theorem}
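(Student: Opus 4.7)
The plan is to prove convergence via the method of moments, which reduces the weak convergence of $\mu_N$ to the convergence of the moments $m_k(\mu_N) := \int_\R x^k\, d\mu_N(x)$ toward the moments of $\sigma$. First I would observe that
\[
m_k(\mu_N) = \frac{1}{N}\sum_{i=1}^N (\lambda_i^N)^k = \frac{1}{N^{1+k/2}}\,\Tr\!\left((M^N)^k\right),
\]
so the problem becomes computing moments of traces of powers of $M^N$. Expanding
\[
\E\left[\Tr\!\left((M^N)^k\right)\right] = \sum_{i_1,\dots,i_k=1}^N \E\!\left[M^N_{i_1 i_2} M^N_{i_2 i_3}\cdots M^N_{i_k i_1}\right],
\]
independence and centering of the entries force only index tuples whose associated closed walk on $\{1,\dots,N\}$ uses every edge at least twice to contribute. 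A truncation argument (replacing $M^N_{i,j}$ by $M^N_{i,j}\mathds 1_{|M^N_{i,j}|\le N^{1/2-\veps}}$ and checking this does not affect the spectrum in the large-$N$ limit) would be used to ensure arbitrarily high moments are finite, since only variance is assumed.

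Next comes the main combinatorial step: a dimension count shows that among the surviving index patterns, only those corresponding to closed walks of length $k$ on a tree with $k/2$ edges in which every edge is traversed exactly twice give a contribution of order $N^{1+k/2}$; all other patterns are $O(N^{k/2})$. For odd $k$ this forces the limit to be $0$, and for $k=2m$ the combinatorics is exactly that of rooted plane trees with $m$ edges, counted by the Catalan number
\[
C_m = \frac{1}{m+1}\binom{2m}{m}.
\]
I expect this combinatorial identification (often organised through non-crossing pair partitions or Dyck paths) to be the main technical obstacle, since it requires a careful bookkeeping of how many index choices each equivalence class of walks admits. Once it is done, using the fact that each entry has variance $1$ gives
\[
\lim_{N\to\infty} \E\!\left[m_k(\mu_N)\right] = \begin{cases} C_{k/2} & \text{if } k \text{ is even},\\ 0 & \text{if } k \text{ is odd}.\end{cases}
\]

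To upgrade convergence in expectation to almost sure convergence, I would compute the variance $\Var(m_k(\mu_N))$ by a similar expansion: the leading terms cancel and the surviving ones yield $\Var(m_k(\mu_N)) = O(1/N^2)$. Borel-Cantelli then gives $m_k(\mu_N)\to C_{k/2}$ (resp. $0$) almost surely for each fixed $k$, and a countable intersection makes this hold simultaneously for all $k$. Finally, an explicit computation (e.g.\ via the change of variables $x = 2\cos\theta$) verifies that the moments of the semicircle density \eqref{eq:semicircledistrib} are exactly the Catalan numbers, and since $\sigma$ is compactly supported it is uniquely determined by its moments. Combined with the tightness coming from the uniform bound on $m_2(\mu_N)$, this yields the desired almost sure weak convergence $\mu_N \convlaw \sigma$.
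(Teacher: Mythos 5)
Your moment-method approach is correct and is the right route for the statement as written, which requires no Gaussianity. The paper's detailed proofs of the Wigner theorem --- the Hermite-polynomial and determinantal-kernel argument, the Dyson Brownian motion dynamics, and the large deviations principle --- all start from the explicit joint eigenvalue density of the Gaussian ensembles and therefore do not apply to the general hypothesis you are asked to cover here. The moment method is only sketched in the paper's Appendix (under the mean Wigner theorem), with a pointer to the literature, so you are in effect completing that sketch and upgrading it from convergence in expectation to almost sure convergence. What the paper's routes buy is much sharper structural information about the spectrum (explicit correlation kernels, a stochastic dynamics picture, exponential decay rates) at the cost of being confined to Gaussian matrices; your argument is more elementary and establishes the universality that the statement actually asserts. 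One caveat worth keeping in view: since only finite variance is assumed (and not even identically distributed entries), the truncation step needs some uniform-in-$(i,j)$ Lindeberg-type control for the truncated matrix to approximate the original and for the higher moments of the truncated entries to be absorbed in the combinatorial expansion; and the $O(N^{-2})$ bound on $\Var(m_k(\mu_N))$ requires uniformly bounded fourth moments, so it must be established \emph{after} truncating --- keeping the logical order truncate, then variance bound, then Borel--Cantelli, then untruncate --- rather than before.
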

\begin{center}
\includegraphics[scale=0.5]{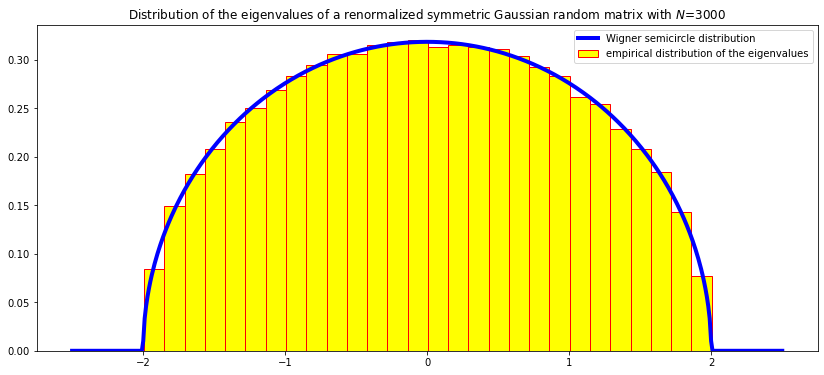}
Figure 2: Illustration of the Wigner theorem.
\end{center}
\tab Random matrix theory is at the intersection of linear algebra and probability explaining its mathematical richness. Moreover, random matrix theory gives new points of view on many topics in mathematics and creates a path between some topics that were a priori not connected.
Since the seminal questions the random matrix theory has been used in various fields. Here is a non exhaustive list of some active fields in which random matrices appear:
\newline
$\bullet$ Ecology and large Lotka–Volterra models \cite{akjouj2024complex,akjouj2024equilibria}
\newline
$\bullet$ Coulomb and Riesz gases as the eigenvalues of some random matrices models can be considered as log gases \cite{Dyson,serfaty2024lectures,Chafai1}
\newline
$\bullet$ Zeros of random polynomials and random power series \cite{hough2009zeros,butez2017polynomes}
\newline
$\bullet$ Physics \cite{guhr1998random} since for instance the distributions of fermions trapped in a harmonic potential is the same as the distribution the eigenvalues of random matrices \cite{dean2019noninteracting} or in the study of levels of energy of atoms as in the original work of Wigner \cite{wigner1958distribution} or for complex systems as many-body quantum systems \cite{vyas2018random}
\newline
$\bullet$ Number theory through a deep connexion between the zeros of the Riemann zeta function and the eigenvalues of random matrices originally discovered by Montgomery and Dyson \cite{montgomery1973pair,bourgade2006random,compactgroup}
\newline
$\bullet$ Random permutations \cite{baik1999distribution}
\newline
$\bullet$ Last passage percolation (which is also linked with random permutations) \cite{baryshnikov2001gues,gravner2001limit}, the Brownian percolation which is connected with the Dyson Brownian motion \cite{o2002representation} and more recently the so-called directed landscape which can be constructed as a limit object of last passage percolation \cite{barraquand2024paysage,dauvergne2022directed}
\newline
$\bullet$ Random graphs through the adjacency matrix \cite{erdos2013spectral,augeri2025large}
\newline
$\bullet$ Combinatorics of maps \cite{guionnet2005combinatorial,dubach2021words}
\newline
$\bullet$ Free probability or non commutative probability introduced by Voiculescu to deal with the free group factors isomorphism problem in operator algebras theory \cite{voiculescu1995free,mingo2017free,Alicelivre}
\newline
$\bullet$ Statistics as in the original work of Wishart to estimate covariance matrices \cite{wishart1928generalised}
\newline
\newline
\tab In these notes, we decided to focus on the link between random matrices and system of particles in interaction. 
\newline
\tab In Part \ref{part:ginibregaussian}, we shall mainly focus on two ensembles of matrices that are the so-called Ginibre ensemble and the Gaussian unitary ensemble. From these models one can compute the law of their eigenvalues. This will allow us to consider the eigenvalues as Riesz or Coulomb log gases in dimension 1 and 2 subjected to an external quadratic potential. Then, we shall detail how we can study the eigenvalues of Ginibre ensemble and Gaussian unitary ensemble as point processes in $\R$ or $\C$, in order to compute the correlation functions of the eigenvalues. From these computations, we shall obtain the so-called circular law and Wigner theorem that gives the statistics of the eigenvalues when the size of random matrices of the Ginibre ensemble or the Gaussian unitary ensemble becomes large. 
\newline
\tab In Part \ref{part:dyson}, we deal with the so-called Dyson Brownian motion which is a continuous model of random matrices introduced by Dyson in 1962 \cite{Dyson}. From this continuous model of random matrices, one can obtain a system of stochastic differential equations for the dynamics of the eigenvalues, so that the eigenvalues can be considered as particles in interaction through a repulsive force and Brownian noise. As we will see, one can interpret the law of the process of the eigenvalues of the Dyson Brownian motion as the law of $N$ independent Brownian motions conditioned by the fact that they do not intersect. This approach gives another point of view and make some other proofs available for some results of random matrices viewed in Part \ref{part:ginibregaussian} as the law of the eigenvalues of matrices of the Gaussian unitary ensemble or the Wigner theorem. 
\newline
\tab Finally, we study the large deviations of the empirical mean of the eigenvalues of matrices of the Gaussian unitary ensemble. This result was proved by Ben Arous and Guionnet in \cite{Alice1}. The method is quite robust and actually only uses the fact that the eigenvalues of random matrices of the Gaussian unitary ensemble is a log gas in dimension 1. The result obtained by Ben Arous and Guionnet is today extended for Coulomb and Riesz gases and we shall explain those results in Part \ref{sec:largedev}. This gives another proof for the Wigner theorem and the circular law presented in Part \ref{part:ginibregaussian}. 
\newline
\newline
Before starting, let us mention that notations between parts are independent except for the notations of table \ref{tb:notations}.
\newpage

\part{Ginibre and Gaussian unitary ensembles}

\label{part:ginibregaussian}
\tab In this section, we shall focus on the two most famous ensembles of matrices which are the Gaussian unitary ensemble and the Ginibre ensemble. They correspond to hermitian and non hermitian matrices with independent Gaussian entries. 
\newline
These models were particularly studied because they have a certain structure that help doing explicit computations as we shall explain. Moreover, in random matrix theory it is expected a so-called "universality" phenomenon which morally says that the behaviour of spectral statistics of a random matrix is similar with the behaviour with independent Gaussian entries when the size of the matrix becomes large. Typically, the circular law or the Wigner theorem, that shall be proved in this section for Ginibre matrices and matrices of the Gaussian unitary ensemble, are relevant examples.
\newline
\tab The remarkable point about matrices of the Gaussian unitary ensemble or the Ginibre ensemble is that the law of the eigenvalues of such matrices can be computed explicitly and has a certain form. Indeed, the eigenvalues of such matrix models are real or complex determinental point processes. Thanks to this remark we can compute the correlation functions of the eigenvalues of matrices in the Ginibre or in the Gaussian unitary ensemble. There are many references for these standard results as for instance \cite{Alicelivre,haagerup2003random,Mehta,Tao,Chafai1,compactgroup}.
\newline
\tab Then, we prove that up to a re normalization, the eigenvalues of Ginibre matrices are distributed uniformly on the disk when the size of the matrix becomes large. This is called the circular law and was originally proved by Mehta and Ginibre \cite{Mehta}. About the circular law, we refer to the detailed revue of Bordenave and Chafaï \cite{bordenave2012around} and the revue of Chafaï \cite{Chafai1}.
\newline
\tab We finally show the Wigner theorem for matrices of the Gaussian unitary ensemble which is the counterpart of the circular law. We refer to \cite{Tao,haagerup2003random,Alicelivre} for this result.
\newline
\tab All the random variables are defined on a same probability space ($\Omega$, $\mathcal F$, $\mP$).
 
\section{Examples of remarkable ensembles of matrices}
In this part we introduce the Ginibre ensemble and the Gaussian unitary ensemble and we point out some basic properties they have in common.
\subsection{Notation and first properties}
\subsubsection{Gaussian unitary ensemble}
Let $N\ge 1$ be a positive integer.
\begin{definition}
\label{def gue,goe}
A random variable $X$ is said to be a $N$ complex (resp. real) Wigner matrix if $X$ takes value in $\mathcal H_N(\C)$ (resp. $S_N(\R)$) and $(X_{i,i},\sqrt2\, \Re (X_{i,j}), \sqrt 2\,\Im(X_{i,j}))_{1\le i < j\le N}$ is a family of independent Gaussian random variables of law $\mathcal N(0,1)$ (resp.  $(X_{i,i}, \sqrt{2} X_{i,j})_{1\le i < j\le N}$ is a family of independent Gaussian random variables of law $\mathcal N(0,1)$). 
\newline 
If $X$ is a $N$ complex (resp. real) Wigner matrix, we say that $X$ is in the Gaussian Unitary Ensemble (resp. Gaussian Orthogonal Ensemble). We shall write $X\sim GUE_N$ (resp. $X\sim GOE_N)$.
\newline
For a random variable in the $GUE_N$ or in the $GOE_N$, we denote $(\lambda_1,...,\lambda_N)\in\R^N$ the vector of its eigenvalues viewed as an exchangeable vector of $\R^N$.
\end{definition}
Let us mention that in the literature what we call here Wigner matrices are most of the times called Gaussian Wigner matrices since the entries of the matrix are Gaussian. Since, in these notes we shall only focus on the case when the entries are Gaussian we decided to use the terminology of Wigner matrices instead of Gaussian Wigner matrices. 
\newline
We will now focus on $N$ complex Wigner matrix.
To compute the law of a $N$ complex Wigner matrix, we first compute the dimension of $\mathcal H_N(\C)$ as a $\R$ vector space.
\begin{lemma}
\label{lemma:dimensionHermitienne}
$\mathcal H_N(\C)$ is a $\R$ vector space of dimension $N^2$ such that $((\Delta_{k,k})_{k\le N}, (\Delta_{k,j}+\Delta_{j,k})_{1\le k<j\le N},  (i(\Delta_{k,j}-\Delta_{j,k})_{1\le k<j\le N}))$ is a basis.
\end{lemma}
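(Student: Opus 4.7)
The plan is to verify the two claims (spanning and linear independence) separately, after first noting the basic structural constraint: a matrix $A\in M_N(\C)$ belongs to $\mathcal H_N(\C)$ iff $A=A^*$, equivalently iff $A_{k,k}\in\R$ for all $k$ and $A_{j,k}=\overline{A_{k,j}}$ for all $k<j$. In particular $\mathcal H_N(\C)$ is only a real vector subspace of $M_N(\C)$ (not complex: multiplying a nonzero hermitian matrix by $i$ produces an anti-hermitian one), which is why we are computing a real dimension.

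To prove spanning, I would take an arbitrary $A\in\mathcal H_N(\C)$ and expand it on the canonical complex basis $(\Delta_{k,j})$ of $M_N(\C)$, obtaining
\[
A=\sum_{k=1}^N A_{k,k}\,\Delta_{k,k}+\sum_{1\le k<j\le N}\bigl(A_{k,j}\,\Delta_{k,j}+\overline{A_{k,j}}\,\Delta_{j,k}\bigr).
\]
Writing $A_{k,j}=\Re(A_{k,j})+i\,\Im(A_{k,j})$, the $(k,j)$-block becomes
\[
\Re(A_{k,j})\bigl(\Delta_{k,j}+\Delta_{j,k}\bigr)+\Im(A_{k,j})\bigl(i(\Delta_{k,j}-\Delta_{j,k})\bigr),
\]
which exhibits $A$ as a real linear combination of the proposed family. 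Since $A_{k,k}$ is already real, the diagonal term is a real combination of the $\Delta_{k,k}$'s.

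For linear independence, I would consider a vanishing real linear combination
\[
\sum_{k}\alpha_k\,\Delta_{k,k}+\sum_{k<j}\beta_{k,j}\bigl(\Delta_{k,j}+\Delta_{j,k}\bigr)+\sum_{k<j}\gamma_{k,j}\bigl(i(\Delta_{k,j}-\Delta_{j,k})\bigr)=0,
\]
with $\alpha_k,\beta_{k,j},\gamma_{k,j}\in\R$. Reading off the $(k,k)$-entry gives $\alpha_k=0$, and reading off the $(k,j)$-entry for $k<j$ gives $\beta_{k,j}+i\gamma_{k,j}=0$, hence $\beta_{k,j}=\gamma_{k,j}=0$ by taking real and imaginary parts. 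This is really the only place where one must be careful: the independence crucially uses that we are taking \emph{real} coefficients in front of matrices some of which have purely imaginary entries.

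Finally, a count of the basis vectors yields $N+\binom{N}{2}+\binom{N}{2}=N+N(N-1)=N^2$, which gives the dimension. I expect no serious obstacle here; the main point deserving emphasis is simply the real-versus-complex vector space distinction, which motivates the presence of the factor $i$ in the third family.
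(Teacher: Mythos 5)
Your proof is correct and follows essentially the same route as the paper: decomposing an arbitrary hermitian matrix in terms of the proposed family to establish spanning. You go further by spelling out the linear-independence check and the dimension count $N+2\binom{N}{2}=N^2$, both of which the paper leaves implicit after exhibiting the decomposition.
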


\begin{proof}
Let $M=(M_{k,j})_{1\le k,j\le N}\in \mathcal H_N(\C)$. By definition of $\mathcal H_N(\C)$, for all $1\le k\le N$, $M_{k,k}\in\R$ and for all $1\le k<j\le N$, $\Re(M_{k,j})=\Re(M_{j,k})$ and $\Im(M_{k,j})=-\Im(M_{j,k})$. So we can write: $$M=\sum_{1\le k\le N} M_{k,k}\Delta_{k,k}+ \sum_{1\le k<j\le N}\Re(M_{k,j})\left(\Delta_{k,j}+\Delta_{j,k}\right)+\sum_{1\le k<j\le N}\Im(M_{k,j})i\left(\Delta_{k,j}-\Delta_{j,k}\right).$$
This gives the result.
\end{proof}

We shall sometimes identify $\mathcal H_N(\C)$ and $\R^{N^2}$ by expressing a matrix of $\mathcal H_N(\C)$ in the basis of Lemma \ref{lemma:dimensionHermitienne}.

\begin{prop}
\label{prop: loi Gue}
Let $X$ be a $N$ complex Wigner matrix, then $X$ has a density with respect to the Lebesgue measure on $\R^{N^2}$ which is proportional to $M\mapsto\exp(-\Tr(M^2)/2)$.
\end{prop}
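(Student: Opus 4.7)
The plan is to compute the joint law of the independent Gaussian coordinates of $X$ in the basis of Lemma~\ref{lemma:dimensionHermitienne}, and to recognise the resulting density as proportional to $\exp(-\Tr(M^2)/2)$ via an explicit expansion of $\Tr(M^2)$ in these coordinates.

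First, I would use Lemma~\ref{lemma:dimensionHermitienne} to identify $\mathcal H_N(\C)$ with $\R^{N^2}$ through the $N^2$ real coordinates
$$\bigl(M_{k,k}\bigr)_{1\le k\le N},\quad \bigl(\Re(M_{k,j})\bigr)_{1\le k<j\le N},\quad \bigl(\Im(M_{k,j})\bigr)_{1\le k<j\le N},$$
so that the Lebesgue measure on $\R^{N^2}$ is the product of the Lebesgue measures on each of these real coordinates.

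Next, using hermiticity $M_{j,k}=\overline{M_{k,j}}$, one gets $(M^2)_{k,k}=\sum_j M_{k,j}M_{j,k}=\sum_j|M_{k,j}|^2$, hence
$$\Tr(M^2)=\sum_{k=1}^N M_{k,k}^2+2\sum_{1\le k<j\le N}\bigl(\Re(M_{k,j})^2+\Im(M_{k,j})^2\bigr).$$

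Finally, by Definition~\ref{def gue,goe} the $N^2$ real coordinates of $X$ are independent, with $X_{k,k}\sim\mathcal N(0,1)$ for $1\le k\le N$ and $\sqrt 2\,\Re(X_{k,j}),\,\sqrt 2\,\Im(X_{k,j})\sim\mathcal N(0,1)$ for $1\le k<j\le N$; the last two are therefore $\mathcal N(0,1/2)$, each with density proportional to $e^{-x^2}$. Multiplying the marginal densities and substituting the expansion of $\Tr(M^2)$ yields, up to a multiplicative constant depending only on $N$,
$$\exp\left(-\frac{1}{2}\sum_k M_{k,k}^2-\sum_{k<j}\bigl(\Re(M_{k,j})^2+\Im(M_{k,j})^2\bigr)\right)=\exp\bigl(-\Tr(M^2)/2\bigr),$$
which is the claim. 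The only delicate point is essentially bookkeeping: one has to verify that the factor $1/2$ in the variance of the unscaled off-diagonal real and imaginary parts is precisely what cancels the factor $2$ appearing in front of the off-diagonal contributions in the expansion of $\Tr(M^2)$, so that the quadratic form in the exponent recombines exactly into $\Tr(M^2)/2$.
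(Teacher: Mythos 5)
Your proof is correct and follows essentially the same route as the paper: both start from the independence and Gaussian marginals of the coordinates given by Definition~\ref{def gue,goe}, write down the product density, and recombine the resulting quadratic form into $\Tr(M^2)/2$ (the paper does this via $\sum_{i,j}|Y_{i,j}|^2=\Tr(Y^2)$, you do it by directly expanding $\Tr(M^2)$ in the real coordinates, which is the same identity read in the other direction). Your extra remark about the factor $1/2$ in the off-diagonal variances cancelling the factor $2$ in $\Tr(M^2)$ is exactly the bookkeeping the paper glosses over; it is right.
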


\begin{proof}
Let $f\ge0$ be a measurable function defined on $\R^{N^2}=\mathcal H_N(\C)$. By definition of a Wigner matrix, there exists a normalizing constant $c_N$ such that $$\E[f(X)]=c_N\dis\int_{\R^{N^2}}f((Y_{i,j})_{1\le i,\,j\le N})\exp\left(-\dis\sum_{1\le i \le N}\cfrac{Y_{i,i}^2}{2}-\dis\sum_{1\le i<j\le N}\left( \Re(Y_{i,j})^2 +\Im(Y_{i,j})^2\right)\right)d\leb(Y).$$
So we get: $$\E[f(X)]=c_N\dis\int_{\R^{N^2}}f((Y_{i,j})_{1\le i,\,j\le N})\exp\left(-\cfrac{1}{2}\dis\sum_{1\le i,j\le N}|Y_{i,j}|^2\right)d\leb(Y).$$
We write: $$\E(f(X))=c_N\dis\int_{\mathcal H_N(\C)}f((Y_{i,j})_{1\le i,\,j\le N})\exp\left(-\cfrac{1}{2}\Tr(Y^2)\right)d\leb(Y).$$
Since this formula is true for all $f\ge 0$ measurable, we get the result.
\end{proof}

The next proposition explains the terminology of the Gaussian Unitary Ensemble and show an invariance of the law with respect to multiplication by a unitary matrices. A similar result holds for the Gaussian orthogonal ensemble with orthogonal matrices instead of unitary matrices.
We first state a lemma in order do a change a variable.
\begin{lemma}
\label{lemma: det 1}
Let $V\in\mathcal U_N(\C)$ and $\psi_{V}:\mathcal H_N(\C)\to\mathcal H_N(\C)$ defined by $\psi_{V}(X)=V XV^*$. Then we have $|\det(\psi_V)|=1$.
\end{lemma}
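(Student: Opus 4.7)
The plan is to exhibit $\psi_V$ as an orthogonal transformation of a Euclidean structure on $\mathcal{H}_N(\mathbb{C})$, from which $|\det(\psi_V)|=1$ is immediate. First I would check that $\psi_V$ is indeed a well-defined $\mathbb{R}$-linear endomorphism of $\mathcal{H}_N(\mathbb{C})$: linearity is clear, and if $X^*=X$ then $(VXV^*)^* = V X^* V^* = VXV^*$, so $\psi_V$ maps $\mathcal{H}_N(\mathbb{C})$ into itself. By Lemma \ref{lemma:dimensionHermitienne} this is a real vector space of dimension $N^2$.

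Next I would equip $\mathcal{H}_N(\mathbb{C})$ with the bilinear form $\langle X,Y\rangle := \Tr(XY)$. One checks this is real-valued (since for hermitian $X,Y$, $\Tr(XY) = \overline{\Tr((XY)^*)} = \overline{\Tr(YX)} = \overline{\Tr(XY)}$), symmetric, and positive definite (writing $X=X^*$ in an orthonormal eigenbasis gives $\Tr(X^2)=\sum_k \lambda_k^2 \ge 0$ with equality iff $X=0$). Thus $(\mathcal{H}_N(\mathbb{C}),\langle\cdot,\cdot\rangle)$ is a Euclidean space of dimension $N^2$.

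The key computation is then that $\psi_V$ preserves this inner product: using $V^*V=I_N$ and the cyclicity of the trace,
\[
\langle \psi_V(X),\psi_V(Y)\rangle = \Tr(V X V^* V Y V^*) = \Tr(V X Y V^*) = \Tr(XY) = \langle X,Y\rangle.
\]
Therefore $\psi_V$ is an orthogonal transformation of the Euclidean space $\mathcal{H}_N(\mathbb{C})$, so its determinant (computed in any orthonormal basis, for instance a suitable orthonormalization of the basis given in Lemma \ref{lemma:dimensionHermitienne}) is $\pm 1$, which gives $|\det(\psi_V)|=1$.

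I do not anticipate any real obstacle: the only slightly delicate point is verifying that $\Tr(XY)$ is a genuine Euclidean inner product on $\mathcal{H}_N(\mathbb{C})$, but this is standard. As an alternative one could note that $V\mapsto |\det(\psi_V)|$ is a continuous group homomorphism from the compact group $\mathcal{U}_N(\mathbb{C})$ into $(\mathbb{R}_+^*,\times)$, whose only compact subgroup is $\{1\}$; this avoids introducing an inner product but relies on a bit more structure.
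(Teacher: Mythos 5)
Your proof is correct, and your main argument takes a genuinely different route from the paper. You endow $\mathcal{H}_N(\C)$ with the Frobenius inner product $\langle X,Y\rangle = \Tr(XY)$, verify it is a real, symmetric, positive definite bilinear form, and show $\psi_V$ preserves it — hence $\psi_V$ is orthogonal and its determinant lies in $\{-1,+1\}$. This is clean, elementary, and actually yields slightly more than the lemma asks for (the exact value $\pm 1$, not just the absolute value). By contrast, the paper avoids introducing any inner product: it observes that $d_{V'} := |\det(\psi_{V'})|$ defines a homomorphism from $\mathcal{U}_N(\C)$ to $(\R_+^*,\times)$, uses $d_V d_{V^*}=1$ to get $d_V \neq 0$ and $d_{V^k}=d_V^k$, and then extracts a convergent subsequence of $(V^k)_k$ by compactness to force the geometric sequence $(d_V^k)_k$ to have ratio $1$. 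Your "alternative" remark — that a continuous homomorphism from a compact group into $\R_+^*$ must be trivial — is precisely the paper's argument packaged more abstractly, so your proposal in fact contains both approaches. The paper's method is a touch more portable (it would also apply if no natural invariant inner product were available, e.g. for the analogous Lemma \ref{lemma: det changevariablegin} on $M_N(\C)$), while your method is shorter, more concrete, and gives sharper information.
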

\begin{proof}
We define $d_{V'}=|\det(\psi_{V'})|\in\R^+$ for all $V'\in\mathcal U_N(\C)$. Then for all $V'\in\mathcal U_N(\C)$ and $V''\in\mathcal U_N(\C)$ we have $d_{V'}d_{V''}=d_{V'V''}$ since $\psi_{V'}\circ\psi_{V''}=\psi_{V'V''}$.
This gives that $d_Vd_{V^*}=1$ and so $d_V\ne 0$ and for all $k\in\Z$, $d_{V^k}=d_V^k$.
\newline
Since $(V^k)_{k\in\N}$ is a sequence of matrices of $\mathcal U_N(\C)$, which is a compact group, we can extract a subsequence that converges to $\tilde{V}\in\mathcal U_N(\C)$. By continuity, there exists a subsequence of $(d_{V^k})_{k\in\N}$ that converges to $d_{\tilde{V}}\ne 0$. Since $(d_{V^k})_{k\in\N}$ is a non negative real geometric sequence of reason $d_V$, it converges to an element of $\R^{+*}$ if and only if $d_V=1$ which proves the result.
\end{proof} 

\begin{prop}
\label{prop:invarianceloiunitaire}
Let $U\in\mathcal U_N(\C)$ be a deterministic unitary matrix and $X$ be a $N$ complex Wigner matrix, then $U^*XU$ is also a $N$ complex Wigner matrix. 
\end{prop}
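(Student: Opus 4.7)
The plan is to invoke Proposition \ref{prop: loi Gue} and reduce the claim to an invariance of the density $M \mapsto \exp(-\Tr(M^2)/2)$ on $\mathcal{H}_N(\mathbb{C})$ under the linear map $\psi_{U^*}: M \mapsto U^* M U$. Since the characterization of a Wigner matrix via independent Gaussian entries is equivalent (by Proposition \ref{prop: loi Gue}) to having this explicit density on $\mathcal{H}_N(\mathbb{C}) \cong \mathbb{R}^{N^2}$, it suffices to show that $U^*XU$ admits the same density.

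First I would check that $U^* X U$ takes values in $\mathcal{H}_N(\mathbb{C})$, which is immediate since $(U^* X U)^* = U^* X^* U = U^* X U$. Then, for any non-negative measurable $f: \mathcal{H}_N(\mathbb{C}) \to \R$, I would write
\begin{equation*}
\E[f(U^* X U)] = c_N \int_{\mathcal{H}_N(\C)} f(U^* Y U) \exp\!\left(-\tfrac{1}{2}\Tr(Y^2)\right) d\leb(Y),
\end{equation*}
using the density from Proposition \ref{prop: loi Gue}. The change of variables $Y = U Z U^* = \psi_U(Z)$ sends $\mathcal{H}_N(\mathbb{C})$ onto itself bijectively, and by Lemma \ref{lemma: det 1} the Jacobian satisfies $|\det(\psi_U)| = 1$, so Lebesgue measure is preserved.

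Two elementary algebraic facts then close the argument: under $Y = UZU^*$ one has $U^* Y U = Z$, and $\Tr(Y^2) = \Tr(UZU^* U Z U^*) = \Tr(U Z^2 U^*) = \Tr(Z^2)$ by the cyclicity of the trace and $U^* U = I_N$. Substituting, I obtain
\begin{equation*}
\E[f(U^* X U)] = c_N \int_{\mathcal{H}_N(\C)} f(Z) \exp\!\left(-\tfrac{1}{2}\Tr(Z^2)\right) d\leb(Z) = \E[f(X)],
\end{equation*}
so $U^*XU$ has the same density as $X$ on $\mathcal{H}_N(\mathbb{C})$ and is therefore a $N$ complex Wigner matrix.

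There is no real obstacle here; the only point that requires a little care is the Jacobian computation, which is already handled by Lemma \ref{lemma: det 1} via the compactness of $\mathcal{U}_N(\C)$. Everything else reduces to the invariance of $\Tr(M^2)$ under unitary conjugation.
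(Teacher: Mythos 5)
Your proof is correct and follows essentially the same route as the paper's: you invoke Proposition \ref{prop: loi Gue} for the density, Lemma \ref{lemma: det 1} for the unit Jacobian, and the cyclicity of the trace to show the density is preserved under unitary conjugation. The only additions are the (welcome) explicit check that $U^*XU$ remains Hermitian and a more detailed bookkeeping of the change of variables, but the underlying argument is identical.
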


\begin{proof}
The application $\phi:$ $\mathcal H_N(\C)\to \mathcal H_N(\C)$ defined by $\phi(X)=U^*XU$ has a Jacobian by Lemma \ref{lemma: det 1}. Using the law of a $N$ complex Wigner matrix obtained in Proposition \ref{prop: loi Gue} and doing a change of variables yields that the density of $U^*XU$ is proportional to $Y\mapsto\exp\left(-\cfrac{1}{2}\Tr((UYU^*)^2)\right)$. 
\newline
By property of the trace $\Tr((UYU^*)^2)=\Tr(UY^2U^*)=\Tr(Y^2U^*U)=\Tr(Y^2)$, which yields that the density of $U^*XU$ is proportional to $Y\mapsto\exp\left(-\cfrac{1}{2}\Tr(Y^2)\right)$. Hence $U^*XU$ and $X$ have the same law.
\end{proof}

An important property about the spectrum of a $N$ complex Wigner matrix is that the spectrum is almost surely simple.
\begin{prop}
\label{prop: spectrum simple}
Let $M$ be a $N$ complex Wigner matrix. Then almost surely the spectrum of $M$ is simple (all the eigenvalues are distinct) and $\det(M)\ne 0$.
\end{prop}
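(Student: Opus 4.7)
The plan is to express both conclusions as the event that a certain polynomial function of the entries of $M$ vanishes, and then invoke the fact that $M$ has a density with respect to Lebesgue measure on $\R^{N^2}$ so that any set of Lebesgue measure zero is negligible.

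More precisely, after identifying $\mathcal H_N(\C)$ with $\R^{N^2}$ via the basis of Lemma \ref{lemma:dimensionHermitienne}, the entries of a matrix $Y \in \mathcal H_N(\C)$ become real coordinates $(y_1,\ldots,y_{N^2})$, and both $\det(Y)$ and the discriminant $\mathrm{disc}(\chi_Y)$ of the characteristic polynomial of $Y$ are polynomial functions of $(y_1,\ldots,y_{N^2})$. The event $\{\det(M)=0\}$ is the zero set of the first polynomial, and the event that $M$ has a multiple eigenvalue is the zero set of the second, since over $\C$ a polynomial has a repeated root exactly when its discriminant vanishes.

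The key lemma to invoke is the classical fact that the zero set of a non-identically-zero polynomial $P:\R^d \to \R$ has Lebesgue measure zero (this follows by induction on $d$ using Fubini and the fact that a nonzero univariate polynomial has finitely many roots). Combined with Proposition \ref{prop: loi Gue}, which tells us that the law of $M$ is absolutely continuous with respect to Lebesgue measure on $\R^{N^2}$, this will immediately yield $\mP(P(M)=0)=0$ for any nonzero polynomial $P$ on $\mathcal H_N(\C)$.

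The main (small) obstacle is therefore showing that both polynomials are not identically zero on $\mathcal H_N(\C)$. For the determinant this is immediate since $\det(I_N)=1$ and $I_N\in\mathcal H_N(\C)$. For the discriminant, it suffices to exhibit one matrix in $\mathcal H_N(\C)$ with simple spectrum, for instance $\DIAG(1,2,\ldots,N)\in\mathcal H_N(\C)$, whose eigenvalues $1,2,\ldots,N$ are pairwise distinct, so that $\mathrm{disc}(\chi_{\DIAG(1,\ldots,N)})=\prod_{1\le i<j\le N}(j-i)^2 \ne 0$. Applying the measure-zero argument to each polynomial and taking the union of two negligible events concludes the proof.
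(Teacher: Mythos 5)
Your proposal is correct and follows essentially the same route as the paper: identify $\mathcal H_N(\C)$ with $\R^{N^2}$, realize both the determinant and the "discriminant" as nonzero polynomials in the real coordinates, and invoke the measure-zero property of polynomial zero sets together with absolute continuity of the law (the paper's Lemma~\ref{lemma:polyalmostsur} and Proposition~\ref{prop: loi Gue}). The only cosmetic difference is in how you justify that the discriminant is a polynomial in the entries: you appeal to the standard algebraic fact that $\mathrm{disc}(\chi_Y)$ is a polynomial in the coefficients of $\chi_Y$ (e.g.\ via the resultant of $\chi_Y$ and $\chi_Y'$), whereas the paper instead writes out the product $\prod_{i\ne j}(\lambda_i-\lambda_j)$ and passes through Newton's theorem on symmetric polynomials to express it in terms of the elementary symmetric functions of the eigenvalues. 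Both are valid; yours is arguably a bit more direct since it sidesteps the symmetric-polynomial machinery, and you also make the non-vanishing step fully explicit by exhibiting $\DIAG(1,\ldots,N)$, which the paper leaves implicit.
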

\begin{proof}
Let $\Lambda^0=(\lambda_1^0,...,\lambda_N^0)\in\R^N$ be the random spectrum of $M$. 
Let us introduce a quantity linked with the so-called discriminant of a polynomial. For $\Lambda=(\lambda_1,...,\lambda_n)\in\R^N$, let $\mathcal D (\Lambda):=\prod_{i\ne j}(\lambda_i-\lambda_j)$.
We notice that the spectrum of $M$ is simple if and only if $\mathcal D (\Lambda^0)=\prod_{i\ne j}(\lambda_i^0-\lambda_j^0)\ne 0$.
\newline
Since $\mathcal D$ is a symmetric polynomial in the $\lambda_i$ (which means that for all permutation $\sigma\in \mathfrak S_N$, for all $(\lambda_1,...\lambda_N)\in\R^N$ we have that $\mathcal D(\lambda_1,...,\lambda_N)=\mathcal D(\lambda_{\sigma(1)},...,\lambda_{\sigma(N)})$), the fundamental theorem of the symmetric polynomial (the so-called Newton theorem) gives that $\mathcal D\in \R[\Sigma_1,...,\Sigma_N]$ with $(\Sigma_i)_{1\le i\le N}$ the symmetric polynomials associated to the $\lambda_i$. Given $\Lambda\in\R^N$, let us consider a matrix $M^\Lambda$ in $\mathcal M_N(\C)$ such that the set of its eigenvalues is exactly $\Lambda$. By the fundamental properties of the determinant, the coefficients of $M^\Lambda$ and its characteristic polynomial are in the same ring. Since the coefficients of the characteristic polynomial of $M^\Lambda$ are up to the sign the $\Sigma_i$ we have that the $\Sigma_i$ are in the same ring as the coefficients of $M^\Lambda$. 
\newline
Hence, we deduce that there exists $R\in\R[(X_{i,j})_{1\le i,j\le N}]$ such that for all $\Lambda\in\R^N$, $\mathcal D(\Lambda)=R((M^\Lambda_{i,j})_{1\le i,j\le N})$.
So we get that the spectrum of $M$ is simple if and only if $D(\Lambda^0)=R((M_{i,j})_{1\le i,j\le N})\ne 0$.
Let us recall a basic lemma which can be proved by induction on the dimension. 
\begin{lemma}
\label{lemma:polyalmostsur}
Let $X\in \R^d$ be a random variable with a density with respect to the Lebesgue measure of $\R^d$. Then, if $P\in \R[Z_1,...,Z_d]$ is not null, we almost surely have $P(X)\ne 0$. 
\end{lemma}
\begin{proof} 
We sketch the proof. We prove the result by induction on $d$. 
\newline
If $d=1$, if we denote $\{x_1,...,x_k\}\in\R^k$ the real roots of $P$ (with possibly no real roots if $k=0$) then $P(X)=0$ if and only if $X\in \{x_1,...,x_k\}$. Since $X$ has a density with respect to the Lebesgue measure, $\mP(X\in \{x_1,...x_k\})=0$ proving the result. 
\newline
We suppose that the result is established for $d\ge 1$. We write $X=(X_1,...,X_{d+1})$. We consider $P\in \R[Z_1,...,Z_{d+1}]$ a non null polynomial. We can suppose that $P$ depends on all the $(Z_i)_{1\le i\le d+1}$. Indeed if it is not the case, we can for instance suppose that $P\in \R[Z_1,...,Z_{d}]$ and we can use the induction hypothesis to $P$ and $(X_1,...,X_d)$ which also has a density with respect to the Lebesgue measure in $\R^d$ giving the result.
We now suppose that $P$ depends on all the variables. Conditioning by $X_1$ yields \begin{equation}
\label{eq:polzero}
\mP(P(X)=0)=\E(\mP(P(X)=0|X_1)).
\end{equation}
Conditionally on $X_1$, $X$ has a density with respect to the Lebesgue measure of $\R^d$ and for all $\omega\in \Omega$, we can define the polynomial $Q_\omega(Z_2,...,Z_{d+1})$ of $\R[Z_2,...,Z_{d+1}]$ as $ Q_\omega(Z_2,...,Z_{d+1})=P(X_1(\omega),Z_2,...,Z_{d+1})$.
Applying the induction hypothesis for all $\omega\in \Omega$ gives that $\mP(P(X)=0|X_1)(\omega)=\mP(P(X_1,X_2,...,X_{d+1})=0|X_1)(\omega)$ is equal to 0 if $Q_\omega(Z_2,...,Z_{d+1})$ is not the null polynomial and is equal to 1 if $Q_\omega(Z_2,...,Z_{d+1})$ is the null polynomial. 
So we obtain $$\mP(P(X)=0|X_1)(\omega)=\mathds 1_{Q_\omega(Z_2,...,Z_{d+1})=0}=\mathds 1_{P(X_1(\omega),Z_2,...,Z_{d+1})=0}.$$
Taking the expectation and using \eqref{eq:polzero} yields $$\mP(P(X)=0)=\mP(P(X_1(\omega),Z_2,...,Z_{d+1})=0).$$
Writing $P\in \R[Z_1,...,Z_{d+1}]$ as an element of $\R[Z_1][Z_2,...,Z_{d+1}]$, as $P$ is not the null polynomial, we can find $k\ge 1$ non null polynomials of $\R[Z_1]$ denoted $\widetilde P_1,...,\widetilde P_k$ such that $P(x_1,Z_2,...,Z_{d+1})=0$ if and only if $\widetilde  P_1(x_1)=\widetilde P_2(x_1)=...=\widetilde P_k(x_1)=0$. So we have $$\mP(P(X)=0)=\mP(P(X_1(\omega),Z_2,...,Z_{d+1})=0)=
\mP(\widetilde P_1(X_1(\omega))=...
=\widetilde P_k(X_1(\omega))=0).$$
Using the case $d=1$ we obtain the result.
\end{proof}
Thanks to this lemma we deduce that almost surely the spectrum of $M$ is simple.
\newline
Similarly, the determinant of a matrix is a polynomial in its entries and so Lemma \ref{lemma:polyalmostsur} gives that $\det(M)\ne 0$ almost surely.
\end{proof}
Let us mention that in the previous proof we do not use the explicit law of the entries of the matrix. The only important points are the independence of the entries and the fact that the entries have a density with respect to the Lebesgue measure.

\subsubsection{Ginibre ensemble}
Let $N\ge 1$ be a positive integer.
\begin{definition}
A $N$ complex Ginibre random matrix is a random variable with value in $M_N(\C)$ such that for all $(\Re(M_{i,j}),\Im(M_{i,j}))_{1\le i,j\le N}$ is a family of independent Gaussian random variable of law $\mathcal N(0,1/2)$ (for all $1\le i,j\le N$, $M_{i,j}$ is said to be a standard complex Gaussian random variable).
\newline
For a $N$ complex Ginibre matrix, we denote $(\lambda_1,...,\lambda_N)\in\C^N$ the vector of its complex eigenvalues viewed as an exchangeable vector of $\C^N$.
\end{definition}
Now we shall show that a $N$ complex Ginibre random matrix has a density with respect to the Lebesgue measure of $\C^{N^2}$. For all $k\ge 0$, we define the Lebesgue measure on $\C^k$, which shall be written $dz_1...dz_k$, as the Lebesgue measure on $\R^{2k}$ with the identification of $\C$ as $\R^2$. More exactly it means that for suitable $f$ defined on $\C^k$: $$\int_{\C^k} f(z_1,...,z_k) dz_1...dz_k=\int_{\R^{2k}}f(x_1,y_1,x_2,y_2,...,x_k,y_k)dx_1dy_1dx_2dy_2...dx_kdy_k,$$ where $z_j=x_j+iy_j$ and $dz_j:=dx_j dy_j$ for all $1\le j\le k$.
\begin{prop}
\label{prop :law ginibre matr}
Let $M$ be a $N$ complex Ginibre random matrix. The law of $M$ has a density with respect to the Lebesgue measure of $\C^{N^2}$ which is proportional to $$M\mapsto\exp\left(-\sum_{i,j=1}^N|M_{i,j}|^2\right)=\exp(-\Tr(M M^*)),$$ with $M^*=\overline M^T$ the transconjugate of $M$.  
\end{prop}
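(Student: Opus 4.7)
The plan is to argue directly from the definition, exactly as in the proof of Proposition \ref{prop: loi Gue}, but on $\C^{N^2}\simeq \R^{2N^2}$ instead of $\mathcal H_N(\C)\simeq\R^{N^2}$. The key point is that for a Ginibre matrix the $2N^2$ real coordinates $(\Re(M_{i,j}),\Im(M_{i,j}))_{1\le i,j\le N}$ are an independent family of $\mathcal N(0,1/2)$ random variables, with no linear constraint among them (contrary to the Hermitian case), so there is nothing to diagonalise and the joint density is just the product of the marginals.

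First I would write, for $f\ge 0$ measurable on $\C^{N^2}$, using independence and the explicit density $\frac{1}{\sqrt{\pi}}\e^{-x^2}$ of an $\mathcal N(0,1/2)$ random variable,
\begin{equation*}
\E[f(M)] = \pi^{-N^2}\int_{\R^{2N^2}} f\big((x_{i,j}+iy_{i,j})_{1\le i,j\le N}\big)\,\exp\Big(-\sum_{1\le i,j\le N}(x_{i,j}^2+y_{i,j}^2)\Big)\,\prod_{i,j}\dd x_{i,j}\dd y_{i,j}.
\end{equation*}
Identifying $\R^{2N^2}$ with $\C^{N^2}$ via $M_{i,j}=x_{i,j}+iy_{i,j}$, this rewrites as
\begin{equation*}
\E[f(M)] = \pi^{-N^2}\int_{\C^{N^2}} f(Y)\,\exp\Big(-\sum_{1\le i,j\le N}|Y_{i,j}|^2\Big)\,\dd Y,
\end{equation*}
with $\dd Y$ the Lebesgue measure on $\C^{N^2}$ as defined just before the proposition.

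It then remains to identify the sum of squared moduli with a trace. A direct computation gives $(YY^*)_{i,i}=\sum_{k}Y_{i,k}\overline{Y_{i,k}}=\sum_{k}|Y_{i,k}|^2$, so summing over $i$ yields $\Tr(YY^*)=\sum_{i,j}|Y_{i,j}|^2$. Substituting this in the integral and using that the identity holds for every nonnegative measurable $f$ gives the announced density.

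There is no real obstacle here: once the entries are declared independent with the prescribed Gaussian law, the computation is a one-line consequence of Fubini and the definition of $\Tr(MM^*)$. The only minor subtlety, which is essentially bookkeeping, is to be careful with the normalisation $1/2$ in the variance of the real and imaginary parts so that the exponent is exactly $-|M_{i,j}|^2$ (and not $-|M_{i,j}|^2/2$), which is what makes the clean formula $\exp(-\Tr(MM^*))$ appear.
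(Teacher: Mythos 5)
Your proof is correct and follows essentially the same route as the paper: you use the independence and Gaussianity of the real and imaginary parts of each entry to write the joint density as a product, identify $\R^{2N^2}$ with $\C^{N^2}$, and recognize $\sum_{i,j}|M_{i,j}|^2 = \Tr(MM^*)$. The only difference is that you spell out the trace identity and the normalizing constant $\pi^{-N^2}$ a bit more explicitly, but this is just bookkeeping; the argument is the same.
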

\begin{proof}
By definition of a $N$ complex Ginibre random matrix, for all $1\le i,j\le N$, $\Re(M_{i,j})$ and $\Im(M_{i,j})$ are real Gaussian random variables with densities with respect to the real Lebesgue measure proportional to $x\mapsto\exp(-x^2)$. By independence, we get that for all $1\le i,j\le N$, $M_{i,j}$ (as an element of $\C$ viewed as $\R^2$) has a density with respect to the complex Lebesgue measure proportional to $$z=(x,y)\mapsto\exp(-(x^2+y^2))dxdy=\exp(-|z|^2)dz.$$ By independence of $(M_{i,j})_{1\le i,j\le N}$ we have the result.
\end{proof}
Let us state some basics properties of Ginibre random matrices.
First, the law of a $N$ Complex Ginibre matrix is invariant by unitary change of basis. We first state a counterpart of Lemma \ref{lemma: det 1} whose proof is totally similar. 
\begin{lemma}
\label{lemma: det changevariablegin}
Let $(U,V)\in\mathcal U_N(\C)^2$ and $\psi_{V}:\mathcal M_N(\C)\to\mathcal M_N(\C)$ defined by $\psi_{V}(X)=V X$ and $\widetilde{\psi_{U}}:\mathcal M_N(\C)\to\mathcal M_N(\C)$ defined by $\widetilde {\psi_U}(X)=XU$. Then we have $|\det(\psi_V)|=|\det(\widetilde {\psi_U)}|=1$.
\end{lemma}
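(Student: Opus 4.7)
The plan is to copy the structure of the proof of Lemma \ref{lemma: det 1} verbatim, since the same compactness-plus-group-homomorphism trick applies to both maps. First I would introduce the non-negative scalars $a_V := |\det(\psi_V)|$ and $b_U := |\det(\widetilde{\psi_U})|$ for $V, U \in \mathcal U_N(\C)$. The key algebraic identities are $\psi_V \circ \psi_{V'} = \psi_{VV'}$ (immediate from associativity of matrix multiplication: $(VV')X = V(V'X)$) and $\widetilde{\psi_U} \circ \widetilde{\psi_{U'}} = \widetilde{\psi_{U'U}}$ (with order reversed because multiplication acts on the right: $(XU')U = X(U'U)$). Applying $|\det|$ on both sides yields $a_V a_{V'} = a_{VV'}$ and $b_U b_{U'} = b_{U'U}$; in particular one has $a_{V^k} = a_V^k$ and $b_{U^k} = b_U^k$ for every $k \in \Z$, which is all that the compactness argument actually needs.

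Next I would invoke the compactness of $\mathcal U_N(\C)$ exactly as in the proof of Lemma \ref{lemma: det 1}: from $(V^k)_{k \ge 0}$ one extracts a convergent subsequence with limit $\widetilde V \in \mathcal U_N(\C)$, and then continuity of the map $V \mapsto \psi_V$ (which is in fact linear in $V$, hence continuous) together with continuity of the determinant shows that a subsequence of $(a_V^k)_k = (a_{V^k})_k$ converges to $a_{\widetilde V}$, which is a strictly positive real number since $\psi_{\widetilde V}$ is a bijection of $M_N(\C)$ (its inverse being $\psi_{\widetilde V^*}$). A non-negative geometric sequence admits a positive accumulation point only when its common ratio equals $1$, which forces $a_V = 1$. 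The same reasoning, with $U$, $\widetilde{\psi_U}$ and $b_U$ in place of $V$, $\psi_V$ and $a_V$, delivers $b_U = 1$.

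The only mild subtlety I anticipate is the reversed composition law for $\widetilde{\psi}$, but this never enters the compactness argument because one only uses $b_{U^k} = b_U^k$, which holds independently of the order reversal. Beyond this minor bookkeeping I do not foresee any genuine obstacle. As a sanity check one could alternatively compute the Jacobian directly by viewing $X \in M_N(\C)$ column by column: $\psi_V$ then acts as $V$ on each of the $N$ copies of $\C^N$, so its complex determinant equals $(\det V)^N$ whose modulus is $1$ since $V$ is unitary, and $\widetilde{\psi_U}$ is handled symmetrically by viewing $X$ row by row.
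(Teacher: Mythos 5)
Your primary argument is correct and is exactly the compactness-plus-multiplicativity proof the paper intends: the paper simply declares the proof of this lemma ``totally similar'' to that of Lemma \ref{lemma: det 1}, and you reproduce it faithfully, including the correct observation that the reversed composition law $\widetilde{\psi_U}\circ\widetilde{\psi_{U'}}=\widetilde{\psi_{U'U}}$ is harmless because all that enters the compactness step is $b_{U^k}=b_U^k$. The closing ``sanity check'' you offer is actually the cleaner route and bypasses compactness entirely: in the basis of columns, $\psi_V$ acts block-diagonally as $N$ copies of $V$, so its complex determinant is $(\det V)^N$ of unit modulus, and hence the real Jacobian (the one relevant to the Lebesgue measure on $\C^{N^2}\cong\R^{2N^2}$) is $|(\det V)^N|^2=1$; the row-wise analogue handles $\widetilde{\psi_U}$. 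This direct computation is shorter and generalizes at no extra cost, whereas the compactness argument has the virtue of applying verbatim to the Hermitian case of Lemma \ref{lemma: det 1}, where the map is only $\R$-linear and not complex-linear, so the block-diagonal trick would need more care.
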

\begin{prop}
\label{prop: inv ginibre law}
If $M$ is a $N$ complex Ginibre random matrix and $U,V\in\mathcal U_N(\C)$ are deterministic matrices, then $UMV$ has the law of a $N$ complex Ginibre matrix.
\end{prop}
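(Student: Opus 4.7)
My plan is to mimic exactly the proof of Proposition \ref{prop:invarianceloiunitaire}, replacing the conjugation $X \mapsto U^*XU$ on $\mathcal H_N(\C)$ with the left-right multiplication map $X \mapsto UXV$ on $M_N(\C)$, and using Lemma \ref{lemma: det changevariablegin} instead of Lemma \ref{lemma: det 1}.

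More precisely, I would first introduce the map $\phi \colon M_N(\C) \to M_N(\C)$ defined by $\phi(X) = UXV$, noting that $\phi = \psi_U \circ \widetilde{\psi_V}$ in the notation of Lemma \ref{lemma: det changevariablegin}, so $|\det(\phi)| = |\det(\psi_U)| \cdot |\det(\widetilde{\psi_V})| = 1$. This tells us that $\phi$ is a volume-preserving $\R$-linear diffeomorphism of $M_N(\C) \cong \C^{N^2}$, so the change of variables formula applies with Jacobian $1$.

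Next, using Proposition \ref{prop :law ginibre matr}, for any nonnegative measurable function $f$ on $M_N(\C)$ I would write
\begin{equation*}
\E[f(UMV)] = c_N \int_{M_N(\C)} f(UYV)\exp\bigl(-\Tr(YY^*)\bigr)\, dY,
\end{equation*}
and then perform the change of variables $Z = UYV$, so that $Y = U^*ZV^*$. Since $|\det(\phi^{-1})| = 1$ as well, the Lebesgue measure is preserved. The key cyclicity computation is
\begin{equation*}
\Tr\bigl((U^*ZV^*)(U^*ZV^*)^*\bigr) = \Tr(U^*ZV^*VZ^*U) = \Tr(U^*ZZ^*U) = \Tr(ZZ^*UU^*) = \Tr(ZZ^*),
\end{equation*}
where I used $V^*V = UU^* = I_N$ and the cyclic invariance of the trace. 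Substituting this back yields
\begin{equation*}
\E[f(UMV)] = c_N \int_{M_N(\C)} f(Z)\exp\bigl(-\Tr(ZZ^*)\bigr)\, dZ = \E[f(M)].
\end{equation*}
Since this holds for every nonnegative measurable $f$, the random matrices $UMV$ and $M$ share the same law, which is what we wanted. There is no real obstacle here: the only thing one has to be careful about is the order of factors in the transconjugate, $(UYV)^* = V^*Y^*U^*$, and the repeated use of cyclicity of the trace together with unitarity of $U$ and $V$.
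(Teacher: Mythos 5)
Your proof is correct and follows essentially the same route as the paper: a change of variables justified by Lemma \ref{lemma: det changevariablegin}, followed by the trace computation using unitarity of $U$, $V$ and cyclicity of the trace. You just spell out the composition $\phi = \psi_U \circ \widetilde{\psi_V}$ and the transconjugate $(U^*ZV^*)^* = VZ^*U$ a bit more explicitly than the paper does, which is fine.
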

\begin{proof}
Using Proposition \ref{prop :law ginibre matr} and doing a change of variables using Lemma \ref{lemma: det changevariablegin}, we get that $UMV$ has a density with respect to the complex Lebesgue measure which is proportional to 
\begin{align*}
M\mapsto \exp{(-\Tr(U^{-1}MV^{-1}(U^{-1}MV^{-1})^*))}&=\exp{(-\Tr(U^{-1}MM^*(U^{-1})^*))}\\
&=\exp{(-\Tr(MM^*))}
\end{align*}
where we used that $UU^*=I_N$, $VV^*=I_N$ and the properties of the trace.
\end{proof}
A direct computation gives us a formal interpretation of a $N$ complex Ginibre matrix when $N$ is large. 
\begin{prop}
\label{prop:orthogonality N large}
Let $(M_{i,j})_{ (i,j)\in \N_{\ge 1}^2}$ be a family of independent standard complex Gaussian random variables. 
For all $N\ge 1$, let $M_N=(M_{i,j})_{1\le i,j\le N}$ be a $N$ complex Ginibre. Then almost surely $M_N/\sqrt{N}$ has orthonormal rows/columns when $N$ goes to $+\infty$. 
\end{prop}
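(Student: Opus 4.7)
The plan is to first unpack the informal claim into a precise statement, then reduce it to the strong law of large numbers. Writing $R_i^N := (M_{i,1},\dots,M_{i,N})/\sqrt N$ for the $i$-th row of $M_N/\sqrt N$, the assertion I would prove is that there is an event of full probability on which, for every fixed pair $(i,j)\in\N_{\ge 1}^2$,
\[
\langle R_i^N,R_j^N\rangle \;\underset{N\to+\infty}{\longrightarrow}\; \delta_{i,j},
\]
and similarly for the columns. Convergence here is along $N\ge\max(i,j)$.

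First I would fix $i\ne j$ and write
\[
\langle R_i^N,R_j^N\rangle = \frac{1}{N}\sum_{k=1}^N M_{i,k}\,\overline{M_{j,k}}.
\]
Because rows $i$ and $j$ share no entries, the family $(M_{i,k}\overline{M_{j,k}})_{k\ge 1}$ is i.i.d.; each term has mean $0$ (by independence and $\E[M_{i,k}]=0$) and finite variance, since $\E[|M_{i,k}\overline{M_{j,k}}|^2]=\E[|M_{1,1}|^2]^2=1$ (each of $\Re(M_{1,1}),\Im(M_{1,1})$ being $\mathcal N(0,1/2)$). Applying the strong law of large numbers to the real and imaginary parts separately yields $\langle R_i^N,R_j^N\rangle\to 0$ almost surely. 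For $i=j$ I would handle
\[
\|R_i^N\|_2^2 = \frac{1}{N}\sum_{k=1}^N |M_{i,k}|^2
\]
similarly: the i.i.d.\ summands $|M_{i,k}|^2=\Re(M_{i,k})^2+\Im(M_{i,k})^2$ have expectation $\tfrac12+\tfrac12=1$, so the SLLN gives $\|R_i^N\|_2^2\to 1$ almost surely.

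Finally, as $\N_{\ge 1}^2$ is countable I would intersect the corresponding null sets to obtain a single event of full probability on which all the asymptotic orthonormality relations hold simultaneously. The column statement follows from the same argument applied to $M_N^T$, since by the definition of the Ginibre ensemble the whole family $(M_{i,j})_{(i,j)\in\N_{\ge 1}^2}$ is i.i.d. The only delicate point is to interpret the informal claim ``has orthonormal rows/columns as $N\to\infty$'' as asymptotic orthonormality for each fixed pair of indices; beyond that there is no genuine obstacle, since the result is a straightforward consequence of the SLLN applied to i.i.d.\ sequences with finite second moment.
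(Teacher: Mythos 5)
Your proof is correct and follows the same route the paper takes: an application of the strong law of large numbers using $\E[|M_{i,j}|^2]=1$ and $\E[M_{i,j}\overline{M_{k,l}}]=0$ for distinct index pairs. You simply spell out the details the paper leaves implicit (the interpretation of the informal statement, the i.i.d.\ structure along each row, the finite-variance check, and the intersection of countably many full-measure events), which is all sound.
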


\begin{proof}
The result is an immediate application of the law of large number since for all $1\le i,j,k,l\le N$ with $(i,j)\ne (k,l)$ we have that $\E(|M_{i,j}|^2)=1$ and $\E(M_{i,j}\overline{M_{k,l}})=0$.
\end{proof}

Formally, Proposition \ref{prop:orthogonality N large} explains that given a $N$ complex Ginibre matrix $M_N$ when $N$ is large, $M_N/\sqrt{N}$ is like a unitary matrix whose law is invariant by left and right multiplication by a unitary matrix of $\mathcal U_N(\C)$ by Proposition \ref{prop: inv ginibre law}. This shall be linked with the notion of Haar measure on $\mathcal U_N(\C)$ as we shall see in next section. Moreover, since the spectrum of a unitary matrix is located on the circle we can formally expect that the spectrum of $M_N/\sqrt{N}$ will be located near the circle when $N$ becomes large. This shall be proved in next section proving the so-called circular law.   
Finally, as for the Wigner case, one can show that the spectrum of a $N$ complex Ginibre matrix is simple almost surely.
\begin{prop}
\label{prop: spectre simple}
Let $M$ be a $N$ complex Ginibre matrix. Then almost surely the spectrum of $M$ is simple (all the eigenvalues are distinct) and $\det(M)\ne 0$. Hence almost surely, $M$ is diagonalizable.
\end{prop}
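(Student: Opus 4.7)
The plan is to recycle almost verbatim the argument for the Wigner case given in Proposition~\ref{prop: spectrum simple}. As the author points out at the end of that proof, it rests on only two features of the entries: independence, and the fact that, viewed as real random variables, they possess a density with respect to Lebesgue measure. Both hold here: by definition of a $N$ complex Ginibre matrix, writing $M_{i,j}=X_{i,j}+iY_{i,j}$, the $2N^2$ real numbers $(X_{i,j},Y_{i,j})_{1\le i,j\le N}$ are independent $\mathcal{N}(0,1/2)$ variables, so their joint law has a density on $\R^{2N^2}$. This is the setting of Lemma~\ref{lemma:polyalmostsur}.

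I would then express each of the two statements as the non-vanishing of a real polynomial evaluated at this $\R^{2N^2}$-valued vector. For $\det(M)$, the Leibniz formula shows that $\det$ is a polynomial with integer coefficients in the entries $M_{i,j}$; substituting $M_{i,j}=X_{i,j}+iY_{i,j}$ and separating real and imaginary parts yields $\det(M)=A+iB$ with $A,B\in\R[(X_{i,j},Y_{i,j})_{1\le i,j\le N}]$. The real part $A$ is not the zero polynomial since $\det(I_N)=1$, so $A=1$ at that point. Lemma~\ref{lemma:polyalmostsur} applied to $A$ gives $\det(M)\ne 0$ almost surely. For the simplicity of the spectrum I would reproduce the steps of Proposition~\ref{prop: spectrum simple} verbatim: the discriminant
$$\mathcal{D}(\lambda_1,\dots,\lambda_N):=\prod_{i\ne j}(\lambda_i-\lambda_j)$$
is symmetric in the eigenvalues, hence by Newton's theorem a polynomial in their elementary symmetric functions, which are up to sign the coefficients of $\chi_M$, which are themselves polynomials in the entries of $M$. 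Hence there is $R\in\C[(Z_{i,j})_{1\le i,j\le N}]$ with $\mathcal{D}(\lambda_1,\dots,\lambda_N)=R((M_{i,j})_{1\le i,j\le N})$; splitting $R=R_1+iR_2$ with $R_1,R_2\in\R[(X_{i,j},Y_{i,j})]$, it is enough to show $R_1$ is not identically zero. Evaluating at $M=\diag(1,2,\dots,N)$, the eigenvalues are real and distinct, so $\mathcal{D}$ is real and nonzero there, showing $R_1\ne 0$. Lemma~\ref{lemma:polyalmostsur} then yields $\mathcal{D}(\lambda_1,\dots,\lambda_N)\ne 0$ almost surely, i.e.\ the spectrum is simple.

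Diagonalizability is then automatic: any matrix of $M_N(\C)$ with $N$ distinct eigenvalues is diagonalizable, since each eigenvalue has algebraic multiplicity one and therefore geometric multiplicity one, and the direct sum of the eigenspaces has dimension $N$.

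The main obstacle is cosmetic. The only difference with the Wigner case is that here the entries are complex-valued, which forces the short detour through the real/imaginary decomposition of $\det$ and $\mathcal{D}$ in order to apply Lemma~\ref{lemma:polyalmostsur} as stated (for real polynomials in real variables). Exhibiting a single explicit matrix ($I_N$ in one case, $\diag(1,\dots,N)$ in the other) on which the relevant real polynomial does not vanish bypasses any genuine algebraic computation.
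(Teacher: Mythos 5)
Your proposal is correct and matches the paper's intent exactly: the paper's proof of this proposition is just the one-line remark ``The exact same proof as for complex Wigner matrices can be done,'' and you have carried out precisely that adaptation, correctly identifying that the only wrinkle is the real/imaginary decomposition needed to apply Lemma~\ref{lemma:polyalmostsur} to real polynomials on $\R^{2N^2}$ rather than $\R^{N^2}$. The device of testing the real part of the polynomial at an explicit matrix ($I_N$, resp.\ $\diag(1,\dots,N)$) to certify non-vanishing is a clean way to finish.
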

\begin{proof}
The exact same proof as for complex Wigner matrices can be done. 
\end{proof}

\subsection{Law of the eigenvalues and eigenvectors}
Let $N\ge 1$ be a positive integer. In this section we shall explicitly give the law of the eigenvalues of a $N$ complex Wigner and Ginibre matrix. We first give the details for the Wigner case and then give the counterpart results for the Ginibre case.

\subsubsection{Gaussian unitary ensemble case}
\begin{definition}
A function $f:M_N(\C)\to \C$ is said to only depend on the spectrum if for all $M\in M_N(\C)$ and $U\in GL_N(\C)$ we have $f(UMU^{-1})=f(M)$.
\end{definition}
To give the law of the eigenvalues, the difficult point is that we can not associate canonically to a matrix its eigenvalues and eigenvectors in order to do properly a change of variable. It is actually possible by doing the so-called Weyl integration formula and integrating in quotient spaces. See Section 4.1 of \cite{Alicelivre} or Section 3.1 of \cite{compactgroup}. We will not do this method.
\newline 
To overcome this problem, in this section we shall find a law on eigenvectors and eigenvalues that matches with the law of a complex Wigner matrix. 
\paragraph{Eigenvectors}
Let us recall a general result about the notion of Haar measures on compact groups. 
\begin{definition}
Let $G$ be a compact metrizable group. A measure is said to be a Haar measure on $G$ if and only if for all $g\in G$, for all $A$ measurable $\mu(gA)=\mu(A)$ (this property is called left invariance). 
\end{definition}
\begin{Remark}
The Haar measure could be more generally define for a topological group and generalizes the Lebesgue measure in $\R$.
\end{Remark}
The following theorem gives the existence of a unique Haar probability measure.
\begin{theorem}[Haar]
Let $G$ be a metrizable compact group. All Haar measures on $G$ are finite and are proportional to a unique Haar probability measure which shall be denoted $\mu_G$ which is also right invariant.
\end{theorem}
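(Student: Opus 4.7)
The plan is to build a left-invariant probability measure by a fixed-point argument on the space of probability measures, build a right-invariant one in exactly the same way, and then show the two coincide via a Fubini identity; this single computation simultaneously yields existence, uniqueness and two-sided invariance. Finiteness and proportionality for a general Haar measure will then follow from uniqueness together with the compactness of $G$.

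For \textbf{existence}, I would work in $\mathcal{P}(G)$, the space of Borel probability measures on $G$ endowed with the weak-$*$ topology. Since $G$ is compact metrizable, $\mathcal{P}(G)$ is a convex, compact, metrizable subset of the continuous dual of $C(G)$ (combine Riesz--Markov with Banach--Alaoglu, or equivalently apply Prokhorov's theorem). For each $g\in G$ the map $T_g:\mu\mapsto g_*\mu$ defined by $g_*\mu(A)=\mu(g^{-1}A)$ is a continuous affine self-map of $\mathcal{P}(G)$. Applying a Kakutani-type fixed-point theorem valid for a compact group of continuous affine maps on a compact convex set in a locally convex Hausdorff space (alternatively the Ryll--Nardzewski theorem) produces a common fixed point, i.e.\ a left-invariant probability measure $\mu_L$. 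The same argument with the right translations $R_g:\mu\mapsto \mu(\cdot\, g^{-1})$ produces a right-invariant probability measure $\mu_R$.

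For \textbf{uniqueness and bi-invariance}, I would use the standard convolution--Fubini trick: for $f\in C(G)$,
\[
\int_G f\,d\mu_L \;=\; \iint f(x)\,d\mu_L(x)\,d\mu_R(y) \;=\; \iint f(yx)\,d\mu_L(x)\,d\mu_R(y),
\]
where the first equality uses that $\mu_R$ is a probability measure and the second uses left invariance of $\mu_L$ (for each fixed $y$). By Fubini this equals $\iint f(yx)\,d\mu_R(y)\,d\mu_L(x)$, and right invariance of $\mu_R$ (for each fixed $x$) converts the inner integral into $\int f(y)\,d\mu_R(y)$, yielding $\int f\,d\mu_L = \int f\,d\mu_R$. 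Hence $\mu_L = \mu_R =: \mu_G$ is bi-invariant, and applying exactly the same computation with any other left-invariant probability measure $\mu'$ in place of $\mu_L$ forces $\mu' = \mu_R = \mu_G$, giving uniqueness. A general Haar measure in the sense of the definition (taken as a Radon measure, which is the natural convention) is automatically finite on the compact space $G$; after normalization it must coincide with $\mu_G$ by uniqueness, hence is proportional to $\mu_G$.

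The main obstacle is the existence step. For a non-abelian compact group one cannot invoke the elementary Markov--Kakutani theorem, which assumes a \emph{commuting} family of affine maps, and must instead rely on a stronger fixed-point theorem for group actions, or construct the invariant measure directly on $C(G)$ via an equicontinuity/Arzelà--Ascoli argument (showing that the closed convex hull of $\{f\circ L_g:g\in G\}$ in $C(G)$ contains a constant). Once existence of $\mu_L$ and $\mu_R$ is in hand, the coincidence and uniqueness step is essentially formal bookkeeping.
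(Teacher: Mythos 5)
Your proposal is correct and follows essentially the same route as the paper: existence is delegated to a Kakutani-type fixed-point theorem (which the paper also invokes without detailing), and uniqueness plus two-sided invariance come from the same Fubini--convolution identity that the paper uses, the only cosmetic difference being that the paper routes through the intermediate inversion-symmetry $\mu(A^{-1})=\mu(A)$ while you compare $\mu_L$ and $\mu_R$ directly. Your remark that plain Markov--Kakutani needs a commuting family, so one must reach for Ryll--Nardzewski or an equicontinuity argument, is a useful refinement the paper glosses over.
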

The existence and uniqueness of a Haar measure for a compact group can be obtained using the so-called Kakutani fixed point theorem. We shall not detail this general proof that does not explain how to construct explicitly a such Haar measure. 
However, in the particular case of $\mathcal U_N(\C)$ one can construct explicitly a Haar measure.
\begin{prop}
There exists a Haar probability measure on $\mathcal U_N(\C)$.
\end{prop}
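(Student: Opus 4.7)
The plan is to construct a Haar probability measure on $\mathcal U_N(\C)$ explicitly by pushing forward the Ginibre distribution through the QR (Gram--Schmidt) factorization. The left invariance will then follow almost for free from the bi-invariance property of the Ginibre law established in Proposition~\ref{prop: inv ginibre law}.

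First I would recall the QR factorization: for any invertible matrix $M\in GL_N(\C)$, there exists a unique pair $(Q,R)$ with $Q\in\mathcal U_N(\C)$ and $R$ upper triangular with positive real diagonal entries such that $M=QR$. The map $\Phi:GL_N(\C)\to\mathcal U_N(\C)$, $M\mapsto Q$, is well defined and measurable (the Gram--Schmidt algorithm is a rational-algebraic procedure in the columns of $M$ combined with normalisations by $\|\cdot\|_2$, both of which are continuous on $GL_N(\C)$). Let $M$ be a $N$ complex Ginibre random matrix; by Proposition~\ref{prop: spectre simple}, $M$ is almost surely invertible, so $Q:=\Phi(M)$ is almost surely well defined. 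I would then define $\mu_{\mathcal U_N(\C)}$ as the law of $Q$ on $\mathcal U_N(\C)$, which is automatically a probability measure supported on the compact group $\mathcal U_N(\C)$.

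The key step is left invariance. Fix a deterministic $V\in\mathcal U_N(\C)$ and consider $VM$. By Proposition~\ref{prop: inv ginibre law}, $VM$ has the same law as $M$. On the other hand, writing $M=QR$ we have $VM=(VQ)R$, where $VQ\in\mathcal U_N(\C)$ and $R$ is still upper triangular with positive diagonal. By uniqueness of the QR factorization this is the QR factorization of $VM$, so $\Phi(VM)=VQ$. Taking laws, $VQ\overset{\mathcal L}{=} \Phi(VM)\overset{\mathcal L}{=}\Phi(M)=Q$, which says exactly that the push-forward of $\mu_{\mathcal U_N(\C)}$ by left translation by $V$ equals $\mu_{\mathcal U_N(\C)}$. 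As $V$ was arbitrary and every measurable set $A\subset \mathcal U_N(\C)$ satisfies $\mu_{\mathcal U_N(\C)}(VA)=\mP(VQ\in VA)=\mP(Q\in A)=\mu_{\mathcal U_N(\C)}(A)$, this yields the Haar property.

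I expect the main obstacle to be the bookkeeping around the QR factorization: proving uniqueness (one has to fix the sign/phase of the diagonal of $R$, otherwise $(Q,R)$ can be modified by a diagonal unitary) and checking measurability of $M\mapsto Q$. Both are standard but must be handled carefully in order for the transport-of-law argument to go through cleanly. Once these are in place, the invariance of the Ginibre density under $M\mapsto VM$, which depends only on $\Tr(MM^*)$ and on Lemma~\ref{lemma: det changevariablegin}, makes the Haar property immediate.
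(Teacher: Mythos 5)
Your proposal is correct and takes essentially the same approach as the paper: both construct the Haar measure as the push-forward of the Ginibre law under orthonormalization of columns, and both rely on the unitary left-invariance of the Ginibre law from Proposition~\ref{prop: inv ginibre law}. The only cosmetic difference is that the paper establishes the equivariance $\mathcal G(UM)=U\mathcal G(M)$ by a direct computation with the Gram--Schmidt formulas, whereas you invoke uniqueness of the QR factorization (with positive real diagonal on $R$) to get the same conclusion; these are interchangeable, since the Gram--Schmidt map is precisely the $Q$-component of that normalized QR decomposition.
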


\begin{proof}
This construction is based on the Gram-Schmidt algorithm. Starting from a matrix $M\in GL_n(\C)$, we write $C_1,...,C_N$ its column. We construct an orthonormal basis associated to the the basis $(C_1,...,C_N)$ of $\C^N$. We first define $\widetilde {C_1}:=C_1/||C_1||_2$, we then define $$\widetilde {C_2}:=\cfrac{C_2-\cfrac{1}{||C_1||_2^2}\langle C_1|C_2 \rangle \,C_1}{|| C_2-\cfrac{1}{||C_1||_2^2}\langle C_1|C_2 \rangle \,C_1||_2},$$ and so on. We construct an orthonormal basis of $\C^N$ $(\widetilde {C_1},...,\widetilde {C_N})$. We define the Gram-Schmidt map $\mathcal G : GL_N(\C)\to \mathcal U_N(\C)$ by $\mathcal G(M)=(\widetilde {C_1},...,\widetilde {C_N})$ if $M=(C_1,...,C_N)$.
An important property of the Gram-Schmidt algorithm is that it commutes with the multiplication by a matrix of $\mathcal U_N(\C)$. More exactly, let $U\in \mathcal U_N(\C)$ and $M=(C_1,...,C_N)\in GL_N(\C)$. Then $\mathcal G(UM)=U\mathcal G(M)$. Indeed let $UM=(UC_1,...UC_N)$. Then using that $U\in U_N(\C)$ preserves the norm and the scalar product, we have 
\begin{align*}
\widetilde {UC_1}&=\cfrac{UC_1}{||UC_1||_2}=U \,\cfrac{C_1}{||C_1||_2}=U \widetilde {C_1}\\
\widetilde {UC_2}&=\cfrac{UC_2-\cfrac{1}{||UC_1||_2^2}\langle UC_1|UC_2 \rangle \,UC_1}{|| UC_2-\cfrac{1}{||UC_1||_2^2}\langle UC_1|UC_2 \rangle \,UC_1||_2}=U\, \cfrac{C_2-\cfrac{1}{||C_1||_2^2}\langle C_1|C_2 \rangle \,C_1}{|| C_2-\cfrac{1}{||C_1||_2^2}\langle C_1|C_2 \rangle \,C_1||_2}=U \widetilde {C_2},
\end{align*}
and so on. So we obtain that for all $1\le k\le N$, $\widetilde {UC_k}=U\widetilde {C_k}$.
\newline
We can now construct a random variable with law a Haar measure on $\mathcal U_N(\C)$. Consider a $N$ complex Ginibre random matrix $M_N$. By Proposition \ref{prop: inv ginibre law}, we know that the law of $M_N$ is invariant by multiplication by a deterministic matrices of $\mathcal U_N(\C)$. 
We consider the random variable $\mathcal G(M_N)$ which is defined almost surely by Proposition \ref{prop: spectre simple}. Then, for $U\in \mathcal U_N(\C)$ we have: $$U \mathcal G(M_N)=\mathcal G(UM_N)\overset{\text{Law}}{=}\mathcal G(M_N),$$ using the fact that the multiplication by $U$ and $\mathcal G$ commute. Hence the law of $\mathcal G(M_N)$ is a Haar probability measure on $\mathcal U_N(\C)$.
\end{proof}

\begin{Remark}
Another more geometric point of view would be to sample uniformly on the sphere a vector, then sample an orthogonal vector to this one on the unit sphere and so on to create $N$ orthogonal vectors. We refer to the first chapter of \cite{compactgroup} for this construction.
\end{Remark}
Since $\mathcal U_N(\C)$ is a compact group one can show some general properties of its Haar probability measures.
\begin{lemma}
Let $\mu$ be a Haar probability measure of $G$ a compact metrizable group. Then for all $A$ measurable subset and for all $g\in G$, we have: 
\newline
$\bullet$ $\mu(A^{-1})=\mu(A)$ with $A^{-1}=\{g^{-1},\, g\in A\}$,
\newline
$\bullet$ $\mu(Ag)=\mu(A)$,
\newline
$\bullet$ If $\nu$ is another Haar probability measure, then $\mu=\nu$.
\end{lemma}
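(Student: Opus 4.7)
The plan is to leverage the uniqueness of the Haar probability measure on $\mathcal U_N(\C)$ (more generally, on a compact metrizable group $G$), which was asserted in the preceding theorem via Kakutani's fixed point theorem. The three bullets will not be proven in the order listed: I will establish right invariance first, then use it to derive the inversion identity, and finally note that uniqueness is nothing but a restatement of the theorem.

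First I would tackle the second bullet (right invariance). Fix $g \in G$ and consider the pushforward measure $\mu_g$ defined by $\mu_g(A) := \mu(Ag)$ for every Borel set $A$; this is well-defined because $h \mapsto hg$ is a homeomorphism of $G$, hence Borel measurable, and $\mu_g$ is clearly a probability measure. For any $h \in G$ and Borel $A$,
\begin{equation*}
\mu_g(hA) \;=\; \mu\bigl((hA)g\bigr) \;=\; \mu\bigl(h(Ag)\bigr) \;=\; \mu(Ag) \;=\; \mu_g(A),
\end{equation*}
where the third equality uses the left invariance of $\mu$. Therefore $\mu_g$ is itself a Haar probability measure on $G$, and the uniqueness part of the theorem forces $\mu_g = \mu$, i.e.\ $\mu(Ag) = \mu(A)$.

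Next, for the first bullet, I would introduce $\tilde \mu(A) := \mu(A^{-1})$, which is a well-defined probability measure because the inversion map $g \mapsto g^{-1}$ is a homeomorphism of $G$. For any $h \in G$ and Borel $A$, using $(hA)^{-1} = A^{-1} h^{-1}$ and now invoking the right invariance just proved,
\begin{equation*}
\tilde \mu(hA) \;=\; \mu\bigl(A^{-1} h^{-1}\bigr) \;=\; \mu(A^{-1}) \;=\; \tilde \mu(A).
\end{equation*}
Thus $\tilde \mu$ is a Haar probability measure on $G$, and by uniqueness $\tilde \mu = \mu$, which is the identity $\mu(A^{-1}) = \mu(A)$. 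Finally, the third bullet is immediate from the theorem: if $\nu$ is another Haar probability measure then $\mu$ and $\nu$ are both proportional to $\mu_G$, and being probability measures the two proportionality constants must equal $1$, so $\mu = \nu = \mu_G$.

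The only genuine difficulty in this plan is the verification that $\mu_g$ and $\tilde \mu$ are honest Borel probability measures on $G$, which reduces to checking that the translation and inversion maps are Borel — a quick consequence of their continuity on the topological group $G$. Everything else is a routine application of the existence/uniqueness black box provided by the Haar theorem.
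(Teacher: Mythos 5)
Your proof is logically valid, but it takes a genuinely different route from the paper, and in a way that partly defeats the purpose of the lemma. The paper's proof is self-contained: starting from only the left-invariance axiom, it applies the Fubini--Tonelli theorem to $\mu(A)=\int_G\mu(gA)\,d\mu(g)$ and the identity $\mathds{1}_{gA}(z)=\mathds{1}_{zA^{-1}}(g)$ to obtain $\mu(A)=\mu(A^{-1})$ directly; right invariance then drops out via $(Ag)^{-1}=g^{-1}A^{-1}$; and uniqueness is proved by running the same Fubini trick with $\nu$ in place of one of the two copies of $\mu$. Nothing in that argument invokes Kakutani or the uniqueness claim of the Haar theorem, which the paper explicitly chose to treat as a black box. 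Your proposal instead feeds the Haar theorem's uniqueness statement back in: you define the translated measure $\mu_g$ and the reflected measure $\tilde\mu$, verify they are left invariant, and conclude they equal $\mu$ by uniqueness; your third bullet is then a verbatim restatement of the theorem. This is correct reasoning, but it makes the lemma essentially a corollary of the black box rather than an independent derivation, and in fact once you grant the theorem (which already asserts both uniqueness \emph{and} right invariance of $\mu_G$) your careful arguments for the first two bullets are also obtainable directly from the theorem's statement. The paper's Fubini argument is the one worth internalizing: it shows that all three bullets follow from left invariance alone, which is exactly what you want if, say, you have built a Haar measure by hand (as the paper does for $\mathcal{U}_N(\C)$ via Gram--Schmidt) and wish to verify its properties without appeal to an existence-and-uniqueness theorem you did not prove.
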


\begin{proof}
$\bullet$ We directly compute using the left invariance of $\mu$ and the Fubini-Tonelli theorem: 
\begin{equation}
\label{eq haar calcul}
\begin{split}
\mu(A)&=\int_G \mu(gA) d\mu(g)\\
&=\int_G\int_G \mathds {1}_{gA}(z)d\mu(z)d\mu(g)\\
&=\int_G\int_G \mathds {1}_{zA^{-1}}(g)d\mu(g)d\mu(z)\\
&=\int_G \mu(zA^{-1})d\mu(z)\\
&=\mu(A^{-1}).
\end{split}
\end{equation}

$\bullet$ Using the previous computation and the left invariance of $\mu$ yields $$\mu(Ag)=\mu((Ag)^{-1})=\mu(g^{-1}A^{-1})=\mu(A^{-1})=\mu(A).$$

$\bullet$ We do the same computation as in \eqref{eq haar calcul} using this time $\nu$ we get that for every $B$ measurable:
\begin{equation}
\label{eq haar calcul 2}
\begin{split}
\mu(B)&=\int_G \mu(gB) d\nu(g)\\
&=\int_G\int_G \mathds {1}_{gB}(z)d\mu(z)d\nu(g)\\
&=\int_G\int_G \mathds {1}_{zB^{-1}}(g)d\nu(g)d\mu(z)\\
&=\int_G \nu(zB^{-1})d\mu(z)\\
&=\nu(B^{-1})=\nu(B).
\end{split}
\end{equation}
\end{proof}

As a consequence of the existence of the Haar measure on $\mathcal U_N(\C)$, we can give the law of the eigenvectors of a $N$ complex Wigner matrix matrix.

\begin{prop}
\label{prop trigo unitaire diagonale}
There exists a measure $\mu_{\Lambda}$ on $\R^N$ such that if $\Lambda$ is a diagonal matrix such that $(\Lambda_{1,1},..., \Lambda_{N,N})$ has the law $\mu_\Lambda$ and $U$ is a random variable independent of $\Lambda$ whose law is  $\mu_{\mathcal U_N(\C)}$ then $U\Lambda U^*$ is a $N$ complex Wigner matrix.
\end{prop}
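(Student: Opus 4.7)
The plan is to read off $\mu_\Lambda$ from a measurable diagonalization of a GUE matrix, and then to use the left-invariance of the Haar measure together with the unitary conjugation-invariance of the GUE law (Proposition~\ref{prop:invarianceloiunitaire}) to verify the claim.

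First, let $M$ be a $N$ complex Wigner matrix. By Proposition~\ref{prop: spectrum simple}, almost surely the spectrum of $M$ is simple, so on this full-measure event there is a diagonalization $M = V_0 \Lambda_0 V_0^*$ with $V_0 \in \mathcal U_N(\C)$ and $\Lambda_0$ real diagonal, unique up to a phase on each column of $V_0$. I would pin down a measurable choice, for instance by arranging the eigenvalues of $\Lambda_0$ in increasing order and normalizing each eigenvector so that its first nonzero coordinate is a positive real number. Then define $\mu_\Lambda$ on $\R^N$ to be the law of the random vector $((\Lambda_0)_{1,1},\ldots,(\Lambda_0)_{N,N})$.

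Next, let $W$ be Haar-distributed on $\mathcal U_N(\C)$, independent of $M$ (hence also of $(V_0,\Lambda_0)$). Conditioning on $(V_0,\Lambda_0) = (v_0,\lambda_0)$, the matrix $W$ is still Haar, and by the left-invariance of the Haar measure, $W v_0$ is then Haar as well. Since this conditional law of $WV_0$ given $(V_0,\Lambda_0)$ does not depend on $(v_0,\lambda_0)$, the matrix $WV_0$ is Haar distributed and independent of $(V_0,\Lambda_0)$; in particular, it is independent of $\Lambda_0$.

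To conclude, I would apply Proposition~\ref{prop:invarianceloiunitaire}: $WMW^*$ has the same law as $M$, hence is a $N$ complex Wigner matrix. But $WMW^* = (WV_0)\Lambda_0 (WV_0)^*$, and the previous paragraph shows that the pair $(WV_0,\Lambda_0)$ has the same joint law as the pair $(U,\Lambda)$ appearing in the statement, namely a Haar unitary together with an independent diagonal matrix whose diagonal entries have law $\mu_\Lambda$. Since the law of $U\Lambda U^*$ depends only on this joint law, $U\Lambda U^*$ is a $N$ complex Wigner matrix. The only real obstacle is the measurable selection of $V_0$ on the simple-spectrum event; any explicit convention of the type above is enough to make $(V_0,\Lambda_0)$ a genuine random element, so this is a technicality rather than a conceptual difficulty.
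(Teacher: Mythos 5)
Your proof is correct and follows essentially the same strategy as the paper's: measurably diagonalize a Wigner matrix as $V_0\Lambda_0 V_0^*$, multiply $V_0$ by an independent Haar unitary to produce a Haar unitary independent of $\Lambda_0$, and then invoke conjugation-invariance of the Wigner law to conclude. One small slip: to argue that $Wv_0$ is Haar you need \emph{right}-invariance of the Haar measure (since $v_0$ multiplies $W$ on the right), not left-invariance as stated; this holds because Haar measure on a compact group is bi-invariant, as shown in the paper's lemma on Haar measures.
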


\begin{proof}
Let $X$ be a $N$ complex Wigner matrix. By the spectral theorem we can find $W \in \mathcal U_N(\C)$ and $\Lambda$ a real diagonal matrix which are random variables and such that $X= W\Lambda W^*$. Let $V$ be a matrix distributed according to $\mu_{\mathcal U_N(\C)}$ independent of $W$ and $\Lambda$. Then the couple $(\Lambda,U:=\Lambda, VW)$ satisfies the properties of the proposition. 
\newline
Indeed we first prove that $U\Lambda U^*\equalaw X$. Let $f$ be a bounded continuous function, then we have: 
\begin{align*}
\E(f(U\Lambda U^*))&=\E(f(V(W\Lambda W^*)V^*))\\
&\hspace{-0.7cm}\overset{\text{Independence}}{=}\int_{\mathcal U_N(\C)}\E(f(\nu W\Lambda W^*\nu^*))d\mu_{\mathcal U_N(\C)}(\nu)\\
&=\int_{\mathcal U_N(\C)}\E(f(\nu X\nu^*))d\mu_{\mathcal U_N(\C)}(\nu)\\
&=\int_{\mathcal U_N(\C)}\E(f(X))d\mu_{\mathcal U_N(\C)}(\nu)=\E(f(X)),
\end{align*}
where we used the invariance of the law of $X$ by a unitary change of basis. It proves the first point. 
\newline
Secondly we must look at the the law of $(\Lambda,U)$. Let $f,g$ two bounded continuous functions, then we have: 
\begin{align*}
\E(f(U)g(\Lambda))&=\E(f(VW)g(\Lambda))\\
&=\E[\E(f(VW)g(\Lambda)|W,\Lambda)]\\
&\hspace{-0.7cm}\overset{\text{Independence}}{=}\E\left[\int_{\mathcal U_N(\C)}f(\nu W)g(\Lambda)d\mu_{\mathcal U_N(\C)}(\nu)\right]\\
&\hspace{-0.2cm}\overset{\text{Haar}}{=}\E\left[g(\Lambda)\int_{\mathcal U_N(\C)}f(\nu)d\mu_{\mathcal U_N(\C)}(\nu)\right]\\
&=\int_{\mathcal U_N(\C)}f(\nu)d\mu_{\mathcal U_N(\C)}(\nu)\E(g(\Lambda)).
\end{align*}
It yields that $\Lambda$ and $U$ are independent and that the law of $U$ is the Haar measure in $\mathcal U_N(\C)$.
\end{proof}

\paragraph{Eigenvalues} We consider $(U,\Lambda)$ as in Proposition \ref{prop trigo unitaire diagonale}. The goal is to find the law $\mu_\Lambda$ of $\Lambda$. 
\newline
\tab Fix $U_0\in \mathcal U_N(\C)$, $\Lambda_0$ a diagonal matrix and $M_0\in\mathcal H_N(\C)$ such that $M_0=U_0\Lambda_0 U_0^*$. We suppose that the spectrum of $M_0$ is simple (which shall be almost surely true for a $N$ complex Wigner matrix by Proposition \ref{prop: spectrum simple}). 
\newline
To find the law of the eigenvalues we shall use a change of variables in the neighbourhood of $M_0$. 
\newline
Let us give some notations and basic linear algebra results to do this change of variables. $\mathcal H_N(\C)$ is a $\R$ vectorial space of dimension $N^2$.
$\R^N$ is a $\R$ vectorial space of dimension $N$. In this section, let $\mathcal A_{N,0}$ be the set of anti hermitian matrices of $M_N(\C)$ with null diagonal, it is a $\R$ vectorial space of dimension $N(N-1)$. 
\newline
We define $\phi: \R^N\times A_{N,0}\to \mathcal H_N(\C)$  by: $$\phi(\Lambda, A)=U_0\exp(A)(\Lambda_0+\text{Diag}(\Lambda))\exp(-A)U_0^*, $$ where $\text{Diag}(X)$ is the diagonal matrix such that $\text{Diag}(X)_{i,i}=X_i$ for all $1\le i\le N$.
\newline
We have that $\phi(0,0)=M_0$. We shall compute the differential at (0,0) of $\phi$ and then use implicit function theorem to inverse locally around this point the application $\phi$. We first observe that the dimension as a $\R$ vector space of $\R^N\times A_{N,0}$ is $N^2$ which is the dimension of $\mathcal H_N(\C)$ as a $\R$ vector space which makes sense.
\begin{prop}
$\phi$ is differentiable at the point $0:=(0,0)$ and we have: $$d\phi_{0}(\Lambda, A)=U_0(\emph{Diag}(\Lambda)+A\Lambda_0-\Lambda_0A)U_0^*.$$
\end{prop}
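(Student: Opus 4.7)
The map $\phi$ is the composition of three smooth operations: the matrix exponential $A\mapsto\exp(A)$ (and its inverse $A\mapsto\exp(-A)$), the linear embedding $\Lambda\mapsto\Lambda_0+\text{Diag}(\Lambda)$, and the two-sided multiplication by the constant unitary $U_0$ and $U_0^*$. Each of these is smooth on its domain, so $\phi$ is itself smooth (in particular differentiable) and the differential at $(0,0)$ is obtained by the chain rule. The plan is simply to carry out the first-order Taylor expansion of each factor and collect terms.

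First, I would recall that near $A=0$ we have $\exp(A)=I+A+O(\|A\|^2)$ and $\exp(-A)=I-A+O(\|A\|^2)$, while the map $\Lambda\mapsto\Lambda_0+\text{Diag}(\Lambda)$ is affine with derivative $\Lambda\mapsto\text{Diag}(\Lambda)$. Plugging in:
\begin{align*}
\exp(A)(\Lambda_0+\text{Diag}(\Lambda))\exp(-A)
&=(I+A)(\Lambda_0+\text{Diag}(\Lambda))(I-A)+o(\|(\Lambda,A)\|)\\
&=\Lambda_0+\text{Diag}(\Lambda)+A\Lambda_0-\Lambda_0 A+o(\|(\Lambda,A)\|),
\end{align*}
where the omitted terms are either $O(\|A\|^2)$, $O(\|A\|\cdot\|\Lambda\|)$, or $O(\|A\|^2\|\Lambda\|)$ and hence $o(\|(\Lambda,A)\|)$.

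Finally, I would conjugate by $U_0$ on the left and $U_0^*$ on the right. Since this conjugation is $\R$-linear, it commutes with taking the differential, and using $\phi(0,0)=U_0\Lambda_0 U_0^*$ we conclude
$$\phi(\Lambda,A)-\phi(0,0)=U_0\bigl(\text{Diag}(\Lambda)+A\Lambda_0-\Lambda_0 A\bigr)U_0^*+o(\|(\Lambda,A)\|),$$
which gives the announced formula. As a sanity check, the right-hand side is indeed hermitian: $\text{Diag}(\Lambda)$ is real diagonal, and since $A\in\mathcal A_{N,0}$ is anti-hermitian and $\Lambda_0$ is real diagonal, $(A\Lambda_0-\Lambda_0 A)^*=\Lambda_0 A^*-A^*\Lambda_0=-\Lambda_0 A+A\Lambda_0$, so the bracket lies in $\mathcal H_N(\C)$ as required. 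There is no real obstacle here; the only thing to be careful about is to track which terms are genuinely first-order in $(\Lambda,A)$ and which are of higher order before conjugating by $U_0$.
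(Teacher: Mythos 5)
Your proposal is correct and takes essentially the same route as the paper: expand $\exp(\pm A)$ to first order, multiply out, drop higher-order terms, and conjugate by $U_0$. The closing sanity check that the bracket lands in $\mathcal H_N(\C)$ is a nice addition not present in the paper, but the core computation is identical.
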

\begin{proof}
Let $\Lambda$ and $A$  small elements of  $\R^N$ and  $A_{N,0}$. We have: 
\begin{align*}
\phi(\Lambda,A)&=U_0\exp(A)(\Lambda_0+\text{Diag}(\Lambda))\exp(-A)U_0^*\\
&=U_0(I_N+A)(\Lambda_0+\text{Diag}(\Lambda))(I_N-A)U_0^*+o((\Lambda,A))\\
&=U_0(\Lambda_0+\text{Diag}(\Lambda)+A\Lambda_0)(I_N-A)U_0^*+o((\Lambda,A))\\
&=U_0(\Lambda_0+\text{Diag}(\Lambda)+A\Lambda_0-\Lambda_0A)U_0^*+o((\Lambda,A))\\
&=\phi(0,0) + U_0(\text{Diag}(\Lambda)+A\Lambda_0-\Lambda_0A)U_0^*+o((\Lambda,A))
\end{align*}
Hence, $\phi$ is differentiable at $(0,0)$ and we have $d\phi_{0}(\Lambda, A)=U_0(\text{Diag}(\Lambda)+A\Lambda_0-\Lambda_0A)U_0^*$.
\end{proof}
Now we need to compute the determinant of the linear map  $d\phi_0$ and prove that this determinant is non null so that by the implicit function theorem we can inverse this map locally around $0$.
\newline
Let us remark that $d\phi_0$ is the composition of two linear maps: $\psi_{U_0}:=\mathcal H_N(\C)\to\mathcal H_N(\C)$ defined by $\psi_{U_0}(X)=U_0 XU_0^*$ and $\Psi_{\Lambda_0}:= \R^N\times A_{N,0}\to \mathcal H_N(\C)$ defined by $\gamma(\Lambda,A)=\text{Diag}(\Lambda)+A\Lambda_0-\Lambda_0A$. The notations $\psi_{U_0}$ and $\Psi_{\Lambda_0}$ will only be used in this section. 
\newline
We already proved that $|\det(\psi_{U_0})|=1$ in Lemma \ref{lemma: det 1}.
It remains to compute the determinant of $\Psi_{U_0}$.
\begin{lemma}
\label{lemma det 2}
The determinant of $\Psi_{\Lambda_0}$ is: $$\det(\Psi_{\Lambda_0})=\prod_{k<j}(\lambda_0(j)-\lambda_0(k))^2=\Delta_N(\Lambda_0)^2,$$
with $\Delta_N(\Lambda_0)=\prod_{k<j}(\lambda_0(j)-\lambda_0(k))$ the Vandermonde determinant.
\end{lemma}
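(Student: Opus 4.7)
The plan is to compute the determinant of $\Psi_{\Lambda_0}$ by expressing its matrix in well-chosen bases of the source $\R^N\times\mathcal A_{N,0}$ and of the target $\mathcal H_N(\C)$, both of which have real dimension $N^2$. The crucial observation is that when $\Lambda_0$ is diagonal with eigenvalues $\lambda_0(1),\dots,\lambda_0(N)$, the commutator has entries
\[
(A\Lambda_0-\Lambda_0 A)_{k,\ell}=\bigl(\lambda_0(\ell)-\lambda_0(k)\bigr)\,A_{k,\ell},
\]
so in particular it has a zero diagonal (matching the diagonal term $\mathrm{Diag}(\Lambda)$, which alone fills the diagonal of the image), and the off-diagonal structure is diagonalised pair by pair.

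For the source I would take the standard basis $(e_k)_{1\le k\le N}$ of $\R^N$ together with, for each pair $1\le i<j\le N$, the two vectors $A^{(1)}_{i,j}:=\Delta_{i,j}-\Delta_{j,i}$ and $A^{(2)}_{i,j}:=i(\Delta_{i,j}+\Delta_{j,i})$, which form a basis of $\mathcal A_{N,0}$. For the target I would take the basis of $\mathcal H_N(\C)$ given in Lemma \ref{lemma:dimensionHermitienne}, namely $(\Delta_{k,k})_{1\le k\le N}$, $(\Delta_{i,j}+\Delta_{j,i})_{i<j}$ and $(i(\Delta_{i,j}-\Delta_{j,i}))_{i<j}$.

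Then a direct computation with the formula above gives
\[
\Psi_{\Lambda_0}(e_k,0)=\Delta_{k,k},\qquad
\Psi_{\Lambda_0}(0,A^{(1)}_{i,j})=\bigl(\lambda_0(j)-\lambda_0(i)\bigr)(\Delta_{i,j}+\Delta_{j,i}),
\]
\[
\Psi_{\Lambda_0}(0,A^{(2)}_{i,j})=\bigl(\lambda_0(j)-\lambda_0(i)\bigr)\,i(\Delta_{i,j}-\Delta_{j,i}).
\]
Hence in the chosen bases the matrix of $\Psi_{\Lambda_0}$ is diagonal: the $N$ diagonal entries coming from $\R^N$ all equal $1$, and each pair $i<j$ contributes two diagonal entries both equal to $\lambda_0(j)-\lambda_0(i)$. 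Taking the product yields
\[
\det(\Psi_{\Lambda_0})=\prod_{i<j}\bigl(\lambda_0(j)-\lambda_0(i)\bigr)^2=\Delta_N(\Lambda_0)^2,
\]
which is the claimed formula.

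The only point requiring some care is bookkeeping: one must check that the chosen bases are genuine $\R$-bases of the correct spaces (in particular that $(A^{(1)}_{i,j},A^{(2)}_{i,j})_{i<j}$ really spans $\mathcal A_{N,0}$, using that an anti-Hermitian matrix has purely imaginary diagonal which is forced to vanish by the "null diagonal" condition), and that $\Psi_{\Lambda_0}$ maps into $\mathcal H_N(\C)$ (i.e.\ that the commutator $A\Lambda_0-\Lambda_0 A$ is Hermitian when $A\in\mathcal A_{N,0}$ and $\Lambda_0$ is real diagonal), which is the verification making the statement dimensionally consistent. Once these points are settled, no further calculation is needed, as the block-diagonal structure of $\Psi_{\Lambda_0}$ makes the determinant transparent.
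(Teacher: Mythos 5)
Your proof is correct and follows essentially the same route as the paper: both express $\Psi_{\Lambda_0}$ in the identical pair of bases (standard basis of $\R^N$; $\Delta_{i,j}-\Delta_{j,i}$ and $i(\Delta_{i,j}+\Delta_{j,i})$ for $\mathcal A_{N,0}$; $\Delta_{k,k}$, $\Delta_{i,j}+\Delta_{j,i}$, $i(\Delta_{i,j}-\Delta_{j,i})$ for $\mathcal H_N(\C)$), observe that the matrix is diagonal with each pair $i<j$ contributing two entries equal to $\lambda_0(j)-\lambda_0(i)$, and read off the determinant. The only stylistic difference is that you spell out the general commutator entry formula $(A\Lambda_0-\Lambda_0 A)_{k,\ell}=(\lambda_0(\ell)-\lambda_0(k))A_{k,\ell}$ and explicitly flag the two bookkeeping checks (that the lists are genuine bases, and that the commutator lands in $\mathcal H_N(\C)$), which the paper leaves implicit.
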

\begin{proof}
This proof is a basic linear algebra argument. Let $(e_k)_{1\le k\le N}$ the canonical basis of $\R^N$, we consider $((\Delta_{k,j}-\Delta_{j,k})_{1\le k<j\le N},  (i(\Delta_{k,j}+\Delta_{j,k}))_{1\le k<j\le N})$ as a basis of $A_{N,0}$ and $((\Delta_{k,k})_{k\le N}, (\Delta_{k,j}+\Delta_{j,k})_{1\le k<j\le N},  (i(\Delta_{k,j}-\Delta_{j,k}))_{1\le k<j\le N})$ a basis of $\mathcal H_N(\C)$.
\newline
First, for all $1\le k\le N$ we have that $\Psi_{\Lambda_0}((e_k,0))=\Delta_{k,k}$.
\newline
Moreover we can compute for $1\le k<j\le N$: $$\Psi_{\Lambda_0}(0, \Delta_{k,j}-\Delta_{j,k})=(\Delta_{k,j}-\Delta_{j,k})\Lambda_0-\Lambda_0(\Delta_{k,j}-\Delta_{j,k})=(\Lambda_0(j)-\Lambda_0(k))(\Delta_{k,j}+\Delta_{j,k}),$$ and $$\Psi_{\Lambda_0}(0, i(\Delta_{k,j}+\Delta_{j,k}))=(i(\Delta_{k,j}+\Delta_{j,k}))\Lambda_0-\Lambda_0(i(\Delta_{k,j}+\Delta_{j,k}))=i(\Delta_{k,j}-\Delta_{j,k})(\Lambda_0(j)-\Lambda_0(k)).$$
In these basis $\Psi_{\Lambda_0}$ is diagonal and we can directly compute its determinant: $$\det(\Psi_{\Lambda_0})=\prod_{k<j}(\lambda_0(j)-\lambda_0(k))^2.$$
\end{proof}
We can now compute the Jacobian of $\phi$ near $0$.
\begin{prop}
\label{prop changement de variable}
$\phi$ is $C^1$ local diffeomorphism around $0$ and $|\det(d\phi_0)|=\Delta_N(\Lambda_0)^2$.
\end{prop}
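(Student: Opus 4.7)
The plan is to exploit the factorization $d\phi_0 = \psi_{U_0}\circ \Psi_{\Lambda_0}$ which was already isolated in the preceding paragraph, and then apply the inverse function theorem. So the proof reduces to two essentially bookkeeping tasks: showing $\phi$ is $C^1$, and assembling the determinant from the two lemmas.

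First I would check regularity. The map $A\mapsto \exp(A)$ is smooth on $M_N(\C)$, matrix multiplication is bilinear hence smooth, and $\Lambda\mapsto \mathrm{Diag}(\Lambda)$ is linear. Therefore $\phi(\Lambda,A)=U_0\exp(A)(\Lambda_0+\mathrm{Diag}(\Lambda))\exp(-A)U_0^*$ is $C^\infty$ on the $\R$-vector space $\R^N\times \cA_{N,0}$ with values in $\cH_N(\C)$. In particular it is $C^1$, so it is enough to invoke the inverse function theorem at the point $0$.

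Next I would compute the Jacobian. By the previous proposition $d\phi_0(\Lambda,A)=\psi_{U_0}(\Psi_{\Lambda_0}(\Lambda,A))$, so
\[
\det(d\phi_0) = \det(\psi_{U_0})\cdot \det(\Psi_{\Lambda_0}).
\]
Lemma \ref{lemma: det 1} yields $|\det(\psi_{U_0})|=1$, and Lemma \ref{lemma det 2} gives $\det(\Psi_{\Lambda_0})=\Delta_N(\Lambda_0)^2$. Combining them,
\[
|\det(d\phi_0)| = \Delta_N(\Lambda_0)^2.
\]

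Finally, since $M_0$ has a simple spectrum by assumption, the eigenvalues $(\lambda_0(1),\dots,\lambda_0(N))$ are pairwise distinct, hence $\Delta_N(\Lambda_0)\ne 0$ and $d\phi_0$ is a linear isomorphism between two $\R$-vector spaces of the same dimension $N^2$ (as noted earlier). The inverse function theorem then provides an open neighbourhood of $0$ on which $\phi$ is a $C^1$ diffeomorphism onto its image in $\cH_N(\C)$, finishing the proposition. There is no real obstacle here; the only subtle point is having carefully set up the domain $\R^N\times \cA_{N,0}$ so that it has exactly the same real dimension as $\cH_N(\C)$, which makes the inverse function theorem directly applicable once non-vanishing of the determinant is known.
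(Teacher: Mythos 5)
Your proof is correct and follows the same route as the paper: factor $d\phi_0 = \psi_{U_0}\circ\Psi_{\Lambda_0}$, apply Lemma~\ref{lemma: det 1} and Lemma~\ref{lemma det 2}, and use the simple-spectrum hypothesis to get a nonvanishing Jacobian. Your spelling out of the smoothness of $\phi$ and the correct invocation of the inverse function theorem (the paper loosely says ``implicit function theorem'') are welcome extra care, but the substance is the same.
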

\begin{proof}
This is an immediate consequence of the fact $d\phi_0=\psi_{U_0}\circ\Psi_{\Lambda_0}$ and Lemma \ref{lemma: det 1} and Lemma \ref{lemma det 2}. Since $\Delta_N(\Lambda_0)\ne 0$ by hypothesis on $M_0$ (simple spectrum), the implicit function theorem yields that $\phi$ is locally invertible around $0$.
\end{proof}

Thanks to this change of variable we can compute the law of the eigenvalues. 
\begin{theorem}
\label{thm:law of spectrum}
The law of the exchangeable spectrum in $\R^N$ of a $N$ complex Wigner matrix has a density with respect to the Lebesgue measure which is proportional to $$\Lambda=(\lambda_1,...,\lambda_N)\mapsto \Delta_N(\Lambda)^2\exp\left(-\cfrac{\Tr(\Lambda^2)}{2}\right)=\prod_{1\le j<k\le N}(\lambda_k-\lambda_j)^2\exp\left(-\cfrac{\sum_{k=1}^N\lambda_k^2}{2}\right).$$
\end{theorem}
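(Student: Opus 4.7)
The strategy is to identify the unknown law $\mu_\Lambda$ appearing in Proposition \ref{prop trigo unitaire diagonale} by computing $\E[f(\mathrm{spec}(X))]$ in two ways for a bounded continuous symmetric test function $f:\R^N\to\R$. Since $X\equalaw U\Lambda U^*$ with $U$ Haar on $\mathcal U_N(\C)$ independent of $\Lambda$, and since $\mathrm{spec}(U\Lambda U^*)$ equals the diagonal of $\Lambda$ as a multiset, one side gives $\E[f(\mathrm{spec}(X))]=\int_{\R^N}f\,d\mu_\Lambda$. Proposition \ref{prop: loi Gue} gives the other side as
$$\E[f(\mathrm{spec}(X))]=c_N\int_{\mathcal H_N(\C)} f(\mathrm{spec}(M))\,e^{-\Tr(M^2)/2}\,dM.$$
Matching the two after a change of variables will reveal the density of $\mu_\Lambda$.

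\textbf{Key step.} To transform the right-hand side I would use the local diffeomorphism $\phi$ of Proposition \ref{prop changement de variable}. On the open set of simple-spectrum Hermitian matrices (of full Lebesgue measure by Proposition \ref{prop: spectrum simple}), each $M_0=U_0\Lambda_0 U_0^*$ admits a chart $\phi$ from $\R^N\times A_{N,0}$ with Jacobian $\Delta_N(\Lambda_0)^2$ at the base point; the $\Lambda$-variable perturbs the eigenvalues while $A$ encodes transverse perturbations of $U_0$ modulo its diagonal unitary stabilizer. Patching these charts, using $\Tr((U\Lambda U^*)^2)=\Tr(\Lambda^2)$ and integrating out the unitary directions against the Haar measure on $\mathcal U_N(\C)$, one obtains
$$c_N\int_{\mathcal H_N(\C)} f(\mathrm{spec}(M))\,e^{-\Tr(M^2)/2}\,dM = C\int_{\R^N} f(\Lambda)\,\Delta_N(\Lambda)^2 e^{-\Tr(\Lambda^2)/2}\,d\Lambda,$$
where $C>0$ absorbs the Haar volume of $\mathcal U_N(\C)$, the volume of the diagonal unitary stabilizer, and the $N!$-fold permutation redundancy. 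Comparing with $\int f\,d\mu_\Lambda$ yields the claimed density up to normalization.

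\textbf{Main obstacle.} The delicate point is the patching: Proposition \ref{prop changement de variable} furnishes only a local Jacobian, and promoting it to the displayed global identity requires care. The cleanest route, avoiding the full Weyl integration formula, is to exploit the $\mathcal U_N(\C)$-invariance of the Gaussian density (Proposition \ref{prop:invarianceloiunitaire}): averaging over a Haar-distributed $U$ and applying Fubini decouples the $A$-dependence from the $\Lambda$-dependence, so that after the local change of variable the remaining integration over the unitary group produces the finite multiplicative constant $C$, while the $\Lambda$-integrand has already absorbed all eigenvalue dependence. Once this step is executed, exchangeability of the spectrum means no further symmetrization is needed, and we read off $d\mu_\Lambda(\Lambda)\propto\Delta_N(\Lambda)^2 e^{-\Tr(\Lambda^2)/2}\,d\Lambda$, which is exactly the density in the statement.
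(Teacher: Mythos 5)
There is a genuine gap in the ``key step.'' Your plan is the \emph{global} route: write $\E[f(\mathrm{spec}(X))] = c_N\int_{\mathcal H_N(\C)} f(\mathrm{spec}(M))e^{-\Tr(M^2)/2}\,dM$ and then convert the right-hand side into an integral over $\R^N\times\mathcal U_N(\C)$ by a global change of variables, integrating out the unitary factor. But Proposition \ref{prop changement de variable} only provides a \emph{local} diffeomorphism near one fixed $M_0=U_0\Lambda_0 U_0^*$. Promoting this to the displayed global identity is precisely the Weyl integration formula for $(\mathcal H_N(\C),\mathcal U_N(\C))$, which requires quotienting by the diagonal torus stabilizer, handling the $N!$-fold permutation ambiguity, and showing the images of the charts tile $\mathcal H_N(\C)$ up to a null set --- none of which your proposal actually executes. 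Your suggested remedy, ``averaging over a Haar-distributed $U$ and applying Fubini decouples the $A$-dependence from the $\Lambda$-dependence,'' does not substitute for this: the $\mathcal U_N(\C)$-invariance of the Gaussian density is a symmetry of the \emph{integrand}, not a mechanism for globalizing a chart of the \emph{domain}. You correctly flag patching as the obstacle but then wave it away without a concrete argument.

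The paper's actual proof avoids all of this by staying local. Starting from Proposition \ref{prop trigo unitaire diagonale}, which gives $X\equalaw U\Lambda U^*$ with $U$ Haar and $\Lambda$ \emph{independent} of $U$, it fixes $(\Lambda_0,U_0)$ and exploits independence to write
\begin{equation*}
\E\bigl[f(\Lambda)\,\mathds 1_{\|\Lambda-\Lambda_0\|\le\varepsilon}\bigr]
= \frac{1}{\mP\bigl(U\in U_0\exp(A_{N,0}\cap B(0,\varepsilon))\bigr)}\,\E\bigl[f(X)\,\mathds 1_{X\in \mathrm{Im}(\phi_\varepsilon)}\bigr],
\end{equation*}
since multiplying the left side by $\mP(U \text{ near } U_0)$ and using independence reconstructs the joint event $\{X\in\mathrm{Im}(\phi_\varepsilon)\}$. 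The right-hand side is now an integral supported on a single chart, so the local change of variables applies directly; letting $\varepsilon\to 0$ and matching the leading-order asymptotics of both sides (one giving $g_\Lambda(\Lambda_0)$, the other giving $|\det d\phi_0|\,e^{-\Tr(\Lambda_0^2)/2}=\Delta_N(\Lambda_0)^2 e^{-\Tr(\Lambda_0^2)/2}$) identifies the density pointwise. The independence of $\Lambda$ and $U$ is what makes the localization work, and it is precisely the ingredient your proposal does not use at the point where it would be needed. If you want to pursue the global route instead, you would need to carry out the Weyl integration argument in full --- the paper explicitly notes this is possible but deliberately chooses not to do it.
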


\begin{Remark}
More explicitly, the previous theorem said that there exists a constant of normalization $c_N$ such that for every symmetric, bounded or positive measurable functions $F$ we have $$\E(F(\Lambda_1,...,\Lambda_N))=c_N\int_{\R^N}F(x_1,...,x_N)\prod_{i<j}(x_i-x_j)^2\exp\left(-\cfrac{\sum_{i=1}^N|x_i|^2}{2}\right)dx_1...dx_N,$$ if $\Lambda=(\Lambda_1,...,\Lambda_N)$ is the spectrum of a $N$ complex Wigner matrix.
\end{Remark}

\begin{proof}
As explained at the beginning of this section the goal is to find the law $\mu_\Lambda$ on $\R^N$ that matches with Proposition \ref{prop trigo unitaire diagonale}. We look for $\mu_\Lambda$ such that $\mu_\Lambda$ has a density $g_\Lambda$ with respect to the Lebesgue measure on $\R^N$. This density shall be a  symmetric function since we look at the eigenvalues as an exchangeable vector of $\R^N$ and shall vanish when at least two of its arguments are equal since the spectrum of a $N$ complex Wigner matrix is almost surely simple. 
\newline
Let $f$ be a non negative measurable function that only depends on the spectrum of a matrix. We can find $\varepsilon>0$ such that $\phi_\varepsilon: \R^N\cap B(0,\varepsilon)\times A_{N,0}\cap B(0,\varepsilon)\to \mathcal H_N(\C)$ defined by $\phi_\varepsilon(\Lambda,A)=\phi(\Lambda,A)$ is a $C^1$ diffeomorphism onto its image by Proposition \ref{prop changement de variable}.
\newline
We recall by Proposition \ref{prop trigo unitaire diagonale} that $U$ is Haar distributed and independent of $\Lambda$ and is such that $X=U\Lambda U^*$ is a $N$ complex Wigner matrix.
Now we compute: 
\begin{equation}
\begin{split}
\E\left[f(\Lambda)\mathds{1}_{||\Lambda-\Lambda_0||\le\varepsilon}\right]&=\E\left[f(\Lambda)\mathds{1}_{||\Lambda-\Lambda_0||\le\varepsilon}\right]\cfrac{\E(\mathds{1}_{U\in\{U_0\exp(A)\,|\, A\in A_{N,0}\cap B(0,\varepsilon)\}})}{\mP(U\in\{U_0\exp(A)\,|\, A\in A_{N,0}\cap B(0,\varepsilon)\})}\\
&\hspace{-0.7cm}\overset{\text{Independance}}{=}c_\varepsilon\E(f(X)\mathds{1}_{X\in Im(\phi_\varepsilon)}),
\end{split}
\end{equation}
with $c_\varepsilon^{-1}:=\mP(U\in\{U_0\exp(A)\,|\, A\in A_{N,0}\cap B(0,\varepsilon)\})\overset{\text{Haar}}{=}\mP(U\in\{\exp(A)\,|\, A\in A_{N,0}\cap B(0,\varepsilon)\})$.
\newline
Now we use the law of $X$ to compute this term. It gives: $$\E\left[f(\Lambda)\mathds{1}_{||\Lambda-\Lambda_0||\le\varepsilon}\right]=\frac{c_\varepsilon}{Z_N}\int_{\mathcal H_N(\C)}f(X)\mathds{1}_{X\in Im(\phi_\varepsilon)}\exp(-\Tr(X^2)/2)dX, $$ with $Z_N$ the constant of normalization of the law of a $N$ complex Wigner matrix.
\newline
Now we can do the change of variable to have: 
$$\E\left[f(\Lambda)\mathds{1}_{||\Lambda-\Lambda_0||\le\varepsilon}\right]=c_\varepsilon'\int_{\R^N\cap B(0,\varepsilon)\times A_{N,0}\cap B(0,\varepsilon)} f(\Lambda+\Lambda_0)e^{-\Tr(\Lambda+\Lambda_0)^2/2}|\det(\phi_{\Lambda,A})|d\Lambda d\leb(A),$$ with $c'_\varepsilon:=c_\varepsilon/Z_N$.
\newline
To find the exact expression of $g_\Lambda$ we shall let $\varepsilon$ goes to $0$. 
\newline
On one side we have by definition of $g_\Lambda$: 
\begin{equation}
\label{calc:densieigenvalues1}
\E\left[f(\Lambda)\mathds{1}_{||\Lambda-\Lambda_0||\le\varepsilon}\right]=\int_{||\Lambda-\Lambda_0||\le\varepsilon}f(\Lambda)g_\Lambda(\Lambda)d\leb(\Lambda)\underset{\varepsilon\to 0}{\sim} \widetilde {C_N}\, \varepsilon^N f(\Lambda_0)g_\Lambda(\Lambda_0),
\end{equation}
with $\widetilde {C_N}$ a constant that only depend on $N$.
\newline
On the other hand, we have: 
\begin{equation}
\label{calc:densieigenvalues2}
\begin{split}
c_\varepsilon'\int_{\R^N\cap B(0,\varepsilon)\times A_{N,0}\cap B(0,\varepsilon)} &f(\Lambda+\Lambda_0)e^{-\Tr(\Lambda+\Lambda_0)^2/2}|\det(\phi_{\Lambda,A})|d\Lambda d\leb(A)\underset{\varepsilon\to 0}{\sim}\\
&\hspace{-1cm}c_\varepsilon'\,f(\Lambda_0)\exp\left(-\cfrac{\Tr(\Lambda_0)^2}{2}\right)|\det(\phi_{0,0})|\widetilde {C_N'}\,\varepsilon^N\leb(A_{N,0}\cap B(0,\varepsilon))=:\\
&\hspace{3cm}\widetilde {C'_{N,\varepsilon}}\,\varepsilon^N \,f(\Lambda_0)\exp\left(-\cfrac{\Tr(\Lambda_0)^2}{2}\right)\Delta_N(\Lambda_0)^2,
\end{split}
\end{equation}
where $\widetilde {C'_{N,\varepsilon}}$ is a constant that only depend on $N$ and $\varepsilon$.
\newline
Comparing \eqref{calc:densieigenvalues1} and \eqref{calc:densieigenvalues2}, since the result holds for every non positive measurable functions $f$, we deduce that $\widetilde {C'_{N,\varepsilon}}$ necessarily converges when $\varepsilon$ goes to 0 to a certain quantity that only depends on $N$ and we have that the density $g_\Lambda$ is proportional to $$\Lambda_0\mapsto \exp\left(-\cfrac{\Tr(\Lambda_0)^2}{2}\right)\Delta_N(\Lambda_0)^2.$$

\end{proof}

\subsubsection{Ginibre case}

We first recall the Schür decomposition that can be viewed as the counterpart for matrices of $M_N(\C)$ of the spectral theorem for Hermitian matrices. 
\begin{prop}[Schür decomposition]
Let $M\in M_N(\C)$, there exists $U\in \mathcal U_N(\C)$, $N\in M_N(\C)$ a nilpotent upper triangular matrix and $D\in D_N(\C)$ a diagonal matrix such that $M=U(D+N)U^*$. Moreover we have that $\Tr(MM^*)=\Tr(DD^*)+\Tr(NN^*)$.
\end{prop}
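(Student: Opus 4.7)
The plan is to prove existence by induction on the size of the matrix (using that $\C$ is algebraically closed to produce an eigenvalue at each step), and then to derive the trace identity by direct expansion, exploiting the fact that a nilpotent upper triangular matrix has zero diagonal.

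For the induction, let me write $n$ for the size (playing the role of $N$ in the statement) in order to avoid clashing with the nilpotent matrix also called $N$. The base case $n=1$ is trivial: any $M \in M_1(\C) = \C$ is already diagonal, so we take $U = 1$, $D = M$ and $N = 0$. For the inductive step at size $n \geq 2$, the characteristic polynomial of $M$ has a root $\lambda_1 \in \C$ and hence a normalized eigenvector $v_1 \in \C^n$. Using the Gram--Schmidt procedure (as in the construction of the Haar measure earlier in this section), we complete $v_1$ into an orthonormal basis $(v_1, \ldots, v_n)$ of $\C^n$ and define $U_1 := (v_1 \mid \cdots \mid v_n) \in \mathcal U_n(\C)$. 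Since $M v_1 = \lambda_1 v_1$, the first column of $U_1^* M U_1$ is $\lambda_1 e_1$, so this matrix has the block form
$$U_1^* M U_1 = \begin{pmatrix} \lambda_1 & L^T \\ 0 & M' \end{pmatrix},$$
for some $L \in \C^{n-1}$ and $M' \in M_{n-1}(\C)$. Applying the induction hypothesis to $M'$ yields $U' \in \mathcal U_{n-1}(\C)$, a diagonal $D'$ and a nilpotent upper triangular $N'$ with $M' = U'(D' + N') U'^*$. Setting $V := \begin{pmatrix} 1 & 0 \\ 0 & U' \end{pmatrix} \in \mathcal U_n(\C)$ and $U := U_1 V$, we obtain
$$U^* M U = \begin{pmatrix} \lambda_1 & L^T U' \\ 0 & D' + N' \end{pmatrix} = D + N,$$
where $D$ is the diagonal matrix with diagonal $(\lambda_1, D'_{1,1}, \ldots, D'_{n-1,n-1})$ and $N$ is upper triangular with zero diagonal, hence nilpotent.

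For the identity $\Tr(MM^*) = \Tr(DD^*) + \Tr(NN^*)$, the decomposition $M = U(D+N)U^*$ together with the cyclicity of the trace and $U^* U = I_n$ gives
$$\Tr(MM^*) = \Tr\bigl((D+N)(D+N)^*\bigr) = \Tr(DD^*) + \Tr(NN^*) + \Tr(DN^*) + \Tr(ND^*).$$
Since $N$ is upper triangular with zero diagonal, the matrix $N^*$ is lower triangular with zero diagonal. Hence $(DN^*)_{ii} = D_{ii}(N^*)_{ii} = 0$ and $(ND^*)_{ii} = 0$ for every $i$, so the two cross terms have vanishing diagonal and $\Tr(DN^*) = \Tr(ND^*) = 0$, which finishes the argument.

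The only delicate step is the initial block reduction, which genuinely uses that $\C$ is algebraically closed to guarantee the existence of $\lambda_1$; the analogous statement over $\R$ would fail. Everything else is a standard induction plus the straightforward algebraic computation above, so no real obstacle is expected.
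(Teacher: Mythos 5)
Your proof is correct. The paper only offers a one-sentence sketch for the existence part (``triangularize in some basis, then orthogonalize that basis via Gram--Schmidt'') and does not justify the trace identity at all, so your argument genuinely fills in the details. There is a mild difference of route: the paper's sketch is the two-step version (first produce a full flag of $M$-invariant subspaces, i.e.\ a triangularizing basis, then apply Gram--Schmidt to it, using that Gram--Schmidt preserves the flag so $M$ stays upper triangular in the new orthonormal basis), whereas you build the unitary directly by induction, peeling off one eigenvector at a time; both are standard and equivalent, and your recursive form has the advantage of not needing a separate triangularizability lemma. For the trace identity, your computation is right: $\Tr(MM^*)=\Tr\bigl((D+N)(D+N)^*\bigr)$ by unitary invariance, and the cross terms vanish because $D$ is diagonal while $N^*$ has zero diagonal, so $(DN^*)_{ii}=D_{ii}\overline{N_{ii}}=0$ and likewise for $ND^*$.
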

To prove the theorem we can triangularize a matrix in $M_N(\C)$ in a certain base and then orthogonalize this base using the Gram Schmidt algorithm.
\newline
Thanks to the Schür decomposition we can do a similar proof to compute the density of the eigenvalues of a $N$ complex Ginibre matrix as we did for a $N$ complex Wigner matrix.

\begin{theorem}
\label{Thmginibrelaw}
The law of the spectrum in $\C^N$ of a random matrix distributed according to a $N$ complex Ginibre matrix has a density with respect to the complex Lebesgue measure proportional to $$(z_1,...,z_N)\mapsto\exp(-\sum_{i=1}^N|z_i|^2)\prod_{i<j}|z_i-z_j|^2.$$
\end{theorem}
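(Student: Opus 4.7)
The plan is to follow the strategy used for Theorem~\ref{thm:law of spectrum}, with the spectral theorem replaced by the Schür decomposition $M = U(D+N)U^*$ with $U \in \mathcal U_N(\C)$, $D$ diagonal and $N$ strictly upper triangular. Three ingredients transfer directly from the Wigner case: the Ginibre law is invariant by unitary conjugation (Proposition~\ref{prop: inv ginibre law}), the spectrum is almost surely simple (Proposition~\ref{prop: spectre simple}), and crucially the orthogonality identity $\Tr(MM^*) = \Tr(DD^*) + \Tr(NN^*)$ from the Schür proposition, which decouples the Gaussian density along the diagonal and nilpotent directions.

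First, I would establish the Ginibre counterpart of Proposition~\ref{prop trigo unitaire diagonale}: there exist a law on $D_N(\C)\cong \C^N$ and a law on the space $\mathcal T_N(\C)$ of strictly upper triangular complex matrices such that, if $(D, N)$ is drawn from their joint distribution and $U$ is Haar-distributed on $\mathcal U_N(\C)$ and independent of $(D, N)$, then $U(D+N)U^*$ is $N$ complex Ginibre. The proof uses the unitary invariance of the Ginibre law together with an exchange of randomness exactly as in the Wigner case; almost sure simplicity of the spectrum makes the decomposition essentially unique, up to the $N$-dimensional group of diagonal unitary matrices (which is gauge-fixed by requiring the anti-Hermitian parameter $A$ introduced below to have zero diagonal).

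Next, fix $M_0 = U_0(D_0 + N_0)U_0^*$ with $D_0 = \diag(z_1^0, \ldots, z_N^0)$ having distinct entries, and locally parametrize $M_N(\C)$ near $M_0$ by
\[
\phi(D, N, A) = U_0\, e^A (D_0 + D + N_0 + N) e^{-A} U_0^*,\qquad (D, N, A) \in D_N(\C)\times\mathcal T_N(\C)\times A_{N,0}.
\]
The source has total real dimension $2N + N(N-1) + N(N-1) = 2N^2 = \dim_\R M_N(\C)$, and differentiating at the origin gives $d\phi_0(D, N, A) = U_0( D + N + [A, D_0 + N_0]) U_0^*$. Conjugation by $U_0$ has unit absolute Jacobian (the immediate counterpart of Lemma~\ref{lemma: det 1}), so the whole problem reduces to computing the Jacobian of $\Psi(D, N, A) := D + N + [A, D_0 + N_0]$ viewed as a linear map onto $M_N(\C) = D_N(\C) \oplus \mathcal T_N(\C) \oplus \mathcal T_N^-(\C)$.

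Computing this Jacobian is the main obstacle. The matrix of $\Psi$ in the chosen source/target decomposition is block upper-triangular: the $D$-block maps to the diagonal part by the identity, the $N$-block maps to the strictly upper part by the identity, and the only non-trivial diagonal block is $A \mapsto \mathrm{lower}([A, D_0 + N_0])$. On the strictly lower-triangular part, $[A, D_0]$ sends the upper entry $A_{kj}$ (with $k<j$) to $(z_j^0 - z_k^0)\overline{A_{kj}}$ at position $(j,k)$; this is a real-linear map on $\C$ with absolute Jacobian $|z_j^0 - z_k^0|^2$. The remainder $[A, N_0]$ contributes, at position $(j,k)$ with $j>k$, only entries of $A$ at strictly larger ``depth'' than $j-k$, so when we order the pairs $(k,j)$ by $j-k$ the block becomes triangular with exactly the same diagonal. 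Hence
\[
|\det d\phi_0| = \prod_{1\le k<j\le N}|z_j^0 - z_k^0|^2.
\]
With this in hand, I conclude as in Theorem~\ref{thm:law of spectrum}: injecting the Ginibre density $\exp(-\Tr(MM^*)) = \exp(-\sum_i |z_i|^2 - \Tr(NN^*))$ through the change of variables, integrating out the Haar-distributed unitary part and the Gaussian factor in the nilpotent variable $N$ (each producing a constant depending only on $N$), and passing to the limit as the localization window shrinks, leaves the announced density proportional to $\exp(-\sum_i |z_i|^2) \prod_{i<j} |z_i - z_j|^2$ on $\C^N$.
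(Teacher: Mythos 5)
Your proof is correct and follows exactly the route the paper indicates: the paper itself only sketches this argument by stating the Schür decomposition, noting the identity $\Tr(MM^*)=\Tr(DD^*)+\Tr(NN^*)$, and referring the reader to "a similar proof" as in the Wigner case (Theorem~\ref{thm:law of spectrum}). You have filled in the details faithfully, and in particular your Jacobian computation (block upper-triangular structure, then ordering the lower-triangular coordinates by depth $j-k$ so that the $[A,N_0]$ contribution is strictly off-diagonal, leaving the $2\times 2$ real blocks of absolute determinant $|z_j^0-z_k^0|^2$) is the right way to make the $\Delta_N$-squared Jacobian rigorous in the non-normal setting.
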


\subsection{Determinental structure}
Viewing the eigenvalues of $N$ complex Ginbire or Wigner matrices as point processes in $\C$ or $\R$, we shall compute the correlation functions of these point processes. These computations are based on the remark that both densities of the eigenvalues of the two models can be written in a so-called determinental form that make the computations easier using an orthogonal family of polynomials. For the links between random matrices and point processes see for instance \cite{Alicelivre,hough2009zeros,compactgroup,Mehta}.
\newline
 We first detail completely the Ginibre case and then state the same property giving some proofs for the Wigner case.
\subsubsection{Ginibre case}
We recall that the Vandermonde determinant of $(\mu_1,...,\mu_N)$ is given by $\Delta_N(\mu_1,...,\mu_N)=\prod_{k<j}(\mu_j-\mu_k)=\det[\mu_j^{k-1}]_{1\le j,k\le N}$.
\newline
\tab Let $c_N$ be the constant of normalization for the density of the eigenvalues of a matrix distributed according to a $N$ complex Ginibre matrix given in Theorem \ref{Thmginibrelaw}. We first compute its value: 
\begin{align*}
c_N^{-1}&=\int_{\C^N}\exp(-\sum_{i=1}^N|z_i|^2)\prod_{k<j}|z_k-z_j|^2 dz_1...dz_N\\
&=\int_{\C^N}\exp(-\sum_{i=1}^N|z_k|^2)|\Delta_N(z_1,...,z_N)|^2 dz_1...dz_N\\
&=\int_{\C^N}\exp(-\sum_{i=1}^N|z_k|^2)|\det[z_j^{k-1}]_{1\le j,k\le N}|^2 dz_1...dz_N.
\end{align*}
Using the multi linearity of the determinant and a family of $N$ monic polynomial $(P_k)_{0\le k\le N-1}$ such that for all $0\le k\le N-1$ the degree of $P_k$ is $k$, we can write: 
\begin{equation}
\label{comp: computation constant}
\begin{split}
c_N^{-1}&=\int_{\C^N}\exp(-\sum_{i=1}^N|z_i|^2)|\Delta_N(z_1,...,z_N)|^2 dz_1...dz_N\\
&=\int_{\C^N}\exp(-\sum_{i=1}^N|z_i|^2)|\det[P_{k-1}(z_j)]_{1\le j,k\le N}|^2 dz_1...dz_N\\
&=\int_{\C^N}\exp(-\sum_{i=1}^N|z_i|^2)\det[P_{k-1}(z_j)]_{1\le j,k\le N} \,\,\overline{\det[P_{k-1}(z_j)]_{1\le j,k\le N}}dz_1...dz_N\\
&=\sum_{\sigma,\sigma'\in\mathfrak{S}_N}(-1)^{\varepsilon(\sigma)+\varepsilon(\sigma')}\int_{\C^N}\prod_{i=1}^N\exp(-|z_i|^2)\prod_{i=1}^N P_{\sigma(i)-1}(z_i)\overline{P_{\sigma'(i)-1}(z_i)}\,dz_1...z_N \\
&\hspace{-0.3cm}\overset{\text{Fubini}}{=}\sum_{\sigma,\sigma'\in\mathfrak{S}_N}(-1)^{\varepsilon(\sigma)+\varepsilon(\sigma')}\prod_{i=1}^N\int_{\C}\exp(-|z|^2) P_{\sigma(i)-1}(z)\overline{P_{\sigma'(i)-1}(z)}\,dz.
\end{split}
\end{equation}
We see that a good idea would be to choose a family of polynomials that are orthogonal for the scalar product $\langle f|g\rangle=\int_{\C}\exp(-|z|^2)\overline{f(z)}g(z)dz$. 
A such family can be found explicitly.
\begin{lemma}
\label{orthogonality}
Let $\gamma(z):=\pi^{-1}\exp(-|z|^2)$. Then for all $k,l\in\N$, we have: $$\int_\C\cfrac{z^k}{\sqrt{k!}}\cfrac{\overline{z}^l}{\sqrt{l!}}\,\gamma(z)dz=\mathds{1}_{k=l}.$$
\end{lemma}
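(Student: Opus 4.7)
The plan is to evaluate the integral directly by switching to polar coordinates $z = r e^{i\theta}$ with $r \geq 0$ and $\theta \in [0, 2\pi)$. Recall that the complex Lebesgue measure $dz$ was defined in the paper as $dx\,dy$ where $z = x + iy$, so in polar coordinates $dz = r\,dr\,d\theta$. Writing $z^k \overline{z}^l = r^{k+l} e^{i(k-l)\theta}$ and $|z|^2 = r^2$, the integral factorizes as
\[
\int_\C \frac{z^k}{\sqrt{k!}}\,\frac{\overline z^l}{\sqrt{l!}}\,\gamma(z)\,dz \;=\; \frac{1}{\pi \sqrt{k!\,l!}} \left( \int_0^{2\pi} e^{i(k-l)\theta}\,d\theta \right) \left( \int_0^{+\infty} r^{k+l+1} e^{-r^2}\,dr \right).
\]

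The first key step is the angular integral: for $k, l \in \N$ with $k \neq l$, the integer $k - l$ is a nonzero integer, so $\int_0^{2\pi} e^{i(k-l)\theta}\,d\theta = 0$, which immediately gives $0$ as desired.

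When $k = l$, the angular integral equals $2\pi$, and I would then compute the radial integral via the change of variables $u = r^2$, $du = 2r\,dr$, giving
\[
\int_0^{+\infty} r^{2k+1} e^{-r^2}\,dr = \frac{1}{2}\int_0^{+\infty} u^{k} e^{-u}\,du = \frac{\Gamma(k+1)}{2} = \frac{k!}{2}.
\]
Substituting back yields
\[
\frac{1}{\pi \, k!} \cdot 2\pi \cdot \frac{k!}{2} = 1,
\]
which finishes the proof. There is no real obstacle here: the argument is a direct computation, and the only point worth being careful about is the convention for $dz$ as the real Lebesgue measure on $\R^2 \cong \C$, which has been fixed earlier in the paper so that the Jacobian change to polar coordinates is the usual $r\,dr\,d\theta$.
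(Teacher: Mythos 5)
Your proof is correct and follows essentially the same route as the paper: pass to polar coordinates, observe that the angular integral vanishes unless $k=l$, then evaluate the radial integral $\int_0^\infty r^{2k+1}e^{-r^2}\,dr = k!/2$. The only cosmetic difference is that you compute the radial integral via the substitution $u=r^2$ and the Gamma function, while the paper invokes integration by parts; both are standard and equivalent.
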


\begin{proof}
To do these computations, we use the polar coordinate. Indeed, we have:
\begin{align*}
\int_\C z^k\overline{z}^l\gamma(z)dz&=\cfrac{1}{\pi}\int_{\R^2}(x+iy)^k(x-iy)^l\exp(-x^2-y^2)dxdy\\
&=\cfrac{1}{\pi}\int_{\R^+}\int_{[0,2\pi]}(r(\cos(\theta)+i\sin(\theta)))^k(r(\cos(\theta)-i\sin(\theta)))^l\exp(-r^2)rdrd\theta\\
&=\cfrac{1}{\pi}\int_{\R^+}r^{k+l}r\exp(-r^2)dr\int_{[0,2\pi]}\exp(i\theta(k-l))d\theta
\end{align*}
We observe that the angular integral is null if $k\ne l$ and is equal to $2\pi$ if $k=l$. When $k=l$ the radial part can be computed without difficulty by integrating by parts and is equal to $$\int_{\R^+}r^{2k+1}\exp(-r^2)dr=\cfrac{k!}{2}.$$
It yields: $$\int_\C\cfrac{z^k}{\sqrt{k!}}\cfrac{\overline{z}^l}{\sqrt{l!}}\gamma(z)dz=\mathds{1}_{k=l}.$$
\end{proof}

\begin{Remark}
\label{remark:ortho}
For $j\ge 0$, if $\mu_j(z):=\sqrt{\gamma(z)}z^j/j!$, Lemma \ref{orthogonality} says that these functions are orthonormal in $L_2(\C)$ for the Lebesgue measure.
\end{Remark}
Now we can continue the computation \eqref{comp: computation constant} with the specific polynomials $P_j(X)=X^j$ for all $j\in\N$. 
We have: 
\begin{equation}
\label{calculsonstantenorm}
\begin{split}
c_N^{-1}&=\sum_{\sigma,\sigma'\in\mathfrak{S}_N}(-1)^{\varepsilon(\sigma)+\varepsilon(\sigma')}\prod_{i=1}^N\int_{\C}\exp(-|z|^2) P_{\sigma(i)-1}(z)\overline{P_{\sigma'(i)-1}(z)}dz\\
&=\sum_{\sigma\in\mathfrak{S}_N}\prod_{i=1}^N\int_{\C}\exp(-|z|^2) P_{\sigma(i)-1}(z)\overline{P_{\sigma(i)-1}(z)}dz\\
&=\sum_{\sigma\in\mathfrak{S}_N}\prod_{i=1}^N\pi (\sigma(i)-1)!=\pi^N N!\prod_{k=0}^{N-1}k!.
\end{split}
\end{equation}
Hence, we can now give the complete expression of the density of the eigenvalues of a $N$ complex Ginibre matrix.

\begin{theorem}
\label{thm : law spectrum ginibre}
The law of the spectrum in $\C^N$ of a random matrix distributed according to a $N$ complex Ginibre matrix has a density with respect to the complex Lebesgue measure which is equal to: 
\begin{equation}
\label{eq:laweigenvaluesconstant}
\phi_N(z_1,...,z_N):=\cfrac{\prod_{k=1}^N\gamma(z_k)}{\prod_{k=1}^N k!}\prod_{i<j}|z_i-z_j|^2,
\end{equation} 
with $\gamma(z):=\exp(-|z|^2)\pi^{-1}$.
\end{theorem}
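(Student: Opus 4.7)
The plan is to leverage the work already done in the excerpt: Theorem \ref{Thmginibrelaw} gives the density of the spectrum up to a multiplicative constant $c_N$, so the task reduces to computing $c_N^{-1}$ explicitly, and then rewriting the density in the compact form of \eqref{eq:laweigenvaluesconstant}.

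First I would write the Vandermonde factor as a determinant: $\prod_{k<j}|z_k-z_j|^2 = |\det[z_j^{k-1}]_{1 \le j,k \le N}|^2$. Expanding the squared determinant via the Leibniz formula turns the integral defining $c_N^{-1}$ into a double sum over permutations $\sigma, \sigma' \in \mathfrak{S}_N$, which after Fubini factors into a product of one-variable complex Gaussian integrals of the form $\int_{\C}\exp(-|z|^2)\, z^{\sigma(i)-1}\overline{z}^{\sigma'(i)-1}\,dz$. This is precisely the computation carried out in display \eqref{comp: computation constant}, so up to this step I am only transcribing what is there.

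The key simplification is orthogonality: by Lemma \ref{orthogonality}, each factor vanishes unless $\sigma(i) = \sigma'(i)$. Summing over $i$ forces $\sigma = \sigma'$, so the double sum collapses to a single sum indexed by $\sigma \in \mathfrak{S}_N$, and each term equals $\prod_{i=1}^N \pi\,(\sigma(i)-1)!$. Since $\sigma$ is a bijection, this product does not depend on $\sigma$ and equals $\pi^N \prod_{k=0}^{N-1} k!$. Multiplying by $|\mathfrak{S}_N| = N!$ yields
\[
c_N^{-1} = \pi^N\, N! \prod_{k=0}^{N-1} k! \;=\; \pi^N \prod_{k=1}^{N} k!,
\]
using $0! = 1$ to reindex.

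Finally I would substitute back: the density equals
\[
c_N \exp\Bigl(-\sum_{i=1}^N |z_i|^2\Bigr)\prod_{i<j}|z_i-z_j|^2
= \frac{1}{\pi^N \prod_{k=1}^N k!}\,\exp\Bigl(-\sum_{i=1}^N |z_i|^2\Bigr)\prod_{i<j}|z_i-z_j|^2,
\]
and recognizing $\pi^{-N}\exp(-\sum_i|z_i|^2) = \prod_{k=1}^N \gamma(z_k)$ gives exactly $\phi_N$ as in \eqref{eq:laweigenvaluesconstant}. The only subtle point is orthogonality; everything else is bookkeeping. The main potential obstacle is being careful with the two conventions $\prod_{k=0}^{N-1}k!$ versus $\prod_{k=1}^{N}k!$ so that the normalization matches the statement.
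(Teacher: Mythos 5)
Your proof is correct and follows essentially the same route as the paper: invoke Theorem \ref{Thmginibrelaw} for the density up to a constant, compute $c_N^{-1}$ via the Vandermonde-to-determinant expansion and orthogonality (Lemma \ref{orthogonality}) exactly as in displays \eqref{comp: computation constant}--\eqref{calculsonstantenorm}, and substitute back. The reindexing $N!\prod_{k=0}^{N-1}k! = \prod_{k=1}^N k!$ is handled correctly.
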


We can use the Vandermonde formula to give another form of the formula \eqref{eq:laweigenvaluesconstant}.  
Indeed we have: 
\begin{equation}
\label{eq: vandermonde formula ginibre}
\begin{split}
\phi_N(z_1,...,z_N)&=\cfrac{\prod_{k=1}^N\gamma(z_k)}{\prod_{k=1}^N k!}\Delta_N(z_1,...,z_N)\Delta_N(\overline{z_1},...,\overline{z_N})\\
&=\cfrac{\prod_{k=1}^N\gamma(z_k)}{\prod_{k=1}^N k!}\det[z_i^{j-1}]_{1\le i,j\le N}\det[\overline {z_i^{j-1}}]_{1\le i,j\le N}\\
&=\cfrac{\prod_{k=1}^N\gamma(z_k)}{\prod_{k=1}^N k!}\det[z_i^{j-1}]_{1\le i,j\le N}\det[\overline {z_j^{i-1}}]_{1\le i,j\le N}\\
&=\cfrac{\prod_{k=1}^N\gamma(z_k)}{N!}\det\left[\cfrac{z_i^{j-1}}{\sqrt{(j-1)!}}\right]_{1\le i,j\le N}\det\left[\cfrac{\overline {z_j^{i-1}}}{{\sqrt{(i-1)!}}}\right]_{1\le i,j\le N}\\
&=\cfrac{1}{N!}\det[K_N(z_i,z_j)]_{1\le i,j\le N},
\end{split}
\end{equation}
with for all $z\in\C$ and $w\in \C$: $$K_N(z,w):=\sqrt{\gamma(z)\gamma(w)}\sum_{l=0}^{N-1}\cfrac{(z\overline w)^l}{l!}.$$
$K_N$ shall be called the kernel of the point process of the eigenvalues of a $N$ complex Ginibre matrix.
\newline
Let us give some definitions and basic properties about point processes. 
\begin{definition}
The set of locally finite part of $\C$ is the subset of the part of $\C$ defined by: $$P_{lf}^\C=\{Z\subset \C,\, |Z\cap K|<+\infty \,\forall K \text{ compact of }\C\}.$$
We endow $P_{lf}^\C$ with a sigma algebra $\mathcal F_{lf}^\C$ which is the smallest sigma algebra that makes measurable the maps $\phi_{A}: P_{lf}^\C\to (\N,\mathcal P(\N))$ such that $\phi_A(Z)=|Z\cap A|$ for $A$ compact subset of $\C$.
\end{definition}
We can now define a point process.
\begin{definition}
$Z$ is said to be a point process if it is a random variable with value in $(P_{lf}^\C,\mathcal F_{lf}^\C)$.
\end{definition}
\begin{Remark}
A function $Z$ defined on a probability space with value in $(P_{lf}^\C,\mathcal F_{lf}^\C)$ is a random variable if and only if $|Z\cap A|$ is a $(\N,\mathcal P(\N))$ valued random variable for all $A$ compact subset of $\C$. The law of $Z$ is determined by the law of $(|Z\cap A_1|,...,|Z\cap A_k|)$ for all $(A_i)_{i\le k}$ compact subsets of $\C$ for all $k\in\N$. By some standard arguments as the Dynkin lemma we can restrict the previous fact on disjoint compact subsets of $\C$.
\end{Remark}

Let us give a standard example of point process.

\begin{example}
\label{example poisson point process}
The Poisson point process of parameter $\lambda>0$ is the point process such that for all $k\ge 1$ for all $(A_i)_{1\le i\le k}$ disjoint compact subsets of $\C$ we have that: $(|Z\cap A_1|,...,|Z\cap A_k|)\equalaw \emph{Poisson}(\lambda \leb(A_1))\otimes...\otimes \emph{Poisson}(\lambda \leb(A_k)),$  where in this example $\emph{Poisson}(\nu)$ is a standard Poisson random variable of parameter $\nu>0$. 
\end{example}

\begin{definition}
\label{definition:correlation}
We say that the correlation functions of a point process $Z$ are $(\phi_k)_{k\ge 1}$ if for all $k\ge 1$, $\phi_k:\C^k\to\R^+$ is such that for all $B_1,...,B_k$ disjoint compact subsets of $\C$ we have: $$\E(|Z\cap B_1|...|Z\cap B_k|)=\int_{B_1\times...\times B_k} \phi_k(z_1,...,z_k)dz_1...dz_k.$$
\end{definition}
For the Example \ref{example poisson point process} we can compute explicitly the correlation functions.
\begin{example}
For a Poisson point process of parameter $\lambda>0$ the correlation functions are given by: 
$\phi_k(z_1,...,z_k)=\lambda^k$ for all $k\ge 1$.
\end{example}

\begin{Remark}
Pay attention to the fact that correlation functions are not sure to exist. For instance, let $Z=\{0\}$ then $\E(|Z\cap\{0\}|)=1\ne\int_{\{0\}}\phi_1(z)dz=0$.
\end{Remark}
When the point process is given by a random variable in $\C^N$ with a symmetric density with respect to the Lebesgue measure one can express the correlation functions of this point process. 
\begin{prop}
\label{prop: correlation density}
Let $Z=(X_1,...,X_N)$ be $N$ complex random variables such that the law of $(X_1,...X_N)$ has a density with respect to the Lebesgue measure $f(z_1,...,z_N)dz_1...dz_N$ that we suppose symmetric, then the correlation functions of $Z$ are given by $\psi_k=0$ if $k>N$ and if $k\le N$, $$\label{form:correlationcomplexe}\psi_k(z_1,...,z_k)=\cfrac{N!}{(N-k)!}\int_{\C^{N-k}}f(z_1,...,z_N)dz_{k+1}...dz_N.$$
\end{prop}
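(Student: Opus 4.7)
My plan is to reduce the expectation on the left-hand side of Definition \ref{definition:correlation} to an integral by expanding the indicators and exploiting the disjointness of the $B_i$'s together with the symmetry of $f$.

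First, since the law of $(X_1,\dots,X_N)$ has a density with respect to the Lebesgue measure on $\C^N$, the diagonal set $\{(z_1,\dots,z_N) : z_i=z_j \text{ for some } i\neq j\}$ has Lebesgue measure zero, so almost surely the $X_i$'s are pairwise distinct. Consequently, for any compact $B\subset \C$ we can write $|Z\cap B|=\sum_{j=1}^N \mathds 1_{B}(X_j)$ almost surely. This is the only real subtlety and it just makes the set/multiset distinction irrelevant.

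Next, for disjoint compacts $B_1,\dots,B_k$, I would expand
\[
\prod_{i=1}^k |Z\cap B_i| \;=\; \sum_{j_1,\dots,j_k=1}^N \prod_{i=1}^k \mathds 1_{B_i}(X_{j_i}).
\]
The key observation is that if two indices coincide, say $j_i=j_{i'}$ with $i\neq i'$, then $\mathds 1_{B_i}(X_{j_i})\mathds 1_{B_{i'}}(X_{j_i})=0$ because $B_i\cap B_{i'}=\emptyset$. Hence only $k$-tuples $(j_1,\dots,j_k)$ of \emph{pairwise distinct} indices contribute. If $k>N$ there are no such tuples and $\psi_k\equiv 0$, giving the first half of the claim. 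If $k\le N$, there are exactly $N!/(N-k)!$ such tuples.

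Now I take expectations and use Fubini together with the density $f$: each distinct tuple $(j_1,\dots,j_k)$ contributes
\[
\E\Bigl[\prod_{i=1}^k \mathds 1_{B_i}(X_{j_i})\Bigr] \;=\; \int_{\C^N} f(x_1,\dots,x_N)\prod_{i=1}^k \mathds 1_{B_i}(x_{j_i})\, dx_1\cdots dx_N.
\]
By the symmetry of $f$, a permutation sending the coordinates $j_1,\dots,j_k$ to $1,\dots,k$ shows this integral equals $\int_{B_1\times\cdots\times B_k}\bigl(\int_{\C^{N-k}} f(z_1,\dots,z_N)\,dz_{k+1}\cdots dz_N\bigr)\, dz_1\cdots dz_k$, independent of the chosen tuple. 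Summing the $N!/(N-k)!$ equal contributions and identifying with Definition \ref{definition:correlation} yields the stated formula for $\psi_k$. The main (minor) obstacle is the symmetry-based relabeling step, which just requires carefully keeping track of the change of variables in the $N$-fold integral, but nothing deeper than Fubini is needed.
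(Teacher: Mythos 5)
Your proof is correct and follows essentially the same route as the paper: expand $\prod_i |Z\cap B_i|$ into a sum of indicator products, use disjointness of the $B_i$ to keep only pairwise-distinct index tuples, and use symmetry of $f$ to see that all $N!/(N-k)!$ tuples give the same contribution. The explicit remark that the $X_i$ are a.s. pairwise distinct (so that $|Z\cap B|=\sum_j\mathds 1_B(X_j)$) is a small but welcome justification that the paper leaves implicit.
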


\begin{proof}
Let $B_1,..,B_k$ be disjoint compact subsets of $\C$, we compute the correlations functions: 
\begin{align*}
\E(|Z\cap B_1|,...,|Z\cap B_k|)&=\E\left[\sum_{i_1=1}^N\mathds{1}_{X_{i_1}\in B_{1}}...\sum_{i_k=1}^N\mathds{1}_{X_{i_k}\in B_{k}}\right]\\
&=\sum_{i_1,...,i_k=1}^N\E\left[\mathds{1}_{X_{i_1}\in B_{1}}...\mathds{1}_{X_{i_k}\in B_{k}}\right].
\end{align*}
Since the $B_i$ are disjoint subsets, an $X_k$ can not be in two different $B_i$. Hence, we can rewrite the sum and then use the symmetry of the law to have: 
\begin{equation}
\label{eq corr density}
\begin{split}
\E(|Z\cap B_1|...|Z\cap B_k|)&=\sum_{i_1,...,i_k \in \{1,...N\}^k \text{ disjoint}}\E\left[\mathds{1}_{X_{i_1}\in B_{1}}...\mathds{1}_{X_{i_k}\in B_{k}}\right]\\
&=\sum_{i_1,...,i_k \in \{1,...N\}^k \text{ disjoint}}\E\left[\mathds{1}_{X_{1}\in B_{1}}...\mathds{1}_{X_{k}\in B_{k}}\right].
\end{split}
\end{equation}
Hence, by \eqref{eq corr density} $\E(|Z\cap B_1|...|Z\cap B_k|)$ is equal to $0$ if $k>N$ (we could directly have said it because by definition of $Z$ if $k>N$ we necessarily have a set $B_i$ with no points of $Z$ and so almost surely $|Z\cap B_1|...|Z\cap B_k|=0$ and so its expectation is also $0$) and otherwise is equal to: $$\cfrac{N!}{(N-k)!}\int_{B_1\times...\times B_k}\left({\int_{\C^{N-k}}}f(z_1,...,z_N)dz_{k+1}...dz_N\right)dz_1,...dz_k. $$
It proves the result.
\end{proof}
We recall that in the case of $N$ complex Ginibre matrices we are interested in the point process of its eigenvalues $Z=\{\lambda_1,...,\lambda_N\}$ for which the density is the symmetric function obtained in Theorem \ref{thm : law spectrum ginibre} and that was written in \eqref{eq: vandermonde formula ginibre} as $$\phi_N(z_1,...,z_N)=\cfrac{1}{N!}\det[K_N(z_i,z_j)]_{1\le i,j\le N}.$$
Thanks to the orthogonality property of the monic polynomials, $K_N$ shall satisfied a semi group property making the computations of the correlation functions easier.

\begin{lemma}
\label{lemma:semigroupprop}
For all $N\ge 1$ and all $x,z\in\C$, we have: $$\int_{\C}K_N(z,z)dz=N,\, \,\int_{\C}K_N(x,y)K_N(y,z)dy=K_N(x,z).$$
\end{lemma}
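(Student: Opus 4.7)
The plan is to rewrite $K_N$ in terms of the orthonormal family already introduced in Remark \ref{remark:ortho}, and then reduce both identities to a direct application of Lemma \ref{orthogonality}.

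First, I would observe that if one sets $\mu_l(z):=\sqrt{\gamma(z)}\,z^l/\sqrt{l!}$ for $l\ge 0$, then the definition of the kernel can be rewritten as
\begin{equation*}
K_N(z,w)=\sqrt{\gamma(z)\gamma(w)}\sum_{l=0}^{N-1}\frac{z^l\,\overline{w}^l}{l!}=\sum_{l=0}^{N-1}\mu_l(z)\,\overline{\mu_l(w)}.
\end{equation*}
By Lemma \ref{orthogonality} (equivalently Remark \ref{remark:ortho}), the family $(\mu_l)_{l\ge 0}$ is orthonormal in $L^2(\C,dz)$, i.e. $\int_\C \overline{\mu_l(y)}\,\mu_m(y)\,dy=\mathds{1}_{l=m}$.

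For the first identity, specializing at $w=z$ gives $K_N(z,z)=\sum_{l=0}^{N-1}|\mu_l(z)|^2$, and integrating term by term yields $\int_\C K_N(z,z)\,dz=\sum_{l=0}^{N-1}\int_\C |\mu_l(z)|^2\,dz=\sum_{l=0}^{N-1}1=N$.

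For the second identity, I would plug the decomposition into the integral and exchange the (finite) sum and the integral:
\begin{equation*}
\int_\C K_N(x,y)K_N(y,z)\,dy=\sum_{l,m=0}^{N-1}\mu_l(x)\,\overline{\mu_m(z)}\int_\C \overline{\mu_l(y)}\,\mu_m(y)\,dy=\sum_{l=0}^{N-1}\mu_l(x)\,\overline{\mu_l(z)}=K_N(x,z),
\end{equation*}
where the second equality uses the orthonormality. There is no real obstacle here: the only thing to justify carefully is the interchange of summation and integration, which is immediate since the sums are finite and each $\mu_l$ lies in $L^2(\C)$ (so the product $\overline{\mu_l}\mu_m$ is in $L^1(\C)$ by Cauchy–Schwarz). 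The whole lemma is really a packaging of the orthonormality of the Hermite-type functions $\mu_l$ into the reproducing/trace properties of a projection kernel.
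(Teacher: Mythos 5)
Your proof is correct and follows the same route as the paper: both rewrite $K_N(z,w)=\sum_{l=0}^{N-1}\mu_l(z)\overline{\mu_l(w)}$ and then invoke the orthonormality of the $\mu_l$ from Lemma \ref{orthogonality}. Note that you use the correct normalization $\mu_l(z)=\sqrt{\gamma(z)}\,z^l/\sqrt{l!}$, whereas the paper's Remark \ref{remark:ortho} and its proof of this lemma write $\mu_j(z)=\sqrt{\gamma(z)}z^j/j!$ with a plain $j!$ in the denominator, which is a typo; your version is the one consistent with Lemma \ref{orthogonality} and with the definition of $K_N$.
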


\begin{proof}
This is a quite immediate consequence of the property of orthogonality of the monic polynomial for the measure $\gamma(z)dz$. 
\newline
Let for all $j\ge 0$, let $\mu_j(z)=\sqrt{\gamma(z)}z^j/j!$. These functions are orthonormal in $L_2(\C)$ for the Lebesgue measure by Lemma \ref{orthogonality}.
\newline
Hence we have that: $$\int_\C K_N(z,z)dz=\sum_{l=0}^{N-1}\int_\C|\mu_j(z)|^2dz=N$$ and: 
\begin{align*}
\int_{\C}K_N(x,y)K_N(y,z)dy&=\sum_{k,j=0}^{N-1}\int_{\C}\mu_j(x)\overline{\mu_j(y)}\mu_k(y)\overline{\mu_k(z)}dy\\
&=\sum_{k,j=0}^{N-1}\mu_j(x)\overline{\mu_k(z)}\mathds{1}_{j=k}\\
&=\sum_{k,j=0}^{N-1}\mu_k(x)\overline{\mu_k(z)}=K_N(x,z)
\end{align*}
\end{proof}
Thanks to this lemma we can now prove the following inductive formula that shall give the explicit value of correlation functions obtained in Proposition \ref{prop: correlation density}. 

\begin{prop}
\label{prop:fonctioncorredet}
For all $k<N$ and $z_1,...,z_k\in\C$ we have the following inductive formula: $$
\int_\C \det[K_N(z_i,z_j)]_{1\le i,j\le k+1}dz_{k+1}=(N-k)\det[K_N(z_i,z_j)]_{1\le i,j\le k}.$$
As a corollary, we have that for all $k\le N$, $$\int_{\C^{N-k}}\det[K_N(z_i,z_j)]_{1\le i,j\le N}dz_{k+1}...dz_N=(N-k)! \det[K_N(z_i,z_j)]_{1\le i,j\le k}.$$
\end{prop}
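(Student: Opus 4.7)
The plan is to expand the determinant via the Leibniz formula and integrate term by term, carefully tracking the two occurrences of the variable $z_{k+1}$.

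Writing
\[
\det[K_N(z_i,z_j)]_{1\le i,j\le k+1} = \sum_{\sigma \in \mathfrak S_{k+1}} \varepsilon(\sigma) \prod_{i=1}^{k+1} K_N(z_i, z_{\sigma(i)}),
\]
the variable $z_{k+1}$ appears exactly twice in each product: as the first argument of $K_N(z_{k+1}, z_{\sigma(k+1)})$ and as the second argument of $K_N(z_{\sigma^{-1}(k+1)}, z_{k+1})$. I would partition the sum according to whether $\sigma$ fixes $k+1$ or not.

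In the first case ($\sigma(k+1) = k+1$), the permutation restricts to some $\tau \in \mathfrak S_k$ with $\varepsilon(\sigma) = \varepsilon(\tau)$, and the only $z_{k+1}$ dependence is through $K_N(z_{k+1}, z_{k+1})$. By the first identity of Lemma \ref{lemma:semigroupprop}, integration produces the factor $N$, and summing over $\tau$ yields $N \det[K_N(z_i, z_j)]_{1\le i,j\le k}$. In the second case, set $j = \sigma(k+1)$ and $i = \sigma^{-1}(k+1)$, both in $\{1,\dots,k\}$; the $z_{k+1}$-dependent piece is $K_N(z_i, z_{k+1})\,K_N(z_{k+1}, z_j)$, which integrates to $K_N(z_i, z_j)$ by the second identity of Lemma \ref{lemma:semigroupprop}. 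I would then introduce the map $\sigma \mapsto (\tau, i)$, where $\tau \in \mathfrak S_k$ is defined by $\tau(i) = j$ and $\tau(l) = \sigma(l)$ for $l \in \{1,\dots,k\}\setminus\{i\}$; this is a bijection onto $\mathfrak S_k \times \{1,\dots,k\}$.

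The delicate point, and the main obstacle, is the sign relation $\varepsilon(\sigma) = -\varepsilon(\tau)$. One checks this by examining the cycle of $\sigma$ containing $k+1$: removing $k+1$ from a cycle of length $r$ yields a cycle of length $r-1$ in $\tau$, and the signatures of cycles of lengths $r$ and $r-1$ differ by $-1$; all other cycles are left untouched. Once the sign is settled, the second case contributes
\[
\sum_{i=1}^{k} \sum_{\tau \in \mathfrak S_k} \bigl(-\varepsilon(\tau)\bigr) \prod_{l=1}^{k} K_N(z_l, z_{\tau(l)}) = -k\, \det[K_N(z_i, z_j)]_{1\le i,j\le k},
\]
and adding both cases yields the announced $(N-k)\,\det[K_N(z_i, z_j)]_{1\le i,j\le k}$.

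Finally, the corollary follows by a straightforward iteration: applying the inductive formula to integrate successively over $z_N, z_{N-1}, \dots, z_{k+1}$ produces the telescoping product $1 \cdot 2 \cdots (N-k) = (N-k)!$ in front of $\det[K_N(z_i,z_j)]_{1\le i,j\le k}$, by induction on the number of integrated variables.
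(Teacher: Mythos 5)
Your proof is correct, and the overall skeleton (split into the case where $z_{k+1}$ hits itself, giving the factor $N$, versus the case where two distinct kernel factors involve $z_{k+1}$, giving $-k$ via the semigroup property of $K_N$) is the same as the paper's. The bookkeeping is genuinely different, though: the paper performs a double Laplace expansion — first along the last column, then along the last row of each minor — and recognizes the resulting sum as $k$ copies of a row-cofactor expansion of the smaller determinant, so the signs come out of the cofactor machinery automatically; you instead expand via the Leibniz formula and set up an explicit bijection $\sigma\mapsto(\tau,i)$ between permutations of $\{1,\dots,k+1\}$ not fixing $k+1$ and $\mathfrak S_k\times\{1,\dots,k\}$, which forces you to verify the sign relation $\varepsilon(\sigma)=-\varepsilon(\tau)$ by hand. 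Your sign argument via cycle lengths is fine, but the cleanest justification is that if $\hat\tau\in\mathfrak S_{k+1}$ denotes $\tau$ extended to fix $k+1$, then $\sigma=\hat\tau\circ(i,\,k+1)$, whence $\varepsilon(\sigma)=-\varepsilon(\hat\tau)=-\varepsilon(\tau)$. The Leibniz route is a bit more symmetric and avoids juggling nested minors; the Laplace route hides the permutation bookkeeping inside familiar cofactor identities. The iteration for the corollary is stated correctly: integrating from $z_N$ inward produces $1\cdot2\cdots(N-k)=(N-k)!$.
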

\begin{proof}
We expand the determinant along the last column and we write $A_{k+1}^{l,k+1}$ the matrix obtain from $[K_N(z_i,z_j)]_{1\le i,j\le k+1}$ by removing the line $l$ and the column $k+1$:
\begin{align*}
\det[K_N(z_i,z_j)]_{1\le i,j\le k+1}=\sum_{l=1}^{k+1}(-1)^{l+k+1}K_N(z_l,z_{k+1})\det (A_{k+1}^{l,k+1})
\end{align*}
We shall distinguish two cases. First when $l=k+1$ we have that $A_{k+1}^{k+1,k+1}=[K_N(z_i,z_j)]_{1\le i,j\le k}$ and so this term does not depend on $z_{k+1}$. Hence, when we integrate the term of the sum that corresponds to $l=k+1$ against $dz_{k+1}$, we only have $$\det[K_N(z_i,z_j)]_{1\le i,j\le k}\int_\C K_N(z_{k+1},z_{k+1})dz_{k+1}=N\det[K_N(z_i,z_j)]_{1\le i,j\le k},$$ thanks to Lemma \ref{lemma:semigroupprop}.
For all the other terms, we again expand the determinant along the last line to have that: $$ \det (A_{k+1}^{l,k+1})=\sum_{p=1}^{k}(-1)^{p+k}K_N(z_{k+1},z_p)\det[K_N(z_i,z_j)]_{1\le i,j\le k, i\ne l, j\ne p}.$$
We notice that in the sum the last determinant does not depend on $z_{k+1}$ so when we integrate with respect to $z_{k+1}$ the only important contribution in this expression is $K_N(z_{k+1},z_p)$.
\newline
So using Lemma \ref{lemma:semigroupprop} again we have that for all $l\le k$: 
\begin{align*}
&\int_\C (-1)^{l+k+1}K_N(z_l,z_{k+1})\det (A_{k+1}^{l,k+1})dz_{k+1}=\\
&\sum_{p=1}^k(-1)^{l+p-1}K_N(z_l,z_p)\det[K_N(z_i,z_j)]_{1\le i,j\le k, i\ne l, j\ne p}.
\end{align*}
We recognize that the last sum is: $$-\det[K_N(z_i,z_j)]_{1\le i,j\le k}$$
Since we sum all these terms for $l\le k$, we get that: 
$$\det[K_N(z_i,z_j)]_{1\le i,j\le k+1}=(N-k) \det[K_N(z_i,z_j)]_{1\le i,j\le k}.$$
The last point is clear by induction. 
\end{proof}

Let us also remark that for $k>N$ we have that: $$[K_N(z_i,z_j)]_{1\le i,j\le k}=[(\mu_l(z_i)]_{1\le i\le k,1\le l\le N}[\overline{\mu_i(z_l)}]_{1\le i\le k,1\le l\le N}$$ with the $\mu_j$ which are the orthogonal elements of $L_2(\C)$ introduced in Remark \ref{remark:ortho}. Since the matrix $[(\mu_l(z_i)]_{1\le i\le k,1\le l\le N}$ has $N$ columns the rank of $[K_N(z_i,z_j)]_{1\le i,j\le k}$ is smaller than $N$ and so strictly smaller than $k$. Hence, the determinant of $[K_N(z_i,z_j)]_{1\le i,j\le k}$ is equal to 0 if $k>N$. 
\newline
Let us summarize the results obtained for the correlation functions of the eigenvalues of $N$ complex Ginibre matrices.
\begin{definition}
A determinental point process of kernel $K:\,\C^2\to \C$ is a point process such that its correlation functions are of the form: $$\psi_k(z_1,...,z_k)=\det[K(z_i,z_j)]_{1\le i,j\le k},\, \text{for almost all  } (z_1,...,z_k)\in\C^k,\,\forall k\in\N_{\ge 1}.$$
\end{definition}

\begin{example}
A Poisson point process of parameter $\lambda>0$ is a determinental point process with kernel $K(z,z')=\lambda \mathds 1_{z=z'}$.
\end{example}

\begin{theorem}
\label{thm ginibre dpp}
The eigenvalues of a $N$ complex Ginibre matrix is a determinental point process of kernel $K_N$.
\end{theorem}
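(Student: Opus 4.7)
The plan is to observe that the theorem follows almost immediately by combining three earlier ingredients: Theorem \ref{thm : law spectrum ginibre} (which gives the joint density of the eigenvalues in the determinantal form $\phi_N(z_1,\dots,z_N) = \frac{1}{N!}\det[K_N(z_i,z_j)]_{1\le i,j\le N}$), Proposition \ref{prop: correlation density} (which expresses the correlation functions of a point process with symmetric density as a partial integral of the joint density), and Proposition \ref{prop:fonctioncorredet} (the integration identity for the kernel $K_N$).

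First I would treat the case $k \le N$. Applying Proposition \ref{prop: correlation density} to the joint density $\phi_N$ gives
\[
\psi_k(z_1,\dots,z_k) = \frac{N!}{(N-k)!} \int_{\C^{N-k}} \phi_N(z_1,\dots,z_N)\, dz_{k+1}\dots dz_N.
\]
Substituting the determinantal form of $\phi_N$ from Theorem \ref{thm : law spectrum ginibre} and then applying the inductive integration formula of Proposition \ref{prop:fonctioncorredet} yields
\[
\psi_k(z_1,\dots,z_k) = \frac{N!}{(N-k)!} \cdot \frac{1}{N!} \cdot (N-k)! \det[K_N(z_i,z_j)]_{1\le i,j\le k} = \det[K_N(z_i,z_j)]_{1\le i,j\le k},
\]
which is exactly the required form.

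For the case $k > N$, Proposition \ref{prop: correlation density} already gives $\psi_k = 0$. To match the determinantal definition, I would invoke the rank argument sketched just before the theorem statement: since $K_N(z,w) = \sum_{l=0}^{N-1} \mu_l(z)\overline{\mu_l(w)}$, the matrix $[K_N(z_i,z_j)]_{1\le i,j\le k}$ factors through an $N$-dimensional intermediate space, so it has rank at most $N < k$, hence its determinant vanishes. Thus $\psi_k(z_1,\dots,z_k) = 0 = \det[K_N(z_i,z_j)]_{1\le i,j\le k}$ in this regime as well.

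Since all the nontrivial work has been done in the preceding propositions, there is really no hard step here; the main point is simply to verify that the factors $N!/(N-k)!$, $1/N!$, and $(N-k)!$ collapse correctly and that the $k>N$ degenerate case is consistent. The proof in the text can therefore be a one-paragraph assembly referring back to Theorem \ref{thm : law spectrum ginibre} and Propositions \ref{prop: correlation density} and \ref{prop:fonctioncorredet}.
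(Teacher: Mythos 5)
Your proposal is correct and matches the paper's (implicit) proof: the paper presents Theorem \ref{thm ginibre dpp} without a separate proof precisely because it is the immediate assembly of Theorem \ref{thm : law spectrum ginibre}, Proposition \ref{prop: correlation density}, Proposition \ref{prop:fonctioncorredet}, and the rank argument for $k>N$ stated just before the theorem, exactly as you describe, with the factors $N!/(N-k)!$, $1/N!$, and $(N-k)!$ cancelling as you check.
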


The next theorem gives the formulas obtained in this section and the notations we shall use in Section \ref{Section:circularlaw}.

\begin{theorem}
\label{thm: fonction correlation ginibre}
For all $N\ge 1$, the density with respect to the complex Lebesgue measure of the eigenvalues of a $N$ complex Ginibre matrix is given by: $$ \phi_N(z_1,...,z_N):=\cfrac{1}{N!}\det[K_N(z_i,z_j)]_{1\le i,j\le N},$$
with $K_N$ the kernel defined by: $$K_N(z,w):=\sqrt{\gamma(z)\gamma(w)}\sum_{l=0}^{N-1}\cfrac{(z\overline w)^l}{l!}, \,\,\forall(z,w)\in\C^2.$$
Moreover we have an expression of the marginal densities of this law defined for $1\le k\le N$ by: $$(z_1,...z_k)\in\C^k\mapsto\phi_{N,k}(z_1,...z_k):=\int_{\C^{N-k}}\phi_{N}(z_1,...,z_N)dz_{k+1}...dz_N=\cfrac{(N-k)!}{N!}\psi_k(z_1,...z_k),$$
with $\psi_k$ the correlation functions of the point process of the eigenvalues of a $N$ complex Ginibre matrix introduced in Definition \ref{definition:correlation}.
\newline
Indeed for all $1\le k\le N$, and for all $(z_1,...,z_k)\in\C^k$, we have: $$\phi_{N,k}(z_1,...z_k)=\cfrac{(N-k)!}{N!}\det[K_N(z_i,z_j)]_{1\le i,j\le k}.$$
In particular, we have the following explicit formulas for the marginal $\phi_{N,N}=\phi_N$ and for $k=1$, for all $z\in\C$, $$\phi_{N,1}(z)=\cfrac{\gamma(z)}{N}\sum_{l=0}^{N-1}\cfrac{|z|^{2l}}{l!}.$$
\end{theorem}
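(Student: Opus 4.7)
The plan is to assemble the theorem from the pieces already established in the section, since essentially every ingredient has been proven. I would first recall that by Theorem~\ref{thm : law spectrum ginibre} together with the Vandermonde computation carried out in equation~\eqref{eq: vandermonde formula ginibre}, the eigenvalue density admits the determinantal form
\begin{equation*}
\phi_N(z_1,\dots,z_N)=\frac{1}{N!}\det[K_N(z_i,z_j)]_{1\le i,j\le N},
\end{equation*}
with $K_N$ as in the statement. The normalization constant here is the one computed in~\eqref{calculsonstantenorm}, so there is nothing further to check for this first assertion.

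Next I would turn to the marginal densities. By definition $\phi_{N,k}(z_1,\dots,z_k)$ is obtained from $\phi_N$ by integrating out the last $N-k$ variables with respect to the complex Lebesgue measure. Applying the inductive integration formula from Proposition~\ref{prop:fonctioncorredet} exactly $N-k$ times (or equivalently invoking its corollary directly) one gets
\begin{equation*}
\int_{\C^{N-k}}\det[K_N(z_i,z_j)]_{1\le i,j\le N}\,dz_{k+1}\cdots dz_N=(N-k)!\,\det[K_N(z_i,z_j)]_{1\le i,j\le k},
\end{equation*}
which immediately yields the claimed expression $\phi_{N,k}(z_1,\dots,z_k)=\frac{(N-k)!}{N!}\det[K_N(z_i,z_j)]_{1\le i,j\le k}$.

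The identification $\phi_{N,k}=\frac{(N-k)!}{N!}\psi_k$ is a direct consequence of Proposition~\ref{prop: correlation density} applied to the symmetric density $\phi_N$: that proposition says precisely that $\psi_k(z_1,\dots,z_k)=\frac{N!}{(N-k)!}\int_{\C^{N-k}}\phi_N(z_1,\dots,z_N)\,dz_{k+1}\cdots dz_N$, which matches what we just computed and shows that the eigenvalue point process is determinantal with kernel $K_N$ (giving also Theorem~\ref{thm ginibre dpp}). Finally, for the special case $k=1$ one just reads off
\begin{equation*}
\phi_{N,1}(z)=\frac{1}{N}K_N(z,z)=\frac{\gamma(z)}{N}\sum_{l=0}^{N-1}\frac{|z|^{2l}}{l!},
\end{equation*}
using the explicit form of $K_N$ on the diagonal. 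Since every step is either a direct invocation of a previous result or a short bookkeeping computation, there is no real obstacle here; the only point requiring a sanity check is making sure that the factors $\frac{1}{N!}$ from the density and $(N-k)!$ from the integration combine with the $\frac{N!}{(N-k)!}$ of Proposition~\ref{prop: correlation density} to give a genuine determinantal kernel, which they do.
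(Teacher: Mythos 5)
Your proposal is correct and matches what the paper does: Theorem~\ref{thm: fonction correlation ginibre} is stated as a summary of the section, and the intended proof is exactly the assembly you describe — the determinantal form from Theorem~\ref{thm : law spectrum ginibre} and equation~\eqref{eq: vandermonde formula ginibre}, the integration via Proposition~\ref{prop:fonctioncorredet}, the identification with $\psi_k$ via Proposition~\ref{prop: correlation density}, and the $k=1$ case by reading off $K_N(z,z)$. The bookkeeping of the factorials is handled correctly.
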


\subsubsection{Gaussian unitary ensemble case}
As for the Ginibre case, we introduce the weight $\gamma_H(x)=\exp(-x^2/2)$ that appears in the formula of the density of the eigenvalues in Theorem \ref{thm:law of spectrum}. As in the Ginibre case, the key point is to find a family of orthogonal polynomials for the weight $\gamma_H(x)$. These polynomials are called Hermite polynomials and can be deduced from the so-called Rodrigues formula \cite{koornwinder2013orthogonal}.
\begin{definition}[Hermite polynomials]
For all $k\ge 0$ we define the $k^{th}$ Hermite polynomial by $$H_k(x)=(-1)^n\exp(x^2/2)\left(\cfrac{d}{dx}\right)^k\left[\exp(-x^2/2)\right].$$
\end{definition}
Let us mention that a priori it is not clear that $H_k$ is a polynomial. The main important properties that we shall use about Hermite polynomials are given in the next proposition. 
\begin{prop}
\label{prop properties hermite}
The Hermite polynomials satisfy the following properties.
\begin{enumerate}
\item{For all $k\ge0$, $H_k$ is a monic polynomial of degree $k$ and has the same parity as $k$.}
\item{They satisfy the following orthogonal property: for all $m,n\in\N$ 
\begin{equation}
\label{ortho hermite}\int_\R H_m(x)H_n(x)\exp(-x^2/2)dx=\sqrt{2\pi}\,n!\mathds{1}_{m=n}.
\end{equation}}
\item{For all $k\ge 1$ we have the following inductive formula: 
\begin{equation}
\label{induction hermite}
H_{k+1}(X)=XH_k(X)-kH_{k-1}(X).
\end{equation}}
\item{For all $k\ge1$, we have $H_k'=kH_{k-1}$ and $H_k$ satisfies the following differential equation: 
\begin{equation}
\label{equadiff hermite}H_k''-xH_k'+kH_k=0.
\end{equation}}
\item{Hermite polynomials satisfy the so-called Christoffel–Darboux formula: for all $n\ge 1$, for all $x\ne y$
\begin{equation}
\label{critophel hermite}\sum_{k=0}^{n-1}\cfrac{H_k(x)H_k(y)}{k!}=\cfrac{H_n(x)H_{n-1}(y)-H_{n-1}(x)H_n(y)}{(x-y)(n-1)!}.
\end{equation}}
\end{enumerate}
\end{prop}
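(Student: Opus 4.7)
The plan is to derive the five properties in an order that lets each one feed into the next, with the Rodrigues formula as the sole input. The natural starting point is the recurrence structure, since (1), (3), and the first half of (4) can all be proved simultaneously by a single induction on $k$. For the differential identity $H_k'=kH_{k-1}$, one notes that applying $d/dx$ to the Rodrigues relation $H_k(x)\,e^{-x^2/2}=(-1)^k(d/dx)^k e^{-x^2/2}$ yields
\begin{equation*}
H_k'(x)\,e^{-x^2/2}-xH_k(x)\,e^{-x^2/2}=-H_{k+1}(x)\,e^{-x^2/2},
\end{equation*}
so $H_{k+1}(x)=xH_k(x)-H_k'(x)$. Assuming by induction that $H_j'=jH_{j-1}$ for $j\le k$, substituting gives (3), and differentiating (3) together with the recurrence itself then yields $H_{k+1}'=(k+1)H_k$, closing the induction. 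That $H_k$ is a monic polynomial of degree $k$ with the parity of $k$ is now immediate from (3) by induction starting from $H_0=1$, $H_1=x$; parity alternatively follows from the change of variable $x\mapsto -x$ inside the Rodrigues formula.

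For the ODE in (4), I plan to use $H_k'=kH_{k-1}$ twice to express $H_k''=k(k-1)H_{k-2}$, then invoke (3) in the form $(k-1)H_{k-2}=xH_{k-1}-H_k$ to substitute and obtain $H_k''=xH_k'-kH_k$. For the orthogonality relation (2), the strategy is integration by parts applied to the Rodrigues expression: for $m\le n$,
\begin{equation*}
\int_\R H_m(x)H_n(x)e^{-x^2/2}\,dx=(-1)^n\int_\R H_m(x)\,\bigl(d/dx\bigr)^n e^{-x^2/2}\,dx=\int_\R H_m^{(n)}(x)\,e^{-x^2/2}\,dx,
\end{equation*}
where the boundary terms vanish thanks to the Gaussian decay. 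If $m<n$ then $H_m^{(n)}\equiv 0$ by (1); if $m=n$ then $H_n^{(n)}=n!$ because $H_n$ is monic of degree $n$, and the remaining Gaussian integral produces $\sqrt{2\pi}\,n!$.

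The Christoffel--Darboux identity (5) is where I expect the main combinatorial care. My plan is to multiply the sum on the left by $(x-y)$ and use (3) on both arguments: writing $xH_k(x)=H_{k+1}(x)+kH_{k-1}(x)$ and symmetrically for $y$ produces, after rearrangement,
\begin{equation*}
(x-y)\,\frac{H_k(x)H_k(y)}{k!}=B_k-B_{k-1},\qquad B_k:=\frac{H_{k+1}(x)H_k(y)-H_k(x)H_{k+1}(y)}{k!},
\end{equation*}
with the convention $B_{-1}=0$ (consistent since $H_{-1}\equiv 0$). Summing from $k=0$ to $n-1$ telescopes to $B_{n-1}$, which is exactly $\bigl(H_n(x)H_{n-1}(y)-H_{n-1}(x)H_n(y)\bigr)/(n-1)!$, and division by $(x-y)$ completes (5). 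The main obstacle is bookkeeping: one must verify that the telescoping identity truly holds for every $k\ge 0$ and that the base case is consistent, which comes down to the correct normalization of the low-order Hermite polynomials provided by (1).
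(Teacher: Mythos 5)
Your proof is correct and uses the same ingredients as the paper: the Rodrigues formula, repeated integration by parts for orthogonality, and telescoping for Christoffel--Darboux. The one place where you organize things differently is the derivation of the three-term recurrence (3): the paper differentiates the identity $[\exp(-x^2/2)]'=-x\exp(-x^2/2)$ a total of $n$ times and applies the Leibniz rule, whereas you build a simultaneous induction on the Rodrigues-derived identity $H_{k+1}=xH_k-H_k'$ and establish $H_k'=kH_{k-1}$ and the recurrence in lockstep. Both are standard; your version has the small advantage of making (1), (3), and the first half of (4) drop out of a single induction, and it avoids invoking Leibniz. You also write out the telescoping computation for (5), which the paper only asserts can be done by induction; your bookkeeping, including the convention $B_{-1}=0$ (equivalently $H_{-1}\equiv 0$), is consistent. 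Everything checks out.
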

Let us mention that these properties are general for families of orthogonal polynomials.
\begin{proof}
We sketch the proof since the arguments are quite straightforward. The first point can be easily proved by induction. 
\newline
We detail the second point since it is the main reason for the use of Hermite polynomial in our context. 
\newline
First suppose that $m>n$ then $$\int_\R H_m(x)H_n(x)\exp(-x^2/2)dx=(-1)^m\int_\R \left(\cfrac{d}{dx}\right)^m\left[\exp(-x^2/2)\right]H_n(x)dx.$$
Doing $m$ integrations by parts yields $$(-1)^m\int_\R \left(\cfrac{d}{dx}\right)^m\left[\exp(-x^2/2)\right]H_n(x)dx=0$$ since the $m^{th}$ derivative of $H_n$ is 0 as $H_n$ is a polynomial of degree $n$ (let us also mention that at each integration by parts the bracket term vanishes since for all polynomial $P\in\R[X]$, $\exp(-x^2/2)P(x)\underset{|x|\to+\infty}{\longrightarrow}0$).  
\newline
In the case $m=n$, doing $n$ integrations by parts as before yields 
\begin{align*}
\int_\R H_n(x)H_n(x)\exp(-x^2/2)dx&=(-1)^n\int_\R \left(\cfrac{d}{dx}\right)^n\left[\exp(-x^2/2)\right]H_n(x)dx\\
&=(-1)^n (-1)^n\int_\R \exp(-x^2/2)\left(\cfrac{d}{dx}\right)^n\left[H_n(x)\right]dx\\
&=n!\int_\R \exp(-x^2/2)dx=\sqrt{2\pi}n!
\end{align*}
This gives the second point. 
\newline
For the third point we can notice that $\left[\exp(-x^2/2)\right]'=-x\exp(-x^2/2)$ and differentiating $n$ times this relation, using the Leibniz rule for the right term gives the result.
\newline
For the point $4$, by definition of $H_k$ we have $H_k'(X)=X H_k(X)-H_{k+1}(X)$ which gives the result using \eqref{induction hermite}. The formula \eqref{equadiff hermite} is a consequence of the fact that $H_k'=kH_{k-1}$ and the formula \eqref{induction hermite}.
\newline
Finally, the formula $\eqref{critophel hermite}$ can be directly proved by induction.
\end{proof} 

Thanks to these properties and especially the orthogonality of Hermite polynomials we can do the same computations as in \eqref{comp: computation constant} and \eqref{calculsonstantenorm}. We obtain the counterpart of Theorem \ref{thm : law spectrum ginibre}.

\begin{theorem}
\label{thm law eigenvalues guen}
The law of the spectrum in $\R^N$ of a random matrix distributed according to a $N$ complex Wigner matrix has a density with respect to the complex Lebesgue measure which is equal to: $$\phi_N^H(x_1,...,x_N):=\cfrac{\prod_{k=1}^N\gamma_H(x_k)}{(2\pi)^{N/2}\prod_{k=1}^N k!}\prod_{i<j}(x_i-x_j)^2,$$ with $\gamma_H(x):=\exp(-x^2/2)$.
\end{theorem}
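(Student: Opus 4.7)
The strategy mirrors the Ginibre computation carried out in equations \eqref{comp: computation constant} and \eqref{calculsonstantenorm}, now using Hermite polynomials in place of the monomials that were orthogonal for the complex Gaussian weight $\gamma$. By Theorem \ref{thm:law of spectrum}, the density of the eigenvalues of a $N$ complex Wigner matrix is already known to be proportional to
\[
(x_1,\dots,x_N)\mapsto\prod_{i<j}(x_i-x_j)^2\exp\!\Bigl(-\tfrac{1}{2}\sum_{k=1}^N x_k^2\Bigr),
\]
so the only thing left is to determine the normalization constant, i.e.\ to evaluate
\[
I_N:=\int_{\R^N}\prod_{i<j}(x_i-x_j)^2\exp\!\Bigl(-\tfrac{1}{2}\sum_{k=1}^N x_k^2\Bigr)\,dx_1\cdots dx_N,
\]
and then check that $I_N=(2\pi)^{N/2}\prod_{k=1}^N k!$.

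First I would replace the Vandermonde by a Hermite determinant. Since by point $1$ of Proposition \ref{prop properties hermite} the family $(H_{k-1})_{1\le k\le N}$ consists of monic polynomials with $\deg H_{k-1}=k-1$, column operations (multilinearity of the determinant) give
\[
\Delta_N(x_1,\dots,x_N)=\det[x_j^{k-1}]_{1\le j,k\le N}=\det[H_{k-1}(x_j)]_{1\le j,k\le N},
\]
so that $\prod_{i<j}(x_i-x_j)^2=\bigl(\det[H_{k-1}(x_j)]\bigr)^2$. Expanding each determinant as a signed sum over $\mathfrak S_N$ and applying Fubini yields
\[
I_N=\sum_{\sigma,\sigma'\in\mathfrak S_N}\varepsilon(\sigma)\varepsilon(\sigma')\prod_{i=1}^N\int_{\R}H_{\sigma(i)-1}(x)H_{\sigma'(i)-1}(x)\,e^{-x^2/2}\,dx.
\]

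Next I would invoke the orthogonality relation \eqref{ortho hermite} from Proposition \ref{prop properties hermite}: the one-dimensional integral above is $\sqrt{2\pi}\,(\sigma(i)-1)!\,\mathds 1_{\sigma(i)=\sigma'(i)}$. Therefore only the diagonal terms $\sigma=\sigma'$ contribute, giving
\[
I_N=\sum_{\sigma\in\mathfrak S_N}\prod_{i=1}^N\sqrt{2\pi}\,(\sigma(i)-1)!=N!\,(2\pi)^{N/2}\prod_{k=0}^{N-1}k!.
\]

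Finally, the simplification $N!\prod_{k=0}^{N-1}k!=\prod_{k=1}^{N}k!$ (using $0!=1$ and shifting the index) yields $I_N=(2\pi)^{N/2}\prod_{k=1}^N k!$, which produces exactly the announced density $\phi_N^H$. There is no real obstacle here; the computation is routine and essentially identical to the Ginibre case, the only new ingredient being the replacement of the monomials $z^j/\sqrt{j!}$ (orthonormal for the complex Gaussian measure by Lemma \ref{orthogonality}) by the Hermite polynomials $H_k$ (orthogonal for the real Gaussian weight $e^{-x^2/2}$ by \eqref{ortho hermite}).
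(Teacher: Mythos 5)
Your proposal is correct and follows exactly the route the paper indicates: the paper's "proof" of Theorem~\ref{thm law eigenvalues guen} is just the one-line remark that one repeats the computation of \eqref{comp: computation constant}--\eqref{calculsonstantenorm} with Hermite polynomials in place of monomials, and you have simply written out that computation in full, with the correct orthogonality constant $\sqrt{2\pi}\,n!$ from \eqref{ortho hermite} and the correct final simplification $N!\prod_{k=0}^{N-1}k!=\prod_{k=1}^{N}k!$.
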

As for the Ginibre case in \eqref{eq: vandermonde formula ginibre}, this expression is called determinental because thanks to the Vandermonde formula we can give another expression of the density.  
Indeed we have: 
\begin{equation}
\begin{split}
\phi_N^H(x_1,...,x_N)&=\cfrac{\prod_{k=1}^N\gamma_H(x_k)}{(2\pi)^{N/2}\prod_{k=1}^N k!}\Delta_N(x_1,...,x_N)\Delta_N(x_1,...,x_N)\\
&=\cfrac{\prod_{k=1}^N\gamma_H(x_k)}{(2\pi)^{N/2}\prod_{k=1}^N k!}\det[x_i^{j-1}]_{1\le i,j\le N}\det[x_i^{j-1}]_{1\le i,j\le N}\\
&=\cfrac{\prod_{k=1}^N\gamma_H(x_k)}{(2\pi)^{N/2}\prod_{k=1}^N k!}\det[x_i^{j-1}]_{1\le i,j\le N}\det[x_j^{i-1}]_{1\le i,j\le N}\\
&=\cfrac{\prod_{k=1}^N\gamma_H(x_k)}{(2\pi)^{N/2}\prod_{k=1}^N k!}\det[H_{j-1}(x_i)]_{1\le i,j\le N}\det[H_{i-1}(x_j)]_{1\le i,j\le N}\\
&=\cfrac{1}{N!}\det[K_N^H(x_i,x_j)]_{1\le i,j\le N},
\end{split}
\end{equation}
with for all $x\in\R$ and $y\in \R$: $$K_N^H(x,y):=\sqrt{\gamma_H(x)\gamma_H(y)}\sum_{l=0}^{N-1}\cfrac{H_{l}(x)H_l(y)}{\sqrt{2\pi}\, l!}.$$

We give the exact same definition of locally finite part in $\R$ that we did in $\C$ for the Ginibre case. We are not very exhausting since it has already been discussed in the part about the Ginibre case.
\begin{definition}
We call (real) locally finite part the subset of the part of $\R$: $$P_{lf}^{\R}=\{Z\subset \R,\, |Z\cap K|<+\infty \,\forall K \text{ compact of }\R\}.$$
We endow $P_{lf}^{\R}$ with a sigma algebra $\mathcal F_{lf}^{\R}$ which is the smallest sigma algebra that makes measurable the maps $\phi_{A}: P_{lf}\to (\N,\mathcal P(\N))$ such that $\phi_A(Z)=|Z\cap A|$ for $A$ compact subset of $\R$.
\end{definition}

\begin{definition}
$Z$ is said to be a (real) point process if it is a random variable with value in $(P_{lf}^{\R},\mathcal F_{lf}^{\R})$.
\end{definition}

\begin{definition}
\label{def corellation gue}
We say that the correlation functions of a point process are $\phi_k$ for $k\ge 1$ if $\phi_k:\R^k\to\R^+$ is such that for all $B_1,...,B_k$ disjoint compact subsets of $\R$ we have: $$\E(|Z\cap B_1|...|Z\cap B_k|)=\int_{B_1\times...\times B_k} \phi_k(x_1,...,x_k)dx_1...dx_k.$$
\end{definition}

The counterpart of Proposition \ref{prop: correlation density} is the following proposition.
\begin{prop}
Let $Z=(X_1,...,X_N)$ be $N$ real random variables such that the law of $(X_1,...,X_N)$ has a density with respect to the Lebesgue measure $f(x_1,...,x_N)dx_1...dx_N$ that we suppose symmetric, then the correlation functions of $Z$ are $\psi_k=0$ if $k>N$ and if $k\le N$, $$\label{form:correlationreel}\psi_k(x_1,...,x_k)=\cfrac{N!}{(N-k)!}\int_{\R^{N-k}}f(x_1,...,x_N)dx_{k+1}...dx_N.$$
\end{prop}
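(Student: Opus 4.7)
The proof mirrors exactly the proof of Proposition \ref{prop: correlation density}, just replacing $\C$ with $\R$ everywhere, so the plan is to adapt that argument verbatim.

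First, I would fix disjoint compact subsets $B_1,\ldots,B_k$ of $\R$ and write $|Z\cap B_j|=\sum_{i=1}^N\mathds{1}_{X_i\in B_j}$. Expanding the product gives
\[
\E\bigl(|Z\cap B_1|\cdots|Z\cap B_k|\bigr)=\sum_{i_1,\ldots,i_k=1}^N\E\left[\mathds{1}_{X_{i_1}\in B_1}\cdots\mathds{1}_{X_{i_k}\in B_k}\right].
\]
Because the $B_j$ are pairwise disjoint, any summand indexed by a tuple $(i_1,\ldots,i_k)$ with a repeated index vanishes, so one can restrict the sum to tuples of pairwise distinct indices in $\{1,\ldots,N\}^k$.

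If $k>N$, no such tuple exists, the sum is empty, and the expectation is $0$; thus $\psi_k\equiv 0$ works. If $k\le N$, the symmetry of the density $f$ shows that each of the $N!/(N-k)!$ summands equals $\E[\mathds{1}_{X_1\in B_1}\cdots\mathds{1}_{X_k\in B_k}]$. Writing this as an integral against $f$ and applying Fubini yields
\[
\E\bigl(|Z\cap B_1|\cdots|Z\cap B_k|\bigr)=\frac{N!}{(N-k)!}\int_{B_1\times\cdots\times B_k}\left(\int_{\R^{N-k}}f(x_1,\ldots,x_N)\,dx_{k+1}\cdots dx_N\right)dx_1\cdots dx_k,
\]
which matches Definition \ref{def corellation gue} with the claimed $\psi_k$.

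There is no genuine obstacle here: the argument is purely combinatorial-measure-theoretic and does not use any structure of $\R$ versus $\C$ beyond the measurability and the existence of the Lebesgue measure. The only care needed is the restriction to tuples with distinct indices (justified by the disjointness of the $B_j$) and the use of the symmetry hypothesis on $f$ to collapse all such tuples to the canonical one $(1,2,\ldots,k)$.
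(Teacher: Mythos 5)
Your proposal is correct and follows exactly the same route as the paper: the paper proves the complex version (Proposition \ref{prop: correlation density}) by expanding the product, discarding tuples with repeated indices using the disjointness of the $B_j$, collapsing by symmetry to the canonical tuple, and counting $N!/(N-k)!$ terms, and then simply states the real case as its counterpart without repeating the argument. Your adaptation is the intended one and introduces no new ideas or gaps.
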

Doing the same computations as for the Ginibre case we can state the counterpart of Theorem \ref{thm ginibre dpp} and Theorem \ref{thm: fonction correlation ginibre}.
\begin{definition}
A (real) determinental point process of kernel $K:\,\R^2\to \R$ is a point process such that its correlation functions are of the form: $$\psi_k(x_1,...,x_k)=\det[K(x_i,x_j)]_{1\le i,j\le k},\,\text{for almost all  } (x_1,...x_k)\in\R^k,\,\forall k\in\N_{\ge 1}.$$
\end{definition}

\begin{theorem}
\label{gue dpp}
The eigenvalues of a $N$ complex Wigner matrix is a real determinental point process of kernel $K_N^H$.
\end{theorem}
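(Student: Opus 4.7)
The plan is to mirror the Ginibre proof step by step, replacing the orthonormal family $(\mu_j)_j$ on $\C$ with the normalized Hermite functions on $\R$. Concretely, define
\[
\mu_j^H(x) := \sqrt{\gamma_H(x)}\,\frac{H_j(x)}{(2\pi)^{1/4}\sqrt{j!}}, \qquad j\ge 0,
\]
so that, by the orthogonality relation \eqref{ortho hermite} from Proposition \ref{prop properties hermite}, the family $(\mu_j^H)_{j\ge 0}$ is orthonormal in $L^2(\R)$ for the Lebesgue measure, and
\[
K_N^H(x,y) = \sum_{l=0}^{N-1} \mu_l^H(x)\,\mu_l^H(y).
\]

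Next, I would establish the real analogue of Lemma \ref{lemma:semigroupprop}: by orthonormality,
\[
\int_\R K_N^H(x,x)\,dx = \sum_{l=0}^{N-1} \int_\R \mu_l^H(x)^2\,dx = N,
\]
and
\[
\int_\R K_N^H(x,y)K_N^H(y,z)\,dy = \sum_{k,l=0}^{N-1} \mu_l^H(x)\mu_k^H(z) \int_\R \mu_l^H(y)\mu_k^H(y)\,dy = K_N^H(x,z).
\]
With these two identities in hand, the cofactor-expansion argument in the proof of Proposition \ref{prop:fonctioncorredet} goes through verbatim (only the ambient space $\C$ is replaced by $\R$ and the kernel $K_N$ by $K_N^H$), yielding the inductive relation
\[
\int_\R \det[K_N^H(x_i,x_j)]_{1\le i,j\le k+1}\,dx_{k+1} = (N-k)\det[K_N^H(x_i,x_j)]_{1\le i,j\le k}
\]
for every $k<N$, and by iteration
\[
\int_{\R^{N-k}} \det[K_N^H(x_i,x_j)]_{1\le i,j\le N}\,dx_{k+1}\cdots dx_N = (N-k)!\,\det[K_N^H(x_i,x_j)]_{1\le i,j\le k}.
\]

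Finally, I would invoke the real-analogue of Proposition \ref{prop: correlation density} (stated right before the theorem) together with the determinantal form of the joint density obtained in Theorem \ref{thm law eigenvalues guen}, namely $\phi_N^H(x_1,\ldots,x_N) = \tfrac{1}{N!}\det[K_N^H(x_i,x_j)]_{1\le i,j\le N}$. This gives, for all $k\le N$ and almost every $(x_1,\ldots,x_k)\in\R^k$,
\[
\psi_k(x_1,\ldots,x_k) = \frac{N!}{(N-k)!}\int_{\R^{N-k}} \phi_N^H(x_1,\ldots,x_N)\,dx_{k+1}\cdots dx_N = \det[K_N^H(x_i,x_j)]_{1\le i,j\le k},
\]
which is exactly the defining property of a real determinantal point process with kernel $K_N^H$. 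For $k>N$ one observes that $[K_N^H(x_i,x_j)]_{1\le i,j\le k}$ factors as a product of a $k\times N$ matrix and an $N\times k$ matrix (via the $\mu_l^H$), hence has rank at most $N<k$, so its determinant vanishes, matching $\psi_k\equiv 0$ in that range.

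There is no genuine obstacle here: the only nontrivial input is the orthonormality of the normalized Hermite functions, which is precisely \eqref{ortho hermite}. The remainder is algebraic bookkeeping identical to the Ginibre argument, and the mild care needed is simply to track the $(2\pi)^{1/2}$ factors that appear in \eqref{ortho hermite} so that the $\mu_l^H$ are genuinely $L^2$-orthonormal and the prefactor $1/N!$ in $\phi_N^H$ matches the determinantal formula.
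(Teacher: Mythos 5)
Your proof is correct and is precisely what the paper means when it writes ``Doing the same computations as for the Ginibre case we can state the counterpart of Theorem \ref{thm ginibre dpp}'': orthonormality of the normalized Hermite functions replaces Lemma \ref{orthogonality}, the semigroup identity and cofactor expansion of Proposition \ref{prop:fonctioncorredet} carry over verbatim with $\C$ replaced by $\R$, and the rank argument handles $k>N$.
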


\begin{theorem}
\label{thm: gue correlation function}
For all $N\ge 1$, the density with respect to the real Lebesgue measure of the eigenvalues of a $N$ complex Wigner matrix is given by: $$ \phi_N^H(x_1,...,x_N)=\cfrac{1}{N!}\det[K_N^H(x_i,x_j)]_{1\le i,j\le N},$$
with $K_N^H$ the kernel defined by: $$K_N^H(x,y)=\sqrt{\gamma_H(x)\gamma_H(y)}\sum_{l=0}^{N-1}\cfrac{H_{l}(x)H_l(y)}{\sqrt{2\pi}\, l!}, \,\,\forall(x,y)\in\R^2.$$
Moreover we have an expression of the marginal densities of this law defined for $1\le k\le N$, by: $$(x_1,...x_k)\in\R^k\mapsto\phi_{N,k}^H(x_1,...x_k):=\int_{\R^{N-k}}\phi_{N}^H(x_1,...,x_N)dx_{k+1}...dx_N=\cfrac{(N-k)!}{N!}\psi_k^H(x_1,...,x_k),$$
with $\psi_k^H$ the correlation functions of this point process introduced in Definition \ref{def corellation gue}.
\newline
Indeed for all $1\le k\le N$, and for all $(x_1,...,x_k)\in\R^k$, we have: $$\phi_{N,k}^H(x_1,...,x_k)=\cfrac{(N-k)!}{N!}\det[K_N^H(x_i,x_j)]_{1\le i,j\le k}.$$
In particular we have the following explicit formulas for the marginal $\phi_{N,N}^H=\phi_N$ and for $k=1$, for all $x\in\R$, $$\phi_{N,1}^H(x)=\cfrac{\gamma_H(x)}{N}\sum_{l=0}^{N-1}\cfrac{H_l(x)^2}{\sqrt{2\pi}\,l!}.$$
\end{theorem}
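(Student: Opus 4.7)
The argument is the Hermite analogue of the Ginibre computation done in Theorem~\ref{thm: fonction correlation ginibre}, with the orthogonality relation \eqref{ortho hermite} of Proposition~\ref{prop properties hermite} playing the role of Lemma~\ref{orthogonality}. The starting point is the explicit density obtained in Theorem~\ref{thm law eigenvalues guen}, namely
$$\phi_N^H(x_1,\dots,x_N)=\cfrac{\prod_{k=1}^N\gamma_H(x_k)}{(2\pi)^{N/2}\prod_{k=1}^N k!}\prod_{i<j}(x_i-x_j)^2,$$
so the whole issue is to rewrite the Vandermonde factor in the determinantal form involving $K_N^H$.

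\textbf{Step 1 (Vandermonde to Hermite).} I first use the Vandermonde identity to write $\prod_{i<j}(x_i-x_j)=\det[x_i^{j-1}]_{1\le i,j\le N}$. Since each $H_{j-1}$ is monic of degree $j-1$ by item~1 of Proposition~\ref{prop properties hermite}, elementary column operations (adding suitable linear combinations of the earlier columns to the $j$-th column) transform the matrix $[x_i^{j-1}]$ into $[H_{j-1}(x_i)]$ without changing the determinant. Squaring gives
$$\prod_{i<j}(x_i-x_j)^2=\det[H_{j-1}(x_i)]_{1\le i,j\le N}\cdot \det[H_{i-1}(x_j)]_{1\le i,j\le N}.$$

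\textbf{Step 2 (single determinant).} I then distribute the weight $\sqrt{\gamma_H(x_k)}/(2\pi)^{1/4}\sqrt{(k-1)!}$ into each row of each of the two determinants. By the Cauchy--Binet/product-of-determinants identity $\det(A)\det(B^T)=\det(AB)$, the resulting product rewrites as $\det[K_N^H(x_i,x_j)]_{1\le i,j\le N}/N!$, after absorbing the constants $(2\pi)^{N/2}$ and $\prod_k k!$ that appear in Theorem~\ref{thm law eigenvalues guen}. This proves the determinantal form of $\phi_N^H$.

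\textbf{Step 3 (marginals by the semigroup property).} By \eqref{ortho hermite} the functions $\mu_l^H(x):=\sqrt{\gamma_H(x)}\,H_l(x)/\sqrt{\sqrt{2\pi}\,l!}$ form an orthonormal system in $L^2(\R,dx)$, and $K_N^H(x,y)=\sum_{l=0}^{N-1}\mu_l^H(x)\mu_l^H(y)$. This immediately yields the real analogue of Lemma~\ref{lemma:semigroupprop}: $\int_\R K_N^H(x,x)\,dx=N$ and $\int_\R K_N^H(x,y)K_N^H(y,z)\,dy=K_N^H(x,z)$. With these two identities in hand I can now reproduce the inductive argument of Proposition~\ref{prop:fonctioncorredet} verbatim (expanding $\det[K_N^H(x_i,x_j)]_{1\le i,j\le k+1}$ along the last column, integrating in $x_{k+1}$, then expanding along the last row) to obtain
$$\int_{\R^{N-k}}\det[K_N^H(x_i,x_j)]_{1\le i,j\le N}\,dx_{k+1}\cdots dx_N=(N-k)!\,\det[K_N^H(x_i,x_j)]_{1\le i,j\le k},$$
which is precisely the stated formula for $\phi_{N,k}^H$.

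\textbf{Step 4 (the $k=1$ marginal).} Specializing $k=1$ gives $\phi_{N,1}^H(x)=\frac{1}{N}K_N^H(x,x)=\frac{\gamma_H(x)}{N}\sum_{l=0}^{N-1}H_l(x)^2/(\sqrt{2\pi}\,l!)$. The main subtlety of the whole proof is bookkeeping of the normalization constants: one must keep track of the three different factors $(2\pi)^{N/2}$, $\prod k!$ and the $\sqrt{2\pi}$ appearing in \eqref{ortho hermite} so that the weights that end up inside the kernel match exactly the expression given for $K_N^H$. Apart from this, no new idea beyond the Ginibre case is needed.
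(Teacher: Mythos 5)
Your proposal is correct and follows essentially the same route as the paper: start from the Vandermonde-squared density of Theorem~\ref{thm law eigenvalues guen}, convert powers to Hermite polynomials using their monicity, collapse the two determinants into $\frac{1}{N!}\det[K_N^H]$ with the weights absorbed via \eqref{ortho hermite}, and then obtain the marginals by transposing the Ginibre semigroup/induction argument (Lemma~\ref{lemma:semigroupprop} and Proposition~\ref{prop:fonctioncorredet}) to the real-line kernel. Your normalization check is also right: the mismatch between $\prod_{k=1}^N k!$ in the density and $\prod_{l=0}^{N-1}l!$ coming from the kernel is precisely the factor $N!$ needed.
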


\section{Circular law}
\label{Section:circularlaw}
The goal of this section is to establish the circular law for the Ginibre ensemble which states that the empirical distribution of the eigenvalues of a $N$  complex Ginibre random matrix up to a re normalization converges to the uniform law on the unit disk of the complex plan. 
\newline
Historically, the circular law was obtained for Ginibre random matrices by Ginibre and Mehta \cite{Mehta}. It was expected a phenomenon of universality which means that the result should not depend on the distribution of the entries of the random matrix. There were a lot of works in order to generalize the circular law in this direction. A general proof was obtained for random matrices with independent and identically distributed complex random variables with few hypothesis on the moments by Tao and Vu in \cite{TaoVu1}. Tao and Vu published many others papers on this topic as \cite{TaoVu2}, \cite{TaoVu3}. The key tools are the hermitization method introduced by Girko, potential theory and good estimates for the singular values of random matrices. 
\newline
For readers interested in this topic, we refer to the review on the circular law of Bordenave and Chafaï \cite{bordenave2012around} for a very detailed work on these methods and for historic comments on the circular law. 

\subsection{Mean circular law}
First we shall prove the circular law in mean and then in the following section we shall explain why the convergence is actually almost sure. 
\begin{theorem}[Mean circular law] 
\label{thm mean ginibre}
Let $\Lambda_N=(\lambda_1^N,...,\lambda_N^N)\in\C^N$ be the random spectrum of a $N$ complex Ginibre matrix. We write: $$\mu_N=\cfrac{1}{N}\sum_{k=1}^N\delta_{\lambda_k^N/\sqrt{N}}$$ the empirical distribution associated to $\Lambda_N/\sqrt{N}$. Moreover, let $\mu_\infty$ be the uniform distribution on the unit disk given by: $$\mu_\infty(dz)=\cfrac{\mathds{1}_{|z|\le 1}}{\pi}\,dz.$$
Then for every $f:\C\to \R$ bounded continuous function we have: 
$$\E\left[\int_\C f d\mu_N\right]\underset{N\to+\infty}{\longrightarrow} \int_\C f d\mu_\infty.$$

\end{theorem}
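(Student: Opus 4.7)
The plan is to exploit the determinantal structure established in the previous section: by exchangeability of the eigenvalues, the expectation $\E[\int f\, d\mu_N]$ depends only on the first marginal density $\phi_{N,1}$, whose closed form is given in Theorem \ref{thm: fonction correlation ginibre}. First I would write
\[
\E\left[\int_\C f\, d\mu_N\right] = \E\left[f\!\left(\lambda_1^N/\sqrt{N}\right)\right] = \int_\C f(z/\sqrt{N})\,\phi_{N,1}(z)\,dz = \int_\C f(w)\,\rho_N(w)\,dw,
\]
after the change of variables $w = z/\sqrt{N}$, where $\rho_N(w) := N\,\phi_{N,1}(\sqrt{N}\,w)$. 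Substituting the explicit expression for $\phi_{N,1}$ gives
\[
\rho_N(w) = \frac{1}{\pi}\,e^{-N|w|^2}\sum_{l=0}^{N-1}\frac{(N|w|^2)^l}{l!} = \frac{1}{\pi}\,\mP(Y_N \le N-1),
\]
where $Y_N \sim \mathrm{Poisson}(N|w|^2)$. The theorem therefore reduces to showing that the probability densities $\rho_N$ converge weakly to $\frac{1}{\pi}\mathds{1}_{|w|\le 1}$.

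For the pointwise limit, I would invoke the law of large numbers for the Poisson: $Y_N/N \to |w|^2$ in probability, so for every $w$ with $|w|\neq 1$,
\[
\rho_N(w)\underset{N\to+\infty}{\longrightarrow} \frac{1}{\pi}\,\mathds{1}_{|w|<1}.
\]
To pass to the limit inside the integral I would split it into a bounded region $|w| \le 1+\delta$ and its complement. On the bounded region, the uniform bound $\rho_N \le 1/\pi$ together with the pointwise convergence yield convergence by dominated convergence. For the tail, I would apply a Chernoff bound for the lower tail of the Poisson: writing $a = |w|^2 > 1$,
\[
\mP(Y_N \le N-1) \le \exp\bigl(N(\log a + 1 - a)\bigr),
\]
where the exponent is strictly negative for $a>1$ and behaves like $-Na$ as $a\to\infty$. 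Integrating in polar coordinates shows that $\int_{|w|>1+\delta}\rho_N(w)\,dw \to 0$ as $N\to\infty$. Combined with the pointwise limit, this yields weak convergence of $\rho_N(w)dw$ to the uniform measure on the unit disk and hence the theorem for every bounded continuous $f$.

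The main obstacle is precisely the tail estimate: since $\rho_N$ is only bounded by $1/\pi$ pointwise and the total mass is $1$, a naive domination by an integrable majorant is not available, so one really needs a sharp large-deviation-type bound for the Poisson distribution to transfer the pointwise collapse of $\rho_N$ outside the unit disk into the vanishing of the mass on that region. Once this Chernoff argument is made quantitative, the rest of the proof is a clean application of exchangeability, the explicit first marginal, and dominated convergence.
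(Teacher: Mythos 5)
Your reduction to the first marginal density and the rescaling $\rho_N(w)=N\phi_{N,1}(\sqrt{N}w)=\frac{1}{\pi}\mP(Y_N\le N-1)$ with $Y_N\sim\mathrm{Poisson}(N|w|^2)$ is exactly the same starting point as the paper. From there, however, you take a genuinely different route. The paper proves a quantitative, non-probabilistic estimate (the lemma bounding $|e_N(Nz)-e^{Nz}\mathds{1}_{|z|\le 1}|$ by an explicit error term $r_N(z)$) in order to obtain \emph{uniform} convergence of $\rho_N$ on every compact set avoiding the circle $|w|=1$, and then applies dominated convergence; the paper even relegates your Poisson/law-of-large-numbers interpretation to a remark and asserts that the pointwise convergence it gives ``is not sufficient.'' Your argument shows that assertion is too strong: you combine (i) pointwise a.e.\ convergence from the LLN, (ii) the trivial uniform bound $\rho_N\le 1/\pi$, which is an integrable majorant on the bounded region $\{|w|\le 1+\delta\}$, and (iii) a Chernoff lower-tail bound to kill the mass outside. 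This is entirely correct and in some ways cleaner: it never needs uniform convergence, and it handles all bounded continuous $f$ directly, whereas the paper's proof silently restricts to compactly supported $f$ and omits the final approximation step back to the bounded continuous case.

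One small simplification worth noting: the Chernoff tail estimate in step (iii), while valid (the exponent $\log a + 1 - a$ is indeed strictly negative for $a=|w|^2>1$), is not actually needed. Since $\rho_N$ is a probability density ($\int_\C\rho_N\,dw=\int_\C\phi_{N,1}\,dz=1$) and your DCT step on $\{|w|\le 1+\delta\}$ already gives $\int_{|w|\le 1+\delta}\rho_N\,dw\to 1$, conservation of total mass forces $\int_{|w|>1+\delta}\rho_N\,dw\to 0$ for free. So the only large-deviation ingredient you truly need is the LLN; the Chernoff bound is a bonus that gives a rate but is logically dispensable.
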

Before doing the proof, let us recall that with the notations of Theorem \ref{thm mean ginibre}, for every $N\ge 1$, we can define a probability measure that shall be written $\E(\mu_N)$ defined by: $$\int_\C f d\E(\mu_N):=\E\left[\int_\C fd\mu_N\right],$$ for all functions $f$ continuous and bounded. The circular law in mean state that the sequence of probability measures $(\E(\mu_N))_{N\ge1}$ converges in law towards the uniform distribution on the circle.
\begin{proof}
We follow the arguments of \cite{Chafai1}.
Let $f:\C\to\R$ be a continuous compact supported function. Using the law of the eigenvalues obtained in Theorem \ref{thm: fonction correlation ginibre} and the symmetry of the density, we have: 
\begin{equation}
\label{eq:circularinmean}
\begin{split}
\E\left[\int_\C f d\mu_N\right]&=\cfrac{1}{N}\sum_{k=1}^N\int_{\C^N}f\left(\cfrac{z_k}{\sqrt{N}}\right)\phi_N(z_1,...,z_N)dz_1...dz_N\\
&=\cfrac{1}{N}\sum_{k=1}^N\int_{\C^N}f\left(\cfrac{z_k}{\sqrt{N}}\right)\phi_N(z_k,z_2,...z_1,...,z_N)dz_k...dz_1...dz_N\\
&=\cfrac{1}{N}\sum_{k=1}^N\int_{\C}f\left(\cfrac{z_k}{\sqrt{N}}\right)\left[\int_{\C^{N-1}}\phi_N(z_k,z_2,...z_1,...,z_N)dz_2...dz_1...dz_N
\right]dz_k\\
&=\cfrac{1}{N}\sum_{k=1}^N\int_{\C}f\left(\cfrac{z_k}{\sqrt{N}}\right) \phi_{N,1}(z_k)dz_k\\
&=\int_{\C}f\left(\cfrac{z}{\sqrt{N}}\right) \phi_{N,1}(z)dz\\
&=N\int_{\C}f(z)\phi_{N,1}(\sqrt{N}z)dz
\end{split}
\end{equation}
where in the last equality we did the change of variable $z\to\sqrt{N}z$ (pay attention that the Jacobian of this transformation is $N$ since we must view $\C$ as $\R^2$ in the integration).
Hence, to conclude it remains to prove that for all $K$ compact included in $\{z\in\C: |z|\ne 1\}$ we have: $$\lim_{N\to\infty}\sup_{z\in K}\left|N\phi_{N,1}(\sqrt{N}z)-\cfrac{\mathds{1}_{|z|\le 1}}{\pi}\right|=0.$$
We recall the explicit expression of $\phi_{N,1}$: $$\phi_{N,1}(z)=\frac{1}{\pi N}\exp(-|z|^2)e_N(|z|^2), $$ with $e_N(z)=\sum_{k=0}^{N-1}z^k/k!$ the truncation of the exponential series up to the $N$ term.
To finish we just need a lemma to estimate the previous quantity.
\begin{lemma}
\label{lemma: approx}
For every $N\in\N$, and $z\in\C$, $$\left|e_N(Nz)-\exp(Nz)\mathds{1}_{|z|\le 1}\right|\le r_N(z), $$ with $r_N(z)$ an error term of the form $$r_N(z)=\cfrac{e^N}{\sqrt{2\pi N}}\,|z|^N\left(\cfrac{N+1}{N(1-|z|)+1}\mathds{1}_{|z|\le 1}+\cfrac{N}{N(|z|-1)+1}\mathds{1}_{|z|>1}\right).$$
\end{lemma}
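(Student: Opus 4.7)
The plan is to treat the two cases $|z|\le 1$ and $|z|>1$ separately, each by elementary estimates on the exponential series, and to bring in Stirling only at the very end to convert $N^N/N!$ into $e^N/\sqrt{2\pi N}$.

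In the case $|z|\le 1$, the indicator equals $1$, so I must control the tail
$$e_N(Nz)-\exp(Nz)=-\sum_{k\ge N}\frac{(Nz)^k}{k!}.$$
The triangle inequality reduces this to bounding $\sum_{k\ge N}(N|z|)^k/k!$. I would factor out the first term $(N|z|)^N/N!$, leaving
$$\sum_{j\ge 0}\frac{(N|z|)^j\,N!}{(N+j)!}=\sum_{j\ge 0}\frac{(N|z|)^j}{(N+1)(N+2)\cdots(N+j)}\le\sum_{j\ge 0}\Bigl(\frac{N|z|}{N+1}\Bigr)^{\!j}=\frac{N+1}{N(1-|z|)+1},$$
where the geometric series converges because $|z|\le 1$ forces $N|z|/(N+1)<1$ (and the limiting case $|z|=1$ is handled by monotone convergence on any $|z|<1$ and continuity). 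This produces exactly the factor appearing in $r_N(z)$ on the regime $|z|\le 1$.

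In the case $|z|>1$, the indicator vanishes and I must bound $|e_N(Nz)|\le\sum_{k=0}^{N-1}(N|z|)^k/k!$. Since the ratio of consecutive terms is $N|z|/(k+1)>1$ for every $k\le N-2$, the largest term is the last one, $(N|z|)^{N-1}/(N-1)!$. Reindexing backwards by $j=N-1-k$ gives
$$\sum_{k=0}^{N-1}\frac{(N|z|)^k}{k!}=\frac{(N|z|)^{N-1}}{(N-1)!}\sum_{j=0}^{N-1}\frac{(N-1)(N-2)\cdots(N-j)}{(N|z|)^j}.$$
Bounding each factor $(N-1-i)\le N-1$ in the numerator and extending the sum to infinity gives a geometric bound with ratio $(N-1)/(N|z|)<1$, worth
$$\frac{1}{1-(N-1)/(N|z|)}=\frac{N|z|}{N(|z|-1)+1}.$$
Combined with the identity $N\cdot (N-1)!=N!$, the right-hand side becomes $\frac{(N|z|)^N}{N!}\cdot\frac{N}{N(|z|-1)+1}$, which matches $r_N$ in the regime $|z|>1$.

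In both cases the common factor $N^N/N!$ is bounded by $e^N/\sqrt{2\pi N}$ via Stirling's lower bound $N!\ge\sqrt{2\pi N}(N/e)^N$, which finishes the proof. The main delicate point is choosing the correct termwise geometric comparison in each of the two regimes: the tail factorization yields the prefactor $(N+1)/(N(1-|z|)+1)$ (from a ratio $N|z|/(N+1)$), while the partial-sum factorization yields $N/(N(|z|-1)+1)$ (from a ratio $(N-1)/(N|z|)$), so the two estimates are not interchangeable and each must be derived from the right end of the sum.
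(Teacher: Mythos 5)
Your proposal is correct and follows essentially the same two-regime strategy as the paper: factor out the dominant term ($(N|z|)^N/N!$ for the tail when $|z|\le 1$, the last partial-sum term when $|z|>1$), compare with a geometric series, and apply Stirling at the end. The only cosmetic difference is that the paper first proves the estimate for $e_N(w)$ with the splitting $|w|\le N$ versus $|w|>N$ and substitutes $w=Nz$ afterward, whereas you carry the substitution throughout.
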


\begin{proof}
We consider two cases. First, for $|z|\le N$, we have: $$\left|e_N(z)-\exp(z)\mathds{1}_{|z|\le N}\right|\le\sum_{k\ge N}\cfrac{|z|^k}{k!}\le\cfrac{|z|^N}{N!}\sum_{k\ge 0}\cfrac{|z|^k}{(N+1)^k}=\cfrac{|z|^N}{N!}\cfrac{N+1}{N+1-|z|}.$$
Secondly if $|z|>N$, we get: 
$$|e_N(z)|\le\sum_{k=0}^{N-1}\cfrac{|z|^k}{k!}\le \cfrac{|z|^{N-1}}{(N-1)!}\sum_{k=0}^{N-1}\cfrac{(N-1)^k}{|z|^k}\le \cfrac{|z|^N}{(N-1)!}\cfrac{|z|}{N+1-|z|}.$$
To conclude we use the classical Stirling bound $\sqrt{2\pi N} N^N\le N!e^N$ in the two previous inequalities.
\end{proof}
Using Lemma \ref{lemma: approx} and the dominated convergence theorem in\eqref{eq:circularinmean} yields the result.
\end{proof}

\begin{Remark}
Let us mention a probabilistic point of view given in \cite{Chafai1} that gives the pointwise convergence of $N\phi_{N,1}(\sqrt N \bullet)$ to $\cfrac{1}{\pi}\left(\mathds{1}_{|z|<1}+\cfrac{1}{2}\mathds{1}_{|z|=1}\right)$ (but for the previous application it is not sufficient since we need to have a uniform convergence on every compact included in $\{z\in \C,\, |z|\ne 1\}$.) 
\newline
To prove this pointwise convergence, we only have to prove the result for $z=r>0$ by invariance by rotation of the formula. 
\newline 
Next, we remark that if $Y_1,...,Y_N$ are independent Poisson random variables of mean $r^2$, then $$\exp(-Nr^2)e_N(Nr^2)=\mP(Y_1+...+Y_N<N)=\mP\left(\cfrac{Y_1+...+Y_N}{N}<1\right), $$ since $Y_1+...+Y_N$ is a Poisson random variable of mean $Nr^2$. 
\newline
Now, using the strong law of large numbers we see that almost surely $\cfrac{Y_1+...+Y_N}{N}$ converges to $r^2$. We immediately deduce that if $r<1$ $$\lim_{N\to\infty}\exp(-Nr^2)e_N(Nr^2)=1$$ and if $r>1$, $$\lim_{N\to\infty}\exp(-Nr^2)e_N(Nr^2)=0.$$
Finally, for the case $r=1$ we use the central limit theorem to have that: $$\exp(-Nr^2)e_N(Nr^2)=\mP\left(\sqrt{N}\left(\cfrac{Y_1+...+Y_N}{N}-1\right)<0\right)\underset{N\to\infty}{\longrightarrow}\int_{\R^-}\cfrac{\exp(-x^2/2)}{\sqrt{2\pi}}dx=\cfrac{1}{2}.$$
\end{Remark}

\subsection{Strong circular law}
To prove an almost sure convergence thanks to the mean convergence we will first that the empirical mean is near its expectation almost surely. We follow the arguments of \cite{Chafai1}.
\newline
Let us also mention that the strong circular law can be viewed as a corollary of large deviations of Coulomb gases in dimension 2 that shall be mentioned 
in Section \ref{subsection: extenson} giving another proof of the strong circular law stated in this section.

\begin{lemma}
\label{lemma:strongcircularlaw}
For all $f$ continuous bounded function from $\C$ to $\R$, we can find a constant $C>0$ that only depends on $f$ such that for all $N\ge 1$: $$\E\left[ \left(\int_\C f d\mu_N -\E\left(\int_\C f d\mu_N\right)\right)^4\right]\le \cfrac{C}{N^2}.$$
\end{lemma}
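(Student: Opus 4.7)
The plan is to exploit the determinantal structure of the eigenvalues (Theorem~\ref{thm ginibre dpp}) to express the central moments of the linear statistic $Y_N := N \int_\C f\,d\mu_N = \sum_{k=1}^N g(\lambda_k^N)$, with $g(z):=f(z/\sqrt N)$, as trace--like integrals of the kernel $K_N$. Since $N^4\,\E[X_N^4] = \E[(Y_N - \E Y_N)^4]$, the target becomes $\E[(Y_N - \E Y_N)^4]\le C N^2$, and I will use the classical relation $\E[(Y_N - \E Y_N)^4] = \kappa_4(Y_N) + 3\,\kappa_2(Y_N)^2$ between centred moments and cumulants, then bound each cumulant separately.

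\emph{Variance ($n=2$).} Using the formula $\rho_2(z,w) = K_N(z,z)K_N(w,w) - |K_N(z,w)|^2$ from Theorem~\ref{thm: fonction correlation ginibre} together with the reproducing identity $\int_\C |K_N(z,w)|^2\,dw = K_N(z,z)$, which is immediate from the orthonormality in Lemma~\ref{orthogonality}, a symmetrization yields
$$\Var(Y_N) \;=\; \tfrac12 \iint_{\C^2}(g(z)-g(w))^2|K_N(z,w)|^2\,dz\,dw \;\le\; 2\|f\|_\infty^2\int_\C K_N(z,z)\,dz \;=\; 2\|f\|_\infty^2\,N,$$
so that $3\kappa_2(Y_N)^2 \le 12\|f\|_\infty^4 N^2$, which already has the desired size.

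\emph{Fourth cumulant via Fredholm determinants.} For any determinantal point process with trace-class kernel, the summation $\E\bigl[\prod_i(1+\phi(\lambda_i))\bigr] = \sum_n \frac{1}{n!}\int\rho_n\prod\phi(z_i)\,dz_i$ combined with $\rho_n=\det[K_N(z_i,z_j)]$ gives the standard identity
$$\E[e^{tY_N}] \;=\; \det\bigl(I + (e^{tg}-1)K_N\bigr).$$
Taking the logarithm and expanding $\mathrm{tr}\log(I+A) = \sum_{m\ge 1}\tfrac{(-1)^{m+1}}{m}\mathrm{tr}(A^m)$ with $A = (e^{tg}-1)K_N$, then extracting the coefficient of $t^n/n!$, expresses $\kappa_n(Y_N)$ as a finite linear combination of cyclic trace integrals
$$J_{k_1,\ldots,k_m} \;:=\; \mathrm{tr}\bigl(M_{g^{k_1}} K_N\,M_{g^{k_2}} K_N\,\cdots\,M_{g^{k_m}} K_N\bigr), \qquad k_1+\cdots+k_m = n,$$
where $M_h$ is multiplication by $h$. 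The crucial structural fact, again from Lemma~\ref{orthogonality}, is that $K_N$ is the orthogonal projection in $L^2(\C)$ onto $\mathrm{span}\{\mu_0,\ldots,\mu_{N-1}\}$, so $\|K_N\|_{op}=1$ and $\mathrm{rank}(K_N)=N$. Consequently each operator $M_{g^{k_i}} K_N$ has rank at most $N$ and operator norm at most $\|g\|_\infty^{k_i}$, hence at most $N$ nonzero singular values each bounded by $\|g\|_\infty^{k_i}$, so its Schatten-$m$ norm satisfies $\|M_{g^{k_i}} K_N\|_m \le N^{1/m}\,\|g\|_\infty^{k_i}$. H\"older's inequality for Schatten norms then gives
$$|J_{k_1,\ldots,k_m}| \;\le\; \prod_{i=1}^m \|M_{g^{k_i}} K_N\|_m \;\le\; N\,\prod_{i=1}^m \|g\|_\infty^{k_i} \;=\; \|f\|_\infty^n\,N.$$
Summing the finitely many cyclic integrals that make up $\kappa_4$ yields $|\kappa_4(Y_N)|\le C\|f\|_\infty^4 N$. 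Combined with the variance bound, $\E[(Y_N-\E Y_N)^4]\le C\|f\|_\infty^4 N + 12\|f\|_\infty^4 N^2 \le C'\|f\|_\infty^4 N^2$, and dividing by $N^4$ closes the argument.

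\emph{Main obstacle.} The variance estimate is short and only uses the reproducing kernel identity, but the fourth cumulant genuinely requires the determinantal structure: a na\"\i ve pointwise bound on $|K_N(z,w)|$ would lose powers of $N$ in the four-fold cyclic integrals. The essential saving is that $K_N$ is a rank-$N$ orthogonal projection, so every operator $M_h K_N$ has at most $N$ nonzero singular values --- morally, $\mathrm{tr}(K_N^m)=N$ for all $m\ge 1$ rather than $N^m$. This cancellation, intrinsic to the determinantal structure, is exactly the mechanism that makes linear statistics of DPPs fluctuate on the scale $1/N$ rather than $1/\sqrt N$.
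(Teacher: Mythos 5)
Your proof is correct, and it takes a genuinely different (and arguably cleaner) route than the paper, which actually gives no proof at all and simply defers to~\cite{Hwang}, describing the deferred argument as a "quite painful" direct expansion of the fourth moment in the spirit of the $L^4$ proof of the strong law of large numbers. That combinatorial approach proceeds by expanding $\E[(Y_N-\E Y_N)^4]$ into four-fold integrals against $\rho_1,\ldots,\rho_4$, tracking coincidences of indices and the cancellations coming from the inclusion--exclusion structure of the determinantal correlations; it works but obscures the mechanism. Your argument packages all of that combinatorics at once through the identity $\E[e^{tY_N}]=\det(I+(e^{tg}-1)K_N)$ and the cumulant expansion $\log\det(I+A)=\sum_m\frac{(-1)^{m+1}}{m}\TR(A^m)$, then isolates the structural reason the bound holds: since $K_N$ is the rank-$N$ orthogonal projection onto $\mathrm{span}\{\mu_0,\ldots,\mu_{N-1}\}$ (Lemma~\ref{orthogonality}, Remark~\ref{remark:ortho}), every operator $M_h K_N$ has at most $N$ nonzero singular values, each bounded by $\|h\|_\infty$, so $\|M_h K_N\|_m\le N^{1/m}\|h\|_\infty$ and Schatten--H\"older gives each cyclic trace $J_{k_1,\ldots,k_m}$ a bound of order $N$ rather than $N^m$. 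This, together with the variance identity $\Var(Y_N)=\frac12\iint(g(z)-g(w))^2|K_N(z,w)|^2\,dz\,dw\le 2\|f\|_\infty^2 N$ (which uses the reproducing property $\int_\C|K_N(z,w)|^2\,dw=K_N(z,z)$ from Lemma~\ref{lemma:semigroupprop}) and $\E[(Y_N-\E Y_N)^4]=\kappa_4(Y_N)+3\kappa_2(Y_N)^2$, yields the claimed $O(N^2)$ bound on the fourth centred moment of $Y_N$ and hence $O(N^{-2})$ after dividing by $N^4$. Your proof is also more informative: it isolates the rank-$N$ projection property as the one fact responsible for the $1/N$ (rather than $1/\sqrt{N}$) fluctuation scale of determinantal linear statistics, a point the brute-force computation obscures. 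As a very minor stylistic remark, you might note that $\kappa_n(Y_N-\E Y_N)=\kappa_n(Y_N)$ for $n\ge 2$ to justify passing between centred moments and cumulants, and that the Fredholm determinant here is a genuine finite-dimensional determinant since $K_N$ has finite rank, so no trace-class subtleties arise.
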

We shall not give the proof. The idea of the proof is based on the same arguments as the proof of the law of large numbers with hypothesis that the fourth moment is bounded.
The proof is quite painful. We refer to \cite{Hwang} for a complete proof of this point.

\begin{corollary}
\label{coro:cvps}
For all $f$ continuous bounded function from $\C$ to $\R$, almost surely we have: $$\lim_N\left(\int_\C f d\mu_N -\E\left(\int_\C f d\mu_N\right)\right)=0.$$
\end{corollary}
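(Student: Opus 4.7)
The plan is to deduce the almost sure convergence from the fourth moment bound of Lemma \ref{lemma:strongcircularlaw} via a standard Borel--Cantelli argument. Denote $X_N := \int_\C f\, d\mu_N - \E\left[\int_\C f\, d\mu_N\right]$, which is a well-defined real random variable since $f$ is bounded.

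First, I would apply Markov's inequality to the nonnegative random variable $X_N^4$: for any fixed $\veps > 0$,
\[
\mP(|X_N| > \veps) = \mP(X_N^4 > \veps^4) \le \frac{\E[X_N^4]}{\veps^4} \le \frac{C}{\veps^4 N^2},
\]
where the last inequality uses Lemma \ref{lemma:strongcircularlaw} and the constant $C$ depends only on $f$. The point of having a fourth moment rather than only a second moment is precisely that $\sum_{N\ge 1} N^{-2} < \infty$, whereas $\sum_N N^{-1}$ diverges.

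Next, by the (first) Borel--Cantelli lemma applied to the events $A_N^\veps := \{|X_N| > \veps\}$, we obtain $\mP(\limsup_N A_N^\veps) = 0$, i.e.\ almost surely $|X_N| \le \veps$ for all $N$ large enough. To upgrade this to $X_N \to 0$ almost surely, I would take a countable family $\veps_k = 1/k$ and intersect the corresponding almost sure events: on the intersection (still of full probability, as a countable intersection of full-probability events), for every $k \ge 1$ we have $|X_N| \le 1/k$ eventually, which exactly means $X_N \to 0$.

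There is essentially no main obstacle here: the whole content lies in the fourth moment estimate of Lemma \ref{lemma:strongcircularlaw}, which is taken as input. The only thing to check is that the choice of the exponent $4$ (as opposed to $2$) is what makes the series $\sum_N \mP(|X_N|>\veps)$ summable and thus allows Borel--Cantelli to apply; a second moment bound of the same quality would not suffice to conclude almost sure convergence directly.
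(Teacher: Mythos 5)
Your proof is correct, but it takes a slightly different (equally standard) route from the paper. The paper's proof is more direct: it sums the fourth-moment bound to get $\sum_N \E[X_N^4]<\infty$, applies Fubini--Tonelli to conclude $\E\left[\sum_N X_N^4\right]<\infty$, and hence deduces that almost surely $\sum_N X_N^4<\infty$, which immediately forces $X_N\to 0$ almost surely — no Markov inequality, no explicit Borel--Cantelli, no intersection over a countable family of $\varepsilon_k$. You instead convert the moment bound into a tail bound via Markov, apply the first Borel--Cantelli lemma for each fixed $\varepsilon$, and then take a countable intersection over $\varepsilon_k=1/k$. Both arguments are valid and rely on exactly the same input (the fourth-moment estimate and the summability of $N^{-2}$); the paper's version just bypasses the $\varepsilon$-bookkeeping, while yours makes the Borel--Cantelli structure explicit, which is perhaps pedagogically more transparent but also a bit longer. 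Your remark about why a second-moment bound of the same order would not suffice is correct and applies equally to the paper's argument.
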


\begin{proof}
With the notations of Lemma \ref{lemma:strongcircularlaw}, we get that: $$\sum_{N\ge 1} \E\left[ \left(\int_\C f d\mu_N -\E\left(\int_\C f d\mu_N\right)\right)^4\right] <\infty.$$
Thanks to Fubini theorem, we get that: $$ \E\left[\sum_{N\ge 1} \left(\int_\C f d\mu_N -\E\left(\int_\C f d\mu_N\right)\right)^4\right]< \infty.$$
So almost surely: $$\sum_{N\ge 1} \left(\int_\C f d\mu_N -\E\left(\int_\C f d\mu_N\right)\right)^4<\infty.$$ It implies directly that almost surely $$\lim_N\left(\int_\C f d\mu_N -\E\left(\int_\C f d\mu_N\right)\right)=0.$$
\end{proof}

\begin{Remark}
We shall see in Section \ref{section:wignerthm} that we can obtain this result by a concentration inequality in the Wigner case. We shall explain at this moment why we can not do these methods for the Ginibre case.
\end{Remark}

We can now state the strong circular law.
\begin{theorem}
With the notations of the mean circular law, we almost surely have the following convergence: 
$$\mu_N\overunderset{\mathcal L}{N\to+\infty}{\longrightarrow}\mu_\infty.$$
\end{theorem}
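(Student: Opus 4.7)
The strategy is to combine the mean circular law (Theorem \ref{thm mean ginibre}) with the concentration statement of Corollary \ref{coro:cvps}: for each fixed continuous bounded $f$, combining the two yields $\int_\C f\, d\mu_N \to \int_\C f\, d\mu_\infty$ almost surely. The only real subtlety is the quantifier order: Corollary \ref{coro:cvps} produces, \emph{for each $f$}, an event $A_f$ of probability $1$ on which convergence holds, whereas we need a single event $A$ of probability $1$ on which convergence holds \emph{simultaneously} for every bounded continuous $f$.

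First, I would reduce to a countable class of test functions. Since $\C$ is a Polish space, the Banach space $C_0(\C)$ (continuous functions vanishing at infinity, equipped with the sup norm) is separable; pick a countable dense subset $(f_k)_{k\ge 1}\subset C_0(\C)$. For each $k$, Corollary \ref{coro:cvps} gives an event $A_k$ with $\mP(A_k)=1$ on which $\int_\C f_k\, d\mu_N - \E[\int_\C f_k\, d\mu_N] \to 0$, while Theorem \ref{thm mean ginibre} yields $\E[\int_\C f_k\, d\mu_N] \to \int_\C f_k\, d\mu_\infty$. Setting $A=\bigcap_k A_k$, we have $\mP(A)=1$, and on $A$ we get $\int_\C f_k\, d\mu_N \to \int_\C f_k\, d\mu_\infty$ for every $k$. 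A routine $3\varepsilon$ argument (uniform bound $\mu_N(\C)=\mu_\infty(\C)=1$ together with density in sup norm) then upgrades this to convergence on $A$ for \emph{every} $f\in C_0(\C)$.

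Next, I would establish tightness of $(\mu_N)_{N\ge 1}$ on the event $A$. Pick a cutoff $\chi_R\in C_c(\C)$ with $0\le\chi_R\le 1$ and $\chi_R\equiv 1$ on the disk $B(0,R)$. Since $\chi_R\in C_0(\C)$, on $A$ we have $\int\chi_R\, d\mu_N \to \int\chi_R\, d\mu_\infty$. But $\mu_\infty$ is supported on the unit disk, so for any $R>1$ we have $\int\chi_R\, d\mu_\infty = 1$, hence $\liminf_N \mu_N(B(0,R))\ge \int\chi_R\, d\mu_N \to 1$ on $A$.

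Finally I would extend to the full class of bounded continuous test functions. Given $f\in C_b(\C)$ and $\varepsilon>0$, write $f=f\chi_R + f(1-\chi_R)$. The first summand lies in $C_0(\C)$, so $\int f\chi_R\, d\mu_N\to \int f\chi_R\, d\mu_\infty$ on $A$; the remainders are bounded by $\|f\|_\infty\mu_N(B(0,R)^c)$ and $\|f\|_\infty\mu_\infty(B(0,R)^c)$, both of which are smaller than $\varepsilon$ for $R$ large enough, the first by the tightness just obtained. Hence on $A$, $\int f\, d\mu_N \to \int f\, d\mu_\infty$ for every $f\in C_b(\C)$, which is exactly weak convergence $\mu_N \weak \mu_\infty$ almost surely. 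The only nontrivial obstacle in this plan is the quantifier swap in step one; the rest is standard functional-analytic bookkeeping.
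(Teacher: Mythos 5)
Your proposal is correct and takes essentially the same route as the paper: combine the mean circular law with Corollary \ref{coro:cvps} for each fixed test function, intersect over a countable dense family of test functions to get a single almost-sure event, then upgrade to all bounded continuous $f$ by a density and tightness argument. The paper compresses that last upgrade into a citation to Billingsley; you spell it out, which is fine. One small slip in your tightness step: since $\chi_R \ge \mathds{1}_{B(0,R)}$, the inequality $\mu_N(B(0,R))\ge\int_\C\chi_R\,d\mu_N$ goes the wrong way; what you want is $\mu_N(B(0,R'))\ge\int_\C\chi_R\,d\mu_N$ for any $R'$ with $\mathrm{supp}(\chi_R)\subset B(0,R')$, which gives $\limsup_N\mu_N\left(B(0,R')^c\right)=0$ and is exactly what the extension to $C_b(\C)$ needs.
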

\begin{proof}
Let $f$ be a continuous bounded function from $\C$ to $\R$. On the one hand, thanks to the mean circular law we have that: $$\E\left[\int_\C f d\mu_N\right]\underset{N\to+\infty}{\longrightarrow} \int_\C f d\mu_\infty.$$ Thanks to Corollary \ref{coro:cvps} we almost surely have that: $$\lim_{N\to\infty}\left(\int_\C f d\mu_N-
\E\left[\int_\C f d\mu_N\right]\right)=0.$$
So we deduce that for all $f$ continuous and bounded functions from $\C$ to $\R$, almost surely: $$\int_\C f d\mu_N\underset{N\to+\infty}{\longrightarrow} \int_\C f d\mu_\infty.$$ 
Then, using a countable sequence of functions dense in the set of continuous and compactly supported functions from $\C$ to $\R$, standard arguments in measure theory (see for instance \cite{billingsley2013convergence}) give that almost surely for all $f$ continuous bounded functions from $\C$ to $\R$, $$\int_\C f d\mu_N\underset{N\to+\infty}{\longrightarrow} \int_\C f d\mu_\infty.$$
Hence, we get that almost surely: $$\mu_N\overunderset{\mathcal L}{N\to+\infty}{\longrightarrow}\mu_\infty.$$
\end{proof}

\subsection{Localisation of the spectrum}
First we shall give the law of the modulus of eigenvalues of a $N$ complex Ginibre random matrix. 
\begin{prop}
\label{prop:loimodule}
Let $\Lambda_N=(\lambda_1,...,\lambda_N)$ be the exchangeable vector of eigenvalues of a $N$ complex Ginibre random matrix. Then we have the following equality in law: $$(|\lambda_1|,...,|\lambda_N|)\equalaw(Z_{\sigma(1)},...,Z_{\sigma(N)}),$$ with $Z_1,...,Z_N$ is a family of independent random variables such that for all $k\ge 1$ the law of $Z_k^2$ is $\emph{Gamma}(k,1)$ and $\sigma$ is a random variable whose law is uniform in the group $\mathfrak{S}_N$ independent of $(Z_i)_{1\le i\le N}$.
More explicitly we have that for all $F:\R^N_+\to\R$ bounded symmetric function: $$\E(F(|\lambda_1|,...,|\lambda_N|))=\E(F(Z_{1},...,Z_{N})).$$
\end{prop}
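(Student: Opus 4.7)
The plan is to read off the joint density of $(|\lambda_1|,\ldots,|\lambda_N|)$ directly from the explicit joint density $\phi_N$ of the eigenvalues given in Theorem~\ref{thm : law spectrum ginibre}, by passing to polar coordinates and integrating out the angular variables, and then to recognise the resulting expression as the density corresponding to the right-hand side of the claim.

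A preliminary observation cuts down the work. Both $(|\lambda_1|,\ldots,|\lambda_N|)$ and $(Z_{\sigma(1)},\ldots,Z_{\sigma(N)})$ are exchangeable random vectors (the former because the spectrum is exchangeable, the latter because $\sigma$ is uniform on $\mathfrak{S}_N$ and independent of the $Z_i$'s), so their laws coincide iff their expectations agree on every bounded \emph{symmetric} test function $F$. For such $F$ one has $F(Z_{\sigma(1)},\ldots,Z_{\sigma(N)}) = F(Z_1,\ldots,Z_N)$ almost surely, so it is enough to prove
$$\E[F(|\lambda_1|,\ldots,|\lambda_N|)] = \E[F(Z_1,\ldots,Z_N)] \qquad \text{for every bounded symmetric }F:\R_+^N\to\R.$$

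The heart of the argument is the angular computation. Writing $z_k = r_k e^{i\theta_k}$ with $\dd z_k = r_k\,\dd r_k\,\dd\theta_k$ and expanding the squared Vandermonde as a double sum,
$$|\Delta_N(z_1,\ldots,z_N)|^2 = \sum_{\sigma,\tau\in\mathfrak{S}_N}\varepsilon(\sigma)\varepsilon(\tau)\prod_{k=1}^N r_k^{\sigma(k)+\tau(k)-2}\,e^{i\theta_k(\sigma(k)-\tau(k))},$$
the elementary orthogonality $\int_0^{2\pi} e^{in\theta}\,\dd\theta = 2\pi\,\IND_{n=0}$ selects only the diagonal $\sigma=\tau$ and produces
$$\int_{[0,2\pi]^N} |\Delta_N|^2\,\dd\theta_1\cdots\dd\theta_N = (2\pi)^N \sum_{\sigma\in\mathfrak{S}_N}\prod_{k=1}^N r_k^{2\sigma(k)-2}.$$
Combining this with the Gaussian weight $\prod_k\gamma(z_k)=\pi^{-N}\prod_k e^{-r_k^2}$ and the constant $1/\prod_k k!$ from Theorem~\ref{thm : law spectrum ginibre}, and using symmetry of $F$ to perform the coordinate permutation $r_k\mapsto s_{\sigma(k)}$ inside each summand (which makes every one of the $N!$ terms identical), I would arrive at
$$\E[F(|\lambda_1|,\ldots,|\lambda_N|)] = \frac{2^N\,N!}{\prod_{k=1}^N k!}\int_{[0,\infty)^N} F(s)\prod_{k=1}^N s_k^{2k-1}\,e^{-s_k^2}\,\dd s_1\cdots\dd s_N.$$

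To conclude, the change of variable $x=s^2$ shows that $Z_k^2\sim\mathrm{Gamma}(k,1)$ yields the density $\frac{2 s^{2k-1}e^{-s^2}}{(k-1)!}\IND_{s\ge 0}$ for $Z_k$, and since $\prod_{k=1}^N(k-1)! = \prod_{k=1}^N k!/N!$, the joint density of the independent family $(Z_1,\ldots,Z_N)$ is exactly $\frac{2^N N!}{\prod_k k!}\prod_k s_k^{2k-1}e^{-s_k^2}$, matching the previous display. The only non-routine step is the orthogonality argument that collapses the double Vandermonde sum to its diagonal, and the trickiest practical point is the careful bookkeeping of the factorials $\prod_k k!$ coming from the normalisation of $\phi_N$ and the Gamma densities, where a slip by a factor of $N!$ is easy.
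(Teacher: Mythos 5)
Your proof is correct and follows essentially the same route as the paper: pass to polar coordinates, expand the squared Vandermonde into a double sum over permutations, use the angular orthogonality $\int_0^{2\pi}e^{in\theta}\,d\theta = 2\pi\,\IND_{n=0}$ to retain only the diagonal, and identify the resulting radial density with the independent Gamma family after careful bookkeeping of the $\prod_k k!$ and $N!$ factors. The only cosmetic difference is that the paper packages the radial density as a permanent and appeals to the general fact that $\tfrac{1}{N!}\mathrm{per}[f_k(x_j)]$ is the density of a uniformly permuted vector of independent coordinates, whereas you reduce at the outset to symmetric test functions and collapse the $N!$ summands directly.
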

\begin{Remark}
We recall that a random variable has a $\emph{Gamma}(k,\lambda)$ distribution if its density with respect to Lebesgue measure is given by $x\mapsto \lambda^k x^{k-1}\exp(-\lambda x)\mathds 1_{x\ge 0}/\Gamma(k)$ with $\Gamma(x)$ the Gamma function of Euler.
\end{Remark}
\begin{proof}
We follow the arguments of \cite{Chafai1} that can also be found in \cite{kostlan1992spectra}.
Using the law of the eigenvalues of $N$ complex Ginibre matrix we shall go in polar coordinate and integrate with respect to the angles to have the distribution of the modulus of the eigenvalues. 
\newline
In polar coordinates we can write the density of the eigenvalues as follow: $$(r_1,...r_N,\theta_1,...,\theta_N)\mapsto \cfrac{\exp(-\sum_{i=1}^Nr_i^2)}{\pi^N \prod_{i=1}^N k!} \prod_{j<k}|r_j\exp(i\theta_j)-r_k\exp(i\theta_k)|^2.$$
To integrate this we shall linearise the product. 
\newline
To do so we interpret the product as the product of the determinant of two matrices: 
\begin{align*}
\prod_{j<k}|r_j\exp(i\theta_j)-r_k\exp(i\theta&_k)|^2=\prod_{j<k}\left(r_j\exp(i\theta_j)-r_k\exp(i\theta_k)\right) \prod_{j<k}\left(r_j\exp(-i\theta_j)-r_k\exp(-i\theta_k)\right)\\
&=\det[r_j^{k-1}\exp(i(k-1)\theta_j)]_{1\le j,k\le N}\det[r_j^{k-1}\exp(-i(k-1)\theta_j)]_{1\le j,k\le N}\\
&=\sum_{\sigma,\sigma'\in\mathfrak{S}_N}(-1)^{\varepsilon(\sigma)+\varepsilon(\sigma')}\prod_{j=1}^N r_j^{\sigma(j)+\sigma'(j)-2}\exp(i(\sigma(j)-\sigma'(j))\theta_j).
\end{align*}
Integrating this term with respect to the angles, the only term that shall not vanish is when $\sigma=\sigma'$. 
Hence it gives: $$\int_{[0,2\pi]^N} \prod_{j<k}|r_j\exp(i\theta_j)-r_k\exp(i\theta_k)|^2 d\theta_1...
d\theta_N=(2\pi)^N\sum_{\sigma\in\mathfrak{S}_N}\prod_{j=1}^N r_j^{2\sigma(j)-2}.$$
Now let us recall a linear algebra notation. Let $A\in M_N(C)$, we call permanent of $A$ the following quantity: $$\text{per}(A)=\sum_{\sigma\in\mathfrak{S}_N}\prod_{j=1}^N A_{j,\sigma(j)}.$$
Thanks to this notation we can write a compact formula for the density of the modulus of the eigenvalues: 
\begin{align*}
\int_{[0,2\pi]^N}\cfrac{e^{-\sum_{i=1}^Nr_i^2}}{\pi^N \prod_{i=1}^N k!} \prod_{j<k}|r_j\exp(i\theta_j)-r_k\exp(i\theta_k)|^2 d\theta_1...d\theta_N&=\cfrac{2^N\exp(-\sum_{i=1}^Nr_i^2)}{ \prod_{i=1}^N k!}\text{per}\left[r_j^{2k}\right]_{1\le j,k\le N}\\
&=\cfrac{1}{N!}\,\text{per}\left[\cfrac{2}{(k-1)!}\,r_j^{2k}\,\exp(-r_j^2)\right]_{1\le j,k\le N}.
\end{align*}
We recognise inside the permanent the expression of the expected density. Indeed, if $Z_k$ has a density with respect to Lebesgue measure given by $g_k(r)=\cfrac{2}{(k-1)!}\,r^{2k}\exp(-r^2)\mathds{1}_{r\ge0}$ then an easy computation show that $Z_k^2$ has a $\text{Gamma}(k,1)$ law. 
\newline Finally we just need to notice that if $X_1,...,X_N$ are independent random variables with there respective densities given by $(f_k)_{1\le k\le N}$ then $(x_1,...x_N)\mapsto \cfrac{1}{N!}\,\text{per}[f_k(x_j)]_{1\le j,k\le N}$ is the density of the random variable $(X_{\sigma(1)},...,X_{\sigma(N)})$ with $\sigma$ a uniform permutation in $\mathfrak{S}_N$ independent of the $(X_i)_{1\le i\le N}$ . 
It proves the result.
\end{proof}

We first mention an application of this result to study the hole probability of the point process of the eigenvalues of a $N$ complex Ginibre matrices with $N$ large, that is the probability that a disk of radius $r$ contains no points of the point process when $r$ becomes large. 
\newline
We shall define the infinite Ginibre ensemble as the point process with kernel $K_\infty$ obtained as the limit of the kernel of $N$ complex Ginibre matrices: $$\forall (z,w)\in\C^2, \, \lim_{N\to+\infty}K_N(z,w)=:K_\infty(z,w).$$ A direct computation prove that for all $(z,w)\in\C^2$ $$K_\infty(z,w)=\cfrac{1}{\pi}\exp\left(z\overline w-\frac{1}{2}|z|^2-\frac{1}{2}|w|^2\right),$$ and the convergence is uniform on every compact of $\C^2$. We refer to \cite{adhikari2017hole,hough2009zeros} for more details about the infinite Ginibre ensemble.

\begin{definition}[Infinite Ginibre ensemble] We define the infinite Ginibre ensemble $ \Lambda_\infty$ as the point process in $\C$ with kernel $$K_\infty(z,w)=\cfrac{1}{\pi}\exp\left(z\overline w-\frac{1}{2}|z|^2-\frac{1}{2}|w|^2\right).$$
\end{definition}
We can also define the hole probability of a point process. 
\begin{definition}[Hole probability of a point process]
Let $Z$ be a point process in $\C$. For $r>0$, we define $n(r):=|Z\cap \overline {B(0,r)}|$ as the number of points of the point process $Z$ in $\overline {B(0,r)}$. We call hole probability of the point process the probability that the point process has no points in $\overline {B(0,r)}$ which is $\mP(n(r)=0)$.
\end{definition}
We give an easy example to understand this quantity. 
\begin{example}
Let $Z$ be a Poisson point process on $\C$ of parameter $\lambda>0$ introduced in Example \ref{example poisson point process}. By definition of a Poisson point process, $n(r)=|Z\cap \overline {B(0,r)}|$ is a Poisson random variable of parameter $\pi r^2\lambda$. So the hole probability is given by  $\mP(n(r)=0)=\exp(-\pi r^2\lambda)$.
\end{example}
We give another example that is an immediate consequence of Proposition \ref{prop:loimodule}.
\begin{example}
\label{example hole proba ginibre}
For $N\ge 1$, let $\Lambda_N=(\lambda_1,...,\lambda_N)$ be the set of eigenvalues of a $N$ complex Ginibre matrix. The hole probability of $\Lambda_N$ is given by $$\mP(n(r)=0)=\mP(\forall i\in[1,N],\, |\lambda_i|> R)=\mP(\forall i\in[1,N], Z_i> R), $$
where $Z_i$ are independent random variables such that for all $1\le k\le N$, $Z_k^2$ has a $\emph{Gamma}(k,1)$ distribution. By independence, we get $$\mP(n(r)=0)=\prod_{i=1}^N \mP(Z_k^2> r^2).$$
Let $(E_i)_{i\ge 1}$ be a family of iid exponential random variables of parameter $1$ (that can also be viewed as $\emph{Gamma}(1,1)$ random variables), then by the stability property of $\emph{Gamma}$ distributions, for all $k\ge 1$, we know that the law of $E_1+...+E_k$ is $\emph{Gamma}(k,1)$. Hence the hole probability can be written $$\mP(n(r)=0)=\prod_{i=1}^N \mP(E_1+...+E_k> r^2),$$ where $(E_i)_{i\ge 1}$ be a family of iid exponential law of parameter $1$. 
\end{example}

As a consequence of the uniform convergence on every compact of $K_N$ toward $K_\infty$ we have the following result
\begin{lemma}
\label{lemmaholeprobaginibre}
Let $\Lambda_\infty$ be the infinite Ginibre ensemble, then the hole probability of $\Lambda_\infty$ is given by $$\mP(n(r)=0)=\lim_{N\to+\infty}\mP( |\Lambda_N\cap\overline {B(0,r)}|=0)=\prod_{k=1}^{+\infty} \mP(Z_k^2> r^2)=\prod_{k=1}^{+\infty} \mP(E_1+...+E_k> r^2), $$
where $(E_k)_{k\ge 1}$ is a family of independent exponential law of parameter $1$ and $(Z_k)_{k\ge 1}$ is a family of independent random variables whose respective law are $\emph{Gamma}(k,1)$ .
\end{lemma}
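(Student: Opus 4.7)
The plan is to decompose the chain of equalities into two substantive pieces and one purely distributional identity. The last equality, $\prod_k \mP(Z_k^2 > r^2) = \prod_k \mP(E_1 + \cdots + E_k > r^2)$, is immediate from the standard representation of a $\mathrm{Gamma}(k,1)$ law as the sum of $k$ iid Exponential$(1)$ random variables, so I would dispose of it in one line. The middle equality is essentially Example \ref{example hole proba ginibre} applied at finite $N$: we have
\[
\mP(|\Lambda_N \cap \overline{B(0,r)}| = 0) = \prod_{k=1}^N \mP(Z_k^2 > r^2),
\]
and since each factor lies in $[0,1]$, this is a non-increasing sequence in $N$, hence converges to $\prod_{k=1}^{+\infty} \mP(Z_k^2 > r^2)$. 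Thus the entire problem reduces to the first equality: identifying $\mP(n(r) = 0)$ for the infinite Ginibre ensemble with $\lim_N \mP(|\Lambda_N \cap \overline{B(0,r)}| = 0)$.

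For this, I would exploit the determinantal structure of both ensembles. Writing $A = \overline{B(0,r)}$, the inclusion–exclusion expansion of the hole probability (which is the Fredholm determinant expansion for a determinantal point process) gives
\[
\mP(|\Lambda \cap A| = 0) = \sum_{k=0}^{+\infty} \frac{(-1)^k}{k!} \int_{A^k} \det[K(z_i,z_j)]_{1\le i,j\le k}\, dz_1 \cdots dz_k,
\]
valid for both $\Lambda = \Lambda_N$ (with kernel $K_N$, the series truncating at $k = N$) and for $\Lambda = \Lambda_\infty$ (with kernel $K_\infty$). I would then pass to the limit term by term. For each fixed $k$, uniform convergence of $K_N$ to $K_\infty$ on the compact $A \times A$ (stated in the paragraph preceding the lemma) implies uniform convergence of $\det[K_N(z_i,z_j)]$ to $\det[K_\infty(z_i,z_j)]$ on $A^k$, so each finite-$k$ integral converges.

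The main obstacle is justifying the interchange of the sum and the limit in $N$. This requires a uniform-in-$N$ bound on the terms of the series, i.e., a Hadamard-type estimate: since $K_N(z,z) \le K_\infty(z,z) = 1/\pi$ on $A$, one gets $|\det[K_N(z_i,z_j)]_{1\le i,j\le k}| \le k^{k/2} / \pi^k$ by Hadamard's inequality, giving a summable upper bound $\sum_k k^{k/2} \pi^{-k} \leb(A)^k / k!$ after using the crude estimate. This lets me apply dominated convergence to the series, concluding that $\mP(|\Lambda_N \cap A| = 0) \to \mP(|\Lambda_\infty \cap A| = 0)$. Combining this with the finite-$N$ product identity and the decreasing-product argument closes the three equalities.
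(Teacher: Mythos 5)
Your proof is correct and fills in the same Fredholm-determinant argument that the paper only sketches and delegates to a reference: expand the hole probability as the alternating Fredholm series, use uniform convergence of $K_N$ to $K_\infty$ on the compact $\overline{B(0,r)}$ to pass to the limit term by term, justify interchanging the sum and the limit via a Hadamard-type bound, and combine with the finite-$N$ product formula and the Gamma-as-sum-of-exponentials identity. One small tightening: the row-norm Hadamard bound $|\det[K_N(z_i,z_j)]|\le k^{k/2}/\pi^k$ requires the off-diagonal estimate $|K_N(z,w)|\le 1/\pi$ for all $z,w\in A$, not merely the diagonal bound $K_N(z,z)\le 1/\pi$ you cite; this follows immediately from Cauchy--Schwarz, $|K_N(z,w)|\le\sqrt{K_N(z,z)K_N(w,w)}$, since $K_N(z,w)=\sum_{l=0}^{N-1}\mu_l(z)\overline{\mu_l(w)}$ is a reproducing kernel.
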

\begin{proof}
We give the main idea but not get into details. Since $K_N$ converges uniformly to $K_\infty$ on every compacts, we have that 
\begin{equation}
\label{equation infinte kernel gini}
\mP(n(r)=0)=\mP( |\Lambda_\infty\cap\overline {B(0,r)}|)=\lim_{N\to+\infty}\mP( |\Lambda_N\cap\overline {B(0,r)}|).
\end{equation}
This can be obtained by using the so-called Fredholm determinant. These determinants naturally appear when we look at the probability that a set does not contain any eigenvalues of a $N$ complex Ginibre matrix  (more generally when we have a determinental point process). The Fredholm determinant theory is really useful when we have to pass to the limit when $N$ becomes large to study the limit of the probability that a set does not contain any eigenvalues of a $N$ complex Ginibre matrix. We refer to the Section 3.4 of \cite{Alicelivre} for a short introduction to Fredholm determinant and its use for these questions in determinental models of random matrices. Using \eqref{equation infinte kernel gini} and Example $\ref{example hole proba ginibre}$ give the result.
\end{proof}
The goal of the next theorem is to estimate for the infinite Ginibre ensemble how fast the hole probability goes to 0 when $r$ becomes large.
The following result can be viewed as large deviations principle and actually we shall use in the proof the Cramer theorem that is recall in the introduction of Section \ref{sec:largedev}.
\begin{theorem}
Let $\Lambda_\infty$ be the infinite Ginibre point process. Then $$\lim_{r\to+\infty}\cfrac{1}{r^4}\log\left(\mP(n(r)=0)\right)=-\cfrac{1}{4}.$$
\end{theorem}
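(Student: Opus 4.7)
Starting from Lemma~\ref{lemmaholeprobaginibre},
\[
-\log\mP(n(r)=0)=\sum_{k=1}^{+\infty}\bigl(-\log\mP(S_k>r^2)\bigr),
\]
where $S_k=E_1+\cdots+E_k\sim\mathrm{Gamma}(k,1)$. The relevant scale is $k\asymp r^2$: after substituting $k=xr^2$, the sum should become a Riemann sum on $x>0$, with the main contribution coming from $x\in(0,1)$ where $-\log\mP(S_k>r^2)$ is of order $r^2$, while the tail $x>1$ produces only negligible contributions because $\mP(S_k>r^2)\to 1$ rapidly when $k>r^2$.

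To estimate each term I would use a Chernoff-type large deviation bound, in the spirit of the Cramer theorem for the exponential distribution (log-Laplace $\Lambda(\lambda)=-\log(1-\lambda)$ on $\lambda<1$, Legendre dual $I(a)=a-1-\log a$). For $k<r^2$, the upper Chernoff bound optimised at $\lambda=1-k/r^2$ reads
\[
\mP(S_k>r^2)\leq\exp\bigl(-r^2\varphi(k/r^2)\bigr),\qquad\varphi(y):=1-y+y\log y,
\]
and a matching lower bound follows from the explicit Erlang tail $\mP(S_k>r^2)=e^{-r^2}\sum_{j=0}^{k-1}r^{2j}/j!$ by keeping only the dominant term $j=k-1$ and applying Stirling; together they give $-\log\mP(S_k>r^2)=r^2\varphi(k/r^2)+O(\log k)$. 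For $k>r^2$, the symmetric Chernoff bound yields $\mP(S_k\leq r^2)\leq\exp(-r^2\varphi(k/r^2))$ with the same convex $\varphi$; since $\varphi(1+t)\sim t^2/2$ near $0$ and $-\log(1-u)\leq 2u$ for small $u$, the tail sum $\sum_{k>r^2}(-\log\mP(S_k>r^2))$ is only $O(r)$, negligible at scale $r^4$.

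Collecting these bounds gives $-\log\mP(n(r)=0)=r^2\sum_{k=1}^{\lfloor r^2\rfloor}\varphi(k/r^2)+o(r^4)$, and continuity of $\varphi$ on $[0,1]$ turns $\tfrac{1}{r^2}\sum_{k=1}^{\lfloor r^2\rfloor}\varphi(k/r^2)$ into $\int_0^1\varphi(y)\,dy$. A direct computation, using $\int_0^1 y\log y\,dy=-1/4$ obtained by integration by parts, yields
\[
\int_0^1(1-y+y\log y)\,dy=1-\tfrac12-\tfrac14=\tfrac14,
\]
hence $-\log\mP(n(r)=0)\sim r^4/4$, giving the announced limit $-1/4$. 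The main obstacle is controlling the estimates uniformly in $k$, especially near the transition $k\approx r^2$ where $\varphi$ vanishes quadratically: the quadratic vanishing confines the sensitive region to a band of width $O(r)$, and combined with the explicit Erlang tail formula this allows the lower bound to be obtained without any sharp local central limit argument.
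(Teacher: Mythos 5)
Your plan is correct and follows essentially the same route as the paper: both start from the infinite product $\mP(n(r)=0)=\prod_{k\ge1}\mP(S_k>r^2)$, both apply the Chernoff bound optimized at $\lambda=1-k/r^2$ for $k<r^2$ (giving the exponent $r^2\varphi(k/r^2)$ with $\varphi(y)=1-y+y\log y$), both obtain the matching lower bound from the explicit Erlang/Poisson tail by retaining the top term $j=k-1$, and both pass to a Riemann sum to extract $\int_0^1\varphi=1/4$. The only genuine variation is in the tail $k>r^2$: the paper splits it into $r^2<k\le 2r^2$ (treated by monotonicity $\mP(S_k>r^2)\ge\mP(S_{r^2}>r^2)\to 1/2$) and $k>2r^2$ (treated by Cramer's theorem for $\mP(S_k<k/2)$), whereas you propose a unified Chernoff bound on $\mP(S_k\le r^2)$ together with the quadratic vanishing $\varphi(1+t)\sim t^2/2$ to show the tail contributes $O(r)$. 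Your version is a little slicker, but be aware it needs the same extra ingredient the paper uses explicitly: in the window $r^2<k\lesssim r^2+Cr$ the Chernoff exponent $r^2\varphi(k/r^2)$ is $O(1)$, so $\mP(S_k\le r^2)$ is not small and the inequality $-\log(1-u)\le 2u$ does not apply; one must separately note that there $\mP(S_k>r^2)$ is bounded below by a constant (e.g.\ by monotonicity in $k$ and the Poisson median/CLT fact $\mP(S_{\lfloor r^2\rfloor}>r^2)\to 1/2$). You flag this as the "main obstacle" and gesture at the Erlang formula, which is the right fix; just make sure it is carried out explicitly in a full write-up, since without it the claimed $O(r)$ tail bound is not justified on that band.
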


\begin{proof}
We give the main arguments of the proof. A complete proof can be found in Section 7.2 of \cite{hough2009zeros}. 
From Lemma \ref{lemmaholeprobaginibre}, we have an explicit expression for the hole probability \begin{equation}
\label{eq1holeproba}
\mP(n(r)=0)=\prod_{k=1}^{+\infty} \mP(Z_k^2> r^2)=\prod_{k=1}^{+\infty} \mP(E_1+...+E_k> r^2), 
\end{equation}
where $(E_k)_{k\ge 1}$ is a family of independent exponential law of parameter $1$. As explained before this expression come from the fact that $E_1+...+E_k$ has a $\text{Gamma}(k,1)$ distribution for all $k\ge 1$.
\newline
We first lower bound this probability using the Markov inequality. Indeed, we obtain by independence of the $(E_i)_{i\ge 1}$ that for $k\ge 1$, for all $\theta<1$,  
\begin{equation}
\label{eqmarkovbound}
\begin{split}
\mP(E_1+...+E_k> r^2)\le \exp(-\theta r^2)\exp(\theta(E_1+...+E_k))&\le \exp(-\theta r^2)\exp(k\theta(E_1))\\
&=\exp(-\theta r^2)(1-\theta)^{-k}.
\end{split}
\end{equation}
We see that the optimal $\theta$ to optimize the previous bound is $\theta=1-\frac{k}{r^2}$. We now do as if $r^2$ is an integer (if not we should consider the integer part of $r^2$ instead which does not change the estimate we shall do). We can upper bound the hole probability using \eqref{eqmarkovbound}:
\begin{equation}
\label{eq2holeproba}
\begin{split}
\mP(n(r)=0)&=\prod_{k=1}^{+\infty} \mP(E_1+...+E_k> r^2)\\
&\le \prod_{k=1}^{r^2} \mP(E_1+...+E_k> r^2)\\
&\le \prod_{k=1}^{r^2} \exp\left(-\left(1-\cfrac{k}{r^2}\right)r^2-k\log\left(\cfrac{k}{r^2}\right)\right)\\
&\le \exp\left(-\cfrac{r^2}{2}\,\left(r^2-1\right)-r^4 \left(\int_0^1 x\log(x)dx\right) +o(r^4)\right)\\
&=\exp\left(-\cfrac{r^4}{4}+o(r^4)\right),
\end{split}
\end{equation}
where we used that the Riemann sum $\sum_{k=1}^{r^2}\frac{k}{r^2}\log\left(\frac{k}{r^2}\right)=r^2(\int_0^1 x\log(x)dx +o(1))$.
For the lower bound, we start by noticing that for all $k\ge 1$, for all $\lambda>0$, $$\mP(\text{Gamma}(k,1)>\lambda)=\mP(\text{Poisson}(\lambda)<k)$$ where $\text{Poisson}(\lambda)$ is the standard Poisson law of parameter $\lambda$.
Using this remark we obtain that for all $k\ge 1$, $$\mP(Z_k^2> r^2)=\mP(\text{Poisson}(r^2)<k-1)\ge \exp(-r^2)\cfrac{r^{2(k-1)}}{(k-1)!}.$$
Using this bound, we lower bound the terms for $k\le r^2$
\begin{equation}
\label{ineq3markovbound}
\begin{split}
\prod_{k=1}^{r^2} \mP(Z_k^2> r^2)&\ge \prod_{k=1}^{r^2}\exp(-r^2)\cfrac{r^{2(k-1)}}{(k-1)!}\\
&=\exp\left(-r^4+\sum_{k<r^2}\left[k\log(r^2)-\log(k!)\right]\right)
\\&=\exp\left(-r^4+\sum_{k<r^2}\left[(r^2-k)\log\left(\cfrac{k}{r^2}\right)\right]\right).
\end{split}
\end{equation}
Again using a Riemann sum, we obtain that \begin{equation}
\label{ineq4markovbound}
\prod_{k=1}^{r^2} \mP(Z_k^2> r^2)\ge \exp\left(-\cfrac{r^4}{4}+o(r^4)\right).
\end{equation}
For the other terms of the product, we can first notice that since $\mP(\text{Poisson}(\lambda)>\lambda)\underset{\lambda\to+\infty}{\longrightarrow} 1/2$, for $r$ large enough, for all $k>r^2$ we get 
\begin{equation}
\begin{split}
\mP(Z_k^2>r^2)=\mP(E_1+...+E_k> r^2)&\ge \mP(E_1+...+E_{r^2}> r^2)\\
&=\mP(Z_{r^2}^2>r^2)=\mP(\text{Poisson}(r^2)>r^2-1)\ge \cfrac{1}{4}.
\end{split}
\end{equation}
This gives that for $r$ large enough 
\begin{equation}
\label{ineqhole6}
\prod_{k=r^2+1}^{2r^2}\mP(E_1+...+E_k> r^2)\ge \exp(-r^2\log(4)).
\end{equation}
Finally, using the Cramer theorem, there exists $c>0$ such that for all $k\ge 1$, $$\mP\left(E_1+...+E_k<\cfrac{k}{2} \right)\le \exp(-ck).$$
Hence we obtain that 
\begin{equation}
\begin{split}
\sum_{k=2r^2+1}^{+\infty}\mP\left(Z_k^2<r^2 \right)=\sum_{k=2r^2+1}^{+\infty}\mP\left(E_1+...+E_k<r^2 \right)&\le \sum_{k=2r^2+1}^{+\infty}\mP\left(E_1+...+E_k<\cfrac{k}{2} \right)\\
&\le \sum_{k=2r^2+1}^{+\infty}\exp(-ck).
\end{split}
\end{equation}
Taking $r$ large enough we get $$\sum_{k=2r^2+1}^{+\infty}\mP\left(Z_k^2<r^2 \right)<\cfrac{1}{2}\,.$$
By independence of the $(Z_i)_{i\ge 1}$, we obtain that for $r$ large enough: 
\begin{equation}
\label{ineq5holeproba}
\begin{split}
\prod_{k=2r^2+1}^{+\infty}\mP\left(Z_k^2>r^2 \right)=\mP\left(\bigcap_{k>2r^2}Z_k^2>r^2\right)=1-\mP\left(\bigcup_{k>2r^2}Z_k^2\le r^2\right)&\ge 1-\sum_{k=2r^2+1}^{+\infty}\mP\left(Z_k^2\le r^2 \right)\\ &\ge \cfrac{1}{2}\,.
\end{split}
\end{equation}
Using \eqref{eq2holeproba}, \eqref{ineqhole6} \eqref{ineq5holeproba} we have that for $r$ large enough $$\mP(n(r)=0)=\prod_{k=1}^{+\infty}\mP\left(Z_k^2>r^2 \right) \ge \exp\left(-\cfrac{r^4}{4}+o(r^4)\right).$$

\end{proof}

Let us also mention that this result has also been proved using a potential theory approach and proving a large deviations principle for the hole probability of $N$ complex Ginibre matrix in \cite{adhikari2017hole}. Finally, let us also say that determinental point processes appear when we study the roots of power series with random coefficients. Similar results about the hole probability of such determinental point processes can be obtained. See Section 7 of \cite{hough2009zeros} for instance.
\newline
\tab We now mention some results about the spectral radius of $N$ complex Ginibre matrices. 
\begin{theorem}
Let $\Lambda_N=(\lambda_1,...,\lambda_N)$ be the exchangeable vector of eigenvalues of a $N$ complex Ginibre random matrix. Then we have the following convergence in probability: $$\rho_N:=\max_{1\le k\le N}\cfrac{\lambda_k}{\sqrt{N}}\overunderset{\mP}{N\to +\infty}{\longrightarrow} 1.$$
\end{theorem}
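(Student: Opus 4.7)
The plan is to reduce the question to a statement about independent Gamma variables using Proposition \ref{prop:loimodule} and then bracket $\max_k |\lambda_k|/\sqrt N$ above and below using elementary large deviations estimates. By Proposition \ref{prop:loimodule}, $(|\lambda_1|,\dots,|\lambda_N|)$ has the same law as $(Z_{\sigma(1)},\dots,Z_{\sigma(N)})$ where $(Z_k)_{1\le k\le N}$ are independent with $Z_k^2 \sim \mathrm{Gamma}(k,1)$. A uniform random permutation does not affect the maximum, so $\rho_N$ has the same distribution as $\max_{1\le k\le N} Z_k/\sqrt N$, and it suffices to prove this quantity tends to $1$ in probability. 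I will use freely the representation $Z_k^2 \stackrel{\mathrm{law}}{=} E_1+\cdots+E_k$ with $(E_i)_{i\ge 1}$ i.i.d.\ exponentials of parameter $1$ (only inside each fixed $k$, since joint independence between the $Z_k$'s is what the proposition gives).

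For the lower bound, observe that $Z_N^2 \stackrel{\mathrm{law}}{=} E_1+\cdots+E_N$, so by the law of large numbers $Z_N^2/N \to 1$ in probability, hence $Z_N/\sqrt N \to 1$ in probability. Since $\max_k Z_k/\sqrt N \ge Z_N/\sqrt N$, this yields $\liminf_N \max_k Z_k/\sqrt N \ge 1$ in probability.

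For the upper bound, fix $c>1$ and apply a union bound together with the Chernoff estimate used in \eqref{eqmarkovbound}: for any $\theta\in(0,1)$,
\begin{equation*}
\mP(Z_k^2 > cN)\le \exp(-\theta c N)(1-\theta)^{-k}.
\end{equation*}
Optimising in $\theta$ by taking $\theta = 1-k/(cN)$ (which lies in $(0,1)$ for every $1\le k\le N$) gives
\begin{equation*}
\mP(Z_k^2>cN)\le \exp\Bigl(N\bigl(-c+x-x\log(x/c)\bigr)\Bigr),\qquad x:=k/N\in(0,1].
\end{equation*}
The function $f(x):=x-x\log(x/c)-c$ satisfies $f'(x)=-\log(x/c)>0$ on $(0,c)$, so it is increasing on $(0,1]$ and its maximum is $f(1)=1+\log c-c<0$. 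Setting $\alpha:=c-1-\log c>0$ yields $\mP(Z_k^2>cN)\le e^{-\alpha N}$ uniformly in $k\in\{1,\dots,N\}$, so
\begin{equation*}
\mP\bigl(\max_k Z_k/\sqrt N > \sqrt c\bigr)\le \sum_{k=1}^N \mP(Z_k^2>cN)\le N e^{-\alpha N}\xrightarrow[N\to\infty]{} 0.
\end{equation*}
Since $c>1$ is arbitrary, $\limsup_N \max_k Z_k/\sqrt N\le 1$ in probability, and combining the two bounds gives $\rho_N\to 1$ in probability.

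The only non-routine step is the uniform Chernoff bound: one needs to observe that although the Gamma parameter $k$ varies across $\{1,\dots,N\}$, the resulting Cramér rate function $f(x)$ is monotone on $(0,1]$ for $c>1$, so the worst index is $k=N$ and the negativity of $1+\log c-c$ provides a uniform exponential decay. Everything else is the strong law for the lower bound and a union bound over the $N$ summands, both of which are standard.
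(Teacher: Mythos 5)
Your proposal is correct and follows the same basic route as the paper: reduce to the independent Gamma representation of $(|\lambda_k|)_{1\le k\le N}$ from Proposition~\ref{prop:loimodule} and then analyse $\max_k Z_k/\sqrt N$. The paper's own argument, however, is only a sketch: it writes $\mP(\rho_N\le r)=\prod_{k=1}^N\mP(Z_k^2/N\le r^2)$ and then asserts ``by the strong law of large numbers each term converges to $0$ if $r<1$ and to $1$ if $r>1$'' and concludes. This phrasing is actually misleading (for $r<1$ the $k=1$ factor tends to $1$, not $0$; only the $k=N$ factor is killed by the LLN), and more importantly, for $r>1$ the number of factors grows with $N$, so pointwise convergence of each factor to $1$ does \emph{not} by itself imply the product tends to $1$; one needs a quantitative estimate showing $\sum_{k=1}^N \mP(Z_k^2>r^2N)\to 0$. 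Your uniform Chernoff bound, giving $\mP(Z_k^2>cN)\le e^{-\alpha N}$ with $\alpha=c-1-\log c>0$ independent of $k$ (via the monotonicity of the Cram\'er rate on $(0,1]$), supplies exactly this missing ingredient and then closes the argument with a union bound; the lower bound via $Z_N/\sqrt N\to 1$ is the same as the paper's. In short, you took the same decomposition but made it rigorous where the paper left a real gap; no further changes are needed, though you could shorten slightly by noting that the lower bound is also what makes the $r<1$ direction of the paper's product argument go through.
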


\begin{proof}
We just here give the principal idea based on Proposition \ref{prop:loimodule}. We use the same notations as in the Proposition \ref{prop:loimodule}. By the additive property of Gamma distribution, for all $k\ge 1$ we write $Z_k^2\equalaw E_1+...+E_k$ with $(E_i)_{i\in\Z}$ a family of iid random variable with exponential law of parameter $1$. Then by independence, for all $r>0$ we have $$\mP(\rho_N\le r)=\prod_{1\le k\le N}\mP\left(\cfrac{E_1+...+E_k}{N}\le r^2\right).$$
By the strong law of large numbers we know that if  $r<1$ each term in the product converges to 0 and if $r>1$ each term converges to 1. From that we can deduce that $\mP(\rho_N\le r)$ converges to 0 if $r<1$ and 1 if $r>1$. So $\rho_N$ converges to $0$ in law and so in probability since $0$ is a constant random variable.
\end{proof}

Actually we can prove that the convergence is not only in probability but also almost surely and we can compute the fluctuation of this convergence, see \cite{rider2003limit}.

\begin{theorem}
Let $\Lambda_N=(\lambda_1,...,\lambda_N)$ be the exchangeable vector of eigenvalues of a $N$ complex Ginibre random matrix. Then we have almost surely: $$\rho_N=\max_{1\le k\le N}\cfrac{\lambda_k}{\sqrt{N}}\underset{N\to +\infty}{\longrightarrow} 1.$$
Moreover, if we let $\kappa_N=\log(N/2\pi)-2\log(\log(N))$, then we have $$\sqrt{4N\kappa_N}\left(\rho_N-1-\sqrt{\cfrac{\kappa_N}{4N}}\right)\overunderset{\mathcal L}{N\to+\infty}{\longrightarrow} \text{Gumbel},$$ where Gumbel is the law of $-\log(X)$ with $X$ an exponential random variable of parameter 1.
\end{theorem}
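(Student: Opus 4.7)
The plan is to exploit Proposition \ref{prop:loimodule} to reduce both statements to questions about the maximum of independent (but not identically distributed) Gamma random variables. By that proposition, since $\rho_N$ is a symmetric function of the $|\lambda_k|$, we have
$$\rho_N \overset{\text{law}}{=} \max_{1\le k\le N} \frac{Z_k}{\sqrt N}, \qquad Z_k^2 \overset{\text{law}}{=} E_1+\cdots+E_k,$$
with the $Z_k$ independent and $(E_i)_{i\ge 1}$ iid $\text{Exp}(1)$. So I work throughout with this representation.

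For the almost sure convergence, I would apply Borel--Cantelli in both directions. For the upper tail $\mathbb{P}(\rho_N>1+\varepsilon)$, a union bound gives $\sum_{k=1}^N \mathbb{P}(E_1+\cdots+E_k>N(1+\varepsilon)^2)$; monotone coupling (adding nonnegative $E_j$'s) shows each summand is bounded by the $k=N$ term, and Cramér's theorem (as recalled earlier in the excerpt) yields an exponential bound $\exp(-N I((1+\varepsilon)^2))$ with $I$ strictly positive at $(1+\varepsilon)^2>1$. Hence $\mathbb{P}(\rho_N>1+\varepsilon)\le N e^{-cN}$, which is summable. For the lower tail, by independence,
$$\mathbb{P}(\rho_N<1-\varepsilon)=\prod_{k=1}^N \mathbb{P}(Z_k^2<N(1-\varepsilon)^2)\le \mathbb{P}(E_1+\cdots+E_N<N(1-\varepsilon)^2),$$
which is again exponentially small by Cramér since $(1-\varepsilon)^2<1$. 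Borel--Cantelli then gives $\rho_N\to 1$ almost surely (which strengthens the convergence in probability already established).

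For the Gumbel fluctuation, set $r_N(t)=1+\sqrt{\kappa_N/(4N)}+t/\sqrt{4N\kappa_N}$ and use independence to write
$$\log\mathbb{P}(\rho_N\le r_N(t)) = \sum_{k=1}^N \log\bigl(1-p_{N,k}(t)\bigr), \qquad p_{N,k}(t):=\mathbb{P}(Z_k^2>Nr_N(t)^2).$$
The key identity is $p_{N,k}(t)=\mathbb{P}(\text{Poisson}(Nr_N(t)^2)<k)$, so each summand can be expanded via classical sharp asymptotics for the lower tail of a Poisson random variable. The strategy is to show that only the indices $k$ within a window of width $O(\sqrt{N\kappa_N})$ below $N r_N(t)^2\approx N$ contribute non-negligibly, apply a local central-limit style Laplace approximation to $p_{N,k}(t)$ on this window, and sum using a Riemann-sum/saddle-point argument. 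The prefactor $\sqrt{N/(2\pi)}\cdot(\log N)^{-1}$ absorbed in $\kappa_N$ is precisely what makes $\sum_k p_{N,k}(t)\to e^{-t}$; then $\log(1-p_{N,k}(t))\sim -p_{N,k}(t)$ (since individual terms are $o(1)$) gives $\mathbb{P}(\rho_N\le r_N(t))\to \exp(-e^{-t})$, the Gumbel cdf.

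The main obstacle is this last step: the asymptotic of the Poisson lower tail $\mathbb{P}(\text{Poisson}(\lambda)<k)$ with $\lambda,k$ both large and $\lambda-k$ of order $\sqrt{\lambda\log\lambda}$ requires a careful uniform saddle-point expansion (of the form $e^{-\lambda}\lambda^k/k!\cdot(1+o(1))\cdot(\text{geometric factor})$), together with the Stirling correction that produces exactly the $\log(N/2\pi)-2\log\log N$ correction built into $\kappa_N$. Controlling the tail of the sum (i.e.\ indices $k$ far from $N$ contributing $O(N^{-1})$ and indices very close to $N$ contributing in the bulk of the window) is delicate but standard once the pointwise estimate is in hand. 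For a complete treatment I would refer to \cite{rider2003limit}.
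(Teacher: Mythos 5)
The paper does not prove this theorem; immediately after stating it, it simply cites \cite{rider2003limit} and moves on. So there is no paper proof to compare against, and any correct argument you supply here is additional content. With that said, your proposal is a sound way to fill the gap, and it is consistent in spirit with the techniques the paper already uses for the weaker convergence-in-probability result (the representation of $(|\lambda_k|)_k$ via independent Gamma variables from Proposition \ref{prop:loimodule}, the Poisson/Gamma duality $\mP(\text{Gamma}(k,1)>\lambda)=\mP(\text{Poisson}(\lambda)<k)$, and Cram\'er's theorem, all of which the paper deploys in its hole-probability analysis).

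Your treatment of the almost-sure convergence is essentially complete and correct. The union bound for the upper tail together with the monotone coupling $\mP(E_1+\cdots+E_k>N(1+\varepsilon)^2)\le\mP(E_1+\cdots+E_N>N(1+\varepsilon)^2)$, followed by Cram\'er, gives a summable bound $Ne^{-cN}$; for the lower tail, dropping all but the $k=N$ factor in the product and applying Cram\'er on the other side gives another summable bound. Borel--Cantelli in both directions, plus a countable dense set of $\varepsilon$'s, finishes that part cleanly, and it strengthens the convergence-in-probability result the paper actually proves.

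For the Gumbel fluctuations, your outline identifies the right reduction --- write $\log\mP(\rho_N\le r_N(t))=\sum_k\log(1-p_{N,k}(t))$ with $p_{N,k}(t)=\mP(\text{Poisson}(Nr_N(t)^2)<k)$, localize to a window of width $O(\sqrt{N\kappa_N})$ around $k\approx N$, and show $\sum_k p_{N,k}(t)\to e^{-t}$ while each $p_{N,k}(t)=o(1)$ --- but, as you acknowledge, the uniform saddle-point/Stirling estimate for the Poisson lower tail that drives the cancellation producing exactly $\kappa_N=\log(N/2\pi)-2\log\log N$ is not carried out. This is the genuine content of the result and is the heart of \cite{rider2003limit}; your deferral to that reference at precisely this point is appropriate and matches what the paper itself does (the paper defers the entire theorem to that reference, whereas you defer only this one estimate). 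If you wanted to make the proof self-contained, what is missing is a careful two-term asymptotic for $\mP(\text{Poisson}(\lambda)<k)$ uniformly over $k=\lambda-x\sqrt{\lambda}$ with $x$ ranging over $[\sqrt{2\log\lambda}-C,\sqrt{2\log\lambda}+C]$, including the $1/\sqrt{2\pi\lambda}$ Stirling prefactor and the geometric-series correction to the leading $e^{-\lambda}\lambda^k/k!$ term; without it the $2\log\log N$ and $\log 2\pi$ constants in $\kappa_N$ cannot be recovered.
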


\section{The Wigner theorem}
\label{section:wignerthm}
\subsection{The mean Wigner theorem}

\begin{theorem}[Wigner theorem in mean] 
\label{thm mean wigner}
Let $(M_N)_{N\ge 1}$ be a sequence of random matrices such that for every $N\ge 1$, $M_N$ is a $N$ complex Wigner matrix.
For every $N\ge 1$, let $\Lambda^N=(\lambda_1^N,...,\lambda_N^N)\in\R^N$ be the random spectrum of the $N$ complex Wigner matrix $M_N$. We write: $$\mu_N=\cfrac{1}{N}\sum_{k=1}^N\delta_{\lambda_k^N/\sqrt{N}}$$ the empirical distribution associated to the eigenvalues of $M_N/\sqrt{N}$. 
\newline
Then for every bounded and continuous function $f:\R \to\R$ we have: 
$$\E\left[\int_\R f d\mu_N\right]\underset{N\to+\infty}{\longrightarrow} \int_\R f d\sigma,$$
where $\sigma$ is the so-called Wigner semicircle distribution introduced in Appendix \ref{appendixsection} which is the measure on $\R$ whose density with respect to the Lebesgue measure is given by: $$\sigma(x):=\cfrac{\sqrt{4-x^2}}{2\pi}\, \mathds{1}_{[-2,2]}(x).$$
\end{theorem}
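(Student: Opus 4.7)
The plan is to prove the theorem by the method of moments, a natural complement to the kernel-based proof used for the circular law. Since the semicircle $\sigma$ is compactly supported on $[-2,2]$, it is uniquely determined by its moments; combined with the tightness of the sequence $(\E\mu_N)_N$ (which follows from the explicit bound $\int x^2\, d\E\mu_N = N^{-2}\E[\Tr(M_N^2)] = 1$) and a standard subsequence argument, the theorem reduces to showing
\begin{equation*}
m_k^{(N)} := \E\left[\int_\R x^k\, d\mu_N(x)\right] = \frac{1}{N^{k/2+1}}\,\E\!\left[\Tr(M_N^k)\right] \;\underset{N\to\infty}{\longrightarrow}\; m_k := \int_{-2}^{2} x^k\, \sigma(x)\, dx
\end{equation*}
for every integer $k\ge 0$.

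Expanding the trace as $\E[\Tr(M_N^k)] = \sum_{i_1,\ldots,i_k=1}^N \E[(M_N)_{i_1 i_2} (M_N)_{i_2 i_3}\cdots (M_N)_{i_k i_1}]$, I would invoke that the entries of $M_N$ are centered Gaussians with Hermitian pairing $\E[(M_N)_{ij} (M_N)_{kl}] = \delta_{il}\delta_{jk}$, so that Wick's (Isserlis') formula expresses each expectation as a sum over pair partitions $\pi$ of $\{1,\ldots,k\}$ of products of pairwise covariances. In particular all odd moments $m_{2m+1}^{(N)}$ vanish. For $k=2m$, viewing $(i_1, i_2, \ldots, i_{2m}, i_1)$ as a closed walk on $\{1,\ldots,N\}$, the matching constraints imposed by $\pi$ force each walk edge to be traversed exactly twice (in opposite orientations), and the number of compatible index tuples is $N^{v(\pi)}$, where $v(\pi)$ is the number of distinct vertices visited.

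The combinatorial heart is Wigner's planarity lemma: $v(\pi) \le m+1$, with equality if and only if $\pi$ is a \emph{non-crossing} pair partition when its pairs are drawn as chords on the oriented cycle $(1,2,\ldots,2m)$. Non-crossing pair partitions of $[2m]$ are counted by the Catalan number $C_m = \tfrac{1}{m+1}\binom{2m}{m}$, so only these contribute at leading order and one obtains $m_{2m}^{(N)} \to C_m$. A direct computation, via the substitution $x=2\sin\theta$ and Wallis' integrals, confirms that $\int_{-2}^{2} x^{2m}\sigma(x)\,dx = C_m$, so the limiting moments agree with those of the semicircle.

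The main obstacle is the planarity lemma itself --- the structural core of Wigner's original argument --- namely, showing that a closed walk on $\{1,\ldots,N\}$ whose edge pairing is $\pi$ and which visits the maximal possible number $m+1$ of vertices must have $\pi$ non-crossing. Once this is established, promoting moment convergence to weak convergence against bounded continuous test functions is routine using the tightness bound above together with a control on $m_4^{(N)}$. An alternative route, mirroring the proof of the circular law, would be to write $\E[\int f\,d\mu_N] = \int f(y)\, N^{-1/2} K_N^H(\sqrt{N}y,\sqrt{N}y)\,dy$ using Theorem \ref{thm: gue correlation function}, apply the Christoffel--Darboux formula \eqref{critophel hermite} to rewrite the diagonal kernel as a combination of three consecutive Hermite polynomials, and then invoke the Plancherel--Rotach asymptotics to identify the bulk limit with the semicircle density.
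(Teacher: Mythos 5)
Your method-of-moments plan is sound, but it is not the route the paper takes. The paper also reduces to moment convergence, but it extracts the moments from the explicit determinantal structure developed earlier: writing
$\E\bigl[\int f\,d\mu_N\bigr] = \int f(x/\sqrt{N})\,\tfrac{1}{N}K_N^H(x,x)\,dx$ via Theorem \ref{thm: gue correlation function}, then differentiating the Christoffel--Darboux identity to obtain $\tfrac{d}{dx}K_N^H(x,x)=-\sqrt{N}\,\mathcal H_N(x)\mathcal H_{N-1}(x)$, integrating by parts, and using the Hermite translation identity $H_n(x+a)=\sum_k \binom{n}{k}H_k(x)\,a^{n-k}$ together with orthogonality to land on an \emph{exact finite-$N$ formula} for the Laplace transform $\mathcal L_{\E[\mu_N]}(s)$, whose term-by-term limit is $\mathcal L_\sigma(s)$ with the Catalan numbers appearing algebraically rather than combinatorially. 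Your Wick--Isserlis expansion with the planarity lemma and non-crossing pair partitions is the classical Wigner argument; its virtue is that it does not use Gaussianity in an essential way and therefore extends to general (non-Gaussian) Wigner ensembles, whereas the paper's argument leans heavily on the Gaussian/Hermite orthogonal-polynomial machinery but rewards you with an exact partition-function identity at every finite $N$ and avoids the combinatorial core (the planarity lemma) that you correctly flag as the main obstacle in your route. Your closing remark about using Christoffel--Darboux plus Plancherel--Rotach asymptotics is closer in spirit to the paper, but note that the paper deliberately sidesteps Plancherel--Rotach: it computes the Laplace transform in closed form instead of asymptotically analyzing $K_N^H$ pointwise.
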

Before doing the proof, let us recall that with the notations of Theorem \ref{thm mean wigner}, for every $N\ge 1$, we can define a probability measure that shall be written $\E(\mu_N)$ defined by: $$\int_\R f d\E(\mu_N):=\E\left[\int_\R fd\mu_N\right],$$ for all functions $f$ continuous and bounded. The Wigner theorem in mean state that the sequence of probability measures $(\E(\mu_N))_{N\ge1}$ converges in law toward the semicircular distribution $\sigma$.
\begin{proof}
We follow in this proof the arguments given in \cite{Alicelivre} in Section 3.3 which are up to our knowledge originally from \cite{haagerup2003random}.
\newline
Let $f:\R\to\R$ be a continuous and bounded function. Using the density of the eigenvalues of a $N$ Wigner matrix obtained in Theorem \ref{thm: gue correlation function} and the symmetry of the distribution, we have: 
\begin{equation}
\label{eq: calcul mean wigner}
\begin{split}
\E\left[\int_\R f d\mu_N\right]&=\cfrac{1}{N}\sum_{k=1}^N\int_{\R^N}f\left(\cfrac{x_k}{\sqrt{N}}\right)\phi_N^H(x_1,...,x_N)dx_1...dx_N\\
&=\cfrac{1}{N}\sum_{k=1}^N\int_{\R^N}f\left(\cfrac{x_k}{\sqrt{N}}\right)\phi_N^H(x_k,x_2,...,x_1,...,x_N)dx_k...dx_1...dx_N\\
&=\cfrac{1}{N}\sum_{k=1}^N\int_{\R}f\left(\cfrac{x_k}{\sqrt{N}}\right)\left[\int_{\R^{N-1}}\phi_N^H(x_k,x_2,...x_1,...,x_N)dx_2...dx_1...dx_N
\right]dx_k\\
&=\cfrac{1}{N}\sum_{k=1}^N\int_{\R}f\left(\cfrac{x_k}{\sqrt{N}}\right) \phi_{N,1}^H(x_k)dx_k\\
&=\int_{\R}f\left(\cfrac{x_k}{\sqrt{N}}\right) \phi_{N,1}^H(x)dx\\
&=\int_{\R}f\left(\cfrac{x_k}{\sqrt{N}}\right)\,\cfrac{K_N^H(x,x)}{N}dx.
\end{split}
\end{equation}
We now use properties of the Hermite polynomials stated in Proposition \ref{prop properties hermite} to obtain another expression of $K_N^H(x,x)$.
In this proof only, we define for $k\ge 0$, $$\mathcal H_k(x):=\cfrac{\sqrt{\gamma_H(x)}H_k(x)}{\sqrt{\sqrt{2\pi}k!}} $$ such that we can rewrite $K_N^H$ as $$K_N^H(x,y)=\sum_{l=0}^{N-1}\mathcal H_l(x)\mathcal H_l(y).$$
Using the Christoffel–Darboux formula for $x\ne y$, we get $$K_N^H(x,y)=\sqrt{N}\,\cfrac{\mathcal H_N(x)\mathcal H_{N-1}(y)-\mathcal H_N(y)\mathcal H_{N-1}(x)}{x-y}.$$
Letting $y$ goes to $x$ yields that for all $x\in\R$, $$K_N^H(x,x)=\sqrt{N}\,\left[\mathcal H_N'(x)\mathcal H_{N-1}(x)-\mathcal H_N(x)\mathcal H_{N-1}'(x)\right].$$ 
Using the formula \eqref{induction hermite} or \eqref{equadiff hermite} we obtain that for all $N\ge 0$, $$\mathcal H_N''(x)+\left(n+\cfrac{1}{2}-\cfrac{x^2}{4}\right)\mathcal H_N(x)=0.$$
This gives the identity 
\begin{equation}
\label{deriative noyau}
\cfrac{d}{dx}K_N^H(x,x)=-\sqrt{N}\mathcal H_N(x) \mathcal H_{N-1}(x).
\end{equation} 
Thanks to these identities we shall do an integration by parts in \eqref{eq: calcul mean wigner}. 
\newline
Since the measure $\sigma$ is compactly supported we only need to prove the convergence of the expectation of the moments of the empirical measure toward the moments of the semicircular distribution.
\newline
To do so, we shall compute for every $N\ge 1$ the moment generating function of $\E[\mu_N]$. Given a measure $\mu\in\mes$ we introduce its Laplace transform defined on $\R$ as $$\mathcal L_\mu(s):=\int_{\R}\exp(sx)\mu(dx).$$
The Laplace transform is also called the moment generating function since for $\mu\in\mes$ $$
\mathcal L_\mu(s)=\sum_{n\ge 0}\int_\R x^n\mu(dx)\,\cfrac{s^n}{n!},$$ when the sum converges absolutely.
\newline
For $N\ge 1$, using \eqref{eq: calcul mean wigner}, for all $s\in\R^*$, 
\begin{equation}
\label{eq:calculmeanwigner2}
\begin{split}
\mathcal L_{\E[\mu_N]}(s)&=\E\left[\int_\R\exp(st)\mu_N(dt)\right]\\
&=\int_\R\exp\left(\cfrac{sx}{\sqrt{N}}\right) \cfrac{K_N^H(x,x)}{N}\,dx \\
&=\cfrac{1}{s}\int_\R\exp\left(\cfrac{sx}{\sqrt{N}}\right)\mathcal H_N(x)\mathcal H_{N-1}(x) \,dx,
\end{split}
\end{equation}
where we integrated by parts for the last identity and use \eqref{deriative noyau}.
Using the definition of $\mathcal H_N$, we compute for $s\in\R^*$ and $N\ge 1$,
\begin{equation}
\begin{split}
\label{eq:meanwigner3}
\cfrac{1}{s}\int_\R\exp\left(\cfrac{sx}{\sqrt{N}}\right)&\mathcal H_N(x)\mathcal H_{N-1}(x) \,dx=\cfrac{\sqrt{N}}{N!\sqrt{2\pi}s}\int_\R\exp\left(-\cfrac{x^2}{2}+\cfrac{sx}{\sqrt{N}}\right) H_N(x)H_{N-1}(x)\,dx\\
&=\cfrac{\sqrt{N}\exp\left(\cfrac{s^2}{2N}\right)}{N!\sqrt{2\pi}s}\int_\R\exp\left(-\cfrac{x^2}{2}\right) H_N\left(x+\cfrac{s}{\sqrt{N}}\right)H_{N-1}\left(x+\cfrac{s}{\sqrt{N}}\right)dx.
\end{split}
\end{equation}
We finally want to use the orthogonality of the Hermite polynomials with respect to the weight $\gamma_H$ to compute this integral. To do so, let us recall that since $H_n'=nH_{n-1}$ for all $n\ge 1$, a Taylor expansion in $s$ gives that for all $n\ge 0$, for all $x\in\R$, for all $s\in\R$, 
$$H_{n}\left(x+\cfrac{s}{\sqrt{N}}\right)=\sum_{k=0}^{n}{n\choose k} H_{n-k}(x)\,\left(\cfrac{s}{\sqrt{N}}\right)^k=\sum_{k=0}^{n}{n\choose k} H_{k}(x)\,\left(\cfrac{s}{\sqrt{N}}\right)^{n-k}.$$
Using this formula to express $$H_N\left(x+\cfrac{s}{\sqrt{N}}\right)$$ and $$H_{N-1}\left(x+\cfrac{s}{\sqrt{N}}\right)$$ in \eqref{eq:meanwigner3} and using the orthogonality property of the Hermite polynomials, we obtain that for all $N\ge 1$, for all $s\in\R$, 
\begin{equation}
\label{eq:laplacetransfor}
\begin{split}
\mathcal L_{\E[\mu_N]}(s)&=\frac{\exp\left(\frac{s^2}{2N}\right)}{s}\sum_{k=0}^{N-1}\cfrac{k!}{N!}{N\choose k}{{N-1}\choose k}\left(\cfrac{s}{\sqrt{N}}\right)^{2N-1-2k}\\
&=\frac{\exp\left(\frac{s^2}{2N}\right)}{s}\sum_{k=0}^{N-1}\cfrac{(N-1-k)!}{N!}{N\choose {N-1-k}}{{N-1}\choose {N-1-k}}\left(\cfrac{s}{\sqrt{N}}\right)^{2k+1}\\
&=\exp\left(\frac{s^2}{2N}\right)\sum_{k=0}^{N-1}\cfrac{1}{k+1}{{2k} \choose k}\,\cfrac{(N-1)...(N-k)}{N^k}\,\cfrac{s^{2k}}{(2k)!}
\end{split}
\end{equation}
We recall that the moments of the semicircular law were computed in Appendix in Proposition \ref{annexe prop moment}.
Passing to the limit, we obtain that for all $s\in\R$, \begin{equation}
\mathcal L_{\E[\mu_N]}(s)\underset{N\to+\infty}{\longrightarrow}\sum_{k=0}^{+\infty}\cfrac{1}{k+1}{{2k} \choose k}\,\cfrac{s^{2k}}{(2k)!}=\sum_{k=0}^{+\infty}\int_\R x^k \sigma(dx)\,\cfrac{s^{k}}{k!}=\mathcal L_\sigma(s),
\end{equation}
This gives the result.
\end{proof}

\subsection{A concentration inequality}
\label{section:concentrationineq}
We first prove a concentration inequality for Lipschitz functions of Gaussian random variables. More generally for concentration inequalities we refer to the textbook \cite{ledoux2001concentration}.
\begin{theorem}
\label{Talagrand ineq}
Let $X_1,...,X_d$ be $d$ independent centered Gaussian random variable of variance smaller than $\sigma^2$. Let $F:\R^d\to \R$ which is $\lambda>0$ Lispchitz which means that for all $(x,y)\in\R^d$, $|F(x)-F(y)|\le \lambda||x-y||_2$. Then for all $t>0$ we have: 
\begin{equation}
\label{eq talagrand}
\mP(|F(X_1,...,X_d)-\E(F(X_1,...,X_d))|>t)\le 2\exp\left(-2\left(\cfrac{t}{\lambda\sigma\pi}\right)^2\right)
\end{equation}
\end{theorem}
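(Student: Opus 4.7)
The plan is to use the classical Maurey--Pisier smart-path interpolation, which is the standard route to a Gaussian concentration inequality with the specific constant $2/(\lambda\sigma\pi)^2$ that appears in the statement (as opposed to the sharper Borell constant $1/(2\lambda^2\sigma^2)$ obtained by the log-Sobolev/Herbst argument). First, by a standard mollification I would reduce to the case of a $C^1$ function $F$ with $\|\nabla F\|_2 \le \lambda$ pointwise: convolving with a smooth compactly supported mollifier preserves the Lipschitz constant, and one passes to the limit $\veps\to 0$ in the final tail bound using $F\ast\rho_\veps(X)\to F(X)$ almost surely together with a trivial $\delta$-cushion in $t$. It is then enough, by applying the argument to $\pm F$ and taking a union bound, to prove the one-sided estimate $\mP(F(X)-\E F(X)>t)\le\exp(-2t^2/(\pi^2\sigma^2\lambda^2))$.

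Let $Y=(Y_1,\dots,Y_d)$ be an independent copy of $X=(X_1,\dots,X_d)$ and introduce, for $\theta\in[0,\pi/2]$,
\[
X_\theta = X\sin\theta + Y\cos\theta, \qquad \widetilde{X}_\theta = X\cos\theta - Y\sin\theta.
\]
Coordinate-wise this is a rotation in $\R^2$ of an independent pair of centered Gaussians of the same variance, so $(X_\theta,\widetilde{X}_\theta)$ has the same joint law as $(X,Y)$; in particular $\widetilde{X}_\theta$ is independent of $X_\theta$ and each of its coordinates has variance at most $\sigma^2$. Since $X_0=Y$ and $X_{\pi/2}=X$, the chain rule gives
\[
F(X)-F(Y) \;=\; \int_0^{\pi/2} \tfrac{d}{d\theta} F(X_\theta)\,d\theta \;=\; \int_0^{\pi/2} \langle \nabla F(X_\theta),\,\widetilde{X}_\theta\rangle\,d\theta.
\]

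Viewing $\tfrac{2}{\pi}d\theta$ as a probability measure on $[0,\pi/2]$, Jensen's inequality applied to the convex function $u\mapsto e^u$, followed by Fubini, yields for every $s\in\R$
\[
\E\!\left[e^{s(F(X)-F(Y))}\right] \;\le\; \frac{2}{\pi}\int_0^{\pi/2} \E\!\left[\exp\!\left(\tfrac{s\pi}{2}\,\langle\nabla F(X_\theta),\widetilde{X}_\theta\rangle\right)\right]d\theta.
\]
Conditioning on $X_\theta$ and using the Gaussian Laplace transform for $\widetilde{X}_\theta$ (whose coordinates are independent centered Gaussians of variance at most $\sigma^2$), the inner conditional expectation is bounded by $\exp\bigl(\tfrac{s^2\pi^2\sigma^2}{8}\|\nabla F(X_\theta)\|_2^2\bigr)\le\exp\bigl(\tfrac{s^2\pi^2\sigma^2\lambda^2}{8}\bigr)$. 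Thus $F(X)-F(Y)$ is sub-Gaussian with parameter $\pi\sigma\lambda/2$, and by Jensen applied to the independent copy $Y$ (namely $e^{-s\E F(Y)}\le\E[e^{-sF(Y)}]$ and independence of $X,Y$) the same sub-Gaussian bound transfers to $F(X)-\E F(X)$.

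A Chernoff bound optimized at $s=4t/(\pi^2\sigma^2\lambda^2)$ then gives the one-sided estimate, and a union bound with the analogous estimate for $-F$ yields \eqref{eq talagrand}. The step that requires the most care is the Jensen--Fubini manipulation turning the path integral into a single moment generating function bound; the preparation of the rotation, the Gaussian computation conditional on $X_\theta$, and the Chernoff optimization are all routine.
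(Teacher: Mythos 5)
Your proposal is correct and follows the same Maurey--Pisier (smart-path) interpolation argument that the paper uses, following Tao: the rotation $X\sin\theta + Y\cos\theta$, its independent derivative, the Jensen--Fubini bound on the moment generating function, the Gaussian conditional Laplace transform, and the Chernoff optimization are all identical to the paper's proof. The only cosmetic difference is that the paper centers $F$ at the outset (taking $\E F = 0$) and applies the Jensen step $\E[e^{-\lambda F(Y)}]\ge 1$ immediately, whereas you transfer from $F(X)-F(Y)$ to $F(X)-\E F(X)$ at the end; these are the same computation in a different order.
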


\begin{proof}
We follow the arguments of Tao in \cite{Tao} (Theorem 2.1.12). Up to changing $F$ and the $(X_i)_{1\le i\le d}$ we can suppose that $\Var(X_i)=1$ for all $1\le i\le d$, $\lambda=1$ and $\E(F(X_1,...,X_d))=0$. Moreover using a density argument or the fact that a Lipschitz function is differentiable almost everywhere by Rademacher theorem, we can assume that $F$ is $C^1$ and so satisfies that $|\nabla F|\le 1$ in $\R^d$.
\newline
Fix $\lambda>0$. As usual for concentration inequalities, we shall try to bound $\E(\exp(\lambda F(X_1,...,X_d)))$.
To do so, let introduce $(Y_1,...,Y_d)$ an independent copy  of $(X_1,...,X_d)$. First by the Jensen inequality, we get that $$\E(\exp(-\lambda F(Y_1,...,Y_d)))\ge\exp(-\lambda\E(F(Y_1,...,Y_d)))=1.$$
We obtain 
\begin{equation}
\label{ineq concentration 1}
\E(\exp(\lambda F(X_1,...,X_d)))\le \E(\exp(\lambda(F(X_1,...,X_d)-F(Y_1,...,Y_d)))).
\end{equation}
Let $X=(X_1,...,X_d)$ and $Y=(Y_1,...,Y_d)$. $X$ and $Y$ are independent centered Gaussian vectors in $\R^d$ of covariance matrix $I_d$ the identity matrix of $M_d(\R)$. We shall construct a path to rely $X$ and $Y$ which is not the affine one.
We define $Z:[0,\pi/2]\to \R^d$ by $Z(s)=\sin(s)X+\cos(s)Y$. We get 
\begin{equation}
\label{eq concentration 2}
F(X)-F(Y)=F(Z(\pi/2))-F(Z(0))=\int_0^{\pi/2}\langle\nabla F(Z(s))|Z'(s)\rangle\,ds.
\end{equation}
For all $s\in[0,\pi/2]$, $Z(s)$ is a centred Gaussian vector in $\R^d$ with covariance matrix $I_d$. We remark that for all $s\in[0,\pi/2]$, $Z'(s)=\cos(s)X-\sin(s) Y$ is also a centred Gaussian vector in $\R^d$ with covariance matrix $I_d$. The remarkable property that justifies the choice of the path $Z$ is that for all $s\in[0,\pi/2]$, $Z(s)$ and $Z'(s)$ are independent. Indeed, fix $s\in[0,\pi/2]$, then $(Z(s),Z'(s))$ is a centred Gaussian vector in $\R^{2d}$ and for all $1\le i\ne j\le d$, 
\begin{align*}
\E[Z(s)_i Z'(s)_j]&=\E[(\sin(s)X_i+\cos(s)Y_i)(\cos(s)X_j-\sin(s) Y_j)]=0\\
\E[Z(s)_i Z'(s)_i]&=\E[(\sin(s)X_i+\cos(s)Y_i)(\cos(s)X_i-\sin(s) Y_i)]=\cos(s)\sin(s)-\cos(s)\sin(s)=0
\end{align*}
since $X$ and $Y$ are independent centred Gaussian vectors of covariance matrix $I_d$. Hence, the covariance matrix of the Gaussian vector $(Z(s),Z'(s))$ is $I_{2d}$. By standard properties of Gaussian vectors it implies that $Z(s)$ and $Z'(s)$ are independent.
\newline
Starting from \eqref{ineq concentration 1} we get:
\begin{equation}
\label{ineq concentration 3}
\begin{split}
\E(\exp(\lambda F(X_1,...,X_d)))&\le\E\left[\exp\left(\lambda\int_0^{\pi/2}\langle\nabla F(Z(s))|Z'(s)\rangle\,ds \right)\right]\\
&=\E\left[\exp\left(\lambda\frac{\pi}{2}\int_0^{\pi/2}\langle\nabla F(Z(s))|Z'(s)\rangle\,\frac{2}{\pi}ds \right)\right]\\
&\hspace{-0.2cm}\overset{Jensen}{\le}\E\left[ \int_0^{\pi/2} \exp\left(\lambda\frac{2}{\pi}\langle\nabla F(Z(s))|Z'(s)\rangle\right) \frac{\pi}{2}\,ds\right]\\
&=\cfrac{2}{\pi} \int_0^{\pi/2} \E\left[\exp\left(\lambda\frac{\pi}{2}\langle\nabla F(Z(s))|Z'(s)\rangle\right)\right] \,ds.
\end{split}
\end{equation}
For all $s\in[0,\pi/2]$, conditioning on $Z(s)$ and and using that $Z'(s)$ is a standard Gaussian vector independent of $Z(s)$, we obtain:
\begin{align*}
\E\left[\exp\left(\lambda\frac{\pi}{2}\langle\nabla F(Z(s))|Z'(s)\rangle\right)\right]&=\E\left[\E\left[\exp\left(\lambda\frac{\pi}{2}\langle\nabla F(Z(s))|Z'(s)\rangle\right)\right]\dis{\lvert} Z(s)\right]\\
&=\E\left[\exp\left(\left(\lambda\frac{\pi}{2}\right)^2\frac{||\nabla F(Z(s))||^2}{2}\right)\right]\\
&\le \exp\left(\cfrac{\lambda^2\pi^2}{8}\right)
\end{align*}
Using this estimate in \eqref{ineq concentration 3} we get: 
\begin{equation}
\label{ineq concentration 4}
\E(\exp(\lambda F(X_1,...,X_d)))\le\exp\left(\frac{\lambda^2\pi^2}{8}\right)
\end{equation}
Let $t>0$, using the Markov inequality we have that for all $\lambda>0$: 
\begin{align*}
\mP( F(X_1,...,X_d)>t)&=\mP(\exp(\lambda  F(X_1,...,X_d))> \exp(\lambda t))\\
&\le \E(\exp(\lambda  F(X_1,...,X_d)))\exp(-\lambda t)\\
&\le\exp\left(\frac{\lambda^2\pi^2}{8}\right)\exp(-t\lambda)
\end{align*}
We optimize the right term in $\lambda$ and choosing $\lambda=4t/\pi^2$ yields $$\mP( F(X_1,...,X_d)>t)\le \exp\left(-\frac{2\,t^2}{\pi^2}\right).$$
By symmetry we obtain the result.
\end{proof}

To use Theorem \ref{Talagrand ineq} in the context of random matrices the key lemma is the so-called Hoffman-Wielandt inequality. 
\begin{lemma}[Hoffman-Wielandt's lemma]
\label{lemma: hoffman wielandt}
Let $A,B\in \mathcal H_N(\C)$ and $\Lambda^A:=(\lambda_1^A,...,\lambda_N^A)\in\R^N$ and $\Lambda^B:=(\lambda_1^B,...,\lambda_N^B)\in\R^N$ be their respective ordered spectrum $\lambda_1^A\ge ...\ge \lambda_N^A$, $\lambda_1^B\ge ...\ge \lambda_N^B$. Then $$||\Lambda^A-\Lambda^B||_2\le ||A-B||_2,$$
where $||C||_2=\Tr(CC^*)$ for $C\in M_N(\C)$ is the canonical norm in $M_N(\C)$ and $||X||_2$ for $X\in\R^N$ is the canonical norm in $\R^N$.  
\end{lemma}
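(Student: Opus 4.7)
My plan is to prove the equivalent algebraic reformulation: since $\|C\|_2^2 = \Tr(CC^*)$, expanding $\|A-B\|_2^2 = \Tr(A^2) + \Tr(B^2) - 2\Tr(AB)$ and using that $\Tr(A^2) = \sum_i (\lambda_i^A)^2$ and $\Tr(B^2) = \sum_i (\lambda_i^B)^2$, while $\|\Lambda^A - \Lambda^B\|_2^2 = \sum_i (\lambda_i^A)^2 + \sum_i (\lambda_i^B)^2 - 2 \sum_i \lambda_i^A \lambda_i^B$, the inequality reduces to the trace inequality
\begin{equation*}
\Tr(AB) \le \sum_{i=1}^N \lambda_i^A \lambda_i^B,
\end{equation*}
where the eigenvalues on the right are paired in the same (decreasing) order.

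To establish this, I would diagonalize using the spectral theorem: write $A = U D_A U^*$ and $B = V D_B V^*$ with $D_A = \diag(\lambda_1^A, \dots, \lambda_N^A)$ and $D_B = \diag(\lambda_1^B, \dots, \lambda_N^B)$ and $U,V \in \mathcal U_N(\C)$. Setting $W := U^* V \in \mathcal U_N(\C)$, the cyclicity of the trace gives
\begin{equation*}
\Tr(AB) = \Tr(D_A W D_B W^*) = \sum_{i,j=1}^N \lambda_i^A \lambda_j^B\, |W_{ij}|^2.
\end{equation*}
Now the crucial observation is that the matrix $P := (|W_{ij}|^2)_{1\le i,j\le N}$ is doubly stochastic: its rows and columns both sum to $1$, because $W$ being unitary means $\sum_j |W_{ij}|^2 = (WW^*)_{ii} = 1$ and similarly for the columns. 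Thus $\Tr(AB) = \sum_{i,j} \lambda_i^A \lambda_j^B P_{ij}$ is a value of the linear functional $P \mapsto \sum_{i,j} \lambda_i^A \lambda_j^B P_{ij}$ on the polytope $\mathcal D_N$ of doubly stochastic matrices.

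The next step invokes the Birkhoff--von Neumann theorem: the extreme points of $\mathcal D_N$ are precisely the $N!$ permutation matrices, so $P$ is a convex combination $P = \sum_{\sigma \in \mathfrak S_N} c_\sigma P^\sigma$ with $c_\sigma \ge 0$, $\sum_\sigma c_\sigma = 1$. This yields
\begin{equation*}
\Tr(AB) = \sum_{\sigma \in \mathfrak S_N} c_\sigma \sum_{i=1}^N \lambda_i^A \lambda_{\sigma(i)}^B.
\end{equation*}
Finally, the classical rearrangement inequality tells us that for any two decreasing sequences $(\lambda_i^A)$ and $(\lambda_j^B)$ and any permutation $\sigma$,
\begin{equation*}
\sum_{i=1}^N \lambda_i^A \lambda_{\sigma(i)}^B \le \sum_{i=1}^N \lambda_i^A \lambda_i^B,
\end{equation*}
so taking the convex combination gives the desired bound and completes the proof.

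The main obstacle is the Birkhoff--von Neumann theorem, which is the nontrivial combinatorial input; the rearrangement inequality is elementary (a straightforward swap argument), and the reduction via spectral decomposition is purely computational. One could alternatively bypass Birkhoff by directly solving the linear program $\max\{\sum_{ij} \lambda_i^A \lambda_j^B P_{ij} : P \in \mathcal D_N\}$ via an exchange argument on $P$, but invoking Birkhoff is cleaner and more standard.
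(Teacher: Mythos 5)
Your proof is correct, and it takes a genuinely different route from the paper after the (common) reduction to the trace inequality $\Tr(AB)\le\sum_i\lambda_i^A\lambda_i^B$. You diagonalize both $A$ and $B$, observe that $P=(|W_{ij}|^2)$ is doubly stochastic, invoke Birkhoff--von Neumann to decompose $P$ as a convex combination of permutation matrices, and finish with the rearrangement inequality. The paper instead diagonalizes only $A$, reduces $\Tr(AB)$ to $\sum_i\lambda_i^A\,B'_{ii}$ for a conjugated $B'$, and then applies Abel summation using the ordering and (after a shift) nonnegativity of $\Lambda^A$; the key input there is a separate lemma, proved by induction via the min--max theorem, that the partial sums $\sum_{k\le i}B'_{kk}$ are bounded by $\sum_{k\le i}\lambda_k^B$ (a Schur--Horn/Ky Fan type majorization). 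The two arguments sit at different levels of abstraction: your approach outsources the combinatorial work to the named Birkhoff theorem, which makes the proof shorter and highlights the doubly stochastic structure (and immediately generalizes, e.g.\ to normal matrices); the paper's argument is more self-contained, avoiding Birkhoff at the cost of proving the partial-sum lemma inline. Your final remark---that one can bypass Birkhoff by directly solving the linear program over the Birkhoff polytope via an exchange argument---is essentially a route back toward the spirit of the paper's Abel-summation proof.
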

Before proving this result let us mention that if we do not order the eigenvalues the result does not hold, as we shall see it plays an important role in the proof. In a probabilistic point of view we can equivalently reformulate the Hoffman-Wieland inequality saying that the map defined from $(\mathcal H_N(\C),||.||_2)$ to $\mes$ by $$A\mapsto \cfrac{1}{N}\sum_{i=1}^N\delta_{\lambda^A_i},$$ where $(\lambda_1^A,...,\lambda_N^A)$ is the spectrum of $A$ is $1/\sqrt{N}$ Lipschitz if we endow $\mes$ with the so-called Wasserstein distance $W_2$. 
\newline
The Hoffman-Wielandt inequality is actually true for normal matrices but is not true for matrices in $M_N(\C)$ which is a problem in order to obtain for example concentration inequalities for Ginibre matrices. However, for matrices in $M_N(\C)$ a Hoffman-Wielandt inequality holds by substituting the eigenvalues of a matrix by its singular values. Combining it with the concentration inequality it gives good concentration inequalities for the empirical mean of singular values of Ginibre matrices.
\newline
\tab Let us give the proof of Lemma \ref{lemma: hoffman wielandt}.
\begin{proof}
We first compute: 
$$||\Lambda^A-\Lambda^B||_2^2=||\Lambda^A||_2^2-2\langle\Lambda^A|\Lambda^B\rangle+||\Lambda^B||_2^2.$$
Since $||\Lambda^A||_2^2=\sum_{i=1}^N(\lambda_i^A)^2=\Tr(A^2)=||A||_2^2$, to prove the Hoffman-Wielandt inequality it remains to show that: $$\Tr(AB)\le\sum_{i=1}^N\lambda_i^A\lambda_i^B.$$
First, let us notice that up to changing $A$ by $A-\lambda_N^A I_N$ and using that $\Tr(B)=\sum_{i=1}^N\lambda_i^B$, we can suppose that the spectrum of $A$ is in $\R^+$. Since now we suppose that $\lambda_N^A\in\R^+$.
\newline
Then, using the spectral theorem, we can find $U\in\mathcal U_N(\C)$ such that $A=U \diag(\Lambda^A)U^*$ where $\diag(X)$ for $X\in\C^N$ is the diagonal matrix of $M_N(\C)$ with $\diag(X)_{i,i}=X_i$ for $1\le i\le N$. By properties of the trace we have $$\Tr(AB)=\Tr(U \diag(\Lambda^A)U^*B)=\Tr(\diag(\Lambda^A)U^*BU)=
\Tr(\diag(\Lambda^A)B'),$$ where $B':=U^*BU\in \mathcal H_N(\C)$ and has the same spectrum as $B$. Thanks to this transformation we reduce to the case $A$ is a diagonal matrix. Write $B'=(B'_{i,j})_{1\le i,j\le N}$ to compute \begin{equation}
\label{eq: trace maj}\Tr(AB)=\Tr(\diag(\Lambda^A)B')=\sum_{i=1}^N \lambda_i^A B'_{i,i}.
\end{equation}
A usual method to obtain the type of inequalities as Hoffman-Wielandt is to use the hypothesis that $\Lambda^A$ is ordered by doing a summation by parts.
For all $0\le i\le N+1$, we introduce $s_i:=\sum_{k=1}^i  B'_{k,k}$ and $s_{0}=0$. \eqref{eq: trace maj} gives 

\begin{equation}
\label{eq trace 2}
\Tr(AB)=\sum_{i=1}^N \lambda_i^A B'_{i,i}=\sum_{i=1}^N \lambda_i^A (s_{i}-s_{i-1})=\sum_{i=1}^N (\lambda_i^A-\lambda_{i+1}^A) s_i,
\end{equation}
where $\lambda_{N+1}^A:=0$ by convention.
We notice that for all $1\le i\le N$, $\lambda_i^A-\lambda_{i+1}^A\ge 0$ because $\Lambda^A$ is ordered and $\lambda_N^A\ge 0$.
\newline
The only thing that it remains to prove is that for all $1\le i\le N$, $s_i\le \sum_{k=1}^i \lambda_k^B$. We shall prove these inequalities in Lemma \ref{lemma ineq trace}. Indeed assume that such inequalities have been obtained, \eqref{eq trace 2} yields $$\Tr(AB)=\sum_{i=1}^N (\lambda_i^A-\lambda_{i+1}^A) s_i\le \sum_{i=1}^N \left[(\lambda_i^A-\lambda_{i+1}^A) \sum_{k=1}^i \lambda_k^B\right]=\sum_{i=1}^N \lambda_i^A\lambda_i^B,$$ doing again a summation by parts for the last equality.
\end{proof}
\begin{lemma}
\label{lemma ineq trace}
Let $B=(b_{i,j})_{1\le i,j\le N}\in \mathcal H_N(\C)$ and $\lambda_1\ge ... \ge \lambda_N\in\R^N$ be its ordered spectrum. Then for all $1\le i\le N$ we have $$s_i:=\sum_{k=1}^ib_{k,k}\le \sum_{k=1}^i \lambda_k.$$
\end{lemma}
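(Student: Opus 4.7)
My plan is to prove the inequality by diagonalizing $B$ and reducing the problem to a convex combination estimate that exploits the ordering of the eigenvalues. By the spectral theorem, I would write $B = U D U^*$ with $U=(u_{k,j})_{1\le k,j\le N} \in \mathcal U_N(\C)$ and $D=\diag(\lambda_1,\ldots,\lambda_N)$. A direct computation then gives, for every $k$,
\[
b_{k,k} = \sum_{j=1}^N |u_{k,j}|^2 \lambda_j,
\]
and summing over $k$ from $1$ to $i$ yields
\[
s_i = \sum_{k=1}^i b_{k,k} = \sum_{j=1}^N t_j \lambda_j, \qquad \text{where} \quad t_j := \sum_{k=1}^i |u_{k,j}|^2.
\]

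The next step, which is the heart of the argument, is to record two properties of the weights $t_j$ coming from the unitarity of $U$. First, since each column of $U$ is a unit vector, $t_j \le \sum_{k=1}^N |u_{k,j}|^2 = 1$, so $0 \le t_j \le 1$. Second, since each row is a unit vector, interchanging summations gives
\[
\sum_{j=1}^N t_j = \sum_{k=1}^i \sum_{j=1}^N |u_{k,j}|^2 = i.
\]
In particular $\sum_{j\le i}(1-t_j) = \sum_{j>i} t_j$.

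It remains to show that among all weights satisfying these two constraints, the sum $\sum_j t_j \lambda_j$ is maximized by the choice $t_j=\IND_{j\le i}$, which gives exactly $\sum_{k=1}^i \lambda_k$. I would do this by the following rearrangement:
\[
\sum_{k=1}^i \lambda_k - s_i = \sum_{j\le i} \lambda_j (1-t_j) - \sum_{j>i} \lambda_j t_j.
\]
Using $\lambda_j \ge \lambda_i$ for $j\le i$ and $\lambda_j \le \lambda_i$ for $j>i$ (this is where the ordering hypothesis on the spectrum is essential and the only place it is used), together with $1-t_j \ge 0$ and $t_j \ge 0$, I can lower bound this by
\[
\lambda_i \sum_{j\le i}(1-t_j) - \lambda_i \sum_{j>i} t_j = 0,
\]
which is exactly the desired inequality $s_i \le \sum_{k=1}^i \lambda_k$.

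I do not expect any serious obstacle here: the proof is a short rearrangement once the problem is reformulated via the spectral decomposition. The only point that requires a little care is correctly handling the weights $t_j$ (ensuring one uses both the column and row normalizations of $U$, which give respectively $t_j\le 1$ and $\sum_j t_j=i$), and keeping track of the sign of $\lambda_j-\lambda_i$ in the two regimes $j\le i$ and $j>i$.
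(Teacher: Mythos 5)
Your proof is correct, and it takes a genuinely different route from the paper's. The paper argues by induction on $i$: it uses the Courant--Fischer min--max theorem to produce a unit vector $v_{i+1}$ in $\mathrm{span}(e_1,\dots,e_{i+1})$ with $\langle v_{i+1}|Bv_{i+1}\rangle \le \lambda_{i+1}$, rotates the basis to make $v_{i+1}$ a coordinate vector, and then appeals to the induction hypothesis in the new basis (using that the first $i$ diagonal entries change but the trace of the $(i{+}1)\times(i{+}1)$ block does not). You instead diagonalize $B$ once and for all, observe that $s_i=\sum_j t_j\lambda_j$ where the weights $t_j=\sum_{k\le i}|u_{k,j}|^2$ are \emph{doubly substochastic}-type coefficients (each $t_j\in[0,1]$ from column-orthonormality, $\sum_j t_j=i$ from row-orthonormality), and then conclude by a one-line rearrangement using the ordering of the $\lambda_j$. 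Your argument is non-inductive and more structural: it makes visible the Schur--Horn-style fact that the diagonal of $B$ is obtained from the spectrum by a doubly stochastic transformation, and that $\IND_{j\le i}$ maximizes $\sum_j t_j\lambda_j$ under the given constraints. The paper's min--max induction, by contrast, exposes the variational characterization of the $\lambda_k$ and generalizes a bit more readily to statements about general $k$-dimensional subspaces; both are standard and of comparable length, so this is a matter of taste.
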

\begin{proof}
In this proof, let $(e_1,...,e_n)$ be the canonical basis of $\C^N$.
We prove the result by induction on $1\le i\le N$. For $i=1$, the result comes directly from the fact that $$b_{1,1}=\langle e_1|Be_1\rangle\le \sup_{||x||_2=1}\langle x|Bx\rangle=\lambda_1.$$
Assume that the result holds for $1\le i\le N-1$. Let us prove that $$s_{i+1}\le \sum_{k=1}^{i+1} \lambda_k.$$
Let $F_i=\SPAN(e_1,...,e_{i+1})$. By the Min-max theorem, we get that $\lambda_{i+1}\ge \inf_{x\in F_i,\, ||x||_2=1}\langle x|Bx\rangle$. So, we can find $v_{i+1}\in F_i$ with $||v_{i+1}||_2=1$ such that $\lambda_{i+1}\ge \langle v_{i+1}|Bv_{i+1}\rangle$.
We complete $v_{i+1}$ by $v_1,...,v_i$ in an orthonormal basis of $F_i$. Then $\mathcal B_i:=(v_1,...,v_i,v_{i+1},e_{i+2},...e_n)$ is an orthonormal basis of $\C^N$. Let $\widetilde {B_i}:=(\widetilde {b_{k,j}})_{1\le k,j\le N}$ be the matrix $B$ in the basis $\mathcal B_i$. Since $B$ and $\widetilde {B_i}$ are conjugate they have the same spectrum. We apply the induction to the matrix $\widetilde {B_i}$. This gives that $$\sum_{k=1}^i \widetilde {b_{k,k}}\le \sum_{k=1}^i\lambda_k.$$
By construction of $v_{i+1}$ we have that: 

\begin{equation}
\label{eq: ineq trace}
\sum_{k=1}^{i+1} \widetilde {b_{k,k}}=\sum_{k=1}^{i} \widetilde {b_{k,k}}+\langle v_{i+1}|Bv_{i+1}\rangle\le \sum_{k=1}^i\lambda_k+\lambda_{i+1}.
\end{equation}
Let us remark that by construction of the basis $\mathcal B_i$, for all $N\ge j>i+1$ we have $\widetilde b_{j,j}=b_{j,j}=\langle e_j|Be_j\rangle$ and since $B$ and $\widetilde B_i$ are conjugate they have the same trace. This leads to $\sum_{k=1}^{i+1} \widetilde {b_{k,k}}=\sum_{k=1}^{i+1} b_{k,k}$. \eqref{eq: ineq trace} gives the result.

\end{proof}

Let us recall that given a function $f:\R\to \C$ and $M\in\mathcal H_N(\C)$ that can be written $M=UDU^*$ with $U\in\mathcal U_N(\C)$ and $D$ a real diagonal matrix, we define $f(M)$ as $Uf(D)U^*$ and this choice does not depend of the choice of basis $U$.
A direct application of the Hoffman-Wielandt inequality is the following result.
\begin{corollary}
\label{coro lip}
Let $f: \R\to \C$ be a $\lambda$ Lipschitz function, then the map $X\mapsto \cfrac{1}{N}\Tr(f(X))$ is $\lambda/\sqrt{N}$ Lipschitz on $(\mathcal H_N(\C),||.||_2)$.
\end{corollary}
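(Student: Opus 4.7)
The plan is to reduce the problem to a statement about the eigenvalues via the spectral theorem and then apply the Hoffman--Wielandt inequality (Lemma \ref{lemma: hoffman wielandt}). Given $A,B\in\mathcal H_N(\C)$, write $A = U_A\,\mathrm{diag}(\Lambda^A)\,U_A^*$ and $B = U_B\,\mathrm{diag}(\Lambda^B)\,U_B^*$ where the spectra $\Lambda^A=(\lambda_1^A,\dots,\lambda_N^A)$ and $\Lambda^B=(\lambda_1^B,\dots,\lambda_N^B)$ are arranged in decreasing order. By definition of the functional calculus, $\Tr(f(A)) = \sum_{i=1}^N f(\lambda_i^A)$ and similarly for $B$, so the difference of traces only sees the eigenvalues.

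First I would bound the difference of traces using the Lipschitz hypothesis on $f$ followed by the Cauchy--Schwarz inequality:
\begin{align*}
\ABS{\Tr(f(A)) - \Tr(f(B))} &\le \sum_{i=1}^N \ABS{f(\lambda_i^A) - f(\lambda_i^B)} \\
&\le \lambda \sum_{i=1}^N \ABS{\lambda_i^A - \lambda_i^B} \\
&\le \lambda\, \sqrt{N}\, \NRM{\Lambda^A - \Lambda^B}_2.
\end{align*}
The reason for insisting that both spectra be reordered decreasingly is precisely that the Hoffman--Wielandt inequality is stated for ordered eigenvalues, so this ordering is what allows the next step.

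Next I would invoke Lemma \ref{lemma: hoffman wielandt} to replace the Euclidean distance between the (ordered) eigenvalue vectors by the Hilbert--Schmidt distance between the matrices themselves: $\NRM{\Lambda^A - \Lambda^B}_2 \le \NRM{A-B}_2$. Combining this with the previous estimate and dividing both sides by $N$ yields
\begin{equation*}
\BABS{\tfrac{1}{N}\Tr(f(A)) - \tfrac{1}{N}\Tr(f(B))} \le \frac{\lambda}{\sqrt N}\, \NRM{A-B}_2,
\end{equation*}
which is exactly the claimed Lipschitz bound.

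There is no real obstacle here: the whole content of the result is the Hoffman--Wielandt inequality already proved in Lemma \ref{lemma: hoffman wielandt}, and the only other ingredient is a one-line Cauchy--Schwarz argument. The main conceptual point worth emphasising is that although the map $X\mapsto \Lambda(X)$ is not well-defined as a single-valued function on $\mathcal H_N(\C)$ (eigenvalues live in the quotient $\R^N/\mathfrak S_N$), the symmetric functional $\frac{1}{N}\Tr(f(\cdot))$ does not depend on the labelling of the eigenvalues, so the use of the decreasing ordering on both sides is legitimate.
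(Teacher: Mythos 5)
Your proof is correct and uses essentially the same ingredients as the paper's: the Lipschitz hypothesis on $f$, the Cauchy--Schwarz inequality, and the Hoffman--Wielandt inequality. The only cosmetic difference is the order of operations (the paper squares the difference first and applies Cauchy--Schwarz to the squared sum, while you apply Lipschitz term by term and then bound the $\ell^1$ norm by $\sqrt{N}$ times the $\ell^2$ norm); these are equivalent and lead to the same constant $\lambda/\sqrt{N}$.
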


\begin{proof}
Let $X,Y\in\mathcal H_N(\C)$, we write $\Lambda^X$ and $\Lambda^Y$ their ordered spectrum as in Lemma $\ref{lemma: hoffman wielandt}$. We directly compute: 
\begin{align*}
\left|\cfrac{1}{N}\Tr(f(X))-\cfrac{1}{N}\Tr(f(Y))\right|^2&=\cfrac{1}{N^2}\left|\sum_{i=1}^N (f(\lambda_i^X)-f(\lambda_i^Y))\right|^2\\
&\hspace{-1cm}\overset{Cauchy-Schwarz}{\le} \cfrac{1}{N}\sum_{i=1}^N \left[\left|f(\lambda_i^X)-f(\lambda_i^Y)\right|^2\right]\\
&\le \cfrac{\lambda^2}{N}\sum_{i=1}^N \left[\left|\lambda_i^X-\lambda_i^Y\right|^2\right]\\
&\le \cfrac{\lambda^2}{N}\,||\Lambda^X-\Lambda^Y||_2^2\\
&\le \cfrac{\lambda^2}{N}\,||X-Y||_2^2,
\end{align*}
where we used the Hoffman-Wielandt inequality for the last inequality.
\end{proof}
Now using Theorem \ref{Talagrand ineq}, we shall prove a concentration inequality for the spectrum of Wigner matrices.
\begin{theorem}
\label{thm concentration}
Let $f: \R\to \C$ be a $\lambda$ Lipschitz function and $M_N$ be a $N$ complex Wigner matrix then for all $t>0$: 
$$\mP\left(\, \left|\cfrac{1}{N}\Tr f(M_N)-\E\left[\cfrac{1}{N}\Tr f(M_N)\right] \right|\ge t\right)\le 2\exp\left(-2N\,\cfrac{t^2}{\pi^2\lambda^2}\right).$$
\end{theorem}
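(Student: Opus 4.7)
The plan is to reduce the statement to the Gaussian concentration inequality of Theorem \ref{Talagrand ineq} applied to a well-chosen parameterization of $\mathcal H_N(\C)$ by independent standard Gaussian variables. Concretely, I identify the $N$ complex Wigner matrix $M_N$ with a vector $X \in \R^{N^2}$ of coordinates $(M_{i,i})_{1\le i\le N},\ (\sqrt 2\,\Re M_{i,j})_{1\le i<j\le N},\ (\sqrt 2\,\Im M_{i,j})_{1\le i<j\le N}$, which by Definition \ref{def gue,goe} is a family of $N^2$ independent $\mathcal N(0,1)$ random variables. Then $G:\R^{N^2}\to \R$ defined by $G(X)=\tfrac{1}{N}\Tr f(M_N(X))$ is the function I want to apply Theorem \ref{Talagrand ineq} to, with $\sigma=1$.

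The key step is showing that $G$ is $\lambda/\sqrt N$-Lipschitz for the canonical Euclidean norm on $\R^{N^2}$. For this I first observe that the parametrization is norm-preserving:
\begin{equation*}
\NRM{M}_2^2=\Tr(MM^*)=\sum_{i=1}^N M_{i,i}^2+2\sum_{i<j}\bigl(\Re M_{i,j}\bigr)^2+2\sum_{i<j}\bigl(\Im M_{i,j}\bigr)^2 = \NRM{X}_2^2.
\end{equation*}
The factors $\sqrt 2$ in the definition of the Wigner matrix are precisely what makes the Hilbert--Schmidt norm on $\mathcal H_N(\C)$ coincide with the Euclidean norm of $X$. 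Now Corollary \ref{coro lip} tells me that $M\mapsto \tfrac{1}{N}\Tr f(M)$ is $\lambda/\sqrt N$-Lipschitz on $(\mathcal H_N(\C),\NRM{\cdot}_2)$; composing with the (isometric) identification $X\mapsto M_N(X)$, I get that $G$ is $\lambda/\sqrt N$-Lipschitz on $(\R^{N^2},\NRM{\cdot}_2)$.

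It then suffices to plug the Lipschitz constant $\lambda/\sqrt N$ and variance $\sigma^2=1$ into the bound \eqref{eq talagrand}: the right-hand side becomes
\begin{equation*}
2\exp\!\left(-2\left(\frac{t}{(\lambda/\sqrt N)\,\pi}\right)^{\!2}\right)=2\exp\!\left(-2N\,\frac{t^2}{\pi^2\lambda^2}\right),
\end{equation*}
which is exactly the claimed inequality.

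The only subtle point is really the bookkeeping of variances and the $\sqrt 2$ factors: one must check that the Gaussian coordinates chosen have unit variance and that the map $X\mapsto M_N(X)$ is an isometry from $(\R^{N^2},\NRM{\cdot}_2)$ onto $(\mathcal H_N(\C),\NRM{\cdot}_2)$, since otherwise the Lipschitz constant one carries into Theorem \ref{Talagrand ineq} would be wrong by a dimension-dependent factor and would ruin the $N^{-1/2}$ concentration scale. Once this identification is made, everything else is a direct substitution, and the Hoffman--Wielandt inequality (Lemma \ref{lemma: hoffman wielandt}) does the heavy lifting inside Corollary \ref{coro lip}.
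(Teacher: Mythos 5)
Your proof is correct and takes essentially the same route as the paper's: reduce to the Gaussian Lipschitz concentration of Theorem \ref{Talagrand ineq} via Corollary \ref{coro lip}, with the only subtlety being the identification of $\mathcal H_N(\C)$ with $\R^{N^2}$. You are more explicit than the paper's one-line proof in spelling out why the $\sqrt{2}$-scaled coordinates $(M_{i,i},\sqrt 2\,\Re M_{i,j},\sqrt 2\,\Im M_{i,j})$ are the right parameterization — they are simultaneously unit-variance standard Gaussians (so $\sigma^2=1$ applies) and make $X\mapsto M_N(X)$ an isometry from $(\R^{N^2},\NRM{\cdot}_2)$ to $(\mathcal H_N(\C),\NRM{\cdot}_2)$ (so the Lipschitz constant $\lambda/\sqrt N$ from Corollary \ref{coro lip} carries over unchanged); this bookkeeping is indeed the point that the paper states tersely, and without it one would pick up a spurious $\sqrt 2$ in the Lipschitz constant and lose a factor of $2$ in the exponent.
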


\begin{proof}
We recall that by definition of a complex Wigner matrix, its entries are centered normal law of variance $1$ on the diagonal and otherwise the real and the imaginary part of its entries are independent centered normal law of variance $1/2$. By Corollary \ref{coro lip}, we can apply Theorem \ref{Talagrand ineq} with $F_f(X)=X\mapsto \cfrac{1}{N}\Tr(f(X))$ which is $\lambda/\sqrt{N}$ Lipschitz and $\sigma^2=1$ which gives the result.
\end{proof}

\begin{Remark}
The result of Theorem \ref{thm concentration} can also be obtained by the so-called logarithmic Sobolev inequalities (in short LSI). We refer to \cite{chafai2024logarithmic,guionnet2003lectures} for generalities on this topic and to \cite{Alicelivre} for its applications in random matrices. The concentration inequality obtained in  Theorem \ref{thm concentration} can be obtained by LSI in two different ways. We can first apply a LSI directly to the random matrix $M_N$ whose law satisfies a LSI from the tensorization property since all its entries are independent Gaussian random variables that satisfy a LSI. We can also rewrite $\Tr (f(M_N))$ in function of the eigenvalues of $M_N$ and use that the law of the eigenvalues of a $N$ complex Wigner random matrix obtained in Theorem \ref{thm law eigenvalues guen} satisfies a LSI by the Bakry-Émery criterion. 
\newline
Let us mention that none of these two approaches can be used to obtain a similar concentration inequality for Ginibre random matrices.
\end{Remark}

As a consequence of this concentration inequality we obtain a bound on the variance of $\cfrac{1}{N}\Tr f(M_N/\sqrt N)$ where $M_N$ is a $N$ complex Wigner matrix that shall be used in the following part to obtain the strong Wigner theorem from the mean one.

\begin{prop}
\label{prop: bound variance}
Let $f: \R\to \C$ be a $\lambda$ Lipschitz function and $M_N$ be a $N$ complex Wigner matrix then there exists $c_f>0$ a constant that only depend on $f$ such that 
$$\Var\left(\cfrac{1}{N}\Tr f\left(\cfrac{M_N}{\sqrt N}\right)\right)\le \cfrac{c_f}{N^2}.$$
\end{prop}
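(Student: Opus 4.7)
The plan is to deduce the variance bound directly from the concentration inequality of Theorem \ref{thm concentration} combined with the standard tail-integration identity for the variance.

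First I would absorb the $1/\sqrt N$ scaling into the test function. Define $\tilde f : \R \to \C$ by $\tilde f(x) = f(x/\sqrt N)$. Since $f$ is $\lambda$ Lipschitz, $\tilde f$ is $\lambda/\sqrt N$ Lipschitz, and we have the identity
$$\frac{1}{N}\Tr \tilde f(M_N) = \frac{1}{N}\Tr f\!\left(\frac{M_N}{\sqrt N}\right).$$
Applying Theorem \ref{thm concentration} to the Lipschitz function $\tilde f$ (with Lipschitz constant $\lambda/\sqrt N$ in place of $\lambda$) then gives, for every $t>0$,
$$\mP\!\left(\,\left|\frac{1}{N}\Tr f\!\left(\frac{M_N}{\sqrt N}\right) - \E\!\left[\frac{1}{N}\Tr f\!\left(\frac{M_N}{\sqrt N}\right)\right]\right| \ge t\right) \le 2\exp\!\left(-\frac{2N^2 t^2}{\pi^2\lambda^2}\right).$$
So the empirical mean of $f$ applied to the rescaled eigenvalues enjoys a Gaussian concentration of scale $1/N$.

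Next I would convert this tail estimate into a variance bound via the classical formula
$$\Var(Z) = \int_0^{+\infty} 2t\,\mP\!\left(|Z - \E(Z)| > t\right)\,dt,$$
applied to $Z := \frac{1}{N}\Tr f(M_N/\sqrt N)$. Plugging the previous tail bound in yields
$$\Var\!\left(\frac{1}{N}\Tr f\!\left(\frac{M_N}{\sqrt N}\right)\right) \le \int_0^{+\infty} 4t \exp\!\left(-\frac{2N^2 t^2}{\pi^2\lambda^2}\right)\,dt = \frac{\pi^2\lambda^2}{N^2},$$
using the elementary integral $\int_0^{+\infty} 2t\,\e^{-at^2}\,dt = 1/a$ with $a = 2N^2/(\pi^2\lambda^2)$. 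Setting $c_f := \pi^2\lambda^2$ gives the claim.

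There is essentially no obstacle here: the only subtlety is the rescaling step, i.e.\ being careful that passing from $f$ to $f(\cdot/\sqrt N)$ divides the Lipschitz constant by $\sqrt N$, which is precisely what turns the $N^{-1}$ decay inside the exponential of Theorem \ref{thm concentration} into the sharper $N^{-2}$ decay needed for the $c_f/N^2$ variance bound. The conclusion is simply a tail integration.
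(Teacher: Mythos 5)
Your proof is correct and follows essentially the same path as the paper: rescale $f$ to $\tilde f(x)=f(x/\sqrt N)$ to pick up the $\lambda/\sqrt N$ Lipschitz constant, apply the concentration inequality of Theorem~\ref{thm concentration}, and integrate the tail to bound the variance, arriving at the same constant $\pi^2\lambda^2/N^2$. The only cosmetic difference is the form of the tail-integration identity (you use $\int_0^\infty 2t\,\mP(|Z-\E Z|>t)\,dt$, the paper writes $\int_0^\infty \mP((Z-\E Z)^2\ge t)\,dt$), which are the same up to a change of variable.
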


\begin{proof}
By definition of the variance, we have: $$\Var\left(\cfrac{1}{N}\Tr f\left(\cfrac{M_N}{\sqrt N}\right)\right)=\E\left[\left(\cfrac{1}{N}\Tr f\left(\cfrac{M_N}{\sqrt N}\right)-\E\left(\cfrac{1}{N}\Tr f\left(\cfrac{M_N}{\sqrt N}\right)\right)\right)^2\right].$$
Using the Fubini-Tonelli theorem yields: 
\begin{align*}
\E&\left[\left(\cfrac{1}{N}\Tr f\left(\cfrac{M_N}{\sqrt N}\right)-\E\left(\cfrac{1}{N}\Tr f\left(\cfrac{M_N}{\sqrt N}\right)\right)\right)^2\right]=\\
&\hspace{1cm}\int_0^{+\infty}\mP\left[\left(\cfrac{1}{N}\Tr f\left(\cfrac{M_N}{\sqrt N}\right)-\E\left(\cfrac{1}{N}\Tr f\left(\cfrac{M_N}{\sqrt N}\right)\right)\right)^2\ge t\right]dt=\\
&\hspace{3.5cm}\int_0^{+\infty}\mP\left[\left|\cfrac{1}{N}\Tr f\left(\cfrac{M_N}{\sqrt N}\right)-\E\left(\cfrac{1}{N}\Tr f\left(\cfrac{M_N}{\sqrt N}\right)\right)\right|\ge \sqrt t\right]dt.
\end{align*}
Using Theorem \ref{thm concentration} with the function $\tilde f(x):=f(x/\sqrt{N})$ which is $\lambda/\sqrt N$ Lipschitz we get:
\begin{align*}
\Var\left(\cfrac{1}{N}\Tr f\left(\cfrac{M_N}{\sqrt N}\right)\right)\le 2\int_0^{+\infty}\exp\left(-2N^2\cfrac{t}{\pi^2\,\lambda^2}\right)dt=\cfrac{\pi^2\lambda^2}{N^2}.
\end{align*}
\end{proof}

\subsection{The strong Wigner theorem}
We can now state the strong Wigner theorem.
 
\begin{theorem}[Almost sure Wigner theorem] 
\label{thm strong wigner}
Let $(M_N)_{N\ge 1}$ be a sequence of random matrices such that for every $N\ge 1$ $M_N$ is a $N$ complex Wigner matrix.
For every $N\ge 1$, let $\Lambda^N=(\lambda_1^N,...,\lambda_N^N)\in\R^N$ be the random spectrum of the $N$ complex Wigner matrix $M_N$. We write: $$\mu_N=\cfrac{1}{N}\sum_{k=1}^N\delta_{\lambda_k^N/\sqrt{N}}$$ the empirical distribution associated to the eigenvalues of $M_N/\sqrt{N}$. 
\newline
Then almost surely $$\mu_N\overunderset{\mathcal L}{N\to+\infty}{\longrightarrow} \sigma.$$
\end{theorem}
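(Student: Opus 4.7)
The plan is to upgrade the mean Wigner theorem (Theorem \ref{thm mean wigner}) to almost sure convergence via the variance estimate of Proposition \ref{prop: bound variance} combined with a Borel--Cantelli argument, and then to pass from Lipschitz test functions to all bounded continuous ones by a standard separability argument. This mirrors exactly the scheme used in Corollary \ref{coro:cvps} for the Ginibre case, except that here the concentration input comes from Talagrand's inequality rather than from a direct fourth-moment computation.

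First, I would fix a Lipschitz bounded function $f:\R\to\R$ and apply Proposition \ref{prop: bound variance} to $\frac{1}{N}\Tr f(M_N/\sqrt N)=\int_\R f\,d\mu_N$, which yields
\[
\Var\!\left(\int_\R f\,d\mu_N\right)\le \frac{c_f}{N^2}.
\]
By Chebyshev's inequality, for every $\varepsilon>0$,
\[
\mP\!\left(\left|\int_\R f\,d\mu_N-\E\!\left[\int_\R f\,d\mu_N\right]\right|\ge\varepsilon\right)\le \frac{c_f}{\varepsilon^2 N^2}.
\]
Since $\sum_{N\ge 1} N^{-2}<\infty$, the Borel--Cantelli lemma gives that almost surely
\[
\int_\R f\,d\mu_N-\E\!\left[\int_\R f\,d\mu_N\right]\underset{N\to+\infty}{\longrightarrow}0.
\]
Combined with the mean Wigner theorem, which ensures $\E[\int f\,d\mu_N]\to \int f\,d\sigma$, this yields $\int_\R f\,d\mu_N\to \int_\R f\,d\sigma$ almost surely, for each fixed Lipschitz bounded $f$.

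Next I would handle the dependence of the null set on $f$. Choose a countable family $(f_k)_{k\ge 1}$ of Lipschitz bounded functions that is dense in $C_c(\R)$ for the uniform norm (for instance, a countable family of tent functions with rational parameters). For each $k$, the previous step provides a $\mP$-negligible set $\mathcal N_k$ off which $\int f_k\,d\mu_N\to \int f_k\,d\sigma$. Setting $\mathcal N:=\bigcup_{k\ge 1}\mathcal N_k$, which is still negligible, on $\mathcal N^c$ the convergence holds simultaneously for all $f_k$. A standard $3\varepsilon$ approximation argument then extends it to every $f\in C_c(\R)$, and since the family $(\mu_N)_{N\ge 1}$ consists of probability measures, a tightness-plus-Portmanteau argument (or the classical fact that convergence against compactly supported continuous functions implies weak convergence when the limit is a probability measure) promotes this to convergence against every bounded continuous $f$. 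Thus, almost surely, $\mu_N\overunderset{\mathcal L}{N\to+\infty}{\longrightarrow}\sigma$.

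The only genuinely non-routine step is the variance control, which has already been packaged in Proposition \ref{prop: bound variance}; everything else is bookkeeping. I do not expect any serious obstacle, but the point most worth caring for is the order of quantifiers: the Borel--Cantelli step produces one negligible set per test function, so the countable-density reduction cannot be skipped.
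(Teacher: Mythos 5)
Your proposal is correct and follows essentially the same route as the paper: fix a Lipschitz test function, invoke the variance bound from Proposition \ref{prop: bound variance} (ultimately a consequence of the concentration inequality), apply Chebyshev--Markov and Borel--Cantelli, combine with the mean Wigner theorem, and then reduce to a countable dense family of test functions. The only cosmetic difference is that the paper makes the additional countable-$\varepsilon$ reduction in the Borel--Cantelli step explicit, whereas you fold it into the conclusion; both arguments are sound.
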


\begin{proof}
As usual for the convergence in law it suffices to show that for a countable dense subset of the compactly supported functions $\mathcal D$ we have that almost surely for all $f\in\mathcal D$, $$\int_\R f d\mu_N\underset{n\to+\infty}{\longrightarrow}\int_\R f d\sigma.$$
Since $\mathcal D$ is countable, it suffices to prove that for all $f\in\mathcal D$, almost surely, $$\int_\R f d\mu_N\underset{n\to+\infty}{\longrightarrow}\int_\R f d\sigma.$$
\newline
For the rest of the proof, we fix $\mathcal D$ a countable dense subset of the compactly supported functions and $f\in \mathcal D$.
By the Wigner theorem in mean we have that $$\E\left[\int_\R f d\mu_N\right]\underset{N\to+\infty}{\longrightarrow} \int_\R f d\sigma.$$ Now, we need to be check that almost surely the empirical mean will not be far from its expectation. We shall use the concentration inequality obtained in the previous section.
\newline
We notice that by definition of the spectrum, $$\int_\R f d\mu_N=\cfrac{1}{N}\Tr f\left(\cfrac{M_N}{\sqrt N}\right).$$
Moreover the bound of the variance obtained in Proposition \ref{prop: bound variance} and the Markov inequality give that for all $\varepsilon>0$ $$\mP\left[\left|\cfrac{1}{N}\Tr f\left(\cfrac{M_N}{\sqrt N}\right)-\E\left(\cfrac{1}{N}\Tr f\left(\cfrac{M_N}{\sqrt N}\right)\right)\right|\ge \varepsilon\right]\le \cfrac{\Var\left(\cfrac{1}{N}\Tr f\left(\cfrac{M_N}{\sqrt N}\right)\right)}{\varepsilon^2}\le\cfrac{c_f}{N^2\varepsilon^2}.$$
Using the Borel–Cantelli lemma yields that for all $\varepsilon>0$, almost surely there exists $N_0$ such that for all $N\ge N_0$, $$\left|\cfrac{1}{N}\Tr f\left(\cfrac{M_N}{\sqrt N}\right)-\E\left(\cfrac{1}{N}\Tr f\left(\cfrac{M_N}{\sqrt N}\right)\right)\right|\le \varepsilon.$$
Again using a countable dense sequence of positive real numbers, we obtain that almost surely for all $\varepsilon>0$, there exists $N_0$ such that for all $N\ge N_0$ we have: 
\begin{equation}
\label{eq almost sure wigner}
\left|\cfrac{1}{N}\Tr f\left(\cfrac{M_N}{\sqrt N}\right)-\E\left(\cfrac{1}{N}\Tr f\left(\cfrac{M_N}{\sqrt N}\right)\right)\right|=\left|\int_\R f d\mu_N-\E\left[\int_\R f d\mu_N\right]\right|\le \varepsilon.
\end{equation}
Since $\E\left[\int_\R f d\mu_N\right]\underset{N\to+\infty}{\longrightarrow} \int_\R f d\sigma,$ \eqref{eq almost sure wigner} yields that almost surely, $$\int_\R f d\mu_N \underset{N\to+\infty}{\longrightarrow} \int_\R f d\sigma.$$
This concludes the proof.
\end{proof}
\subsection{Almost sure convergence of the smallest and the largest eigenvalue}
In this section, given a matrix $M\in \mathcal H_N(\C)$ we denote $\lambda_{\max}(M)$ and $\lambda_{\text{min}}(M)$ the largest and the smallest real eigenvalue of $M$.
As a consequence of the almost sure convergence of the empirical mean of the re normalized eigenvalues of a $N$ complex Wigner matrices we obtain the first estimate on the behaviour of the largest eigenvalue of a $N$ complex Wigner matrix. 
\begin{prop}
Let $(M_N)_{N\ge 1}$ be a sequence of random matrices such that for every $N\ge 1$ $M_N$ is a $N$ complex Wigner matrix. Then, we almost surely have $$\liminf_N\lambda_{\max}\left(\frac{M_N}{\sqrt{N}}\right)\ge 2.$$
\end{prop}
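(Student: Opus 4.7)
The plan is to deduce this lower bound directly from the almost sure Wigner theorem (Theorem \ref{thm strong wigner}) through a Portmanteau-type argument that exploits the fact that the semicircle law $\sigma$ has strictly positive density on the open interval $(-2,2)$.

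First, I would fix $t\in(0,2)$ and pick a nonnegative continuous bounded function $f_t:\R\to\R$ with $\SUPP(f_t)\subset(t,2)$ and $\int_\R f_t\, d\sigma>0$; this is possible because by \eqref{eq:semicircledistrib} the density of $\sigma$ is strictly positive on $(-2,2)$, so $\sigma\big((t,2)\big)>0$, and we can take for instance a suitable continuous bump function. By Theorem \ref{thm strong wigner}, almost surely
$$\int_\R f_t\, d\mu_N\underset{N\to+\infty}{\longrightarrow}\int_\R f_t\, d\sigma>0,$$
hence almost surely one has $\int_\R f_t\, d\mu_N>0$ for $N$ large enough. But by definition of $\mu_N$,
$$\int_\R f_t\, d\mu_N=\cfrac{1}{N}\sum_{k=1}^N f_t\!\left(\cfrac{\lambda_k^N}{\sqrt{N}}\right),$$
and since $f_t$ is nonnegative and supported in $(t,+\infty)$, its integral against $\mu_N$ being strictly positive forces at least one $\lambda_k^N/\sqrt{N}$ to lie in $(t,+\infty)$. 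Therefore $\lambda_{\max}(M_N/\sqrt{N})>t$ for every sufficiently large $N$, which yields $\liminf_N \lambda_{\max}(M_N/\sqrt{N})\ge t$ almost surely.

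To conclude, I would apply this for the countable sequence $t_k:=2-1/k$, $k\ge 1$. The countable intersection of the associated almost sure events is still almost sure, so almost surely $\liminf_N\lambda_{\max}(M_N/\sqrt{N})\ge t_k$ for every $k\ge 1$, and letting $k\to+\infty$ gives the claimed bound. The argument is routine and presents no real obstacle: the only point that requires a bit of care is that the weak convergence of $\mu_N$ does not immediately yield convergence of functionals of the form $\mu_N\big((t,+\infty)\big)$, which is why we replace the indicator of $(t,+\infty)$ by a continuous bump function $f_t$. Let us also mention that the matching upper bound $\limsup_N\lambda_{\max}(M_N/\sqrt{N})\le 2$ is strictly more delicate and typically requires the moment method applied to $\E[\Tr((M_N/\sqrt{N})^{2k_N})]$ for a well chosen sequence $k_N\to+\infty$, but it is not needed here.
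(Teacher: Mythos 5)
Your proof is correct and follows essentially the same route as the paper's: both invoke the almost sure Wigner theorem and then test the weak convergence $\mu_N\to\sigma$ against a continuous bump function that vanishes below a threshold $t<2$ but has positive integral against $\sigma$, forcing eigenvalues above $t$ for large $N$. The only cosmetic difference is that the paper argues by contradiction along a subsequence for a fixed $\omega$ while you argue directly and then take a countable intersection over $t_k=2-1/k$; the underlying idea is identical.
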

\begin{proof}
By the strong Wigner theorem, we know that almost surely $$\mu_N:=\cfrac{1}{N}\sum_{k=1}^N\delta_{\lambda_k^N/\sqrt{N}}$$ converges weakly towards the semicircle distribution $\sigma$ with the same notations as in Theorem \ref{thm strong wigner}. Let $\omega\in \Omega$ such that $$\mu_N(\omega)\overunderset{\mathcal L}{N\to+\infty}{\longrightarrow} \sigma.$$
Assume by contradiction that $$\liminf_N\lambda_{\max}\left(\frac{M_N(\omega)}{\sqrt{N}}\right)< 2.$$
Then we can find $\varepsilon>0$ and a subsequence $(\phi(N))_{N\ge 1}$ (which both depend on $\omega$) such that for all $N\ge 1$, $$\lambda_{\max}\left(\frac{M_{\phi(N)}(\omega)}{\sqrt{\phi(N)}}\right)< 2-\varepsilon.$$
Let $f_\varepsilon$ be the continuous bounded function on $\R$ which is null on $(-\infty,2-\varepsilon)$, equal to $1$ on $(2,+\infty)$ and linear on $(2-\varepsilon,2)$. Then, by choice of $f_\varepsilon$ we notice that $\int_\R f_\varepsilon d\sigma>0$ and for all $N\ge 1$ $\int_\R f_\varepsilon d\mu_{\phi(N)}(\omega)=0$ since all the eigenvalues of $\frac{M_{\phi(N)}(\omega)}{\sqrt{\phi(N)}}$ are smaller than $2-\varepsilon$ where $f$ is null.
However by the definition of the weak convergence of measure we have $$\int_\R f_\varepsilon d\mu_{\phi(N)}(\omega)\underset{N\to+\infty}{\longrightarrow} \int_\R f_\varepsilon d\sigma,$$
which is not possible. This proves the result.
\end{proof}
We now prove the other bound.
\begin{prop}
Let $(M_N)_{N\ge 1}$ be a sequence of random matrices such that for every $N\ge 1$ $M_N$ is a $N$ complex Wigner matrix. Then, we almost surely have $$\limsup_N\lambda_{\max}\left(\frac{M_N}{\sqrt{N}}\right)\le 2.$$
\end{prop}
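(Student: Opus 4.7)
The plan is to use the moment method combined with Markov's inequality and a Borel--Cantelli argument. The starting observation is that since all eigenvalues of $M_N/\sqrt N$ are real, for every $k \ge 1$ we have the deterministic bound
$$\lambda_{\max}(M_N/\sqrt N)^{2k} \le \sum_{i=1}^N \lambda_i(M_N/\sqrt N)^{2k} = \Tr\!\left((M_N/\sqrt N)^{2k}\right) = N \int_\R x^{2k}\, d\mu_N(x).$$
Taking expectation gives $\E[\lambda_{\max}(M_N/\sqrt N)^{2k}] \le N \, m_{2k}(N)$, where $m_{2k}(N) := \E\!\left[\int x^{2k}\, d\mu_N\right]$, and Markov's inequality yields for any $\varepsilon > 0$
$$\mP\!\left(\lambda_{\max}(M_N/\sqrt N) > 2 + \varepsilon\right) \le \frac{N\, m_{2k}(N)}{(2+\varepsilon)^{2k}}.$$

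The next step is to extract a uniform bound on $m_{2k}(N)$ from the explicit moment generating function \eqref{eq:laplacetransfor} established in the proof of the Wigner theorem in mean. Expanding $\exp(s^2/(2N))$ there and reading off the coefficient of $s^{2k}/(2k)!$, together with the elementary bounds $\prod_{i=1}^l(1 - i/N) \le 1$ and $C_l := \frac{1}{l+1}\binom{2l}{l} \le 4^l$, a short manipulation using $(2k)!/(2l)! \le (2k)^{2(k-l)}$ leads, for every $1 \le k \le N-1$, to
$$m_{2k}(N) \le 4^k \exp\!\left(\frac{k^2}{2N}\right).$$

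Injecting this into the Markov bound and setting $r := 4/(2+\varepsilon)^2 < 1$ gives $\mP(\lambda_{\max}(M_N/\sqrt N) > 2+\varepsilon) \le N\, r^k \exp(k^2/(2N))$. Choosing $k_N := \lceil 3 \log N / \log(1/r) \rceil = O(\log N)$, the factor $\exp(k_N^2/(2N))$ tends to $1$ and $r^{k_N} \le N^{-3}$, so the bound is $O(N^{-2})$, hence summable. Borel--Cantelli then gives, for each fixed $\varepsilon > 0$, almost surely $\lambda_{\max}(M_N/\sqrt N) \le 2 + \varepsilon$ for all sufficiently large $N$. Applying this to $\varepsilon = 1/m$ for every $m \ge 1$ and intersecting the countably many almost-sure events yields $\limsup_N \lambda_{\max}(M_N/\sqrt N) \le 2$ almost surely.

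The main technical obstacle is to secure the uniform moment bound $m_{2k}(N) \le 4^k e^{k^2/(2N)}$ with $k$ allowed to grow like $\log N$; the Catalan growth $C_k \sim 4^k/(k^{3/2}\sqrt\pi)$ is ultimately what produces the threshold $2$. A more conceptual alternative would be to combine the weaker estimate $\E[\lambda_{\max}(M_N/\sqrt N)] \le (N\, m_{2k_N}(N))^{1/(2k_N)} \to 2$, valid by Jensen with any $k_N \to \infty$ such that $k_N^2 = o(N)$, with a direct application of Theorem \ref{Talagrand ineq} to the $1/\sqrt N$-Lipschitz map $X \mapsto \lambda_{\max}(X/\sqrt N)$ on $(\mathcal H_N(\C), \|\cdot\|_2)$; this yields Gaussian concentration of $\lambda_{\max}(M_N/\sqrt N)$ around its expectation at scale $1/\sqrt N$, and another application of Borel--Cantelli gives the desired almost-sure upper bound.
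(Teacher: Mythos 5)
Your proof is correct, and it takes a genuinely different route from the paper's. The paper (following Haagerup--Thorbjørnsen) uses the exponential Chernoff bound: starting from $e^{t\lambda_{\max}(M_N/\sqrt N)}\le \Tr\exp(tM_N/\sqrt N)$, it plugs $s=t$ directly into the Laplace-transform identity \eqref{eq:laplacetransfor}, bounds the sum by $\exp(2t)$, optimizes over $t$, and obtains the very strong estimate $\mP(\lambda_{\max}\ge 2+\varepsilon)\le N\exp(-\varepsilon^2 N/2)$; Borel--Cantelli then finishes. You instead use the polynomial moment method: $\lambda_{\max}^{2k}\le\Tr\bigl((M_N/\sqrt N)^{2k}\bigr)$, extract the bound $m_{2k}(N)\le 4^k e^{k^2/(2N)}$ from the coefficient of $s^{2k}/(2k)!$ in the same Laplace transform (using $C_l\le 4^l$, $\prod(1-i/N)\le 1$, $(2k)!/(2l)!\le (2k)^{2(k-l)}$), and take $k_N = O(\log N)$ to produce a summable $O(N^{-2})$ tail. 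The two proofs draw on identical input — \eqref{eq:laplacetransfor} — but the paper's exponential version is slicker and yields stretched-exponential decay, while yours is the more classical moment-method argument and requires choosing a growing power $k_N$ carefully. Your alternative sketch — Jensen giving $\E[\lambda_{\max}(M_N/\sqrt N)]\to 2$, combined with Theorem~\ref{Talagrand ineq} applied to the $1/\sqrt N$-Lipschitz map $X\mapsto\lambda_{\max}(X/\sqrt N)$ on $(\mathcal H_N(\C),\|\cdot\|_2)$ (Lipschitzness here following from Hoffman--Wielandt, Lemma~\ref{lemma: hoffman wielandt}) — is a genuinely third route: it replaces the explicit Laplace transform by the abstract Gaussian concentration machinery of Section~\ref{section:concentrationineq}, which the paper establishes but does not exploit for $\lambda_{\max}$. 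That route is arguably the most robust, since it does not rely on the exact Hermite-polynomial computation and would survive under milder hypotheses on the matrix entries.
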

\begin{proof}
This proof is based on the arguments of \cite{haagerup2003random}. The idea is to obtain a Chernoff bound on the largest eigenvalue and then to use the Borel-Cantelli lemma. 
\newline
Fix $\varepsilon>0$. Now fix $t>0$. Using that $\exp$ is an increasing function we get:
\begin{equation}
\label{eq:largesteigenvaluemark}
\begin{split}
\mP\left(\lambda_{\max}\left(\frac{M_N}{\sqrt{N}}\right)\ge 2+\varepsilon\right)&=\mP\left(\exp\left(t\lambda_{\max}\left(\frac{M_N}{\sqrt{N}}\right)\right)\ge \exp(t(2+\varepsilon))\right)\\
&=\mP\left(\lambda_{\max}\left(\exp\left(t\,\frac{M_N}{\sqrt{N}}\right)\right)\ge \exp(t(2+\varepsilon))\right).
\end{split}
\end{equation}
Let us notice that since the eigenvalues of $\exp\left(t\,\frac{M_N}{\sqrt{N}}\right)$ are positive, we have $$\lambda_{\max}\left(\exp\left(t\,\frac{M_N}{\sqrt{N}}\right)\right)\le \Tr\left(\exp\left(t\,\frac{M_N}{\sqrt{N}}\right)\right).$$
So we can upper bound using the Markov inequality
\begin{equation}
\label{eq:largesteigenvaluemark2}
\begin{split}
\mP\left(\lambda_{\max}\left(\frac{M_N}{\sqrt{N}}\right)\ge 2+\varepsilon\right)&\le\mP\left(\Tr\left(\exp\left(t\,\frac{M_N}{\sqrt{N}}\right)\right)\ge \exp(t(2+\varepsilon))\right)\\
&\le \exp(-t(2+\varepsilon))\,\E\left[\Tr\left(\exp\left(t\,\frac{M_N}{\sqrt{N}}\right)\right)\right].
\end{split}
\end{equation}
Now let us recall that we computed the expectation of the right hand side in Theorem \ref{thm mean wigner} because it is the Laplace transform of the measure denoted $N\E(\mu_N)$ (with the notations of the mean Wigner theorem).
So \eqref{eq:laplacetransfor} gives 
\begin{equation}
\label{eq:largesteigenvaluemark3}
\begin{split}
\E\left[\Tr\left(\exp\left(t\,\frac{M_N}{\sqrt{N}}\right)\right)\right]&=N\exp\left(\frac{t^2}{2N}\right)\sum_{k=0}^{N-1}\cfrac{1}{k+1}{{2k} \choose k}\,\cfrac{(N-1)...(N-k)}{N^k}\,\cfrac{t^{2k}}{(2k)!}\\
&\le N\exp\left(\frac{t^2}{2N}\right)\sum_{k=0}^{+\infty}\cfrac{1}{k!(k+1)!}\,t^{2k}\\
&\le N\exp\left(\frac{t^2}{2N}\right)\left[\sum_{k=0}^{+\infty}\cfrac{t^k}{k!}\right]^2\\
&\le N\exp\left(\frac{t^2}{2N}+2t\right).
\end{split}
\end{equation}
Using \eqref{eq:largesteigenvaluemark3} in \eqref{eq:largesteigenvaluemark2} we obtain that for all $t>0$ 
$$\mP\left(\lambda_{\max}\left(\frac{M_N}{\sqrt{N}}\right)\ge 2+\varepsilon\right)\le N\exp\left(\frac{t^2}{2N}+2t-t(2+\varepsilon)\right)=N\exp\left(\frac{t^2}{2N}-t\varepsilon\right).$$
Optimizing in $t>0$ gives that for all $N\ge 1$ $$\mP\left(\lambda_{\max}\left(\frac{M_N}{\sqrt{N}}\right)\ge 2+\varepsilon\right)\le N\exp\left(-\frac{\varepsilon^2 N}{2}\right).$$
Since the right hand side is summable, the Borel-Cantelli lemma yields that for all $\varepsilon>0$ almost surely $\limsup_N\left(\lambda_{\max}\left(\frac{M_N}{\sqrt{N}}\right)\right)\le 2+\varepsilon$. Using a countable dense family of $\varepsilon>0$ we obtain that almost surely for all $\varepsilon>0$ $\limsup_N\left(\lambda_{\max}\left(\frac{M_N}{\sqrt{N}}\right)\right)\le 2+\varepsilon$ giving the result.
\end{proof}
Combining the two previous propositions and using a symmetric argument for the smallest eigenvalue of a $N$ complex Wigner matrix we get the following result.
\begin{theorem}
Let $(M_N)_{N\ge 1}$ be a sequence of random matrices such that for every $N\ge 1$ $M_N$ is a $N$ complex Wigner matrix. Then, we almost surely have $$\lim_{N\to+\infty}\lambda_{\max}\left(\frac{M_N}{\sqrt{N}}\right)= 2 \emph{  and  } \lim_{N\to+\infty}\lambda_{\min}\left(\frac{M_N}{\sqrt{N}}\right)= -2 .$$
\end{theorem}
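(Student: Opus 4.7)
The plan is to simply combine the two preceding propositions with a symmetry argument. For $\lambda_{\max}$, there is nothing more to do: the two previous propositions give almost surely $\liminf_N \lambda_{\max}(M_N/\sqrt{N}) \ge 2$ and $\limsup_N \lambda_{\max}(M_N/\sqrt{N}) \le 2$, so on the almost sure intersection of these two events the sequence converges to $2$.

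For $\lambda_{\min}$, I would exploit the central observation that if $M_N$ is a $N$ complex Wigner matrix in the sense of Definition \ref{def gue,goe}, then so is $-M_N$. Indeed, the family $(-M^N_{i,i},\sqrt 2\,\Re(-M^N_{i,j}),\sqrt 2\,\Im(-M^N_{i,j}))_{1\le i<j\le N}$ is obtained by flipping the signs of an i.i.d.\ family of $\mathcal N(0,1)$ random variables; since $\mathcal N(0,1)$ is symmetric, $-M_N \equalaw M_N$. Combined with the elementary relation $\lambda_{\min}(A) = -\lambda_{\max}(-A)$ valid for every hermitian matrix $A$, this gives that $\lambda_{\min}(M_N/\sqrt N)$ has the same distribution as $-\lambda_{\max}(M_N/\sqrt N)$ for each $N$.

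The two preceding propositions can therefore be applied verbatim to the Wigner sequence $(-M_N)_{N\ge 1}$, yielding almost surely $\lambda_{\max}(-M_N/\sqrt N)\to 2$, i.e. $\lambda_{\min}(M_N/\sqrt N)\to -2$. Taking the intersection of the two almost sure events (one for the upper edge applied to $M_N$, one for the upper edge applied to $-M_N$) gives both limits simultaneously on a single probability one event, which is the claim. There is no genuine obstacle here: the only small technical check is the stability of the Wigner law under the transformation $M_N\mapsto -M_N$, which is immediate from the symmetry of the Gaussian distribution.
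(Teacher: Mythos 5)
Your proposal is correct and matches the paper's (only sketched) approach: the paper obtains the theorem directly from the two preceding propositions plus an unspecified ``symmetric argument,'' and your filling-in of that argument---noting that $(-M_N)_{N\ge 1}$ is again a sequence of $N$ complex Wigner matrices to which the two propositions apply pathwise, combined with $\lambda_{\min}(A)=-\lambda_{\max}(-A)$---is exactly what is intended. A minor remark: the intermediate observation that $\lambda_{\min}(M_N/\sqrt N)$ equals $-\lambda_{\max}(M_N/\sqrt N)$ in distribution for each fixed $N$ is a detour (equality in law per $N$ would not by itself give almost sure convergence); what actually carries the argument, as you correctly conclude, is applying the propositions pathwise to the Wigner sequence $(-M_N)$.
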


\newpage
\part{Dynamical approach to random matrices: the Dyson Brownian motion}
\label{part:dyson}
In 1962 \cite{Dyson}, Dyson introduced continuous models that defined new types of Coulomb gases through the spectrum of random matrices. We shall recover from these new models for instance the Wigner theorem stated in Theorem \ref{thm mean wigner}. It gives a new approach for the study of the eigenvalues of random matrices. We shall first introduced some models of random matrices and explain how we can obtain from these models a dynamic on their eigenvalues. This will define a system of stochastic differential equations (in short SDE) for the study of the eigenvalues of random matrices. We study these eigenvalues as a system of particles in interaction through a pair correlation interaction. Then we will prove the convergence of the empirical mean of these particles to a deterministic process, solution of a partial differential equation. Finally, we shall give some properties of solutions of this partial differential equation. We will emphasize on how the study of the process of the eigenvalues viewed as a system of particles in interaction can help us to recover some result presented in the first section as the Wigner theorem or the law of the eigenvalues of a $N$ complex Wigner matrix. 
\newline
Let $(\Omega,\mathcal F,\mP)$ be a standard filtered probability space on which all the random variables shall be defined.
\section{Dynamic on the eigenvalues}

\subsection{Dyson Brownian motion}

\begin{definition}
\label{def:dyson}
Fix $\beta\in\{1,2\}$ and $N\ge 1$ a positive integer. Let $(B_{k,j}, \widetilde{B_{k,j}})_{1\le k\le j\le N}$ be a family of independent Brownian motions. We call $\beta$ Dyson Brownian motion of size $N\ge 1$, the random process which is valued in $\mathcal S_N(\R)$ for $\beta=1$ or $\mathcal H_N(\C)$ for $\beta=2$ defined by: 
\begin{equation}
\left\{
\begin{split}
D_N^\beta(t)&:=\cfrac{1}{\sqrt{N}}(D_{k,j}^\beta(t))_{1\le k,j\le N}\\
\forall 1\le k\le N,\, D_{k,k}^\beta(t)&:=\cfrac{\sqrt{2}}{\sqrt{\beta}}\,B_{k,k}(t)\\
\forall 1\le k\le j\le N,\, D_{k,j}^\beta(t)&:=\cfrac{1}{\sqrt{\beta}}\,\left[B_{k,j}(t)+i(\beta-1)\widetilde{B_{k,j}}\right]
\end{split}
\right.
\end{equation} 
\end{definition}

\begin{Remark}
For $\beta=1$, $\sqrt{N}\,D_N^1(1/2)$ is a $N$ real Wigner matrix as it was defined in Definition \ref{def gue,goe}. For $\beta=2$, $\sqrt{N}\,D_N^2(1)$ is a $N$ complex Wigner matrix as it was defined in Definition \ref{def gue,goe}. 
\end{Remark}

\begin{definition}
\label{def:wishart}
Fix $N\ge 1$ and $M\ge 1$ be two non negative integers. Let $(B_{k,j}, \widetilde{B_{k,j}})_{1\le k\le N,\,1\le j\le M}$ be a family of independent Brownian motions.  We call Wishart Brownian motion of size $N\times M$, the random process which is valued in $\mathcal H_N(\C)$ defined by: 
\begin{equation}
\left\{
\begin{split}
W_N(t)&=\frac{1}{N}A_tA_t^*\\ A_t&=\left(\cfrac{B_{k,j}(t)}{\sqrt{2}}+i\,\cfrac{\widetilde{B_{k,j}}(t)}{\sqrt{2}}\right)_{1\le k\le N, 1\le j\le M}.
\end{split}
\right.
\end{equation}
\end{definition}

\subsection{A differential approach of the spectrum of matrices}
We present the approach for symmetric matrices but the approach is also valid without any difference for hermitian matrices. This method is called in Physic the perturbation theory. Formally the idea is given a matrix whose spectrum and eigenvectors are known, could we get an approximation of the eigenvalues and eigenvectors of a small perturbation of the initial matrix. This method is for instance used in quantum mechanic for solving the Schrödinger equation as in the hydrogen atom case \cite{hirschfelder1964recent}.
\newline
We recall that $ \overset{\circ}{\mathcal S_N(\R)} $ is the set of $N\times N$ symmetric matrices whose characteristic polynomial has simple roots. 
$ \overset{\circ}{\mathcal S_N(\R)} $ is an open and dense subset of $\mathcal S_N(\R)$.
\newline
Moreover, for $A\in \mathcal S_N(\R)$ we order its eigenvalues $$\lambda_1(A)\le...\le\lambda_N(A), $$ and we shall write $(u_i(A))_{1\le i\le N}$ an orthonormal basis of eigenvectors associated respectively to the $(\lambda_i(A))_{1\le i\le N}$.
Let us recall that given $A\in\overset{\circ}{\mathcal S_N(\R)}$ we can find a neighbourhood $V_A$ of $A$ in $\overset{\circ}{\mathcal S_N(\R)}$ such that for all $1\le i\le N$ the map $\lambda_i$ defined from $\overset{\circ}{\mathcal S_N(\R)}$ to $\R$ by $\lambda_i(A)$ is the $i^{th}$ eigenvalue of $A$ is smooth on $V_A$. 
\newline
Indeed, this result is a classical application of the implicit function theorem by looking at the map $\phi: \overset{\circ}{\mathcal S_N(\R)}\times \R\to \R$ defined by $$\phi(A,\lambda)=\chi_A(\lambda)$$ and using that $\lambda$ is an eigenvalue of $A$ if and only if $\chi_A(\lambda)=0$ and for $A\in \overset{\circ}{S_N}$ and $\lambda$ one of its eigenvalue we have $\chi_A'(\lambda)\ne 0$. 
\newline
We can do the same for the normalised eigenvector (even if they are defined up to a sign) by writing that for $A\in \overset{\circ}{\mathcal S_N(\R)}$ and $\lambda_i(A)$ an eigenvalue of $A$, an eigenvector of $A$ for $\lambda_i(A)$ is defined by the equation $(A-\lambda_i(A))x=0$ on $\R^N-\{0\}$. 
\newline
So, if we consider a smooth map $\mathcal A$ from an open interval $I\subset\R$ to $\overset{\circ}{\mathcal S_N(\R)}$ then we can find a parametrization of the eigenvectors such that for all $1\le i\le N$, $t\mapsto\lambda_i(A(t))$ and $t\mapsto u_i(A(t))$ are smooth on $I$. 

\begin{example}
\label{exemple:perturbationtheory}
Let $A\in \overset{\circ}{\mathcal S_N(\R)}$ and $B\in \mathcal S_N(\R)$ then there exists a small open interval centred in 0, $I\subset \R$, such that $t\mapsto\lambda_i(A+tB)$ and $t\mapsto u_i(A+tB)$ are smooth on $I$.
\end{example}

Until the end of this section, we fix a smooth map $\mathcal A$ from $I\subset \R$ an open interval to $\overset{\circ}{\mathcal S_N(\R)}$. 

\begin{prop}[Hadamard]
\label{prop:hadamard}
For all $1\le i\le N$, the maps defined on $I$ by $\phi_i(t)=\lambda_i(\mathcal A(t))$ are smooth and we have the following formulas for their derivatives:
\begin{align}
&\phi_i'(t)=\langle u_i(\mathcal A(t))|\mathcal A'(t)u_i(\mathcal A(t))\rangle\\
&\phi_i''(t)=\langle u_i(\mathcal A(t))|\mathcal A''(t)u_i(\mathcal A(t))\rangle+2\dis\sum_{1\le k\ne i\le N}\cfrac{|\langle u_k(\mathcal A(t))|\mathcal A'(t)u_i(\mathcal A(t))\rangle|^2}{\lambda_i(\mathcal A(t))-\lambda_k(\mathcal A(t))}
\end{align}

 \end{prop}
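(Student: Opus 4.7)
The plan is to differentiate the two defining relations
$\mathcal{A}(t)u_i(t)=\lambda_i(t)u_i(t)$ and
$\langle u_i(t)|u_i(t)\rangle=1$
and exploit the symmetry of $\mathcal{A}(t)$ to recover both formulas. The smoothness of $\phi_i$ and of the normalized eigenvectors $t\mapsto u_i(\mathcal{A}(t))$ is already granted by the discussion preceding the statement (applications of the implicit function theorem on $\overset{\circ}{\mathcal{S}_N(\mathbb{R})}$), so we may freely differentiate. Throughout, write $u_i=u_i(\mathcal{A}(t))$, $\lambda_i=\phi_i(t)=\lambda_i(\mathcal{A}(t))$ and use $\mathcal{A}$, $\mathcal{A}'$, $\mathcal{A}''$ for $\mathcal{A}(t),\mathcal{A}'(t),\mathcal{A}''(t)$.

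For the first derivative, differentiate the eigenvalue equation once to obtain $\mathcal{A}'u_i+\mathcal{A}u_i'=\phi_i'u_i+\lambda_i u_i'$, then take the scalar product with $u_i$. By symmetry of $\mathcal{A}$, $\langle u_i|\mathcal{A}u_i'\rangle=\langle \mathcal{A}u_i|u_i'\rangle=\lambda_i\langle u_i|u_i'\rangle$, which kills the $\mathcal{A}u_i'$ term against the $\lambda_iu_i'$ term on the right-hand side. Using $\langle u_i|u_i\rangle=1$ gives immediately $\phi_i'(t)=\langle u_i|\mathcal{A}'u_i\rangle$.

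For the second derivative, differentiate the eigenvalue equation twice to get $\mathcal{A}''u_i+2\mathcal{A}'u_i'+\mathcal{A}u_i''=\phi_i''u_i+2\phi_i'u_i'+\lambda_iu_i''$, and again pair with $u_i$. The symmetry argument removes the $\mathcal{A}u_i''$ vs $\lambda_iu_i''$ pair, and differentiating the normalization $\langle u_i|u_i\rangle=1$ gives $\langle u_i|u_i'\rangle=0$, which removes the $2\phi_i'\langle u_i|u_i'\rangle$ term. This leaves
\begin{equation*}
\phi_i''(t)=\langle u_i|\mathcal{A}''u_i\rangle+2\langle u_i|\mathcal{A}'u_i'\rangle.
\end{equation*}
It remains to compute the cross term. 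I expand $u_i'=\sum_{k=1}^N\langle u_k|u_i'\rangle\,u_k$ in the orthonormal basis $(u_k)_{k}$; the coefficient at $k=i$ vanishes by $\langle u_i|u_i'\rangle=0$, and for $k\neq i$ I return to the first-order identity $\mathcal{A}'u_i+\mathcal{A}u_i'=\phi_i'u_i+\lambda_iu_i'$ and pair with $u_k$. Using $\langle u_k|\mathcal{A}u_i'\rangle=\lambda_k\langle u_k|u_i'\rangle$ and $\langle u_k|u_i\rangle=0$ this yields
\begin{equation*}
\langle u_k|u_i'\rangle=\frac{\langle u_k|\mathcal{A}'u_i\rangle}{\lambda_i-\lambda_k},
\end{equation*}
which is well defined precisely because $\mathcal{A}(t)\in\overset{\circ}{\mathcal{S}_N(\mathbb{R})}$ (simple spectrum). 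Substituting and using the symmetry $\langle u_i|\mathcal{A}'u_k\rangle=\overline{\langle u_k|\mathcal{A}'u_i\rangle}$ gives $\langle u_i|\mathcal{A}'u_i'\rangle=\sum_{k\neq i}|\langle u_k|\mathcal{A}'u_i\rangle|^2/(\lambda_i-\lambda_k)$, completing the proof.

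There is essentially no obstacle here; the only points requiring care are (i) using symmetry of $\mathcal{A}$ rather than of $\mathcal{A}'$ to eliminate unwanted terms, and (ii) the normalization condition $\langle u_i|u_i'\rangle=0$, which is what makes the sum in $\phi_i''$ range only over $k\neq i$. The simplicity of the spectrum is used exactly once, to divide by $\lambda_i-\lambda_k$.
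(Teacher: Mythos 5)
Your proof is correct and follows essentially the same route as the paper: differentiate the eigenvalue and normalization relations, project onto $u_i$ using symmetry of $\mathcal{A}$ to kill the unwanted terms, and expand $u_i'$ in the eigenbasis to evaluate the cross term. The only cosmetic difference is that you obtain $\phi_i'' = \langle u_i|\mathcal{A}''u_i\rangle + 2\langle u_i|\mathcal{A}'u_i'\rangle$ by differentiating the eigenvalue equation twice and pairing with $u_i$, whereas the paper differentiates the first-derivative formula $\phi_i'=\langle u_i|\mathcal{A}'u_i\rangle$ directly; both give the same intermediate identity and the rest of the argument is identical.
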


\begin{proof}
We fix $1\le i\le N$ and $t\in I$. We shall just write $\mathcal A$ for $\mathcal A(t)$ and $\lambda_i$ for $\phi_i$ to lighten the notations in the proof. 
\newline
By definition, we have $\mathcal A u_i=\lambda_i u_i$ and $\langle u_i|u_i\rangle=1$. We take the derivative of these two equalities. We get:
\begin{equation}
\label{eq:perturbationderivative1}
\mathcal A'u_i+\mathcal Au_i'=\lambda_i'u_i+\lambda_iu_i' 
\end{equation}
\begin{equation}
\label{eq:perturbationderivative2}
\langle u_i'|u_i\rangle=0
\end{equation}
We shall take the scalar product against $u_i$ in \eqref{eq:perturbationderivative1}.
Using the fact that $u_i$ and $u_i'$ are orthogonal by \eqref{eq:perturbationderivative2}, the symmetry of $\mathcal A$ yields $$\langle u_i|\mathcal Au_i'\rangle=\langle \mathcal Au_i|u_i'\rangle=\lambda_i\langle u_i|u_i'\rangle=0$$ 
We get: 
\begin{equation}
\label{eq:hadamard1}
\langle u_i| \mathcal A'u_i\rangle=\lambda_i'.
\end{equation}
This gives the first derivative. For the second derivative, we take the derivative of the formula \eqref{eq:hadamard1}:
\begin{equation}
\label{eq:hadamard2}
\lambda_i''=\langle u_i|\mathcal A''u_i\rangle+2\langle u_i'|\mathcal A'u_i\rangle.
\end{equation}
It remains to express $u_i'$ in the basis $(u_k)_{1\le k\le N}$. We already know that $\langle u_i'|u_i\rangle =0$ and to compute $\langle u_k|u_i'\rangle$ for $k\ne i$, we take the scalar product against $u_k$ in the formula $\eqref{eq:perturbationderivative1}$. 
Using that $\mathcal A$ is symmetric, it gives: $$(\lambda_i-\lambda_k)\langle u_i'|u_k\rangle =\langle u_k|\mathcal A'u_i\rangle .$$
So we get $u_i'=\dis\sum_{1\le i\ne k\le N}\cfrac{\langle u_k|\mathcal A'u_i\rangle }{\lambda_i-\lambda_k}u_k$. 
\newline
Using formula \eqref{eq:hadamard2}, we have: $$\lambda_i''=\langle u_i|\mathcal A''u_i\rangle+2\dis\sum_{1\le k\ne i\le N}\cfrac{|\langle u_k|\mathcal A'u_i\rangle |^2}{\lambda_i-\lambda_k}.$$

\end{proof}

\begin{example}
\label{exemple:perturbationtheory2}
With the notations of Example \eqref{exemple:perturbationtheory}, $A(t)=A+tB$ we have $\phi_i'(0)=\langle u_i(A)|Bu_i(A)\rangle $ and $\phi_i''(0)=2\dis\sum_{k\ne i}\cfrac{|\langle u_k(A)|Bu_i(A)\rangle|^2}{\lambda_i-\lambda_k}.$
\end{example}

\subsection{Particles in mean field interaction}
\label{subsection:particlesinmeanfieldinteraction}
We first introduce a family of stochastic processes that could be viewed as a system of particles in interaction through a singular repulsive force and which are subject to agitations modelled by Brownian motions. 
\newline
In this section, fix $N\ge 1$ and $M\ge 1$ two positive integers.
\begin{definition}
\label{def:dysonparticles}
Fix $\beta\in\{1,2\} $ and $N$ distinct real numbers $x_1<...<x_N$. We call $\beta$ Dyson particles starting at $(x_i)_{1\le i\le N}$ the family of stochastic processes $(\lambda_.^{i,\beta})_{1\le i\le N}$ solution of the system of SDE:
 \begin{equation}
   \label{dysonreal}
   \left\{
 \begin{split}
  \forall 1\le i\le N,\, d\lambda_t^{i,\beta}&=\cfrac{1}{N}\dis\sum_{j\ne i}\cfrac{1}{\lambda_t^{i,\beta}-\lambda_t^{j,\beta}}\,dt+\sqrt{\cfrac{2}{\beta N}}\,dB_t^i\\
  \lambda_0^{i,\beta}&=x_i
  \end{split}
  \right.
 \end{equation} with $(B^i)_{1\le i\le N}$ a family of independent Brownian motions.

\end{definition}

The next theorem proved by Dyson in \cite{Dyson} explains that the stochastic process of the eigenvalues of a Dyson Brownian motion matrix of size $N$ introduced in Definition \ref{def:dyson} has the same law as a Dyson particles system. 

\begin{theorem}[Dyson]
\label{thm:dyson}
Fix $\beta\in\{1,2\}$. Let $(D_N^\beta(t))_{t\in\R^+}$ be a $\beta$ Dyson Brownian motion of size $N$. Let $(\lambda^{i,\beta}(t))_{1\le i\le N,t\in\R^+}$ be the family of ordered eigenvalues of $(D_N^\beta(t))_{t\in\R^+}$. Then $(\lambda^{i,\beta})_{1\le i\le N,}$ is a weak solution of \eqref{dysonreal}. 

\end{theorem}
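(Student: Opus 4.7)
The plan is to apply Itô's formula to each ordered eigenvalue $\lambda^{i,\beta}(t)$, viewed as a smooth function of the matrix entries on the open subset of matrices with simple spectrum, and to identify the drift and martingale parts using the Hadamard perturbation formulas of Proposition~\ref{prop:hadamard}.

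\textbf{Parametrization.} Let $(E_m)$ be an orthonormal basis of $\mathcal{H}_N(\C)$ (for $\beta=2$) or $\mathcal S_N(\R)$ (for $\beta=1$) with respect to the Hilbert--Schmidt inner product $\langle A, B\rangle_{\mathrm{HS}} := \Tr(A^*B)$. Reading off Definition~\ref{def:dyson} directly, one can write $D_N^\beta(t) = \tfrac{1}{\sqrt N}\sum_m Y_m(t)\, E_m$, where $(Y_m)_m$ is a family of mutually independent continuous martingales with $d\langle Y_m\rangle_t = \tfrac{2}{\beta}\,dt$. Since $D_N^\beta$ is affine in the parameters $Y_m$, the matrix-valued second derivatives vanish and Proposition~\ref{prop:hadamard} reduces to
\[ \partial_{Y_m}\lambda_i = \tfrac{1}{\sqrt N}\langle u_i\,|\,E_m u_i\rangle, \qquad \partial^2_{Y_m}\lambda_i = \tfrac{2}{N}\sum_{k\ne i}\frac{|\langle u_k\,|\,E_m u_i\rangle|^2}{\lambda_i-\lambda_k}. \]

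\textbf{Drift and martingale part.} Assume momentarily that the spectrum remains simple on $[0,T]$. Then Itô's formula yields $d\lambda_i = \sum_m \partial_{Y_m}\lambda_i\, dY_m + \tfrac{1}{\beta}\sum_m \partial^2_{Y_m}\lambda_i\, dt$. A direct computation of $\sum_m |\langle u_k\,|\,E_m u_i\rangle|^2$ in the explicit basis (e.g.\ $\{\Delta_{p,p}\}\cup\{\tfrac{1}{\sqrt 2}(\Delta_{p,q}+\Delta_{q,p})\}\cup\{\tfrac{i}{\sqrt 2}(\Delta_{p,q}-\Delta_{q,p})\}$ for $\beta=2$), expanded using $\sum_p|(u_i)_p|^2=\sum_p|(u_k)_p|^2=1$, gives $\tfrac{\beta}{2}$ when $k\ne i$, so that the drift collapses to $\tfrac{1}{N}\sum_{j\ne i}\tfrac{1}{\lambda_i-\lambda_j}\,dt$ uniformly in $\beta$. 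For the continuous martingale $M_i := \sum_m \partial_{Y_m}\lambda_i\, dY_m$, the analogous Parseval identity applied to the rank-one projectors $u_iu_i^*$ and $u_ju_j^*$ (which lie in the ambient Hermitian/symmetric space) yields
\[ d\langle M_i, M_j\rangle_t = \tfrac{2}{\beta N}\sum_m \langle u_i\,|\,E_m u_i\rangle \,\overline{\langle u_j\,|\,E_m u_j\rangle}\, dt = \tfrac{2}{\beta N}\,\Tr(u_iu_i^* u_j u_j^*)\, dt = \tfrac{2}{\beta N}\delta_{ij}\, dt. \]
Lévy's characterization then produces independent standard Brownian motions $(B^i)_{1\le i\le N}$ with $M_i = \sqrt{2/(\beta N)}\,B^i$, and one recovers exactly the system \eqref{dysonreal}.

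\textbf{Main obstacle.} The Itô calculation above is only valid up to the first collision time $\tau := \inf\{t>0:\lambda^{i,\beta}(t)=\lambda^{j,\beta}(t)\text{ for some }i\ne j\}$, since both the Hadamard formulas and the singular drift $1/(\lambda_i-\lambda_j)$ degenerate when two eigenvalues coincide. The delicate step is to show $\tau = +\infty$ almost surely. The standard route (McKean) is to introduce $\tau_\varepsilon := \inf\{t:\min_{i\ne j}|\lambda^{i,\beta}(t)-\lambda^{j,\beta}(t)|\le \varepsilon\}$, run the preceding derivation on $[0,\tau_\varepsilon]$, and then apply Itô to $\log\prod_{i<j}(\lambda^{i,\beta}_t - \lambda^{j,\beta}_t)^2$: a cancellation between the singular drift and the bracket correction (valid precisely when $\beta\ge 1$) keeps this process bounded in expectation uniformly in $\varepsilon$, forcing $\tau_\varepsilon\nearrow+\infty$ as $\varepsilon\to 0$. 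Gluing the local computation then gives the weak solution claimed on all of $\R_+$.
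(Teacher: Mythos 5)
Your proposal is correct, and it takes a genuinely different and more rigorous route than the paper's. The paper's proof is explicitly declared to be formal: it conditions on $A(t)$, treats the time increment $D_N^\beta(t+h)-D_N^\beta(t)$ as a small Gaussian Wigner perturbation $\sqrt{2h/(\beta N)}\,G^\beta$, applies the Hadamard expansion, and then replaces the single realization $|\langle u_j|G^\beta u_i\rangle|^2$ by its mean $\beta/2$ invoking ``concentration'' --- an approximation, not an identity --- before deferring the rigorous argument to the reference \cite{Alicelivre}. You instead parametrize $D_N^\beta(t)=\tfrac{1}{\sqrt N}\sum_m Y_m(t)E_m$ linearly over a Hilbert--Schmidt orthonormal basis $(E_m)$ (which is indeed what Definition~\ref{def:dyson} gives: independent scaled Brownian coordinates with $d\langle Y_m\rangle_t=(2/\beta)\,dt$) and apply Itô's formula directly. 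The crucial gain is that the heuristic ``replace $|\cdot|^2$ by its expectation'' is replaced by an exact Parseval identity: summing the Hadamard second-derivative over all directions $E_m$ yields $\sum_m |\langle u_k|E_m u_i\rangle|^2 = \beta/2$ and the cross-bracket $\sum_m\langle u_i|E_m u_i\rangle\langle u_j|E_m u_j\rangle = \langle u_iu_i^*,u_ju_j^*\rangle_{\mathrm{HS}}=\delta_{ij}$, both of which I verified in the explicit bases; Lévy's characterization then produces the Brownian noises. Both approaches share the same gap, namely that the Hadamard formulas and the singular drift are only valid while the spectrum stays simple, and you correctly flag this as the real work and sketch the McKean/non-collision argument; the paper handles this separately via the containment-function Theorem~\ref{thm:existenceparticule} (which it needs anyway for existence of solutions to \eqref{dysonreal}), and it is worth noting that Remark~\ref{remark:dysonextended} is also needed because the Dyson Brownian motion of Definition~\ref{def:dyson} starts at the zero matrix, whose spectrum is degenerate, an issue your "assume momentarily that the spectrum remains simple on $[0,T]$" quietly sidesteps just as the paper's proof does. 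In short: your derivation is a cleaner, Parseval-plus-Itô argument that is rigorous up to the stopped interval, whereas the paper's is a pedagogical discretize-and-perturb heuristic aimed at \emph{discovering} the SDE rather than proving it.
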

\begin{Remark}
For the notion of weak and strong solutions of stochastic differential equations we refer to \cite{oksendal2013stochastic}.
\end{Remark}
\begin{proof}
We shall not give a complete proof, focusing on the formal ideas. We want to emphasize on the methodology to go from a random matrix model to a system of SDE satisfied by the eigenvalues of this random matrix model. Here a main difficulty that shall not be treated is to prove that the process of eigenvalues is actually a semi martingale process in order for instance to use the Itô formula. A rigorous proof can be found in Theorem 4.3.2 of \cite{Alicelivre}. Nonetheless, the proof presented in \cite{Alicelivre} goes backward and so this requires to know the a priori system of SDE we should look for. That is why we present this method in order to identify the system of SDE that is satisfied by the eigenvalues of random matrix stochastic process. We follow the arguments given in \cite{Tao}.
\newline
We recall that $X$ is a complex centred Gaussian random variable of variance $1$ if $\Re(X)$ and $\Im(X)$ are independent real centred Gaussian random variable of variance $1/2$ (in particular $\E[\,|X|^2\,]=\E(\Re(X)^2+\Im(X)^2)=1$).   
\newline
Fix $t>0,\,h> 0$ be non negative real numbers with $h$ a small quantity. We have the following equalities:
\begin{equation}
\label{equation:particlesfirststep}
\begin{split}
   \lambda^{i,\beta}(t+h)&=\lambda^{i,\beta}(D_N^\beta(t+h))\\
   &=\lambda^{i,\beta}(D_N^\beta(t)+D_N^\beta(t+h)-D_N^\beta(t))\\
   &=: \lambda^{i,\beta}(D_N^\beta(t)+\widetilde{D_N^\beta}(h))
\end{split}
\end{equation} 
with $\widetilde {D_N^\beta}$ whose law is a $\beta$ Dyson Brownian motion independent of $D_N^\beta$ by the stationary and independent property of a real Brownian motion. 
\newline
We set $A(t):=D_N^\beta(t)$ and using the scaling property of a real Brownian motion we can rewrite \eqref{equation:particlesfirststep} as: $$\lambda^{i,\beta}(t+h)=:\lambda^{i,\beta}\left(A(t)+\sqrt{\cfrac{2h}{\beta N}}\,G^\beta\right),$$
with $G^\beta$ a $N$ real Wigner matrix in the case $\beta=1$ or a $N$ complex Wigner matrix in the case $\beta=2$ independent of $A(t)$.  
\newline
Now we condition on $A(t)$, since law of $G^\beta$ is the same as the law of $G^\beta$ after we condition by $A(t)$ (by independence of $G^\beta$ and $A(t)$), we can formally continue the proof as if $A(t)$ is a deterministic matrix. 
\newline
More exactly we shall show that almost surely for all $A\in \overset{\circ}{\mathcal S_N(\R)}$
\begin{equation}
\label{eq:valeurspropreshadamard}
\forall 1\le i\le N,\,\,\lambda_i\left(A+\sqrt{\cfrac{2h}{\beta N}}\,G^\beta\right)= \lambda_i(A)+\sqrt{\cfrac{2}{\beta N}}\,B_i(h)+\cfrac{h}{N}\dis\sum_{j\ne i}\cfrac{1}{\lambda_i(A)-\lambda_j(A)}+...
\end{equation} with $(B_i)_{1\le i\le N}$ a family of independent real centred Gaussian random variables of variance $h$. 
\newline
Assume that we proved this result, we apply \eqref{eq:valeurspropreshadamard} with $A=A_N(t)$ (which is almost surely in $\overset{\circ}{\mathcal S_N(\R)}$ by Proposition \ref{prop: spectre simple}) to have the following result: $$\lambda^{i,\beta}(t+h)-\lambda^{i,\beta}(t)=\sqrt{\cfrac{2}{\beta N}}\,B_i(h)+\cfrac{h}{N}\dis\sum_{j\ne i}\cfrac{1}{\lambda^{i,\beta}(t)-\lambda^{j,\beta}(t)}+...$$ with $(B_i)_{1\le i\le N}$ a family of independent real centred Gaussian random variables of variance $h$. In the Itô language it gives $$d\lambda_t^{i,\beta}=\cfrac{1}{N}\dis\sum_{j\ne i}\cfrac{1}{\lambda_t^{i,\beta}-\lambda_t^{j,\beta}}\,dt+\sqrt{\cfrac{2}{\beta N}}\,dB_t^i.$$
\newline
So it remains to prove \eqref{eq:valeurspropreshadamard}.
The Hadamard formulas of Proposition \ref{prop:hadamard} with the Example \ref{exemple:perturbationtheory2} yields:
\begin{equation}
\label{eq:hadamardvaleurppropre2}
\lambda_i\left(A+\sqrt{\cfrac{2 h}{\beta N}}\,G^\beta\right)=\lambda_i(A)+\sqrt{\cfrac{2 h}{\beta N}}\,\langle u_i|G^\beta u_i\rangle+\cfrac{2h}{\beta N}\dis\sum_{j\ne i}\cfrac{|\langle u_j|G^\beta u_i\rangle|^2}{\lambda_i(A)-\lambda_j(A)}+...
\end{equation} with $(u_i)_{1\le i\le N}$ an orthonormal basis of $A$ associated to the $(\lambda_i(A))_{1\le i\le N}$ as in Proposition \ref{prop:hadamard}. Then, we notice that by invariance of the law by a unitary change of basis of a $N$ real or complex Wigner matrix (Proposition \ref{prop:invarianceloiunitaire}), $(N_{i}:=\langle u_i|G^\beta u_i\rangle)_{1\le i\le N}$ are independent real centred Gaussian random variables of variance $1$. Similarly, for all $1\le j\ne i\le N$, $\langle u_j|G^\beta u_i\rangle$ is a real Gaussian random variable of variance $1/2$ in the case $\beta=1$ or a complex Gaussian random variable of variance $1$ in the case $\beta=2$. So for all $1\le j\ne i\le N$, we can do the following approximation thanks to the "nice" concentration property of Gaussian random variables $|\langle u_j|G^\beta u_i\rangle |^2\sim\E(|\langle u_j|G^\beta u_i\rangle |^2)=\beta/2$. 
Hence, we can rewrite \eqref{eq:hadamardvaleurppropre2} as:  $$\lambda_i\left(A+\sqrt{\cfrac{2h}{\beta N}}\,G^\beta\right)=\lambda_i(A)+\sqrt{\cfrac{2h}{\beta N}}\,N_i+\cfrac{h}{N}\dis\sum_{j\ne i}\cfrac{1}{\lambda_i(A)-\lambda_j(A)}+...$$ with $(N_i)_{1\le i\le N}$ independent real centred Gaussian random variables of variance $1$. Finally, defining for all $1\le i\le N$, $(B_i(h))_{h\ge0}:=(\sqrt{h}N_i)$ we obtain \eqref{eq:valeurspropreshadamard}. 
\end{proof}

\begin{definition}
\label{def:particlesWishart}
Fix $N$ distinct real numbers $x_1<...<x_N$. We call Wishart particles starting at $(x_i)_{1\le i\le N}$ the family of stochastic processes $(\lambda_.^{i})_{1\le i\le N}$ solution of the system of SDE:
\begin{equation}
\label{sdewishart}
\left\{
 \begin{split}
\forall 1\le i\le N,\, d\lambda_t^i&=\,\left(\cfrac{1}{N}\dis\sum_{j\ne i}\cfrac{\lambda_t^i+\lambda_t^j}{\lambda_t^i-\lambda_t^j}+\cfrac{M}{N}\right)\,dt+\,\sqrt{\cfrac{2\lambda_t^i}{N}}\,dB_t^i\\
\lambda_0^i&=x_i
 \end{split}
 \right.
 \end{equation} with $(B^i)_{1\le i\le N}$ a family of independent Brownian motions.
\end{definition}
We can state a similar result with the Dyson case.
\begin{theorem}
Let $(W_N(t))_{t\in\R^+}$ be a Wishart Brownian motion of size $N\times M$. Let $(\lambda^i(t))_{1\le i\le N,t\in\R^+}$ be the family of eigenvalues of $(W_N(t))_{t\in\R^+}$. Then $(\lambda^i)_{1\le i\le N}$ is a weak solution of \eqref{sdewishart}. 

\end{theorem}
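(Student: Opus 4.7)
The plan is to mimic the proof of Theorem \ref{thm:dyson}, replacing the additive Gaussian perturbation $A+\sqrt h\,G$ of a fixed Hermitian matrix by the multiplicative perturbation $\tfrac{1}{N}(A+\sqrt h\,G)(A+\sqrt h\,G)^*$ appropriate to the Wishart structure. Fix $t\ge 0$ and $h>0$ small. By the independent-and-stationary-increments property of the Brownian motions defining $A_t$, we may write $A_{t+h}=A_t+\sqrt h\,G$, where the entries of $G$ are i.i.d.\ standard complex Gaussians ($G_{k,j}=\tfrac{1}{\sqrt 2}(X_{k,j}+iY_{k,j})$ with $X_{k,j},Y_{k,j}\sim\mathcal N(0,1)$ independent), and $G$ is independent of $A_t$. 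Setting $A:=A_t$ and expanding,
\begin{equation*}
W_N(t+h)=W_N(t)+\sqrt h\,H_1+h\,H_2,\qquad H_1:=\tfrac{1}{N}(AG^*+GA^*),\quad H_2:=\tfrac{1}{N}GG^*.
\end{equation*}
Conditioning on $\mathcal F_t$ lets us treat $A$ as deterministic, and Proposition \ref{prop:hadamard} (whose proof works verbatim for Hermitian matrices) applied to this perturbation yields, up to $o(h)$,
\begin{equation*}
\lambda_i(W_N(t+h))-\lambda_i(W_N(t))\approx \sqrt h\,\langle u_i|H_1 u_i\rangle+h\,\langle u_i|H_2 u_i\rangle+h\sum_{j\ne i}\frac{|\langle u_j|H_1 u_i\rangle|^2}{\lambda_i-\lambda_j},
\end{equation*}
where $(u_i)_{1\le i\le N}$ is an orthonormal eigenbasis of $W_N(t)$.

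The three contributions are then matched to \eqref{sdewishart} one by one. Since $AA^*u_i=N\lambda_i u_i$, the vectors $\tilde v_i:=A^*u_i/\sqrt{N\lambda_i}\in\C^M$ are unit-norm, and with $Z_{ab}:=\langle u_a|G\tilde v_b\rangle$ a direct expansion gives
\begin{equation*}
\langle u_j|H_1 u_i\rangle=\frac{1}{\sqrt N}\bigl(\sqrt{\lambda_j}\,\overline{Z_{ij}}+\sqrt{\lambda_i}\,Z_{ji}\bigr).
\end{equation*}
Orthonormality of $u_a,\tilde v_b$ gives $\E|Z_{ab}|^2=1$, while circular symmetry of $G$ (real and imaginary parts independent with the same variance) gives $\E[G_{kl}G_{k'l'}]=0$, hence $\E[Z_{ab}Z_{cd}]=0$. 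Setting $j=i$ turns the diagonal term into $\sqrt h\,\langle u_i|H_1 u_i\rangle=\tfrac{2\sqrt{h\lambda_i}}{\sqrt N}\Re(Z_{ii})$, a centred real Gaussian of variance $2h\lambda_i/N$, which is exactly the martingale term $\sqrt{2\lambda_t^i/N}\,dB_t^i$ of \eqref{sdewishart}. Next, $\langle u_i|H_2 u_i\rangle=\tfrac{1}{N}\|G^*u_i\|^2$ has expectation $M/N$ and variance $O(M/N^2)$, contributing the deterministic drift $(M/N)\,dt$. Finally, the vanishing of $\E[\overline{Z_{ij}}\,\overline{Z_{ji}}]$ forces $\E|\langle u_j|H_1 u_i\rangle|^2=(\lambda_i+\lambda_j)/N$, and replacing each squared quantity in the Hadamard sum by its mean (as was done in the Dyson case) gives
\begin{equation*}
h\sum_{j\ne i}\frac{|\langle u_j|H_1 u_i\rangle|^2}{\lambda_i-\lambda_j}\approx\frac{h}{N}\sum_{j\ne i}\frac{\lambda_i+\lambda_j}{\lambda_i-\lambda_j}.
\end{equation*}
Summing the three contributions recovers precisely the right-hand side of \eqref{sdewishart} in its Itô form.

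The genuine difficulty, as already for Theorem \ref{thm:dyson}, is not the algebra above but its rigorous justification: one must know that the eigenvalue process is a continuous semimartingale with a measurable eigenbasis on $(0,+\infty)$, equivalently that almost surely no two eigenvalues of $W_N(t)$ ever collide for $t>0$ and that no eigenvalue returns to $0$ (so that the diffusion coefficient $\sqrt{2\lambda_t^i/N}$ stays well defined). I would handle this on the SDE side rather than on the matrix side: Bessel-type comparison arguments applied both to the gaps $\lambda_t^i-\lambda_t^j$ and to each $\lambda_t^i$ itself yield a unique strong solution of \eqref{sdewishart} living in the positive Weyl chamber $\{0<\lambda_1<\cdots<\lambda_N\}$ for all $t>0$, and the law of this solution is then identified with that of the eigenvalue process through uniqueness of the associated martingale problem.
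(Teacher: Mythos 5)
Your proof takes essentially the same route as the paper's: both condition on $A_t$, expand $W_N(t+h)=W_N(t)+\sqrt h\,H_1+h\,H_2$, apply the Hadamard perturbation formulas to the reparameterized curve $s\mapsto W_N(t)+sH_1+s^2H_2$ evaluated at $s=\sqrt h$, use the singular value decomposition of $A$ to express $H_1$ in the eigenbasis, and replace the squared Gaussian quantities by their means. Your $\tilde v_i$ are the paper's $v_i$, your $Z_{ab}$ encode the same quantities the paper denotes $N_i$, $N_{k,i}$, $\widetilde{N_{i,k}}$, and the covariance computations ($\E|Z_{ab}|^2=1$, $\E[Z_{ab}Z_{cd}]=0$) are exactly those the paper derives from unitary invariance of the Ginibre law. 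The one genuine difference is your final paragraph: the paper stops at the formal derivation, explicitly declining to justify the semimartingale structure, whereas you sketch how one would close the gap (Bessel-type comparison to keep the $\lambda^i$ ordered and strictly positive, then uniqueness of the martingale problem). That is a reasonable route, complementary to the matrix-side argument the paper references elsewhere, but it goes beyond what the paper's proof actually does.
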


\begin{proof}
We shall focus on how to obtain the counterpart of \eqref{eq:valeurspropreshadamard} in the Wishart case. We detail the case $N=M$ for simplicity but there is no difference when dealing with the general case.
\newline
By Definition \ref{def:wishart}, we can write $W_N(t)=\frac{1}{N}A_tA_t^*$ with $A_t$ introduced in the definition of a Wishart Brownian motion. 
\newline
As for the Dyson case we define $A_{t+h}-A_t=:\sqrt{h}G$ where $G$ has the law of a $N$ complex Ginibre matrix. We compute:

\begin{equation}
\label{eq:wisharteigenvalues}
\begin{split}
W_N(t+h)&=\cfrac{1}{N}\,A_{t+h}A_{t+h}^* \\
&= \cfrac{1}{N}\,\left(A_t+\sqrt{h}G\right)\left(A_t+\sqrt{h}G\right)^*\\
&=W_N(t)+\cfrac{\sqrt{h}}{N}\,A_t G^*+\cfrac{\sqrt{h}}{N} GA_t^*+\cfrac{h}{N}GG^*.
\end{split}
\end{equation}
Conditioning as for the Dyson case by $A_N(t)$, formally we have to study the evolution of the spectrum of 
$$\mathcal W(h):=\cfrac{1}{N}B B^*+\cfrac{\sqrt{h}}{N}\,B G^*+\cfrac{\sqrt{h}}{N}\,GB^*+\cfrac{h}{N}\,GG^*$$ for any $B\in M_{N}(\mathbb \R)$ a deterministic matrix. 
\newline
To do so, we introduce: $$\widetilde{\mathcal W}(h):=\,\cfrac{1}{N}B B^*+\cfrac{h}{N}\,B G^*+\cfrac{h}{N}\, GB^*+\cfrac{h^2}{N}\,GG^*.$$
Using the Hadamard formulas of Proposition \ref{prop:hadamard} yields: 
\begin{equation}
\label{wishart:hadamard1}
\begin{split}
\lambda_i(\widetilde{\mathcal W}(h))=\lambda_i(\widetilde{\mathcal W}(0))+\cfrac{h}{N}\langle u_i(BB^*)&|[BG^*+GB^*] u_i(BB^*)\rangle+\cfrac{h^2}{N}\langle u_i(BB^*)|GG^*\, u_i(BB^*)\rangle\\
&+\cfrac{h^2}{N^2}\dis\sum_{i\ne k}\cfrac{|\langle u_k(BB^*)|[BG^*+GB^*]\,u_i(BB^*)\rangle|^2}{\lambda_i(\widetilde{\mathcal W}(0))-\lambda_k(\widetilde{\mathcal W}(0))}
\end{split}
\end{equation}
with the notations of Proposition \ref{prop:hadamard}. 
\newline
To continue the computations, it is not clear to understand how $B$ or $B^*$ acts on $u_i(BB^*)$. So we shall use the singular value decomposition of $B$ to construct $(u_i(BB^*))_{1\le i\le N}$. We construct $(u_i(BB^*))_{1\le i\le N}$ such that $B^*u_i(BB^*)=\sqrt{\lambda_i(BB^*)}v_i=\sqrt{N}\sqrt{\lambda_i(\widetilde{\mathcal W}(0))}v_i$ with $(v_i)_{1\le i\le N}$ an orthonormal basis of $\R^N$ (we can consider a such construction thanks to the singular value decomposition of $B$). 
\newline
It gives: 
\begin{equation}
\label{wishart:hadamard2}
\begin{split}
\langle u_i(BB^*)|[BG^*+GB^*]u_i(BB^*)\rangle
&=\sqrt{N}\sqrt{\lambda_i(\widetilde{\mathcal W}(0))}\left[\langle v_i|G^* u_i(BB^*)\rangle+\langle u_i(BB^*) |Gv_i\rangle\right] \\
&=: \sqrt{2N}\sqrt{\lambda_i(\widetilde{\mathcal W}(0))}N_i
\end{split}
\end{equation}
with $(N_i:=\sqrt{2}\,\Re(\langle u_i(BB^*)|G v_i\rangle))_{1\le i\le N}$ a family of independent centred real Gaussian random variables of variance $1$ by invariance of the law of a Ginibre matrix by multiplication by unitary matrices (Proposition \ref{prop: inv ginibre law}).
\newline 
Moreover, since 
\begin{equation}
\label{wishart:hadamard3}
\langle u_i(BB^*)|GG^* u_i(BB^*)\rangle=||G^*u_i(BB^*)||_2^2=\sum_{k=1}^N |\langle v_k| G^*u_i(BB^*)\rangle|^2
\end{equation} and for all $1\le i,k\le N$, $\langle v_k| G^*u_i(BB^*)\rangle$ is a complex Gaussian random variable of variance 1 (again by invariance of the law of a Ginibre matrix by right and left multiplication by a unitary matrix), we can do the following approximation at the first order: 
\begin{equation}
\label{wishart:hadamard4}
\begin{split}
\langle u_i(BB^*)|GG^* u_i(BB^*)\rangle&=\sum_{k=1}^N |\langle v_k| G^*u_i(BB^*)\rangle|^2\\
&\sim \sum_{k=1}^N \E[|\langle v_k| G^*u_i(BB^*)\rangle|^2]\\
&=\sum_{k=1}^N 1=N
\end{split}
\end{equation}
\newline
Finally we also have for all $1\le k\ne i\le N$: 
\begin{equation}
\label{wishart:hadamard5}
\begin{split}
|\langle u_k(BB^*)&|[BG^*+GB^*]u_i(BB^*)\rangle|^2=\\
&N\left[\left|\sqrt{\lambda_k(\widetilde{\mathcal W}(0))}\langle v_k |G^*u_i(BB^*)\rangle+\sqrt{\lambda_i(\widetilde{\mathcal W}(0))}\langle u_k(BB^*)|Gv_i\rangle \right|^2\right]\\
\hspace{-2cm}&=:N\left[\lambda_k(\widetilde{\mathcal W}(0))|\widetilde{N_{i,k}}|^2+\lambda_i(\widetilde{\mathcal W}(0))|N_{k,i}|^2+2\sqrt{\lambda_i(\widetilde{\mathcal W}(0))\lambda_k(\widetilde{\mathcal W}(0))}\Re(\widetilde{N_{i,k}}N_{k,i})\right],
\end{split}
\end{equation} 
where $N_{k,i}:=\langle u_k(BB^*)|Gv_i\rangle$ and $\widetilde{N_{i,k}}:=\langle u_i(BB^*)|Gv_k\rangle$ are complex independent centred normal random variables of variance $1$ again by the invariance of the law of $G$ by left and right multiplication by unitary matrices. Doing the same approximation as before, we have: 
\begin{equation}
\label{wishart:hadamard6}
\begin{split}
N&\left[\lambda_k(\widetilde{\mathcal W}(0))|\widetilde{N_{i,k}}|^2+\lambda_i(\widetilde{\mathcal W}(0))|N_{k,i}|^2+2\sqrt{\lambda_i(\widetilde{\mathcal W}(0))\lambda_k(\widetilde{\mathcal W}(0))}\Re(\widetilde{N_{i,k}}N_{k,i})\right]\sim \\
&N\E\left[\lambda_k(\widetilde{\mathcal W}(0))|\widetilde{N_{i,k}}|^2+\lambda_i(\widetilde{\mathcal W}(0))|N_{k,i}|^2+2\sqrt{\lambda_i(\widetilde{\mathcal W}(0))\lambda_k(\widetilde{\mathcal W}(0))}\Re(\widetilde{N_{i,k}} N_{k,i})\right]\\
&=N\left[\lambda_k(\widetilde{\mathcal W}(0))\E(|\widetilde{N_{i,k}}|^2)+\lambda_i(\widetilde{\mathcal W}(0))\E(|N_{k,i}|^2)+2\sqrt{\lambda_i(\widetilde{\mathcal W}(0))\lambda_k(\widetilde{\mathcal W}(0))}\Re(\E(\widetilde{N_{i,k}}N_{k,i}))\right]\\
&=N\left[\lambda_k(\widetilde{\mathcal W}(0))+\lambda_i(\widetilde{\mathcal W}(0))\right]
\end{split}
\end{equation}
Using \eqref{wishart:hadamard2}, \eqref{wishart:hadamard4} and \eqref{wishart:hadamard6} in \eqref{wishart:hadamard1} yields: 
\begin{equation}
\lambda_i(\mathcal W(h))=\lambda_i(\mathcal W(0))+\sqrt{\cfrac{2\lambda_i(\mathcal W(0))}{N}}B_i(h)+h\left[1+\cfrac{1}{N}\dis\sum_{j\ne i}\cfrac{\lambda_i(\mathcal W(0))+\lambda_j(\mathcal W(0))}{\lambda_i(\mathcal W(0))-\lambda_j(\mathcal W(0))}\right]+...
\end{equation} where $(B_i(h):=\sqrt{h}N_i)_{1\le i\le N}$ is family of independent Gaussian random variables.
This corresponds to the Itô formulation of \eqref{eq:wisharteigenvalues}.
\end{proof}

This method is quite robust and can be used in many models as for instance if we consider a family of symmetric matrices with independent entries like the Dyson case but instead of having Brownian motions in the entries we have Ornstein-Uhlenbeck $(O_{i,j}(t))_{1\le i,j\le N}$ processes which are solutions of SDE $$dO_{i,j}=-\theta O_{i,j}dt+d(D_N^1)_{i,j}$$  with $D_N^1$ a Dyson Brownian motion of size $N$ and $\theta$ a parameter. Using the same approach one can show that the system of eigenvalues $(\lambda_i(t))_{1\le i\le N}$ satisfies the followings system of SDE: 
\begin{equation}
\label{eq:sdeornseteinuhlenbeck}
\forall 1\le i \le N, \,d\lambda_i(t)=\cfrac{1}{N}\dis\sum_{j\ne i}\cfrac{1}{\lambda_i(t)-\lambda_j(t)}\,dt+\cfrac{\sqrt{2}}{\sqrt{N}}\,dB_t^i-\theta \lambda_i(t) dt.
\end{equation}

\section{Study of the system of particles}
In this section we focus on general properties (existence, tightness of the empirical measure...) of solutions of Dyson Brownian motion SDE \eqref{dysonreal}. We could state similar results for other models like Wihsart or Ornstein-Uhlenbeck. We refer to \cite{bru1991wishart,graczyk2007moments} for the Wishart case and \cite{Chan,Shi} for the Ornstein-Uhlenbeck case. 
\newline
We shall follow the standard arguments to study the existence of a system of particles in interaction through a singular potential using a containment function. For instance, this method have been used for the Dyson Brownian case in \cite{Alicelivre}, for Coulomb gas in dimension 2 in \cite{bolley2018dynamics}, for the so-called generalized Dyson Brownian motion and Ornstein-Uhlenbeck models in \cite{li2013generalized,Shi} or for the cicular Dyson Brownian motion \cite{cepa2001brownian}.
\newline
We fix an integer $N\ge2$. 
\subsection{Notations}
We introduce the open subset on which we will work:  $$D=\R^N-\bigcup_{i\ne j}\{(x_1,...,x_N)\in\R^N: x_i=x_j\}$$ and its boundary $$\partial D=\{+\infty\}\cup_{i\ne j}\{(x_1,...,x_N)\in\R^N: x_i=x_j\}.$$
Let us write $\R_>^N$ the subset of $\R^N$: $\{(x_1,...,x_N)\in\R^N: x_1<x_2<...<x_N\}$.
We shall write $x=(x_1,...,x_N)$ for an element of $\R^N$. 
\newline
We say that $x$ is $\varepsilon$ near of $\partial D$ there exists $i$ such that $|x_i|>\varepsilon^{-1}$ or if there exists $i\ne j$ such that $|x_i-x_j|\le \varepsilon$.
\newline
We introduce an energy associated to the Dyson model. If $x\in D$, we define its energy by $$\mathcal E(x)=\cfrac{1}{N}\dis\sum_{i=1}^NV(x_i)+\cfrac{1}{2N^2}\dis\sum_{1\le i \ne j \le N} W(x_i-x_j)=:\mathcal E_V(x)+\mathcal E_W(x)$$ where $V$ is an external confinement potential which is in our case $V(x)=x^2$ as the kinetic energy that will ensure that the system of particle is spatially confined and $W(x)=-\log(|x|^2)$ is potential energy of interaction associated to our system since $-W'(x)=2/x$.
\newline
Let $\alpha_N$ and $\beta_N$ be two positive real numbers. 
In this section we shall focus on solutions of the following system of SDE:
\begin{equation} 
\label{eq:dysoncoeff}
\forall 1\le i\le N,\, d\lambda_t^i=-\cfrac{\alpha_N}{2N^2}\dis\sum_{j\ne i}W'(\lambda_t^i-\lambda_t^j)\,dt+2\sqrt{\cfrac{\alpha_N}{\beta_N}}\,dB_t^i
\end{equation}
 with $(B_i)_{1\le i\le N}$ a family of independent Brownian motions.
\newline
The cases of Dyson Brownian motion defined in Definition \ref{def:dyson} correspond to the particular cases $\alpha_N=N$ and $\beta_N=4N^2$ for $\beta=2$ and $\beta_N=2N^2$ for $\beta=1$.
\newline
The first goal is to study under which condition on $\alpha_N$ and $\beta_N$ the system is well posed starting from an initial data in $D$ which is not clear a priori since there is a singular term in the system. On the one side the singular term is a repulsive interaction that should help the system to be well defined but if the fluctuations due to the Brownian motion parts are too important collisions may appear.
\newline
For this aim, we introduce the following stopping times for a solution of \eqref{eq:dysoncoeff}: for all $\varepsilon>0$, $$T_\varepsilon=\inf\{t\ge0: \underset{1\le i\le N}{\max} |\lambda_t^i|\ge\varepsilon^{-1} \, \text{or}\, \underset{1\le i,j\le N}{\min}|\lambda_t^i-\lambda_t^j|\le \varepsilon\}$$ and $$T_{\partial D}=\,\underset{\varepsilon>0}{\sup}T_\varepsilon.$$

\subsection{Formulas}
The goal of this section is to give formulas to explain how the generator of the system of particles solution of \eqref{eq:dysoncoeff} acts on $\mathcal E$.
\newline
Firstly the generator associated to the system $(\lambda_t^i)_{1\le i\le N}$ solution of \eqref{eq:dysoncoeff} is $$Lf=2\cfrac{\alpha_N}{\beta_N}\Delta f-\cfrac{\alpha_N}{2}\nabla \mathcal E_W.\nabla f,$$ for $f$ a smooth function from $D$ to $\R$. It means that for a smooth function $f$ from $D$ to $\R$, the Itô formula yields: $$\E(f(\lambda_t^1,..,\lambda_t^N))=f(\lambda_0^i,...,\lambda_0^N)+\E\left(\dis\int_0^t Lf(\lambda_t^1,...,\lambda_t^N)dt\right).$$
For $x\in D$ and $1\le i,j\le N$, we have the following formulas: 
\begin{align}
\cfrac{\partial \mathcal E_V}{\partial x_i}(x)&=\cfrac{2x_i}{N}\\
\cfrac{\partial \mathcal E_W}{\partial x_i}(x)&=-\cfrac{2}{N^2}\dis\sum_{j\ne i}\cfrac{1}{x_i-x_j}\\
\nabla^2 \mathcal E_V(x)&=\cfrac{2}{N}I_N\\
\nabla^2\mathcal E_W(x)&=\cfrac{1}{N^2} A
\end{align}
with $A$ a matrix such that $A_{i,i}=2\dis\sum_{j\ne i}\cfrac{1}{(x_i-x_j)^2}$ and $A_{i,j}=-\cfrac{2}{(x_i-x_j)^2}$ if $i\ne j$.
\newline
We can now compute $L\mathcal E_V$ and after $L\mathcal E_W$ to finally compute  $L\mathcal E$.
\newline
We compute $$L\mathcal E_V(x)=\cfrac{2\alpha_N}{\beta_N}\times 2-\cfrac{\alpha_N}{2}\dis\sum_{1\le i\ne j\le N}\cfrac{-4x_i}{N^3}\cfrac{1}{x_i-x_j}=\cfrac{4\alpha_N}{\beta_N}+\cfrac{2\alpha_N}{N^3}\dis\sum_{1\le i\ne j\le N}\cfrac{x_i}{x_i-x_j}.$$
Remark that $$\dis\sum_{1\le i\ne j\le N}\cfrac{x_i}{x_i-x_j}=\dis\sum_{1\le i\ne j\le N}\cfrac{x_j}{x_j-x_i}$$
It yields: $$2\dis\sum_{1\le i\ne j\le N}\cfrac{x_i}{x_i-x_j}=\dis\sum_{1\le i\ne j\le N}\cfrac{x_i-x_j}{x_i-x_j}=N(N-1).$$
This gives 
\begin{equation}
\label{calculLHV}
L\mathcal E_V(x)=\cfrac{4\alpha_N}{\beta_N}+\cfrac{\alpha_N(N-1)}{N^2}.
\end{equation}
Now we compute $L\mathcal E_W(x)$.
Start with: 
\begin{align}
L\mathcal E_W(x)&=2\cfrac{\alpha_N}{\beta_N}\dis\sum_{i=1}^N\cfrac{2}{N^2}\dis\sum_{j\ne i}\cfrac{1}{(x_i-x_j)^2}-\cfrac{\alpha_N}{2}|\nabla \mathcal E_W(x)|^2\\
&=4\cfrac{\alpha_N}{N^2\beta_N}\dis\sum_{i=1}^N\dis\sum_{j\ne i}\cfrac{1}{(x_i-x_j)^2}-\cfrac{2\alpha_N}{N^4}\dis\sum_{i=1}^N\left(\dis\sum_{j\ne i}\cfrac{1}{x_i-x_j}\right)^2.
\end{align}
Notice that
\begin{equation}
\begin{split}
\dis\sum_{i=1}^N\left[\left(\dis\sum_{j\ne i}\cfrac{1}{x_i-x_j}\right)^2-\dis\sum_{j\ne i}\cfrac{1}{(x_i-x_j)^2}\right]&=\dis\sum_{j\ne i,k\ne i, j\ne k}\cfrac{1}{(x_i-x_k)(x_i-x_j)}\\
&=\dis\sum_{j\ne i,k\ne i, j\ne k}\cfrac{1}{x_j-x_k}\left(\cfrac{1}{x_i-x_j}-\cfrac{1}{x_i-x_k}\right)\\
&=-\dis\sum_{j\ne i,k\ne i, j\ne k}\cfrac{1}{(x_j-x_k)(x_j-x_i)}-\dis\sum_{j\ne i,k\ne i, j\ne k}\cfrac{1}{(x_k-x_j)(x_k-x_i)}\\
&=-2\dis\sum_{j\ne i,k\ne i, j\ne k}\cfrac{1}{(x_i-x_k)(x_i-x_j)}
\end{split}
\end{equation}
This gives: 
\begin{equation}
3\dis\sum_{j\ne i,k\ne i, j\ne k}\cfrac{1}{(x_i-x_k)(x_i-x_j)}=0\,\text{ and } \sum_{i=1}^N\left[\left(\dis\sum_{j\ne i}\cfrac{1}{x_i-x_j}\right)^2-\dis\sum_{j\ne i}\cfrac{1}{(x_i-x_j)^2}\right]=0
\label{cotlard}
\end{equation}

Hence, we have: 
\begin{equation}
\label{calculLHW}
L\mathcal E_W(x)=\left[\cfrac{4\alpha_N}{N^2\beta_N}-\cfrac{2\alpha_N}{N^4}\right]\dis\sum_{i=1}^N\dis\sum_{j\ne i}\cfrac{1}{(x_i-x_j)^2}.
\end{equation}
Computations \eqref{calculLHV} and \eqref{calculLHW} can be summarized into 
\begin{equation}
\label{calculLH}
L\mathcal E(x)=\cfrac{4\alpha_N}{\beta_N}+\cfrac{\alpha_N(N-1)}{N^2}+\left[4\cfrac{\alpha_N}{N^2\beta_N}-\cfrac{2\alpha_N}{N^4}\right]\dis\sum_{i=1}^N\dis\sum_{j\ne i}\cfrac{1}{(x_i-x_j)^2}.
\end{equation}
We obtain the following result.
\begin{prop}
\label{prop:upperboundenergy}
Assume that $\beta_N\ge 2N^2$. Then we have the following upper bound $$\sup_{x\in D} L\mathcal E(x)\le\cfrac{4\alpha_N}{\beta_N}+\cfrac{\alpha_N(N-1)}{N^2}.$$
\end{prop}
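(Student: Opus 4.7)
The plan is very short: the statement is essentially a direct reading of the identity \eqref{calculLH} which has just been established, so the only thing to do is to check the sign of the coefficient in front of the sum of inverse squared differences.

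More precisely, I would start from the formula
\begin{equation*}
L\mathcal E(x)=\cfrac{4\alpha_N}{\beta_N}+\cfrac{\alpha_N(N-1)}{N^2}+\left[\cfrac{4\alpha_N}{N^2\beta_N}-\cfrac{2\alpha_N}{N^4}\right]\dis\sum_{i=1}^N\dis\sum_{j\ne i}\cfrac{1}{(x_i-x_j)^2},
\end{equation*}
already proved in \eqref{calculLH}. The sum $\sum_{i\ne j}(x_i-x_j)^{-2}$ is manifestly nonnegative on $D$ (in fact strictly positive), so the third term on the right hand side is bounded above by $0$ as soon as the bracket is nonpositive. Now the bracket is nonpositive if and only if $\frac{4\alpha_N}{N^2\beta_N}\le \frac{2\alpha_N}{N^4}$, which, since $\alpha_N>0$, is equivalent to $\beta_N\ge 2N^2$. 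This is exactly the assumption, so under that hypothesis the bracket is nonpositive and the inequality $L\mathcal E(x)\le \frac{4\alpha_N}{\beta_N}+\frac{\alpha_N(N-1)}{N^2}$ follows uniformly in $x\in D$, giving the claimed supremum bound.

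There is no real obstacle here since the hard work (the two computations \eqref{calculLHV} and \eqref{calculLHW}, and the combinatorial identity \eqref{cotlard} used to control the square of the gradient of $\mathcal E_W$) has already been carried out in the preceding paragraphs. The only subtle point worth mentioning is why the assumption $\beta_N\ge 2N^2$ appears naturally: the Brownian diffusion term contributes $\frac{4\alpha_N}{N^2\beta_N}\sum_{i\ne j}(x_i-x_j)^{-2}$ to $L\mathcal E_W$ while the repulsive drift, after the cancellation \eqref{cotlard}, produces $-\frac{2\alpha_N}{N^4}\sum_{i\ne j}(x_i-x_j)^{-2}$, so the condition $\beta_N\ge 2N^2$ is exactly the threshold at which the repulsion dominates the noise at the level of the Lyapunov-type functional $\mathcal E$. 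This is the point that will be used in the next subsections as a containment estimate to prevent collisions or escapes to infinity before time $T_{\partial D}$.
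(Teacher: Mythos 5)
Your proof is correct and is exactly the argument implicit in the paper: the proposition is stated as an immediate consequence of the identity \eqref{calculLH}, and the only thing to verify is that the coefficient $\frac{4\alpha_N}{N^2\beta_N}-\frac{2\alpha_N}{N^4}$ is nonpositive precisely when $\beta_N\ge 2N^2$, so that the nonnegative sum $\sum_{i\ne j}(x_i-x_j)^{-2}$ can be dropped. Your closing remark on the interpretation of the threshold is a nice addition but goes beyond what the paper writes.
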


\subsection{Existence of solutions for the Dyson case}
\subsubsection{Existence thanks to a containment function}
Let state the theorem we will prove in this section.
\begin{theorem}
\label{thm:existenceparticule}
Assume that $\beta_N\ge 2N^2$. For any initial data $\lambda_0\in D$, there exists a unique strong solution of $~\eqref{eq:dysoncoeff}$ defined on $[0,+\infty($ and $T_{\partial D}=+\infty$ almost surely. 
\end{theorem}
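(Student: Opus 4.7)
The plan is to take $\mathcal{E}$ as a containment function and exploit the uniform upper bound
$$\sup_{x \in D} L\mathcal{E}(x) \leq C := \frac{4\alpha_N}{\beta_N} + \frac{\alpha_N(N-1)}{N^2}$$
furnished by Proposition~\ref{prop:upperboundenergy}, which is available precisely because $\beta_N \geq 2N^2$. On $D$ the drift $b_i(x) = \frac{\alpha_N}{N^2}\sum_{j \neq i}(x_i - x_j)^{-1}$ is smooth and locally Lipschitz, and the diffusion matrix is constant, so standard SDE theory gives a unique strong solution on $[0, T_\varepsilon]$ for every $\varepsilon > 0$ and hence on $[0, T_{\partial D})$ with $T_{\partial D} = \sup_\varepsilon T_\varepsilon$; the remaining task is then to show that $T_{\partial D} = +\infty$ almost surely.

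Next I would apply It\^o's formula to $\mathcal{E}$ on the stopped interval $[0, t \wedge T_\varepsilon]$, on which $\mathcal{E}$ is smooth with bounded gradient. This yields
$$\mathcal{E}(\lambda_{t \wedge T_\varepsilon}) = \mathcal{E}(\lambda_0) + \int_0^{t \wedge T_\varepsilon} L\mathcal{E}(\lambda_s)\,ds + M_{t \wedge T_\varepsilon},$$
where $M$ is the stochastic integral against $(B^1,\dots,B^N)$; boundedness of $\nabla\mathcal{E}$ on the stopped domain makes $M_{\cdot \wedge T_\varepsilon}$ a true martingale with zero expectation, so combined with the bound on $L\mathcal{E}$ one obtains the a priori estimate
$$\E\bigl[\mathcal{E}(\lambda_{t \wedge T_\varepsilon})\bigr] \leq \mathcal{E}(\lambda_0) + C t,$$
uniformly in $\varepsilon > 0$.

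Finally I would combine this with two deterministic properties of $\mathcal{E}$: a global lower bound $\mathcal{E} \geq -C_0$ on $D$, obtained by splitting pairs according to whether $|x_i - x_j| \leq 1$ and using $-\log(|x_i|+|x_j|+1) \geq -(|x_i|+|x_j|)$ so that the quadratic confinement $V(x)=x^2$ absorbs the resulting linear terms; and the divergence $\mathcal{E}(x) \geq c(\varepsilon)$ with $c(\varepsilon) \to +\infty$ whenever $x$ is $\varepsilon$-close to $\partial D$, since either $|x_i| \geq 1/\varepsilon$ for some $i$ (giving $\mathcal{E}_V(x) \geq 1/(N\varepsilon^2)$) or $|x_i - x_j| \leq \varepsilon$ for some pair (giving a contribution at least $\frac{2\log(1/\varepsilon)}{N^2}$ to $\mathcal{E}_W$), the other log-terms being controlled as in the global lower bound. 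Markov's inequality then gives
$$\mP(T_\varepsilon \leq t) \leq \frac{\mathcal{E}(\lambda_0) + Ct + C_0}{c(\varepsilon) + C_0} \underset{\varepsilon \to 0}{\longrightarrow} 0,$$
and since $T_\varepsilon$ is increasing in $1/\varepsilon$ this forces $\mP(T_{\partial D} \leq t) = 0$ for every $t > 0$, yielding the claim. The main technical delicacy lies in this second deterministic bound: one must verify that the quadratic confinement $V$ always dominates the negative contributions of well-separated pairs, so that the diverging contribution from a colliding or escaping particle really sends $\mathcal{E}$ to $+\infty$; everything else is standard machinery once Proposition~\ref{prop:upperboundenergy} is at hand.
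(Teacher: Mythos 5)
Your proposal is correct and takes essentially the same approach as the paper: both use $\mathcal{E}$ as a Lyapunov/containment function, invoke Proposition~\ref{prop:upperboundenergy} (this is where $\beta_N\ge 2N^2$ enters), apply It\^o's formula on a stopped interval, and conclude via a Markov-type estimate. The only cosmetic divergence is that you control $\mP(T_\varepsilon\le t)$ directly through the divergence of $\mathcal{E}$ near $\partial D$, whereas the paper introduces the auxiliary sublevel-set times $T'_R:=\inf\{t:\mathcal{E}(\lambda_t)>R\}$ and proves $\mathcal{E}\ge 0$ outright in Lemma~\ref{lemma:exploenergy}; these variants are equivalent through that lemma.
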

This proof relies on the fact that $\mathcal E$ is a good containment function. We follow the arguments of \cite{bolley2018dynamics}.
\begin{lemma}
\label{lemma:exploenergy}
We have the following properties for $\mathcal E$: 
\begin{enumerate}
\item{\label{enumerate1}$\mathcal E\ge0$}
\item{\label{enumerate2}$\underset{x\to\partial D}{\lim}\mathcal E(x)=+\infty$}
\item{\label{enumerate3}$\exp(-\beta \mathcal E)$ is integrable on $D$ for any $\beta>0$.}
\end{enumerate}
\end{lemma}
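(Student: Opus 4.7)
The three statements all follow from elementary comparisons between the quadratic confinement $\mathcal{E}_V$ and the logarithmic interaction $\mathcal{E}_W$. The single one-line estimate that drives everything is the convexity inequality $\log t \leq t - 1$ applied to $t = |x_i - x_j|$, which yields
\[
-\log|x_i-x_j|^2 = -2\log|x_i-x_j| \geq 2 - 2|x_i-x_j|.
\]
This inequality alone is enough for Properties~\ref{enumerate1} and \ref{enumerate2}; Property~\ref{enumerate3} is then a clean Gaussian-beats-polynomial argument.

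For Property~\ref{enumerate1}, the plan is to sum the displayed bound over pairs $i\ne j$ and divide by $2N^2$, obtaining $\mathcal E_W(x) \geq \frac{N-1}{N} - \frac{1}{N^2}\sum_{i\ne j}|x_i-x_j|$. Then applying Cauchy--Schwarz together with the elementary identity
\[
\sum_{i\ne j}(x_i-x_j)^2 = 2N\sum_i x_i^2 - 2\Bigl(\sum_i x_i\Bigr)^2 \leq 2N^2\, \mathcal E_V(x)
\]
gives $\sum_{i\ne j}|x_i-x_j| \leq N^2\sqrt{2\mathcal E_V(x)}$, and hence
\[
\mathcal E(x) \geq \mathcal E_V(x) - \sqrt{2\mathcal E_V(x)} + \frac{N-1}{N}.
\]
Viewed as a quadratic in $u = \sqrt{\mathcal E_V(x)}$, the right-hand side has discriminant $2 - 4(N-1)/N \leq 0$ for $N \geq 2$, so it is nonnegative. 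For Property~\ref{enumerate2} I would split the approach to $\partial D$ into two (non-exclusive) regimes. If $\|x\| \to \infty$, then $\mathcal E_V(x) \to \infty$ and the same quadratic lower bound forces $\mathcal E(x) \to +\infty$. If instead $\min_{i\ne j}|x_i-x_j| \to 0$ along a sequence with $\|x\|$ bounded, the offending pair contributes a term $-\frac{1}{N^2}\log|x_i-x_j|^2 \to +\infty$ to $\mathcal E_W$, while every other term is bounded below by a finite constant depending only on $N$ and the a priori bound on $\|x\|$; the total still diverges.

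For Property~\ref{enumerate3}, the key rewriting is
\[
e^{-\beta\mathcal E(x)} = e^{-\frac{\beta}{N}\sum_i x_i^2}\,\prod_{i<j}|x_i-x_j|^{2\beta/N^2},
\]
so the Vandermonde-like factor is a positive polynomial of total degree $\beta(N-1)/N$, crudely bounded by $(2\|x\|_\infty)^{\beta(N-1)/N}$. Against the centered Gaussian density $e^{-\frac{\beta}{N}\|x\|^2}$ this is plainly integrable on $\R^N$, hence on $D \subset \R^N$. The only mildly delicate point of the whole lemma is calibrating the convexity bound in Property~\ref{enumerate1} carefully enough that the resulting quadratic in $\sqrt{\mathcal E_V}$ is nonnegative uniformly for all $N \geq 2$; once that is done the rest is direct.
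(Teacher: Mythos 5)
Your proof is correct, but it takes a genuinely different route from the paper's. The paper bounds each pair term-by-term using the inequality $\tfrac{u}{2} - \log u \geq 1 - \log 2$ for $u = |x_i - x_j|^2$, together with the identity $\tfrac{1}{2}\sum_{i\ne j}|x_i-x_j|^2 \leq N\|x\|^2$, to arrive at the single Gaussian-type lower bound $\mathcal E(x) \geq \frac{\|x\|^2}{2N} + \frac{1}{16}$. That one estimate delivers all three properties: strict positivity for Property~1, integrability for Property~3 by direct Gaussian comparison, and (after a short local argument near coalescing pairs) the blow-up at the boundary for Property~2. Your approach instead uses the rougher convexity bound $\log t \leq t-1$, which produces a first-power sum $\sum_{i\ne j}|x_i-x_j|$ that you must then relate to $\mathcal E_V$ via Cauchy--Schwarz; the result is a quadratic in $\sqrt{\mathcal E_V}$ whose discriminant is only barely nonpositive (it vanishes at $N=2$), so you get $\mathcal E \geq 0$ with no margin. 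Because this bound has no strictly positive constant and no explicit Gaussian part, it does not immediately give Property~3, which is why you need the separate factorization argument writing $e^{-\beta\mathcal E}$ as a Gaussian times the Vandermonde-type power. That factorization argument is sound (the product is bounded by $\max\bigl(1, 2\|x\|_\infty\bigr)^{\beta(N-1)/N}$, hence Gaussian-dominated), and your two-regime argument for Property~2 is essentially identical to the paper's. The trade-off: the paper's version is tighter and more compact, and the quantitative bound $\mathcal E(x)\geq \|x\|^2/(2N)+1/16$ is reused structurally later (e.g.\ in establishing that $\mathcal E$ is a containment function in Theorem~\ref{thm:existenceparticule}), so it is the one worth keeping on record; your version is perfectly adequate for proving the lemma in isolation and has the virtue of only invoking the completely elementary $\log t \leq t-1$.
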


\begin{proof}
Let $x$ in $D$. Start with: $$\cfrac{1}{2}\dis\sum_{1\le i\ne j\le N}|x_i-x_j|^2=\cfrac{1}{2}\dis\sum_{1\le i, j\le N}|x_i-x_j|^2=N\dis\sum_{1\le N}x_i^2-\left(\dis\sum_{1\le i\le N}x_i\right)^2\le N\dis\sum_{1\le i\le N}x_i^2=N|x|^2.$$
\newline
So we get that: $$2N^2\mathcal E(x)=N|x|^2+N|x|^2-\dis\sum_{1\le i\ne j\le N}\log|x_i-x_j|^2\ge N|x|^2+ \dis\sum_{1\le i\ne j\le N}\left(\cfrac{|x_i-x_j|^2}{2}-\log|x_i-x_j|^2\right).$$
Since for all $u>0$, $\cfrac{u}{2}-\log(u)\ge 1-\log(2)>1/4$, we can lower bound for $\mathcal E$ on $D$: 
\begin{equation}
\label{eq:lowerboundenergy}
\mathcal E(x)\ge\cfrac{|x|^2}{2N}+\cfrac{N(N-1)}{8N^2}\ge\cfrac{|x|^2}{2N}+\cfrac{1}{16}.
\end{equation}
It proves \ref{enumerate1} and \ref{enumerate3}.
\newline
We now prove \ref{enumerate2}. By definition of $\partial D$, we must prove that for all $R>0$ there exists $A>0$ and $\varepsilon>0$ such that $H(x)>R$ if there exists $1\le i\le N$ such that $|x_i|>A$ or if there exists $1\le i\le N$ and $1\le j\le N$ such that $|x_i-x_j|\le \varepsilon$. 
\newline
We fix $R>0$. First, notice that by $\eqref{eq:lowerboundenergy}$, we can find $A>0$ large enough such that if there exists $1\le i\le N$ such that $|x_i|>A$ then $\mathcal E(x)\ge R$. We now fix a such $A>0$. We can now assume furthermore that for all $1\le l\le N$, $|x_l|<A$
because otherwise as explained $\mathcal E(x)>R$. 
\newline
Let $\varepsilon>0$ and assume that there exists $1\le i\le N$ and $1\le j\le N$ such that $|x_i-x_j|\le \varepsilon$, then 
\begin{equation}
\label{eq:lowerboundH2}
\begin{split}
N^2\mathcal E(x)&\ge -2\log|x_i-x_j|-\dis\sum_{1\le k\ne j\le N, \{k,l\}\ne\{i,j\}}\log|x_k-x_l|\\
&\ge -2\log(\varepsilon)-\sum_{1\le k\ne j\le N, \{k,l\}\ne\{i,j\}}\log|x_k-x_l|.
\end{split}
\end{equation}
Using the inequality $|x-y|\le(1+|x|)(1+|y|)$ for all $(x,y)\in\R^2$, we get that for all $l\ne k$: $$-\log|x_k-x_l|\ge-\log(1+|x_k|)-\log(1+|x_k|)\ge-2\log(1+A).$$
Hence it yields: $$N^2\mathcal E(x)\ge-2\log(\varepsilon)-2N^2\log(1+A), $$ so that we can choose $\varepsilon>0$ small enough to be sure that $\mathcal E(x)$ is greater than $R$. This concludes the proof.
\end{proof}
We now prove Theorem \ref{thm:existenceparticule}.
\begin{proof}
This proof is quite standard when dealing with existence of SDE with a potential explosion in finite time. We regularize the singular part in the SDE, then construct a solution up to an explosion time and finally we prove that this solution is defined globally in time.
\newline
Let $\lambda_0\in D$ be an initial data and $\varepsilon>0 $ smaller than $\min_{1\le i\ne j\le N+1}(|\lambda_0^i-\lambda_0^j|)$ and such that $\varepsilon^{-1}>\max_{i}\lambda_0^i$. 
\newline
We consider a smooth approximation $W_\varepsilon$ of $W$ which coincides with $W$ on $\R-[-\varepsilon,\varepsilon]$ and we define as before $\mathcal E_\varepsilon=\mathcal E_V+\mathcal E_{W_\varepsilon}$. 
By existence and uniqueness of the solution to the SDE we can consider $(\lambda_t^\varepsilon)_{t\ge0}$ starting from $\lambda_0$ solution to the following problem: $$d\lambda_t^\varepsilon=2\sqrt{\cfrac{\alpha_N}{\beta_N}}dB_t-\cfrac{\alpha_N}{2}\nabla \mathcal E_{W_\varepsilon}(\lambda_t^\varepsilon)dt.$$
For $\varepsilon'<\varepsilon$, let $$T_{\varepsilon, \varepsilon'}=\inf\{s>0: \min_{1\le i\ne j\le N+1}|(\lambda_s^{\varepsilon'})^i-(\lambda_s^{\varepsilon'})^j|\le\varepsilon\text{ or }\max_{i}|(\lambda_s^{\varepsilon'})^i|\ge\varepsilon^{-1}\}$$ be a stopping time such that by definition we have that the processes $\lambda^\varepsilon$ and $\lambda^{\varepsilon'}$ coincide up to this time.
Hence, this allows us to define for all $\varepsilon$ as in the beginning of the proof, a stopping time $T_\varepsilon:=T_{\varepsilon,\varepsilon'}$ for $\varepsilon'<\varepsilon$ and $(\lambda_t)_{0\le t\le T_\varepsilon}:=(\lambda_t^{\varepsilon'})_{0\le t\le T_\varepsilon}$ for $\varepsilon'<\varepsilon$. We clearly have that if $\varepsilon'<\varepsilon$, $T_{\varepsilon'}\ge T_{\varepsilon}$ almost surely, and so that we can uniquely defined $\lambda$ until the stopping time $T_{\partial D}$. Moreover by definition of $W_\varepsilon$, we have that $\lambda$ satisfies the SDE with $W$ until the time $T_{\partial D}$ since it satisfies it for all $\varepsilon$ on $[0,T_\varepsilon]$ and $W$ and $W_\varepsilon$ coincide on $\R-[-\varepsilon,\varepsilon]$. So it remains to prove that $T_{\partial D}=+\infty$ almost surely to conclude. 
\newline
Let us define for all $R>0$ the stopping times: $$T'_R=\inf\{t\ge0\,:\, \mathcal E(\lambda_t)>R\}$$ and $$T'=\lim_{R\to\infty} T'_R.$$
By the previous lemma: $T'=+\infty\subset T_{\partial D}=+\infty$ because if $\lambda_t$ converges to $\partial D$ in a finite time, $\mathcal E(\lambda_t)$ would explode in finite time by Lemma \ref{lemma:exploenergy}. 
\newline
Let $R>0$, thanks to Lemma \ref{lemma:exploenergy} we can find $\varepsilon$ (that depends on $R$) such that $T_\varepsilon\ge T'_R$. 
\newline
Hence, we can use Itô's formula to have for all $t>0$: 
$$\E(\mathcal E_\varepsilon(\lambda^\varepsilon_{t\wedge T'_R}))-\mathcal E_\varepsilon(\lambda_0^\varepsilon)=\E\left(\int_0^{t\wedge T'_R}L\mathcal E_\varepsilon(\lambda_s^\varepsilon)ds\right).$$
Since $T_\varepsilon \ge T'_R$ it gives:
\begin{equation}
\label{eq:equationparticleito}
\E(\mathcal E(\lambda_{t\wedge T'_R}))-\mathcal E(\lambda_0)=\E\left(\int_0^{t\wedge T'_R}L\mathcal E(\lambda_s)ds\right).
\end{equation}
Now by Proposition \ref{prop:upperboundenergy}, $L\mathcal E$ is uniformly bounded on $D$ by a constant (which depends on $N$) that we will denote $K_N$. 
\newline 
So, we have the following upper bound for every $t>0$, $$\E\left(\int_0^{t\wedge T'_R}L\mathcal E(\lambda_s)ds\right)\le K_Nt.$$ 
As a consequence of the Itô formula $\eqref{eq:equationparticleito}$, for every $t>0$, $$\underset{R>0}{\sup}\,\E(\mathcal E(\lambda_{t\wedge T'_R}))<+\infty.$$
Thanks to that we can use a kind of Markov estimate. For every $t>0$ since $\mathcal E$ is non negative we have: $$R \mathds{1}_{T'_R\le t}\le \mathcal E(\lambda_{t\wedge T'_R}).$$
By taking the expectation for every $t>0$ we get: $$\mP(T'_R\le t)\le\cfrac{\underset{R>0}{\sup}\,\E(\mathcal E(\lambda_{t\wedge T'_R}))}{R}.$$
Hence, let $R\to+\infty$ to have that for all $t>0$, $$\mP(T'\le t)=0.$$ 
So $T'=+\infty$ almost surely which concludes the proof. 
\end{proof}

The criterion $\beta_N\ge 2N^2$ is the better that can be obtained. Indeed if $\beta_N<2N^2$ then Cépa and Lépingle proved that collisions occur almost surely \cite{cepa2001brownian,cepa1997diffusing}.
We sketch a simple argument which can be found in Lemma 2 of \cite{guillin2025quasi}. 

\begin{prop}
If $\beta_N<2N^2$ then collisions of \eqref{eq:dysoncoeff} occur almost surely. 
\end{prop}

\begin{proof}
Assume by contradiction that a strong solution $(\lambda_t^i)_{t\ge 0,\,1\le i\le N}$ of \eqref{eq:dysoncoeff} exists for every time and that $T_{\partial_D}=+\infty$ on a $\Omega'\subset\Omega$ of nonzero probability. 
\newline 
Up to replace $\lambda_t^i$ by $\frac{\sqrt{\beta_N}}{2\sqrt{\alpha_N}}\lambda_t^i$ we can consider that, instead of \eqref{eq:dysoncoeff}, the system of particles $(\lambda_t^i)_{t\ge 0,\,1\le i\le N}$ is solution of 

\begin{equation}
\label{eq:dysoncoeffbis}
\forall 1\le i\le N,\, d\lambda_t^i=\gamma_N\dis\sum_{j\ne i}\cfrac{1}{\lambda_t^i-\lambda_t^j}\,dt+dB_t^i,
\end{equation}
with $\gamma_N:=\frac{\beta_N}{4N^2}$. So, the hypothesis that $\beta_N<2N^2$ is replaced by $\gamma_N<\frac{1}{2}$.
\newline
By hypothesis for every time $t>0$ we have $(\lambda_t^i)_{1\le i\le N}\in\R^N_>$. Let $l\in\{1,N-1\}$ be an index and define $\sigma_{l,l+1}:=\inf\{t>0,\, \lambda_t^l=\lambda_t^{l+1}\}$. By hypothesis $\sigma_{l,l+1}$ is infinite on $\Omega'$. 
\newline
Applying the Itô formula to $S_t:=|\lambda_t^{l+1}-\lambda_t^l|^2$ we obtain: 
\begin{align*}
S_t=S_0+2t+2\int_0^t\sqrt{S_s}(dB_s^{l+1}-dB_s^l)+2\gamma_N\int_0^t \left(\sum_{j=1,\,j\ne l}^N\cfrac{\lambda_s^{l+1}-\lambda_s^l}{\lambda_s^{l+1}-\lambda_s^j}+\sum_{j=1,\,j\ne l+1}^N\cfrac{\lambda_s^{l}-\lambda_s^{l+1}}{\lambda_s^{l}-\lambda_s^j}\right)\,ds.
\end{align*} 
We remark that for every $s\ge 0$
\begin{align*}
\sum_{j=1,\,j\ne l}^N\cfrac{\lambda_s^{l+1}-\lambda_s^l}{\lambda_s^{l+1}-\lambda_s^j}+\sum_{j=1,\,j\ne l+1}^N\cfrac{\lambda_s^{l}-\lambda_s^{l+1}}{\lambda_s^{l}-\lambda_s^j}&=2+\sum_{j\notin\{l,l+1\}}\left(\cfrac{\lambda_s^{l+1}-\lambda_s^l}{\lambda_s^{l+1}-\lambda_s^j}+\cfrac{\lambda_s^{l}-\lambda_s^{l+1}}{\lambda_s^{l}-\lambda_s^j}\right)\\
&=2+\sum_{j\notin\{l,l+1\}}\cfrac{(\lambda_s^{l+1}-\lambda_s^j)^2}{(\lambda_s^{l+1}-\lambda_s^j)(\lambda_s^{j}-\lambda_s^l)}\\
&\le 2
\end{align*}
because $\lambda_s\in\R^N_>$.
It yields:
\begin{align*}
S_t\le S_0+2t(2\gamma_N+1)+2\sqrt{2}\int_0^t\sqrt{S_s}dW_s^l
\end{align*}
where $W_s^l:=(B_s^{l+1}-B_s^l)/\sqrt{2}$ is a standard Brownian motion.
By the comparison theorem for SDE of Ikeda and Watanabe \cite{ikeda1977comparison} we obtain that for all $t>0$ $$0\le S_t\le R_t$$ where $(R_t)_{t\ge 0}$ is the solution of the SDE $$dR_t=2(2\gamma_N+1)dt+2\sqrt{2}\sqrt{S_t}dW_t^l.$$
$(R_t)_{t\ge 0}$ is known as a squared Bessel process of parameter $2\gamma_N+1\in(1,2)$. Standard results about Bessel processes (see \cite{revuz2013continuous} for instance) prove that almost surely there exists $t>0$ such that $R_t=0$ and so $\sigma _{l,l+1}$ is finite which is a contradiction.
\end{proof}
\begin{Remark}
Cépa and Lépingle also proved that actually multiple collisions can not occur \cite{cepa2007no}. It means that in the regime $\beta_N<2N^2$ when there is a collision, it can not be strictly more than two particles that collide. Formally, this is linked with the fact that in dimension 1 two independent Brownian motions collide almost surely but the probability that strictly more than two independent Brownian motions collide at the same time is zero. The technique they used in order to study the multiple collisions is based on the properties of squared Bessel processes. Similar techniques have been used to study the existence and the collisions of more delicate systems of particles (typically when the singular term is attractive instead of repulsive as in the Dyson case). For instance, see the Keller-Seggel model for chemotaxis \cite{fournier2024collisions}.
\end{Remark}

In the particular case of the Dyson Brownian motion, it gives the following result.

\begin{theorem}
\label{thm:existenceparticuledyson}
Let $\beta\in\{1,2\}$. For any initial data $\lambda_0\in D$, there exists a unique strong solution of $~\eqref{dysonreal}$ defined on $[0,+\infty)$ and $T_{\partial D}=+\infty$ almost surely. 
\end{theorem}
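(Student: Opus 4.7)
The plan is straightforward: identify Theorem \ref{thm:existenceparticuledyson} as the specialization of Theorem \ref{thm:existenceparticule} to the coefficients corresponding to the Dyson Brownian motion, and verify that the hypothesis $\beta_N \geq 2N^2$ is satisfied in both cases $\beta \in \{1,2\}$.

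First, I would rewrite the Dyson SDE \eqref{dysonreal} in the form \eqref{eq:dysoncoeff} by reading off the parameters $\alpha_N$ and $\beta_N$. Since $W(x) = -\log(|x|^2)$, we have $W'(x) = -2/x$, so the drift in \eqref{eq:dysoncoeff} reads
\[
-\frac{\alpha_N}{2N^2}\sum_{j\ne i} W'(\lambda_t^i - \lambda_t^j)\,dt = \frac{\alpha_N}{N^2}\sum_{j\ne i}\frac{1}{\lambda_t^i - \lambda_t^j}\,dt.
\]
Matching this with the drift $\frac{1}{N}\sum_{j\ne i}\frac{1}{\lambda_t^{i,\beta}-\lambda_t^{j,\beta}}\,dt$ of \eqref{dysonreal} forces $\alpha_N = N$. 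Matching the diffusion coefficients, $2\sqrt{\alpha_N/\beta_N} = \sqrt{2/(\beta N)}$, gives $\beta_N = 2\beta\, \alpha_N N = 2\beta N^2$.

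Next, I would check the hypothesis of Theorem \ref{thm:existenceparticule}: for $\beta = 2$ we get $\beta_N = 4N^2 \geq 2N^2$, and for $\beta = 1$ we get $\beta_N = 2N^2 \geq 2N^2$. In both cases the criterion holds (the case $\beta=1$ being exactly at the borderline). Applying Theorem \ref{thm:existenceparticule} with these parameters yields, for any initial data $\lambda_0 \in D$, a unique strong solution of \eqref{dysonreal} defined on $[0,+\infty)$ with $T_{\partial D} = +\infty$ almost surely.

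There is no real obstacle here since all the substantive work (the containment argument based on the energy $\mathcal{E}$, the upper bound on $L\mathcal{E}$ from Proposition \ref{prop:upperboundenergy}, and the Markov-type estimate controlling the explosion time) has already been done in the proof of Theorem \ref{thm:existenceparticule}. The only point that may be worth emphasizing is that the case $\beta = 1$ sits exactly at the threshold $\beta_N = 2N^2$: this is the minimal amount of repulsion needed to counteract the Brownian fluctuations, and below it Cépa and Lépingle have shown that collisions between pairs of particles occur almost surely, as mentioned in the remark following Theorem \ref{thm:existenceparticule}.
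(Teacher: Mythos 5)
Your proposal is correct and follows exactly the paper's approach: the text preceding Theorem \ref{thm:existenceparticule} already records that the Dyson SDE \eqref{dysonreal} corresponds to $\alpha_N=N$ with $\beta_N=2N^2$ for $\beta=1$ and $\beta_N=4N^2$ for $\beta=2$, so the theorem is immediately the specialization of Theorem \ref{thm:existenceparticule}. Your parameter matching is accurate and your observation that $\beta=1$ sits exactly at the threshold $\beta_N=2N^2$ matches the remark in the paper about the Cépa--Lépingle collision results.
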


\begin{Remark}
\label{remark:dysonextended}
Let us briefly explain how to give a sense to a solution to \eqref{dysonreal} when $\lambda_0$ is in $\R^N$ and not necessarily in $D$. The following argument is used in Theorem 4.3.2 in \cite{Alicelivre}. Fix $\beta\in\{1,2\}$
It is known that for all $t>0$, almost surely the spectrum of $D_N^\beta(t)$ introduced in Definition \ref{def:dyson} is simple by Proposition \ref{prop: spectre simple}.
For all $t\ge 0$, we denote $(\lambda_t^{j,\beta})_{1\le j\le N}$ the ordered spectrum of $D_N^\beta(t)$.
\newline
Let $(t_n)_{n\ge 0}$ be a sequence of positive real numbers that goes to 0 when $n$ goes to $\infty$. Almost surely for all $n\ge 0$, $(\lambda_{t_n}^i)_{1\le i\le N}\in\R^N_>$. By Theorem \ref{thm:existenceparticuledyson}, for all $n$ we can consider the unique strong solution to \eqref{dysonreal} starting from $(\lambda_{t_n}^{i,\beta})_{1\le i\le N}$, $(\tilde\lambda_t^{i,\beta})_{t\ge t_n,\, 1\le i\le N}$ whose law coincides with the law of the eigenvalues of $(D_N^\beta(t))_{t\ge t_n}$ by the Dyson Theorem \ref{thm:dyson} \cite{Dyson}. By uniqueness of the trajectory obtained in Theorem \ref{thm:existenceparticuledyson}, we can construct a solution $(\tilde\lambda_t)_{t>0}$ of the Dyson equation \eqref{dysonreal} whose law coincides with the law of the eigenvalues of $(D_N^\beta(t))_{t>0}$ and which is in $\R^N_>$ for all $t>0$. Moreover the Hoffman-Wielandt inequality stated in Lemma \ref{lemma: hoffman wielandt} implies that $\lambda_t^\beta$ converges to $\lambda_0$ when $t$ goes to 0 by continuity in 0 of $(D_N^\beta(t))_{t\ge 0}$. This proves existence and uniqueness in law of solutions of \eqref{dysonreal} starting from $\lambda_0\in D$.  
\newline
For solutions of \eqref{eq:dysoncoeff}, in general we can not interpret the particles as eigenvalues of random matrices to give a sense to a solution starting from $\lambda_0\in\R^N$. Nonetheless, we can define solutions using a comparison principle of the particles and more stochastic calculus. See Lemma 4.3.6 and Proposition 4.3.5 of \cite{Alicelivre}.
\end{Remark}
\subsubsection{Solution of the Fokker Planck equation of the Dyson system}
In this part, we follow \cite{Tao} to solve the Fokker Planck equation.
To lighten the notation, we take $\alpha_N$ and $\beta_N$ such that the Dyson equation in this case is
\begin{equation}
 \forall 1\le i\le N,\, d\lambda_t^i=\dis\sum_{j\ne i}\cfrac{1}{\lambda_t^i-\lambda_t^j}\,dt+\,dB_t^i
\label{dysonbis}
\end{equation}
\begin{Remark}
As explained in Section \ref{subsection:particlesinmeanfieldinteraction}, this system of SDE corresponds to the dynamic of the eigenvalues of $\sqrt{N}D_N^2(t)$ where $D_N^\beta$ is the Dyson Brownian motion of size $N\ge 1$ introduced in Definition \ref{def:dyson}.
\end{Remark}

\begin{prop}
Let $\lambda_0$ in $\R^N_>$, $(\lambda_t^i)_{t\in\R^+}$ be the solution of $~\eqref{dysonbis}$ starting from $\lambda_0$ and $f: \R^n\to R$ be a smooth function with bounded derivatives, then for any $t\ge 0$ we have: \begin{equation}
\label{eq:generateur}
\partial_t\E[f(\lambda_t^1,...,\lambda_t^N)]=\E[D^*f(\lambda_t^1,...,\lambda_t^N))]
\end{equation}
 where $D^*$ is the adjoint of the Dyson operator defined by: 
\begin{equation}
D\phi=\cfrac{1}{2}\sum _{i=1}^N\cfrac{\partial^2 \phi}{\partial \lambda_i^2}-\sum_{1\le i,j\le N, i\ne j}\cfrac{\partial}{\partial\lambda_i}\left(\cfrac{\phi}{\lambda_i-\lambda_j}\right)
\end{equation}
 and its adjoint is: 
 \begin{equation}
D^*\phi=\cfrac{1}{2}\sum _{i=1}^n\cfrac{\partial^2 \phi}{\partial \lambda_i^2}+\sum_{1\le i,j\le N, i\ne j}\cfrac{\partial\phi}{\partial\lambda_i}\cfrac{1}{\lambda_i-\lambda_j}.
\end{equation}
Moreover, let $\rho_t$ be the distribution of the ordered vector $(\lambda_t^1,...,\lambda_N^t)$, then $\rho$ satisfies the Fokker Planck equation in the sense of distribution on the subset of $\R_>^N$:\begin{equation}
\label{eq:edpdensité}
\partial_t\rho=D\rho.
\end{equation}
\end{prop}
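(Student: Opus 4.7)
The strategy is to apply Itô's formula to the semimartingale $(\lambda_t^1,\dots,\lambda_t^N)$ and identify the generator of the diffusion as $D^*$, then derive the Fokker--Planck equation by integrating by parts against test functions supported away from the diagonals. The main conceptual delicacy is that the drift $\sum_{j\ne i}1/(\lambda_t^i-\lambda_t^j)$ is singular at the boundary $\partial D$, so Itô's formula cannot be applied directly on all of $\R^N$; the rescue is precisely Theorem~\ref{thm:existenceparticuledyson}, which guarantees that the trajectory stays in $D$ for all time.

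First, I would fix $f\in C^\infty(\R^N)$ with bounded derivatives and start from $\lambda_0\in\R^N_>$. For any $\varepsilon>0$, the stopping time $T_\varepsilon$ defined in the section on containment functions is almost surely positive, and on the stochastic interval $[0,T_\varepsilon]$ both $\lambda_t^i-\lambda_t^j$ and the drift terms $1/(\lambda_t^i-\lambda_t^j)$ are bounded. Applying Itô's formula to $f(\lambda^1_{t\wedge T_\varepsilon},\dots,\lambda^N_{t\wedge T_\varepsilon})$ and using the SDE \eqref{dysonbis}, the quadratic variation part contributes $\tfrac12\sum_i \partial_i^2 f\,dt$ (the Brownian motions being independent), while the drift contributes $\sum_i\sum_{j\ne i}\partial_i f\cdot (\lambda_t^i-\lambda_t^j)^{-1}\,dt$. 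Taking expectation kills the martingale term $\sum_i \int \partial_i f\,dB_s^i$ and yields
\begin{equation*}
\E\bigl[f(\lambda_{t\wedge T_\varepsilon})\bigr]-f(\lambda_0)=\E\!\left[\int_0^{t\wedge T_\varepsilon}\!\!\Bigl(\tfrac12\sum_i\partial_i^2 f(\lambda_s)+\sum_{i\ne j}\frac{\partial_i f(\lambda_s)}{\lambda_s^i-\lambda_s^j}\Bigr)\,ds\right].
\end{equation*}
Next I would pass to the limit $\varepsilon\to 0$, which by Theorem~\ref{thm:existenceparticuledyson} gives $T_\varepsilon\uparrow+\infty$ almost surely; the bounds on $\partial_i f$ and $\partial_i^2 f$ together with the containment estimate $\sup_{x\in D}L\mathcal{E}(x)<+\infty$ from Proposition~\ref{prop:upperboundenergy} allow dominated convergence (at worst one needs to control $\sum_{i\ne j}|\partial_i f|/|\lambda_s^i-\lambda_s^j|$, which can be bounded by the energy $\mathcal{E}(\lambda_s)$ whose expectation grows at most linearly in time by the Itô identity already used in the proof of Theorem~\ref{thm:existenceparticuledyson}). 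Differentiating in $t$ then yields \eqref{eq:generateur}.

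Finally, for the Fokker--Planck identity \eqref{eq:edpdensité}, I would choose $f=\varphi\in C_c^\infty(\R^N_>)$ and rewrite the left-hand side of \eqref{eq:generateur} as $\partial_t\int\varphi\,d\rho_t$. Since $\varphi$ is compactly supported inside $\R^N_>$, one has no boundary term in the integrations by parts: writing $D^*\varphi=\tfrac12\sum_i\partial_i^2\varphi+\sum_{i\ne j}\partial_i\varphi/(\lambda^i-\lambda^j)$, two successive integrations by parts move the $\partial_i^2$ onto $\rho_t$ and the $\partial_i$ onto the product $\rho_t/(\lambda^i-\lambda^j)$, giving
\begin{equation*}
\int_{\R^N_>}D^*\varphi\,d\rho_t=\int_{\R^N_>}\varphi\,D\rho_t\,d\lambda.
\end{equation*}
Hence $\partial_t\int\varphi\,d\rho_t=\int\varphi\,D\rho_t\,d\lambda$ for every test function $\varphi\in C_c^\infty(\R^N_>)$, which is exactly $\partial_t\rho=D\rho$ in the sense of distributions on $\R^N_>$. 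The main obstacle is the justification of dominated convergence as $\varepsilon\to 0$ in the first step, which is precisely why the preceding energy estimate on $\mathcal{E}$ is indispensable.
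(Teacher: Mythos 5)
Your overall approach—Itô's formula followed by integration by parts against test functions supported in $\R^N_>$—is the same as the paper's, which simply invokes Itô's formula, notes that the bounded derivatives of $f$ make the local martingale part a true martingale, and then integrates by parts. Your localization by the stopping times $T_\varepsilon$ is a reasonable extra precaution for the singular drift (the paper does not make it explicit), and your treatment of the Fokker--Planck step is sound.

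However, there is a genuine gap in your justification of the dominated convergence as $\varepsilon\to 0$. You claim that $\sum_{i\ne j}|\partial_i f|/|\lambda_s^i-\lambda_s^j|$ "can be bounded by the energy $\mathcal{E}(\lambda_s)$." This is false: $\mathcal{E}$ contains the terms $-\log|\lambda_i-\lambda_j|$, and a logarithmic singularity does not dominate the much stronger singularity $1/|\lambda_i-\lambda_j|$. Near a diagonal $\{\lambda_i=\lambda_j\}$ the ratio $(1/|\lambda_i-\lambda_j|)/(-\log|\lambda_i-\lambda_j|)$ diverges, so the containment estimate on $\E[\mathcal{E}(\lambda_s)]$ does not give the $L^1$ control you want. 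To justify the passage to the limit, you need a different argument. One option, when $f$ is a sum $\sum_i g(\lambda_i)$ of single-variable functions (as in Proposition~\ref{prop:itoformulasystemparticles}), is the symmetrization $\sum_{i\ne j}\frac{g'(\lambda_i)}{\lambda_i-\lambda_j}=\frac12\sum_{i\ne j}\frac{g'(\lambda_i)-g'(\lambda_j)}{\lambda_i-\lambda_j}$, which is uniformly bounded by $\|g''\|_\infty$; but for a general multivariate $f$ this trick does not directly apply. A correct route for general $f$ is to use the explicit Johansson density \eqref{formula:johansson}: since $\rho_s$ vanishes linearly (like $\Delta_N(\lambda)$) near each diagonal, $\E\bigl[\sum_{i\ne j}1/|\lambda_s^i-\lambda_s^j|\bigr]$ is finite for each $s>0$, and for $\lambda_0\in\R_>^N$ it stays bounded as $s\to 0$, giving the needed integrability. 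As it stands, the energy bound does not close the argument.
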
 

\begin{proof}
The proof is an immediate application of the Itô formula using \eqref{dysonbis}. Indeed, since the derivatives of $f$ are bounded, the local martingale part of the Itô formula is actually a real martingale with expectation 0 given \eqref{eq:generateur}. The Fokker Planck equation \eqref{eq:edpdensité} is obtained by integrations by parts as usual. 
\end{proof}

The Fokker Planck equation seems difficult to solve a priori. However, this equation is not far from the heat equation as we shall show. We shall give an intuition for this idea in Remark \ref{remark:conditionning}.
\newline
We recall that given $\lambda=(\lambda_1,...\lambda_N) \in\R^N$ we define the Vandermonde determinant of these real numbers the quantity: $$\Delta_N(\lambda)=\prod_{1\le i<j\le N} (\lambda_j-\lambda_i), $$ which is also given by 
$$\Delta_N(\lambda_1,...,\lambda_N)=\det[(\lambda_i^{j-1})_{1\le i,j\le N}].$$

\begin{Remark}
\label{rem:vanish}
We have that $\Delta_N(\lambda_1,...,\lambda_N)=0$ if and only if two of the $\lambda_i$ are equal. 
\end{Remark}
We can compute the derivatives of the Vandermonde determinant.
\begin{lemma}
\label{lemma:formuladerivativevander}
For $(\lambda_1,...,\lambda_N)\in \R_>^N$, we have the following derivatives: $$\cfrac{\partial \Delta_N}{\partial \lambda_i}(\lambda_1,...,\lambda_N)=\sum_{1\le j \le N, i\ne j}\cfrac{\Delta_N(\lambda_1,...,\lambda_N)}{\lambda_i-\lambda_j}$$
$$\Delta(\Delta_N)(\lambda_1,...,\lambda_N)=0$$ with $\Delta$ the Laplacian operator.
\end{lemma}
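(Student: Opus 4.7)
The plan is to treat the two formulas in order, using the product representation $\Delta_N(\lambda) = \prod_{k<j}(\lambda_j - \lambda_k)$ on $\R^N_>$ (where $\Delta_N$ does not vanish by Remark \ref{rem:vanish}), and then to invoke the algebraic identity \eqref{cotlard} already proved a few lines earlier to collapse the Laplacian to zero.

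For the first formula, the cleanest route is the logarithmic derivative. On $\R^N_>$ the function $\Delta_N$ is strictly positive, so one may write
$$\frac{\partial_i \Delta_N}{\Delta_N} \;=\; \partial_i \log \Delta_N \;=\; \sum_{k<j} \frac{\partial_i(\lambda_j - \lambda_k)}{\lambda_j - \lambda_k}.$$
The only surviving terms are those where $i \in \{k,j\}$, giving $+1/(\lambda_i - \lambda_k)$ for $k<i$ and $-1/(\lambda_j - \lambda_i) = 1/(\lambda_i - \lambda_j)$ for $j>i$; after reindexing, one obtains exactly $\sum_{j\neq i} 1/(\lambda_i - \lambda_j)$, which yields the claimed gradient formula after multiplying through by $\Delta_N$.

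For the Laplacian, I would differentiate the gradient formula once more. From $\partial_i \Delta_N = \Delta_N \sum_{j\neq i}(\lambda_i - \lambda_j)^{-1}$, the product rule gives
$$\partial_i^2 \Delta_N \;=\; \Delta_N\!\left[\Big(\sum_{j\neq i}\tfrac{1}{\lambda_i - \lambda_j}\Big)^{\!2} \;-\; \sum_{j\neq i}\tfrac{1}{(\lambda_i - \lambda_j)^2}\right].$$
Summing over $i$ and factoring out $\Delta_N$ then yields
$$\Delta(\Delta_N) \;=\; \Delta_N \sum_{i=1}^N\!\left[\Big(\sum_{j\neq i}\tfrac{1}{\lambda_i - \lambda_j}\Big)^{\!2} - \sum_{j\neq i}\tfrac{1}{(\lambda_i - \lambda_j)^2}\right].$$
The bracketed double sum is precisely the quantity whose vanishing was established in \eqref{cotlard} (by the telescoping identity $\frac{1}{(\lambda_i-\lambda_k)(\lambda_i-\lambda_j)} + \text{cyclic} = 0$). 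Hence $\Delta(\Delta_N) \equiv 0$ on $\R^N_>$, and then everywhere by polynomial continuation.

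There is essentially no obstacle here: both formulas reduce to bookkeeping once the logarithmic derivative trick is set up, and the key cancellation for harmonicity is already in hand from \eqref{cotlard}. The only minor care needed is to justify working with $\log \Delta_N$ on $\R^N_>$ (so that the sign issues and vanishing loci do not intrude), after which the identities, being polynomial in the $\lambda_i$, extend automatically to the complement of the diagonal.
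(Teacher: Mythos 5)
Your proof is correct and matches the paper's approach: direct computation of $\partial_i \Delta_N$ from the product representation (the paper calls it "immediate by derivation," you do it via $\partial_i \log \Delta_N$, which is the same bookkeeping), then product rule for $\partial_i^2 \Delta_N$, then summing over $i$ and invoking the identity \eqref{cotlard} to kill the bracket. The only difference is cosmetic notation (the paper writes $g_i^2$ as a product of two separately indexed sums), and your extra remark about polynomial continuation off $\R^N_>$ is harmless but not needed, since the lemma is only stated there.
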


\begin{proof}
The first derivative is immediate by derivation. The second one is obtained by computing the derivatives and using the identities \eqref{cotlard}. Indeed, using the formula for the fist derivative yields:
\begin{align*}
&\Delta(\Delta_N)(\lambda_1,...,\lambda_N)=\sum_{i=1}^N \cfrac{\partial^2 \Delta_N}{\partial \lambda_i^2}(\lambda_1,...,\lambda_N)\\
&=\sum_{i=1}^N\left[\Delta_N(\lambda_1,...,\lambda_N)\,\sum_{1\le k \le N, i\ne k}\cfrac{1}{\lambda_i-\lambda_k}\,\sum_{1\le j \le N, i\ne j}\cfrac{1}{\lambda_i-\lambda_j}-\Delta_N(\lambda_1,...,\lambda_N)\sum_{1\le j \le N, i\ne j}\cfrac{1}{(\lambda_i-\lambda_j)^2}\right]\\
&=\Delta_N(\lambda_1,...,\lambda_N)\sum_{i=1}^N\left[\sum_{1\le k \le N, i\ne k}\cfrac{1}{\lambda_i-\lambda_k}\,\sum_{1\le j \le N, i\ne j}\cfrac{1}{\lambda_i-\lambda_j}-\sum_{1\le j \le N, i\ne j}\cfrac{1}{(\lambda_i-\lambda_j)^2}\right]\\
&\overset{\eqref{cotlard}}{=}0
\end{align*}
\end{proof}

The harmonic property of the Vandermonde determinant helps us to solve the Fokker Planck equation. The most important property to solve the Fokker Planck equation \eqref{eq:edpdensité} is the following one.

\begin{prop}
For $\rho$ a smooth solution on $\R_>^N$ of $$\partial_t\rho=D\rho,$$ then define $u$ by: \begin{equation}
\label{eq:changefunction}
\rho=\Delta_N u
\end{equation}
on $\R_>^N$. Then $u$ is a solution of the heat equation: 
\begin{equation}
\label{eq:edpheat}
\partial_tu=\cfrac{1}{2}\Delta u
\end{equation} on $\R_>^N$.
\end{prop}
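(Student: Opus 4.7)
The plan is to verify by direct computation the intertwining identity $D(\Delta_N u) = \tfrac{1}{2}\Delta_N\,\Delta u$ for every smooth $u$ on $\R^N_>$; since $\Delta_N$ does not vanish on $\R^N_>$ by Remark \ref{rem:vanish}, dividing the equation $\partial_t(\Delta_N u) = D(\Delta_N u)$ through by $\Delta_N$ will yield the heat equation for $u$.

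First, I would apply the Leibniz rule to compute
\[
\frac{\partial(\Delta_N u)}{\partial \lambda_i} = \left(\sum_{j\ne i}\frac{1}{\lambda_i-\lambda_j}\right)\Delta_N\, u + \Delta_N\,\frac{\partial u}{\partial \lambda_i},
\]
using the formula for $\partial_{\lambda_i}\Delta_N$ from Lemma \ref{lemma:formuladerivativevander}. Differentiating once more and summing over $i$ gives
\[
\tfrac{1}{2}\sum_i \frac{\partial^2 (\Delta_N u)}{\partial \lambda_i^2} = \tfrac{1}{2}u\,\Delta(\Delta_N) + \Delta_N\sum_i\sum_{j\ne i}\frac{1}{\lambda_i-\lambda_j}\,\frac{\partial u}{\partial \lambda_i} + \tfrac{1}{2}\Delta_N\,\Delta u,
\]
where the first term vanishes thanks to the harmonicity of $\Delta_N$ stated in Lemma \ref{lemma:formuladerivativevander}.

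Next I would expand the drift term of $D$: a direct differentiation yields
\[
\sum_{i\ne j}\frac{\partial}{\partial\lambda_i}\!\left(\frac{\Delta_N u}{\lambda_i-\lambda_j}\right) = \Delta_N u\sum_i\left(\sum_{j\ne i}\frac{1}{\lambda_i-\lambda_j}\right)^{\!2} - \Delta_N u\sum_{i\ne j}\frac{1}{(\lambda_i-\lambda_j)^2} + \Delta_N\sum_{i\ne j}\frac{1}{\lambda_i-\lambda_j}\frac{\partial u}{\partial \lambda_i}.
\]
Here the crucial cancellation comes from identity \eqref{cotlard}, which says precisely that the first two terms on the right add up to zero. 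Hence subtracting this from the Laplacian contribution above, the mixed first-order terms in $u$ cancel and I obtain
\[
D(\Delta_N u) = \tfrac{1}{2}\Delta_N\,\Delta u.
\]

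Finally, plugging $\rho = \Delta_N u$ into $\partial_t\rho = D\rho$ gives $\Delta_N\,\partial_t u = \tfrac{1}{2}\Delta_N\,\Delta u$ on $\R^N_>$, and dividing by the nonvanishing $\Delta_N$ yields $\partial_t u = \tfrac{1}{2}\Delta u$. The main technical obstacle is simply the bookkeeping of the double-sum $\sum_{i\ne j}\sum_{k\ne i}\frac{1}{(\lambda_i-\lambda_j)(\lambda_i-\lambda_k)}$ and recognising that it collapses to $\sum_i(\sum_{j\ne i}(\lambda_i-\lambda_j)^{-1})^2$ so that identity \eqref{cotlard} applies — no deeper analytic input is needed, since everything takes place pointwise on the open set where $\Delta_N\ne 0$.
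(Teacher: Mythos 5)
Your proof is correct and takes essentially the same route as the paper: differentiate the relation $\rho = \Delta_N u$, compare the Laplacian and drift contributions of $D$, and use the first-derivative formula and harmonicity of $\Delta_N$ from Lemma~\ref{lemma:formuladerivativevander}. The only minor organizational difference is in the drift term: the paper first collapses $\sum_{j\ne i}\frac{\Delta_N u}{\lambda_i-\lambda_j}$ to $u\,\partial_{\lambda_i}\Delta_N$ and \emph{then} differentiates (so the cancellation is absorbed into a second use of harmonicity), whereas you differentiate first via the quotient rule and invoke the identity~\eqref{cotlard} directly to kill the squared and square-sum terms. Since harmonicity of $\Delta_N$ is itself a consequence of~\eqref{cotlard}, the two are the same algebra in different packaging; your version makes the cancellation mechanism slightly more explicit at the cost of a little more bookkeeping.
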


\begin{proof}
The computation is quite straight forward.
Differentiating the equality $\rho=\Delta_N u$ yields: $$\partial_t\rho=\Delta_N\partial_tu,$$ $$\cfrac{1}{2}\Delta(\rho)=\cfrac{1}{2}\Delta(\Delta_N)u+\cfrac{1}{2}\Delta(u)\Delta_N+\sum_{i=1}^N\cfrac{\partial u}{\partial \lambda_i}\cfrac{\partial \Delta_N}{\partial \lambda_i},$$
$$\sum_{1\le i,j\le N, i\ne j}\cfrac{\partial}{\partial\lambda_i}\left(\cfrac{\rho}{\lambda_i-\lambda_j}\right)=\sum_{1\le i,j\le N, i\ne j}\cfrac{\partial}{\partial\lambda_i}\left(\cfrac{\Delta_N u}{\lambda_i-\lambda_j}\right)=\sum_{1\le i\le N}\cfrac{\partial}{\partial\lambda_i}\left(u\cfrac{\partial\Delta_N}{\partial \lambda_i}\right),$$ by using the expression of the first derivative of the Vandermonde determinant for the last equality. 
\newline
Finally using that $\rho$ satisfies the Fokker Planck equation \eqref{eq:edpdensité} and using the harmonic property of $\Delta_N$ on $\R_>^N$, we get: 
\begin{equation}
\label{eq:computationedp}
\Delta_N\partial_t u=\cfrac{1}{2}\Delta(u)\Delta_N+\sum_{i=1}^N\cfrac{\partial u}{\partial \lambda_i}\cfrac{\partial \Delta_N}{\partial \lambda_i}-\sum_{1\le i\le N}\cfrac{\partial}{\partial\lambda_i}\left(u\cfrac{\partial\Delta_N}{\partial \lambda_i}\right).
\end{equation}
However, again by the harmonic property of the Vandermonde determinant,  the last term of the equation is: 
\begin{equation}
\label{eq:computationheat}
u\sum_{1\le i\le N}\cfrac{\partial^2\Delta_N}{\partial \lambda_i^2}+\sum_{1\le i\le N}\cfrac{\partial u}{\partial\lambda_i}\cfrac{\partial\Delta_N}{\partial \lambda_i}=u\Delta(\Delta_N)+\sum_{1\le i\le N}\cfrac{\partial u}{\partial\lambda_i}\cfrac{\partial\Delta_N}{\partial \lambda_i}=\sum_{1\le i\le N}\cfrac{\partial u}{\partial\lambda_i}\cfrac{\partial\Delta_N}{\partial \lambda_i}.
\end{equation}
Using the relation \eqref{eq:computationheat} in \eqref{eq:computationedp} and simplifying  by $\Delta_N$ we have that $u$ satisfies on $\R_>^N$ $$\partial_t u=\cfrac{1}{2}\Delta u.$$
\end{proof}

\begin{Remark}
\label{remark:conditionning} The computation is quite immediate but it is very natural to ask ourselves what is the intuition behind this computation. An interpretation that shall be detailed in Section \ref{subsection:interpretationofthesystem} is that the law of the solution of the Dyson SDE \eqref{dysonbis} can be interpreted as the law of $N$ independent Brownian motions conditioned to not intersect. The density of $N$ independent Brownian motion is solution of the heat equation and $\Delta_N$ should be viewed as the natural quantity that vanishes exactly when two real numbers are equal as mentioned in Remark \ref{rem:vanish}. So, formally, \eqref{eq:changefunction} can be interpreted as a kind of Bayes rule.
\end{Remark}

We now look at the the eigenvalues of random matrices as an exchangeable vector of $\C^N$ (or $\R^N$ if the spectrum is real) as we did in Part \ref{part:ginibregaussian}.  We extend $\rho$ symmetrically across $\R_>^N$ on $\R^N$ and $u$ anti symmetrically across $\R^N$ (since $\Delta_N$ is antisymmetric). This extension of $u$ on $\R^N$ is solution of the heat equation \eqref{eq:edpheat} on $\R^N$ 
\newline
\tab This gives a heuristic to solve the Fokker Planck equation \eqref{eq:edpdensité} of the Dyson particles \eqref{dysonbis}. 
\newline
Indeed, assume that we start from an deterministic matrix $A_0\in \mathcal S_N(\R)$ with spectrum $\lambda_0=(\lambda_0^1,...,\lambda_0^N)$. First we assume that $\lambda_0\in \R_>^N$. So on $\R_>^N$ (before extended $\rho$ symmetrically on $\R^N$), $\rho(0,\lambda)=\delta_{\lambda_0}(\lambda)$ where $\delta_a(x)$ is the dirac measure at the point $a$.
\newline 
For $\sigma \in \mathfrak S_N$, we write $\sigma(\lambda_0)=(\lambda_0^{\sigma(1)},...,\lambda_0^{\sigma(N)})$ and $\varepsilon(\sigma)$ the sign of the permutation $\sigma$.
Hence $u(t,\lambda)$ satisfies the heat equation on $\R^N$ with initial condition: $$u(0,\lambda)=\cfrac{1}{N!\Delta_N(\lambda_0)}\sum_{\sigma\in \mathfrak S_N}\varepsilon(\sigma)\delta_{\sigma(\lambda_0)}(\lambda).$$
Using the fundamental solution of the linear heat equation, we directly obtain that: $$ u(t,\lambda=(\lambda^1,...,\lambda^N))=\cfrac{1}{N!(2\pi t)^{N/2}\Delta_N(\lambda_0)}\sum_{\sigma\in \mathfrak S_N}\varepsilon(\sigma)\exp\left(-\cfrac{|\lambda-\sigma(\lambda_0)|^2}{2t}\right).$$ 
We can write this expression with a more compact formula thanks to the determinant identity: $$u(t,\lambda=(\lambda^1,...,\lambda^N))=\cfrac{1}{N!(2\pi t)^{N/2}\Delta_N(\lambda_0)}\det\left[\left(\exp\left(-\cfrac{|\lambda^i-\lambda_0^j|^2}{2t}\right)\right)_{1\le i,j\le N}\right].$$
Using the formula \eqref{eq:changefunction}, it gives the following result for $\rho$ originally obtained by Johansson in \cite{johansson2001universality}.

\begin{theorem}[Johansson formula]
\label{thm:johansson}
Let $A_0 \in \overset{\circ}{ \mathcal S_N(\R)}$ and write $\lambda_0\in \R_>^N$ its spectrum. For $t>0$, let $\lambda_t\in \R_>^N$ be  the solution of $~\eqref{dysonbis}$ starting from $\lambda_0$. Then the density on $\R^N$ of $\lambda_t$ (viewed as an exchangeable vector) is: 
\begin{equation}
\label{formula:johansson}
\rho(t,\lambda=(\lambda^1,...,\lambda^N))=\cfrac{1}{N!(2\pi t)^{N/2}}\cfrac{\Delta_N(\lambda)}{\Delta_N(\lambda_0)}\det\left[\left(\exp\left(-\cfrac{|\lambda^i-\lambda_0^j|^2}{2t}\right)\right)_{1\le i,j\le N}\right].
\end{equation}
\end{theorem}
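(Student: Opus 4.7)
The plan follows the road map already sketched in the paragraphs preceding the theorem statement. First I would argue that for $t>0$ the law of $\lambda_t$ admits a smooth density $\rho(t,\cdot)$ on $\R^N_>$, extended symmetrically to $\R^N$ (so that it coincides with the density of the exchangeable vector of eigenvalues), and that this density satisfies the Fokker-Planck equation $\partial_t\rho = D\rho$ on $\R^N_>$ with initial condition $\rho(0,\cdot)=\delta_{\lambda_0}$. Smoothness in the interior of $\R^N_>$ comes from the uniform ellipticity of the generator there, and the fact that $\lambda_t$ almost surely remains in $\R^N_>$ is exactly Theorem \ref{thm:existenceparticuledyson}. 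The Fokker-Planck equation itself is obtained by duality from the identity \eqref{eq:generateur} applied to smooth test functions, followed by integration by parts.

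Next I would apply the change of unknown $u := \rho/\Delta_N$ on $\R^N_>$, which is licit since $\Delta_N>0$ there. The key computation already recorded before the theorem shows that $u$ then solves the linear heat equation $\partial_t u=\tfrac{1}{2}\Delta u$ on $\R^N_>$. The crucial step is to extend this problem to all of $\R^N$: since $\rho$ extends symmetrically and $\Delta_N$ is antisymmetric in its arguments, $u$ extends antisymmetrically across the reflecting hyperplanes $\{\lambda_i=\lambda_j\}$. Because the Laplacian and the time derivative commute with coordinate permutations and with sign changes, the antisymmetric extension still solves the heat equation on all of $\R^N$. The initial condition becomes the antisymmetrized Dirac mass
\begin{equation*}
u(0,\lambda)=\cfrac{1}{N!\,\Delta_N(\lambda_0)}\sum_{\sigma\in\mathfrak{S}_N}\varepsilon(\sigma)\,\delta_{\sigma(\lambda_0)}(\lambda),
\end{equation*}
the $1/N!$ reflecting the fact that the symmetrized $\rho$ distributes its unit mass uniformly among the $N!$ Weyl chambers obtained by permuting coordinates.

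Convolving the antisymmetric initial datum with the standard Gaussian heat kernel on $\R^N$, by linearity and translation invariance one obtains
\begin{equation*}
u(t,\lambda)=\cfrac{1}{N!\,(2\pi t)^{N/2}\,\Delta_N(\lambda_0)}\sum_{\sigma\in\mathfrak{S}_N}\varepsilon(\sigma)\exp\!\left(-\cfrac{|\lambda-\sigma(\lambda_0)|^2}{2t}\right).
\end{equation*}
Separating the cross terms and extracting the $\sigma$-independent factors $\exp(-|\lambda|^2/2t)$ and $\exp(-|\lambda_0|^2/2t)$, the alternating sum collapses by the Leibniz formula into $\det\bigl[\exp(-|\lambda^i-\lambda_0^j|^2/2t)\bigr]_{1\le i,j\le N}$. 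Multiplying by $\Delta_N(\lambda)$ via the relation $\rho=\Delta_N u$ yields the Johansson formula \eqref{formula:johansson}.

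The point that requires the most care is the initial condition and the uniqueness step: one must check that the extended $u$ is the unique tempered antisymmetric solution of the heat equation with the prescribed antisymmetric initial distribution, so that the Gaussian convolution is actually the right formula. This rests on standard uniqueness for the Cauchy problem in $\R^N$ under a Tychonoff-type growth condition, together with the verification that $\rho$ is a \emph{bona fide} exchangeable probability density (non-negativity follows because the right-hand side of \eqref{formula:johansson} can be rewritten, via $\Delta_N(\lambda)=\det[\lambda_i^{j-1}]$, as the determinant of a Gram-like matrix; normalization to one is then enforced by integration over a single Weyl chamber and multiplication by $N!$). A secondary technical point is verifying continuity at $t=0^+$, which can be handled by comparison with the dominant Gaussian term together with the Hoffman-Wielandt bound of Lemma \ref{lemma: hoffman wielandt}.
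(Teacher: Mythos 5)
Your proposal follows the paper's derivation step for step: Fokker--Planck for $\rho$ on the Weyl chamber, the substitution $u=\rho/\Delta_N$ reducing to the heat equation, antisymmetric extension of $u$ across $\R^N$ with the antisymmetrized Dirac as initial datum, Gaussian heat-kernel convolution, and the Leibniz collapse into the determinant. The extra care you take with uniqueness of the Cauchy problem, positivity, and normalization is sound and actually tightens what the paper itself flags as a ``heuristic'' derivation, but it is the same route to the same formula.
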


\begin{Remark}
More explicitly, with the notations of Theorem \ref{thm:johansson}, for every symmetric, bounded or positive measurable functions $F$ and for every $t>0$ we have 
\begin{equation}
\begin{split}
\E(F(\lambda_t^1,...,\lambda_t^N))=\cfrac{1}{N!(2\pi t)^{N/2}\Delta_N(\lambda_0)}\int_{\R^N}F(x_1,&...,x_N)\prod_{1\le i<j\le N}(x_j-x_i)\times\\
&\det\left[\left(\exp\left(-\cfrac{|x_i-\lambda_0^j|^2}{2t}\right)\right)_{1\le i,j\le N}\right]dx_1...dx_N\\
&\hspace{-5.5cm}=\cfrac{1}{(2\pi t)^{N/2}\Delta_N(\lambda_0)}\int_{\R^N_>}F(x_1,...,x_N)\prod_{1\le i<j\le N}(x_j-x_i)\times\\
&\det\left[\left(\exp\left(-\cfrac{|x_i-\lambda_0^j|^2}{2t}\right)\right)_{1\le i,j\le N}\right]dx_1...dx_N.
\end{split}
\end{equation}
\end{Remark}

Now, we give an argument for the existence of a solution starting from any $\lambda_0\in \R^N$. 
\newline
Let us consider the fundamental case $A_0=0$ as an initial matrix, so with initial spectrum $\lambda_0=0\in\R^N$.  
\newline
We consider a sequence of initial data $(\lambda_{0,n})_{n\in\N}\in(\R_>^N)^{\N}$ that converges to $0\in \R^N$ when $n$ goes to $+\infty$. 
\newline
It remains to notice that for all $\lambda\in \R^N$: 
\begin{equation}
\label{eq:passagelimitedet1}
\begin{split}
\det\left[\left(\exp\left(-\cfrac{|\lambda^i-\lambda_{0,n}^j|^2}{2t}\right)\right)_{1\le i,j\le N}\right]=\exp\left(-\cfrac{|\lambda|^2}{2t}\right)\exp\left(-\cfrac{|\lambda_{0,n}|^2}{2t}\right)\det\left[\left(\exp\left(\cfrac{\lambda^i\lambda_{0,n}^j}{t}\right)\right)_{1\le i,j\le N}\right].
\end{split}
\end{equation}
Using a Taylor expansion of the exponential, we have that for $n$ that goes to $+\infty$: 
\begin{equation}
\label{eq:passagelimitedet2}
\begin{split}
\det\left[\left(\exp\left(\cfrac{\lambda^i\lambda_{0,n}^j}{t}\right)\right)_{1\le i,j\le N}\right]&=\det\left[\left(\sum_{k=0}^{+\infty}\left(\cfrac{(\lambda^i)^k(\lambda_{0,n}^j)^k}{k!t^k}\right)\right)_{1\le i,j\le N}\right]\\
&=\det\left[\left(\sum_{k=0}^{N-1}\left(\cfrac{(\lambda^i)^k(\lambda_{0,n}^j)^k}{k!t^k}\right)\right)_{1\le i,j\le N}\right]+o(\Delta_N(\lambda_{0,n}))\\
&=\det\left[\left(\sum_{k=0}^{N-1}\left(\cfrac{(\lambda^i)^k}{k!t^k}\,\,(\lambda_{0,n}^j)^k\right)\right)_{1\le i,j\le N}\right]+o(\Delta_N(\lambda_{0,n}))\\
&=\det\left[\left(\cfrac{(\lambda^i)^{k-1}}{(k-1)!t^{k-1}}\right)_{1\le i,k\le N}\right]\det\left[\left((\lambda_{0,n}^i)^{k-1}\right)_{1\le k,i\le N}\right]+o(\Delta_N(\lambda_{0,n}))\\
&=\cfrac{1}{1!2!...(N-1)!}\,\Delta_N\left(\cfrac{\lambda}{t}\right)\Delta_N\left(\lambda_{0,n}\right)+o(\Delta_N(\lambda_{0,n})).
\end{split}
\end{equation}

Applying Theorem \ref{thm:johansson} with $\lambda_{0,n}\in \R_>^N$ as an initial data, we shall pass to the limit in the formula \eqref{formula:johansson} when $n$ goes to $+\infty$ and using \eqref{eq:passagelimitedet1} and \eqref{eq:passagelimitedet2} yields that the solution of the Fokker Planck equation \eqref{eq:edpdensité} with initial condition $\lambda_0=0\in\R^N$ is given by: 
\begin{equation}
\label{formula:johanssonbis}
\rho(t,\lambda)=\cfrac{1}{(2\pi t)^{N/2}\,1!2!...N!}\exp\left(-\cfrac{|\lambda|^2}{2t}\right)\Delta_N(\lambda)\Delta_N\left(\cfrac{\lambda}{t}\right).
\end{equation}
To justify the passage to the limit, we can use the Dyson theorem \ref{thm:dyson} to interpret $\rho(t,.)$ as  the law of the eigenvalues of $D_N^2$ and using the Hoffman-Wielandt Lemma \ref{lemma: hoffman wielandt} to pass to the limit.
\newline
Taking $t=1$ in \eqref{formula:johanssonbis}, we obtain that the density of the eigenvalues of a $N$ complex Wigner matrix is given by: 
$$\rho(1,\lambda)=\cfrac{1}{(2\pi)^{N/2}\,1!2!...N!}\exp\left(-\cfrac{|\lambda|^2}{2}\right)|\Delta_N(\lambda)|^2.$$
This gives another proof of the formula obtained in Theorem \ref{thm:law of spectrum} and Theorem \ref{thm law eigenvalues guen}.

\subsection{Interpretation of this system of particle}
\label{subsection:interpretationofthesystem}
We shall give a better physical interpretation of  $~\eqref{dysonbis}$ revealing the important role of the Vandermonde determinant. This section is quite formal. 
\newline
This interpretation is linked with the notion of Brownian motions conditioned to live in a Weyl Chamber. Here the Weyl Chamber is $\R_>^N$. We shall not detail the notion of Weyl chamber in this section, focusing on the particular Dyson case. We refer to \cite{grabiner1999brownian,hobson1996non} for the links between random matrices and Weyl Chambers and \cite{pinsky1985convergence} for general results about diffusion processes conditioned to remain in a certain domain. This part is inspired by the blog article of Mufan Li: An Unusally Clean Proof: Dyson Brownian Motion via Conditioning on Non-intersection.
\newline
First we shall briefly recall the notion of $h$-transform (or Doob transform) of a Markov process.
For a general introduction to $h$-transform, we refer to \cite{doob1984classical}.

\subsubsection{h-transform}
\begin{definition}
Let $(X(t))_{t\ge0}$ be a Markov process with value in $\R^d$. An event $A$ is said to be invariant if for all $x\in\R^d$, for all $s>0$ we have: $$\mP( (X(t))_{t\ge0}\in A|X(0)=x)=\mP(X(t+s))_{t\ge0}\in A|X(s)=x).$$
For a such process $X$ and event $A$, let us define: 
\begin{align*}
\mP_x(.)&=\mP(.|X(0)=x)\\
h(x)&=\mP_x(A)\\
\mP^t(x,dy)&=\mP_x(X(t))\\
\tilde\mP^t(x,dy)&=\mP_x(X(t)\in dy|A).
\end{align*}
\end{definition}

\begin{prop}
\label{prop:htransfo}
With the notations of the previous definition, we have that for all $x\in\R^d$ the measure $\tilde\mP^t(x,.)$ is absolutely continuous with respect to the measure $\mP^t(x,.)$ and the Radon-Nikodyn derivative is given by $$g(y)=h(y)/h(x).$$
\end{prop}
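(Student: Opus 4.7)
The plan is to prove the identity directly via Bayes' rule and the Markov property applied at time $t$, using the invariance hypothesis on $A$ to identify the conditional probability of $A$ given the trajectory up to time $t$. Fix $x \in \R^d$ with $h(x) > 0$ (otherwise the conditioning is not defined) and let $B \subset \R^d$ be an arbitrary measurable set. The goal is to show
\[
\tilde\mP^t(x, B) = \int_B \frac{h(y)}{h(x)}\, \mP^t(x, dy),
\]
which is equivalent to the Radon--Nikodym formula in the statement.

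First, by definition of conditional probability, $\tilde\mP^t(x, B) = \mP_x(\{X(t) \in B\} \cap A)/h(x)$. Letting $\mathcal{F}_t = \sigma(X(s) : s \le t)$ denote the natural filtration and using the tower property,
\[
\mP_x(\{X(t) \in B\} \cap A) = \E_x\bigl[\mathbf{1}_{X(t) \in B}\, \mP_x(A \mid \mathcal{F}_t)\bigr].
\]
The key step — and the main (conceptual) obstacle — is to identify $\mP_x(A \mid \mathcal{F}_t)$ with $h(X(t))$ almost surely. This is where both hypotheses are used simultaneously: the Markov property ensures that, conditionally on $\mathcal{F}_t$, the shifted process $(X(t+s))_{s \ge 0}$ has the law of $X$ started from $X(t)$, while the invariance hypothesis on $A$ guarantees that the event $A$ evaluated on the shifted trajectory coincides with $A$ evaluated on the original trajectory. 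Combining these two facts and applying the very definition of $h$ at the random point $X(t)$ gives
\[
\mP_x(A \mid \mathcal{F}_t) = \mP\bigl((X(t+s))_{s \ge 0} \in A \,\big|\, X(t)\bigr) = h(X(t)).
\]

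Plugging this back in yields
\[
\mP_x(\{X(t) \in B\} \cap A) = \E_x\bigl[\mathbf{1}_{X(t) \in B}\, h(X(t))\bigr] = \int_B h(y)\, \mP^t(x, dy),
\]
and dividing by $h(x)$ gives exactly the claimed identity. This simultaneously shows absolute continuity of $\tilde\mP^t(x,\cdot)$ with respect to $\mP^t(x,\cdot)$ and identifies the Radon--Nikodym derivative as $g(y) = h(y)/h(x)$. The only implicit regularity point is measurability of $h$, which is standard for the Markov processes of interest (e.g.\ it holds whenever the transition kernel is sufficiently regular, by Fubini applied to the definition of $h$).
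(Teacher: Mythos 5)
Your proof takes essentially the same route as the paper's: Bayes' rule combined with the Markov property and the invariance of $A$, with the key step being the identification of the conditional probability of $A$ given the trajectory up to time $t$ with $h(X(t))$. The paper states its argument informally (explicitly as ``intuition'') using $dy$-notation, writing $\mP_x(A \mid X(t)\in dy) = h(y)$, while you spell out the same identity more carefully via the tower property and the filtration $\mathcal F_t$; the content is the same. One small remark worth flagging: the ``invariance'' condition as literally written in the preceding definition is automatically satisfied by any time-homogeneous Markov process (the two sides are equal by the Markov property alone, for any event $A$), so it cannot by itself justify the step $\mP_x(A\mid\mathcal F_t)=h(X(t))$. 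What is actually needed is the stronger, pathwise statement you use in prose — that the event $A$ evaluated on the original path coincides with $A$ evaluated on the time-$t$ shifted path — i.e.\ shift-invariance of $A$ as a set of trajectories. Both you and the paper implicitly appeal to this stronger property, but you state it explicitly, which is a genuine improvement on the paper's exposition rather than a different proof.
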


\begin{proof}
We give the intuition of the proof. Thanks to the Bayes formula and invariance of $A$ we have: 
$$\tilde\mP^t(x,dy)=\mP_x(X(t)\in dy|A)=\cfrac{\mP_x(A|X(t)\in dy)\mP_x(X(t)\in dy)}{\mP_x(A)}=\cfrac{h(y)}{h(x)}\mP^t(x,dy).$$

\end{proof}
For a Markov process we can define the generator by $$L[f](x)=\lim_{t\to 0}\cfrac{\E_x(f(X(t))-f(x)}{t},$$ when this quantity exists. 
\newline
We can express the generator $\tilde L$ under the probability $\tilde P$ of $X$. Indeed by Proposition \ref{prop:htransfo}, we get: 
\begin{equation}
\begin{split}
\tilde L[f](x)&=\lim_{t\to 0}\cfrac{\E_x(f(X(t))|A)-f(x)}{t}\\
&=\lim_{t\to 0}\cfrac{1}{t}\left(\int f(y)\tilde\mP^t(x,dy)-f(x)\right)\\
&=\lim_{t\to 0}\cfrac{1}{t}\left(\int f(y)\cfrac{h(y)}{h(x)}\mP^t(x,dy)-f(x)\right)\\
&=\cfrac{1}{h(x)}\lim_{t\to 0}\cfrac{1}{t}\left(\int f(y)h(y)\mP^t(x,dy)-f(x)h(x)\right)\\
&=\frac{1}{h(x)}L[fh](x).
\label{transform}
\end{split}
\end{equation}
An important example is when $X(t)$ is the solution of a SDE. Indeed, consider $X$ as the solution of the SDE: $$dX(t)=g(X(t))dt+dB(t),$$ with $g$ a function defined on an open subset $\Omega$ of $\R^d$ to $\R^d$ and smooth. By the Itô formula, the generator associated of $(X(t))_{t\ge 0}$ is $$L[f](x)=\langle g(x),\nabla f(x)\rangle+\cfrac{1}{2}\Delta f(x),$$ with $\langle. |.\rangle$ the canonical scalar product on $\R^d$. 
\newline
Now, assume that $h$ is harmonic on $\Omega$ which means $\Delta h(x)=0$ on $\Omega$.
Using the formula \eqref{transform} we have: \begin{equation}
\tilde L[f](x)=\langle g(x)+\nabla\log(h(x)),\nabla f(x)\rangle+\cfrac{1}{2}\Delta f(x).
\label{Ltransform}
\end{equation}
We recognise the generator of the solution of the SDE $$d\tilde X(t)=(g(\tilde X(t))+\nabla\log h(\tilde X(t)))dt+dB(t).$$
So, the conditioning by an invariant event transform the law of a process solution of a SDE into the law of another process, solution of another SDE whose difference is the drift term.

\subsubsection{Law of Brownian motions conditioned on the fact they do not intersect}

We want to prove the following result. 
\begin{theorem}
Let $(\lambda_t^i)_{t\ge 0, 1\le i \le N}$ be the solution of \eqref{dysonbis} starting from $\lambda_0\in \R_>^N$. Let $(B(t))_{t\ge0}$ be a Brownian motion in $\R^N$ starting from $\lambda_0$. Then, we have the following equality in law: $$(\lambda_t^i)_{t\ge 0, 1\le i \le N}=(B_i(t))_{t\ge 0, 1\le i\le N}|A, $$ with $A=\{\omega \in\Omega\,|\,\forall t>0,\, B(t)(\omega)\in D\}$.
\end{theorem}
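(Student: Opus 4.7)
The plan is to interpret the conditioning via the Doob $h$-transform with $h=\Delta_N$, relying on the harmonicity of the Vandermonde determinant on $\R^N_>$ established in Lemma \ref{lemma:formuladerivativevander}. The event $A$ has probability zero for standard Brownian motion (since any two of the $N$ coordinates almost surely collide in finite time), so ``conditioning on $A$'' must be understood as the limit of conditioning on $\{\tau > T\}$ as $T\to+\infty$, where $\tau:=\inf\{t>0:B(t)\notin D\}$ is the first exit time of the Weyl chamber $\R^N_>$.

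First I would establish the Karlin--McGregor formula for the sub-Markovian semigroup of Brownian motion killed at $\partial D$: for $\lambda_0,y\in\R^N_>$,
\begin{equation*}
p_t^D(\lambda_0,y)=\det\left[p_t(\lambda_0^i,y^j)\right]_{1\le i,j\le N},\qquad p_t(x,y)=\cfrac{1}{\sqrt{2\pi t}}\exp\left(-\cfrac{(x-y)^2}{2t}\right),
\end{equation*}
by the classical reflection/permutation argument: write $\mP_{\lambda_0}(B(t)\in dy,\,\tau>t)$ as an alternating sum over permutations of products of one-dimensional heat kernels, using that at the first collision one swaps the two colliding coordinates to produce a strong Markov coupling with a path that has exited $D$. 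Next, by Lemma \ref{lemma:formuladerivativevander} the Vandermonde $\Delta_N$ is a positive harmonic function on $\R^N_>$ vanishing on $\partial D$; hence it is an admissible $h$-function for the killed semigroup. The Doob transform produces a Markov process on $\R^N_>$ with transition density
\begin{equation*}
q_t(\lambda_0,y)=\cfrac{\Delta_N(y)}{\Delta_N(\lambda_0)}\,p_t^D(\lambda_0,y),
\end{equation*}
which one recognizes as the Johansson density of Theorem \ref{thm:johansson}. Applying formula \eqref{Ltransform} with $g\equiv 0$ and using Lemma \ref{lemma:formuladerivativevander} to compute $\nabla\log\Delta_N(x)=(\sum_{j\ne i}(x_i-x_j)^{-1})_{1\le i\le N}$, the generator of the $h$-transformed process is
\begin{equation*}
\tilde L f(x)=\cfrac{1}{2}\Delta f(x)+\sum_{i=1}^N\left(\sum_{j\ne i}\cfrac{1}{x_i-x_j}\right)\partial_i f(x),
\end{equation*}
which is precisely the generator of the SDE \eqref{dysonbis}. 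By Theorem \ref{thm:existenceparticuledyson} (strong existence and uniqueness, together with non-collision $T_{\partial D}=+\infty$), the $h$-transformed process must coincide in law with the Dyson particles.

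It remains to identify this $h$-transformed process with the conditioned Brownian motion in the limit sense. For this I would show that for any cylinder event $\Gamma$ depending on $(B(s))_{0\le s\le t}$,
\begin{equation*}
\lim_{T\to+\infty}\mP_{\lambda_0}\bigl(\Gamma\,\bigm|\,\tau>T\bigr)=\E_{\lambda_0}^{\Delta_N}\bigl[\mathds 1_\Gamma\bigr],
\end{equation*}
where $\mP^{\Delta_N}$ denotes the law of the $h$-transform. The argument uses the Markov property at time $t$ to reduce the ratio to
\begin{equation*}
\cfrac{\E_{\lambda_0}\bigl[\mathds 1_\Gamma\,\mathds 1_{\tau>t}\,\mP_{B(t)}(\tau>T-t)\bigr]}{\mP_{\lambda_0}(\tau>T)},
\end{equation*}
and then the key input is the asymptotic $\mP_x(\tau>s)\sim C_N\,\Delta_N(x)\,s^{-N(N-1)/4}$ as $s\to+\infty$ for $x\in\R^N_>$, uniformly on compacts; this asymptotic follows from the Karlin--McGregor formula by a Laplace-type analysis of $\int p_s^D(x,y)dy$ (which is, up to normalisation, the Selberg-type integral giving the $\Delta_N(x)$ factor at leading order). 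Plugging this in, the ratio converges to $\E_{\lambda_0}[\mathds 1_\Gamma\,\mathds 1_{\tau>t}\,\Delta_N(B(t))/\Delta_N(\lambda_0)]$, which is exactly the definition of the $h$-transformed law.

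The main obstacle is precisely this last step: the event $A$ being of probability zero, one must justify the limit $T\to+\infty$ and control the tail asymptotics of $\mP_x(\tau>s)$ uniformly enough to exchange it with the expectation on the path-space. The Karlin--McGregor formula provides exactly the structure needed for this, since it reduces the question to a sharp asymptotic for a determinant of one-dimensional heat-kernel integrals, where the leading behaviour factors through the Vandermonde determinant by the same determinant-to-Vandermonde computation as in \eqref{eq:passagelimitedet2}.
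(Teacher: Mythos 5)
Your proof is correct but takes a genuinely different route from the paper's. The paper makes sense of the conditioning by introducing the events $A_c=\{(\Delta_N(B(t)))_{t\ge 0}\text{ hits }c\text{ before }0\}$, computes $h_c(\lambda)=\mP_\lambda(A_c)=\Delta_N(\lambda)/c$ by a gambler's-ruin argument (optional stopping applied to the martingale $\Delta_N(B(t\wedge\tau))$, using the harmonicity of $\Delta_N$), and then observes that $\nabla\log h_c=\nabla\log\Delta_N$ is $c$-independent, so the $h$-transformed drift is the same for every $c$ and one may formally ``let $c\to\infty$''. You instead make sense of the conditioning as $\lim_{T\to\infty}\mP_{\lambda_0}(\,\cdot\mid\tau>T)$, where $\tau$ is the first exit time from $D$, and you replace the elementary optional-stopping computation with the Karlin--McGregor formula $p_t^D(\lambda_0,y)=\det[p_t(\lambda_0^i,y^j)]$ together with the persistence asymptotic $\mP_x(\tau>s)\sim C_N\Delta_N(x)\,s^{-N(N-1)/4}$. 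The paper's route is lighter (only a one-dimensional martingale argument is needed, and the entire passage to the limit is trivialised by the observation that the conditioned drift does not depend on $c$), while your route requires more input (Karlin--McGregor plus a sharp tail asymptotic and a dominated-convergence argument to justify exchanging the $T\to\infty$ limit with the path-space expectation). On the other hand, your conditioning on $\{\tau>T\}$ is arguably the more natural reading of ``conditioned never to collide,'' and as a byproduct it identifies the $h$-transform directly with the Johansson density and with the killed-process description, which the paper obtains separately. Both arguments are at a comparable level of rigour (the paper also sketches the limiting step); the only place you might tighten yours is the uniform-on-compacts claim for the asymptotic of $\mP_x(\tau>s)$, which you correctly flag but do not carry out.
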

Let us notice that we use a slight abuse of notation in the previous theorem. Indeed the event $A$ is of $0$ probability. But as for the Brownian bridge (for which we conditioned by the event $B_1=0$) the meaning of this conditional probability is that we shall construct a non increasing sequence of probability event $(A_c)_{c>0}$ which converges to $A$ but for all $c>0$, the probability of $A_c$ is not 0. 
\newline
By definition $B(t)\notin D$ is equivalent to $\Delta_N(B(t))=0$ and moreover we notice that $\Delta_N(\lambda)>0$ for $\lambda\in\R_>^N$. The harmonic property of the Vandermonde determinant shall be used to compute the $h$-transform of $(B(t))_{t\ge0}$.
\newline
For all $c>0$, let $$A_c=\{(\Delta_N(B(t)))_{t\ge 0}\,\, \text{hits c before 0} \},$$ and note that $$A=\lim_{c\to\infty}A_c.$$
\newline
Moreover, to compute $\mP_{\lambda_0}(A_c)$ we shall use the same idea as for the gambler ruin problem for the discrete random walk. 
\newline 
\begin{lemma}
Let $\tau=\inf\{t>0,\, \Delta_N(B(t))\,\, \emph{hits c or 0}\}$, then $(\Delta_N(B(t)))_{t \wedge \tau}$ is a martingale.
\end{lemma}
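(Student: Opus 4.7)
The plan is to apply Itô's formula to the smooth function $\Delta_N$ composed with the Brownian motion $B(t)$, and then exploit the harmonicity of $\Delta_N$ established in Lemma \ref{lemma:formuladerivativevander} to kill the drift term.

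More precisely, first I would note that $\Delta_N:\R^N\to\R$ is a polynomial, hence $C^2$, so Itô's formula applies. Since $B(t)=(B_1(t),\dots,B_N(t))$ is a standard $N$-dimensional Brownian motion with quadratic covariation $d\langle B_i,B_j\rangle_t = \delta_{i,j}\,dt$, Itô's formula gives
\begin{equation*}
d\Delta_N(B(t)) = \sum_{i=1}^N \frac{\partial \Delta_N}{\partial \lambda_i}(B(t))\,dB_i(t) + \frac{1}{2}\Delta(\Delta_N)(B(t))\,dt.
\end{equation*}
By Lemma \ref{lemma:formuladerivativevander}, the Vandermonde determinant is harmonic on $\R^N_{>}$; but as a polynomial identity, $\Delta(\Delta_N)\equiv 0$ on all of $\R^N$ (the computation via \eqref{cotlard} does not require the $\lambda_i$ to be distinct once one clears denominators, and moreover the set where they coincide has Lebesgue measure zero and is almost surely avoided by $B(t)$ for any fixed $t>0$). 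Consequently the drift term vanishes and $\Delta_N(B(t))$ is a continuous local martingale.

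Next I would stop the process at $\tau$. By the very definition of $\tau$, for $t\le \tau$ the process $\Delta_N(B(t))$ remains in $[0,c]$, so the stopped process $(\Delta_N(B(t\wedge\tau)))_{t\ge 0}$ is a bounded continuous local martingale. It is a standard fact that a bounded local martingale is a true martingale (dominated convergence applied to an approximating sequence of bounded stopping times reducing the local martingale), which gives the claim.

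The only point that requires a small caveat is justifying that $\Delta(\Delta_N)=0$ holds \emph{as a polynomial}, not only on $\R^N_{>}$, because the Brownian motion may leave the Weyl chamber. This is not a real obstacle: the computation in Lemma \ref{lemma:formuladerivativevander} produces a rational identity whose numerator must vanish identically (since it vanishes on the open dense set $\R^N_{>}$), so $\Delta(\Delta_N)\equiv 0$ on $\R^N$. With this in hand, the Itô argument above closes the proof immediately.
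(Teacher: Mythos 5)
Your proof is correct and follows essentially the same route as the paper: apply It\^o's formula, use the harmonicity of $\Delta_N$ established in Lemma~\ref{lemma:formuladerivativevander} to eliminate the drift, and then upgrade the resulting continuous local martingale to a true martingale. The only place you diverge is the last step: you stop first and invoke boundedness of the stopped process in $[0,c]$ (a bounded local martingale is a martingale), whereas the paper observes that the unstopped stochastic integral already has a square-integrable integrand, since each $\partial\Delta_N/\partial\lambda_i$ is a polynomial and Brownian motion has finite moments of every order, so that $\Delta_N(B(t))$ itself is a martingale and one then stops via optional stopping. Both arguments are correct; yours is the more economical one for the stopped statement as written, while the paper's establishes the slightly stronger fact that the unstopped process is a martingale. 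Your remark that $\Delta(\Delta_N)\equiv 0$ extends from $\R^N_{>}$ to all of $\R^N$ because a polynomial vanishing on a nonempty open set vanishes identically is a legitimate and worthwhile clarification, since the Brownian motion is not confined to the Weyl chamber and It\^o's formula is most naturally applied globally before stopping.
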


\begin{proof}
This is a consequence of the Itô formula. Since $\Delta(\Delta_N)=0$ by Lemma \ref{lemma:formuladerivativevander}, in the Itô formula we only have the local martingale term which is this case a real martingale using the expression of the first derivative of the Vandermonde determinant.
\end{proof}

The stopping time theorem yields that $$\E_{\lambda}(\Delta_N(B(\tau)))=\Delta_N(\lambda).$$
By definition of $\tau$, we have: $$\E_{\lambda}(\Delta_N(B(\tau)))=c\mP_{\lambda}(A_c).$$
Hence it gives the expression of the $h$-transform with respect to the event $A_c$: 
$$h_{c}(\lambda)=\mP_{\lambda}(A_c)=\cfrac{\Delta_N(\lambda)}{c}.$$
Now thanks to the formula \eqref{Ltransform}, for all $c>0$ under the probability $\mP$ conditioned by the event $A_c$, the law of $(B(t))_{t\ge 0}$ is the law of the solution of the SDE $$ d\lambda_t^c=\nabla\log h_c(\lambda_t^c)dt+dB(t).$$
Since $h_c(x)=\cfrac{\Delta_N(x)}{c}$, Lemma \ref{lemma:formuladerivativevander} yields: $$\cfrac{\partial \log h_c}{\partial x_i}\,(x)=\sum_{j\ne i}\cfrac{1}{x_i-x_j}.$$
Hence the law of the process $(\lambda_t^c)_{t\ge 0}$ is the same for all $c>0$ and so we can let $c$ goes to $+\infty$ so that the law of $(B(t))_{t\ge 0}$ conditioned by the event that $(B(t))_{t\ge 0}$ remains in $D$ is the law of the solution of $\eqref{dysonbis}$. 

\section{Mean field limit of the system of particles}
In this section, we shall deal with the Dyson case \eqref{dysonreal} and the Ornstein-Uhlenbeck case \eqref{eq:sdeornseteinuhlenbeck}. More precisely, we shall focus the case $\beta=1$ in the Dyson case to lighten the notations, but all the results also hold for the case $\beta=2$. For the Dyson case we refer to the Section 4.3 of \cite{Alicelivre} and for the Ornstein-Uhlenbeck case to \cite{Chan,Shi,li2013generalized}. 
\newline
The method we will use is really standard when dealing with system of particles in mean field interaction. The general strategy is to prove a compactness in a certain space (Section \ref{section:compactness}) and then to prove the uniqueness of the limit point (Section \ref{section:uniquenesslimit}). For people interested in this kind of questions, we recommend after the read of this section to see for instance \cite{allez2015random} for a similar approach for the study of the eigenvalues of random matrices in a non confining potential or \cite{tardy2022weak} for the the Keller-Segel in chemotaxis which is a much more difficult model than the Dyson model because of the difficulty to prove the uniqueness of the limit point.
\newline
In all this section, we consider a family $(B^i)_{i\in\N}$ of independent Brownian motions defined on a same probability space $(\Omega,\mathcal F,\mP)$. 
\subsection{Empirical distribution}
Given a solution $(\lambda^i)_{1\le i\le N}$ of the SDE system \eqref{dysonreal}, we define the empirical distribution of this system of particles as: $$\mu_N(t)=\cfrac{1}{N}\sum_{i=1}^N \delta_{\lambda_t^i}.$$
$\mu_N$ is a random variable and we shall specify in which space it is valued.
\newline
Fix $T>0$ to be a fix time. 
For all $\omega\in \Omega$, $\mu_N(\omega)$ is a continuous function from $[0,T]$ to $\mes$ endowing $\mes$ with the natural weak topology which is metrizable by certain metrics. We shall detail more precisely in Section \ref{Subsection:topoaspect} the topology we will consider.
\newline
For a measurable function $f:\mathbb X\to \R$ and a measure $\mu$ on a measurable set $\mathbb X$ we write $\langle \mu,f\rangle :=\int_\mathbb X fd\mu$.
\newline
We first give a key proposition that shall be used in all this section.
\begin{prop}
\label{prop:itoformulasystemparticles}
Let $f\in C^2([0,T]\times \R, \R)$ with all its derivatives bounded, then for all $t\in[0,T]$, we have: 
\begin{equation}
\begin{split}
\langle \mu_N(t),f(t,.)\rangle=&\langle\mu_N(0),f(0,.)\rangle+\int_{0}^t\langle\partial_sf(s,.),\mu_N(s)\rangle ds\\
&+\cfrac{1}{2}\int_0^t\int\int\cfrac{\partial_xf(s,x)-\partial_yf(s,y)}{x-y}d\mu_N(s)(x)d\mu_N(s)(y)ds\\
&+(2-1)\cfrac{1}{2N}\int_0^t\langle\mu_N(s),\partial_x^2f(s,.)\rangle ds+M_f^N(t)
\end{split}
\label{formule}
\end{equation}
with $(M_f^N(t))_{0\le t\le T}$ a martingale given by: 
$$M_f^N(t):=\cfrac{\sqrt{2}}{N^{3/2}}\sum_{i=1}^N\int_0^t\partial_xf(s,\lambda_t^i)dB_s^i.$$

\end{prop}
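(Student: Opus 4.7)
The plan is to apply the one‑dimensional Itô formula to each trajectory $t \mapsto f(t, \lambda_t^i)$, sum over $i$, divide by $N$, and then repackage the result in terms of $\mu_N(s)$. By Theorem \ref{thm:existenceparticuledyson}, the trajectories $(\lambda_t^i)$ are well defined on $[0,T]$ and never collide, so the SDE \eqref{dysonreal} holds pathwise for each $i$ with quadratic variation $d\langle \lambda^i\rangle_s = \tfrac{2}{\beta N}\,ds$. Itô's formula then yields
\begin{equation*}
f(t,\lambda_t^i)=f(0,\lambda_0^i)+\int_0^t\partial_s f(s,\lambda_s^i)\,ds+\frac{1}{N}\int_0^t\partial_x f(s,\lambda_s^i)\sum_{j\neq i}\frac{ds}{\lambda_s^i-\lambda_s^j}+\frac{1}{\beta N}\int_0^t\partial_x^2 f(s,\lambda_s^i)\,ds+\sqrt{\tfrac{2}{\beta N}}\int_0^t\partial_x f(s,\lambda_s^i)\,dB_s^i.
\end{equation*}
Summing over $i$ and dividing by $N$, the first two terms already have the announced form, and the stochastic integral assembles into $M_f^N(t)$.

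The key algebraic step is to rewrite the drift sum as a double integral against $\mu_N(s)\otimes\mu_N(s)$. First, a symmetrization by swapping the dummy indices $i,j$ gives
\begin{equation*}
\frac{1}{N^2}\sum_{i\neq j}\frac{\partial_x f(s,\lambda_s^i)}{\lambda_s^i-\lambda_s^j}=\frac{1}{2N^2}\sum_{i\neq j}\frac{\partial_x f(s,\lambda_s^i)-\partial_x f(s,\lambda_s^j)}{\lambda_s^i-\lambda_s^j}.
\end{equation*}
The function $\Phi_s(x,y):=\frac{\partial_x f(s,x)-\partial_y f(s,y)}{x-y}$, extended by continuity to $\partial_x^2 f(s,x)$ on the diagonal, is continuous and bounded by $\|\partial_x^2 f\|_\infty$ (a standard consequence of $f\in C^2$ with bounded derivatives). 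Adding and subtracting the diagonal terms, one rewrites the off‑diagonal sum as the full double integral minus the diagonal contribution $\frac{1}{N}\langle \mu_N(s),\partial_x^2 f(s,\cdot)\rangle$, so that
\begin{equation*}
\frac{1}{N^2}\sum_{i\neq j}\frac{\partial_x f(s,\lambda_s^i)}{\lambda_s^i-\lambda_s^j}=\frac{1}{2}\int\!\!\int \Phi_s(x,y)\,d\mu_N(s)(x)d\mu_N(s)(y)-\frac{1}{2N}\langle\mu_N(s),\partial_x^2 f(s,\cdot)\rangle.
\end{equation*}
Combining this identity with the Itô second‑order contribution $\frac{1}{\beta N}\langle \mu_N(s),\partial_x^2 f(s,\cdot)\rangle$ yields, for $\beta=1$, the net coefficient $\frac{1}{N}-\frac{1}{2N}=\frac{1}{2N}$ written as $(2-1)\tfrac{1}{2N}$ in the statement; for $\beta=2$ the same bookkeeping produces the cancellation that makes the coefficient vanish.

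Finally, I would check that $M_f^N$ is a genuine martingale, not merely a local martingale. This follows from the assumption that $\partial_x f$ is bounded: the predictable quadratic variation of $M_f^N$ satisfies $\langle M_f^N\rangle_t\le \frac{2\,T}{N^2}\|\partial_x f\|_\infty^2$, which is deterministic and finite, so the stochastic integrals are genuine square‑integrable martingales and $M_f^N$ inherits this property.

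The only subtle point is the treatment of the diagonal in the symmetrization: the apparently singular kernel $1/(x-y)$ must be paired with the antisymmetric combination $\partial_x f(s,x)-\partial_x f(s,y)$ before one invokes $\mu_N(s)\otimes\mu_N(s)$, precisely so that $\Phi_s$ is bounded and the integral is well defined. This is the only non‑routine step; the rest is straightforward bookkeeping on Itô's formula.
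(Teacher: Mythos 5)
Your proof is correct and follows the same route as the paper: apply Itô to each trajectory, sum, symmetrize the drift sum by swapping indices, restore the diagonal to pass from $\sum_{i\neq j}$ to the full product integral against $\mu_N\otimes\mu_N$, and finally absorb the diagonal correction into the second-order Itô term. Your explicit observation that the symmetrized kernel $\Phi_s$ extends continuously and boundedly across the diagonal, together with the $\langle M_f^N\rangle_t$ bound guaranteeing a true martingale, matches the paper's argument exactly.
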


\begin{proof}
By definition of $\mu_N(t)$ we have: $$\langle \mu_N(t),f(t,.)\rangle =\cfrac{1}{N}\sum_{i=1}^Nf(t,\lambda_t^i).$$
Using that $(\lambda^i)_{1\le i\le N}$ are solution of \eqref{dysonreal} with $\beta=1$, the Itô formula yields:
\begin{equation}
\label{eq:itoformula1}
\begin{split}
\langle \mu_N(t),f(t,.)\rangle=&\langle\mu_N(0),f(0,.)\rangle+\int_{0}^t\langle\partial_sf(s,.),\mu_N(s)\rangle ds\\
&+\int_0^t\int\int_{x\ne y}\cfrac{\partial_xf(s,x)}{x-y}d\mu_N(s)(x)d\mu_N(s)(y)ds\\
&+2\,\cfrac{1}{2N}\int_0^t\langle\mu_N(s),\partial_x^2f(s,.)\rangle ds+M_f^N(t).
\end{split}
\end{equation}

We remark that for every $s\in[0,T]$ $$\int\int_{x\ne y}\cfrac{\partial_xf(s,x)}{x-y}d\mu_N(s)(x)d\mu_N(s)(y)=\cfrac{1}{2}\int\int_{x\ne y}\cfrac{\partial_xf(s,x)-\partial_yf(s,y)}{x-y}d\mu_N(s)(x)d\mu_N(s)(y).$$
The quantity $$\cfrac{\partial_xf(s,x)-\partial_yf(s,y)}{x-y}$$ can be extended to $\partial_x^2f(s,x)$ if $x=y$.
\newline
Hence we can rewrite for $s\in[0,T]$:
\begin{equation}
\label{eq:itoformula2}
\begin{split}
\int&\int_{x\ne y}\cfrac{\partial_xf(s,x)}{x-y}d\mu_N(s)(x)d\mu_N(s)(y)=\cfrac{1}{2}\int\int_{x\ne y}\cfrac{\partial_xf(s,x)-\partial_yf(s,y)}{x-y}d\mu_N(s)(x)d\mu_N(s)(y)\\
&=\cfrac{1}{2}\int\int\cfrac{\partial_xf(s,x)-\partial_yf(s,y)}{x-y}d\mu_N(s)(x)d\mu_N(s)(y)-\cfrac{1}{2}\int\int_{x=y}\partial_x^2f(s,x)d\mu_N(s)(x)d\mu_N(s)(y).
\end{split}
\end{equation}
We then compute the last term:
\begin{equation}
\begin{split}
\label{eq:itoformula3}
\int\int_{x=y}\partial_x^2f(s,x)d\mu_N(s)(x)d\mu_N(s)(y)&=\cfrac{1}{N^2}\sum_{i=1}^N\sum_{j=1}^N\mathds{1}_{\lambda_s^i=\lambda_s^j}\partial_x^2f(s,\lambda_s^i)\\
&=\cfrac{1}{N}\langle\mu_N(s),\partial_x^2f(s,.)\rangle
\end{split}
\end{equation}
So \eqref{eq:itoformula2} and \eqref{eq:itoformula3} gives: 
\begin{equation}
\label{eq:itoformula4}
\begin{split}
\int_0^t\int&\int_{x\ne y}\cfrac{\partial_xf(s,x)}{x-y}d\mu_N(s)(x)d\mu_N(s)(y)ds=\\
&\cfrac{1}{2}\int_0^t\int\int\cfrac{\partial_xf(s,x)-\partial_yf(s,y)}{x-y}d\mu_N(s)(x)d\mu_N(s)(y)ds-\int_0^t\cfrac{1}{2N}\langle\mu_N(s),\partial_x^2f(s,.)\rangle ds.
\end{split}
\end{equation}
Using \eqref{eq:itoformula4} in \eqref{eq:itoformula1} gives \eqref{formule}.
\newline 
Finally, we shall prove that $(M_f^N(t))_{0\le t\le T}$ is not just a local martingale but a martingale. 
\newline
Indeed we can compute its bracket: 
\begin{equation}
\label{eq:martingalebound}
\langle M_f^N(t),M_f^N(t)\rangle =\cfrac{2}{N^3}\int_0^t\sum_{i=1}^N(\partial_x f(s,\lambda_s^i))^2ds\le\cfrac{2t\sup_{s\in[0,T]}||\partial_xf(s,.)||_{\infty}^2}{N^2}.
\end{equation}
This implies that $\E[\langle M_f^N(T),M_f^N(T)\rangle]<+\infty$. So $(M_f^N(t))_{0\le t\le T}$ is a martingale.
\end{proof}

\subsection{Compact subsets of $C([0,T],\mes)$}
\subsubsection{Topological warm-up}
Consider $(Y,d_Y)$ a compact metric space. Let $X$ and $d$ and $d'$ two distances on $X$ such that $(X,d)$ and $(X,d')$ define the same topology. 
\newline
We recall that $C((Y,d_Y),(X,d))$ are the continuous functions from $Y$ to $X$ where $Y$ is endowed with the distance $d_Y$ and $X$ with the distance $d$. Let us first mention that $C((Y,d_Y),(X,d))$ and $C((Y,d_Y),(X,d'))$ are the same space. Indeed, since $(X,d)$ and $(X,d')$ define the same topology they have the same open subsets. As a function is continuous if and only if the preimage of an open subset of the arrival space is an open subset of the space of definition, $C((Y,d_Y),(X,d))$ and $C((Y,d_Y),(X,d'))$ are equal. We call this space $Z$. 
\newline
$Z$ can be endowed with two distances: $$\mathcal D(f,g)=\sup_{y\in Y}d(f(y),g(y))\text{ or }\mathcal D'(f,g)=\sup_{y\in Y}d'(f(y),g(y)).$$ We shall prove that $(Z,\mathcal D)$ and $(Z,\mathcal D')$ define the same topology. 
\begin{Remark}
Let us mention that the hypothesis on the compactness of $Y$ is necessary. Indeed, if we take $Y=\R^+$ with the natural distance, $X=\R^+$ again with the natural distance $d$ and with $d'(x,y)=|x^2-y^2|$ we can see that the sequence of functions $f_n(x):=x+1/n$ converges for the metric $\mathcal D$ to $f(x)=x$ but does not converge to $f$ in $\mathcal D'$.
\end{Remark}
Let us state a lemma that characterizes the convergence in $(Z,\mathcal D)$ when $Y$ is compact. 
\begin{lemma}
\label{lemma:convunifor}
Let $(f_n)_{n\in\N}$ and $f$ be functions in $Z$. Then we have the following equivalence:
$f_n\overunderset{\mathcal D}{n\to+\infty}{\longrightarrow} f$ if and only if for all sequences $(y_n)_{n\in\N}\in Y^\N$, $y\in Y$, $\phi :\N\to \N$ strictly increasing such that $y_{\phi(n)}\overunderset{d_Y}{n\to+\infty}{\longrightarrow}y$ and $\gamma:\N\to\N$ strictly increasing we have $d(f_{\gamma(n)}(y_{\phi(n)}),f(y))\underset{n\to+\infty}{\longrightarrow} 0$. 
\end{lemma}
\begin{proof}
Assume that $f_n\overunderset{\mathcal D}{n\to+\infty}{\longrightarrow} f$ and consider $(y_n)_{n\in\N}\in Y^\N$, $y\in Y$, $\phi :\N\to \N$ strictly increasing such that $y_{\phi(n)}\overunderset{d_Y}{n\to+\infty}{\longrightarrow}y$ and $\gamma:\N\to\N$ strictly increasing. 
Then it yields 
\begin{equation}
\begin{split}
d(f_{\gamma(n)}(y_{\phi(n)}),f(y))&\le d(f_{\gamma(n)}(y_{\phi(n)}),f(y_{\phi(n}))+d(f(y_{\phi(n)}),f(y))\\
&\le \mathcal D(f_{\gamma(n)},f)+ d(f(y_{\phi(n)}),f(y)).
\end{split}
\end{equation}
The right hand side goes to 0 by hypothesis and continuity of $f$.
\newline
Conversely, assume by contradiction that $f_n$ does not converges to $f$ in $\mathcal D$. There exists $\varepsilon>0$ and $\psi:\N\to\N$ strictly increasing such that for all $n\in\N$ $\mathcal D(f_{\psi(n)},f)>\varepsilon$. By definition, for all $n>0$ we can find $y_n\in Y$ such that \begin{equation}
\label{eq:ineqtopolo}
d(f_{\psi(n)}(y_n),f(y_n))>\varepsilon.
\end{equation}
By compactness of $(Y,d_Y)$, one can find $\phi:N\to \N$ strictly increasing and $y\in Y$ such that $y_{\phi(n)}\overunderset{d_Y}{n\to+\infty}{\longrightarrow}y$. 
Evaluating \eqref{eq:ineqtopolo} in $\phi(n)$ we obtain that for all $n\ge 0$:
\begin{equation}
\label{eq:ineqtopolo2}
d(f_{\psi\circ\phi(n)}(y_{\phi(n)}),f(y_{\phi(n)}))>\varepsilon.
\end{equation}
By continuity of $f$, for $n$ large enough we have $$d(f(y_{\phi(n)}),f(y))\le \frac{\varepsilon}{2}.$$ Combining it with \eqref{eq:ineqtopolo2} we have that for $n$ large enough $$d(f_{\psi\circ\phi(n})(y_{\phi(n)}),f(y_{\phi(n)}))>\frac{\varepsilon}{2}.$$ This contradicts the hypothesis with $\gamma(n)=\psi\circ\phi(n)$.
\end{proof}
As a corollary we obtain that $(Z,\mathcal D)$ and $(Z,\mathcal D')$ are the same topological spaces. 
\begin{corollary}
\label{coro:topoeq}
Let $(f_n)_{n\in\N}$ and $f$ in $Z$. We have  $f_n\overunderset{\mathcal D}{n\to+\infty}{\longrightarrow} f$ if and only if $f_n\overunderset{\mathcal D'}{n\to+\infty}{\longrightarrow} f$.
\end{corollary}
\begin{proof}
By Lemma \ref{lemma:convunifor} when we endow $Z$ with $\mathcal D$ we get $f_n\overunderset{\mathcal D}{n\to+\infty}{\longrightarrow} f$ if and only if for all sequences $(y_n)_{n\in\N}\in Y^\N$, $y\in Y$, $\phi :\N\to \N$ strictly increasing such that $y_{\phi(n)}\overunderset{d_Y}{n\to+\infty}{\longrightarrow}y$ and $\gamma:\N\to\N$ strictly increasing we have $d(f_{\gamma(n)}(y_{\phi(n)}),f(y))\underset{n\to+\infty}{\longrightarrow} 0$. Since $(X,d)$ and $(X,d')$ define the same topology we obtain that $f_n\overunderset{\mathcal D}{n\to+\infty}{\longrightarrow} f$ if and only if for all sequences $(y_n)_{n\in\N}\in Y^\N$, $y\in Y$, $\phi :\N\to \N$ strictly increasing such that $y_{\phi(n)}\overunderset{d_Y}{n\to+\infty}{\longrightarrow}y$ and $\gamma:\N\to\N$ strictly increasing we have $d'(f_{\gamma(n)}(y_{\phi(n)}),f(y))\underset{n\to+\infty}{\longrightarrow} 0$. Again by Lemma \ref{lemma:convunifor} when $Z$ is endowed with $\mathcal D'$ it yields $f_n\overunderset{\mathcal D}{n\to+\infty}{\longrightarrow} f$ if and only if $f_n\overunderset{\mathcal D'}{n\to+\infty}{\longrightarrow} f$.
\end{proof}
\subsubsection{Characterization of compact subsets of $C([0,T],\mes)$}
\label{Subsection:topoaspect}
This part can be skip for people non familiar with topology. The only result that shall be used after is Lemma \ref{lemma:topology} that gives some typical compact subsets of $C([0,T],\mes)$. 
\newline
As mentioned before, we consider $\mes$ as a topological space with the weak topology. Since now we fix a family $\mathfrak F:=(f_i)_{i\ge 1}$ of $C^2(\R,\R)$ functions such that for all $i\ge 1$, $||f_i||_{\infty}+||f_i'||_{\infty}+||f_i''||_{\infty }\le 1$ and $\text{span}(\mathcal F)$ is dense in the set of continuous compactly supported functions. Then the distance on $\mes$ defined by: $$d_\mathfrak F(\mu,\nu)=\sum_{n\ge 0} 2^{-n} \left|\int f_{n+1} d\mu-\int f_{n+1} d\nu\right|$$ metrizes the weak topology (convergence in law/in distribution) on $\mes$.
\newline
Now we endow $C([0,T],\mes)$ with the uniform convergence : $$d(f,g)=\sup_{0\le t\le T}d_{\mathfrak F}(f(t),g(t)).$$ As explained in Corollary \ref{coro:topoeq} we could have endowed $\mes$ with other distances that metrizes exactly the weak topology. We choose this distance to exhibit explicit compact subsets of $C([0,T],\mes)$.

\begin{lemma}
\label{lemma:topology}
Let $K$ be a compact subset of $\mes$, and $(C_i)_{i\ge 1}$ be a family of compact subsets of $C([0,T],\R)$. Then the set $$\mathcal K=C([0,T],\mes)\bigcap\{\forall t\in[0,T],\, \mu(t)\in K\}\bigcap_{i\ge 1}\{t\mapsto\langle \mu(t),f_i\rangle \in C_i\}$$
is a compact subset of $C([0,T],\mes)$.
\end{lemma}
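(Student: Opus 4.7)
The plan is to establish sequential compactness of $\mathcal K$: given any sequence $(\mu_n)_{n\ge 1}$ in $\mathcal K$, I would extract a subsequence converging in $C([0,T],\mes)$ to some limit lying in $\mathcal K$.

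First, I would perform a diagonal extraction. For each $i\ge 1$, the map $t\mapsto \langle \mu_n(t),f_i\rangle$ lies in the compact set $C_i\subset C([0,T],\R)$, so by the usual diagonal argument we can pass to a subsequence (still denoted $(\mu_n)$) and find functions $g_i\in C_i$ such that for every $i\ge 1$,
\begin{equation*}
\sup_{t\in[0,T]} \bigl|\langle \mu_n(t),f_i\rangle - g_i(t)\bigr| \underset{n\to\infty}{\longrightarrow} 0.
\end{equation*}
Next I would build a pointwise limit $\mu:[0,T]\to \mes$. For each $t\in[0,T]$, the sequence $(\mu_n(t))_{n\ge 1}$ lies in the compact set $K$, so any weak accumulation point $\nu$ satisfies $\langle \nu,f_i\rangle = g_i(t)$ for every $i$. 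Since $\mathrm{span}(\mathfrak F)$ is dense in the set of continuous compactly supported functions on $\R$, this uniquely determines $\nu$, so the whole sequence $\mu_n(t)$ converges weakly to a probability measure $\mu(t)\in K$ characterised by $\langle \mu(t),f_i\rangle = g_i(t)$ for all $i$.

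Now I would upgrade this pointwise convergence to uniform convergence in $t$ using the metric $d_{\mathfrak F}$. For any $K_0\in\N$ and $t\in[0,T]$, using $\|f_i\|_\infty\le 1$,
\begin{equation*}
d_{\mathfrak F}(\mu_n(t),\mu(t)) \le \sum_{k=0}^{K_0} 2^{-k}\,\bigl|\langle \mu_n(t),f_{k+1}\rangle - g_{k+1}(t)\bigr| + 2\sum_{k>K_0} 2^{-k}.
\end{equation*}
Given $\varepsilon>0$, first pick $K_0$ so that the tail is smaller than $\varepsilon/2$, then use the uniform convergence obtained in the diagonal step for $k=0,\dots,K_0$ to make the first sum less than $\varepsilon/2$ for $n$ large, uniformly in $t$. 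This proves $\sup_{t\in[0,T]} d_{\mathfrak F}(\mu_n(t),\mu(t)) \to 0$, i.e.\ $\mu_n\to \mu$ in $C([0,T],\mes)$; in particular $\mu$ is continuous as the uniform limit of continuous curves.

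It remains to check $\mu\in\mathcal K$: for every $t$, $\mu(t)\in K$ since $K$ is closed (compact in the Hausdorff space $\mes$) and $\mu_n(t)\to\mu(t)$ weakly; and $t\mapsto\langle\mu(t),f_i\rangle = g_i$ lies in $C_i$ since $C_i$ is closed in $C([0,T],\R)$. The only non-cosmetic step in the whole argument is the identification of the pointwise limit, where one genuinely needs both ingredients simultaneously: compactness of $K$ (providing tightness, hence weak accumulation points in $\mes$) and density of $\mathrm{span}(\mathfrak F)$ in $C_c(\R,\R)$ (ensuring any such accumulation point is uniquely pinned down by the values $g_i(t)$, so that the full sequence—not just a subsequence depending on $t$—converges).
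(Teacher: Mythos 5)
Your proof is correct, and it takes a noticeably more streamlined route than the paper's. The paper proceeds in two steps: it first asserts closedness of $\mathcal K$ as an intersection of closed sets, and then for relative compactness it performs \emph{two} diagonal extractions --- one over the indices $i$ to get uniform convergence of $\langle\mu_n(\cdot),f_i\rangle\to g_i$, and a second over a countable dense family $(t_k)_{k\ge 0}\subset[0,T]$ to obtain accumulation points $\mu(t_k)\in K$, after which it extends $\mu$ from $\{t_k\}$ to all of $[0,T]$ by a continuity argument and concludes the convergence $\mu_{\phi(n)}\to\mu$ from the density of the $f_i$. Your version dispenses with the second diagonalization and the continuity-extension step entirely: after the single diagonal extraction, you observe that for \emph{every} $t$ the sequence $(\mu_n(t))_n$ lives in the compact $K$ and has a unique accumulation point (pinned down by the equations $\langle\nu,f_i\rangle=g_i(t)$ via density of $\mathrm{span}(\mathfrak F)$), so the whole sequence converges at every $t$; you then upgrade pointwise-in-$t$ weak convergence to convergence in $C([0,T],\mes)$ by an explicit tail-truncation of the series defining $d_{\mathfrak F}$, using $\|f_i\|_\infty\le 1$. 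This also gives closedness of $\mathcal K$ for free, since the limit is shown to stay in $\mathcal K$, so sequential compactness is obtained in one pass. Both arguments rely on the same two ingredients --- compactness of $K$ and separating density of $\mathfrak F$ --- but your handling of the uniformity in $t$ via the explicit metric is cleaner and makes the last step of the paper's proof (which is stated somewhat tersely) fully rigorous.
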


\begin{proof}
We have to prove that $\mathcal K$ is relatively compact and closed. 
\newline
$\mathcal K$ is closed as an intersection of closed subsets. 
It remains to prove that $\mathcal K$ is relatively compact. Let $(\mu_n)_{n\in \N}$ be a sequence of elements of $\mathcal K$. By definition, for all $i\ge 1$ the sequence of functions $(t\to\langle\mu_n(t),f_i\rangle )_{n\in\N}$ is a sequence of functions in $C_i$. Hence by a diagonalization extraction argument, we can find a subsequence $\phi$ such that for all $i\ge 1$, $(t\to\langle\mu_{\phi(n)}(t),f_i\rangle)_{n\in\N}$ converges towards a function $g_i\in C_i$.
\newline
Now, let $(t_k)_{k\in \N}$ a dense countable sequence in $[0,T]$. By a same diagonalization extraction argument we can find sub sub sequence that shall be written $\phi$ such that for all $k\ge 0$ we have $\mu_{\phi(n)}(t_k)$ converges to $\mu(t_k)\in\mes$ and still for all $i\ge 1$, $(t\to\langle\mu_{\phi(n)}(t),f_i\rangle)_{n\in\N}$ converges towards $g_i\in C_i$. We necessarily have that 
\begin{equation}
\label{eq:topo}
\langle \mu(t_k),f_i\rangle=g_i(t_k)
\end{equation} for all $i\ge 1$ and $k\ge 0$. We then extend $\mu$ on $[0,T]$ by a continuity argument. 
Indeed, by construction, for all $k\ge 0$, $\mu(t_k)\in K$. Let $0\le t\le T$, and consider consider a subsequence, still denoted $t_k$ that converges to $t$. The sequence of measures $(\mu(t_k))_{k\ge 0}$ is relatively compact since for all $k\ge 0$, $\mu(t_k)\in K$. Moreover, let $\nu\in\mes$ be a accumulation point, passing to the limit in \eqref{eq:topo} yields $$\langle \nu, f_i\rangle =g_i(t)$$ for all $i\ge 1$ by continuity of $g$. Since the $(f_i)_{i\ge 1}$ are dense, these relations uniquely characterize $\nu$. So $(\mu(t_k))_{k\ge 0}$ converges to an element $\mu(t)\in\mes$ such that for all $i\ge 1$, $\langle \mu(t), f_i\rangle =g_i(t)$. This extend $\mu$ continuously as an element of $C([0,T],\mes)$.
Since, $(f_i)_{i\ge 1}$ is a dense sequence and $(t\to\langle\mu_{\phi(n)}(t),f_i\rangle)_{n\in\N}$ converges towards a function $\langle \mu(.),f_i\rangle \in C_i$ (uniformly on $[0,T]$) for all $i\ge 1$, $(\mu_{\phi(n)})_{n>0}$ converges to $\mu$ in $C([0,T],\mes)$.
\end{proof}

\subsection{Compactness of $(\mu_N)_{N\ge 1}$ }
\label{section:compactness}
The goal of this section is to prove that almost surely the sequence $(\mu_N)_{N>0}$ is relatively compact in $C([0,T],\mes)$. 
Now we suppose that for all $N\ge 1$, $(\lambda_0^{i,N})_{1\le i\le N}\in D$ is the initial data for the system of $N$ particles \eqref{dysonreal}.
We  assume that: $$C_0:=\underset{N\ge 0}{\sup}\cfrac{1}{N}\sum_{i=1}^N\log((\lambda_0^{i,N})^2+1)<+\infty.$$

\begin{prop}
\label{prop:almostsurelyprecompact}
The sequence $(\mu_N)_{N\ge 1}$ is almost surely relatively compact in $C([0,T],\mes)$.
\end{prop}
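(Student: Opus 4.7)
The plan is to invoke Lemma~\ref{lemma:topology}: it suffices to produce, almost surely, a compact $K(\omega)\subset\mes$ containing every $\mu_N(t)$ for $N\ge 1$ and $t\in[0,T]$, and, for each $i\ge 1$, a compact $C_i(\omega)\subset C([0,T],\R)$ containing every function $t\mapsto\langle\mu_N(t),f_i\rangle$. Both will come from applying the Itô-type identity of Proposition~\ref{prop:itoformulasystemparticles} together with Doob's maximal inequality and Borel--Cantelli.

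To get the compact in $\mes$, the natural candidate is a sub-level set of the confining functional $\mu\mapsto\langle\mu,\varphi\rangle$ with $\varphi(x):=\log(1+x^2)$. A direct computation gives $\|\varphi''\|_\infty<+\infty$ and, more importantly, a bounded divided difference
$$\frac{\varphi'(x)-\varphi'(y)}{x-y}=\frac{2(1-xy)}{(1+x^2)(1+y^2)}.$$
Plugging $\varphi$ into~\eqref{formule} and using the assumption $\langle\mu_N(0),\varphi\rangle\le C_0$ uniformly in $N$, one obtains a deterministic bound on $\langle\mu_N(t),\varphi\rangle$ up to the martingale $M_\varphi^N$. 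Since the bracket of $M_\varphi^N$ on $[0,T]$ is of order $1/N^2$ by~\eqref{eq:martingalebound}, Doob's $L^2$ inequality gives $\E[\sup_{t\le T}|M_\varphi^N(t)|^2]=O(1/N^2)$, and Markov plus Borel--Cantelli yield $\sup_{t\le T}|M_\varphi^N(t)|\to 0$ almost surely. I then conclude that $R(\omega):=\sup_{N\ge 1,\,t\in[0,T]}\langle\mu_N(t),\varphi\rangle<+\infty$ almost surely, and by the Markov inequality the set $K(\omega):=\{\mu\in\mes:\langle\mu,\varphi\rangle\le R(\omega)\}$ is tight, hence relatively compact in $\mes$.

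For the second requirement, I will apply Proposition~\ref{prop:itoformulasystemparticles} with $f=f_i$. Since $\|f_i\|_\infty+\|f_i'\|_\infty+\|f_i''\|_\infty\le 1$, the mean value theorem bounds $|f_i'(x)-f_i'(y)|/|x-y|$ by $1$, so both drift terms in~\eqref{formule} are Lipschitz in $t$ with an absolute constant. The same Doob plus Borel--Cantelli argument applied to $M_{f_i}^N$ (whose bracket is again $O(1/N^2)$) shows that $\sup_{t\le T}|M_{f_i}^N(t)|\to 0$ almost surely. Therefore the family $\{t\mapsto\langle\mu_N(t),f_i\rangle:N\ge 1\}$ is almost surely uniformly bounded by $\|f_i\|_\infty\le 1$ and equicontinuous, so by Arzelà--Ascoli it lies in a compact subset $C_i(\omega)$ of $C([0,T],\R)$.

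Taking the countable intersection over $i\ge 1$ of the full-probability events constructed above, together with the event where the bound $R(\omega)$ is finite, yields a single set of full measure on which the hypotheses of Lemma~\ref{lemma:topology} are satisfied, giving the desired relative compactness. The main technical point is passing from the $L^2$-smallness of the martingales (which is automatic from $\langle M\rangle=O(1/N^2)$) to an almost sure uniform-in-$N$ smallness; this is precisely where the quadratic decay $1/N^2$ is essential, since its summability makes the Borel--Cantelli step go through for each fixed test function, and one must then be careful to combine these into a single exceptional null set covering all $i\ge 1$ simultaneously.
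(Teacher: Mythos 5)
Your proof is correct and follows the same overall skeleton as the paper's: both start from Lemma~\ref{lemma:topology}, both choose the confinement function $\varphi(x)=\log(1+x^2)$ and exploit the assumption on $\langle\mu_N(0),\varphi\rangle$, and both control the martingale in the It\^o identity~\eqref{formule} via a maximal inequality followed by Borel--Cantelli (the paper invokes Burkholder--Davis--Gundy, but for an $L^2$ bound on a continuous martingale this is exactly Doob's inequality, so no real difference there). Where you genuinely diverge is in the second half, the equicontinuity of $t\mapsto\langle\mu_N(t),f_i\rangle$. The paper establishes a quantitative, $\delta$-dependent modulus-of-continuity bound: it chops $[0,T]$ into $O(1/\delta)$ blocks, applies Burkholder--Davis--Gundy at fourth moment to each block increment (which is where the $N^4$ in the denominator of~\eqref{eq:compactness8} comes from), and so constructs \emph{deterministic} compacts $C(f_k,\varepsilon_k)$ into which $\mu_N$ falls a.s.\ for $N$ large. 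You instead argue ``softly'': once $\sup_{t\le T}\lvert M_{f_i}^N(t)\rvert\to0$ almost surely, the paths decompose as (uniformly Lipschitz drift) plus (martingale whose uniform oscillation goes to zero), and equicontinuity follows --- for $N$ large the martingale contribution is below any target, and the remaining finitely many paths are each uniformly continuous, so one takes the worst $\delta$. This is a legitimate and slightly more elementary argument; it buys you simplicity at the cost of producing $\omega$-dependent compacts and no explicit H\"older-type modulus (which you never needed, since the Proposition only asks for relative compactness). One small presentational point: the step ``therefore ... equicontinuous'' deserves a sentence making the split between $N\ge N_0$ (martingale sup $\le\varepsilon/4$) and $N<N_0$ (finitely many uniformly continuous paths) explicit, since equicontinuity is a uniform-in-$N$ statement and the small-$N$ case relies on the a.s.\ path continuity of the semimartingales, which you use but do not name.
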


\begin{proof}
We follow the arguments of \cite{Alicelivre}.
We shall apply Lemma \ref{lemma:topology} to construct a compact subset of $C([0,T],\mes)$ such that for $N$ large enough $\mu_N$ is in this compact almost surely.
So, there is two parts in the proof. The first one will be to construct a compact subset of $\mes$ in which almost surely for all $t\le T$, $\mu_N(t)$ is and then compacts subsets of $C([0,T],\R)$ as in Lemma \ref{lemma:topology}. 
\newline
Let us consider the function $f(x):=\log(1+x^2)$ which is $C^2$ on $\R$  and  $||f'||_{\infty}\le 1$, $||f''||_{\infty}\le 2$. We also have that: $$\forall x,y \in \R, \, \left|\cfrac{f'(x)-f'(y)}{x-y}\right|\le ||f''||_{\infty}\le 2.$$
Hence using the Itô formula $\eqref{formule}$, we get the following bound: 
\begin{equation}
\label{eq:compactness1}
\forall N>0,\, \underset{t\le T}{\sup}|\langle\mu_N(t),f\rangle|\le |\langle\mu_N(0),f\rangle|+T\left(1+\cfrac{1}{N}\right)+\underset{t\le T}{\sup}|M_f^N(t)|.
\end{equation}
Moreover, by Proposition \ref{prop:itoformulasystemparticles}, $(M_f^N(t))_{0\le t\le T}$ is a martingale with its bracket bounded by: 
\begin{equation}
\label{eq:compactness2}
\sup_{0\le t\le T}\langle M_f^N(t),M_f^N(t)\rangle \le\cfrac{2T}{N^2},
\end{equation}
as obtained in \eqref{eq:martingalebound}.
Let $$K_M:=\{\mu\in\mes,\,\dis\int\log(1+x^2)d\mu(x)\le M\}$$ for a certain $M>0$ that will be specified later. 
\newline
$K_M$ is a compact subset of $\mes$. Indeed, $K_M$ is closed by the Portemanteau theorem and is relatively compact by the Prokhorov theorem since the Markov inequality yields that for every $\mu\in K_M$, for every $K>0$ $$\mu(\R-[-K,K])\le\cfrac{\dis\int\log(1+x^2)d\mu(x)}{\log(1+K^2)}\le \cfrac{M}{\log(1+K^2)}.$$
Now we shall prove that almost surely, for $N$ large enough, for all $0\le t\le T$, $\mu_N(t)\in K_M$ for a good $M$. 
We use the Borel-Cantelli Lemma. 
\newline
Indeed, we first estimate the martingale term  using \eqref{eq:compactness2}, the Markov inequality and the Burkholder–Davis–Gundy inequality: 
\begin{equation}
\label{eq:compactness3}
\mP(\underset{t\le T}{\sup}|M_f^N(t)|\ge\varepsilon)\le\cfrac{C\E(\langle M_f^N(T),M_f^N(T)\rangle)}{\varepsilon^2}\le\cfrac{2TC}{N^2\varepsilon^2},
\end{equation}
with $C>0$ a constant independent of $\varepsilon$ and $N$ given by the Burkholder–Davis–Gundy inequality.
Hence using \eqref{eq:compactness1}, we obtain that for all $N>0$ $$\underset{t\le T}{\sup}|\langle\mu_N(t),f\rangle|\le |\langle \mu_N(0),f\rangle |+T\left(1+\cfrac{1}{N}\right)+\underset{t\le T}{\sup}|M_f^N(t)|\le C_0+T\left(1+\cfrac{1}{N}\right)+\underset{t\le T}{\sup}|M_f^N(t)|.$$ So for $M>2T+C_0$, we get 
\begin{equation}
\label{eq:compactnes4}
\mP\left(\underset{t\le T}{\sup}|\langle\mu_N(t),f\rangle|\ge M\right)\le \mP\left(\underset{t\le T}{\sup}|M_f^N(t)|\ge M-2T-C_0\right) \le \cfrac{\widetilde C(T)}{(M-2T-C_0)^2N^2},
\end{equation} with $\widetilde C(T)$ a constant independent of $M$ and $N$.
\newline
Hence the Borel Cantelli lemma yields:
\begin{equation}
\label{eq:compactness5}\mP\left(\bigcup_K\bigcap_{N\ge K}\{\forall t\le T, \mu_N(t)\in K_M\}\right)=1. 
\end{equation}
This proves the first part.
\newline
Now we shall find good functions $(f_i)_{i\ge 0}$ and compact subsets $(C_i)_{i\ge 0}$ of $C([0,T])$ such that almost surely the sequence $(\mu_N)_{N>0}$ is in $$\bigcap_{i>0}\{t\mapsto\langle\mu(t),f_i\rangle\in C_i\}$$ for $N$ large enough.  
\newline
Let us recall that the sets of the form: \begin{equation}
\label{eq:compactness6}
C=\bigcap_{n}\{g\in C([0,T],\R), \, \underset{|t-s|\le\eta_n}{\sup}|g(t)-g(s)|\le\varepsilon_n,\,\underset{t\le T}{\sup} \,|g(t)|\le M\}
\end{equation} with $M>0$ a constant, $(\varepsilon_n)_{n\ge 0}$ and $(\eta_n)_{n\ge 0}$ two sequence that converge to 0 are compact subsets of $C([0,T],\R)$ by the Arzela-Ascoli theorem.
So, we shall look at the oscillation of $t\to\langle\mu_N(t),g\rangle$ with $g\in C^2(\R)$ and its derivatives bounded by $1$. 
\newline
Let $M>2$ and $\delta\in(0,1)$, thanks to \eqref{formule} we have that for all $0\le s,t\le T$, 
$$|\langle\mu_N(t),g\rangle-\langle\mu_N(s),g\rangle|\le ||g''||_{\infty}|s-t|+|M_g^N(t)-M_g^N(s)|.$$
Let $|s-t|\le \delta$ then we have: 
\begin{equation}
\label{eq:compactness7}
|\langle\mu_N(t),g\rangle-\langle\mu_N(s),g\rangle|\le \delta+|M_g^N(t)-M_g^N(s)|.
\end{equation}
Now, shall prove the following estimate: 
\begin{equation}
\label{eq:compactness8}
\mP\left(\underset{|t-s|\le\delta}{\sup} |\langle\mu_N(t),g\rangle-\langle\mu_N(s),g\rangle|\ge M\delta^{1/8}\right)\le \cfrac{a\delta^{1/2}}{N^4 M^4},
\end{equation} with $a>0$ a constant independent of $\delta$, $M$, $N$ and $g$.
\newline
By \eqref{eq:compactness7} we have:
\begin{equation}
\label{eq:compactness9}
\mP\left(\underset{|t-s|\le\delta}{\sup} |\langle\mu_N(t),g\rangle-\langle\mu_N(s),g\rangle|\ge M\delta^{1/8}\right)\le\mP\left(\underset{|t-s|\le\delta}{\sup}|M_g^N(t)-M_g^N(s)| \ge (M-1)\delta^{1/8}\right).
\end{equation}
It remains to estimate right hand term of \eqref{eq:compactness9}. By the same computation as for \eqref{eq:martingalebound} we have that for all $t\ge k\delta$, 
\begin{equation}
\label{eq:compactness10}
|\langle M_g^N(t)-M_g^N(k\delta),M_g^N(t)-M_g^N(k\delta)\rangle|^2\le 4 \cfrac{(t-k\delta)^2||g'||_\infty^2}{N^4}.
\end{equation}
The idea is to truncate $[0,T]$ in interval of length $\delta$. Let $J=\lfloor T/\delta\rfloor$ and now we bound the right hand term using as before the Markov inequality and the Burkholder–Davis–Gundy inequality: 
\begin{equation}
\label{eq:compactness11}
\begin{split}
&\hspace{-3cm}\mP\left(\underset{|t-s|\le\delta}{\sup}|M_g^N(t)-M_g^N(s)| \ge (M-1)\delta^{1/8}\right)\\
\le &\sum_{k=1}^{J+1}\mP\left(\underset{k\delta\le t\le (k+1)\delta}{\sup}|M_g^N(t)-M_g^N(k\delta)| \ge\cfrac{ (M-1)\delta^{1/8}}{3}\right)\\
\le &\sum_{k=1}^{J+1}\cfrac{3^4}{\delta^{1/2}(M-1)^4}\,\E\left(\underset{k\delta\le t\le (k+1)\delta}{\sup}|M_g^N(t)-M_g^N(k\delta)|^4\right)\\
\le &\sum_{k=1}^{J+1}\cfrac{3^4}{\delta^{1/2}(M-1)^4}\,\E\left(\left|\langle M_g^N((k+1)\delta)-M_g^N(k\delta)\rangle\right|^2\right)\\
\underset{\eqref{eq:compactness10}}{\le}& \cfrac{C\delta^2}{N^4\delta^{1/2}(M-1)^4}(J+1)||g'||_{\infty}^2\le \cfrac{a\delta^{1/2}}{N^4(M-1)^4}||g'||_\infty^2,
\end{split}
\end{equation}
with $a$ a constant independent of $N$, $M$, $\delta$ and $g$. This gives \eqref{eq:compactness8}
\newline
For $g$ a $C^2$ function with derivatives bounded by $1$ and $\varepsilon>0$ we define the subset of $C([0,T],\mes)$ $$C(g,\varepsilon):=\bigcap_{n\ge 1}\{\mu\in C([0,T],\mes), \underset{|t-s|\le n^{-4}}{\sup}|\langle\mu(t),g\rangle-\langle\mu(s),g\rangle|\le \cfrac{1}{\varepsilon\sqrt{n}}\}.$$ As a consequence of \eqref{eq:compactness8} we obtain 
\begin{equation}
\begin{split}
\label{eq:compactness12}
\mP\left(\mu_N\notin  C(g,\varepsilon)\right)&\le\sum_{n\ge 1}\mP\left(\mu_N \notin \{\mu\in C([0,T],\mes), \underset{|t-s|\le n^{-4}}{\sup}|\langle\mu(t),g\rangle-\langle\mu(s),g\rangle|\cfrac{1}{\varepsilon\sqrt{n}}\}\right)\\
&\underset{\eqref{eq:compactness8}}{\le} \sum_{n\ge 1}\cfrac{a\varepsilon^4}{N^4\,n^2}=:\cfrac{\tilde{a}\varepsilon^4}{N^4},
\end{split}
\end{equation} with $\tilde a$ a constant independent of $g$, $\varepsilon$ and $N$. 
We recall that the family of functions $\mathfrak F=(f_i)_{i\ge 1}$ is the family introduced at the beginning of Section \ref{Subsection:topoaspect} to define a metric on $\mes$. Set $\varepsilon_k:=1/k$. Using \eqref{eq:compactness12} we get: 
\begin{equation}
\label{eq:compactness13}
\mP\left(\mu_N\notin  \bigcap_k C(f_k,\varepsilon_k)\right)\le\cfrac{b}{N^4},
\end{equation} with $b>0$ a constant independent on $N$. This quantity is summable and so the Borel Cantelli lemma yields: 
\begin{equation}
\label{eq:compactness14}\mP\left(\bigcup_{K\ge 1}\bigcap_{N\ge K}\{\mu_N\in \bigcap_k C(f_k,\varepsilon_k)\}\right)=1.
\end{equation}
Set $$\mathcal K=K_M\bigcap_k C(f_k,\varepsilon_k)\subset C([0,T],\mes),$$ which is a compact subset of $C([0,T],\mes)$ by Lemma \ref{lemma:topology}. By \eqref{eq:compactness7} and  \eqref{eq:compactness14} we get that $$\mP\left(\bigcup_{K\ge 1}\bigcap_{N\ge K}\{\mu_N\in \mathcal K\}\right)=1.$$
Hence, almost surely the sequence $(\mu_N)_N$ is relatively compact in $C([0,T],\mes)$.
\end{proof}

\subsection{Convergence of $\mu_N$}
In this section we shall prove that almost surely the sequence $(\mu_N)_{N\ge 1}$ converges to an element of $C([0,T],\mes)$. We assume that $\mu_N(0)\in \mes$ converges weakly to a measure $\mu_0\in\mes$.
\newline
Since the sequence is relatively compact we shall just prove that there exists a unique limit for this sequence. 
Fix $\omega\in \Omega$.
\subsubsection{Characterization of the limit}

First, we can give a description of a limit point of $(\mu_N(\omega))_{N\ge 1}$.  
\begin{prop}
\label{prop:characlimit}
Let $\mu\in C([0,T],\mes)$ be a limit point of the sequence $(\mu_N(\omega))_{N\ge 1}$ in the space $C([0,T],\mes)$, then $\mu(0)=\mu_0$ and for every $f\in C^2([0,T]\times\R,\R)$ with its derivatives bounded, for every $t\ge 0$, 
\begin{equation}
\begin{split}
\label{eq:characlimit}
\int f(t,x)d\mu(t)(x)&=\int f(0,x)d\mu_0(x)+\int_0^t\int\partial_s f(s,x)d\mu(s)(x)ds\\
&+\cfrac{1}{2}\int_0^t\int\int\cfrac{\partial_xf(s,x)-\partial_yf(s,y)}{x-y}d\mu(s)(x)d\mu(s)(y)ds.
\end{split}
\end{equation}
\end{prop}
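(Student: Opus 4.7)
The plan is to pass to the limit in the Itô-type identity \eqref{formule} obtained in Proposition \ref{prop:itoformulasystemparticles} along a subsequence of $(\mu_N(\omega))_{N\ge 1}$ converging to $\mu$. First, I would fix $\omega$ in a full probability event (to be specified below) and extract an extraction $(N_k)_{k\ge 1}$ such that $\mu_{N_k}(\omega)\to \mu$ in $C([0,T],\mes)$. The initial condition $\mu(0)=\mu_0$ then follows immediately, since $\mu_{N_k}(\omega)(0)=\mu_{N_k}(\omega)(0)\to\mu(0)$ in $\mes$ by uniform convergence in time, while by hypothesis $\mu_N(0)\to\mu_0$.

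Next, I would treat the three easy terms on the right-hand side of \eqref{formule}. The Laplacian correction $\frac{1}{2N}\int_0^t \langle \mu_N(s),\partial_x^2 f(s,\cdot)\rangle ds$ is bounded by $T\|\partial_x^2 f\|_\infty/(2N)$ and vanishes. The linear time-integral $\int_0^t\langle\partial_s f(s,\cdot),\mu_N(s)\rangle ds$ converges to $\int_0^t\langle\partial_s f(s,\cdot),\mu(s)\rangle ds$ by the weak convergence $\mu_{N_k}(\omega)(s)\to\mu(s)$ for each $s$ and the dominated convergence theorem (the integrand is uniformly bounded by $\|\partial_s f\|_\infty$).

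The main difficulty, and the step I would single out as the key point, is the convergence of the double integral with the potentially singular kernel $\frac{\partial_x f(s,x)-\partial_y f(s,y)}{x-y}$. The essential observation is that this function, a priori defined only off the diagonal, extends continuously to all of $[0,T]\times\R^2$ by setting its value to $\partial_x^2 f(s,x)$ when $x=y$; moreover, by the mean value theorem, it is uniformly bounded by $\|\partial_x^2 f\|_\infty$. Hence I can apply the continuous mapping theorem to the weak convergence $\mu_{N_k}(\omega)(s)\otimes\mu_{N_k}(\omega)(s)\to \mu(s)\otimes \mu(s)$ on $\R^2$ (which follows from the weak convergence of the marginals) to get pointwise convergence in $s$ of the inner double integral, and then dominated convergence on $[0,t]$ to pass to the limit in the time integration.

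It remains to show that the martingale term $M_f^N(t)$ vanishes in the limit, and here one needs to take care to control a large enough family of test functions. The bracket bound \eqref{eq:martingalebound} combined with Doob's $L^2$-inequality yields $\E[\sup_{0\le t\le T}|M_f^N(t)|^2]\le C_f/N^2$, which is summable; thus Borel--Cantelli gives, for each fixed $f$, almost surely $\sup_{t\le T}|M_f^N(t)|\to 0$. Taking a countable family $\mathfrak D$ of $C^2$ functions with bounded derivatives that is dense for the norm $\|f\|=\|f\|_\infty+\|\partial_t f\|_\infty+\|\partial_x f\|_\infty+\|\partial_x^2 f\|_\infty$, one obtains a single full probability event on which simultaneously $(\mu_N(\omega))_{N\ge 1}$ is relatively compact (by Proposition \ref{prop:almostsurelyprecompact}) and $M_f^N(\omega)\to 0$ uniformly on $[0,T]$ for every $f\in \mathfrak D$. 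On this event, the identity \eqref{eq:characlimit} holds for every $f\in \mathfrak D$, hence for every $C^2$ function with bounded derivatives by a density argument using the uniform tightness of $\{\mu(s):s\in[0,T]\}$ inherited from the compact set constructed in the proof of Proposition \ref{prop:almostsurelyprecompact}.
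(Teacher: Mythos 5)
Your proof is correct and takes essentially the same route as the paper (passing to the limit along a convergent subsequence in the Itô identity \eqref{formule}, sending the martingale term to zero almost surely via a second-moment estimate and Borel--Cantelli). You supply substantially more detail than the paper's two-sentence proof — in particular, the continuous extension of the divided-difference kernel across the diagonal, the dominated-convergence step for the time integral, the use of Doob's $L^2$ maximal inequality in place of Burkholder--Davis--Gundy (both work here), and the final density argument to upgrade the a.s.\ statement from a countable family of test functions to all of $C^2$ with bounded derivatives — but the underlying strategy is the same.
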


\begin{proof}
We pass to the limit in the Itô formula \eqref{formule}. The martingale term $\left(\underset{0\le t\le T}{\sup}|M_f^N(t)|\right)_{N\ge 1}$ goes to 0 almost surely using the Markov inequality and the Burkholder-Davis-Gundy inequality as we already did for the compactness of the sequence in Proposition \ref{prop:almostsurelyprecompact}.
\end{proof}

To identify the limit point we shall use the Stieltjes transform. We recall that we introduced this transformation and some standard properties in Appendix \ref{annexesectionstieljes}.

\begin{definition}
Let $\mu$ be a probability measure on $\R$. We define its Stieltjes transform as: $$S_\mu(z)=\int_{\R}\cfrac{1}{z-x}d\mu(x), \, z\in \C-\R.$$

\end{definition}

\begin{example}
\label{example:semicircle}
We computed in Proposition \ref{annexe prop unique stieljes sqrt} the Stieljes transform the semicircle distribution defined by: $$\sigma(dx):=\cfrac{1}{2\pi}\sqrt{(4-x^2)_+}dx.$$ It is equal to: $$S_\sigma(z)= \cfrac{z-\sqrt{z^2-4}}{2},$$ with $\sqrt{\,}$ a suitable determination of the square root. 
\end{example}

\begin{prop}
\label{prop:charastieljes}
Let $\mu$ be a limit point of $(\mu_N(\omega))_{N\ge 1}$ as in Proposition \ref{prop:characlimit}, then $\mu(0)=\mu_0$. Set $S_t:=S_{\mu(t)}$ for the Stieltjes transform of the measure $\mu(t)$ for every $t\ge 0$ then we have
\begin{equation}
S_t(z)=S_0(z)-\int_0^tS_s(z)\partial_zS_s(z)ds, \,\forall z\in \C-\R
\label{Steljes eq}
\end{equation}
\end{prop}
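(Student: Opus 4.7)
The plan is to apply Proposition~\ref{prop:characlimit} to the test function $f(s,x)=(z-x)^{-1}$ for each fixed $z\in\C-\R$. Since $f$ is complex-valued, I would first decompose it into its real and imaginary parts; for each fixed $z$ with $|\Im z|=\eta>0$, both parts are in $C^{\infty}(\R,\R)$, do not depend on $s$, and have all their $x$-derivatives bounded by constants depending only on $\eta$ (since $|z-x|\ge \eta$ for $x\in\R$). So the hypotheses of Proposition~\ref{prop:characlimit} are satisfied. By definition of the Stieltjes transform, $\int f(t,x)\,d\mu(t)(x)=S_t(z)$, $\int f(0,x)\,d\mu_0(x)=S_0(z)$ (using that $\mu(0)=\mu_0$, which is part of Proposition~\ref{prop:characlimit}), and $\partial_s f\equiv 0$ kills the middle term in \eqref{eq:characlimit}.

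The only genuine computation is the evaluation of the symmetric double-integral kernel. Writing $\partial_x f(x)=(z-x)^{-2}$ and factoring $(z-y)^2-(z-x)^2=(x-y)(2z-x-y)$, one obtains the identity
\begin{equation*}
\frac{\partial_x f(x)-\partial_y f(y)}{x-y}=\frac{2z-x-y}{(z-x)^2(z-y)^2}=\frac{1}{(z-x)(z-y)^2}+\frac{1}{(z-x)^2(z-y)},
\end{equation*}
using $2z-x-y=(z-x)+(z-y)$ for the last equality. Differentiating $S_s(z)=\int (z-y)^{-1}\,d\mu(s)(y)$ under the integral sign (legitimate for $z\in\C-\R$ by dominated convergence, since $(z-y)^{-2}$ is bounded uniformly in $y$) gives $\int (z-y)^{-2}\,d\mu(s)(y)=-\partial_z S_s(z)$. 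The double integral of each of the two summands above against $\mu(s)\otimes\mu(s)$ then factorizes as a product, each equal to $-S_s(z)\,\partial_z S_s(z)$, so that the full integrand in the double-integral term of \eqref{eq:characlimit} evaluates to $\tfrac12\cdot(-2S_s(z)\partial_z S_s(z))=-S_s(z)\,\partial_z S_s(z)$.

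Substituting these expressions into \eqref{eq:characlimit} yields exactly equation~\eqref{Steljes eq}. There is no serious obstacle: the only things to check carefully are (i) the reduction of the complex-valued test function to two real ones, which is justified by linearity of the identity in $f$; and (ii) the differentiation under the integral defining $\partial_z S_s(z)$, which is immediate from the uniform bound $|(z-y)^{-2}|\le \eta^{-2}$ valid for all $y\in\R$ once $z$ is fixed.
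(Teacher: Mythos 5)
Your proof is correct and follows the same route as the paper: plug the test function $f(s,x)=1/(z-x)$ into Proposition~\ref{prop:characlimit}, factor $(z-y)^2-(z-x)^2=(x-y)(2z-x-y)$, split $2z-x-y=(z-x)+(z-y)$, and recognize $-S_s(z)\,\partial_z S_s(z)$. You add the justifications the paper leaves implicit (real/imaginary decomposition of the complex test function, and differentiation under the integral), which is a welcome refinement but not a different argument.
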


\begin{proof}
This is an immediate consequence of Proposition \ref{prop:characlimit}. Evaluating \eqref{eq:characlimit} with the test functions $f_z(t,x)=1/(z-x)$ for $z\in\C-\R$, we obtain:

\begin{align*}
S_t(z)&=S_0(z)
+\cfrac{1}{2}\int_0^t\int\int\cfrac{\cfrac{1}{(z-x)^2}-\cfrac{1}{(z-y)^2}}{x-y}d\mu(s)(x)d\mu(s)(y)ds\\
&=S_0(z)+\cfrac{1}{2}\int_0^t\int\int\cfrac{(z-y)^2-(z-x)^2}{(z-x)^2(z-y)^2(x-y)}d\mu(s)(x)d\mu(s)(y)ds\\
&=S_0(z)+\cfrac{1}{2}\int_0^t\int\int\cfrac{2z-x-y}{(z-x)^2(z-y)^2}d\mu(s)(x)d\mu(s)(y)ds\\
&=S_0(z)+\cfrac{1}{2}\int_0^t\int\int\cfrac{z-x+(z-y)}{(z-x)^2(z-y)^2}d\mu(s)(x)d\mu(s)(y)ds\\
&=S_0(z)+\int_0^t\int\int\cfrac{z-x}{(z-x)^2(z-y)^2}d\mu(s)(x)d\mu(s)(y)ds\\
&=S_0(z)+\int_0^t\int\int\cfrac{1}{(z-x)(z-y)^2}d\mu(s)(x)d\mu(s)(y)ds\\
&=S_0(z)-\int_0^tS_s(z)\partial_zS_s(z)ds
\end{align*} 
\end{proof}

\subsubsection{Uniqueness of the limit and convergence}
\label{section:uniquenesslimit}
We fix $\mu\in C([0,T],\mes)$ a limit point of $(\mu_N(\omega))_{N\ge 1}$ and we recall that we defined $S_t:=S_{\mu(t)}$ as the Stieljes transform of the measure $\mu(t)$ for every $t\ge 0$ which is solution of $\eqref{Steljes eq}$ and satisfies $\mu(0)=\mu_0$.
\newline
Equation \eqref{Steljes eq} can be written $$\partial_tS(t,z)+S(t,z)\partial_z S(t,z)=0.$$ This is a Burger's equation on the complex plane. Uniqueness of a solution of a Burger equation is not immediate and sometimes it is not the case. In this case we should prove the uniqueness of a solution of this Burger equation using that we look for solutions that are the Stieljes transform of a certain measure at any time. We shall use  the method of characteristic. 
Set $\mathbb{H}=\{z\in\C,\, \Im(z)>0\}$ and recall by Remark \eqref{annexe remark stiejes} that the Stieljes transform on $\mathbb H$ uniquely determined a real probability measure (see Proposition \eqref{annexe prop property stieljes}).
More exactly we shall show that there exists a unique solution of the system:
\begin{equation}
\left\{
\begin{split}
&S(0,z)=S_{\mu_0}(z), \forall z\in \mathbb H \\
&\partial_tS(t,z)+S(t,z)\partial_zS(t,z)=0, \forall t>0, \forall z\in\mathbb H\\
&\text{ for all }t\ge 0,\, S(t,.)\text{ is the Stieljes transform of a real probability measure}
\end{split}
\right.
\label{system Stieljes}
\end{equation}
Assume that the uniqueness of solution of $\eqref{system Stieljes}$ has been proved. Since the Stieljes transform on $\mathbb H$ of a real probability measure uniquely characterizes this measure, we deduce from Proposition \eqref{prop:charastieljes} $(\mu_N(\omega))_{N\ge 1}$ has a unique limit point.
\newline
We shall prove some general lemmas in order to apply the characteristic method as the existence of the characteristics as solutions of a ordinary differential equations and secondly the fact that we can go back up the characteristic that have been built.

\begin{lemma}
\label{lemma:existencechara}
For every $z\in \mathbb H$, for every $t>0$ there exists a unique $r\in \mathbb H$ (which depends on $z$ and $t$) such that: $$z=r+tS_{\mu_0}(r).$$
\end{lemma}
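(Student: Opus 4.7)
The plan is to prove existence and uniqueness separately, using that $S_{\mu_0}$ is holomorphic on $\mathbb{H}$ with the two crucial identities
\[
\Im S_{\mu_0}(r) = -\Im r \int_\R \cfrac{d\mu_0(x)}{|r-x|^2}, \qquad |S_{\mu_0}(r)| \le \cfrac{1}{\Im r},
\]
and the telescoping identity $S_{\mu_0}(r_1)-S_{\mu_0}(r_2) = (r_2-r_1)\int_\R \frac{d\mu_0(x)}{(r_1-x)(r_2-x)}$. I will let $F(r) := r + tS_{\mu_0}(r)$ and $\Omega_t := \{r\in\mathbb{H}: t\int \frac{d\mu_0(x)}{|r-x|^2}<1\}$, so that, by the first identity above, $\Omega_t$ is exactly the open set on which $F$ takes values in $\mathbb{H}$, and moreover $0<\Im F(r)\le \Im r$ on $\Omega_t$.

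For uniqueness, suppose $r_1,r_2 \in \mathbb{H}$ are two solutions of $z=r+tS_{\mu_0}(r)$. From the telescoping identity, either $r_1=r_2$ or
\[
1 = t\int_\R \cfrac{d\mu_0(x)}{(r_1-x)(r_2-x)}.
\]
Taking absolute values and applying Cauchy--Schwarz yields
\[
1 \le t\left(\int_\R \cfrac{d\mu_0(x)}{|r_1-x|^2}\right)^{1/2}\left(\int_\R \cfrac{d\mu_0(x)}{|r_2-x|^2}\right)^{1/2}.
\]
On the other hand, taking imaginary parts of the equation gives $\Im z = \Im r_i\bigl(1-t\int \frac{d\mu_0(x)}{|r_i-x|^2}\bigr)$, and since $\Im z>0$ and $\Im r_i>0$ we must have $t\int \frac{d\mu_0(x)}{|r_i-x|^2}<1$ for $i=1,2$. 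Combining these two bounds forces $1<1$, a contradiction; hence $r_1=r_2$.

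For existence, the strategy is a connectedness/open--closed argument on $F(\Omega_t)\subset \mathbb{H}$. By the uniqueness step (which applies verbatim to any two preimages in $\mathbb{H}$ under $F$), the holomorphic map $F\colon \Omega_t\to \mathbb{H}$ is injective; by the open mapping theorem $F(\Omega_t)$ is open. It is non-empty because any $r$ with $\Im r >\sqrt{t}$ satisfies $\int \frac{d\mu_0(x)}{|r-x|^2}\le (\Im r)^{-2}<1/t$, hence lies in $\Omega_t$. It remains to check $F(\Omega_t)$ is closed in $\mathbb{H}$: let $z_n=F(r_n)\to z\in\mathbb{H}$ with $r_n\in\Omega_t$. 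From $\Im r_n\ge \Im z_n\to \Im z>0$ the sequence is bounded away from the real axis, and from $|r_n|\le |z_n|+t|S_{\mu_0}(r_n)|\le |z_n|+t/\Im r_n\le |z_n|+t/\Im z_n$ it is also bounded in modulus. Any subsequential limit $r\in \mathbb{H}$ then satisfies $F(r)=z$ by continuity of $S_{\mu_0}$ on $\mathbb{H}$, and lies in $\Omega_t$ because $F(r)=z\in\mathbb{H}$. Thus $F(\Omega_t)$ is a non-empty open and closed subset of the connected set $\mathbb{H}$, so $F(\Omega_t)=\mathbb{H}$, which yields existence of $r$.

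The main obstacle is the existence half: showing that one cannot have $r_n$ escape either to the real axis or to infinity as $z_n\to z$. The two bounds $\Im r_n\ge \Im z_n$ (from positivity of $\Im F$ on $\Omega_t$) and $|r_n|\le |z_n|+t/\Im z_n$ (from $|S_{\mu_0}(r)|\le 1/\Im r$) are precisely what rules out both pathologies and makes the open/closed argument go through.
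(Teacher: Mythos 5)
Your proof is correct, and it takes a genuinely different route from the paper on \emph{both} halves of the lemma, in each case giving a shorter argument.

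For uniqueness, the paper subtracts the two equations to arrive at $\frac{1}{t}=\int\frac{d\mu_0(x)}{(r_1-x)(r_2-x)}$ just as you do, but then writes $r_j=a_j+ib_j$ and does a long real/imaginary-part computation to isolate a manifestly negative term on the right-hand side. Your Cauchy--Schwarz argument short-circuits all of that: the modulus of the right-hand side is bounded by $\left(\int\frac{d\mu_0}{|r_1-x|^2}\right)^{1/2}\left(\int\frac{d\mu_0}{|r_2-x|^2}\right)^{1/2}$, while taking $\Im$ of the original equation for each $r_i$ already gives $t\int\frac{d\mu_0}{|r_i-x|^2}<1$ strictly. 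The two facts are immediately incompatible. This is the cleaner route.

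For existence, the paper argues by contradiction: it assumes $r\mapsto r-\phi(r)$ (with $\phi(r)=z-tS_{\mu_0}(r)$) never vanishes, shows the reciprocal $h(r)=1/(r-\phi(r))$ is bounded holomorphic on $\mathbb{H}$, derives a Poisson-kernel representation of $h$ via the residue theorem, deduces $\Im(\phi(r)-r)\ge 0$, and finally contradicts this by sending $\Im r\to+\infty$. Your approach is a direct open-closed connectedness argument on the image $F(\Omega_t)$. The key quantitative inputs are exactly the same two bounds ($\Im F(r)\le\Im r$ on $\Omega_t$, and $|S_{\mu_0}(r)|\le 1/\Im r$), but organized so that no complex-analytic machinery beyond the open mapping theorem is needed. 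Two small points worth making explicit when writing it up: (a) $\Omega_t$ need not be connected, but $F$ is injective on all of $\Omega_t$ by the uniqueness step, hence non-constant on each component, so $F$ is still an open map; (b) the subsequential limit $r$ you extract in the closedness step lands in $\Omega_t$ precisely because $F(r)=z$ has $\Im z>0$, which forces $t\int\frac{d\mu_0}{|r-x|^2}<1$ — this closes the loop without any further estimate. Your write-up already contains both observations, so the argument is complete.
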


\begin{proof}
We follow the arguments of \cite{Shi}.
First of all, let us notice a general property of the Stieljes transform that shall be used throughout the proof: 
\begin{equation}
\label{eq:stieljesastuce}
\Im(S_\nu(z))=-\Im(z)\int_\R\cfrac{1}{|z-x|^2}d\nu(x)\,\text{ for }\nu\in \mes.
\end{equation}
Let us prove the uniqueness. Assume that we have $r_1$ and $r_2$ both in $\mathbb H$ that are different and such that $z=r_1+tS_{\mu_0}(r_1)$ and $z=r_2+tS_{\mu_0}(r_2)$.
If we subtract the two identities we have: $$0=r_1-r_2+tS_{\mu_0}(r_1)-tS_{\mu_0}(r_2).$$
So it gives $$r_1-r_2=t\dis\int_{\R}\left(\cfrac{1}{r_2-x}-\cfrac{1}{r_1-x}\right)d\mu_0(x)=t\int_{\R}\cfrac{r_1-r_2}{(r_2-x)(r_1-x)}d\mu_0(x).$$
Hence, we have the following relation: 
\begin{equation}
\label{eq:uniquenesscharac1}
\cfrac{1}{t}=\int_{\R}\cfrac{1}{(r_2-x)(r_1-x)}d\mu_0(x).
\end{equation}
Now we write $r_j=a_j+ib_j$ for $j=1,2$, $a_j\in\R$ and $b_j\in \R^{+*}$. Taking the imaginary and real part of \eqref{eq:uniquenesscharac1} yields: 
\begin{equation}
\begin{split}
\int_\R\cfrac{(x-a_1)(x-a_2)-b_1b_2}{|x-r_1|^2|x-r_2|^2}d\mu_0(x)=&\cfrac{1}{t}\\
\int_\R\cfrac{b_1(x-a_2)+b_2(x-a_1)}{|x-r_1|^2|x-r_2|^2}d\mu_0(x)=&0.
\end{split}
\label{sys}
\end{equation}
Thanks to the second equation of $\eqref{sys}$ we have: 
\begin{equation}
\int_{\R}\cfrac{x}{|x-r_1|^2|x-r_2|^2}d\mu_0(x)=\cfrac{a_2b_1+a_1b_2}{b_1+b_2}\int_\R\cfrac{1}{|x-r_1|^2|x-r_2|^2}d\mu_0(x) \label{first}.
\end{equation}
Then using that $$(x-a_1)(x-a_2)-b_1b_2=|x-r_2|^2+(a_2-a_1)(x-a_2)-b_2^2-b_1b_2,$$ the first equation of $\eqref{sys}$ gives that: 
\begin{equation}
\label{eq:uniquenesschara2}
\begin{split}
\cfrac{1}{t}=&\int_\R\cfrac{d\mu_0(x)}{|x-r_1|^2}+(a_2-a_1)\int_\R\cfrac{xd\mu_0(x)}{|x-r_1|^2|x-r_2|^2}\\
&-(a_2(a_2-a_1)+b_2^2+b_1b_2)\int_\R\cfrac{d\mu_0(x)}{|x-r_1|^2|x-r_2|^2}\\
&\underset{~\eqref{first}}{=}\int_\R\cfrac{d\mu_0(x)}{|x-r_1|^2}+\cfrac{(a_2-a_1)(a_2b_1+a_1b_2)}{b_1+b_2}\int_\R\cfrac{d\mu_0(x)}{|x-r_1|^2|x-r_2|^2}\\
&-(a_2(a_2-a_1)+b_2^2+b_1b_2)\int_\R\cfrac{d\mu_0(x)}{|x-r_1|^2|x-r_2|^2}.
\end{split}
\end{equation}
We can rewrite the last term of \eqref{eq:uniquenesschara2} by taking imaginary part of the relation $z=r_1+tS_{\mu_0}(r_1)$: 
$$\Im(z)=b_1-tb_1\int_\R\cfrac{1}{|r_1-x|^2}d\mu_0(x).$$
Using this identity in \eqref{eq:uniquenesschara2} gives: $$\cfrac{1}{t}=\cfrac{1}{t}-\cfrac{\Im(z)}{tb_1}-\cfrac{b_2}{b_1+b_2}((a_2-a_1)^2+(b_1+b_2)^2)\int_\R\cfrac{d\mu_0(x)}{|x-r_1|^2|x-r_2|^2}.$$
However the second term is clearly strictly smaller than $1/t$ since $z\in \mathbb H$ and $b_1$ and $b_2$ are positive which gives a contradiction.
\newline
\newline
Now let us prove the existence. The proof uses some similar ideas with the proof of the D'Alembert-Gauss theorem and the Liouville theorem for holomorphic and bounded functions on $\C$. 
\newline
Fix $z\in \mathbb H$. We remark that the function defined on $\mathbb H$ by $$\phi(r):=z-tS_{\mu_0}(r)$$ is holomorphic on $\mathbb H$ and takes value in $\mathbb H$ (since $\Im(\phi(r))=\Im(z)+\Im(r)\int_\R\frac{1}{|r-x|^2}\,d\mu_0(x)$ by \eqref{eq:stieljesastuce}).
\newline
To prove the existence in Lemma \ref{lemma:existencechara}, we have to prove $r\mapsto r-\phi(r)$ has a zero on $\mathbb H$.
Assume that is not the case. 
Then, the function $h$ defined on $\mathbb H$ by $$h(r):=\cfrac{1}{r-\phi(r)}$$ is holomorphic on $\mathbb H$. Let us prove that $h$ is bounded on $\mathbb H$. 
\newline
Indeed, first we have that if $r\in \mathbb H$ and $0<\Im(r)<\Im(z)/2$, then $\Im(r-\phi(r))\le -\Im(z)/2<0$ which gives that for $r\in \mathbb H$ with $0<\Im(r)<\Im(z)/2$, $$|h(r)|\le\cfrac{1}{|\Im(r-\phi(r))|}\le\cfrac{2}{\Im(z)}=:C_1.$$
Secondly, we see that for all $r\in \mathbb H$, $$|\phi(r)|\le |z|+t\cfrac{1}{\Im(r)}$$ and so in particular if $\Im(r)\ge \Im(z)/2$, we get: $$|\phi(r)|\le |z|+t\cfrac{2}{\Im(z)}.$$
Hence we can find constants $K, C_2$ such that for all $r\in\mathbb H$ such that $|r|\ge K$ and $\Im(r)\ge \Im(z)/2$ we have $|h(r)|\le C_2$. By continuity of $h$ on $\mathbb H$ we can find a constant $C_3$ such that for all $r\in\mathbb H$ such that $\Im(r)\ge \Im(z)/2$ and $|r|\le K$ we get $|h(r)|\le C_3$.
\newline
So for all $r\in \mathbb H$, $|h(r)|\le \max(C_1,C_2,C_3)$. It proves that $h$ is bounded on $\mathbb H$.
\newline
For all $\varepsilon>0$, we define $h_\varepsilon(r):=h(r+i\varepsilon)$ for $r\in\C$ such that $\Im(r)> -\varepsilon$.
\newline
From now on, fix $r\in\mathbb H$. The goal shall be to express the value of $h_\varepsilon(r)$ thanks to the value of $h_\varepsilon(x)$ for $x\in\R$. 
To do so, we use the residue theorem to the function $$g_{\varepsilon,r}(z'):=\cfrac{h_\varepsilon(z')}{(z'-\Re(r))^2+\Im(r)^2},$$ with the closed curved $\gamma_R=\gamma_{1,R}\vee\gamma_{2,R}$ such that $\gamma_{1,R}=[-R,R]$ and $\gamma_{2,R}$ is the semi circle centred in $0$ which joins $R$ and $-R$ in the upper half plane for $R>0$. For $R$ large enough, there is one residue inside $\gamma_{R}$ at $z'=r$ with value $\cfrac{h_\varepsilon(r)}{2i\Im(r)}.$ 
\newline
The residue theorem yields that $R>0$ large enough: 
\begin{equation}
\label{eq:residuetheorem}
\cfrac{\pi h_\varepsilon(r)}{\Im(r)}=\int_{\gamma_{1,R}}g_{\varepsilon,r}(z')dz'+\int_{\gamma_{2,R}}g_{\varepsilon,r}(z')dz'.
\end{equation} 
This $h$ is bounded on $\mathbb H$, thanks to the dominated convergence theorem we obtain $$\int_{\gamma_{2,R}}g_{\varepsilon,r}(z')dz'\underset{R\to+\infty}{\longrightarrow} 0.$$
Passing to the limit in \eqref{eq:residuetheorem} yields \begin{equation}
\label{eq:residuetheorem2}\pi h_\varepsilon(r)=\int_{\R}h_\varepsilon(x)\cfrac{\Im(r)}{(x-\Re(r))^2+\Im(r)^2}dx.
\end{equation}
Now we notice that for $x\in\R$, $\Im(x+i\varepsilon-\phi(x+i\varepsilon))\le\varepsilon-\Im(z)$. So if we choose $\varepsilon<\Im(z)/2$, then $\Im(x+i\varepsilon-\phi(x+i\varepsilon))<0$ for all $x\in\R$.
\newline
It gives that $\Im(h_\varepsilon(x))>0$ for all $x\in\mathbb R$.
We deduce from formula \eqref{eq:residuetheorem2} that $\Im(h_\varepsilon(r))>0$ for all $r\in \mathbb H$ if $\varepsilon$ is small enough. Let $\varepsilon$ goes to 0 to obtain $\Im(h(r))\ge 0$ for all $r\in\ \mathbb H$. Hence, it gives that for all $r\in\mathbb H$, $\Im(\phi(r)-r)\ge 0$.
\newline
However for $r\in\mathbb H$, we can compute:        $$0\le\Im(\phi(r)-r)=\Im(z)+t\,\Im(r)\int_\R\cfrac{1}{|r-x|^2}\,d\mu_0(x)-\Im(r),$$ and thanks to: $$\int_\R\cfrac{1}{|r-x|^2}\,d\mu_0(x)\le\cfrac{1}{|\Im(r)|^2}, $$ we obtain for $r\in\mathbb H$ that $$0\le \Im(\phi(r)-r)\le\Im(z)+t\,\cfrac{1}{\Im(r)}-\Im(r)$$
Let $\Im(r)$ goes to $+\infty$ to have a contradiction.
\end{proof}

A second important lemma will be the study of a ordinary differential equation to construct the characteristic.

\begin{lemma}
\label{lemma:edocharac}
Let $z\in\mathbb H$ and $S(.,.)$ be a solution of \eqref{system Stieljes}. We can construct a solution of the ordinary differential equation in $\C$: 
\begin{equation}
\left\{
\begin{split}
z(0)=&z\\
z'(t)=&S((t,z(t)),
\end{split}
\right.
\label{edo}
\end{equation} 
which will exist up to the time $z(t)$ hits the real axis.
\end{lemma}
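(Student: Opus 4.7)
The plan is to view \eqref{edo} as a non-autonomous ODE in the open set $\mathbb H \subset \C$ and apply the classical Cauchy-Lipschitz (Picard-Lindel\"of) theorem, then argue that the maximal solution extends up to the time $z(t)$ meets $\R$. The key point is that for every $t \in [0,T]$, $S(t,\cdot)$ is the Stieltjes transform of a probability measure, so by Proposition \ref{annexe prop property stieljes} (or directly from the definition) it is holomorphic on $\mathbb H$ with the elementary bound $|S(t,z)| \le 1/\Im(z)$ for all $z \in \mathbb H$. These two properties will supply the local Lipschitz estimate in the $z$ variable needed to run the fixed-point argument.

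First I would record the regularity of $S$ in $t$: the integral equation \eqref{Steljes eq} shows that for every fixed $z \in \mathbb H$, the map $t \mapsto S(t,z)$ is continuous (in fact $C^1$), and the bound $|S(t,z)\partial_z S(t,z)| \le 1/\Im(z)^3$ together with Cauchy's formula gives joint continuity of $S$ on $[0,T] \times \mathbb H$. Next, for any compact subset $K \subset \mathbb H$ staying at distance $\delta>0$ from $\R$, Cauchy's integral formula applied on the disk $\overline{B(w,\delta/2)} \subset \mathbb H$ for $w \in K$ yields $|\partial_z S(t,z)| \le 2/\delta^2$ uniformly for $(t,z) \in [0,T] \times K$, hence $S(t,\cdot)$ is uniformly Lipschitz on $K$. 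Together with joint continuity in $(t,z)$, this places us in the classical setting for Picard-Lindel\"of, providing a unique local holomorphic-in-initial-condition solution $z(\cdot)$ starting from $z(0) = z \in \mathbb H$.

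Then I would extend the solution to its maximal interval of existence $[0,T^*)$ with $T^* \le T$. The standard blow-up alternative says that either $T^* = T$, or $z(t)$ exits every compact subset of $\mathbb H$ as $t \to T^*$. To control $|z(t)|$ I would use the crude bound $|z'(t)| = |S(t,z(t))| \le 1/\Im(z(t))$, which in particular gives $|z(t)| \le |z| + t/\inf_{s \le t}\Im(z(s))$, so the trajectory cannot escape to infinity without $\Im(z(t))$ first approaching $0$. Consequently, the only way the solution can fail to extend is if $\Im(z(t)) \to 0$, i.e.\ $z(t)$ hits the real axis, which is exactly the assertion of the lemma.

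The main obstacle I anticipate is not existence itself, which is routine once Cauchy-Lipschitz applies, but cleanly justifying the joint regularity of $S(t,z)$ in the two variables: the hypothesis built into \eqref{system Stieljes} only guarantees that $S$ solves an integral equation pointwise in $z$ and is, at each time, a Stieltjes transform. Turning this into enough regularity to invoke the ODE theorem requires combining the integral equation (for time continuity) with Cauchy estimates for holomorphic functions (for the local Lipschitz constant in $z$), and carefully controlling those estimates uniformly on compact subsets of $\mathbb H$ away from $\R$.
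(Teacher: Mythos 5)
Your proposal is correct and follows essentially the same strategy as the paper: apply Cauchy--Lipschitz on the strips $I_a = \{\Im(z) > a\}$ where one has a uniform Lipschitz estimate, then patch the solutions over shrinking $a$ to reach the real axis. The one technical difference is that the paper derives the Lipschitz bound $|S_t(z)-S_t(z')|\le |z-z'|/a^2$ directly from the Stieltjes representation $S_t(z)-S_t(z')=\int_\R \frac{z'-z}{(z-x)(z'-x)}\,d\gamma(t)(x)$, which is shorter and avoids both the Cauchy-integral-formula step and the explicit verification of joint continuity that you build up; the growth control $|z(t)|\le |z|+t/\inf_{s\le t}\Im(z(s))$ you invoke is a reasonable way to exclude escape to infinity, which the paper handles implicitly through the global Lipschitz bound on each $I_a$.
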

\begin{proof}
For $a> 0$, let $I_a:=\{z\in\C,\, \Im(z)> a\}$. We fix $a$ such that $0<a<\Im(z)$. We notice that $(t,z)\to S(t,z)$ is uniformly in time Lipschitz for the second variable on $\R^+ \times I_a$. Indeed, for all $t\ge 0$, we consider $\gamma(t)\in\mes$ such that $S(t,.)$ is the Stieljes transform of $\gamma(t)$ (by definition of a solution of \eqref{system Stieljes}). For $t\ge 0$ and $z,z'$ in $I_a$ we have: $$S_t(z)-S_t(z')=\int_\R\cfrac{1}{z-x}-\cfrac{1}{z'-x}\,d\gamma(t)(x)=\int_\R\cfrac{z'-z}{(z-x)(z'-x)}\,d\gamma(t)(x),$$ and for $x\in\R$, $$a^2\le|\Im(z)||\Im(z')|\le|z-x||z'-x|$$
which gives that $$|S_t(z)-S_t(z')|\le \cfrac{1}{a^2}|z-z'|.$$
Hence, by the Cauchy-Lipschitz theorem, for all $a>0$ we have a solution of the ordinary differential equation $\eqref{edo}$, $z_a$ which exists until the time $\tau_a$ which is the moment when $z_a$ hits $\{z\in\C,\, \Im(z)= a\}$. Thanks to the uniqueness part of the Cauchy-Lipschitz theorem if $a<b$, $z_a$ and $z_b$ coincide until $\tau_b$. This allow us to define a solution of $\eqref{edo}$; $z_0$ until the moment when this solution hits $\{z\in\C,\, \Im(z)=0\}$ (which is possibly infinite).
\end{proof}

Now let us prove the existence and uniqueness of a solution of \eqref{system Stieljes}.
 
\begin{prop}
\label{prop:uniquenesschara}
There exists a unique solution $S$ of $\eqref{system Stieljes}$.
\end{prop}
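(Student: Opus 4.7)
The plan is to use the method of characteristics, leveraging the two technical lemmas just established. Existence was essentially already obtained: the compactness in Proposition \ref{prop:almostsurelyprecompact} produces a limit point $\mu$ of $(\mu_N(\omega))_{N\ge 1}$, and Proposition \ref{prop:charastieljes} shows that $S_t:=S_{\mu(t)}$ satisfies the system \eqref{system Stieljes}. So the real work is uniqueness; in the process we will also recover the explicit characteristic formula $S(t,w)=S_{\mu_0}(r_{t,w})$, where $r_{t,w}$ is the pre-image produced by Lemma \ref{lemma:existencechara}.

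Given any solution $S$ and any pair $(t,w)$ with $t>0$ and $w\in\mathbb H$, I will first invoke Lemma \ref{lemma:existencechara} to obtain a unique $r\in\mathbb H$ with $w=r+tS_{\mu_0}(r)$. Lemma \ref{lemma:edocharac} then provides a characteristic curve $\zeta$ solving $\zeta'(s)=S(s,\zeta(s))$, $\zeta(0)=r$, as long as it stays in $\mathbb H$. Differentiating $S$ along this curve and invoking the Burgers equation yields
\begin{equation*}
\frac{d}{ds}\,S(s,\zeta(s))=\partial_tS(s,\zeta(s))+\zeta'(s)\,\partial_zS(s,\zeta(s))=\partial_tS+S\partial_zS=0,
\end{equation*}
so $S(s,\zeta(s))\equiv S_{\mu_0}(r)$ is constant. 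Integrating the ODE then collapses $\zeta$ to the affine curve $\zeta(s)=r+sS_{\mu_0}(r)$, and in particular $\zeta(t)=w$.

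The crucial verification is that $\zeta$ really is defined on all of $[0,t]$. Since $\Im(S_{\mu_0}(r))=-\Im(r)\int_\R|r-x|^{-2}\,d\mu_0(x)<0$, the map $s\mapsto\Im(\zeta(s))=\Im(r)+s\Im(S_{\mu_0}(r))$ is affine and strictly decreasing, yet it is positive at $s=0$ (equal to $\Im(r)$) and positive at $s=t$ (equal to $\Im(w)$). Monotonicity of an affine function then forces $\Im(\zeta(s))>0$ for every $s\in[0,t]$, so $\zeta$ remains inside $\mathbb H$ on the whole interval. Evaluating at the endpoint gives $S(t,w)=S(t,\zeta(t))=S_{\mu_0}(r)$, and since $r$ is uniquely determined by $(t,w)$ via Lemma \ref{lemma:existencechara}, the value $S(t,w)$ is pinned down, proving uniqueness.

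The conceptual obstacle is not the characteristic computation itself — once the two lemmas are available it reduces to a one-line evaluation — but rather the two preparatory inputs: the invertibility of the map $r\mapsto r+tS_{\mu_0}(r)$ on $\mathbb H$, which is precisely the content of Lemma \ref{lemma:existencechara} (proved by a careful imaginary-part inequality for uniqueness and a Liouville-type argument for surjectivity), and the non-trivial fact that characteristics issued from $\mathbb H$ cannot leave $\mathbb H$ before time $t$, which is settled here by the affine form of $\Im(\zeta)$ together with the positivity of $\Im(w)$.
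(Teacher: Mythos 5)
Your proof is correct and follows the same method-of-characteristics route as the paper, invoking Lemmas \ref{lemma:existencechara} and \ref{lemma:edocharac} in the same roles. The only cosmetic difference is in how you verify that the characteristic does not leave $\mathbb H$ before time $t$: you argue directly from the affine form of $\Im(\zeta)$ being positive at both endpoints, whereas the paper first writes down the explicit hitting time $\tau(r)=\bigl(\int_\R|r-x|^{-2}\,d\mu_0(x)\bigr)^{-1}$ and deduces $t<\tau(r)$ by contradiction — the two verifications are equivalent.
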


\begin{proof}
The existence of a solution of this system is obtained by Proposition \ref{prop:charastieljes} taking any almost sure limit point of $(\mu_N)_{N\ge 1}$ which is almost surely relatively compact by Proposition \ref{prop:almostsurelyprecompact}.
\newline
As said before we shall use the method of characteristic to prove the uniqueness. Indeed, by Lemma \ref{lemma:edocharac} let $z_r(t)$ be the solution of $\eqref{edo}$ starting from $r\in \mathbb H$ and defined until a time $\tau(r)>0$ (since $r\in\mathbb H$) which is the moment when $z_r(t)$ its the real axis. By definition of $z_r(t)$ the derivative of $t\to S(t,z_r(t))$ is 0 since $S$ is solution of $\eqref{system Stieljes}$. So, for all $r\in \mathbb H$, for all $ t<\tau(r)$, 
\begin{equation}
\left\{
\begin{split}
S(t,z_r(t))&=S_{\mu_0}(r)\\
z_r'(t)&=S(t,z_r(t))=S_{\mu_0}(r)
\end{split}
\right.
\end{equation}
Hence we deduce that for all $z\in\mathbb H$,  for all $t<\tau(z)$, 
\begin{equation}
\left\{
\begin{split}
z_r(t)&=r+tS_{\mu_0}(r),\\
S(t,z_r(t))&=S(t,r+tS_{\mu_0}(r))=S_{\mu_0}(r).
\end{split}
\right.
\label{charac}
\end{equation}
Now we have an explicit expression of $z_r(t)$ and so we can have an explicit expression of $\tau(r)$ which is solution of: $$\Im(z_r(t))=\Im(r)-t\Im(r)\int_\R\cfrac{1}{|r-x|^2}\,d\mu_0(x)=0.$$
Hence, we find  $$\tau(r)=\cfrac{1}{\dis\int_\R\cfrac{1}{|r-x|^2}\,d\mu_0(x)}.$$
Now fix $t>0$ and $z\in\mathbb H$. By Lemma \ref{lemma:existencechara}, we can find a unique $r\in\mathbb H$ such that $z=r+tS_{\mu_0}(r)$. So, we can write $S(t,z)=S(t,r+tS_{\mu_0}(r))$. Since $z\in\mathbb H$ we necessarily have that $t<\tau(r)$ because if not we would have that: $$\Im(z)=\Im(r+tS_{\mu_0}(r))=\Im(r)-t\Im(r)\int_\R\cfrac{1}{|r-x|^2}\,d\mu_0(x)=\Im(r)\left(1-\cfrac{t}{\tau(r)}\right)\le 0.$$
So using \eqref{charac} we get: $$S(t,z)=S(t,r+tS_{\mu_0}(r))=S(t,z_r(t))=S_{\mu_0}(r).$$
This proved that the value of $S(t,z)$ for every $t\ge 0$ and $z\in\mathbb H$ is uniquely determined by $S_{\mu_0}$ proving the uniqueness of a solution of \eqref{system Stieljes}.
\end{proof}

Finally, we conclude this section by stating a theorem for the convergence of the system of particles which summarizes the results obtained in this section. 

\begin{theorem}
\label{thm:systemofparticlesconvergence}
For all $N\ge 1$, let $(\lambda_t^{i,N})_{1\le i\le N}$ be the solution of $\eqref{dysonreal}$ starting from $\lambda_0^N\in \R^N_>$. Let $(\mu_N)_{N\ge 1}\in \left(C([0,T],\mes)\right)^{\N_\ge 1}$ be the sequence of empirical distributions associated to the $(\lambda_t^{i,N})_{1\le i\le N}$. Assume that $(\mu_N(0))_{N\ge 1}\in (\mes)^{\N}$ converges weakly to a measure $\mu_0\in\mes$ and that $$C_0:=\underset{N\ge 0}{\sup}\cfrac{1}{N}\sum_{i=1}^N\log((\lambda_0^{i,N})^2+1)<+\infty.$$
Then almost surely $(\mu_N)_{N\ge 1}\in \left(C([0,T],\mes)\right)^\N$ converges to the deterministic measure valued process $\mu\in C([0,T],\mes)$ whose Stieljes transform $S_t(z)=S_{\mu(t)}(z)$ is the unique solution of \begin{equation}
\label{eq:edpstieljes}
S_t(z)=S_{\mu_0}(z)-\int_0^tS_s(z)\partial_zS_s(z)ds, \,\forall t\ge 0,\, \forall z\in \C-\R
\end{equation}
\end{theorem}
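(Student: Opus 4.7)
The strategy is simply to assemble the pieces established in the preceding propositions: compactness plus uniqueness of the limit point forces convergence of the whole sequence. Fix $T>0$ and work in the Polish space $C([0,T],\mes)$ equipped with the topology described in Section \ref{Subsection:topoaspect}.

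First, I would invoke Proposition \ref{prop:almostsurelyprecompact}: under the assumption $C_0<+\infty$, the sequence $(\mu_N)_{N\ge 1}$ is almost surely relatively compact in $C([0,T],\mes)$. Let $\Omega_0\subset\Omega$ be the event of full probability on which this relative compactness holds together with the convergence $M_f^N\to 0$ uniformly on $[0,T]$ in probability strong enough so that Proposition \ref{prop:characlimit} applies; this is obtained by combining the Borel--Cantelli arguments used in the proof of Proposition \ref{prop:almostsurelyprecompact} for a countable dense family $\mathfrak F$ of $C^2$ test functions. Fix $\omega\in\Omega_0$.

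Next, let $\mu\in C([0,T],\mes)$ be any limit point of $(\mu_N(\omega))_{N\ge 1}$; write $\mu_{\phi(N)}(\omega)\to\mu$ for a subsequence. The hypothesis $\mu_N(0)\weak\mu_0$ plus continuity of the evaluation at $t=0$ gives $\mu(0)=\mu_0$. Proposition \ref{prop:characlimit} then shows that $\mu$ satisfies the weak equation \eqref{eq:characlimit} for every $f\in C^2([0,T]\times\R,\R)$ with bounded derivatives. Specializing, as in Proposition \ref{prop:charastieljes}, to the test functions $f_z(t,x)=1/(z-x)$ for $z\in\C-\R$, the associated Stieltjes transform $S_t(z):=S_{\mu(t)}(z)$ solves \eqref{Steljes eq}, and by construction $S_t(\cdot)$ is, for every $t$, the Stieltjes transform of a probability measure on $\R$; hence $S$ satisfies the system \eqref{system Stieljes}.

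The uniqueness proved in Proposition \ref{prop:uniquenesschara} now gives the payoff: the system \eqref{system Stieljes} admits a unique solution, determined entirely by $S_{\mu_0}$ via the characteristics $z=r+tS_{\mu_0}(r)$ of Lemma \ref{lemma:existencechara}. Therefore, any two limit points of $(\mu_N(\omega))_{N\ge 1}$ have the same Stieltjes transform on $\mathbb H$ at every $t\in[0,T]$, and by Proposition \ref{annexe prop property stieljes} they coincide as measures for every $t$, hence as elements of $C([0,T],\mes)$. A relatively compact sequence in a metric space with a unique accumulation point converges, so $\mu_N(\omega)\to\mu$ in $C([0,T],\mes)$ for every $\omega\in\Omega_0$, i.e.\ almost surely.

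The only step requiring care is the very first one, namely the uniform--in--$N$ control needed for Proposition \ref{prop:characlimit}: one must check that the martingale remainder $M_f^N$ in the Itô formula of Proposition \ref{prop:itoformulasystemparticles} vanishes almost surely along the chosen subsequence, uniformly in $t\in[0,T]$, for a set of test functions rich enough to recover \eqref{eq:characlimit}. This follows from the Burkholder--Davis--Gundy bound $\E\!\left[\sup_{t\le T}|M_f^N(t)|^2\right]\lesssim \|\partial_x f\|_\infty^2/N^2$ combined with Borel--Cantelli on a countable dense family of test functions, exactly as in the compactness argument. Everything else in the proof is a direct quotation of the earlier propositions, so no new calculation is needed.
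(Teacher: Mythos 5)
Your proposal is correct and follows the same route as the paper's own proof: cite Proposition \ref{prop:almostsurelyprecompact} for almost sure relative compactness, characterize any limit point via Propositions \ref{prop:characlimit} and \ref{prop:charastieljes}, and invoke the uniqueness of the solution of \eqref{system Stieljes} from Proposition \ref{prop:uniquenesschara} to conclude that a relatively compact sequence with a unique accumulation point converges. The one place where you are more explicit than the paper — the careful handling of the martingale remainders $M_f^N$ over a countable dense family of test functions, needed so that the passage to the limit in Proposition \ref{prop:characlimit} is valid simultaneously for all relevant $f$ on the same full-probability event — is a genuine gap that the paper's one-line proof of Proposition \ref{prop:characlimit} glosses over, so including it is a mild improvement rather than a deviation.
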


\begin{proof}
By Proposition \ref{prop:almostsurelyprecompact} $$(\mu_N)_{N\ge 1}\in \left(C([0,T],\mes)\right)^\N$$ is almost surely relatively compact in $\left(C([0,T],\mes)\right)$. 
\newline 
Let $\omega \in \Omega$ such that $(\mu_N(\omega))_{N\ge 1}$ is relatively compact in $(C([0,T],\mes)$. Consider $\nu(\omega)\in C([0,T],\mes)$ a limit point of $(\mu_N(\omega))_{N\ge 1}$. By Proposition \ref{prop:charastieljes} and Proposition \ref{prop:uniquenesschara} $\nu(\omega)$ is uniquely determined as the unique measure valued process $\nu\in C([0,T],\mes)$ whose Stieljes transform $S_t(z):=S_{\nu(t)}(z)$ is the unique solution of \eqref{eq:edpstieljes}.
\newline
So, the sequence $(\mu_N(\omega))_{N\ge 1}$ has a unique limit point and so converges towards it. 
\end{proof}

\subsubsection{Application: the Wigner theorem}
In this section, for all $N\ge 0$ we consider the unique strong solution of \eqref{dysonreal}, $(\lambda_t^{i,1})_{t\in\R^+, \,1\le i\le N}$ starting from $(\lambda_0^{i,N})_{1\le i\le N}=0\in\R^N$ as it was explained in Remark \ref{remark:dysonextended}. We shall use the notations of Theorem \ref{thm:systemofparticlesconvergence} as for instance that $S_t$ is the Stieljes transform of $\mu(t)$ the almost sure limit point of $(\mu_N)_{N\ge 1}$.
\newline
We recall that by the Dyson Theorem for all $N\ge 1$, if we denote $(\lambda_t^{i,1,\text{Dyson}})_{t\in\R^+, \,1\le i\le N}$ the spectrum of $(D_N^1(t))_{t\in\R^+}$, then $(\lambda_t^{i,1})_{t\in\R^+, \,1\le i\le N}$ and $(\lambda_t^{i,1,\text{Dyson}})_{t\in\R^+, \,1\le i\le N}$ have the same law.

\begin{lemma}
\label{lemma1:wignerdynamic}
The Stieljes transform of $\mu$ satisfies the following identity: for all $t\ge 0$, for all $z\in \C-\R$,: 
$$S_t(z)=t^{-1/2}S_1(zt^{-1/2}).$$
\end{lemma}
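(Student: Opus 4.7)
The plan is to exploit the Brownian scaling of the Dyson Brownian motion and then pass to the limit $N \to +\infty$ using Theorem \ref{thm:systemofparticlesconvergence}.

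First, I would establish the scaling identity at the matrix level. By Definition \ref{def:dyson}, every entry of $D_N^1(t)$ is a fixed linear combination of independent standard Brownian motions, so by the Brownian scaling property, for any fixed $t > 0$ the matrix $D_N^1(t)$ has the same law as $t^{1/2} D_N^1(1)$ (as random variables in $\mathcal S_N(\R)$). Consequently their spectra satisfy
\begin{equation*}
(\lambda_t^{1,1,\mathrm{Dyson}},\dots,\lambda_t^{N,1,\mathrm{Dyson}}) \equalaw (t^{1/2}\lambda_1^{1,1,\mathrm{Dyson}},\dots,t^{1/2}\lambda_1^{N,1,\mathrm{Dyson}}),
\end{equation*}
and by the Dyson Theorem \ref{thm:dyson} the same identity in law holds for the solutions $(\lambda_t^{i,1})_{1\le i\le N}$ of \eqref{dysonreal} starting from $0$.

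Next, I would transfer this scaling to the empirical measures. Writing $T_{\sqrt t}$ for the pushforward by $x\mapsto t^{1/2} x$, the previous step yields $\mu_N(t) \equalaw T_{\sqrt t}\, \mu_N(1)$ for every fixed $t > 0$ and every $N \ge 1$. By Theorem \ref{thm:systemofparticlesconvergence}, $\mu_N(1)$ and $\mu_N(t)$ converge almost surely (hence in law) to the deterministic measures $\mu(1)$ and $\mu(t)$. Since $T_{\sqrt t}$ is continuous on $\mes$ for the weak topology, $T_{\sqrt t}\,\mu_N(1)$ converges in law to $T_{\sqrt t}\,\mu(1)$. Identifying the two limits gives the deterministic identity
\begin{equation*}
\mu(t) = T_{\sqrt t}\,\mu(1).
\end{equation*}

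Finally, I would translate this identity into an identity on Stieljes transforms by a direct change of variables: for all $z\in \C-\R$,
\begin{equation*}
S_t(z) = \int_\R \frac{1}{z-x}\,d\mu(t)(x) = \int_\R \frac{1}{z-t^{1/2} y}\,d\mu(1)(y) = t^{-1/2} \int_\R \frac{1}{z t^{-1/2} - y}\,d\mu(1)(y) = t^{-1/2} S_1(z t^{-1/2}),
\end{equation*}
which is the desired formula. The only subtle point is the passage to the limit in the scaling identity, but because the limit $\mu$ is deterministic and $T_{\sqrt t}$ is continuous, this presents no real difficulty; everything else is routine.
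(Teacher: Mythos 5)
Your proof is correct and follows essentially the same approach as the paper: Brownian scaling at the matrix level, transfer to the Dyson particles via Theorem \ref{thm:dyson}, and passage to the $N\to\infty$ limit using the almost sure convergence of Theorem \ref{thm:systemofparticlesconvergence}. The only cosmetic difference is the bookkeeping: the paper establishes the identity $\E(S_t^N(z)) = t^{-1/2}\E(S_1^N(zt^{-1/2}))$ at fixed $N$ and passes to the limit on these expectations, while you first identify the deterministic limits of two sequences with the same law to obtain the cleaner measure-level identity $\mu(t) = T_{\sqrt t}\,\mu(1)$, then read off the Stieltjes formula by change of variables. Your version is arguably slightly more transparent since it isolates the scaling statement on $\mu$ itself rather than only on its transforms.
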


\begin{proof}
Let $t>0$. We denote $S_t^N(z)$ the Stieljes transform of $\mu_N(t)$ (let us mention that this Stieljes transform is a random varaible since $\mu_N$ is). Since $\mu_N(t)$ converges almost surely towards $\mu(t)$ in $\mes$(which is deterministic), for all $z\in\C-\R$ the dominated convergence theorem yields 
\begin{equation}
\label{eq:stieljesidentity1}
\E(S_t^N(z))\underset{N\to+\infty}{\longrightarrow}\E(S_t(z))=S_t(z).
\end{equation}
We denote $\mu_N^{\text{Dyson}}(t)$ the empirical distribution of the Dyson particles $(\lambda_t^{i,1,\text{Dyson}})_{t\in\R^+, \,1\le i\le N}$ and $S_t^{N,\text{Dyson}}$ the Stieljes transform of $\mu_N^{\text{Dyson}}(t)$. By the Dyson theorem, for all $N\ge 1$, for all $z\in\C-\R$,
\begin{equation}
\label{eq:stieljesidentity2}
\E(S_t^N(z))=\E(S_t^{N,\text{Dyson}}(z))
\end{equation}
By the scaling property of the Brownian motion: $B(t)\overset{\text{Law}}{=}\sqrt{t}B(1)$, $D_N(t)$ and $\sqrt{t}D_N(1)$ have the same law and so $(\lambda_t^{i,1,\text{Dyson}})_{1\le i\le N}$ and $(\sqrt{t}\lambda_1^{i,1,\text{Dyson}})_{1\le i\le N}$ also. 
This gives that for all $z\in\C-\R$:
\begin{equation}
\begin{split}
\label{eq:stieljesidentity3}
\E(S_t^{N,\text{Dyson}}(z))&=\E\left[\cfrac{1}{N}\sum_{i=1}^N\cfrac{1}{z-\lambda_t^{i,1,\text{Dyson}}}\right]
=\E\left[\cfrac{1}{N}\sum_{i=1}^N\cfrac{1}{z-\sqrt{t}\lambda_1^{i,1,\text{Dyson}}}\right]\\
&=t^{-1/2}\E(S_1^{N,\text{Dyson}}(zt^{-1/2}))
\end{split}
\end{equation}
Using \eqref{eq:stieljesidentity2} we obtain: \begin{equation}
\label{eq:stieljesidentity4}
\E(S_t^N(z))=t^{-1/2}\E(S_1^{N}(zt^{-1/2}))
\end{equation}
Passing to the limit in \eqref{eq:stieljesidentity4} and using \eqref{eq:stieljesidentity1} we get that for all $z\in \C-\R$ $$S_t(z)=t^{-1/2}S_1(zt^{-1/2}).$$
\end{proof}
Thanks to this lemma, to compute the Stieljes tranform at any time of $\mu$ it suffices to compute it at the the time $t=1$. 
\begin{lemma}
\label{lemma2:Wignerdynamic}
The Stieljes transform of $\mu$ at time $t=1$ is given by $S_1=S_\sigma$ where $\sigma$ is the semicircle distribution introduced in Example \ref{example:semicircle}. As a consequence $\mu(1)=\sigma$.
\end{lemma}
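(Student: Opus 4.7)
The plan is to apply the characteristic method developed in the proof of Proposition \ref{prop:uniquenesschara} to the fixed-point equation \eqref{eq:edpstieljes} for $S$, starting from the initial measure $\mu_0 = \delta_0$, and to recognise the resulting Stieljes transform as that of the semicircle law computed in Example \ref{example:semicircle}.

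First, I would verify that Theorem \ref{thm:systemofparticlesconvergence} applies with initial data $\lambda_0 = 0 \in \R^N$, so that $\mu(t)$ is well defined. The empirical initial distributions are $\mu_N(0) = \delta_0$, which trivially converge weakly to $\mu_0 = \delta_0$, and the log-moment bound $C_0 = 0 < \infty$ holds. The only subtlety is that $0 \notin \R^N_>$; this is handled via Remark \ref{remark:dysonextended}, namely for any small $t_0 > 0$ the spectrum at time $t_0$ is almost surely in $\R^N_>$, so Theorem \ref{thm:systemofparticlesconvergence} can be applied on $[t_0, 1]$, and the limit Stieljes transform is continuous up to $t = 0$ by Lemma \ref{lemma1:wignerdynamic}. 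Thus $S_0(z) = 1/z$ for all $z \in \C - \R$.

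Next, I would exploit the characteristics formula $S(t, r + t S_{\mu_0}(r)) = S_{\mu_0}(r)$ obtained in the proof of Proposition \ref{prop:uniquenesschara}. With $S_{\mu_0}(r) = 1/r$, this becomes $S(t, r + t/r) = 1/r$. Writing $S := S_t(z)$ and eliminating $r = 1/S$ from the characteristic equation $z = r + t/r$ yields the quadratic relation
\begin{equation*}
t\, S_t(z)^2 - z\, S_t(z) + 1 = 0, \qquad z \in \mathbb H.
\end{equation*}
At $t = 1$ the two roots are $(z \pm \sqrt{z^2-4})/2$. The branch is forced by the asymptotic $S_1(z) \sim 1/z$ as $|z| \to \infty$ in $\mathbb H$, which any Stieljes transform of a probability measure must satisfy; this selects $S_1(z) = (z - \sqrt{z^2-4})/2$. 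By Example \ref{example:semicircle} this is precisely $S_\sigma(z)$. Injectivity of the Stieljes transform on $\mathbb H$ (Proposition \ref{annexe prop property stieljes}) then gives $\mu(1) = \sigma$.

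The main obstacle is the handling of the branch of the square root; once one observes the normalisation $zS_t(z) \to 1$ at infinity, both the selection of the root and the compatibility with Example \ref{example:semicircle} follow immediately. A secondary but minor technical point is the initial condition $\lambda_0 = 0 \notin \R^N_>$, which must be dealt with by approximation from positive times, as described above.
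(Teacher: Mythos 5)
Your proof is correct, but it takes a genuinely different route from the paper's. The paper first establishes the scaling identity $S_t(z)=t^{-1/2}S_1(zt^{-1/2})$ (Lemma \ref{lemma1:wignerdynamic}), substitutes it into the integral version of Burgers' equation from Proposition \ref{prop:charastieljes}, and performs a change of variables $u=t^{1/2}s^{-1/2}$ in the integral to recognize $-\frac{1}{2z}S_1(z)^2$, arriving at $S_1(z)=\frac{1}{z}+\frac{1}{z}S_1(z)^2$. You instead bypass the scaling lemma entirely (except for the minor endpoint-continuity remark at $t=0$) and apply the method of characteristics from the uniqueness proof directly with $S_{\mu_0}(r)=1/r$: eliminating $r=1/S_t(z)$ from $z=r+t/r$ gives the cleaner family of equations $t\,S_t(z)^2-z\,S_t(z)+1=0$ for all $t>0$, which at $t=1$ is exactly the functional equation of Proposition \ref{annexe prop unique stieljes sqrt}. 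Your route is shorter and more transparent, since it exploits work already done in the uniqueness argument rather than re-deriving the quadratic from the integral form; the paper's route, by contrast, makes the Brownian self-similarity of the Dyson flow explicit as a stand-alone lemma, which is also used to deduce $\mu(t)=\sigma_{1/t}$ for other values of $t$ in the subsequent corollaries. Both handle the degenerate start $\lambda_0=0\notin\R^N_>$ by appealing to Remark \ref{remark:dysonextended}, and both select the branch of the square root by the decay $S_1(z)\to 0$ as $|\Im z|\to\infty$ together with Proposition \ref{annexe prop unique stieljes sqrt}.
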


\begin{proof}
Using the identity obtained in Lemma \ref{lemma1:wignerdynamic} and Proposition \ref{prop:charastieljes}, for all $z\in\C-\R$, for all $t>0$, we get
\begin{equation}
\label{eq:stieljesequation1}
S_1(z)=\sqrt{t}S_t(\sqrt{t}z)=\cfrac{1}{z}-\sqrt{t}\int_0^t s^{-1/2}S_1(zt^{1/2}s^{-1/2})s^{-1}S_1'(zt^{1/2}s^{-1/2})ds.\end{equation}
Doing the change of variable $t^{1/2}s^{-1/2}=u$ in the integral yields 
\begin{equation}
\label{eq:stieljesequation2}
\begin{split}
\sqrt{t}\int_0^t s^{-1/2}S_1(zt^{1/2}s^{-1/2})s^{-1}S_1'(zt^{1/2}s^{-1/2})ds&=\int_{1}^{+\infty}S_1(zu)S_1'(zu)u^3\,\cfrac{2}{u^3}\,du\\
&=2\int_{1}^{+\infty}S_1(zu)S_1'(zu) du.
\end{split}
\end{equation}
However we have 
\begin{equation}
\label{eq:stieljesequation3}
\int_{1}^{+\infty}S_1(zu)S_1'(zu) du=\cfrac{1}{2z}\int_{1}^{+\infty}\partial_u(\left(S_1(zu)\right)^2)du=-\cfrac{1}{2z}S_1(z)^2.
\end{equation}
So \eqref{eq:stieljesequation1} and $\eqref{eq:stieljesequation3}$ give $$S_1(z)=\cfrac{1}{z}+\cfrac{1}{z}\,S_1(z)^2.$$
We recognize the functional equation satisfied by $S_\sigma$. By Proposition \ref{annexe prop unique stieljes sqrt} we deduce that $S_1=S_\sigma$.

\end{proof}

Let state the Wigner theorem in the real and complex case.

\begin{theorem}[Wigner]
For all $N\ge 1$, let $X_N$ be a real Wigner matrix of size $N$ and $\mu_N^\text{Wigner}$ be the spectral measure of $X_N/\sqrt{N}$: $$\mu_N^\text{Wigner}=\cfrac{1}{N}\sum_{i=1}^N\delta_{\frac{\lambda_{i}^N}{\sqrt{N}}},$$ with $(\lambda_i^N)_{1\le i\le N}$ the eigenvalues of $X_N$. Then for all $f:\R\to \R$ continuous and bounded: $$\E\left[\int fd\mu_N^\text{Wigner}\right]\underset{N\to+\infty}\longrightarrow\int fd\sigma_1,$$
where $\sigma_1(dx)$ is given by $$\sigma_1(dx):=\cfrac{1}{\pi}\,\sqrt{(2-x^2)_+}\,dx.$$
\end{theorem}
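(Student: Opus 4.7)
The plan is to reduce this statement to the particle-system convergence of Theorem \ref{thm:systemofparticlesconvergence} by exploiting the matrix-time identification $\sqrt{N}\,D_N^1(1/2) \overset{\mathrm{law}}{=} X_N$ coming from Definition \ref{def:dyson}. This identity says that $X_N/\sqrt{N}$ has the same law as $D_N^1(1/2)$, so $\mu_N^{\text{Wigner}}$ has the same law as the empirical measure $\mu_N(1/2)$ of the eigenvalues of the $\beta=1$ Dyson Brownian motion at time $t=1/2$. By the Dyson Theorem \ref{thm:dyson} and Remark \ref{remark:dysonextended}, those ordered eigenvalues form a weak solution of the particle system \eqref{dysonreal} with $\beta=1$, started from $0\in\R^N$ (the degenerate initial condition is handled exactly as in that remark by restarting the dynamics at an arbitrarily small positive time where the spectrum is a.s.\ simple).

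I then apply Theorem \ref{thm:systemofparticlesconvergence} in the case $\beta=1$ to this particle system. The initial conditions satisfy $\mu_N(0)=\delta_0 \to \delta_0$ weakly and the log-moment bound $C_0=0$ is trivial, so the theorem gives that almost surely $(\mu_N)_{N\ge 1}$ converges in $C([0,T],\mes)$ to the deterministic process $\mu\in C([0,T],\mes)$ whose Stieltjes transform $S_t$ is the unique solution of \eqref{eq:edpstieljes} with $\mu_0=\delta_0$. The $\beta$-dependent term $(2-\beta)/(2N)$ appearing in Proposition \ref{prop:itoformulasystemparticles} is of order $1/N$ and disappears in the limit, so the limiting PDE is the same for $\beta=1$ as for $\beta=2$; consequently the proofs of Lemmas \ref{lemma1:wignerdynamic} and \ref{lemma2:Wignerdynamic} apply verbatim (they rely only on Brownian scaling, the Dyson identification, and the universal limiting equation), and yield the self-similarity $S_t(z)=t^{-1/2}S_\sigma(zt^{-1/2})$ on $\C-\R$, where $\sigma$ is the semicircle on $[-2,2]$ of Example \ref{example:semicircle}. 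Equivalently, $\mu(t)$ is the pushforward of $\sigma$ under the dilation $x\mapsto \sqrt{t}\,x$.

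Evaluating at $t=1/2$, a change of variables gives the density of $\mu(1/2)$ as $\sqrt{2}\,\sigma(\sqrt{2}\,x)=\frac{1}{\pi}\sqrt{(2-x^2)_+}$ on $[-\sqrt{2},\sqrt{2}]$, which is exactly $\sigma_1$. Since almost-sure convergence in $C([0,T],\mes)$ implies almost-sure weak convergence at every fixed time $t\in[0,T]$, and since $\mu_N^{\text{Wigner}}$ has the same law as $\mu_N(1/2)$, one obtains $\mu_N^{\text{Wigner}}\to\sigma_1$ in law in $\mes$. For any bounded continuous $f$, the quantity $\int f\,d\mu_N^{\text{Wigner}}$ is bounded by $\|f\|_\infty$, so dominated convergence gives $\E\!\left[\int f\,d\mu_N^{\text{Wigner}}\right]\to\int f\,d\sigma_1$, which is the claim.

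The argument is essentially bookkeeping on top of Theorem \ref{thm:systemofparticlesconvergence}; the one point meriting a careful check is that the time at which $\sqrt{N}\,D_N^1(t)$ has the real Wigner distribution is $t=1/2$ (forced by the variance $1/2$ of the off-diagonal entries in Definition \ref{def gue,goe}), and that via the self-similarity $S_t(z)=t^{-1/2}S_1(zt^{-1/2})$ this shifts the support of the limit from $[-2,2]$ (the $\beta=2$ case at $t=1$ treated in the text) to $[-\sqrt{2},\sqrt{2}]$, matching $\sigma_1$. Verifying that Lemmas \ref{lemma1:wignerdynamic}--\ref{lemma2:Wignerdynamic} really are $\beta$-independent is the only genuine verification needed, and it is immediate since both proofs use only Brownian scaling and the limiting Burgers equation, neither of which sees the value of $\beta$.
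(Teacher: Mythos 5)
Your proof is correct and follows essentially the same route as the paper: identify $X_N/\sqrt{N}$ with $D_N^1(1/2)$, apply the Dyson theorem to pass to the particle system, invoke Theorem \ref{thm:systemofparticlesconvergence} together with Lemmas \ref{lemma1:wignerdynamic}--\ref{lemma2:Wignerdynamic} to conclude $\mu(1/2)=\sigma_1$, and finish with dominated convergence. The paper already states these two lemmas and Theorem \ref{thm:systemofparticlesconvergence} in the $\beta=1$ setting (with a remark that the $\beta=2$ case is identical), so your verification that the arguments are $\beta$-independent, and your explicit change-of-variables check that the dilation of $\sigma$ by $1/\sqrt{2}$ has density $\frac{1}{\pi}\sqrt{(2-x^2)_+}$, are correct but simply make explicit what the paper leaves implicit.
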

\begin{proof}
We use the notations of Lemma \ref{lemma1:wignerdynamic}.
\newline
We recall that for all $N\ge 1$, $X_N/\sqrt{N}$ and $D_N^1(1/2)$ have the same law. Moreover, by Lemma \ref{lemma1:wignerdynamic} and Lemma \ref{lemma2:Wignerdynamic}, we have $\mu(1/2)=\sigma_1$. Let $f:\R\to\R$ be a continuous bounded function. Then for all $N\ge 1$
\begin{equation}
\label{eq:wignerdyn1}
\E\left[\int fd\mu_N^\text{Wigner}\right]=\E\left[\int fd\mu_N^\text{Dyson}(1/2)\right].
\end{equation}
By the Dyson theorem for every $N\ge 1$ we also have: 
\begin{equation}
\label{eq:wignerdyn2}
\E\left[\int fd\mu_N^\text{Dyson}(1/2)\right]=\E\left[\int fd\mu_N(1/2)\right].
\end{equation}
Moreover, by Theorem \ref{thm:systemofparticlesconvergence}, the dominated convergence theorem yields: 
\begin{equation}
\label{eq:wignerdyn3}
\E\left[\int fd\mu_N(1/2)\right]\underset{N\to+\infty}{\longrightarrow}\E\left[\int fd\mu(1/2)\right]=\int fd\mu(1/2)=\int fd\sigma_1.
\end{equation}
\eqref{eq:wignerdyn1}, \eqref{eq:wignerdyn2} and \eqref{eq:wignerdyn3} gives the result.
\end{proof}

\begin{theorem}[Wigner]
For all $N\ge 1$, let $X_N$ be a complex Wigner matrix of size $N$ and $\mu_N^\text{Wigner}$ be the spectral measure of $X_N/\sqrt{N}$: $$\mu_N^\text{Wigner}=\cfrac{1}{N}\sum_{i=1}^N\delta_{\frac{\lambda_{i}^N}{\sqrt{N}}},$$ with $(\lambda_i^N)_{1\le i\le N}$ the eigenvalues of $X^N$. Then for all $f:\R\to \R$ continuous and bounded: $$\E\left[\int fd\mu_N^\text{Wigner}\right]\underset{N\to+\infty}\longrightarrow\int fd\sigma,$$
where $\sigma(dx)$ is the semicircle law.
\end{theorem}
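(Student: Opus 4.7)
The plan is to mirror the dynamical proof just carried out in the real case, replacing $\beta = 1$ by $\beta = 2$ everywhere and adjusting the time at which we evaluate the limiting measure-valued process. Concretely, I would first identify the spectrum of a $N$ complex Wigner matrix, rescaled by $\sqrt{N}$, with the state at a fixed time of the eigenvalue process of a $\beta = 2$ Dyson Brownian motion; then apply the (already-established) mean-field limit theorem for Dyson particles in the form \ref{thm:systemofparticlesconvergence}; and finally identify the limit measure at the relevant time as the semicircle law $\sigma$.

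More precisely, the Remark following Definition \ref{def:dyson} tells us that $\sqrt{N}\,D_N^2(1)$ is a $N$ complex Wigner matrix, so $X_N/\sqrt{N}$ and $D_N^2(1)$ have the same law. By Dyson's Theorem \ref{thm:dyson} applied to $\beta = 2$, the ordered eigenvalue process of $(D_N^2(t))_{t\ge 0}$ is a weak solution of the Dyson SDE \eqref{dysonreal} with $\beta = 2$ and initial condition $0 \in \R^N$ (extended as in Remark \ref{remark:dysonextended}). Theorem \ref{thm:systemofparticlesconvergence} was stated only for $\beta = 1$, but as the author notes at the beginning of Section \ref{section:compactness}, the argument extends to $\beta = 2$ without modification: in the Itô formula of Proposition \ref{prop:itoformulasystemparticles} only the numerical constant in front of the $\frac{1}{N}\langle \mu_N(s), \partial_x^2 f\rangle$ term changes, and this term disappears as $N \to \infty$; the martingale bracket estimate \eqref{eq:martingalebound} is still $O(1/N^2)$, so the same tightness and uniqueness-via-Stieltjes-transform arguments go through, yielding a limit $\mu \in C([0,T],\mes)$ whose Stieltjes transform solves the same Burgers equation \eqref{eq:edpstieljes} with $\mu(0) = \delta_0$.

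Since the limiting PDE does not depend on $\beta$, Lemmas \ref{lemma1:wignerdynamic} and \ref{lemma2:Wignerdynamic} apply verbatim (the scaling identity $D_N^2(t) \equalaw \sqrt{t}\, D_N^2(1)$ holds by Brownian scaling, yielding $S_t(z) = t^{-1/2} S_1(z t^{-1/2})$, and plugging into the Burgers equation at $t = 1$ gives $S_1(z) = z^{-1} + z^{-1} S_1(z)^2$, whose correct determination is $S_\sigma$). Thus $\mu(1) = \sigma$. Putting it together, for $f$ continuous and bounded,
\[
\E\!\left[\int f\,d\mu_N^{\text{Wigner}}\right] = \E\!\left[\int f\,d\mu_N^{2,\text{Dyson}}(1)\right] \underset{N\to\infty}{\longrightarrow} \int f\,d\sigma,
\]
the last step by Theorem \ref{thm:systemofparticlesconvergence} extended to $\beta = 2$ together with dominated convergence. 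The only real verification needed, and the main (mild) obstacle, is to check carefully that every estimate in Section \ref{section:compactness} and in the uniqueness argument of Proposition \ref{prop:uniquenesschara} survives the change from $\beta = 1$ to $\beta = 2$; as indicated above this is straightforward because all $\beta$-dependent terms are lower order in $N$ and vanish in the limit.
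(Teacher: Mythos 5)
Your proof is correct and follows the same route as the paper: identify $X_N/\sqrt{N}$ with $D_N^2(1)$, observe that the $\beta$-dependent coefficients (martingale term and second-derivative term in the Itô formula of Proposition \ref{prop:itoformulasystemparticles}) are lower order in $N$ and vanish in the limit so that Theorem \ref{thm:systemofparticlesconvergence} and Lemmas \ref{lemma1:wignerdynamic}--\ref{lemma2:Wignerdynamic} carry over unchanged, and conclude $\mu(1)=\sigma$. You spell out the Brownian scaling and the Burgers-equation step a bit more explicitly than the paper's sketch, but the argument is the same.
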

\begin{proof}
We sketch the proof because it is similar to the real case. In this section we detailed the case $\beta=1$ to lighten the notations but all the computations are also valid for $\beta=2$. We notice that when we pass to the limit in Proposition \ref{prop:characlimit}, the coefficient in front the Brownian motions in the system of particles \eqref{dysonreal} does not play any role. Indeed this coefficient appears in the martingale term and in the term with second derivatives in the Itô formula in Proposition \ref{prop:itoformulasystemparticles} which disappear when we pass to the limit when $N$ goes to $+\infty$. Hence in the real case, the exact same Theorem \ref{thm:systemofparticlesconvergence} holds with the same equation at the limit for the measure valued process $\mu\in C([0,T],\mes)$. Since in the complex case for all $N\ge 1$, $X_N/\sqrt{N}$ and $D_N^2(1)$ have the same law, the exact same proof yields that for all $f:\R\to \R$ continuous and bounded: 
$$\E\left[\int fd\mu_N^\text{Wigner}\right]\underset{N\to+\infty}\longrightarrow\int f d\mu(1)=\int fd\sigma.$$
\end{proof}

\begin{Remark}
An interpretation of the equation \eqref{eq:characlimit} comes from the theory of free probability. The free probability were introduced by Voiculescu \cite{voiculescu2002free} and appears when we deal with non commutative random variables as random matrices. We choose in these notes to not use the language of free probability but the free probability represents an important topic in the study of random matrices as it gives amazing results and interpretations for some results that can be obtained in random matrix theory. We refer to \cite{Tao,Alicelivre,guionnet2010uses} for general introductions on this topic. The Dyson Brownian motion can be considered as the free Brownian motion \cite{biane1997free2,biane2001free3} and $\eqref{eq:characlimit}$ can be in this sense considered as a free version of the heat equation. As we saw in this section, the semicircle distribution can be viewed as the fundamental solution of this partial differential equation which means the solution with initial condition $\delta_0(dx)$. As for the heat equation, thanks to the free probability interpretation we can give an "explicit" formula of the solution starting from $\mu_0\in\mes$. The solution is given by what is called the free convolution of $\mu_0$ and the semicircle distribution. Thanks to this representation of the solutions one can study some analytic properties of solutions of \eqref{eq:characlimit} as the regularity. We refer to \cite{biane1997free} for such results. 
\end{Remark}

\subsection{Case of Ornstein-Uhlenbeck process}
\label{section:Orn}
In this section, we consider the particles which are solutions of the Ornstein Uhlenbeck equation \eqref{eq:sdeornseteinuhlenbeck}. First we explain the convergence of the system of particles to obtain the counterpart of Theorem \ref{thm:systemofparticlesconvergence} for the ornstein-Uhlenbeck case. We shall not give many proof but just explain how to adapt the proof of the Dyson case (that corresponds to the case $\theta=0$). Then, we study the long time behaviour of solutions of the Ornstein-Uhlenbeck case. We shall follow the arguments of \cite{Shi,Chan} for this case.

\subsubsection{Convergence of the system of particles}
For all $N\ge 1$, we consider the family of $N$ particles solution of the Ornstein-Uhlenbeck system of stochastic differential equations.
\begin{equation}
\left\{
\begin{split}
d\lambda_i(t)&=\cfrac{1}{N}\dis\sum_{j\ne i}\cfrac{1}{\lambda_i(t)-\lambda_j(t)}\,dt+\sqrt{\cfrac{2}{N}}\,dB_t^i-\theta \lambda_i(t) dt\,  \,\forall 1\le i \le N\\
\lambda_i(0)&=\lambda_0^i\,\, \forall 1\le i \le N
\end{split}
\right.
\label{Orns}
\end{equation}
with $\lambda_0\in D$ an initial data.
One can prove a counterpart of Theorem \eqref{thm:existenceparticuledyson} that is the existence of a strong solution for this system of SDE and such that the particles never collide for $t>0$ almost surely (see Lemma 1 of \cite{Shi}). 
As for the Dyson case, for all $N\ge 1$, we denote $\mu_N\in C([0,T],\mes)$ the empirical distribution of a solution $(\lambda_i)_{1\le i\le N}$ of \eqref{Orns} which means that for all $N$, for all $t\ge 0$ we have $$\mu_N(t)=\cfrac{1}{N}\sum_{i=1}^N\delta_{\lambda_i(t)}.$$
We can prove that almost surely the sequence $(\mu_N)_{N\ge 1}$ is relatively compact (see Section 3 of \cite{Chan,Shi}).
We can state the counterpart of Proposition \eqref{prop:characlimit}.
\begin{prop}
\label{prop:limitornsetein}
Fix $\omega\in \Omega$. Suppose that $(\mu_N(0))_{N\ge 1}$ converges in $\mes$ towards $\mu_0\in\mes$. Let $\mu\in C([0,T],\mes)$ be a limit point of the sequence $(\mu_N(\omega))_{N\ge 1}$ in the space $C([0,T],\mes)$, then $\mu(0)=\mu_0$ and for every $f\in C^2(\R,\R)$ with its derivatives bounded, for every $t\ge 0$, 
\begin{equation}
\begin{split}
\int_\R f(x)d\mu(t)(x)&=\int_\R f(x)d\mu_0(x)-\theta\int_0^t \int_\R xf'(x)d\mu(s)(x)ds\\
&+\cfrac{1}{2}\int_0^t\int\int\cfrac{f'(x)-f'(y)}{x-y}d\mu(s)(x)d\mu(s)(y)ds
\end{split}
\label{FormulaOrn}
\end{equation}
\end{prop}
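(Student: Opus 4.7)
My plan is to follow closely the proof of Proposition~\ref{prop:characlimit}, adapting the Itô identity of Proposition~\ref{prop:itoformulasystemparticles} to the Ornstein--Uhlenbeck system~\eqref{Orns}. Starting from $f\in C^2(\R,\R)$ with bounded derivatives, applying Itô's formula to each $f(\lambda_i(t))$, summing over $i$ and dividing by $N$, the singular interaction term is symmetrized exactly as in the Dyson case by writing $\tfrac{f'(x)}{x-y}+\tfrac{f'(y)}{y-x}=\tfrac{f'(x)-f'(y)}{x-y}$ and extending the kernel by $f''(x)$ on the diagonal $\{x=y\}$. The linear drift $-\theta\lambda_i\,dt$ produces the additional contribution $-\theta\int xf'(x)\,d\mu_N(s)(x)\,ds$, while the Brownian part yields a martingale $M_f^N(t)$ with $\langle M_f^N,M_f^N\rangle_t\le \tfrac{2t\|f'\|_\infty^2}{N^2}$. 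This gives, for every $t\in[0,T]$,
\begin{equation*}
\langle\mu_N(t),f\rangle=\langle\mu_N(0),f\rangle+\frac{1}{2}\int_0^t\!\int\!\int \frac{f'(x)-f'(y)}{x-y}d\mu_N(s)(x)d\mu_N(s)(y)ds-\theta\int_0^t\!\int xf'(x)d\mu_N(s)(x)ds+\frac{1}{2N}\int_0^t\langle\mu_N(s),f''\rangle ds+M_f^N(t).
\end{equation*}

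Next, along a subsequence $\phi(N)$ realizing the convergence $\mu_{\phi(N)}(\omega)\to\mu$ in $C([0,T],\mes)$, I would pass to the limit term by term. The initial term converges by hypothesis on $\mu_N(0)$; the $\tfrac{1}{2N}\langle\mu_N,f''\rangle$ term is $O(1/N)$ since $f''$ is bounded; the martingale $M_f^N$ vanishes almost surely uniformly on $[0,T]$ by the Doob/Burkholder--Davis--Gundy argument reproduced verbatim from the proof of Proposition~\ref{prop:characlimit}. For the double integral, the symmetrized kernel $(x,y)\mapsto \tfrac{f'(x)-f'(y)}{x-y}$ extends to a bounded continuous function on $\R^2$, so the uniform weak convergence $\mu_N(s)^{\otimes 2}\to \mu(s)^{\otimes 2}$ allows one to pass to the limit inside the integral and inside $\int_0^t\cdot\,ds$ by dominated convergence.

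The one genuinely delicate step is the OU drift integral $\int xf'(x)d\mu_N(s)(x)$: since $f'$ is only bounded and not compactly supported, the integrand has linear growth in $x$ and weak convergence of $\mu_N(s)$ alone is insufficient. To handle this I would adapt the Lyapunov computation of Proposition~\ref{prop:upperboundenergy} and Lemma~\ref{lemma:exploenergy} by incorporating the confining potential $V(x)=\tfrac{\theta}{2}x^2$ into the energy $\mathcal E$. The generator applied to $\mathcal E$ then gains a negative dissipative term $-\theta\langle \mu_N, x^2\rangle$ that absorbs the growth and yields, after an Itô/Grönwall argument together with the Borel--Cantelli estimate already employed in the proof of Proposition~\ref{prop:almostsurelyprecompact}, an almost sure bound $\sup_{N\ge N_0}\sup_{s\in[0,T]}\int x^2\,d\mu_N(s)(x)<+\infty$. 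This moment control gives uniform integrability of $x\mapsto xf'(x)$ against $\mu_N(s)$, uniformly in $s\in[0,T]$, and therefore the drift integral passes to the limit to $-\theta\int_0^t\int xf'(x)d\mu(s)(x)ds$.

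Collecting the four limits produces exactly the identity~\eqref{FormulaOrn}, and the equality $\mu(0)=\mu_0$ comes from the convergence of the initial empirical measures. The main obstacle, as indicated above, is the uniform second-moment bound needed to treat the unbounded-growth drift term; once this is established via the confinement brought by $\theta>0$, the remainder of the argument is a direct transcription of the Dyson proof.
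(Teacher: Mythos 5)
Your proposal follows exactly the route the paper gestures at (``adapt the Dyson proof''), and the Itô identity you derive for the OU system is correct, including the $\tfrac{1}{2N}\langle\mu_N,f''\rangle$ coefficient after the cancellation between the Itô correction $\tfrac{1}{N}\langle\mu_N,f''\rangle$ and the diagonal contribution $-\tfrac{1}{2N}\langle\mu_N,f''\rangle$ from the symmetrization. Your handling of the martingale, the $O(1/N)$ term, and the symmetrized interaction kernel (bounded by $\|f''\|_\infty$) reproduces the Dyson argument faithfully. The paper gives essentially no proof for this proposition, so your write-up supplies genuine content.

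The point you flag as delicate — passing to the limit in $\int xf'(x)\,d\mu_N(s)(x)$ — is a real gap the paper glosses over: since $f'$ is only bounded, the integrand has linear growth and the weak convergence $\mu_N(s)\to\mu(s)$ does not suffice on its own. Your proposed remedy (a strengthened Lyapunov functional $\mathcal E_V+\mathcal E_W$ with $V=\tfrac{\theta}{2}x^2$, yielding an almost sure bound $\sup_N\sup_{s\le T}\int x^2\,d\mu_N(s)<\infty$ and hence uniform integrability of $x\mapsto xf'(x)$) is the standard and correct device. You should, however, state explicitly that this requires the initial data to satisfy a uniform second-moment bound $\sup_N\tfrac{1}{N}\sum_i(\lambda_0^{i,N})^2<\infty$, which is \emph{strictly stronger} than the logarithmic condition $C_0<\infty$ used in the compactness argument of Proposition~\ref{prop:almostsurelyprecompact} and in the statement of Theorem~\ref{thm:limitornsetin}. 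Under only the log-moment hypothesis, the second moment at $t=0$ may be infinite; one then has to either invoke the dissipativity of the OU drift to regain second moments at positive times (a more technical truncation/Gr\"onwall argument), or restrict to a smaller class of test functions with $xf'(x)$ bounded and extend by density afterward. As written, your closure of the argument silently upgrades the initial-data hypothesis; once that caveat is made explicit, the proof is complete.
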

We introduce the Stieljes transform $S_t(z)=S_{\mu(t)}(z)$ of $\mu(t)$ where $\mu$ is a limit point as in Proposition \ref{prop:limitornsetein}. The counterpart of Proposition \ref{prop:charastieljes} is the following result.

\begin{prop}
Let $\mu$ a limit point as in Proposition \ref{prop:limitornsetein}, then $\mu(0)=\mu_0$ and $(S_t)_{t\ge 0}$ satisfies: 
\begin{equation}
S_t(z)=S_0(z)-\int_0^tS_s(z)\partial_zS_s(z)ds+\int_0^t [\theta z\partial_z S_s(z)+\theta S_s(z)]ds, \,\forall z\in \C-\R.
\label{Steljes eq Orn}
\end{equation}
\end{prop}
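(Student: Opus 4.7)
The plan is to mimic the proof of Proposition \ref{prop:charastieljes}, applying the identity \eqref{FormulaOrn} from Proposition \ref{prop:limitornsetein} to the family of test functions $f_z(x) = \frac{1}{z-x}$ indexed by $z \in \C - \R$. These functions are not compactly supported, but for $z\in \C-\R$ one has $|z-x|\ge |\Im(z)|>0$ uniformly in $x\in \R$, so $f_z$ and all its derivatives are bounded on $\R$; the statement of Proposition \ref{prop:limitornsetein} therefore applies (after separating real and imaginary parts, or by noting that its proof goes through identically for $\C$-valued test functions with bounded derivatives). The identification $\int f_z(x)\,d\mu(t)(x) = S_t(z)$ and $f_z'(x) = \frac{1}{(z-x)^2} = -\partial_z f_z(x)$ are the two key algebraic facts I will use throughout.

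First I would handle the interaction term $\frac{1}{2}\int\int \frac{f_z'(x)-f_z'(y)}{x-y}\,d\mu(s)(x)d\mu(s)(y)$. This is exactly the computation already carried out in the proof of Proposition \ref{prop:charastieljes}: writing
\[
\frac{\frac{1}{(z-x)^2} - \frac{1}{(z-y)^2}}{x-y} = \frac{2z-x-y}{(z-x)^2(z-y)^2} = \frac{1}{(z-x)^2(z-y)} + \frac{1}{(z-x)(z-y)^2},
\]
and using Fubini, this double integral equals $-2\,S_s(z)\,\partial_z S_s(z)$, so the factor $1/2$ gives exactly the Dyson contribution $-S_s(z)\,\partial_z S_s(z)$ appearing in \eqref{Steljes eq Orn}.

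Next I would treat the confinement term $-\theta \int x f_z'(x)\,d\mu(s)(x) = -\theta \int \frac{x}{(z-x)^2}\,d\mu(s)(x)$. The algebraic identity $x = z - (z-x)$ yields
\[
\frac{x}{(z-x)^2} = \frac{z}{(z-x)^2} - \frac{1}{z-x},
\]
so, integrating against $\mu(s)$ and using that $\int \frac{1}{(z-x)^2}d\mu(s)(x) = -\partial_z S_s(z)$ and $\int \frac{1}{z-x}d\mu(s)(x) = S_s(z)$, this term equals $\theta z\,\partial_z S_s(z) + \theta S_s(z)$, which is precisely the new contribution appearing in \eqref{Steljes eq Orn}.

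Finally, since $\mu(0) = \mu_0$ by Proposition \ref{prop:limitornsetein}, one has $S_0 = S_{\mu_0}$, which supplies the initial datum. Assembling the three pieces through \eqref{FormulaOrn} gives the claimed integral equation \eqref{Steljes eq Orn} for every $z\in \C - \R$. The only subtle point in the plan is the justification of using the unbounded (but holomorphic) test functions $f_z$ in an identity \eqref{FormulaOrn} that is stated for real $C^2$ functions with bounded derivatives; this is handled by the uniform bound $|z-x|\ge |\Im(z)|$ as above, exactly as is tacitly done in the Dyson case of Proposition \ref{prop:charastieljes}.
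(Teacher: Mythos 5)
Your proof is correct and follows essentially the approach the paper intends (it states this proposition as "the counterpart of Proposition \ref{prop:charastieljes}" without a written proof, leaving exactly the computation you carried out). Your treatment of the interaction term reproduces the paper's algebra from Proposition \ref{prop:charastieljes}, and the identity $x/(z-x)^2 = z/(z-x)^2 - 1/(z-x)$ for the confinement term, together with the note on why the non-compactly-supported complex test functions $f_z$ are admissible, completes the argument cleanly.
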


The counterpart of \eqref{system Stieljes} is 
\begin{equation}
\left\{
\begin{split}
&S(0,z)=S_{\mu_0}(z), \forall z\in \mathbb H \\
&\partial_tS(t,z)+(S(t,z)-\theta z)\partial_zS(t,z)=\theta S(t,z), \forall t>0, \forall z\in\mathbb H\\
&\text{ for all }t\ge 0,\, S(t,.)\text{ is the Stieljes transform of a real probability measure}
\end{split}
\right.
\label{system Stieljes Orn}
\end{equation}

To prove that there is a unique solution for this system we again use the characteristic method. We follow the arguments of the Dyson case. We consider the solution of the ordinary differential equation
\begin{equation}
\left\{
\begin{split}
\label{edoornsetein}
&\partial_tz_r(t)=S(t,z_r(t))-\theta z_r(t)\\
&z(0)=r\in\mathbb H
\end{split}
\right.
\end{equation}
Using that $S(.,.)$ is a solution of \eqref{system Stieljes Orn}, we obtain:
$$\partial_t^2z_r(t)=\theta^2 z_r(t).$$ Solving this equation yields: 
\begin{equation}
\label{soledo}
z_r(t)=r\exp(-\theta t)+\cfrac{1}{\theta}S_0(r)\sinh(\theta t)
\end{equation} (if $\theta$ converges to 0 it gives that $z(t)=r+S_0(r)t$ which is the result obtained in Dyson case). In particular for every $t\ge 0$, \begin{equation}
\label{eq:stilejescharac}
S(t,z_r(t))=z_r'(t)+\theta z_{r}(t)=\exp(\theta t)S_0(r).
\end{equation}
Hence to prove the uniqueness of the solution of $\eqref{Steljes eq Orn}$ we prove that for all $t>0$, for all $z\in\mathbb H$ there exists a unique $z_0(t,z)\in\mathbb H$ such that: 
\begin{equation}
\label{charaornsetin}
z_0(t,z)\exp(-\theta t)+\cfrac{1}{\theta}S_0(z_0(t,z))\sinh(\theta t)=z.
\end{equation} This result is a consequence of Lemma \ref{lemma:existencechara}.
\newline
Thanks to \eqref{soledo} and \eqref{charaornsetin} we can follow the exact same proof as for Proposition \ref{prop:uniquenesschara} to obtain that for all $t\ge 0$ and $z\in\mathbb H$, $S(t,z)$ is uniquely determined by $S_{\mu_0}$ giving the uniqueness of a solution of \eqref{system Stieljes Orn}. We refer to \cite{Shi} for a detailed proof.
\newline
We now state a theorem for the convergence of the particles associated to an Ornstein-Uhlenbeck process.
\begin{theorem}
\label{thm:limitornsetin}
For all $N\ge 1$, let $(\lambda_t^{i,N})_{1\le i\le N}$ be the solution of $\eqref{dysonreal}$ starting from $\lambda_0^N\in \R^N_>$. Let $(\mu_N)_{N\ge 1}\in \left(C([0,T],\mes)\right)^{\N_\ge 1}$ be the sequence of empirical distributions associated to the $(\lambda_t^{i,N})_{1\le i\le N}$. Assume that $(\mu_N(0))_{N\ge 1}\in (\mes)^{\N}$ converges weakly to a measure $\mu_0\in\mes$ and that $$C_0:=\underset{N\ge 0}{\sup}\cfrac{1}{N}\sum_{i=1}^N\log((\lambda_0^{i,N})^2+1)<+\infty.$$
Then almost surely $(\mu_N)_{N\ge 1}\in \left(C([0,T],\mes)\right)^\N$ converges to the element $\mu^\theta\in C([0,T],\mes)$ whose Stieljes transform $S_t(z):=S_{\mu^\theta(t)}(z)$ is the unique solution of $$S_t(z)=S_{\mu_0}(z)-\int_0^tS_s(z)\partial_zS_s(z)ds+\int_0^t \theta z\partial_z S_s(z)+\theta S_s(z)ds, \,\forall z\in \C-\R.$$
\end{theorem}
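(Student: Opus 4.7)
The plan is to mirror step-by-step the proof of Theorem \ref{thm:systemofparticlesconvergence}, adapting each step to accommodate the extra linear drift $-\theta\lambda_i(t)\,dt$. As in the Dyson case, the proof proceeds in four stages: (i) almost sure relative compactness of $(\mu_N)_{N\ge 1}$ in $C([0,T],\mes)$; (ii) identification of limit points through the Itô-type identity \eqref{FormulaOrn} stated in Proposition \ref{prop:limitornsetein}; (iii) translation to the Stieltjes transform equation \eqref{Steljes eq Orn}; and (iv) uniqueness for the system \eqref{system Stieljes Orn} via the method of characteristics. Once uniqueness is established, any almost sure limit point of $(\mu_N)_{N\ge 1}$ coincides with the deterministic $\mu^\theta$ whose Stieltjes transform solves \eqref{system Stieljes Orn}, and almost sure convergence of the whole sequence follows from its relative compactness.

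For stage (i), I would reuse the computations of Proposition \ref{prop:almostsurelyprecompact} with the modified Itô formula: applying it to $f(x)=\log(1+x^2)$, the additional drift contributes a term bounded by $\theta T\sup_{x\in\R}|xf'(x)|\le 2\theta T$, so the containment bound \eqref{eq:compactness1} survives unchanged up to a constant. The oscillation estimate for $\langle\mu_N(t),g\rangle-\langle\mu_N(s),g\rangle$ acquires an additional deterministic term $\theta|t-s|\,\langle\mu_N(s),|x g'(x)|\rangle$, which after restricting attention to the event $\{\mu_N(s)\in K_M\}$ constructed in the first part of that proof is uniformly bounded; the Borel--Cantelli argument then yields the same compact set $\mathcal K\subset C([0,T],\mes)$. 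For stage (ii), testing \eqref{FormulaOrn} against $f_z(x)=1/(z-x)$ and using the identity $\int x/(z-x)^2\,d\mu(s)(x)=-S_s(z)-z\partial_z S_s(z)$ gives \eqref{Steljes eq Orn} directly, as in the derivation of Proposition \ref{prop:charastieljes}.

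The bulk of the work lies in stage (iv). Along a characteristic $z_r(t)$ solving \eqref{edoornsetein}, differentiating $S(t,z_r(t))$ and using \eqref{system Stieljes Orn} shows $\partial_t[e^{-\theta t}S(t,z_r(t))]=0$, so $S(t,z_r(t))=e^{\theta t}S_{\mu_0}(r)$, plugging into \eqref{edoornsetein} gives the closed linear ODE $z_r'(t)+\theta z_r(t)=e^{\theta t}S_{\mu_0}(r)$, whose explicit solution is \eqref{soledo}. The main obstacle is to show that for each $t>0$ the characteristic map $\Phi_t:\mathbb H\to\C$ defined by $\Phi_t(r)=re^{-\theta t}+\theta^{-1}\sinh(\theta t)\,S_{\mu_0}(r)$ is a bijection from $\mathbb H$ onto $\mathbb H$; this is the exact analogue of Lemma \ref{lemma:existencechara}, with the coefficient $t$ replaced by $\theta^{-1}\sinh(\theta t)>0$ and a harmless rescaling $e^{-\theta t}>0$ of the identity term. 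Injectivity reproduces the algebraic identity \eqref{sys} with these positive factors in place, and the positivity argument comparing $1/t$ with a strictly smaller quantity still closes since $e^{-\theta t}$ and $\theta^{-1}\sinh(\theta t)$ have the same sign; existence of a preimage follows by the Liouville-type argument applied to $h(r)=1/(z-\Phi_t(r))$, using that $\Im\Phi_t(r)=e^{-\theta t}\Im(r)+\theta^{-1}\sinh(\theta t)\Im(r)\int_\R|r-x|^{-2}d\mu_0(x)>0$ by \eqref{eq:stieljesastuce}, so $\Phi_t$ sends $\mathbb H$ into $\mathbb H$ and the residue computation of Lemma \ref{lemma:existencechara} transposes verbatim. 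With $\Phi_t$ a bijection, setting $z_0(t,z):=\Phi_t^{-1}(z)$ yields the explicit formula $S(t,z)=e^{\theta t}S_{\mu_0}(z_0(t,z))$, proving uniqueness and completing the proof. As a sanity check, sending $\theta\to 0$ one recovers $\Phi_t(r)=r+tS_{\mu_0}(r)$ and Theorem \ref{thm:systemofparticlesconvergence}.
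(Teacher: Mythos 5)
Your plan reproduces the paper's approach step for step, and stages (ii)--(iv) are carried out correctly: the identity $\int \frac{x}{(z-x)^2}\,d\mu = -S_\mu(z)-z\partial_z S_\mu(z)$ indeed turns \eqref{FormulaOrn} into \eqref{Steljes eq Orn}, and the observation that $\partial_t\bigl[e^{-\theta t}S(t,z_r(t))\bigr]=0$ along characteristics gives \eqref{soledo} cleanly. For stage (iv) you can avoid redoing the injectivity/surjectivity computations of Lemma \ref{lemma:existencechara}: multiplying $\Phi_t(r)=z$ by $e^{\theta t}>0$ turns it into $r + t'\,S_{\mu_0}(r)=z'$ with $t'=e^{\theta t}\theta^{-1}\sinh(\theta t)>0$ and $z'=e^{\theta t}z\in\mathbb H$, so the lemma applies verbatim, which is how the paper phrases it.

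There is, however, a genuine gap in stage (i). You assert that the extra drift contribution to the oscillation estimate, namely $\theta\int_s^t\langle\mu_N(u),|xg'(x)|\rangle\,du$, is uniformly bounded once one restricts to $\{\mu_N(u)\in K_M\}$. That claim is false as stated: $K_M$ controls only $\int\log(1+x^2)\,d\mu$, which gives no control on the first moment. Concretely, $\mu=(1-\varepsilon)\delta_0+\varepsilon\delta_R$ with $\varepsilon=M/\log(1+R^2)$ lies in $K_M$, yet for a function with $|g'|\equiv 1$ one has $\langle\mu,|xg'(x)|\rangle=\varepsilon R = MR/\log(1+R^2)\to+\infty$ as $R\to\infty$. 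The containment step with $f(x)=\log(1+x^2)$ is fine, because there $xf'(x)=2x^2/(1+x^2)\le 2$ is bounded; the problem is only with a generic $g\in C^2$ having $\|g\|_\infty+\|g'\|_\infty+\|g''\|_\infty\le 1$. The repair is to observe that the dense family $(f_i)_{i\ge1}$ of Section \ref{Subsection:topoaspect} may be chosen compactly supported (its span only needs to be dense among continuous compactly supported functions), so that $xf_i'(x)$ is a bounded function and the extra drift term is deterministically bounded by $\theta|t-s|\sup_x|xf_i'(x)|$. Alternatively, one can exploit the confining OU drift to derive via Grönwall a uniform bound on $\E[\langle\mu_N(t),x^2\rangle]$, but that would require second moments of $\mu_N(0)$. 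The paper itself does not confront this and refers to \cite{Shi,Chan} for the compactness; if you keep the argument in-house, the compactly supported test function fix must be made explicit.
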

\subsubsection{Long time behaviour and stationary solutions}

In this section we consider $\theta>0$ and let $\mu^\theta\in C([0,T],\mes)$ be the almost sure limit of the system of particles for the Ornsetin Uhlenbeck process obtained in Theorem \ref{thm:limitornsetin}. We want to look at the convergence of the process of measures $(\mu^\theta(t))_{t\ge 0}$ when $t$ goes to $\infty$. Actually we shall prove that it convergences to a semi-circle distribution.

\begin{theorem}
\label{thm:longtimebhaviou}
The measure valued process $(\mu_\theta(t))_{t\ge 0}$ converges weakly towards the measure $\sigma_\theta\in\mes$ with density: $$\sigma_\theta(x)=\cfrac{\theta}{\pi}\sqrt{\cfrac{2}{\theta}-x^2}\,\,\mathds{1}_{|x|\le\sqrt{2/\theta}}.$$
\end{theorem}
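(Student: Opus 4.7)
The plan is to identify the pointwise limit of the Stieltjes transform $S_t(z):=S_{\mu^\theta(t)}(z)$ on $\mathbb{H}$ by pushing the characteristic method of Section~\ref{section:Orn} to $t=+\infty$, and then to invoke the standard fact (recalled in Appendix~\ref{annexesectionstieljes}) that pointwise convergence on $\mathbb{H}$ of Stieltjes transforms of probability measures is equivalent to weak convergence of the measures.

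Recall from \eqref{soledo}--\eqref{eq:stilejescharac} that for every $r\in\mathbb{H}$ and $t\ge 0$,
$$z_r(t)=re^{-\theta t}+\frac{\sinh(\theta t)}{\theta}\,S_{\mu_0}(r),\qquad S_t\bigl(z_r(t)\bigr)=e^{\theta t}S_{\mu_0}(r).$$
Fix $z\in\mathbb{H}$. Setting $\tilde z:=ze^{\theta t}$ and $\tilde t:=\sinh(\theta t)e^{\theta t}/\theta>0$, the equation $z_r(t)=z$ reads $\tilde z=r+\tilde t\,S_{\mu_0}(r)$, which by Lemma~\ref{lemma:existencechara} admits a unique solution $r_t\in\mathbb{H}$. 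Writing $C_t:=r_te^{-\theta t}\in e^{-\theta t}\mathbb{H}=\mathbb{H}$, this becomes
$$z=C_t+\frac{\sinh(\theta t)}{\theta}\,S_{\mu_0}\!\bigl(C_te^{\theta t}\bigr),\qquad S_t(z)=\frac{1}{C_t}\Bigl[C_te^{\theta t}\,S_{\mu_0}\!\bigl(C_te^{\theta t}\bigr)\Bigr].$$

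The heart of the argument is to analyse the large-$t$ behaviour of $C_t$. Since $\mu_0$ is a probability measure, $wS_{\mu_0}(w)\to 1$ as $|w|\to+\infty$ with $\Im(w)$ bounded away from $0$ (by dominated convergence in $\int\tfrac{w}{w-x}d\mu_0(x)$). Combining this with $\sinh(\theta t)/e^{\theta t}\to 1/2$, any accumulation point $C_\infty\in\mathbb{H}$ of $(C_t)$ must satisfy
$$z=C_\infty+\frac{1}{2\theta C_\infty},\qquad\text{i.e.}\qquad 2\theta C_\infty^2-2\theta z\,C_\infty+1=0.$$
Of the two roots $\tfrac{1}{2}(z\pm\sqrt{z^2-2/\theta})$, only the one giving $1/C_\infty$ in the lower half-plane and with $1/C_\infty\sim 1/z$ as $|z|\to+\infty$ is admissible, forcing the $+$ branch; hence $C_t\to C_\infty=\tfrac{1}{2}(z+\sqrt{z^2-2/\theta})$ along every subsequence, and so $C_t$ itself converges. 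Passing to the limit in the formula for $S_t(z)$ yields
$$S_\infty(z)=\frac{1}{C_\infty}=\theta\Bigl(z-\sqrt{z^2-2/\theta}\Bigr),$$
after rationalisation. Using Example~\ref{example:semicircle} and the scaling $\sigma_\theta=\mathrm{Law}\bigl(X/\sqrt{2\theta}\bigr)$ with $X\sim\sigma$, one checks that $S_\infty=S_{\sigma_\theta}$, and the Stieltjes inversion theorem (Appendix~\ref{annexesectionstieljes}) concludes that $\mu^\theta(t)\rightharpoonup\sigma_\theta$.

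The main obstacle is the \emph{a priori} control on $(C_t)_{t\ge 0}$: one must rule out $C_t\to\partial\mathbb{H}$ (i.e.\ $\Im C_t\to 0$) and $|C_t|\to\infty$ uniformly in $t$. Both can be excluded by taking imaginary parts in the characteristic equation, using $\Im\bigl(S_{\mu_0}(w)\bigr)=-\Im(w)\int|w-x|^{-2}d\mu_0(x)$, and comparing with $\Im(z)>0$ fixed; this mirrors the compactness estimates carried out in the proof of Lemma~\ref{lemma:existencechara}. Once compactness of $(C_t)$ inside $\mathbb{H}$ is secured, the remainder of the proof is algebraic.
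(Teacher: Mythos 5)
Your plan matches the paper's proof almost line for line: your $C_t$ is exactly the paper's $\exp(-\theta t)z_0(t,z)=a(t)+ib(t)$, your reduction $\tilde z = r + \tilde t\, S_{\mu_0}(r)$ is exactly how the paper invokes Lemma~\ref{lemma:existencechara}, the limiting quadratic $z=C_\infty+\tfrac{1}{2\theta C_\infty}$, the choice of branch, and the identification of $\theta(z-\sqrt{z^2-2/\theta})$ with $S_{\sigma_\theta}$ are all the same. The one place your sketch is too optimistic is the claim that \emph{both} compactness obstructions for $(C_t)$ can be ruled out ``by taking imaginary parts'': the two-sided bound $\Im(z)\le b(t)\le \tfrac12\bigl(\Im(z)+\sqrt{\Im(z)^2+4/\theta}\bigr)$ does indeed come from the imaginary part of the characteristic equation, but the bound $\sup_t|\Re C_t|<\infty$ does not — the paper obtains it by contradiction from the \emph{real} part of the equation together with a dominated-convergence argument showing $\Re\bigl(\sinh(\theta t_n)\,S_{\mu_0}(z_0(t_n,z))\bigr)\to 0$ along any hypothetical sequence with $|a(t_n)|\to\infty$. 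Your remark that this ``mirrors the compactness estimates in Lemma~\ref{lemma:existencechara}'' is also a bit off: that lemma proves existence and uniqueness at each fixed time but not uniform-in-$t$ bounds, so you would need to carry out this estimate separately, as the paper does.
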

We shall prove that the Stieljes transform of $\mu^\theta(t)$ converges to the Stieljes transform of $\sigma_\theta$ when $t$ goes to $\infty$. This implies the result by Proposition \ref{annexe prop property stieljes}.
\begin{proof}
We first recall the expression of $S_t(z)$, the Stieljes transform of $\mu_\theta(t)$ for $z\in\mathbb H$ obtained thanks to the characteristic method. Using the notations of \eqref{soledo}, \eqref{charaornsetin} and \eqref{eq:stilejescharac} we have: 
\begin{equation}
\begin{split}
S(t,z)&=S\left(t,z_0(t,z)\exp(-\theta t)+\cfrac{1}{\theta} S_0(z_0(t,z))\sinh(\theta t)\right)\\
&=S(t,z_{z_0(t,z)}(t))\\
&=\exp(\theta t)S_0(z_0(t,z)).
\end{split}
\end{equation} 
So for all $z\in\mathbb H$, $\forall t\ge 0$, we have: 
\begin{equation}
S_t(z)=\exp(\theta t)S_0(z_0(t,z)),
\label{explicit}
\end{equation}
with $z_0(t,z)$ the unique element of $\mathbb H$ such that: 
\begin{equation}
z_0(t,z)\exp(-\theta t)+\cfrac{1}{\theta}S_0(z_0(t,z))\sinh(\theta t)=z.
\label{z_0}
\end{equation}
To look at the convergence of $S(t,z)$ when $t$ goes to $\infty$, we shall look at the behaviour of $z_0(t,z)$.
\newline
We fix $z\in\mathbb H$ and write $\exp(-\theta t)z_0(t,z):=a(t)+ib(t)$. Now let us study $b$ and $a$.
\newline
By taking the imaginary part of $\eqref{z_0}$, we have: 
\begin{equation}
\label{eq:convtempslong1}
b(t)-\cfrac{\sinh(\theta t)b(t)\exp(\theta t)}{\theta}\int_\R\cfrac{1}{|z_0(t,z)-x|^2}d\mu_0(x)=\Im(z).
\end{equation}
From \eqref{eq:convtempslong1}, we deduce that $b(t)\ge \Im(z)$ and also using that $$\int_\R\cfrac{1}{|z_0(t,z)-x|^2}d\mu_0(x)\le \cfrac{1}{|\Im(z_0(t,z))|^2}=\cfrac{1}{\exp(2\theta t)b(t)^2}, $$ we get $$b(t)-\cfrac{1}{b(t)\theta}\le \Im(z).$$ Solving this quadratic inequality we obtain that for all $t\ge 0$, 
\begin{equation}
\label{eq:convtempslong2}
\Im(z)\le b(t)\le\cfrac{\Im(z)+\sqrt{\Im(z)^2+\frac{4}{\theta}}}{2}.
\end{equation}
We shall now prove that $\sup_{t>0}|a(t)|<+\infty$. Indeed if it is not the case we can find a sequence $t_n$ that goes to $+\infty$ such that $|a(t_n)|$ goes to $+\infty$ when $n$ goes to $+\infty$. 
\newline
Now we take the real part of $\eqref{z_0}$ to have that: 
\begin{equation}
\label{eq:convtempslong3}
\Re(z)=a(t_n)+\cfrac{1}{\theta}\,\,\Re\left(\sinh(\theta t_n)\int_\R\cfrac{1}{z_0(t_n,z)-x}d\mu_0(x)\right). 
\end{equation}
We remark that: 
\begin{equation}
\label{eq:convtempslong4}
\sinh(\theta t_n)\cfrac{1}{z_0(t_n,z)-x}=\cfrac{1}{\cfrac{z_0(t_n,z)}{\sinh(\theta t_n)}-\cfrac{x}{\sinh(\theta t_n)}},
\end{equation} and 
\begin{equation}
\label{eq:convtempslong5}
\Re\left(\cfrac{z_0(t_n,z)}{\sinh(\theta t_n)}\right)=a(t_n)\cfrac{\exp(\theta t_n)}{\sinh(\theta t_n)},
\end{equation} which converges in absolute value to $+\infty$ by hypothesis on $a(t_n)$. As a consequence of \eqref{eq:convtempslong4} and \eqref{eq:convtempslong5} we obtain: 
\begin{equation}
\label{eq:convtempslong6}
\begin{split}
\left|\sinh(\theta t_n)\cfrac{1}{z_0(t_n,z)-x}\right|&=\cfrac{1}{\left|\cfrac{z_0(t_n,z)}{\sinh(\theta t_n)}-\cfrac{x}{\sinh(\theta t_n)}\right|}\\
&\le \cfrac{1}{\left|\Re\left(\cfrac{z_0(t_n,z)}{\sinh(\theta t_n)}-\cfrac{x}{\sinh(\theta t_n)}\right)\right|}\\
&=\cfrac{1}{\left|\Re\left(\cfrac{z_0(t_n,z)}{\sinh(\theta t_n)}\right)-\cfrac{x}{\sinh(\theta t_n)}\right|}\underset{n\to+\infty}{\longrightarrow} 0
\end{split}
\end{equation}
Moreover we also have: $$\left| \sinh(\theta t_n)\,\cfrac{1}{z_0(t_n,z)-x}\right|\le |\sinh(\theta t_n)|\cfrac{1}{\Im(z_0(t_n,z))}=\cfrac{|\sinh(\theta t_n)|}{\exp(\theta t_n)b(t_n)}\le \cfrac{1}{b(t_n)}\le\cfrac{1}{\Im(z)}, $$ by bound \eqref{eq:convtempslong2} on $b(t)$ for all $t\ge 0$.
The dominated convergence theorem yields: \begin{equation}
\label{eq:convtempslong7}
\sinh(\theta t_n)\int_\R\cfrac{1}{z_0(t_n,z)-x}d\mu_0(x)\underset{n\to\infty}{\longrightarrow} 0, \end{equation}
which is in contradiction with the equality \eqref{eq:convtempslong3}.
\newline
At this point we obtained $t\ge 0$, $$\Im(z)\le b(t)\le\cfrac{\Im(z)+\sqrt{\Im(z)^2+4/\theta}}{2}\,\,,\,\,\, \sup_{t\ge 0}|a(t)|<+\infty.$$ 
We now want to prove that $(a(t))_{t\ge 0}$, $(b(t))_{t\ge 0}$ converges when $t$ goes to $\infty$.
Since they are bounded we only have to prove that there is only one limit point for $((a(t),b(t))_{t\ge 0}$. So let $(a,b)\in\ \R\times\R^{+*}$ be a limit point of $((a(t),b(t))_{t\ge 0}$ and a sequence $t_n$ that goes to $+\infty$ such that $\exp(-\theta t_n)z_0(t_n,z)=a(t_n)+ib(t_n)$ converges to $w:=a+ib\in\mathbb H$.
Using a similar dominated convergence theorem as for \eqref{eq:convtempslong7}, passing to the limit in \begin{equation}
\begin{split}
z&=z_0(t_n,z)\exp(-\theta t_n)+\cfrac{1}{\theta}S_0(z_0(t_n,z))\sinh(\theta t_n)\\
&=z_0(t_n,z)\exp(-\theta t_n)+\cfrac{\sinh(\theta t_n)}{\theta}\,\int_\R \cfrac{1}{z_0(t_n,z)-x}\, d\mu_0(x)
\end{split}
\end{equation}
we obtain $$z=w+\cfrac{1}{2\theta w}.$$ 
The only solution in $\mathbb H$ is: $$w:=\cfrac{z+\sqrt{z^2-\cfrac{2}{\theta}}}{2}=\cfrac{1}{\theta}\cfrac{1}{z-\sqrt{z^2-\cfrac{2}{\theta}}}.$$  
Since $(a(t),b(t))_{t\ge 0}$ has only one limit point, this proves that $\exp(-\theta t)z_0(t,z)$ converges to $w$ when $t$ goes to $+\infty$.
\newline
Using $\eqref{explicit}$ and again dominated convergence theorem we get $$S_t(z)=\exp(\theta t)\int_\R\cfrac{1}{z_0(t,z)-x}d\mu_0(x)\underset{t\to+\infty}{\longrightarrow} \cfrac{1}{w}=\theta \left(z-\sqrt{z^2-\cfrac{2}{\theta}}\right)=\int_\R\cfrac{1}{z-x}\,d\sigma_\theta(x).$$
\newline
Hence, for all $z\in\mathbb C-\R$, $S_{\mu^\theta(t)}(z)\underset{t\to\infty}{\longrightarrow} S_{\sigma_\theta}(z).$
It proves that $(\mu^\theta(t))_{t\ge 0}$ converges weakly to the measure $\sigma_\theta\in\mes$.
\end{proof}
We shall now link the long time behaviour and the notion of stationary solutions.
\begin{definition}
$\nu \in \mes$ is said to be a stationary solution of $\eqref{FormulaOrn}$ if and only if for every $f\in C^2(\R,\R)$ with its derivatives bounded, 
\begin{equation}
\begin{split}
0&=-\theta \int_\R xf'(x)d\nu(x)+\cfrac{1}{2}\int_\R\int_\R\cfrac{f'(x)-f'(y)}{x-y}d\nu(x)d\nu(y)
\end{split}
\label{FormulaOrnstat}
\end{equation}
\end{definition}

\begin{prop}
$\nu\in\mes$ is a stationary solution if and only if for all $x\in supp(\nu)$, $$\theta x=\int_\R \cfrac{d\nu(y)}{x-y}.$$
\end{prop}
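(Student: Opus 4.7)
The plan is to exploit the symmetry of the kernel $\frac{1}{x-y}$ under the swap $x\leftrightarrow y$ in order to rewrite the double integral in \eqref{FormulaOrnstat} as a single integral against $\nu$, and then to use the arbitrariness of the test function $f$.

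First, I would observe that, since $\frac{f'(x)-f'(y)}{x-y}$ extends continuously to $f''(x)$ on the diagonal, the double integral in \eqref{FormulaOrnstat} is well defined. Renaming $(x,y)\mapsto(y,x)$ and using the antisymmetry of $\frac{1}{x-y}$ shows that, at least in a principal value sense,
\begin{equation*}
\frac{1}{2}\int_\R\int_\R\frac{f'(x)-f'(y)}{x-y}\,d\nu(x)d\nu(y)
= \int_\R f'(x)\,H_\nu(x)\,d\nu(x),
\end{equation*}
where $H_\nu(x):=\mathrm{p.v.}\int_\R \frac{d\nu(y)}{x-y}$. Plugging this into \eqref{FormulaOrnstat}, stationarity of $\nu$ becomes
\begin{equation*}
\int_\R f'(x)\bigl[\theta x - H_\nu(x)\bigr]\,d\nu(x) = 0,
\end{equation*}
for every $f\in C^2(\R,\R)$ with bounded derivatives. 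Since $f'$ can be chosen to approximate any continuous compactly supported function on $\SUPP(\nu)$, this forces $\theta x = H_\nu(x)$ on $\SUPP(\nu)$, which is the claimed equation. The converse direction simply reads the same chain of equalities backwards: if $\theta x = \int_\R \frac{d\nu(y)}{x-y}$ for $x\in\SUPP(\nu)$, multiplying by $f'(x)\,d\nu(x)$, integrating, and symmetrizing gives exactly \eqref{FormulaOrnstat}.

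The main obstacle is the rigorous justification of the symmetrization step, because separating $\frac{f'(x)-f'(y)}{x-y}$ into $\frac{f'(x)}{x-y}-\frac{f'(y)}{x-y}$ yields integrals that are individually divergent near the diagonal. To handle this, I would not split the kernel directly but instead introduce a small cutoff $\chi_\varepsilon(x-y)$ that vanishes on $\{|x-y|\le\varepsilon\}$, apply the symmetrization to the regularized kernel (where each piece is integrable and the variable-swap argument is unambiguous), and then let $\varepsilon\downarrow 0$. The limit of the cutoff integral gives the principal value $H_\nu$, while the remainder controlled by $\|f''\|_\infty\,(\nu\otimes\nu)(\{|x-y|\le\varepsilon\})$ vanishes. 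This requires either assuming enough regularity of $\nu$ (which is the case in practice, since the stationary $\nu$ will turn out to be the semi-circle $\sigma_\theta$ of Theorem~\ref{thm:longtimebhaviou}, which has a bounded density on a compact support), or else phrasing the final equation $\theta x = \int \frac{d\nu(y)}{x-y}$ itself in the principal-value sense.

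Finally, to pass from ``$\int f'(x)[\theta x - H_\nu(x)]\,d\nu(x)=0$ for all admissible $f$'' to the pointwise equality on $\SUPP(\nu)$, I would use that $\{f'\,:\,f\in C^2(\R,\R)\text{ with bounded derivatives}\}$ contains $C_c^1(\R)$, which is dense in $C_c(\SUPP(\nu))$. The classical Riesz argument then shows that $\theta x - H_\nu(x)=0$ for $\nu$-almost every $x$, and by continuity on the support (again using the regularity of $\nu$) this extends to every $x\in\SUPP(\nu)$.
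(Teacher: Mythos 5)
Your proposal is correct and follows essentially the same route as the paper: symmetrize the double integral via the antisymmetry of $1/(x-y)$ to rewrite it as $\int f'(x)\,H_\nu(x)\,d\nu(x)$, then use the arbitrariness of $f'$ to obtain the pointwise Euler--Lagrange equation on $\SUPP(\nu)$. The paper states these two steps tersely (and defers the principal-value interpretation of $\int \frac{d\nu(y)}{x-y}$ to the subsequent Definition~\ref{def:hilbtransfo}), whereas you spell out the $\varepsilon$-cutoff justification and the density argument explicitly; that extra care is a net improvement, not a divergence in method.
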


\begin{proof}
By definition of a stationary measure we have that $\mu\in\mes$ is a stationary measure if and only if for every $f\in C^2(\R,\R)$ with its derivatives bounded, 
\begin{equation}
\begin{split}
0= \int_\R \left(-\theta x+\int_\R\cfrac{d\nu(y)}{x-y}\right)f'(x)d\nu(x).
\end{split}
\end{equation}
Since this identity should hold for all $f\in C^2(\R,\R)$ with its derivatives bounded it gives the result.
\end{proof}

For $\nu\in\mes$, let us specify the definition of the term $$\int_\R\cfrac{d\nu(y)}{x-y}$$ for $x\in supp(\nu)$ which is a priori not defined. This term must be viewed as a distribution.
\newline
We recall that the principal value of $1/x$: $P.V.(1/x)$ is a distribution defined by: $$P.V.(1/x)(\phi)=\lim_{\varepsilon \to 0}\int_{|y|\ge \varepsilon}\cfrac{\phi(y)}{y}\, dy,$$ for $\phi$ a smooth function with compact support.
\begin{definition}
\label{def:hilbtransfo}
Let $\nu\in\mes$, we define the Hilbert transform of $\nu$ as the distribution $$\mathcal H(\nu)=P.V.(1/x)\ast \nu.$$
We can express the Hilbert transform of $\mu$ as
 $$\mathcal H(\nu)(x)=\lim_{\varepsilon \to 0}\int_{|x-y|\ge \varepsilon}\cfrac{\nu(dy)}{x-y}$$ if this quantity is well defined. This quantity is linked with the Stieljes transform of $\nu$ by the following formula: $$\mathcal H(\nu)(x)=\lim_{\varepsilon\to 0}\cfrac{S_\nu(x+i\varepsilon)+S_\nu(x-i\varepsilon)}{2}=\lim_{\varepsilon\to 0}\Re(S_\nu(x+i\varepsilon)).$$

\end{definition}

With this definition we can reformulate the equation for a stationary solution $\nu\in\mes$ writing that $\nu\in\mes$ is a stationary solution if and only if for all $x\in supp(\nu)$: 
\begin{equation}
\label{equationhilberttransfoorns}
\mathcal H(\nu)(x)=\theta x.
\end{equation}

\begin{example}
We can compute the Hilbert transform of the Wigner semi circle law using its Stieljes transform. For all $x\in [-\sqrt{2/\theta},\sqrt{2/\theta}]$: $$\mathcal H(\sigma_\theta)(x)=\theta x,$$ which proves that $\sigma_\theta$ is a stationary solution. This is coherent with Theorem \ref{thm:longtimebhaviou}. In other words, a solution of \eqref{FormulaOrn} converges in long time to a stationary solution of the equation.
\end{example}
To conclude this section let us mention an heuristic to solve \eqref{equationhilberttransfoorns}. We focus on the case $\theta=1/2$ to lighten the notations.
\newline
Let $\nu\in\mes$ with a density with respect to Lebesgue measure such that for all $x\in supp(\mu)$
$$x=2\mathcal H(\nu)(x).$$
We can rewrite this equality as saying that for all $x\in\R$ we have
\begin{equation}
\label{eq:solvestieljes}
(x-2\mathcal H(\nu)(x))\nu(x)=0
\end{equation}
Using the properties of the Stieljes transform (see Proposition \ref{annexe prop property stieljes}), for all $x\in\R$, we have $$\lim_{\varepsilon
\to 0} \Im(S_\nu(x+i\varepsilon))=-\pi\nu(x),\,\, \lim_{\varepsilon
\to 0} \Re(S_\nu(x+i\varepsilon))=\mathcal H(\nu)(x).$$
We can rewrite \eqref{eq:solvestieljes} as \begin{equation}
\label{eq:solvestieljes2}
\lim_{\varepsilon\to 0}\left[(x-2\Re(S_\nu(x+i\varepsilon)))\Im(S_\nu(x+i\varepsilon))\right]=0
\end{equation} for all $x\in\R$.   
This gives that for all $x\in\R$: 
\begin{equation}
\label{eq:solvestieljes3}
\lim_{\varepsilon\to 0}\Im\left[(x+i\varepsilon) S_\nu(x+i\varepsilon)-S_\nu(x+i\varepsilon)^2\right]=0.
\end{equation}
Let $f(z):=zS_\mu(z)-S_\mu(z)^2$. By the properties of the Stieljes transform, $f$ is holomorphic on $\C-\R$, satisfies $\overline{f(z)}=f(\overline z)$ for all $z\in\C$ and for all $x\in\R$, $\Im\left[\lim_{\varepsilon\to 0}f(x+i\varepsilon)\right]=0$. This allows us to extend $f$ as an analytic function on $\C$. Moreover, since for all $z\in\C-\R$, we have $$S_\mu(z)=\int_\R\cfrac{1}{z-t}\,d\nu(t)=\cfrac{1}{z}\int_\R\cfrac{1}{1-\frac{t}{z}}\,d\nu(t)\underset{z\to+\infty}{\sim}\cfrac{1}{z},$$ we obtain $f(z)\underset{z\to+\infty}{\longrightarrow}1$.
The Liouville theorem gives that $f$ is constant equal to $1$. So for all $z\in\C$, $$zS_\mu(z)-S_\mu(z)^2=1.$$
We recover the functional equation that characterizes the Stieljes transform of the semicircle distribution obtained in Proposition \ref{annexe prop unique stieljes sqrt}.
\newpage
\part{Large deviations}
\label{sec:largedev}
The goal of large deviations is to understand how small is the probability that rare events append. The most usual example is the deviations of the empirical mean of iid random variables from its mean. By the law of large numbers we know that if $(X_i)_{i\in\N_{\ge 1}}$ are iid real random variables in $L^1$ then $(X_1+...+X_n)/n$ converges almost surely towards $\E(X_1)$. The large deviations theorem associated to the law of large numbers is the Cramer theorem.
\begin{theorem}[Cramer \cite{Demb}]
Let $(X_i)_{i\in\N_{\ge 1}}$ be a family of real random variables and $\Lambda(t)=\log(\E[\exp(tX_1)])$ be the logarithmic moment generating function of $X_1$. Assume that for all $t\in\R$, $\Lambda(t)<+\infty$. Let $\Lambda^*(x)=\sup_{t\in\R}(tx-\Lambda(t))$ be the Legendre transform of $\Lambda$. Then for all $x>\E(X_1)$ we have: $$\lim_{n\to\infty}\cfrac{1}{n}\log\left[\mP\left(\sum_{i=1}^nX_i\ge nx\right)\right]=-\Lambda^*(x).$$
\end{theorem}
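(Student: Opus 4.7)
The plan is to prove matching upper and lower bounds on $n^{-1}\log\mP(S_n \geq nx)$, writing $S_n := X_1 + \cdots + X_n$. The upper bound is a standard Chernoff argument: for every $t \geq 0$, independence and Markov's inequality give
\[\mP(S_n \geq nx) \leq e^{-tnx}\,\E[e^{tX_1}]^n = e^{-n(tx - \Lambda(t))}.\]
Since $x > \E(X_1) = \Lambda'(0)$ and $\Lambda$ is smooth and convex on all of $\R$ (by the finiteness hypothesis), the supremum of $t \mapsto tx - \Lambda(t)$ over $\R$ is attained at some $t^* \geq 0$, so restricting the supremum to $t \geq 0$ is harmless and one obtains $\limsup_n \frac{1}{n}\log\mP(S_n \geq nx) \leq -\Lambda^*(x)$.

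The matching lower bound is where all the work lies, and I would obtain it through the classical Cramér exponential tilt. Assume first that $x$ belongs to the open range of $\Lambda'$; the complementary case corresponds to $x$ exceeding the essential supremum of $X_1$, for which $\mP(S_n \geq nx) = 0$ and $\Lambda^*(x) = +\infty$, so the statement is trivial. Under this assumption, by strict convexity of $\Lambda$ (we may assume $X_1$ non-constant), there is a unique $t^* > 0$ with $\Lambda'(t^*) = x$, and $\Lambda^*(x) = t^* x - \Lambda(t^*)$. Introduce the tilted law $\tilde\mu$ on $\R$ defined by $d\tilde\mu/d\mu(y) = e^{t^*y - \Lambda(t^*)}$, where $\mu$ is the law of $X_1$, and let $(Y_i)_{i \geq 1}$ be i.i.d.\ with law $\tilde\mu$ and partial sum $T_n$; a direct calculation yields $\E[Y_1] = \Lambda'(t^*) = x$ and $\Var(Y_1) = \Lambda''(t^*) < +\infty$. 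Rewriting the joint density of $(X_i)_{1\le i\le n}$ in terms of that of $(Y_i)_{1\le i\le n}$ gives the identity
\[\mP(S_n \in A) = \E\left[e^{-t^*T_n + n\Lambda(t^*)}\,\mathds{1}_{T_n \in A}\right],\]
valid for every Borel $A \subset \R$. Applied to $A = [nx, n(x+\delta)]$ and followed by lower bounding the exponential factor on this interval, it yields
\[\mP(S_n \geq nx) \geq e^{-nt^*(x+\delta) + n\Lambda(t^*)}\,\mP(T_n \in [nx, n(x+\delta)]).\]
The central limit theorem applied to the $(Y_i)$ (of mean $x$ and finite variance $\Lambda''(t^*)$) ensures $\mP(T_n \in [nx, n(x+\delta)]) \to \tfrac12$, so taking logarithms, dividing by $n$, sending $n \to \infty$ and then $\delta \to 0$ delivers $\liminf_n \frac{1}{n}\log\mP(S_n \geq nx) \geq -\Lambda^*(x)$.

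The principal obstacle is the lower bound, specifically the construction and handling of the exponential tilt: one needs the existence of the tilting parameter $t^*$ solving $\Lambda'(t^*) = x$ together with the integrability $\Lambda''(t^*) < +\infty$ that makes the tilted sum $T_n$ sufficiently concentrated to absorb the $\delta$ window. The global finiteness assumption $\Lambda(t) < +\infty$ for all $t \in \R$ is precisely what guarantees smoothness and strict convexity of $\Lambda$, so that $\Lambda'$ is a homeomorphism from $\R$ onto the open interior of its range and the tilting argument runs cleanly without boundary subtleties.
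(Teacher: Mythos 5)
The paper does not supply a proof of this statement; it is quoted as background with a citation to Dembo and Zeitouni, so there is no in-paper argument to compare against. Your proof is the standard Chernoff--Cram\'er argument (Markov's inequality for the upper bound, exponential tilting combined with the central limit theorem for the lower bound), and the main mechanism --- the tilted law $\tilde\mu$, the change-of-measure identity $\mP(S_n\in A)=\E[e^{-t^*T_n+n\Lambda(t^*)}\mathds{1}_{T_n\in A}]$, the $\delta$-window, and the CLT step --- is correct. The upper bound and the identification $\E[Y_1]=\Lambda'(t^*)=x$, $\Var(Y_1)=\Lambda''(t^*)<\infty$ are also fine.

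There is, however, a genuine gap in the case split for the lower bound. You assume $x$ lies in the \emph{open} range of $\Lambda'$, and describe the complementary case as ``$x$ exceeding the essential supremum of $X_1$'', claiming it is trivial because $\mP(S_n\ge nx)=0$ and $\Lambda^*(x)=+\infty$. But the range of $\Lambda'$ is the open interval $(a,b)$ with $b$ equal to the essential supremum, and the boundary point $x=b$ is neither in the open range nor strictly above the essential supremum, so your dichotomy misses it. At $x=b$ your triviality claim fails whenever $p:=\mP(X_1=b)>0$: then $\mP(S_n\ge nb)=\mP(X_1=\cdots=X_n=b)=p^n$, so $\frac{1}{n}\log\mP(S_n\ge nb)=\log p\neq-\infty$, while writing $tb-\Lambda(t)=-\log\E\bigl[e^{t(X_1-b)}\bigr]$ and letting $t\to+\infty$ (dominated convergence, using $X_1\le b$ a.s.) gives $\Lambda^*(b)=-\log p$. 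The theorem therefore still holds at $x=b$, but it requires this separate computation which the tilting argument cannot produce (there is no finite $t^*$ solving $\Lambda'(t^*)=b$). When $p=0$ both sides are $-\infty$ and that subcase is as you describe. Adding the atom-at-$b$ computation closes the case analysis.
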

In the language of large deviations (the complete definitions of large deviations shall be given in the Section \ref{subsection def grande dev}), we can reformulate the Cramer theorem saying that the family $(\mP_n)_{n\in \N_{\ge 1}}$ where for all $n\ge 1$, $\mP_n$ is the law of $\sum_{i=1}^n X_i/n$, satisfies a large deviations principle at speed $n$ and with good rate function $\Lambda^*$. 
\newline
Another important theorem in large deviations theory is the Sanov theorem \cite{csiszar2006simple,Demb}. Again by the law of large numbers if $(X_i)_{i\in\N_{\ge 1}}$ are iid real random variables with law $\mathcal L$ in $L^1$ then $(\delta_{X_1}+...+\delta_{X_n})/n$ converges almost surely in $\mes$ endowed with the weak convergence towards $\mathcal L$.
The Sanov theorem states that for $(X_i)_{i\in\N_{\ge 1}}$ a family of independent random variables of law $\mathcal L\in\mes$, defining $(\mP_n)_{n\in\N_{\ge 1}}$ the family of law of $(\delta_{X_1}+...+\delta_{X_n})/n$ (viewed as random variables in $\mes$), then this sequence of probabilities satisfies a large deviations principle at speed $n$ with good rate function $D(P||\mathcal L):=\int\log(dP/d\mathcal L)dP$ if $P<<\mathcal L$ and $+\infty$ otherwise. The function $D$ is called the Kullback-Leibler divergence and appears in statistics and also in information theory where it is called relative entropy.
\newline
The goal of this section is to present the theory of large deviations through the large deviations of the empirical mean of eigenvalues of random matrices. The theorem is similar with the Sanov theorem but here the eigenvalues are not at all independent and identically distributed. The good rate function shall be linked with the so-called free entropy of Voiculescu \cite{voiculescu2002free} instead of the entropy.
In 1997, G.Ben Arous and A.Guionnet proved this large deviations principle for the eigenvalues of so-called $\beta$-ensembles as the $GUE_N$ and $GOE_N$ \cite{Alice1}. Then it was extended to other models of random matrices as Ginibre ensemble \cite{arous1998large} and the circular ensemble \cite{hiai2000large}. The method they used is actually quite general and has been generalised to Coulomb and Riesz gases in any dimensions \cite{chafai2014first,serfaty2024lectures,serfaty2015coulomb,chafai2018concentration}. 

\section{Notations and motivations}
\subsection{Recalls and notations}
First we recall some general notations and classical results about random matrices that have been seen in the two first parts. 
\begin{definition}
\label{def:measureemppirique}
For a finite family of real numbers $(\lambda_i)_{1\le i\le N }$ we write $\overline{\mu_N}$ its empirical distribution, which the is a probability measure on $\R$ defined by $$\overline{\mu_N}=\dis\frac{1}{N}\dis\sum_{i=1}^N\delta_{\lambda_i}.$$

Furthermore if a matrix $M\in M_N(\C)$ with real eigenvalues $(\lambda_i)_{1\le i\le N }$, we define its spectral measure as the empirical distribution of its eigenvalues. 
\end{definition}

\begin{definition}
For $\beta>0$, let $\beta$ be the semicircular distribution defined as the probability measure on $\R$ with density $$\sigma_\beta(x):=\dis\frac{1}{\beta\pi}\mathds{1}_{[-\sqrt{2\beta},\sqrt{2\beta}]}\sqrt{2\beta-x^2}.$$ 
\end{definition}

\begin{definition}
The space of probability measure on $\R$ will be denoted $\mes$. We endow it with its usual weak topology which is metrizable by the Kantorovich-Rubinstein distance. 
\end{definition}
We do not explicit the Kantorovich-Rubinstein distance for now. The important point is to take a distance that metrizes the weak topology of $\mes$.
\newline
Let us recall that we defined the Gaussian Unitary Ensemble and the Gaussian Orthogonal Ensemble in Definition \ref{def gue,goe}. The Wigner theorem which can be viewed as the law of large numbers for the eigenvalues of random matrices (see Theorem \ref{thm strong wigner}).

\begin{theorem}[Wigner]If $X_N$ is a $N$ complex Wigner matrix (resp. $N$ real Wigner matrix) then the spectral measure of $X_N/\sqrt{N}$ almost surely converges to $\sigma_2$ (resp. $\sigma_1$). 
\end{theorem}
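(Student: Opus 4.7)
The statement is a recall of the strong Wigner theorem, already established twice in the notes (Theorem \ref{thm strong wigner}, and again dynamically via the Dyson Brownian motion in Part \ref{part:dyson}). The cleanest plan is to combine two ingredients already developed: convergence of $\E[\mu_N]$ via explicit moment or Laplace-transform computations, and an upgrade to almost sure convergence via concentration of Lipschitz spectral statistics.

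First, I would establish the convergence in expectation. For the complex case I would rely on the explicit joint density of the eigenvalues (Theorem \ref{thm law eigenvalues guen}) and the determinantal structure with Hermite kernel (Theorem \ref{thm: gue correlation function}): computing the Laplace transform $\mathcal{L}_{\E[\mu_N]}(s)$ reduces, via an integration by parts and the Christoffel--Darboux identity (Proposition \ref{prop properties hermite}), to the finite sum already obtained in Theorem \ref{thm mean wigner}, whose pointwise limit as $N\to+\infty$ is the Laplace transform of $\sigma_2$. Since $\sigma_2$ is compactly supported and determined by its moments, pointwise convergence of Laplace transforms on $\R$ yields weak convergence $\E[\mu_N]\to\sigma_2$. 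For the real case $(\beta=1)$ the determinantal structure is replaced by a Pfaffian, so I would rather run the moment method directly: expanding $\E\bigl[\frac{1}{N}\Tr(M_N/\sqrt{N})^{2k}\bigr]$ via Wick's formula produces a sum over pair partitions of $[2k]$ weighted by $N^{-k-1}$ times a power of $N$ counting closed walks on a graph; the leading order is attained exactly by non-crossing pair partitions, contributing the Catalan number $C_k$, which is the $2k$-th moment of $\sigma_1$.

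Second, I would upgrade to almost sure convergence using the concentration apparatus of Section \ref{section:concentrationineq}. By the Hoffman--Wielandt inequality (Lemma \ref{lemma: hoffman wielandt}) and Corollary \ref{coro lip}, for any $\lambda$-Lipschitz $f:\R\to\R$ the map $M\mapsto \frac{1}{N}\Tr f(M/\sqrt{N})$ is $\lambda/N$-Lipschitz on $(\mathcal H_N(\C),\|\cdot\|_2)$ (and similarly on $S_N(\R)$). Gaussian concentration for Lipschitz functions (Theorem \ref{Talagrand ineq}) then yields Proposition \ref{prop: bound variance}, namely $\Var\bigl(\int f\,d\mu_N\bigr)\le c_f/N^2$. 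Markov's inequality combined with Borel--Cantelli gives, for each fixed Lipschitz $f$, that almost surely $\int f\,d\mu_N - \E\int f\,d\mu_N\to 0$. Taking a countable family of Lipschitz test functions dense in $C_c(\R)$ and using the convergence in expectation from the first step yields almost sure weak convergence $\mu_N\to\sigma_\beta$.

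The only real obstacle I anticipate is the adaptation of the determinantal/Laplace transform argument to the real symmetric case for the expectation step; the concentration half of the proof is entirely insensitive to $\beta\in\{1,2\}$ since it only uses that the entries are independent Gaussians with bounded variance. Hence the moment method (via Wick expansion and non-crossing pair partitions) is the unifying route that handles both ensembles simultaneously, with only the radius of the semicircle $\sqrt{2\beta}$ changing.
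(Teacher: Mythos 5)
Your proposal is correct in substance, but it retraces the Part~I route (Theorem~\ref{thm strong wigner}: Hermite/Laplace-transform computation of $\E[\mu_N]$, followed by Hoffman--Wielandt plus Gaussian concentration), whereas the paper's proof of \emph{this} statement --- the one actually given within the large deviations part, as the corollary after the LDP --- is genuinely different. There, almost sure convergence for both $\beta=1$ and $\beta=2$ falls out at once from the LDP for $(Q_\beta^N)_N$ at speed $N^2$, the Frostman/Euler--Lagrange characterization of the unique equilibrium measure (Proposition~\ref{prop:euler lagrange} together with Proposition~\ref{prop:calcul log mu}, which identifies the minimizer as $\sigma_\beta$), and Proposition~\ref{prop:almostsureconvlargedev}, which converts an LDP with a unique minimizer into almost sure convergence via Borel--Cantelli. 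The LDP route uses only the joint eigenvalue density of the $\beta$-Hermite ensemble --- no determinantal or Pfaffian kernel structure, no Gaussian concentration --- so it handles arbitrary $\beta>0$ in one stroke and yields quantitative $\exp(-cN^2)$ deviation bounds rather than just $O(N^{-2})$ variance control. Your route is more elementary and avoids building the entire large deviations apparatus, but, as you anticipate, it forces a case split for $\beta=1$ (moment method) because the real ensemble is Pfaffian rather than determinantal.

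One numerical slip in your real-case sketch: the $2k$-th moment of $\sigma_1$ is $C_k/2^k$, not $C_k$. With the normalization of Definition~\ref{def gue,goe}, the off-diagonal entries of a real Wigner matrix satisfy $\sqrt{2}\,X_{i,j}\sim\mathcal N(0,1)$ and hence have variance $1/2$, so the Wick expansion over non-crossing pair partitions carries an additional $(1/2)^k$ factor. This is only a normalization constant and does not affect the validity of the method, but as written your computation would produce the moments of $\sigma_2$ rather than those of $\sigma_1$. (As a side remark, the claim that Part~\ref{part:dyson} establishes the almost sure version is a slight overstatement: the Wigner theorems stated at the end of the Dyson section are in mean only.)
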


In Part \ref{part:ginibregaussian}, we computed the distribution of the eigenvalues of a $N$ complex Wigner matrix (see the Theorem \ref{thm:law of spectrum}). A similar result holds for a matrix which is a $N$ real Wigner matrix. 
\begin{prop}
\label{prop:law goe}
Let $X_N$ be a $N$ real Wigner matrix then the probability distribution of the eigenvalues of $X_N/\sqrt{N}$ is $$Q^N_1(d\lambda_1,...,d\lambda_N):=\dis\cfrac{1}{Z_1^N}\prod_{1\le i<j\le N}|\lambda_i-\lambda_j|\exp{\left(-\dis\frac{1}{2}N\sum_{i=1}^N\lambda_i^2\right)}\prod_{i=1}^Nd\lambda_i$$ with $Z_1^N$ a normalizing constant.
\end{prop}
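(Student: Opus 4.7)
The plan is to mimic the proof of Theorem \ref{thm:law of spectrum} for $GUE_N$, with orthogonal matrices in place of unitary ones. First, by the exact computation of Proposition \ref{prop: loi Gue}, a $N$ real Wigner matrix $X_N$ admits on $S_N(\R)\cong\R^{N(N+1)/2}$ a density proportional to $M\mapsto\exp(-\Tr(M^2)/2)$: indeed $\Tr(M^2)=\sum_i M_{ii}^2+2\sum_{i<j}M_{ij}^2$ matches the independence structure where $M_{ii}\sim\mathcal N(0,1)$ and $M_{ij}\sim\mathcal N(0,1/2)$ for $i<j$. The analogs of Proposition \ref{prop:invarianceloiunitaire} (invariance under conjugation by a deterministic orthogonal matrix, using $\Tr((O^TXO)^2)=\Tr(X^2)$) and of Proposition \ref{prop: spectrum simple} (the spectrum is almost surely simple and $\det(X_N)\neq 0$) then follow verbatim. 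Next, mimicking the construction given for $\mathcal U_N(\C)$, I would produce the Haar probability measure on the compact group $\mathcal O_N(\R)$ by applying the Gram--Schmidt map to a matrix with iid $\mathcal N(0,1)$ real entries, whose law is invariant under left multiplication by any element of $\mathcal O_N(\R)$. Then, as in Proposition \ref{prop trigo unitaire diagonale}, I can write $X_N\equalaw U\Lambda U^T$ with $\Lambda$ a real diagonal random matrix, $U$ Haar distributed on $\mathcal O_N(\R)$, and $U$ independent of $\Lambda$.

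The crux is the local change of variables. Fix $M_0=U_0\Lambda_0 U_0^T\in\overset{\circ}{S_N(\R)}$ and define $\phi:\R^N\times\mathfrak a_{N,0}\to S_N(\R)$ by $\phi(\Lambda,A)=U_0\exp(A)(\Lambda_0+\mathrm{Diag}(\Lambda))\exp(-A)U_0^T$, where $\mathfrak a_{N,0}$ now denotes the space of \emph{real antisymmetric} matrices with null diagonal --- a real vector space of dimension $N(N-1)/2$. Source and target both have real dimension $N+N(N-1)/2=N(N+1)/2=\dim S_N(\R)$. The same computation as before yields $d\phi_0(\Lambda,A)=U_0(\mathrm{Diag}(\Lambda)+A\Lambda_0-\Lambda_0 A)U_0^T$, and expressing $\Psi_{\Lambda_0}$ in the basis $(\Delta_{k,j}-\Delta_{j,k})_{1\le k<j\le N}$ of $\mathfrak a_{N,0}$ together with $((\Delta_{k,k})_{k\le N},(\Delta_{k,j}+\Delta_{j,k})_{k<j})$ of $S_N(\R)$ produces the single family of relations
$$\Psi_{\Lambda_0}(0,\Delta_{k,j}-\Delta_{j,k})=(\lambda_0(j)-\lambda_0(k))(\Delta_{k,j}+\Delta_{j,k}),$$
so that $|\det(d\phi_0)|=|\Delta_N(\Lambda_0)|$ --- the first power of the Vandermonde rather than its square, since each off-diagonal pair contributes only one dimension instead of two. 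Repeating the localization argument at the end of the proof of Theorem \ref{thm:law of spectrum}, the eigenvalues of $X_N$ therefore admit on $\R^N$ a density proportional to $\Lambda\mapsto|\Delta_N(\Lambda)|\exp(-\Tr(\Lambda^2)/2)$.

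A final rescaling $\lambda\mapsto\sqrt{N}\lambda$ (with Jacobian $N^{N/2}$) turns $\Tr(\Lambda^2)$ into $N\sum_i\lambda_i^2$ and multiplies the Vandermonde by $N^{N(N-1)/4}$; all factors of $N$ are absorbed into the normalizing constant $Z_1^N$, giving the announced formula. The main obstacle relative to the $GUE_N$ case is simply bookkeeping: the dimension of the tangent directions to the orbit $\{O\Lambda_0 O^T:O\in\mathcal O_N(\R)\}$ drops from $N(N-1)$ (anti-Hermitian null-diagonal matrices) to $N(N-1)/2$ (antisymmetric null-diagonal matrices), which is precisely the origin of the single power $\prod_{i<j}|\lambda_i-\lambda_j|$ in the GOE case instead of its square in GUE.
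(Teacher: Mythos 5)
Your proof is correct and follows the route the paper itself indicates (and carries out in the GUE case, Theorem~\ref{thm:law of spectrum}): conjugation invariance and Haar distribution of eigenvectors, then a local change of variables near $M_0=U_0\Lambda_0U_0^T$ via $\phi(\Lambda,A)=U_0\exp(A)(\Lambda_0+\mathrm{Diag}(\Lambda))\exp(-A)U_0^T$. The bookkeeping point you highlight is exactly the one that matters: for $\mathcal O_N(\R)$ the tangent directions along the orbit are the $N(N-1)/2$-dimensional antisymmetric matrices, each off-diagonal pair contributes a single relation $\Psi_{\Lambda_0}(0,\Delta_{k,j}-\Delta_{j,k})=(\lambda_0(j)-\lambda_0(k))(\Delta_{k,j}+\Delta_{j,k})$, so $|\det d\phi_0|=|\Delta_N(\Lambda_0)|$ rather than $\Delta_N(\Lambda_0)^2$, and the rescaling $\lambda\mapsto\sqrt N\lambda$ turns $\exp(-\Tr(\Lambda^2)/2)$ into $\exp(-N\sum\lambda_i^2/2)$ with all residual powers of $N$ absorbed into $Z_1^N$. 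Nothing is missing.
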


\begin{prop}
\label{prop:law gue}
Let $X_N$ be a $N$ complex Wigner matrix then the probability distribution of the eigenvalues of $X_N/\sqrt{N}$ is $$Q^N_2(d\lambda_1,...,d\lambda_N):=\dis\cfrac{1}{Z_2^N}\dis\prod_{1\le i<j\le N}|\lambda_i-\lambda_j|^2\exp{\left(-\dis\frac{1}{2}N\dis\sum_{i=1}^N\lambda_i^2\right)}\dis\prod_{i=1}^Nd\lambda_i$$ with $Z_2^N$ a normalizing constant.
\end{prop}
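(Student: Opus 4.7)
The plan is to deduce this statement directly from Theorem~\ref{thm:law of spectrum} by an elementary rescaling, since the two propositions are the same result expressed for $X_N$ versus $X_N/\sqrt N$. Concretely, Theorem~\ref{thm:law of spectrum} already asserts that if $X_N$ is an $N$ complex Wigner matrix, then the exchangeable vector of its eigenvalues $(\tilde\lambda_1,\dots,\tilde\lambda_N)$ has density on $\R^N$ proportional to
\[
\prod_{1\le i<j\le N}(\tilde\lambda_i-\tilde\lambda_j)^2\,\exp\Bigl(-\tfrac12\sum_{i=1}^N\tilde\lambda_i^2\Bigr),
\]
so the only work is to push forward this density under the map $\tilde\lambda_i\mapsto\lambda_i:=\tilde\lambda_i/\sqrt N$, which gives the eigenvalues of $X_N/\sqrt N$.

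The key steps, in order, are the following. First I would record that the linear change of variables $\tilde\lambda_i=\sqrt N\,\lambda_i$ has Jacobian $N^{N/2}$. Second I would compute how the Vandermonde factor scales: $\prod_{i<j}(\tilde\lambda_i-\tilde\lambda_j)^2=N^{N(N-1)/2}\prod_{i<j}(\lambda_i-\lambda_j)^2$. Third, the quadratic exponential transforms as $\exp(-\tfrac12\sum\tilde\lambda_i^2)=\exp(-\tfrac N2\sum\lambda_i^2)$. Collecting these factors, the density of $(\lambda_1,\dots,\lambda_N)$ with respect to the Lebesgue measure on $\R^N$ is proportional to
\[
\prod_{1\le i<j\le N}|\lambda_i-\lambda_j|^2\,\exp\Bigl(-\tfrac{N}{2}\sum_{i=1}^N\lambda_i^2\Bigr),
\]
and all the $N$-dependent multiplicative constants, along with the constant of Theorem~\ref{thm:law of spectrum}, can be absorbed into a single normalizing constant which we rename $Z_2^N$. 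There is no real obstacle; the only bookkeeping point is to track the $N$-dependent factors coming from the Jacobian and the Vandermonde scaling, and to use the same proof scheme (or directly cite it) for the counterpart Proposition~\ref{prop:law goe}, where the Vandermonde appears with exponent $1$ instead of $2$ because the density in the $GOE_N$ case, obtained by a parallel argument to Theorem~\ref{thm:law of spectrum} using $N(N+1)/2$ real coordinates and an orthogonal invariance, carries $|\Delta_N(\Lambda)|$ instead of $\Delta_N(\Lambda)^2$.
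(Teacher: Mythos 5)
Your proposal is correct and matches what the paper actually does: the paper gives no separate proof here, but merely cites Theorem~\ref{thm:law of spectrum} and leaves the rescaling from $X_N$ to $X_N/\sqrt{N}$ implicit, exactly the elementary change-of-variable you carry out. Your bookkeeping of the Jacobian factor $N^{N/2}$ and the Vandermonde scaling $N^{N(N-1)/2}$ is accurate, and all $N$-dependent constants are indeed absorbed into $Z_2^N$, so the argument is complete.
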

These two distributions can be viewed as particular cases of so-called $\beta$ Hermite ensembles.
\begin{definition}
\label{def:measure beta}
Let $\beta>0$, we define $$Q^N_\beta(d\lambda_1,...,d\lambda_N):=\dis\cfrac{1}{Z_\beta^N}\dis\prod_{1\le i<j\le N}|\lambda_i-\lambda_j|^{\beta}\exp{\left(-\dis\frac{1}{2}N\dis\sum_{i=1}^N\lambda_i^2\right)}\dis\prod_{i=1}^Nd\lambda_i$$ with $Z_\beta^N$ a normalizing constant  to obtain a probability measure on $\R^N$. The measure $Q^N_\beta$ defined the $\beta$ Hermite ensemble.
\newline
We also define $$\overline{Q^N_\beta}(d\lambda_1,...,d\lambda_N):=\dis\prod_{1\le i<j\le N}|\lambda_i-\lambda_j|^{\beta}\exp{\left(-\dis\frac{1}{2}N\dis\sum_{i=1}^N\lambda_i^2\right)}\dis\prod_{i=1}^Nd\lambda_i,$$ the measure without normalization.
\end{definition}
\begin{Remark}
It is quite natural to ask ourselves if for all $\beta>0$ we can find a law on random matrices such that if a random matrix is distributed with this law, then its eigenvalues are distributed like the $\beta$ Hermite ensemble. This is the case with $\beta=1$ and $\beta=2$ by the Proposition \ref{prop:law goe} and the Proposition \ref{prop:law gue}. The answer is true for all $\beta>0$ with random tridiagonal hermitian matrices \cite{dumitriu2002matrix}.
\end{Remark}
Now we extend $Q^N_\beta$ on $\mes$ to view $Q^N_\beta$ as a probability measure on $\mes$ and not only $\R^N$ in view of proving a large deviations principle in $\mes$ as for the Sanov theorem.
\begin{definition}
\label{def:extend}
For all $N\ge 1$, let $i_N: \R^N\to \mes$ be the map defined by $i_N(\lambda_1,...,\lambda_n):=\overline{\mu_N}$. For all $N\ge 1$, for all $\beta>0$, we extend $Q^N_\beta$ as a probability measure on $\mes$ by considering the push forward measure $i_N\sharp Q^N_\beta$ defined by $$(i_N\sharp Q^N_\beta)(A)=Q^N_\beta(i_N^{-1}(A)),$$ for $A$ a measurable subset of $\mes$. By abuse of notation we still call $Q^N_\beta$ these measures on $\mes$.
\end{definition}

To avoid confusions this means in practice that for a measurable function $F :\mes\to \R^+$ we have $$\int_{\mes} F(\mu)Q^N_\beta(d\mu)=\int_{\R^N} F\left(\cfrac{1}{N}\sum_{i=1}^N\delta_{\lambda_i}\right)Q^N_\beta(d\lambda_1,...,d\lambda_N)=\int_{\R^N} F\left(\overline{\mu_N}\right)Q^N_\beta(d\lambda_1,...,d\lambda_N),$$
where the notation $\overline{\mu_N}$ is introduced in Definition \ref{def:measureemppirique}.
In particular if $F$ is just the indicator of a ball $B(\mu,\varepsilon)$ we get $$Q_\beta^N(B(\mu,\varepsilon))=\int_{\R^N} \mathds 1_{\overline{\mu_N}\in B(\mu,\varepsilon)}(\lambda_1,...,\lambda_N)Q^N_\beta(d\lambda_1,...,d\lambda_N)
=Q_\beta^N(\overline{\mu_N}\in B(\mu,\varepsilon)),$$
where the $Q_\beta^N$ on the left refers to the measure on $\mes$ and the $Q_\beta^N$ on the right refers to the measure on $\R^N$.
\subsection{Motivation}
The Wigner theorem could be viewed as a law of large numbers. We want to obtain as the Cramer or the Sanov theorem an estimation of the probability that the empirical mean of the eigenvalues of random matrices is different from its almost sure limit. 
\newline
As explained at the beginning of this section the goal of large deviations is to find a "good" function $I$ on $\mes$ such that $$\forall \mu\in\mes,\, \mP(\,\overline{\mu_N}\cong \mu)\underset{N\to+\infty}{\cong}\exp(-N^2I(\mu)),$$ where $\overline{\mu_N}$ is the spectral measure of a random matrix in the $GUE_N$ for instance.
We can naturally expect a term in $N^2$ because a random matrix has $N^2$ entries and we want that $I(\mu)>0$ for every $\mu\in\mes$ different from $\sigma_2$ and $I(\sigma_2)=0$ in the case of the $GUE_N$. Hence it shall imply an exponential decrease of the probability for the spectral measure to be far from $\sigma_2$ in this case.
\subsection{Few definitions of large deviations}
\label{subsection def grande dev}
We give some standard definitions of the theory of large deviations, for a more general approach of this topic see the textbook \cite{Demb} or for instance \cite{touchette2011basic}.
\begin{definition}
Let $X$ be a metric space. A function $I:X\to ]-\infty,\infty]$ is a lower semi continuous (lsc) function if for all $M\in\R$, $\{x,I(x)\le M\}$ is a closed subset of $X$.
\end{definition}
We have a sequential characterization for the lsc functions.
\begin{prop}
\label{prop: lcs equi}
Let $X$ be a metric space. A function $I:X\to ]-\infty,\infty]$ is a lsc function if and only if for all sequences $(x_n)_{n\in\N}\in X^\N$ that converge to $x\in X$ we have $\lim \inf_n I(x_n)\ge I(x)$.
\end{prop}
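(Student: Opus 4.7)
The plan is to prove the two implications separately, using throughout that in a metric space a set is closed if and only if it is sequentially closed (so the topological definition via sublevel sets and the sequential characterization can be freely interchanged).

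For the direct implication, I would assume $I$ is lsc in the sense of the definition (all sublevel sets $\{y\in X: I(y)\le M\}$ are closed) and fix a convergent sequence $x_n\to x$. Set $\ell:=\liminf_n I(x_n)$; if $\ell=+\infty$ there is nothing to prove, so assume $\ell<+\infty$. Pick any $M>\ell$ and extract a subsequence $(x_{\phi(n)})$ with $I(x_{\phi(n)})\to\ell$; for $n$ large enough, $I(x_{\phi(n)})\le M$, i.e.\ $x_{\phi(n)}\in\{y:I(y)\le M\}$. Since this set is closed and $x_{\phi(n)}\to x$, we get $I(x)\le M$. Letting $M\downarrow \ell$ gives $I(x)\le \ell=\liminf_n I(x_n)$.

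For the converse, I would assume the sequential inequality and show that each sublevel set $C_M:=\{y\in X:I(y)\le M\}$ is closed. Since $X$ is a metric space it is enough to check sequential closure: take $(x_n)\in C_M^{\N}$ with $x_n\to x\in X$; then $I(x_n)\le M$ for every $n$, so $\liminf_n I(x_n)\le M$, and the hypothesis yields $I(x)\le \liminf_n I(x_n)\le M$, hence $x\in C_M$.

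There is essentially no obstacle here; the only delicate point worth flagging is the use of the fact that closedness coincides with sequential closedness in a metric space (both when reading the definition of lsc via closed sublevel sets and when concluding closedness from a sequential criterion). In a more general topological setting one would have to rephrase this with nets, but the metric hypothesis in the statement removes that complication entirely.
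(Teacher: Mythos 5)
Your proof is correct and follows essentially the same approach as the paper: both directions argue via sublevel sets, with the converse being word-for-word the same idea. One small merit of your version of the forward implication is that by working with an arbitrary threshold $M>\ell$ and letting $M\downarrow\ell$ (rather than arguing by contradiction with the threshold $I(x)-\varepsilon$, as the paper does), you avoid the degenerate case $I(x)=+\infty$, where $I(x)-\varepsilon$ is not a finite real and the paper's contradiction $I(x)\le I(x)-\varepsilon$ becomes vacuous.
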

\begin{proof}
For the direct sense, we suppose that $I$ is lsc and we consider a sequence $(x_n)_{n\in\N}$ that converges to an $x\in X$. Assume by contradiction that $\lim \inf_n I(x_n)< I(x)$. We can find $\varepsilon>0$ and a subsequence of $(x_n)_{n\in\N}$ that we call $(x_{\phi(n)})_{n\in\N}$ such that for $n$ large enough $I(x_{\phi(n)})\le I(x)-\varepsilon$. Since $\{y,I(y)\le I(x)-\varepsilon\}$ is closed and $(x_n)_{n\in\N}$ converges to $x$, we deduce that $x\in\{y,I(y)\le I(x)-\varepsilon\}$. So $I(x)\le I(x)-\varepsilon$ which is a contradiction.
\newline
Conversely, let $M\in\R$ and $(x_n)_{n\in\N}$ a sequence that converges to $x\in X $ such that for all $n\in\N$, $x_n\in\{y,I(y)\le M\}$. It yields $M\ge \lim\inf_nI(x_n)\ge I(x)$. So $x\in\{y,I(y)\le M\}$. This proves the result.
\end{proof}

\begin{lemma}
\label{lemma: minim lsc}
Let $X$ be a compact metric space and $I:X\to ]-\infty,\infty]$ a lsc function. Then $I$ reaches its infimum.
\end{lemma}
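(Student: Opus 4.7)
The plan is the standard minimizing-sequence argument, combined with compactness of $X$ and the sequential characterization of lower semicontinuity already established in Proposition \ref{prop: lcs equi}.

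First I would set $m := \inf_{x \in X} I(x) \in [-\infty, +\infty]$ and pick a minimizing sequence $(x_n)_{n \in \N} \in X^\N$ such that $I(x_n) \to m$ in $[-\infty, +\infty]$. Such a sequence exists by the definition of the infimum (taking, e.g., $x_n$ with $I(x_n) \le m + 1/n$ when $m > -\infty$, or $I(x_n) \le -n$ when $m = -\infty$).

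Next, since $X$ is a compact metric space, it is sequentially compact, so there exist a subsequence $(x_{\phi(n)})_{n \in \N}$ and a point $x^* \in X$ such that $x_{\phi(n)} \to x^*$ in $X$. Applying the sequential characterization of lsc functions given in Proposition \ref{prop: lcs equi} to this convergent subsequence yields
\begin{equation*}
I(x^*) \le \liminf_{n \to +\infty} I(x_{\phi(n)}) = \lim_{n \to +\infty} I(x_{\phi(n)}) = m,
\end{equation*}
the last equality because $(I(x_{\phi(n)}))_{n \in \N}$ is a subsequence of a sequence converging to $m$. Since $I$ takes values in $]-\infty, +\infty]$ we have $I(x^*) > -\infty$, which in particular forces $m > -\infty$. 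Combined with the trivial bound $I(x^*) \ge m$ coming from the definition of the infimum, we obtain $I(x^*) = m$, so $I$ attains its minimum at $x^*$.

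There is no genuine obstacle in this argument; the only point requiring minimal care is to handle the case $m = -\infty$, which is ruled out a posteriori by the fact that $I$ cannot take the value $-\infty$ and the limit point $x^*$ witnesses $m = I(x^*) > -\infty$. The structural ingredients (compactness giving a convergent subsequence, and lsc passing the infimum to the limit) are exactly the two hypotheses of the lemma, so the proof is essentially a one-line application of them once the minimizing sequence is introduced.
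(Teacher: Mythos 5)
Your proof is correct and follows essentially the same route as the paper's: take a minimizing sequence, extract a convergent subsequence by compactness, and apply the sequential characterization of lower semicontinuity from Proposition \ref{prop: lcs equi} to conclude that the limit point attains the infimum. The only difference is that you spell out more carefully the a priori possibility $m = -\infty$ and why it is ruled out a posteriori, which the paper's proof leaves implicit.
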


\begin{proof}
Let $m=\inf_X I(x)$ the infimum of $I$. By definition of the infimum, we can find a sequence $(x_n)\in X^\N$ such that $(I(x_n))_{n\in\N}$ converges to $m$. Since $X$ is compact, we can find a subsequence $(x_{\phi(n)})_{n\in\N}$ that converges to a $x\in X$. By Proposition \ref{prop: lcs equi}, $m\ge I(x)$. Hence by definition of the infimum, it gives that $m=I(x)$ and so $I$ reaches his infimum in $x$. 
\end{proof}

\begin{definition}
Let $X$ be a metric space. A rate function is a lsc function $I: X\to ]-\infty,+\infty]$. A rate function $I$ is called a good rate function if its sub-level sets $\{x,I(x)\le M\}$ are compact.
\end{definition}

\begin{lemma}
\label{coro ferme}
Let $X$ be a metric space and $I:X\to [0,\infty]$ a good rate function that has a unique minimizer $x^*$ such that $I(x^*)=0$. Let $F$ be a closed subset of $X$ that does not contain $x^*$. Then $\inf_F I>0$.
\end{lemma}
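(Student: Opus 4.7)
The plan is to argue by contradiction using the two defining properties of a good rate function: compactness of sub-level sets and lower semicontinuity.

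First I would assume $\inf_F I = 0$ and extract a minimizing sequence $(x_n)_{n\in\N}$ in $F$ with $I(x_n) \to 0$. For $n$ large enough, $I(x_n) \le 1$, so the whole tail of the sequence lies in the sub-level set $\{x \in X : I(x) \le 1\}$, which is compact because $I$ is a good rate function. Consequently I can extract a subsequence $(x_{\phi(n)})_{n\in\N}$ converging to some $x_\infty \in X$.

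Next, since $F$ is closed and each $x_{\phi(n)}$ belongs to $F$, the limit $x_\infty$ also belongs to $F$. Applying the sequential characterization of lower semicontinuity from Proposition \ref{prop: lcs equi} to $I$ at $x_\infty$ along the convergent subsequence gives
\begin{equation*}
I(x_\infty) \le \liminf_{n\to\infty} I(x_{\phi(n)}) = 0,
\end{equation*}
so $I(x_\infty) = 0$ (using that $I$ takes values in $[0,\infty]$). By uniqueness of the minimizer we conclude $x_\infty = x^*$, contradicting the hypothesis that $x^* \notin F$ since $x_\infty \in F$.

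The argument is essentially routine; the only point worth noting is that both hypotheses on $I$ are used in an essential way. Compactness of sub-level sets is what allows the extraction of a convergent subsequence from an arbitrary minimizing sequence (it replaces an assumption of compactness of $X$ itself, which is not available here), and lower semicontinuity is what transfers the vanishing of $I$ along the subsequence to vanishing at the limit point. Neither step requires computation, so there is no real obstacle; this lemma is a packaged form of the standard fact that a good rate function attains its infimum on every nonempty closed set, combined with uniqueness of the zero.
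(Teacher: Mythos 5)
Your proof is correct and follows essentially the same route as the paper's: contradiction via a minimizing sequence, compactness of a sub-level set to extract a convergent subsequence, closedness of $F$ and lower semicontinuity to conclude the limit is $x^*$, contradicting $x^*\notin F$. The closing remark on the separate roles of compactness and lower semicontinuity is a nice addition but does not change the argument.
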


\begin{proof}
Assume by contradiction that $\inf_F I=0$. We can find a sequence $x_n\in F$ such that $I(x_n)$ converges to 0. Since for all $M\in\R$, $\{I\le M\}$ is a compact subset of $X$, we can find a sub sequence $x_{\phi(n)}$ that converges to an element $x_0\in F$ since $F$ is closed. Since $I$ is lower semi continuous, Proposition $\ref{prop: lcs equi}$ yields $$0=\liminf_n I(x_{\phi(n)})\ge I(x_0).$$ By uniqueness of the minimizer of $I$, we get that $x_0=x^*$ which contradicts that $ x^*\notin F$.
\end{proof}

We can now give a definition for a large deviations principle.
\begin{definition}
\label{def: large dev}
Let $(P_N)_{N\in\N}$ be a sequence of Borel probability measures on a metric space $X$ and $(a_N)_{N\in\N}$ be a sequence of positive real numbers diverging to $+\infty$. Let also $I$ be a good rate function on $X$. The sequence $(P_N)_{N\in\N}$ is said to satisfy a large deviations principle (LDP) at speed $a_N$ with good rate function $I$ if for every Borel set $E\subset X$ the following inequalities hold: $$-\inf_{x\in\overset{\circ} {E}}I(x)\le \underset{N\to +\infty}{\underline{\lim}}\cfrac{1}{a_N}\log(P_N(\overset{\circ} {E}))\le \underset{N\to +\infty}{\overline{\lim}}\cfrac{1}{a_N}\log(P_N(\overline {E}))\le -\inf_{x\in\overline {E}}I(x).$$
\end{definition}

\begin{prop}
\label{prop: inffunctionrate}
Let $(P_N)_{N\in\N}$ be a sequence of Borel probability measures on a metric space $X$ that satisfies a large deviations principle with good rate function $I$, then $\inf_X I=0$. 
\end{prop}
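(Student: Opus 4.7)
The plan is to apply the large deviations principle directly to the single set $E = X$ itself. Since $X$ is simultaneously open and closed in the ambient metric space $X$, both $\overset{\circ}{E}$ and $\overline{E}$ coincide with $X$, so the chain of inequalities in Definition \ref{def: large dev} collapses to
\begin{equation*}
-\inf_{x\in X}I(x)\le \underset{N\to +\infty}{\underline{\lim}}\cfrac{1}{a_N}\log P_N(X)\le \underset{N\to +\infty}{\overline{\lim}}\cfrac{1}{a_N}\log P_N(X)\le -\inf_{x\in X}I(x).
\end{equation*}
Since each $P_N$ is a probability measure, $P_N(X)=1$, hence $a_N^{-1}\log P_N(X)=0$ for every $N$, and both the $\liminf$ and the $\limsup$ in the middle are equal to $0$.

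From the upper bound one then reads off $0\le -\inf_X I$, i.e.\ $\inf_X I\le 0$; from the lower bound one reads off $-\inf_X I\le 0$, i.e.\ $\inf_X I\ge 0$. Combining these two inequalities gives $\inf_X I=0$, which is the claim. There is no real obstacle here: the proof is essentially an unpacking of the definition, and the only subtle point is noticing that the LDP bounds applied to $E=X$ force the infimum to be exactly $0$ (the goodness of the rate function is not even needed for this step, although by Lemma \ref{lemma: minim lsc} applied to a compact sub-level set one could in addition deduce that the infimum is attained at some $x^*\in X$).
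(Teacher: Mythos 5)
Your proof is correct and is essentially identical to the paper's: both take $E=X$ in the definition of the LDP, use $P_N(X)=1$ to make the middle terms vanish, and read off $\inf_X I=0$ from the two outer inequalities. Your side remark that goodness of $I$ is not needed for this step is accurate.
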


\begin{proof}
Taking $E$ equal to $X$ in Definition $\ref{def: large dev}$ gives: 
$$-\inf_{X}I\le 0\le 0\le -\inf_X I,$$
since $P_N(X)=1$ for all $N\in\N$. This gives the result.
\end{proof}
We finally justify how we usually go from a LDP to an almost sure convergence using the Borel-Cantelli Lemma.
\begin{prop}[From large deviations to almost sure convergence]
\label{prop:almostsureconvlargedev}
Let $(X_N)_{N\in\N_{\ge 1}}$ be a family of random variables defined on a same probability space $(\Omega,\mathcal F,\mP)$ with value in a metric space $X$. For all $N\ge 1$, let $P_N$ be the law of $X_N$. Assume that $(P_N)_{N\in\N_{\ge 1}}$ satisfies a LDP with good rate function $I$ at speed $N^s$ with $s>0$. First, $I$ reaches its infimum. Assume that this infimum is reached in a unique $x^*\in X$. Then, we have the following convergence $$X_N\overunderset{\emph{Almost surely}}{N\to+\infty}{\longrightarrow}x^*.$$
\end{prop}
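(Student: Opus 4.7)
The plan is to first dispatch the claim that $I$ attains its infimum, then turn the LDP upper bound on well-chosen closed sets into a summable tail estimate, and conclude with Borel--Cantelli.

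First, I would argue that $I$ reaches its infimum on $X$. Since $I$ is a good rate function, the sublevel sets $\{I\le M\}$ are compact, and the proof of Lemma \ref{lemma: minim lsc} carries over verbatim: pick a minimizing sequence $(x_n)$ with $I(x_n)\to m:=\inf_X I$; for any $M>m$ eventually $x_n\in\{I\le M\}$, which is compact, hence a subsequence converges to some $x\in X$, and by lower semicontinuity (Proposition \ref{prop: lcs equi}) $I(x)\le m$. Combined with Proposition \ref{prop: inffunctionrate} which gives $\inf_X I=0$, uniqueness of the minimizer forces $I(x^*)=0$.

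Next, fix $\varepsilon>0$ and consider the closed set $F_\varepsilon:=\{x\in X : d(x,x^*)\ge \varepsilon\}$, which does not contain $x^*$. By Lemma \ref{coro ferme}, $c_\varepsilon:=\inf_{x\in F_\varepsilon} I(x)>0$. Applying the upper bound of the LDP (Definition \ref{def: large dev}) to $F_\varepsilon$ yields
\[
\limsup_{N\to+\infty}\cfrac{1}{N^s}\log P_N(F_\varepsilon)\le -c_\varepsilon<0.
\]
Hence for $N$ large enough,
\[
\mP(d(X_N,x^*)\ge\varepsilon)=P_N(F_\varepsilon)\le \exp\!\left(-\cfrac{c_\varepsilon}{2}\,N^s\right),
\]
and this bound is summable in $N$ since $s>0$. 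The Borel--Cantelli lemma then gives that almost surely, for all $N$ large enough, $d(X_N,x^*)<\varepsilon$.

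To conclude, pick a countable sequence $\varepsilon_k\downarrow 0$ (e.g.\ $\varepsilon_k=1/k$). For each $k$ the previous step provides an almost sure event $\Omega_k$ on which $d(X_N,x^*)<\varepsilon_k$ eventually. The intersection $\bigcap_k \Omega_k$ still has probability one, and on this event $d(X_N,x^*)\to 0$, i.e.\ $X_N\to x^*$ almost surely. I do not anticipate a substantive obstacle here: the only slightly delicate point is the passage from the LDP upper bound on a single $F_\varepsilon$ to almost sure convergence, which is precisely where Lemma \ref{coro ferme} plays its role by guaranteeing a strictly positive rate $c_\varepsilon$, and where the speed $N^s\to+\infty$ ensures summability for the Borel--Cantelli argument.
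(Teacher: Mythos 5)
Your proposal is correct and follows essentially the same route as the paper: show $I$ attains its infimum via compact sublevel sets and lower semicontinuity, apply the LDP upper bound to the closed set $F_\varepsilon=X\setminus B(x^*,\varepsilon)$, invoke Lemma \ref{coro ferme} for a strictly positive rate $c_\varepsilon$, deduce a summable tail bound, and finish with Borel--Cantelli over a countable sequence $\varepsilon_k\downarrow 0$. The only cosmetic difference is that the paper first restricts to the compact $K=\{I\le 1\}$ before quoting Lemma \ref{lemma: minim lsc}, whereas you re-run its argument directly using compactness of the sublevel sets; the mathematics is the same.
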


\begin{proof}
By Proposition \ref{prop: inffunctionrate}, $\inf_X I=0$. Let $K=\{x\in X,\, I(x)\le 1\}$. Since $\inf_X I=0$ and $I$ is a good rate function, $K$ is a non empty compact subset of $X$. Moreover as $\inf_X I=\inf_K I$, Lemma \ref{lemma: minim lsc} yields that $I$ reaches its infimum. We now assume that this infimum is uniquely reached in $x^*\in X$. 
\newline
The LDP gives that for all $\varepsilon>0$ \begin{equation}
\label{eq:limsupalmostsure}
\underset{N\to +\infty}{\overline{\lim}}\cfrac{1}{N^s}\log(P_N(X\setminus B(x^*,\varepsilon)))\le -\inf_{x\in X\setminus B(x^*,\varepsilon)}I(x),
\end{equation}
with $B(x^*,\varepsilon)$ the open ball of center $x^*$ and of radius $\varepsilon>0$ and $X\setminus B(x^*,\varepsilon)$ its complement in $X$.
\newline
By Lemma \ref{coro ferme}, for all $\varepsilon>0$ we have $0<\inf_{x\in X\setminus B(x^*,\varepsilon)}I(x)=:2c_\varepsilon$.
We deduce from \eqref{eq:limsupalmostsure} that for all $\varepsilon>0$ there exists $N(\varepsilon)$ such that for all $N\ge N(\varepsilon)$
$$\mP(X_N\notin B(x^*,\varepsilon))\le \exp(-N^s c_\varepsilon)).$$
The Borel-Cantelli lemma yields that for all $\varepsilon>0$, for almost all $\omega\in\Omega$ there exists $N^*(\omega,\varepsilon)$ such that for all $N\ge N^*(\omega,\varepsilon)$ $X_N(\omega)\in B(x^*,\varepsilon)$. 
Using a countable dense sequence of $\varepsilon$, we deduce that for almost all $\omega\in \Omega$, for all $\varepsilon>0$, there exists $N^*(\omega,\varepsilon)$ such that for all $N\ge N^*(\omega,\varepsilon)$ $X_N(\omega)\in B(x^*,\varepsilon)$. 
\newline
This proves that $X_N$ converges almost surely towards $x^*$.
\end{proof}
\section{Free entropy and rate function}
In this section we define the function that shall be the good rate function for the large deviations principle. In this section $\beta$ is a fixed positive real number.

\subsection{General study of the rate function}

\begin{definition}
\label{def:enegrygrandedev}
Let $H_\beta$ the function defined on $\mes$ by: $$H_\beta(\mu):=\cfrac{1}{2}\,\int_{\R^2} \left[\cfrac{x^2}{2}+\cfrac{y^2}{2}-\beta\log(|x-y|)\right]\mu(dx)\mu(dy).$$
We write $f_\beta(x,y):=\dis\cfrac{1}{2}(x^2+y^2)-\beta\log(|x-y|)$ if $x\ne y$ and $f_\beta(x,x)=+\infty$ such that $H_\beta(\mu)=\frac{1}{2}\int_{\R^2}f_\beta(x,y)\mu(dx)\mu(dy)$.
\end{definition}
Let us explain formally why this function is a priori the good function to look at as a good rate function for the large deviations principle of the sequence of probability $(Q_\beta^N)_N$. Indeed we can write $$Q_\beta^N(\lambda_1,...,\lambda_N)=\exp(-N^2 H_\beta^N(\lambda_1,...,\lambda_N)),$$ where 
\begin{equation}
\label{Energie}
H_\beta^N(\lambda_1,...,\lambda_N)=-\cfrac{\beta}{2N^2}\sum_{1\le i\ne j\le N}\log(|\lambda_i-\lambda_j|)+\cfrac{1}{2N}\sum_{i=1}^N \lambda_i^2+\cfrac{\log(Z_\beta^N)}{N^2}.
\end{equation}
Formally, let us suppose that the empirical mean $\sum_{i=1}^N\delta_{\lambda_i}/N$ converges to $\mu$ and that $\cfrac{\log(Z_\beta^N)}{N^2}$ converges to a finite quantity $Z$, then $H_\beta^N(\lambda_1,...,\lambda_N)$ converges to $H_\beta(\mu)+Z$. So formally, we get that $$Q_\beta^N(\lambda_1,...,\lambda_N)\approx\exp(-N^2 H_\beta(\mu)).$$ This makes a link with Coulomb and Riesz gases \cite{serfaty2024lectures} as the formula \eqref{Energie} corresponds to particles in interaction through a logarithmic potential that corresponds in dimension 1 to a Riesz gas and an external potential $V(x)=x^2/2$. We shall give some definitions of Riesz and Coulomb gases in Section \ref{subsection: extenson}.
\newline
\newline
\tab We first prove that $H_\beta(\mu)>-\infty$ for all $\mu\in\mes$.
\begin{lemma}
\label{lemmaHfinite}
$H_\beta$ is defined on $\mes$ and takes values in $\left[\dis\cfrac{\beta}{4}\,\left(1-\log{(2\beta)}\right),+\infty\right]$.
\end{lemma}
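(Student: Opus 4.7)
My proposal is to reduce the claim to a pointwise lower bound on the integrand. Write $H_\beta(\mu)=\tfrac12\int_{\R^2} f_\beta(x,y)\,\mu(dx)\mu(dy)$ with $f_\beta(x,y)=\tfrac12(x^2+y^2)-\beta\log|x-y|$ and $f_\beta(x,x)=+\infty$. The function $f_\beta$ is lower semicontinuous from $\R^2$ to $(-\infty,+\infty]$, hence Borel measurable, so as soon as I produce a constant $c_\beta\in\R$ with $f_\beta\ge c_\beta$ everywhere, the integral $\int f_\beta\,d(\mu\otimes\mu)$ is unambiguously defined in $[c_\beta,+\infty]$ for any $\mu\in\mes$. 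This automatically gives $H_\beta(\mu)\in[c_\beta/2,+\infty]$.

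The key step is therefore to compute $\inf_{(x,y)\in\R^2,\,x\neq y} f_\beta(x,y)$. I change variables to $u=x-y$, $v=x+y$, which gives the identity $x^2+y^2=\tfrac12(u^2+v^2)$, hence
\[
f_\beta(x,y)=\frac{u^2+v^2}{4}-\beta\log|u|.
\]
Minimising first over $v\in\R$ (take $v=0$) and then over $u\neq 0$ (derivative $u/2-\beta/u$ vanishes at $u^2=2\beta$), I obtain
\[
\min_{x\neq y} f_\beta(x,y)=\frac{2\beta}{4}-\beta\log\sqrt{2\beta}=\frac{\beta}{2}\bigl(1-\log(2\beta)\bigr).
\]
Combining this with the factor $\tfrac12$ in front of the integral and using that $\mu\otimes\mu$ is a probability measure yields exactly
\[
H_\beta(\mu)\ge \frac{\beta}{4}\bigl(1-\log(2\beta)\bigr).
\]

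The upper end of the stated range is trivial: taking $\mu=\delta_0$ shows the integrand equals $+\infty$ on the diagonal and $H_\beta(\delta_0)=+\infty$, so the value $+\infty$ is attained. There is no genuine obstacle in this proof; the only point that requires a little care is checking that the integral is well defined, which is precisely why one needs the explicit pointwise lower bound rather than separately integrating the positive and negative parts of $\log|x-y|$ (the negative part alone may fail to be $\mu\otimes\mu$-integrable when $\mu$ has atoms, but the sum with the quadratic remains bounded below).
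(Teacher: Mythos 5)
Your proof is correct, and the overall strategy matches the paper's: establish the pointwise lower bound $f_\beta(x,y)\ge \tfrac{\beta}{2}\bigl(1-\log(2\beta)\bigr)$, then integrate against $\mu\otimes\mu$ and halve. The route to the pointwise bound is where you differ. You rotate to coordinates $u=x-y$, $v=x+y$, so that $f_\beta=\tfrac{u^2+v^2}{4}-\beta\log|u|$, and the infimum becomes an immediate one-variable calculus exercise (take $v=0$, then $u^2=2\beta$). The paper instead uses the algebraic split $f_\beta=\tfrac12\bigl((x^2+y^2)-\beta\log(x^2+y^2)\bigr)+\beta\log\bigl(\sqrt{x^2+y^2}/|x-y|\bigr)$ and bounds the two pieces separately, via $t-\beta\log t\ge\beta(1-\log\beta)$ and $\sqrt{x^2+y^2}/|x-y|\ge 1/\sqrt{2}$. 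Both give the same sharp constant, but your change of variables is arguably the more transparent computation: it produces the exact infimum in one stroke, rather than by stitching two inequalities whose equality cases must happen to coincide. Your preliminary remarks — that lower semicontinuity of $f_\beta$ gives measurability and that a uniform lower bound makes the integral unambiguously defined in $[c_\beta,+\infty]$ for every $\mu\in\mes$ — and the observation that $H_\beta(\delta_0)=+\infty$ address points the paper leaves implicit, and are both correct.
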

\begin{proof}
We shall first justify that 
\begin{equation}
\label{eq:ineq1}
-\beta\log(|x-y|)+\cfrac{x^2}{2}+\cfrac{y^2}{2}\ge \dis\frac{\beta}{2}\left(1-\log(2\beta\right)).
\end{equation}
We write $$-\beta\log(|x-y|)+\cfrac{x^2}{2}+\cfrac{y^2}{2}=\dis\cfrac{1}{2}\left((x^2+y^2)-\beta\log(x^2+y^2)\right)+\beta\log\left(\dis\frac{\sqrt{x^2+y^2}}{|y-x|}\right).$$
Since we have that for all $t\in\R^+$, $t-\beta\log(t)\ge \beta(1-\log(\beta))$ by an easy study of function and for all $(x,y)\in\R^2$, $\dis\frac{\sqrt{x^2+y^2}}{|y-x|}\ge\dis\frac{1}{\sqrt{2}}$. 
\newline
Integrating \eqref{eq:ineq1} against $\mu(dx)\mu(dy)$ gives the result.
\end{proof}
\begin{Remark}
\label{Remark:L^1}
Let us also points out that since $f_\beta$ is bounded from below by \eqref{eq:ineq1} we have that for $\mu\in\mes$, $H_\beta(\mu)$ is finite if and only if $f_\beta\in L^1(\mu\otimes \mu)$.
\end{Remark}
When $H_\beta$ is finite we can rewrite it in another form.
\begin{lemma}
\label{lemma: rewrite}
For all $\mu\in\mes$ such that $H_\beta(\mu)<+\infty$ we have $\int_\R x^2 \mu(dx)<+\infty$ and $\int_{\R^2}|\log|x-y||\mu(dx)\mu(dy)<+\infty$. So if $H_\beta(\mu)<+\infty$ we can rewrite $H_\beta(\mu)$ as $$H_\beta(\mu)=-\frac{\beta}{2}\int_{\R^2}\log|x-y|\mu(dx)\mu(dy)+\int_\R\frac{x^2}{2}\,\mu(dx)=:-\frac{\beta}{2}\,\Sigma(\mu)+\int_\R\frac{x^2}{2}\,\mu(dx).$$
\end{lemma}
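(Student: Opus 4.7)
The plan is to establish the two integrability statements separately, and then deduce the formula by linearity of the integral.

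First, I would look for a pointwise lower bound of the form $f_\beta(x,y)\ge c(x^2+y^2)-C$ that controls the quadratic part by the full integrand. Note that for $t\ge 0$ one has $\log(1+t)\le t$, so $\log^+|x-y|\le\log(1+|x-y|)\le|x-y|\le|x|+|y|$. Combining this with Young's inequality $\beta|x|\le \tfrac14 x^2+\beta^2$ gives
\begin{equation*}
\beta\log^+|x-y|\le \frac{x^2+y^2}{4}+2\beta^2.
\end{equation*}
Since $-\beta\log^-|x-y|\ge 0$, this yields the pointwise bound $-\beta\log|x-y|\ge -\tfrac{x^2+y^2}{4}-2\beta^2$, and hence
\begin{equation*}
f_\beta(x,y)\ge \frac{x^2+y^2}{4}-2\beta^2.
\end{equation*}

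Integrating this inequality against $\mu\otimes\mu$ and using $\int f_\beta\, d\mu\otimes d\mu=2H_\beta(\mu)<+\infty$, I obtain $\int x^2\,\mu(dx)\le 4H_\beta(\mu)+4\beta^2<+\infty$, which proves the first integrability claim. Cauchy--Schwarz then gives $\int|x|\,\mu(dx)\le(\int x^2\,\mu(dx))^{1/2}<+\infty$, and the bound $\log^+|x-y|\le|x|+|y|$ used above immediately implies
\begin{equation*}
\int_{\R^2}\log^+|x-y|\,\mu(dx)\mu(dy)\le 2\int_\R|x|\,\mu(dx)<+\infty.
\end{equation*}

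To handle the negative part of the logarithm, I would rearrange the definition of $f_\beta$ as
\begin{equation*}
-\beta\log^-|x-y|=f_\beta(x,y)-\frac{x^2+y^2}{2}+\beta\log^+|x-y|.
\end{equation*}
The left-hand side is non-negative, and by the preceding steps every term on the right is $\mu\otimes\mu$-integrable (recall from Remark \ref{Remark:L^1} that $f_\beta$ itself is in $L^1(\mu\otimes\mu)$). Integrating therefore yields $\int(-\log^-|x-y|)\,\mu(dx)\mu(dy)<+\infty$, and combined with the previous bound this gives $\int|\log|x-y||\,\mu(dx)\mu(dy)<+\infty$.

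With both $\tfrac{x^2+y^2}{2}$ and $\log|x-y|$ being absolutely $\mu\otimes\mu$-integrable, the integral defining $H_\beta(\mu)$ splits by linearity, producing the claimed identity. The main obstacle is really just step one: Lemma \ref{lemmaHfinite} only controls the combination $-\beta\log|x-y|+\tfrac{x^2+y^2}{2}$ from below, not either piece individually, and $-\beta\log|x-y|$ can be arbitrarily negative when $|x-y|$ is large. The Young-type estimate above is exactly what is needed to transfer this ``large $|x-y|$'' cost back onto the quadratic term while keeping the coefficient strictly less than $1$.
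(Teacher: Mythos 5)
Your proof is correct and follows the paper's overall strategy: establish a pointwise lower bound $f_\beta(x,y)\ge\tfrac14(x^2+y^2)-C$ with coefficient strictly less than $\tfrac12$, integrate against $\mu\otimes\mu$ to get $\int x^2\,d\mu<\infty$, bootstrap to $\log|x-y|\in L^1(\mu\otimes\mu)$, and split $H_\beta$ by linearity. The intermediate estimates differ slightly: for the lower bound you use $\log^+|x-y|\le|x|+|y|$ together with Young's inequality, whereas the paper reuses the algebraic rewriting of Lemma~\ref{lemmaHfinite} with $\tfrac{x^2+y^2}{4}$ in place of $\tfrac{x^2+y^2}{2}$ (bounding $t-2\beta\log t$ from below and using $\sqrt{x^2+y^2}/|x-y|\ge 1/\sqrt2$); for the log integrability the paper sandwiches $-\beta\log|x-y|$ between $C_\beta-\tfrac{x^2+y^2}{4}$ and $f_\beta(x,y)$, both already known to be in $L^1(\mu\otimes\mu)$, rather than treating $\log^+$ and $\log^-$ separately as you do. Both routes reach the same place with roughly the same effort. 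One small notational caveat: your identity $-\beta\log^-|x-y|=f_\beta(x,y)-\tfrac{x^2+y^2}{2}+\beta\log^+|x-y|$ with nonnegative left-hand side tacitly uses the convention $\log^-=\min(\log,0)\le 0$ so that $\log=\log^++\log^-$; under the paper's convention of nonnegative negative parts (Table~\ref{tb:notations}) the left side would read $+\beta\log^-$. This is purely a convention choice and does not affect the correctness of the argument.
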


\begin{proof}
Since the quadratic term dominates the logarithmic term we shall justify that we can find a constant $C_\beta$ such that for all $x,y\in\R^2$ 
\begin{equation}
\label{eq:rewrite}
-\beta\log(|x-y|)+\cfrac{x^2}{4}+\cfrac{y^2}{4}\ge C_\beta.
\end{equation}
Doing the same computations as in Lemma \ref{lemmaHfinite} yields $$-\beta\log(|x-y|)+\cfrac{x^2}{4}+\cfrac{y^2}{4}=\dis\cfrac{1}{4}\left((x^2+y^2)-2\beta\log(x^2+y^2)\right)+\beta\log\left(\dis\frac{\sqrt{x^2+y^2}}{|y-x|}\right).$$
Since we have that for all $t\in\R^+$, $t-2\beta\log(t)\ge 2\beta(1-\log(2\beta))$ by an easy study of function and for all $(x,y)\in\R^2$, $\dis\frac{\sqrt{x^2+y^2}}{|y-x|}\ge\dis\frac{1}{\sqrt{2}}$ we obtain \eqref{eq:rewrite}.
\newline
Let $\mu\in\mes$ such that $H_\beta(\mu)<+\infty$.
From \eqref{eq:rewrite} we have $$+\infty>H_\beta(\mu)\ge \frac{1}{2}\int_{\R^2}\left[C_\beta+\frac{x^2}{4}+\frac{y^2}{4}\right]\mu(dx)\mu(dy)=\cfrac{C_\beta}{2}+\int_\R \cfrac{x^2}{4}\,\mu(dx).$$
So we get that $\int_\R x^2\,\mu(dx)<+\infty$.
Moreover, again by \eqref{eq:rewrite} for all $(x,y)\in\R^2$ we have
$$C_\beta-\frac{x^2}{4}-\frac{y^2}{4} \le-\beta\log|x-y| \le f_\beta(x,y).$$
Since the two extremal terms of the inequalities are in $L^1(\mu\otimes\mu)$, this implies that $\int_{\R^2} |\log|x-y||\mu(dx)\mu(dy)<+\infty$.
\newline
For $\mu$ is a probability measure such that $H_\beta(\mu)<+\infty$ we get $$H_\beta(\mu)=\dis\cfrac{1}{2}\dis\int_{\R^2}f_\beta(x,y)\mu(dx)\mu(dy)=\dis\cfrac{1}{2}\left(\dis\int_\R \cfrac{x^2}{2}\,\mu(dx)+\dis\int_\R \cfrac{y^2}{2}\,\mu(dy)\right)-\cfrac{\beta}{2}\,\Sigma(\mu).$$
\end{proof}
\begin{Remark}
For $\mu\in\mes$, the free entropy of $\mu$ is defined as $$\Sigma(\mu)=\displaystyle\int_{\R^2}\log|x-y|\mu(dx)\mu(dy).$$
This notion of free entropy was introduced by Voiculescu in his work on free probability \cite{voiculescu2002free,voiculescu1999analogues}.
\end{Remark}

Let us state some properties on the function $H_\beta$.
\begin{prop}
\label{prop:optim}
The following properties are satisfied:
\begin{enumerate}
\item{$H_\beta$ is defined on $\mes$ and takes values in $\left[\dis\cfrac{\beta}{4}\,\left(1-\log{(2\beta)}\right),+\infty\right]$.}
\item{$H_\beta(\mu)$ is infinite if one of the conditions is satisfied:
\begin{enumerate}
\item{$\dis\int_\R x^2\mu(dx)=+\infty$.}
\item{There exists a set $A$ with positive mass for $\mu$ but null logarithmic capacity, which means $$\mu(A)>0,\,\,\, \emph{cap}(A):=\exp{\left(-\inf_{\nu\in\mathcal P(A)}\dis\int\log{(1/|x-y|)}\nu(dx)\nu(dy)\right)}=0.$$}
\end{enumerate}}
\item{If $\mu$ has an atom, then $H_\beta(\mu)=+\infty$. If $\mu$ has no atom, $$H_\beta(\mu)=\dis\int_{x<y}f_\beta(x,y)\mu(dx)\mu(dy).$$}
\item{For all $M\in\R$, $\{H_\beta\le M\}$ is a compact subset of $\mes$.}
\item{$H_\beta$ is convex on $\mes$.}
\item{$H_\beta$ is actually strict convex on $\mes$.}
\end{enumerate}
\end{prop}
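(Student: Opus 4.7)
The first point is exactly Lemma \ref{lemmaHfinite}, so I only have to address points 2 to 6.

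For point 2(a), I would invoke the pointwise bound \eqref{eq:rewrite} derived in the proof of Lemma \ref{lemma: rewrite}: integrating $f_\beta(x,y)\ge C_\beta+x^2/4+y^2/4$ against $\mu\otimes\mu$ gives $H_\beta(\mu)\ge C_\beta/2+\frac14\int x^2\mu(dx)$, which is $+\infty$ as soon as the second moment of $\mu$ is infinite. For point 2(b), I may assume $A$ is bounded (intersect with a large ball: the capacity of a Borel subset is still zero and the mass is preserved up to a strictly positive factor), so that $x^2/2+y^2/2$ is bounded on $A\times A$. Writing $f_\beta-K_\beta\ge 0$ with $K_\beta:=\tfrac{\beta}{2}(1-\log(2\beta))$ and using Tonelli, I would decompose $\int_{\R^2}(f_\beta-K_\beta)\,d\mu\otimes d\mu$ into the integral over $A\times A$ plus the integral over its complement. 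The definition of capacity applied to the probability $\mu|_A/\mu(A)\in\mathcal P(A)$ yields $\int_{A\times A}\log(1/|x-y|)\,d\mu\otimes d\mu=+\infty$ when $\mathrm{cap}(A)=0$ and $\mu(A)>0$, so the first integral is already $+\infty$ while the second is bounded below.

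Point 3 is then essentially a corollary of 2(b): a single point has zero logarithmic capacity (the only element of $\mathcal P(\{a\})$ is $\delta_a$, which gives energy $+\infty$), so if $\mu(\{a\})>0$, taking $A=\{a\}$ in 2(b) forces $H_\beta(\mu)=+\infty$. When $\mu$ is atomless, the diagonal $\{x=y\}$ is $\mu\otimes\mu$-null, so $f_\beta$ is defined a.e.\ on $\R^2$, and by symmetry one may write $H_\beta(\mu)=\int_{x<y}f_\beta(x,y)\mu(dx)\mu(dy)$.

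For point 4, I would check separately tightness and lower semi-continuity of the sublevel set $\{H_\beta\le M\}$. Tightness follows from the uniform bound $\int x^2\mu(dx)\le 4(M-C_\beta/2)$ obtained in the argument for 2(a), combined with Markov's inequality and Prokhorov's theorem. For lower semi-continuity, I would observe that $f_\beta$ is lsc and bounded below, so $f_\beta=\sup_{n\ge 1}\min(n,f_\beta-K_\beta)+K_\beta$ is an increasing supremum of bounded lsc functions; since each $\mu\mapsto\int\min(n,f_\beta-K_\beta)\,d\mu\otimes d\mu$ is lsc in the weak topology (standard Portmanteau for bounded lsc integrands under the product map $\mu\mapsto\mu\otimes\mu$, which is continuous for weak convergence) and monotone convergence lets me take $n\to\infty$, $H_\beta$ is itself lsc. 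Combined with tightness this yields compactness of the sublevel sets.

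The heart of the proof is points 5 and 6, and this is where I expect the main obstacle. I would reduce convexity to a positive semi-definiteness statement by expanding: for $\mu,\nu\in\mes$ with $H_\beta(\mu),H_\beta(\nu)<+\infty$ and $t\in(0,1)$, a direct computation gives
\[
tH_\beta(\mu)+(1-t)H_\beta(\nu)-H_\beta(t\mu+(1-t)\nu)=\frac{t(1-t)}{2}\int_{\R^2}f_\beta(x,y)\,d(\mu-\nu)(x)\,d(\mu-\nu)(y).
\]
The quadratic part of $f_\beta$ gives zero, since $(\mu-\nu)(\R)=0$ and both measures have finite second moment. What remains is the logarithmic kernel $-\beta\log|x-y|$ integrated against $(\mu-\nu)\otimes(\mu-\nu)$; convexity (resp.\ strict convexity) is equivalent to the non-negativity (resp.\ strict positivity) of this quantity for signed measures of mass zero. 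For this I would invoke the classical Fourier representation
\[
-\log|x-y|=c+\int_0^{+\infty}\frac{1-\cos(\xi(x-y))}{|\xi|}\,d\xi,
\]
which, after Fubini and $|\widehat{\mu-\nu}(0)|=0$, yields
\[
-\int_{\R^2}\log|x-y|\,d(\mu-\nu)^{\otimes 2}=\int_0^{+\infty}\frac{|\widehat{\mu-\nu}(\xi)|^2}{|\xi|}\,d\xi\ge 0,
\]
with equality if and only if $\widehat{\mu-\nu}\equiv 0$, i.e.\ $\mu=\nu$; this last step is the strict convexity of point 6. The delicate point is justifying the use of Fubini and ensuring the logarithmic energy of $\mu-\nu$ is well defined, for which I would truncate the $\xi$-integral or equivalently regularize by convolving with a Gaussian $\mathcal N(0,\varepsilon)$ and let $\varepsilon\to 0$, using the finiteness of $H_\beta(\mu)$ and $H_\beta(\nu)$ together with Lemma \ref{lemma: rewrite} to control the limit.
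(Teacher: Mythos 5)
Your overall strategy coincides with the paper's on all six points: the same lower bound \eqref{eq:rewrite} for 2(a), the same capacity argument specialized to a singleton for the atom case in point 3, the same truncation $f_\beta^M$ to obtain lower semi-continuity in point 4 (your second-moment bound $\int x^2\,d\mu\le 4(M-C_\beta/2)$ followed by Markov and Prokhorov is a small variant of the paper's, which uses the same inequality \eqref{eq:rewrite} to estimate $\mu(K^c)$ directly), and — most importantly — the same reduction of convexity and strict convexity to positive-definiteness of the logarithmic kernel on mean-zero signed measures. Your explicit identity $tH_\beta(\mu)+(1-t)H_\beta(\nu)-H_\beta(t\mu+(1-t)\nu)=\tfrac{t(1-t)}{2}\int f_\beta\,d(\mu-\nu)^{\otimes 2}$ is exactly what the paper packages as Lemma~\ref{lemma: quadratic form}.

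The one concrete correction concerns the pointwise formula you invoke for points 5--6:
\[-\log|x-y|=c+\int_0^{+\infty}\frac{1-\cos(\xi(x-y))}{|\xi|}\,d\xi.\]
This is not valid for any finite $c$: the integrand behaves like $1/\xi$ at $\xi\to\infty$, so the integral diverges. The correct compensated Frullani identity is $-\log|x-y|=\int_0^{+\infty}\frac{\cos((x-y)\xi)-\cos(\xi)}{\xi}\,d\xi$; the $\cos(\xi)$ counterterm disappears once you integrate against $(\mu-\nu)^{\otimes 2}$ because $(\mu-\nu)(\R)=0$, which is why your final identity $-\int\log|x-y|\,d(\mu-\nu)^{\otimes 2}=\int_0^{+\infty}|\widehat{\mu-\nu}(\xi)|^2\xi^{-1}\,d\xi$ is nevertheless the right one (it is stated in the paper's Remark~\ref{remark : fourier repr}). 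But the Fubini exchange and the convergence at both $\xi\to 0$ and $\xi\to\infty$ must be justified against the compensated kernel, not the uncompensated form you wrote. The paper sidesteps this entirely by using the Gaussian heat-kernel representation \eqref{eq1} together with the Gaussian Fourier identity of Lemma~\ref{lemma: sorte fourier}, which keeps every intermediate integral absolutely convergent after the $[1/T,T]$ truncation; your fallback of regularizing by convolving with $\mathcal N(0,\varepsilon)$ amounts to the same device and would also work, and is probably the cleanest way to repair your write-up.
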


\begin{proof}
In this proof let $\Delta=\{(x,x),\, x\in\R\}$ be the diagonal of $\R^2$.
\begin{enumerate}
\item{This point corresponds to Lemma \ref{lemmaHfinite}. To fix the notations for this proof, \eqref{eq:ineq1} yields
\begin{equation}
\label{ineq:f_beta}
\forall(x,y)\in\R^2,\, f_\beta(x,y)\ge\dis\frac{\beta}{2}\left(1-\log(2\beta\right))=:m_\beta.
\end{equation}
Integrating against $\mu(dx)\mu(dy)$ gives the result.}
\item{If the first condition holds, $H_\beta(\mu)=+\infty$ by Lemma \ref{lemma: rewrite}. 
\newline 
Then, we can notice using the inequality \eqref{ineq:f_beta} that for all Borelian subsets $B$ of $\R^2$ $$H_\beta(\mu)\ge\dis\int_B f_\beta(x,y)\mu(dx)\mu(dy)+(1-\mu\otimes\mu(B))m_\beta.$$
But if $B= A\times A$, we also have $$ \dis\int_B f_\beta(x,y)\mu(dx)\mu(dy)\ge-\beta\dis\int_{A\times A}\log(|x-y|)\mu(dx)\mu(dy)\ge-\beta\mu(A)^2\log(\text{cap}(A)).$$
So under the second condition we also have $H_\beta(\mu)=+\infty$.}
\item{If $\mu$ has an atom $a$, then $\mu(\{a\})>0$ and $\text{cap}(\{a\})=0$, then $H_\beta(\mu)=+\infty$. Assume that $\mu$ has no atom, then by Fubini-Tonelli $$\mu\times\mu(\Delta)=\dis\int_\R\int_\R \mathds{1}_{x=y}\mu(dx)\mu(dy)=\dis\int_\R \mu(\{y\}) \mu(dy)=0.$$
Since $f_\beta$ is symmetric we have the result.}
\item{Firstly, let us define for $M>0$,  $f_\beta^M(x,y)=\min(f_\beta(x,y),M)$ in order to truncate $f_\beta$ and to have a bounded function (since $f_\beta$ is bounded from below by $m_\beta$).
\newline
We naturally associate to $f_\beta^M$ the quantity:
\begin{equation}
\label{def H_m}
H_\beta^M(\mu):=\dis\cfrac{1}{2}\dis\int_{\R^2}f_\beta^M(x,y)\mu(dx)\mu(dy).
\end{equation}
Since $f_\beta^M$ is bounded, $\mu\mapsto H_\beta^M(\mu)$ is continuous on $\mes$. 
\newline
Moreover, as $(f_\beta^M)_{M>0}$ increases, converges pointwise to $f_\beta$ and for all $M>0$ $f_\beta^M$ is bounded from below by $m_\beta$ we can apply monotone convergence theorem to have that for every $\mu\in\mes$, $H_\beta^M(\mu)$ converges to $H_\beta(\mu)$.
\newline
Of course it does not imply that $H_\beta$ is continuous but at least $H_\beta$ is lower semi continuous. Indeed, since for all $\mu\in\mes$ $H_\beta^M(\mu)$ increases to $H_\beta(\mu)$, it gives that for all $L\in\R$ $\{H_\beta\le L\}=\bigcap_{M>0} \{H_\beta^M\le L\}$. Since for all $M>0$ $H_\beta^M$ is continuous on $\mes$, for all $L\in\R$, $\{H_\beta\le L\}$ is an intersection of closed subsets of $\mes$.
\newline
It remains to prove that for all $L\in\R$, $\{H_\beta\le L\}$ is a relatively compact subset of $\mes$. Fix $L\in\R$. Let $(\mu_n)_{n\in\N}\in\mes^\N$ be a family of probability measures such that for all for all $n\in\N$, $H_\beta(\mu_n)\le L$. For a constant $C>0$ fixed we can find a compact subset $K\subset \R$ (which depend on $C$) such that $$\min_{(K\times K)^c}\left(\cfrac{x^2}{2}+\cfrac{y^2}{2}-\beta\log|x-y|\right)\ge C.$$
Using Lemma \ref{lemma: rewrite} yields 
\begin{align*}
L\ge H_\beta(\mu_n)&\ge \cfrac{m_\beta}{2}\mu_n(K)^2+\cfrac{C}{2}\,\,(\mu_n\otimes\mu_n)((K\times K)^c)\\
&\ge -\cfrac{|m_\beta|}{2}+\cfrac{C}{2}\mu_n(K^c)
\end{align*}
Hence for all $\varepsilon>0$, we can find $K$ a compact subset of $\R$ such that for all $n\in\N$, $\mu_n(K^c)\le \varepsilon$ by choosing $C$ large enough. The Prokohorov theorem implies that for all $L\in\R$, $\{H_\beta\le L\}$ is a relatively compact subset of $\mes$. }

\item{In order to prove that $H_\beta$ is convex we only have to prove that $\Sigma$ is concave. Let state a lemma to have another expression of $\Sigma$.
\begin{lemma}
Let $a,b>0$ then $$\dis\int_{\R^+}\dis\cfrac{\exp(-at)-\exp(-bt)}{t}\,dt=\log(b/a).$$

\end{lemma}
\begin{proof}
Since the problem of this integral is essentially at 0 let us focus on this part. 
$$\dis\int_{\R^+}\dis\cfrac{\exp(-at)-\exp(-bt)}{t}\,dt=\limite\dis\int_{\varepsilon}^{+\infty}\dis\cfrac{\exp(-at)-\exp(-bt)}{t}.$$
As the two parts of the integral are converging we can cut this integral in two parts, we have: $$\dis\int_{\varepsilon}^{+\infty}\dis\cfrac{\exp(-at)-\exp(-bt)}{t}\,dt=\dis\int_{\varepsilon}^{+\infty}\dis\cfrac{\exp(-at)}{t}\,dt-\dis\int_{\varepsilon}^{+\infty}\dis\cfrac{\exp(-bt)}{t}\,dt.$$
Now, on both integral we do the change of variables $t'=at$ and $t'=bt$.
Hence $$\dis\int_{\varepsilon}^{+\infty}\dis\cfrac{\exp(-at)-\exp(-bt)}{t}\,dt=\int_{a\varepsilon}^{+\infty}\dis\cfrac{\exp(-t)}{t}\,dt-\dis\int_{b\varepsilon}^{+\infty}\dis\cfrac{\exp(-t)}{t}\,dt =\dis\int_{a\varepsilon}^{b\varepsilon}\dis\cfrac{\exp(-t)}{t}\,dt$$\newline
Now by continuity of $\exp$ at 0 we have: $$\limite \dis\int_{a\varepsilon}^{b\varepsilon}\dis\cfrac{\exp(-t)}{t}\,dt=\limite\dis\int_{a\varepsilon}^{b\varepsilon}\dis\cfrac{1}{t}\,dt=\log(b/a).$$
It gives the result.
\end{proof}
Thanks to this formula we can write: $$\forall (x,y)\in\R^2,\,\log|x-y|^2=\dis\int_{\R^+}\cfrac{\exp\left(-\frac{t}{2}\right)-\exp\left(-\frac{t|x-y|^2}{2}\right)}{t}\,dt$$
Then the change of variables $t'=1/t$, yields
\begin{equation}
\forall (x,y)\in\R^2,\,\log|x-y|=\dis\int_{\R^+}\cfrac{\exp\left(-\frac{1}{2t}\right)-\exp\left(-\frac{|x-y|^2}{2t}\right)}{2t}\,dt
\label{eq1}
\end{equation}
We shall use $\eqref{eq1}$ to have another expression of $\Sigma$ and view it as a limit of concave functions. 
\newline
Indeed for all $T>1$, $\mu\in\mes$, we define 
$$\Sigma_T(\mu)=\dis\int_{\R^2}\dis\int_{1/T}^{T}\cfrac{\exp\left(-\frac{1}{2t}\right)-\exp\left(-\frac{|x-y|^2}{2t}\right)}{2t}dt \mu(dx)\mu(dy).$$
We can cut this integral in two parts: one part with the $(x,y)$ such that $|x-y|\le 1$ and the complement of this part.
\newline
On these parts the sign of the function we integrate is constant and so the Fubini-Tonelli theorem implies that that for all $\mu\in \mes$ $$\underset{T\to+\infty}{\lim} \Sigma_T(\mu)=\Sigma(\mu).$$
Hence, we only have to prove that for all $T>1$, $\Sigma_T$ is concave. Fix $T>1$.
\newline
We can first isolate the dependence in $\mu$ of $\Sigma_T$: $$\Sigma_T(\mu)=\dis\int_{1/T}^T\cfrac{\exp\left(-\frac{1}{2t}\right)}{2t}\,dt -\dis\int_{1/T}^T\cfrac{1}{2t}\dis\int_{\R^2}\exp\left(-\cfrac{|x-y|^2}{2t}\right)\mu(dx)\mu(dy)dt.$$
So to prove that $\Sigma_T$ is concave we only have to prove that $$\mu\mapsto \dis\int_{1/T}^T\cfrac{1}{2t}\dis\int_{\R^2}\exp\left(-\cfrac{|x-y|^2}{2t}\right)\mu(dx)\mu(dy)dt$$ is convex on $\mes$. 
The key argument is to express $\Sigma_T$ using the Fourier transform of $\mu$.
\begin{lemma}
\label{lemma: sorte fourier}
Let $\nu$ be a finite signed measure. Then for all $t>0$, 
\begin{equation}
\begin{split}
\dis\int_{\R^2}\exp\left(-\cfrac{(x-y)^2}{2t}\right)&d\nu(x)d\nu(y)\\
&=\sqrt{\cfrac{t}{2\pi}}\dis\int_{\R}\left|\dis\int_{\R}\exp(i\lambda x)d\nu(x)\right|^2\exp(-t\lambda^2/2)d\lambda.
\end{split}
\label{eq2}
\end{equation}
\end{lemma}
\begin{proof}
Let us recall the Fourier transform of the Gaussian $$\forall z\in \C,\, \dis\int_\R\exp(z\lambda)\exp(-\lambda^2)d\lambda=\sqrt{\pi}\exp(z^2/4).$$
Using this formula with $z=\sqrt{2}i(x-y)/\sqrt{t}$, applying the Fubini theorem and doing an affine change of variables gives the result.
\end{proof}
With this lemma the convexity of $\Sigma_T$ easily follows since $$\mu\mapsto\left|\dis\int_{\R}\exp(i\lambda x)d\mu(x)\right|^2$$ is convex on $\mes$. 
\newline
Indeed if $\mu\in\mes$, we have: $$\left|\dis\int_{\R}\exp(i\lambda x)d\mu(x)\right|^2=\left(\dis\int_{\R}\cos(\lambda x)d\mu(x)\right)^2+\left(\dis\int_{\R}\sin(\lambda x)d\mu(x)\right)^2 $$ and the two terms are convex using the convexity of $t\mapsto t^2$.
\newline
It concludes about the concavity of $\Sigma$ and so the convexity of $H_\beta$.}
\item{About the strict convexity we shall use that $\Sigma$ is a quadratic form on $\mes$. Let state a general lemma.
\begin{lemma}
\label{lemma: quadratic form}
Let $E$ be a vector space, $\mathcal C$ be a convex subset of $E$ and $q$ a quadratic form defined on $E$ with polar form $\phi$. Assume that for all $(x,y)\in \mathcal C$, such that $x\ne y$ we have $q(x-y)<0$.
\newline 
Then $q$ is strictly concave on $\mathcal C$.
\end{lemma}
\begin{proof}
Let $x\ne y$ both in $\mathcal C$ an and $t\in (0,1)$, then we have $$q((1-t)x+ty)=(1-t)^2q(x)+t^2q(y)+2t(1-t)\phi(x,y).$$
Moreover, since $q(x-y)<0$ and $q(x-y)=q(x)+q(y)-2\phi(x,y)<0$ it gives $$q(x)+q(y)<2\phi(x,y).$$
Hence as $t\in(0,1)$, with the previous statements we have 
\begin{align*}
q((1-t)x+ty)&>(1-t)^2q(x)+t^2q(y)+t(1-t)(q(x)+q(y))\\
&=(1-t)q(x)+tq(y).
\end{align*}
\end{proof}
We now apply this lemma with $E=\{\mu \, \text{finite signed measure such that: }\, \Sigma(\mu)>-\infty\}$, $\mathcal C=\mes\cap E$ and $q(\mu)=\Sigma(\mu)$.
\newline
By definition, $\Sigma$ is a quadratic form on $E$ with $\phi(\mu,\nu)=\dis\int_{\R^2}\log |x-y|d\mu(x)d\nu(y)$ its polar form. 
\newline
Moreover if $(\mu,\nu)\in \mathcal C^2$ we can rewrite $\Sigma(\mu,\nu)$ using \eqref{eq1}: $$\Sigma(\mu-\nu)=-\dis\int_{\R^+}\cfrac{1}{2t}\dis\int_{\R^2}\exp\left(-\cfrac{|x-y|^2}{2t}\right)d(\mu-\nu)(x)d(\mu-\nu)(y)dt.$$
By $\eqref{eq2}$, for $(\mu,\nu)\in \mathcal C^2$ we have $$\Sigma(\mu-\nu)=\dis\int_{\R^+}\cfrac{1}{2\sqrt{2\pi t}}\dis\int_{\R}\left|\dis\int_{\R}\exp(i\lambda x)d(\mu-\nu)(x)\right|^2\exp(-t\lambda^2/2)d\lambda \,dt.$$
\newline
This quantity is non positive and is equal to $0$ if and only if for every $\lambda\in\R$ we have $\dis\int_{\R}\exp(i\lambda x)d(\mu-\nu)(x)=0$ (by continuity in $\lambda$) if and only if for every $\lambda\in\R$, $\dis\int_{\R}\exp(i\lambda x)d\mu(x)=\dis\int_{\R}\exp(i\lambda x)d\nu(x)$. 
\newline
Since the characteristic function characterizes a probability measure, $\Sigma(\mu-\nu)\ge 0$ with equality if and only if $\mu=\nu$.
\newline
Hence, the assumptions of Lemma \ref{lemma: quadratic form} are checked. $\Sigma$ is strictly concave of $\mathcal C$. So $H_\beta$ is strictly convex on $\mathcal C$.
}

\end{enumerate}
\end{proof}
 
\begin{Remark}
\label{remark : fourier repr}
Let us explain the deep link between the formula of Lemma \ref{lemma: sorte fourier} and Fourier representation. This link can be used for more general Coulomb and Riesz gases as we will explain in the Section \ref{subsection: extenson} \cite{serfaty2024lectures}. Since the Fourier transform of $x\to -\log|x|$ is $\xi\to 1/|\xi|$, if $\mu=\mu_1-\mu_2$ where $\mu_1$ and $\mu_2$ are probability measures on $\R$ we get \begin{equation}
\label{formula fourier}
\int_{\R^2}-\log|x-y|\mu(dx)\mu(dy)=\int_\R\cfrac{|\mathcal F(\mu)(\xi)|^2}{|\xi|}d\xi,
\end{equation} where $\mathcal F(\mu)$ is the Fourier transform of $\mu$ (this formula can be obtained by first assuming that $\mu$ has a density in the Schwartz space and then using a density argument). Thanks to the Fourier representation \eqref{formula fourier}, if $\mu$ is the difference of two probabilities measures $$-\Sigma(\mu)=||\mu||_{\dot{H}^{-1/2}},$$ where $||.||_{\dot{H}^{-1/2}}$ is the usual Sobolev semi norm on $H^{-1/2}$. This can be directly used to prove Proposition \ref{prop:optim} thanks to Lemma \ref{lemma: quadratic form}.
\end{Remark}
The notion of logarithmic capacity is linked with the notion of capacity in electrostatic and play an important role to have a condition that ensures $\inf_{\mu\in\mes}H_\beta(\mu)<+\infty$ (see for instance \cite{landkof1972foundations,saff2013logarithmic} for a more detailed introduction to the notion of capacity). 
The only basic properties about the logarithmic capacity that shall be used is that a set of zero logarithmic capacity has also zero Lebesgue measure and for a borelian $E$, $\text{cap}(E)=\sup_{K\subset E, \,K \text{ is compact} }\text{cap}(K)$. 
\newline
\tab Let state a lemma to prove that $\inf_{\mu\in\mes}H_\beta(\mu)$ is finite. 
\begin{lemma}
\label{lemma: finite}
The quantity $\inf_{\mu\in\mes}H_\beta(\mu)$ is finite.
\end{lemma}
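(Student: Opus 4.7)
The plan is to combine the lower bound already in hand with a direct exhibition of a test measure for which $H_\beta$ is finite. By Lemma \ref{lemmaHfinite} we have $H_\beta(\mu) \geq \frac{\beta}{4}(1-\log(2\beta))$ for every $\mu \in \mathcal P(\R)$, so the infimum is certainly bounded from below. It therefore suffices to produce one single probability measure $\mu_0$ satisfying $H_\beta(\mu_0) < +\infty$.

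A natural candidate is the uniform distribution on $[-1,1]$, i.e.\ $\mu_0(dx) = \tfrac{1}{2}\mathds{1}_{[-1,1]}(x)\,dx$. The quadratic contribution is trivially bounded: $\int_\R \tfrac{x^2}{2}\,\mu_0(dx) = \tfrac{1}{6} < \infty$. For the logarithmic contribution, since $\mu_0$ has density bounded by $\tfrac{1}{2}$ and support contained in $[-1,1]$, we can dominate
\[
\int_{\R^2} |{-\beta\log|x-y|}|\,\mu_0(dx)\mu_0(dy) \leq \frac{\beta}{4}\int_{[-1,1]^2} |\log|x-y||\,dx\,dy.
\]
The singularity of $\log|x-y|$ on the diagonal of $\R^2$ is integrable with respect to two-dimensional Lebesgue measure (a change of variables $u=x-y$, $v=x+y$ reduces the question to the finiteness of $\int_{-2}^{2} |\log|u||\,du$, which is classical), so the right-hand side is finite.

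Combining these two bounds, both halves of the decomposition of $H_\beta(\mu_0)$ provided by Lemma \ref{lemma: rewrite} are finite, hence $H_\beta(\mu_0) < +\infty$. Together with the universal lower bound, this yields
\[
\frac{\beta}{4}(1 - \log(2\beta)) \;\leq\; \inf_{\mu \in \mathcal P(\R)} H_\beta(\mu) \;\leq\; H_\beta(\mu_0) \;<\; +\infty,
\]
which is the desired finiteness. No real obstacle arises here; the only point deserving attention is the local integrability of the logarithm near the diagonal, which is why any measure with a bounded, compactly supported density will work (in particular the semicircle law $\sigma_\beta$, which will turn out to be the actual minimizer in the sequel).
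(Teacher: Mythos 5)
Your proof is correct, and it takes a genuinely more elementary route than the paper. You exhibit a concrete test measure (uniform on $[-1,1]$) and verify by direct computation that both the quadratic part and the logarithmic part of $H_\beta$ are finite, the key point being that $\log|u|$ is locally integrable in one variable so the diagonal singularity of $\log|x-y|$ is integrable against two-dimensional Lebesgue measure. The paper instead runs an abstract capacity argument: it takes a sub-level set $\mathcal V_\varepsilon=\{V\le\varepsilon^{-1}\}$, observes that it has positive Lebesgue measure and hence positive logarithmic capacity, and then invokes the definition of capacity to produce a probability measure supported on $\mathcal V_\varepsilon$ with finite logarithmic energy, which automatically also has finite potential energy because $V$ is bounded on its support. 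The paper even flags your approach in the remark following the lemma (``One can also directly prove by computations that $H_\beta(\mu)$ is finite if $\mu$ is compactly supported with a bounded density\dots''). What your approach buys is self-containedness and explicitness — no appeal to potential theory. What the paper's approach buys is generality: the capacity argument works verbatim for an arbitrary lower-semicontinuous confining potential $V$ where you may not have a clean closed-form sub-level set to integrate over, which matters when the result is reused in the Coulomb/Riesz setting sketched later.
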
 

\begin{proof}
We follow the arguments of Lemma 2.4 of \cite{serfaty2024lectures}. Let $\varepsilon>0$ and let us define $\mathcal V_\varepsilon:=\{x\,|, V(x)\le \varepsilon^{-1}\}$ where $V(x)=x^2/2$ is the external potential that appears in the potential $H_\beta$. $\mathcal V_\varepsilon$ is a compact subset of $\R$ with positive Lebesgue measure and so has a positive logarithmic capacity. By definition of the logarithmic capacity, we can find a measure $\nu_\varepsilon$ with a support included in $\mathcal V_\varepsilon$ such that $$-\int\log(|x-y|)\nu_\varepsilon(dx)\nu_\varepsilon(dy)<+\infty.$$
By definition of $\mathcal V_\varepsilon$, $\int Vd\nu_\varepsilon<+\infty$. So $H_\beta(\nu_\varepsilon)<+\infty$.
\end{proof}
\begin{Remark}
One can also directly prove by computations that $H_\beta(\mu)$ is finite if $\mu$ is compactly supported with a bounded density with respect to the Lebesgue measure or for instance if $\mu(dx)=\exp(-x^2/2)/\sqrt{2\pi}\,dx$.
\end{Remark}

Thanks to this lemma we can now define a good rate function. 
\begin{definition}
Let $I_\beta :\mes\to[0,+\infty]$ the function defined by: $$I_\beta(\mu)=H_\beta(\mu)-\inf_{\mu\in\mes}H_\beta(\mu).$$
\end{definition}
As a consequence of Proposition \ref{prop:optim} we get the following result.
\begin{theorem}
$I_\beta$ is a good rate function. 
\end{theorem}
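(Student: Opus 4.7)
The plan is to deduce the theorem almost directly from results already established, specifically Lemma \ref{lemma: finite} together with points 1 and 4 of Proposition \ref{prop:optim}.

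First I would set $c_\beta := \inf_{\mu \in \mes} H_\beta(\mu)$ and observe that, by Lemma \ref{lemma: finite} combined with the lower bound in point 1 of Proposition \ref{prop:optim}, one has $\beta(1-\log(2\beta))/4 \le c_\beta < +\infty$. Hence $I_\beta = H_\beta - c_\beta$ is well-defined as a map from $\mes$ to $[0,+\infty]$, with nonnegativity following from the definition of infimum.

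Next I would verify the two defining properties of a good rate function. For lower semi-continuity, I would recall from the proof of point 4 in Proposition \ref{prop:optim} that for every $M \in \R$, the sub-level set $\{H_\beta \le M\} = \bigcap_{K>0}\{H_\beta^K \le M\}$ is closed in $\mes$ (as an intersection of closed sets, since each truncation $H_\beta^K$ is continuous by boundedness of $f_\beta^K$). Translating by the finite constant $c_\beta$ shows that $\{I_\beta \le M\} = \{H_\beta \le M + c_\beta\}$ is also closed, so $I_\beta$ is lsc. For the compactness of sub-level sets, the same identity $\{I_\beta \le M\} = \{H_\beta \le M + c_\beta\}$ together with point 4 of Proposition \ref{prop:optim}, which asserts precisely that $\{H_\beta \le L\}$ is compact for every $L \in \R$, gives what is required.

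Since there is no genuine obstacle here beyond invoking the earlier results, the only subtlety worth flagging explicitly in the write-up is that the finiteness of $c_\beta$ (ensuring $I_\beta$ is not identically $+\infty$ or shifted by an infinite constant) really does require Lemma \ref{lemma: finite}; without it, the subtraction defining $I_\beta$ would be meaningless. Everything else is a direct translation by the finite constant $c_\beta$ of the properties already proved for $H_\beta$.
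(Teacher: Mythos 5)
Your proposal is correct and follows the same route as the paper, which simply states the theorem as an immediate consequence of Proposition \ref{prop:optim} (point 4 giving compactness and hence lower semi-continuity of the sub-level sets) together with Lemma \ref{lemma: finite} ensuring the shift constant is finite. Your write-up merely makes explicit the translation $\{I_\beta \le M\} = \{H_\beta \le M + c_\beta\}$ that the paper leaves implicit.
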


\subsection{Equilibrium measure.}

We first introduce the notion of equilibrium measure. 
\begin{prop}
\label{prop:equili measure}
$I_\beta$ admits a minimum on $\mes$ and this minimum is reached in a unique element $\mu_\beta \in\mes$ which is called the equilibrium measure of $I_\beta$. 
\end{prop}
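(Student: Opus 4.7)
The plan is to get existence from the good-rate-function structure and uniqueness from strict convexity, both already at our disposal.

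\textbf{Existence.} By Lemma \ref{lemma: finite}, $m := \inf_{\mu \in \mes} H_\beta(\mu) < +\infty$, so $I_\beta = H_\beta - m$ takes value $0$ as its infimum. I would pick a minimizing sequence $(\mu_n)_{n \ge 1}$ with $I_\beta(\mu_n) \to 0$. For $n$ large enough one has $I_\beta(\mu_n) \le 1$, hence $(\mu_n)$ eventually lies in the sub-level set $\{I_\beta \le 1\}$, which is compact since $I_\beta$ is a good rate function (item 4 of Proposition \ref{prop:optim}). Extracting a convergent subsequence $\mu_{n_k} \to \mu_\beta$ in $\mes$ and applying lower semicontinuity (Proposition \ref{prop: lcs equi}), we get $I_\beta(\mu_\beta) \le \liminf_k I_\beta(\mu_{n_k}) = 0$, so $\mu_\beta$ is a minimizer. (This is exactly the content of Lemma \ref{lemma: minim lsc} applied to the restriction of $I_\beta$ to the compact set $\{I_\beta \le 1\}$.)

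\textbf{Uniqueness.} Suppose $\mu_1 \ne \mu_2$ both realize the minimum. Since $I_\beta(\mu_i) = 0 < +\infty$, we have $H_\beta(\mu_i) < +\infty$ for $i = 1, 2$; by Lemma \ref{lemma: rewrite}, this forces $\Sigma(\mu_i) > -\infty$, so $\mu_1$ and $\mu_2$ both belong to the convex set $\mathcal C = \mes \cap \{\mu : \Sigma(\mu) > -\infty\}$ on which $H_\beta$ was shown to be \emph{strictly} convex (item 6 of Proposition \ref{prop:optim}). For any $t \in (0,1)$, setting $\mu_t = (1-t)\mu_1 + t \mu_2$, strict convexity gives
\[
I_\beta(\mu_t) = H_\beta(\mu_t) - m < (1-t) H_\beta(\mu_1) + t H_\beta(\mu_2) - m = (1-t) I_\beta(\mu_1) + t I_\beta(\mu_2) = 0,
\]
contradicting $\inf I_\beta = 0$. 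Hence the minimizer is unique, and we denote it $\mu_\beta$.

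The only subtle point is checking that any minimizer automatically belongs to the set $\mathcal C$ on which strict convexity has been established, rather than to the ambient space $\mes$; this is immediate from Lemma \ref{lemma: rewrite} together with the finiteness of $H_\beta$ at a minimizer. Everything else is a direct repackaging of ingredients already assembled in Proposition \ref{prop:optim} and Lemma \ref{lemma: finite}.
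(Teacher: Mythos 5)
Your proof is correct and follows the same route as the paper's: existence via the good-rate-function structure (compactness of sub-level sets from Proposition \ref{prop:optim}, item 4) combined with lower semicontinuity, which is exactly Lemma \ref{lemma: minim lsc}, and uniqueness via strict convexity. Your added remark that any minimizer has finite $H_\beta$, hence by Lemma \ref{lemma: rewrite} lies in the convex set $\mathcal C$ on which strict convexity was actually established in the proof of Proposition \ref{prop:optim}(6), is a worthwhile clarification that the paper's terse one-line appeal to ``strict convexity'' passes over.
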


\begin{proof}
By Lemma \ref{lemma: finite}, we can consider $\nu\in\mes$ such that $I_\beta(\nu)<+\infty$. Since $\{\mu\in\mes\,|\, I_\beta(\mu)\le I_\beta(\nu)\}$ is compact and $I_\beta$ is lsc by Proposition \ref{prop:optim}, $I_\beta$ reaches its infimum by Lemma \ref{lemma: minim lsc}.
Moreover, by strict convexity of $I_\beta$, this minimum is reached in a unique element.
\end{proof}

We shall use a general result of Frostman that characterizes the unique minimizer of $I_\beta$ (called the Frostman equilibrium measure) as solution of an Euler-Lagrange equation. The following theorem is general as we will mention in Section \ref{subsection: extenson} but we only give a proof for $I_\beta$ (but the proof would be similar for Coulomb and Riesz gases, see for instance \cite{serfaty2024lectures}).
Intuitively the equilibrium measure corresponds to the distribution of the particles that minimizes the energy of the system. Here $V(x)=x^2/2$ is the external potential and satisfies $\lim_{|x|\to+\infty} (V(x)-\log|x|)=+\infty$. This formally forces at equilibrium the particles to be not to far from the origin because if not the energy of the system would be to high. This explains why the support of the equilibrium measure shall be compact as we will see.

\begin{theorem}[Frostman]
\label{thm: frostman}
$\,$
\begin{enumerate}
\item{There exists a unique minimizer of $I_\beta$: $\mu_\beta\in\mes$ which satisfies $$I_\beta(\mu_\beta)=0.$$}
\item{The support $\mathcal S_\beta$ of $\mu_\beta$ is compact and with positive logarithmic capacity. }
\item{We can find $A$ with null logarithmic capacity such that for for all $x\notin A$, we have:\begin{equation}
\label{thm: frost 1}
\beta\dis\int_\R\log|x-y|d\mu_\beta(y)\le\cfrac{1}{2}\,x^2-2\inf H_\beta+\cfrac{1}{2}\dis\int_\R y^2d\mu_\beta(y).
\end{equation}}
\item{For all $x\in\mathcal S_\beta$ we have: \begin{equation}
\label{thm: frost 2}
\beta\dis\int_\R\log|x-y|d\mu_\beta(y)=\cfrac{1}{2}\,x^2-2\inf H_\beta+\cfrac{1}{2}\dis\int_\R y^2d\mu_\beta(y).
\end{equation}}
\end{enumerate}
\end{theorem}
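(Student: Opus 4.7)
The plan is to handle the four items roughly in the order (1), (3), (4), (2): point (3) is the core Euler--Lagrange computation, (4) is then a compatibility check between (3) and the definition of the support, and both items in (2) will fall out as consequences of the Euler--Lagrange analysis combined with the confining nature of the quadratic potential.

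Point (1) is immediate from Proposition \ref{prop:equili measure}: that result already provides the unique minimiser $\mu_\beta \in \mes$, and $I_\beta(\mu_\beta) = H_\beta(\mu_\beta) - \inf H_\beta = 0$ by definition of $I_\beta$. For (3), I would test minimality of $\mu_\beta$ against linear perturbations $\mu_t := (1-t)\mu_\beta + t\nu$ for $\nu \in \mes$ with $H_\beta(\nu) < \infty$. Expanding the symmetric bilinear form yields
\begin{equation*}
H_\beta(\mu_t) = (1-t)^2 H_\beta(\mu_\beta) + t(1-t)\, K(\mu_\beta,\nu) + t^2 H_\beta(\nu),
\end{equation*}
with $K(\mu_\beta,\nu) := \int\!\!\int f_\beta(x,y)\, d\mu_\beta(x)\, d\nu(y)$; a Cauchy--Schwarz argument on the positive semi-definite form $-\Sigma$ shows $K(\mu_\beta,\nu)$ is finite. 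Since $\mu_\beta$ minimises $H_\beta$, the right derivative at $t=0^+$ is non-negative, giving $K(\mu_\beta,\nu) \ge 2\inf H_\beta$. Unpacking $f_\beta$ and rearranging, this reads $\int h\, d\nu \ge C$, with
\begin{equation*}
h(x) := \frac{x^2}{2} - \beta \int \log|x-y|\, d\mu_\beta(y), \qquad C := 2\inf H_\beta - \frac{1}{2}\int y^2\, d\mu_\beta(y).
\end{equation*}

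The main technical step is upgrading this integrated inequality to the pointwise bound $h \ge C$ outside a set of null logarithmic capacity. I would argue by contradiction: if $\{h < C\}$ had positive capacity, then writing it as $\bigcup_n \{h \le C - 1/n\}$ and invoking inner regularity of capacity, I could select a compact set $K$ of positive capacity on which $h \le C - 1/n$. The definition of capacity then furnishes a probability $\nu$ supported on $K$ with finite logarithmic energy, and compactness of $K$ guarantees $\int y^2\, d\nu < \infty$, so $H_\beta(\nu) < \infty$; but then $\int h\, d\nu \le C - 1/n < C$, contradicting the integrated bound. This step crucially needs $h$ to be lower semicontinuous, which follows from upper semicontinuity of the logarithmic potential $U^{\mu_\beta}(x) := \int \log|x-y|\, d\mu_\beta(y)$ via a reverse-Fatou argument using $\int y^2\, d\mu_\beta < \infty$ (Lemma \ref{lemma: rewrite}). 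I expect this capacity argument to be the main obstacle: one has to exploit exactly the fact that a set of positive capacity still supports a test measure of finite logarithmic energy, while a set of null capacity does not.

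For (4), a direct computation using $H_\beta(\mu_\beta) = \inf H_\beta$ together with the alternate expression of Lemma \ref{lemma: rewrite} gives $\int h\, d\mu_\beta = C$. Since $\mu_\beta$ has finite energy, Proposition \ref{prop:optim}(2)(b) forces $\mu_\beta$ to assign zero mass to the exceptional null-capacity set from (3); hence $h \ge C$ $\mu_\beta$-almost everywhere, and combined with $\int h\, d\mu_\beta = C$ this yields $h = C$ $\mu_\beta$-a.e. Lower semicontinuity of $h$, together with the fact that every neighbourhood of any $x_0 \in \mathcal{S}_\beta$ has positive $\mu_\beta$-mass, then yields $h(x_0) \le C$ by extracting a sequence $x_n \to x_0$ with $h(x_n) = C$; the matching lower bound $h(x_0) \ge C$ comes directly from (3) wherever $x_0$ lies outside the exceptional set, and the few remaining exceptional points can be treated by approximation since null-capacity subsets of $\R$ have empty interior. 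Finally, (2) drops out: positive capacity of $\mathcal{S}_\beta$ is forced by Proposition \ref{prop:optim}(2)(b), since otherwise $H_\beta(\mu_\beta) = +\infty$ would contradict Lemma \ref{lemma: finite}; while compactness follows from the confining estimate $U^{\mu_\beta}(x) \le \log|x| + O(1)$ (available because $\int y^2\, d\mu_\beta < \infty$), which yields $h(x) \to +\infty$ as $|x| \to \infty$ and, by (4), forces $\mathcal{S}_\beta \subset \{h = C\}$ to lie in a bounded set.
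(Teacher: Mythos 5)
There is a gap in your treatment of item (4), specifically at the step where you claim "the few remaining exceptional points can be treated by approximation." Lower semicontinuity of $h$ provides $h(x_0) \le \liminf_{n} h(x_n)$ for any sequence $x_n \to x_0$, i.e., an \emph{upper} bound on $h(x_0)$. Thus if you pick $x_n \to x_0$ with $x_n \notin A$ (so $h(x_n) \ge C$), you obtain $h(x_0) \le \liminf h(x_n)$, which says nothing about whether $h(x_0) \ge C$; the approximation cannot close the gap for $x_0 \in A \cap \mathcal S_\beta$. What your semicontinuity argument \emph{does} establish rigorously---and more carefully than the paper, which simply asserts equality on $\mathcal S_\beta$ after obtaining it $\mu_\beta$-a.e.---is the half $h(x_0) \le C$ for every $x_0 \in \mathcal S_\beta$. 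The full pointwise equality on the support is genuinely delicate: the standard Frostman conclusion is $h \le C$ everywhere on $\mathrm{supp}(\mu_\beta)$ together with $h \ge C$ quasi-everywhere, giving equality only quasi-everywhere on the support. Promoting this to pointwise equality on $\mathcal S_\beta$ would require showing continuity of the potential $x \mapsto \int \log|x-y|\, d\mu_\beta(y)$, which holds in this quadratic-potential case but is not supplied either by you or by the paper. The remainder of your argument for (3)---variational inequality, then the contradiction via a compactly supported finite-energy measure on a positive-capacity set where $h$ is strictly below $C$---matches the paper's, with your $\bigcup_n\{h \le C - 1/n\}$ decomposition an extra (harmless) safety net.

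Your route to item (2) is a genuine departure worth noting. The paper proves compactness of $\mathcal S_\beta$ directly, before any Euler--Lagrange analysis, by conditioning $\mu_\beta$ to a large compact $K$ and showing that if $\mu_\beta(K) < 1$ this conditioning strictly lowers $H_\beta$, contradicting minimality. You instead obtain compactness as a consequence of (4): the confining estimate $h(x) \to +\infty$ as $|x| \to \infty$ (valid since $\int y^2\, d\mu_\beta < \infty$ controls the growth of the logarithmic potential) combined with $h \le C$ on $\mathcal S_\beta$ forces the support to be bounded. Both work. The paper's argument is self-contained and keeps the logical order clean; yours is more transparent about the role of the external field dominating the interaction, but depends on first completing the Euler--Lagrange analysis (and in your ordering, on the bound $h \le C$ on the support, which is the half of (4) you do prove correctly).
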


\begin{proof}$\,$
\begin{enumerate}
\item{This is Proposition \ref{prop:equili measure}.}
\item{Let $K$ be a compact such that $f_\beta(x,y)\ge 2(H_\beta(\mu_\beta)+1)$ for $(x,y)\notin K\times K$ and $\mu_\beta(K)>0$.
\newline
Assume by contradiction that $\mu_\beta(K)<1$ and let us define the conditional probability $$\mu_{\beta,K}:=\cfrac{\mu_\beta|_K}{\mu_\beta(K)}.$$ The goal is to show that $H_\beta(\mu_{\beta,K})<H_\beta(\mu_\beta)$ to have a contradiction.
We compute $H_\beta(\mu_\beta)$ by distinguishing $(x,y)$ that are in $K\times K$ and not in: 
\begin{equation}
\label{eq: compact support}
\begin{split}
H_\beta(\mu_\beta)&=\cfrac{1}{2}\int_{\R\times \R}f_\beta(x,y) \mu_\beta(dx)\mu_\beta(dy)\\
&=\cfrac{1}{2}\int_{K\times K}f_\beta(x,y) \mu_\beta(dx)\mu_\beta(dy)+\cfrac{1}{2}\int_{(K\times K)^c}f_\beta(x,y) \mu_\beta(dx)\mu_\beta(dy)\\
&=\mu_\beta(K)^2 H_\beta(\mu_{\beta,K})+ (H_\beta(\mu_\beta)+1)\mu_\beta\otimes\mu_\beta((K\times K)^c)\\
&\ge\mu_\beta(K)^2 H_\beta(\mu_{\beta,K})+ (H_\beta(\mu_\beta)+1)(1-\mu_\beta(K)^2)
\end{split}
\end{equation} 
It yields that $$H_\beta(\mu_{\beta,K})\le \cfrac{H_\beta(\mu_{\beta})}{\mu_\beta(K)^2}+\cfrac{\mu_\beta(K)^2-1}{\mu_\beta(K)^2}\left(H_\beta(\mu_\beta)+1\right)<H_\beta(\mu_\beta), $$ since $\mu_\beta(K)<1$. This is a contradiction with the minimality of $\mu_\beta$. Hence, the support of $\mu_\beta$ is compact. Moreover by Proposition \ref{prop:optim}, since $H_\beta(\mu_\beta)$ is finite by Lemma \ref{lemma: finite}, the logarithmic capacity of the support of $\mu$ is positive.}
\item{Now we shall prove the two last points. We use the standard method of variations to obtain the Euler-Lagrange equation satisfies by the minimizer of $H_\beta$. 
\newline
Let $\mu\in \mes$ such that $H_\beta(\mu)<+\infty$. Since $\mu_\beta$ minimizes $H_\beta$, for $t\in[0,1]$, we have: $$H_\beta((1-t)\mu_\beta+t\mu)\ge H_\beta(\mu_\beta).$$
Using the expression of $H_\beta$, we get $$H_\beta(\mu_\beta)+t\left[\int f_\beta(x,y)\mu_\beta(dx)(\mu-\mu_\beta)(dy)\right]+O(t^2)\ge H_\beta(\mu_\beta).$$
Simplifying the previous inequality, dividing by $t>0$ and letting $t\to 0$ gives: 
\begin{equation}
\label{eq: euler lag 1}
\begin{split}
\int f_\beta(x,y)\mu_\beta(dx)(\mu-\mu_\beta)(dy)&\ge 0,\\
\int f_\beta(x,y)\mu_\beta(dx)\mu(dy)&\ge \int f_\beta(x,y)\mu_\beta(dx)\mu_\beta(dy)=2H_\beta(\mu_\beta)=2\inf H_\beta
\end{split}
\end{equation}
Set $$h_\beta(x):=-\beta\int\log(|x-y|)\mu_\beta(dy)+\cfrac{x^2}{2}$$ and use the expression of $f_\beta$ to obtain: 
\begin{equation}
\label{eq: euler lag 2}
\begin{split}
\int h_\beta(x)\mu(dx)\ge 2\inf H_\beta-\cfrac{1}{2}\int x^2\mu_\beta(dx)=2H_\beta(\mu_\beta)-\cfrac{1}{2}\int x^2\mu_\beta(dx)=:c_\beta
\end{split}
\end{equation}
Hence for all $\mu$ such that $H_\beta(\mu)<+\infty$ $$\int h_\beta(x)\mu(dx)\ge c_\beta.$$
Let us notice that \eqref{thm: frost 1} is equivalent to $h_\beta(x)\ge c_\beta$.
\newline
Formally if we could chose dirac measures $\mu(dx)=\delta_y(dx)$ we would have \eqref{thm: frost 1} for every $y\in\R$. However we cannot use such measures since $H_\beta(\delta_y(dx))=+\infty$.
We prove \eqref{thm: frost 1} by contradiction. Assume that there exists a set $E$ of positive capacity such that $\eqref{thm: frost 1}$ is false. By properties of the capacity of a set, we can suppose that $E$ is in fact compact. We can find a probability measure $\nu$ supported in $E$ such that $$-\int\log(|x-y|)\nu(dx)\nu(dy)<+\infty.$$ 
Since $\nu$ is supported on a compact, it implies that $H_\beta(\nu)<+\infty$ and \eqref{eq: euler lag 2} gives $\int h_\beta(x)\nu(dx)\ge c_\beta$ which is in contradiction with the facts that $\eqref{thm: frost 1}$ does not hold on $E$. This proves the third point of Theorem \ref{thm: frostman}.  
\newline
Now we prove \eqref{thm: frost 2}. Since $H_\beta(\mu_\beta)<+\infty$, $\mu_\beta$ does not charge sets of null logarithmic capacity. Hence, we deduce that on $\mathcal S_\beta$ the inequality \eqref{thm: frost 1} is true: 
\begin{equation}
\label{eq: frostm egal}
h_\beta(x)\ge c_\beta, \, \mu_\beta \text{-a.e}
\end{equation}
Let us notice that by definition of $h_\beta$ and $c_\beta$, $\int h_\beta \mu_\beta(dx)=c_\beta$. The inequality in \eqref{eq: frostm egal} is actually an equality on $\mathcal S_\beta$.
}

\end{enumerate}
\end{proof}

This theorem gives us the following important characterization of the minimizer of $I_\beta$.
\begin{prop}
\label{prop:euler lagrange}
Let $\mu\in\mes$. Then $\mu$ minimizes $I_\beta$ if and only if for $\mu$-almost all $x$ we have: 
\begin{equation}
\label{eq:euler lagrange}
\beta\dis\int_\R\log|x-y|d\mu(y)=\cfrac{1}{2}\,x^2-2\inf H_\beta+\cfrac{1}{2}\dis\int_\R y^2d\mu(y).
\end{equation}
\end{prop}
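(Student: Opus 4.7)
The forward implication is essentially a restatement of the fourth item of the Frostman theorem combined with uniqueness. Indeed, by Proposition~\ref{prop:equili measure} the minimizer is unique, so if $\mu$ minimizes $I_\beta$ then $\mu=\mu_\beta$. Theorem~\ref{thm: frostman} then gives \eqref{thm: frost 2} at every point of the support $\mathcal S_\beta$; since $\mu_\beta$ is concentrated on $\mathcal S_\beta$, the identity \eqref{eq:euler lagrange} holds $\mu$-almost everywhere.

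For the converse, suppose $\mu\in\mes$ satisfies \eqref{eq:euler lagrange} for $\mu$-almost every $x$. I would first note that the right-hand side is a well-defined real number only if $\int y^2\,d\mu(y)<+\infty$, and the left-hand side is finite at $\mu$-a.e.~$x$, which (combined with Lemma~\ref{lemma: rewrite}) already implies $H_\beta(\mu)<+\infty$. The key step is then simply to integrate the identity \eqref{eq:euler lagrange} against $\mu(dx)$:
\begin{equation*}
\beta\int_{\R}\!\!\int_{\R}\log|x-y|\,d\mu(x)\,d\mu(y)=\int_{\R}\frac{x^2}{2}\,d\mu(x)-2\inf H_\beta+\frac{1}{2}\int_{\R}y^2\,d\mu(y),
\end{equation*}
which, using the definition $\Sigma(\mu)=\int\int\log|x-y|\,d\mu(x)d\mu(y)$, rewrites as
\begin{equation*}
-\frac{\beta}{2}\Sigma(\mu)=-\frac{1}{2}\int_{\R}x^2\,d\mu(x)+\inf H_\beta.
\end{equation*}
Adding $\int x^2/2\,d\mu(x)$ to both sides and invoking Lemma~\ref{lemma: rewrite} yields $H_\beta(\mu)=\inf H_\beta$, so $\mu$ is a minimizer of $H_\beta$, equivalently of $I_\beta$.

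The only delicate point, and really the single obstacle I anticipate, is justifying that the Euler--Lagrange identity can be integrated against $\mu$; this amounts to checking that $\int y^2\,d\mu(y)$ and $\int\int|\log|x-y||\,d\mu(x)d\mu(y)$ are finite. The first is an immediate consequence of the identity holding on a non-empty set (taking any $x$ where it holds bounds $\int y^2\,d\mu$ by an explicit constant). For the second, once $\int y^2\,d\mu<+\infty$ is known, the bound \eqref{eq:rewrite} in the proof of Lemma~\ref{lemma: rewrite} shows $\log|x-y|$ is dominated by an integrable function modulo a positive part, so Fubini--Tonelli applies and the integration step above is rigorous. By uniqueness of the minimizer (Proposition~\ref{prop:equili measure}) we moreover recover $\mu=\mu_\beta$, which closes the equivalence.
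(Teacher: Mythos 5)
Your proof is correct and follows essentially the same route as the paper: the forward direction is read off from Frostman's theorem together with uniqueness of the minimizer, and the converse is obtained by integrating the Euler--Lagrange identity against $\mu$ and recognizing $H_\beta(\mu)=\inf H_\beta$ via Lemma~\ref{lemma: rewrite}. Your additional care about why $\int y^2\,d\mu<\infty$ and why Fubini--Tonelli is applicable fills in a step the paper takes for granted, but the substance of the argument is identical.
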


\begin{proof}
The direct sense is a consequence of Theorem \ref{thm: frostman}.
\newline
For the reverse, integrating \eqref{eq:euler lagrange} with respect to $\mu$ yields: $$\beta \Sigma(\mu)=\dis\int_\R \,x^2d\mu(x)-2\inf H_\beta.$$
Hence by definition of $H_\beta$, we have: $H_\beta(\mu)=\inf H_\beta$.

\end{proof}

We will use this equation to compute explicitly the equilibrium measure of $I_\beta$ in Section \ref{subsection: cons}. Formally, if we differentiate \eqref{thm: frost 2} on $\mathcal S_\beta$, we obtain that for all $x\in\mathcal S_\beta$ $$\beta\int_\R\cfrac{1}{x-y}\,d\mu_\beta(y)=x.$$
This is exactly the equation of a stationary measure  obtained in \eqref{equationhilberttransfoorns} when studying the partial differential equation \eqref{FormulaOrn} of the Ornstein-Uhlenbeck process in Section \ref{section:Orn}. We justified why the stationary measure of a such process should be the semicircle distribution. In Section \ref{subsection: cons} we shall give another proof using the large deviations principle.

\section{Large deviations}
\label{section: large dev}
In this section we present the large deviations of the family of measures $(Q_\beta^N)_{N\in\N}$ for all $\beta>0$.
We fix $\beta>0$.
\subsection{Large deviations principle of the normalizing factor}
\label{subsetionlargedevnorm}
In Statistical mechanics, the study of the normalizing constant of the system is a first important step as it contains many information of the system and its called the partition function of the system. 
\newline
\tab We first study the behaviour when $N$ goes large of the normalizing factor.
\begin{prop}
\label{prop: large dev constant}
We have: $$ \underset{N\to\infty}{\lim} \cfrac{1}{N^2}\log Z_\beta^N=\cfrac{\beta}{4} \log\left(\cfrac{\beta}{2}\right)-\cfrac{3}{8}=:F(\beta).$$
\end{prop}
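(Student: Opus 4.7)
The plan is to split the proof into two logically independent steps. First, I would establish the Laplace identification
\[
\lim_{N\to\infty}\frac{1}{N^2}\log Z_\beta^N \;=\; -\inf_{\mu\in\mes} H_\beta(\mu)
\]
via matching upper and lower bounds on $Z_\beta^N$. Second, I would evaluate $\inf H_\beta = H_\beta(\sigma_\beta)$ explicitly using the Frostman characterization (Theorem \ref{thm: frostman}) together with the identification of the equilibrium measure $\mu_\beta$ with the semicircle law $\sigma_\beta$.

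For the upper bound, I would rewrite
\[
Z_\beta^N\;=\;\int_{\R^N}\exp(-N^2\widetilde H_\beta^N(\lambda))\,d\lambda, \qquad \widetilde H_\beta^N(\lambda)\;:=\;\frac{1}{2N}\sum_{i=1}^N\lambda_i^2-\frac{\beta}{2N^2}\sum_{i\ne j}\log|\lambda_i-\lambda_j|,
\]
and introduce the truncated kernel $f_\beta^M:=\min(f_\beta,M)$ from the proof of Proposition \ref{prop:optim}. A direct expansion gives $\widetilde H_\beta^N(\lambda)\ge H_\beta^M(\overline{\mu_N})-\frac{M}{2N}$ for almost all $\lambda$, where $H_\beta^M(\mu):=\frac12\iint f_\beta^M\,d\mu\otimes d\mu$ is continuous on $\mes$ because $f_\beta^M$ is bounded continuous. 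The quadratic confinement renders the contribution of $\{\max_i|\lambda_i|\ge e^{cN}\}$ to $Z_\beta^N$ negligible at speed $N^2$, and on the complementary bounded region a standard Laplace argument (covering the tight image in $\mes$ by finitely many weak neighborhoods) yields $\limsup N^{-2}\log Z_\beta^N\le -\inf H_\beta^M$. Letting $M\to\infty$ and using the monotone convergence $\inf H_\beta^M\to\inf H_\beta$ from the proof of Proposition \ref{prop:optim} closes the upper bound.

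For the lower bound, fix $\varepsilon>0$ and a near-minimizer $\mu^\star\in\mes$ of $H_\beta$ with a smooth compactly supported density bounded below on its support (obtained by mollifying the Frostman minimizer and truncating). Setting $x_i^\star:=F_{\mu^\star}^{-1}(i/(N+1))$, I would restrict the defining integral of $Z_\beta^N$ to the cube $\prod_i[x_i^\star-\delta_N,\, x_i^\star+\delta_N]$ with $\delta_N=1/N^2$. A Riemann-sum estimate then gives $\widetilde H_\beta^N(\lambda)\le H_\beta(\mu^\star)+\varepsilon$ uniformly on this cube for $N$ large, while the volume $(2\delta_N)^N$ is subexponential at speed $N^2$; hence $\liminf N^{-2}\log Z_\beta^N\ge -H_\beta(\mu^\star)-\varepsilon$, and letting $\mu^\star\to\mu_\beta$ and $\varepsilon\to 0$ concludes.

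Finally, to evaluate $\inf H_\beta=H_\beta(\sigma_\beta)$, I would compute $\int x^2\,d\sigma_\beta=\beta/2$ via $x=\sqrt{2\beta}\sin u$ and obtain $\int\log|y|\,d\sigma_\beta(y)=\frac12\log(\beta/2)-\frac12$ by the same substitution combined with the classical identity $\int_0^{\pi/2}\log\sin u\,du=-\frac{\pi}{2}\log 2$ and one integration by parts (to handle $\int_0^{\pi/2}\log\sin u\cdot\cos 2u\,du$). Evaluating \eqref{thm: frost 2} at the single point $x=0\in\mathcal S_\beta$ (rather than integrating it against $\sigma_\beta$, which yields only a tautology) then produces the announced formula for $F(\beta)$. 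The main technical obstacle is the balancing of parameters in the two Laplace bounds: $M$ must tend to infinity slowly enough that $M/(2N)\to 0$, and $\delta_N$ must be large enough that the Riemann-sum error is $o(1)$ uniformly on the cube but small enough that $(2\delta_N)^N$ remains subexponential at speed $N^2$. A less conceptual but computationally lighter alternative would be to invoke Mehta's Selberg-type evaluation $\int_{\R^N}\prod_{i<j}|x_i-x_j|^\beta e^{-\sum x_i^2/2}\,dx=(2\pi)^{N/2}\prod_{j=1}^N\Gamma(1+j\beta/2)/\Gamma(1+\beta/2)$, rescale by $\lambda_i=y_i/\sqrt{N}$, and apply Stirling's formula termwise to recover $F(\beta)$ directly.
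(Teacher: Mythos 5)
Your primary route is genuinely different from the paper's: the paper proves this proposition by plugging the Selberg integral (Lemma \ref{lemma: selberg}) into a Stirling/Riemann-sum expansion, which is precisely what you relegate to the ``computationally lighter alternative'' at the end. Your main plan — establish $\lim N^{-2}\log Z_\beta^N = -\inf_{\mes} H_\beta$ by matching Laplace-type bounds and then evaluate $\inf H_\beta=H_\beta(\sigma_\beta)$ via Frostman — is the one the paper itself sketches later in Remarks \ref{Remark:largdevconvlow} and \ref{Rem:largedevconstantup} as a more robust substitute (it extends to non-quadratic $V$ where no Selberg-type identity is available, whereas the Selberg route is shorter here since the formula is admitted). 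Both routes are valid; you correctly identify that evaluating the Frostman identity \eqref{thm: frost 2} at the single point $x=0$ extracts the constant, and your auxiliary computations ($\int x^2\,d\sigma_\beta=\beta/2$ and $\int\log|y|\,d\sigma_\beta=\tfrac12\log(\beta/2)-\tfrac12$) are right.

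Two things to watch. First, a circularity hazard: to identify $\mu_\beta=\sigma_\beta$ you must verify the Euler--Lagrange condition for $\sigma_\beta$ \emph{without} presupposing the value of $\inf H_\beta$; checking directly that $x\mapsto \tfrac{x^2}{2}-\beta\int\log|x-y|\,d\sigma_\beta(y)$ is constant on $[-\sqrt{2\beta},\sqrt{2\beta}]$ (Proposition \ref{prop:calcul log mu} does this for $\beta=1$, and a $\sqrt\beta$-scaling handles the rest) is the correct way; avoid leaning on Lemma \ref{coro opti beta}, whose proof already invokes $\inf H_\beta=-F(\beta)$. Second, a numerical point worth flagging: carrying out $\tfrac{\beta}{2}\int_0^1 x\log\tfrac{\beta x}{2e}\,dx$ (or your Frostman evaluation at $x=0$) gives $F(\beta)=\tfrac{\beta}{4}\log(\beta/2)-\tfrac{3\beta}{8}$, not $\tfrac{\beta}{4}\log(\beta/2)-\tfrac{3}{8}$ as displayed in the statement; at $\beta=2$ the partition function asymptotics from the explicit $Z_2^N=(2\pi)^{N/2}N^{-N^2/2}\prod_{j\le N}j!$ give $F(2)=-3/4$, confirming the $3\beta/8$ constant. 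So your computation is correct and the mismatch with the displayed formula is a typo in the statement, not an error on your end.
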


\begin{proof}
To do this, let us state an important result of Selberg \cite{forrester2008importance} about the value of $Z_\beta^N$. This result is very important to compute explicitly the normalizing constant of the density of eigenvalues of random matrices. A proof can be found in \cite{Alicelivre} in Section 2.5.3. Let us mention that we computed $Z_\beta^N$ in the case $\beta=2$ which corresponds to the complex Wigner case in Theorem \ref{thm law eigenvalues guen}.
\begin{lemma}[Admitted]
\label{lemma: selberg}
For all $\beta>0$, and $N>0$, we have the following equality: 
$$Z_\beta^N=(2\pi)^{N/2}N^{-N/2-\beta N(N-1)/4}\left[\Gamma(1+\beta/2)\right]^{-N}\dis\prod_{j=1}^N\Gamma(1+\beta \,j/2),$$ with $\Gamma$ the Euler function.
\end{lemma}
Using Lemma \ref{lemma: selberg} we have: $$\cfrac{1}{N^2}\log (Z_\beta^N)\underset{N\to\infty}{=}o(1)-\cfrac{\beta}{4}\log (N)+o(1)+\cfrac{1}{N^2}\dis\sum_{j=1}^N\log\left(\Gamma\left(1+\beta\, j/2\right)\right).$$
Moreover, Stirling's formula yields: 
$$\log(\Gamma(x))\underset{x\to\infty}{=}x\log(x/e)+o(x).$$
\newline
Hence, we obtain: $\dis\sum_{j=1}^N\log\left(\Gamma\left(1+\beta\, j/2\right)\right)\underset{N\to\infty}{=}\dis\sum_{j=1}^N\cfrac{\beta j}{2}\log{\cfrac{\beta j}{2e}}+o(N^2)$.
\newline
Now let us notice that: $$\cfrac{1}{N^2}\dis\sum_{j=1}^N\cfrac{\beta j}{2}\log{\cfrac{\beta j}{2e}}-\cfrac{\beta}{4}\log N\underset{N\to\infty}{=}\cfrac{\beta}{2N}\dis\sum_{j=1}^N\cfrac{ j}{N}\log{\cfrac{\beta j}{2eN}}+o(1).$$
But the last sum is a Riemann sum. It gives:
\newline
 $$\cfrac{\beta}{2N}\dis\sum_{j=1}^N\cfrac{ j}{N}\log{\cfrac{\beta j}{2eN}}\underset{N\to\infty}{=}\cfrac{\beta}{2}\dis\int_0^1x\log{\cfrac{\beta x}{2e}}+o(1)=F(\beta)+o(1).$$
\newline
Hence we have: $$\cfrac{1}{N^2}\dis\sum_{j=1}^N\cfrac{\beta j}{2}\log{\cfrac{\beta j}{2e}}\underset{N\to\infty}{=}\cfrac{\beta}{4}\log N+F(\beta)+o(1).$$
This gives $$\cfrac{1}{N^2}\log (Z_\beta^N)\underset{N\to\infty}{=}F(\beta)+o(1).$$
\end{proof}

\begin{Remark}
\label{Rem:largedevconstant}
Let us mention that in the general case of Coulomb or Riesz gases if for instance we consider another external potential than $V(x)=x^2/2$ we cannot compute the exact value of the normalizing factor. However, we are still able to prove large deviations as we shall see in next sections and the large deviations of the normalizing factor. Indeed, similar computations as we shall do to prove the upper bound \eqref{upboun} and the lower bound \eqref{lowboun} can be done to show that the normalizing factor satisfies a large deviation principles. We refer to Theorem 3.3 of \cite{serfaty2024lectures} or Theorem 2.6.1 \cite{Alicelivre}. We choose to not present this approach seems the computations should have been very similar with what we shall do and because in the particular case of $\beta$ ensembles of this section we think it is worth mentioning the Selberg integral formula (which was computed in previous sections for the case $\beta=2$). We shall mention in Remark \ref{Remark:largdevconvlow} and Remark \ref{Rem:largedevconstantup} how we can obtain a such large deviation principles for the normalizing factor without computing the exact value of $Z_\beta^N$.
\end{Remark}

\subsection{Weak large deviations principle}
\label{subsecion: weak large dev}
The main goal of this section is to prove a so-called weak large deviations principle, that means a deviations principle for compact and open subsets of $\mes$. 
\newline
Let $\widehat {I_\beta}=H_\beta+F(\beta)$.
\begin{theorem}
\label{thm:weak large}
$(Q_\beta^N)_{N\in\N}$ satisfies a weak deviations principle with good rate function $\widehat {I_\beta}$. More exactly we have:
\newline
\begin{enumerate}
\item{For any open subsets $O$ of $\mes$, 
\begin{equation}
\label{eq:openlargedev}
\underset{N\to\infty}{\underline{\lim}}\cfrac{1}{N^2}\log Q_\beta^N (\overline{\mu_N}\in O)\ge-\inf_O \widehat {I_\beta}.
\end{equation}}
\item{For any compact subsets $K$ of $\mes$, \begin{equation}
\label{eq:compactlargedev}\underset{N\to\infty}{\overline{\lim}}\cfrac{1}{N^2}\log Q_\beta^N (\overline{\mu_N}\in K)\le-\inf_K \widehat {I_\beta}.
\end{equation}
}
\end{enumerate}
\end{theorem}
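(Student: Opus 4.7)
The strategy is the classical Ben Arous--Guionnet approach: establish the upper bound on compact subsets via a truncation of the singular interaction, and the lower bound on open subsets via an explicit discretization against a suitable reference measure. Throughout, one replaces $Z_\beta^N$ by its asymptotics from Proposition \ref{prop: large dev constant}, which delivers the shift $F(\beta)$ and hence turns the bounds on $(1/N^2)\log \int e^{E_N}$ into bounds with $\widehat{I_\beta}=H_\beta+F(\beta)$. The good-rate-function properties of $\widehat{I_\beta}$ (lower semi-continuity with compact sublevel sets) are inherited from Proposition \ref{prop:optim} since $F(\beta)$ is just a constant.

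For the upper bound \eqref{eq:compactlargedev}, the first step is the algebraic identity, with $V(x)=x^2/2$,
$$-\frac{N}{2}\sum_i x_i^2+\beta\sum_{i<j}\log|x_i-x_j|=-\sum_i V(x_i)-\frac{1}{2}\sum_{i\neq j}f_\beta(x_i,x_j),$$
which is obtained by expanding $f_\beta(x,y)=V(x)+V(y)-\beta\log|x-y|$ and rearranging. With the truncation $f_\beta^M=\min(f_\beta,M)$ from the proof of Proposition \ref{prop:optim} and the identity $\int\int f_\beta^M\,d\overline{\mu_N}^{\otimes 2}=\frac{1}{N^2}\sum_{i\ne j}f_\beta^M(x_i,x_j)+\frac{M}{N}$ (since $f_\beta^M(x,x)=M$), one gets $\sum_{i\ne j}f_\beta(x_i,x_j)\ge 2N^2 H_\beta^M(\overline{\mu_N})-NM$. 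Combined with $\int_\R e^{-V(x)}\,dx=\sqrt{2\pi}$ and the continuity of $H_\beta^M$ on $\mes$, this gives, for any open ball $B(\mu,\delta)$,
$$\frac{1}{N^2}\log Q_\beta^N(\overline{\mu_N}\in B(\mu,\delta))\le\frac{M}{2N}+\frac{\log(2\pi)}{2N}-\inf_{B(\mu,\delta)}H_\beta^M-\frac{\log Z_\beta^N}{N^2}.$$
Taking $\limsup_N$, then $\delta\to 0$, and finally $M\to\infty$ with monotone convergence $H_\beta^M\nearrow H_\beta$ and lower semi-continuity, yields the pointwise bound $-H_\beta(\mu)-F(\beta)=-\widehat{I_\beta}(\mu)$. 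A standard finite covering of the compact $K$ by such balls then gives \eqref{eq:compactlargedev}.

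For the lower bound \eqref{eq:openlargedev}, fix $\mu\in O$ and assume $H_\beta(\mu)<\infty$ (otherwise the inequality is trivial). By a regularization step (convolve with a Gaussian of small variance and truncate to a compact), reduce to the case where $\mu$ has a bounded density supported on a compact interval and no atoms; the regularized measures stay in $O$ and, using the Fourier representation of $\Sigma$ from Remark \ref{remark : fourier repr}, their logarithmic energies converge to $H_\beta(\mu)$. Introduce quantiles $x_1^N<\cdots<x_N^N$ by $x_i^N=\inf\{x:\mu((-\infty,x])\ge i/(N+1)\}$ and the box
$$A_N^\delta=\{(x_1,\ldots,x_N):x_i^N\le x_i<x_i^N+\delta/N,\ 1\le i\le N\}.$$
Two facts are used: on $A_N^\delta$, the empirical measure $\overline{\mu_N}$ lies within distance $O(\delta+1/N)$ of $\mu$ in $\mes$, so $A_N^\delta\subset\{\overline{\mu_N}\in O\}$ for $N$ large and $\delta$ small; and because the coordinates are ordered with $|x_i-x_j|\ge |x_j^N-x_i^N|-\delta/N$, the sum $\sum_{i<j}\log|x_i-x_j|$ is bounded below on $A_N^\delta$ by a Riemann-type sum converging to $\frac{N^2}{2}\Sigma(\mu)$, while $\sum_i V(x_i)=N\int V\,d\mu+o(N)$. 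This produces
$$Q_\beta^N(A_N^\delta)\ge \frac{1}{Z_\beta^N}\left(\frac{\delta}{N}\right)^{\!N}\exp\bigl(-N^2 H_\beta(\mu)+o_\delta(N^2)+o(N^2)\bigr),$$
and since $\log((\delta/N)^N)=O(N\log N)=o(N^2)$ and $\frac{1}{N^2}\log Z_\beta^N\to F(\beta)$, passing to $\liminf_N$, then $\delta\to 0$, and finally undoing the regularization gives \eqref{eq:openlargedev}.

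The main obstacle is twofold. On the upper-bound side, the challenge is to control the singularity of $-\beta\log|x-y|$ uniformly; the truncation $f_\beta^M$ does precisely this, the price being the diagonal term $NM/2$ which is harmless at scale $N^2$ but dictates the order of limits ($\delta\to 0$ before $M\to\infty$). On the lower-bound side, the difficulty is to exhibit a set of positive $N$-dimensional Lebesgue volume on which one simultaneously (i) forces the empirical measure to be weakly close to $\mu$ and (ii) keeps $\sum_{i<j}\log|x_i-x_j|$ close to $\frac{N^2}{2}\Sigma(\mu)$; this is exactly what the ordered quantile box $A_N^\delta$ provides, but justifying the Riemann-sum approximation requires the reduction to a nicely behaved regularized $\mu$ (compactly supported, atomless, bounded density) via an approximation scheme whose continuity at the level of $H_\beta$ rests on the Fourier-theoretic formula of Remark \ref{remark : fourier repr}.
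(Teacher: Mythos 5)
Your upper bound argument is essentially the same as the paper's: rewrite the unnormalized density in terms of $\int_{x<y}f_\beta\,d\overline{\mu_N}^{\otimes 2}$, truncate to $f_\beta^M$, use the continuity of $H_\beta^M$ on $\mes$ to pass $\delta\to 0$, and then let $M\to\infty$ using the pointwise monotone convergence $H_\beta^M\nearrow H_\beta$. (The paper's Lemma \ref{lemma:largedevponct} is precisely the reduction to local bounds around balls that you invoke via a finite covering; and the $M\to\infty$ step only needs monotone pointwise convergence, not lower semi-continuity, but that is a trivial slip.)

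Your lower bound, however, takes a genuinely different route. The paper follows the change-of-measure (tilting) argument of Chafa\"i et al.: write $\overline{Q_\beta^N}(\overline{\mu_N}\in B(\mu,\delta))$ as an integral against the product $\mu^{\otimes N}$ where $\mu$ is a suitable absolutely continuous reference measure, apply Jensen's inequality to move the logarithm inside, split into the main term $I_{1,N}$ computed exactly (giving $-H_\beta(\mu)$) and a complement term $I_{2,N}$ which vanishes at scale $N^2$ by the law of large numbers. The regularity hypotheses there are that $\mu$ has a density, finite second moment, finite differential entropy, and $x^2/2+\log\mu(x)\ge C$. You instead use the original Ben Arous--Guionnet quantile discretization: regularize $\mu$ to bounded density on compact support, place $N$ quantile points, form small disjoint boxes $A_N^\delta$ of Lebesgue volume $(\delta/N)^N$ (negligible at scale $N^2$), and bound the integrand from below on the boxes by a Riemann-sum approximation of $\frac{N^2}{2}\Sigma(\mu)$ and $N\int V\,d\mu$. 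Both methods are standard and correct; the tilting argument is shorter and defers most of the regularity work to a clean hypothesis on the density, while your quantile construction is more geometric and explicit but requires careful control of the adjacent-gap logarithms and the Riemann-sum convergence, which in turn forces the reduction to compactly supported measures with bounded density. In both cases the final extension from ``regular'' measures to arbitrary open sets is done by a density argument (the paper delegates this to Chafa\"i et al.; you delegate it to a Gaussian-mollification step plus the Fourier formula for $\Sigma$).
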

We first state an elementary lemma that plays a key role in large deviations theory as we will see.  

\begin{lemma}
\label{lemma: limsup sum}
Let $(u_{j,N})_{j,N\in\N^2}$ be a family of non negative real numbers and $k\in\N_{\ge 1}$, then $$\underset{N\to\infty}{\overline{\lim}}\cfrac{1}{N^2}\log\left(\sum_{j=1}^k u_{j,N}\right)\le \underset{1\le j\le k}{\max}\underset{N\to\infty}{\overline{\lim}}\cfrac{1}{N^2}\log(u_{j,N}).$$
\end{lemma}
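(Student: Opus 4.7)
The plan is to invoke the classical \textbf{principle of the largest term}, which is a one-line observation but deserves to be spelled out since it will be used throughout Section \ref{section: large dev} to pass from finite sums of exponentially small probabilities to their dominant contribution.

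First I would use the elementary bound
\[
\sum_{j=1}^{k} u_{j,N} \;\le\; k \cdot \max_{1\le j\le k} u_{j,N},
\]
valid because all the $u_{j,N}$ are non-negative and the sum has finitely many terms. Taking logarithms (with the convention $\log 0 = -\infty$, under which every inequality below still makes sense) yields
\[
\log\!\left(\sum_{j=1}^{k} u_{j,N}\right) \;\le\; \log k \;+\; \max_{1\le j\le k} \log(u_{j,N}).
\]
Dividing by $N^2$ and letting $N\to\infty$, the term $\log(k)/N^2$ goes to zero since $k$ is a fixed positive integer independent of $N$, so
\[
\limsup_{N\to\infty}\cfrac{1}{N^2}\log\!\left(\sum_{j=1}^{k} u_{j,N}\right) \;\le\; \limsup_{N\to\infty}\, \max_{1\le j\le k}\cfrac{1}{N^2}\log(u_{j,N}).
\]

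It remains to observe that for a \emph{finite} family of sequences of real numbers (possibly equal to $-\infty$), the $\limsup$ of the maximum equals the maximum of the $\limsup$s: indeed, extracting a subsequence $(N_p)$ along which $\max_j \frac{1}{N^2}\log u_{j,N}$ realizes its $\limsup$, and then extracting a further subsequence along which the argmax is a constant index $j^{\star}\in\{1,\dots,k\}$ (possible since the index set is finite), we get
\[
\limsup_{N\to\infty}\max_{1\le j\le k}\cfrac{1}{N^2}\log(u_{j,N})
= \limsup_{p\to\infty}\cfrac{1}{N_p^2}\log(u_{j^{\star},N_p})
\le \limsup_{N\to\infty}\cfrac{1}{N^2}\log(u_{j^{\star},N})
\le \max_{1\le j\le k}\limsup_{N\to\infty}\cfrac{1}{N^2}\log(u_{j,N}).
\]
Combining the two displayed inequalities finishes the proof.

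There is essentially no obstacle here: the statement is a soft, purely elementary fact, and the only mild subtlety is to remember that the argument requires $k$ to be a fixed (finite) integer, so that $\log(k)/N^2$ is negligible at the exponential scale $N^2$. The lemma would fail in general if $k$ were allowed to depend on $N$ and grow like $e^{cN^2}$, which is worth keeping in mind for its subsequent uses in the upper bound of the weak large deviations principle.
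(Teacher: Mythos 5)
Your proof is correct and amounts to the same core idea as the paper's: the finite sum is at most $k$ times its largest term, and $\log(k)/N^2$ is negligible at the exponential scale $N^2$. The only organizational difference is that the paper applies the $\varepsilon$-definition of $\limsup$ to each index $j$ before summing (which sidesteps the need for a separate lemma), while you bound the sum by $k\cdot\max_j u_{j,N}$ first and then invoke the fact that $\limsup$ of a finite max equals the max of the $\limsup$s, which you justify correctly by pigeonhole on the argmax index along a subsequence; both routes are equally elementary and valid.
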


\begin{proof}
The proof is just a use of the definition of the superior limit. Fix $\varepsilon>0$. By definition of the superior limit, for all $j$, we can find $N(j)\in\N$ such that for all $N\ge N(j)$, $$\cfrac{1}{N^2}\log(u_{j,N})\le \underset{N\to\infty}{\overline{\lim}}\cfrac{1}{N^2}\log(u_{j,N})+\varepsilon.$$
Let $\widetilde N=\underset{1\le j\le k} {\max}N(j)$. For all $1\le j\le k$, for all $N\ge \widetilde N$, we have $$\cfrac{1}{N^2}\log(u_{j,N})\le \underset{N\to\infty}{\overline{\lim}}\cfrac{1}{N^2}\log(u_{j,N})+\varepsilon,$$
which implies that for all $1\le j\le k$, for all $N\ge \widetilde N$,  $$u_{j,N}\le \exp\left(N^2\left(\underset{N\to\infty}{\overline{\lim}}\cfrac{1}{N^2}\log(u_{j,N})+\varepsilon\right)\right)\le\exp\left(N^2\left(\underset{1\le j\le k}{\max}\underset{N\to\infty}{\overline{\lim}}\cfrac{1}{N^2}\log(u_{j,N})+\varepsilon\right)\right).$$
Summing these inequalities and taking the logarithm yields $$ \cfrac{1}{N^2}\log\left(\sum_{j=1}^k u_{j,N}\right)\le \cfrac{\log(k)}{N^2}+\underset{1\le j\le k}{\max}\underset{N\to\infty}{\overline{\lim}}\cfrac{1}{N^2}\log(u_{j,N})+\varepsilon,$$
for all $N\ge \widetilde N$. Taking the superior limit and letting $\varepsilon$ goes to 0 gives the result.
\end{proof}

Thanks to this lemma we can reduce the weak deviations principle to a "local" large deviations principle. 

\begin{lemma}
\label{lemma:largedevponct}
If for all $\mu\in\mes$ we have: 
\begin{equation}
\label{weak large dev 1}
-\widehat {I_\beta}(\mu)=\lim_{\delta\to 0}\underset{N\to\infty}{\underline{\lim}}\cfrac{1}{N^2}\log Q_\beta^N (\overline{\mu_N}\in B(\mu,\delta))
\end{equation} and 
\begin{equation}
\label{weak large dev 2}
-\widehat {I_\beta}(\mu)=\lim_{\delta\to 0}\underset{N\to\infty}{\overline{\lim}}\cfrac{1}{N^2}\log Q_\beta^N (\overline{\mu_N}\in B(\mu,\delta)),\end{equation}
then $(Q_\beta^N)_{N\in\N}$ satisfies a weak deviations principle with rate function $\widehat {I_\beta}$.
\end{lemma}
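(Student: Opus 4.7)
The plan is to reduce the open/compact LDP bounds to the pointwise statements \eqref{weak large dev 1}--\eqref{weak large dev 2} by, respectively, a ball-inclusion argument and a finite-cover/sum argument relying on Lemma \ref{lemma: limsup sum}. Neither step is technically hard; the compact part is where Lemma \ref{lemma: limsup sum} plays its essential role.

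For the lower bound \eqref{eq:openlargedev}, I would fix an open set $O \subset \mes$ and an arbitrary $\mu \in O$. Since $O$ is open, there exists $\delta_0>0$ with $B(\mu,\delta_0) \subset O$, hence for every $0<\delta\le \delta_0$,
$$Q_\beta^N(\overline{\mu_N}\in O) \ge Q_\beta^N(\overline{\mu_N}\in B(\mu,\delta)).$$
Taking $\underline\lim_{N\to\infty}\frac{1}{N^2}\log$ on both sides and then letting $\delta\to 0$ and applying \eqref{weak large dev 1} gives $\underline{\lim}_N \frac{1}{N^2}\log Q_\beta^N(\overline{\mu_N}\in O)\ge -\widehat{I_\beta}(\mu)$. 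Since $\mu \in O$ was arbitrary, taking the supremum over $\mu \in O$ yields \eqref{eq:openlargedev}.

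For the upper bound \eqref{eq:compactlargedev}, I would fix a compact set $K \subset \mes$ and any real number $M<\inf_K \widehat{I_\beta}$ (the case $\inf_K \widehat{I_\beta}=+\infty$ is handled by letting $M\to+\infty$ at the end). For each $\mu \in K$ we have $\widehat{I_\beta}(\mu)>M$, so by \eqref{weak large dev 2} there exists $\delta(\mu)>0$ with
$$\overline{\lim_{N\to\infty}}\,\cfrac{1}{N^2}\log Q_\beta^N(\overline{\mu_N}\in B(\mu,\delta(\mu)))\le -M.$$
The family $\{B(\mu,\delta(\mu))\}_{\mu\in K}$ is an open cover of the compact $K$, so it admits a finite subcover $B(\mu_1,\delta(\mu_1)),\dots,B(\mu_k,\delta(\mu_k))$. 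Then
$$Q_\beta^N(\overline{\mu_N}\in K)\le \sum_{j=1}^k Q_\beta^N(\overline{\mu_N}\in B(\mu_j,\delta(\mu_j))),$$
and Lemma \ref{lemma: limsup sum} gives
$$\overline{\lim_{N\to\infty}}\,\cfrac{1}{N^2}\log Q_\beta^N(\overline{\mu_N}\in K)\le \max_{1\le j\le k}\overline{\lim_{N\to\infty}}\,\cfrac{1}{N^2}\log Q_\beta^N(\overline{\mu_N}\in B(\mu_j,\delta(\mu_j)))\le -M.$$
Letting $M\nearrow \inf_K\widehat{I_\beta}$ finishes the proof of \eqref{eq:compactlargedev}.

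The only conceptual subtlety is the compact bound, where the finite-cover trick turns a pointwise bound into a uniform bound on $K$; the sum is controlled at log/$N^2$ scale by Lemma \ref{lemma: limsup sum} precisely because the number of balls is fixed and finite, so $\log k / N^2 \to 0$. The fact that $\widehat{I_\beta}$ is a good rate function (Proposition \ref{prop:optim}, combined with Lemma \ref{lemma: finite} and Proposition \ref{prop: large dev constant}) has already been established, so nothing extra needs to be checked here.
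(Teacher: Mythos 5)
Your proof is correct and follows essentially the same route as the paper's: the lower bound by a ball-inclusion argument followed by optimizing over $\mu\in O$, and the upper bound by a finite subcover combined with Lemma \ref{lemma: limsup sum}. The only cosmetic difference is that you parameterize the compact bound by an arbitrary level $M<\inf_K\widehat{I_\beta}$ whereas the paper uses an additive slack $\eta>0$ with $\delta_\mu$ chosen so each ball is at level $-\widehat{I_\beta}(\mu)+\eta$; these are interchangeable.
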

\begin{proof}
Let us prove that the hypothesis \eqref{weak large dev 2} implies the weak deviations principle for compact subsets. 
\newline
Let $K$ be a compact of $\mes$. Fix $\eta>0$. By hypothesis \eqref{weak large dev 2}, for all $\mu\in K$ we can find $\delta_\mu>0$ such that: $\underset{N\to\infty}{\overline{\lim}}\cfrac{1}{N^2}\log \dis\sum_{i=1}^{l_\delta}Q_\beta^N (\overline{\mu_N}\in B(\mu,\delta_\mu))\le-I_\beta(\mu)+\eta\le -\inf_K I_\beta+\eta$. Since $K$ is compact, we can find a finite number $l_\eta$ of $\mu\in K$ such that $K\subset \dis\bigcup_{i=1}^{l_\eta}B(\mu_i,\delta_{\mu_i})$. By Lemma \ref{lemma: limsup sum} we have: 
\begin{align}
\underset{N\to\infty}{\overline{\lim}}\cfrac{1}{N^2}\log Q_\beta^N (\overline{\mu_N}\in K)&\le \underset{N\to\infty}{\overline{\lim}}\cfrac{1}{N^2}\log \dis\sum_{i=1}^{l_\eta}Q_\beta^N (\overline{\mu_N}\in B(\mu_i,\delta_{\mu_i})) \\
&\le \max_{i\le l_\eta}\left(\underset{N\to\infty}{\overline{\lim}}\cfrac{1}{N^2}\log Q_\beta^N (\overline{\mu_N}\in B(\mu_i,\delta_{\mu_i}))\right)
\end{align}
Hence, thanks to the choice $\delta_\mu$ we have: $$\underset{N\to\infty}{\overline{\lim}}\cfrac{1}{N^2}\log Q_\beta^N (\overline{\mu_N}\in K)\le -\inf_K \widehat {I_\beta}+\eta.$$
Let $\eta$ converges to 0 to conclude. 
\newline
\newline
Let us prove that the hypothesis \eqref{weak large dev 1} implies the weak deviations principle for open subsets of $\mes$.
\newline
Let $O$ be an open subset of $\mes$. Fix $\varepsilon>0$, we take $\mu\in O$ such that $\widehat {I_\beta}(\mu)\le \inf_O \widehat {I_\beta}+\varepsilon$. Hence for all $\delta>0$ small enough we have $B(\mu,\delta)\subset O$ and so $$\underset{N\to\infty}{\underline{\lim}}\cfrac{1}{N^2}\log Q_\beta^N (\overline{\mu_N}\in O)\ge\underset{N\to\infty}{\underline{\lim}}\cfrac{1}{N^2}\log Q_\beta^N (\overline{\mu_N}\in B(\mu,\delta)).$$
Letting $\delta$ converges to 0 and using the hypothesis \eqref{weak large dev 2} yields: $$\underset{N\to\infty}{\underline{\lim}}\cfrac{1}{N^2}\log Q_\beta^N (\overline{\mu_N}\in O)\ge-\widehat {I_\beta}(\mu)\ge-\inf_O \widehat {I_\beta} -\varepsilon.$$
Let $\varepsilon$ converges to 0 to conclude. 
\end{proof}
\begin{Remark}
\label{rem: density large dev}
As we shall explain after in the proof of the lower bound, we only have to prove the lower bound \eqref{lowboun} for "regular" measures. See for instance Corollary 2.7 and Theorem 1.1 of \cite{chafai2014first}.
\end{Remark}

Thanks to the large deviations principle for the normalizing constant stated in Proposition \ref{prop: large dev constant}, to prove Theorem \ref{thm:weak large} it remains to show that for every $\mu\in\mes$:
\begin{equation}
\label{lowboun}
-H_\beta(\mu)\le\lim_{\delta\to 0}\underset{N\to\infty}{\underline{\lim}}\cfrac{1}{N^2}\log\overline{Q^N_\beta}(\overline{\mu_N}\in B(\mu,\delta)) 
\end{equation}
\begin{equation}
\label{upboun}
\lim_{\delta\to 0}\underset{N\to\infty}{\overline{\lim}}\cfrac{1}{N^2}\log \overline{Q^N_\beta}(\overline{\mu_N}\in B(\mu,\delta))\le-H_\beta(\mu),
\end{equation}
where we recall that $\overline{Q^N_\beta}$ was defined in the Definition \ref{def:measure beta} as the non normalize version of $Q^N_\beta$.

\subsubsection{Upper bound \eqref{upboun}}
By definition we have:
$$\overline{Q^N_\beta}(\overline{\mu_N}\in B(\mu,\delta))=\dis\int_{\overline{\mu_N}\in B(\mu,\delta)} \dis\prod_{1\le i<j\le N}|\lambda_i-\lambda_j|^{\beta}\exp{\left(-\dis\frac{1}{2}N\dis\sum_{i=1}^N\lambda_i^2\right)}\dis\prod_{i=1}^Nd\lambda_i.$$
We want to make appear $H_\beta(\mu)$ in the previous expression. We recall that $$H_\beta(\mu)=\dis\cfrac{1}{2}\dis\int_{\R^2}f_\beta(x,y)\mu(dx)\mu(dy).$$
Hence, we write 
\begin{equation}
\label{equation: formule up bound}
\begin{split}
&\overline{Q^N_\beta}(\overline{\mu_N}\in B(\mu,\delta))=\dis\int_{\overline{\mu_N}\in B(\mu,\delta)} \dis\prod_{\lambda_i<\lambda_j}|\lambda_i-\lambda_j|^{\beta}\exp{\left(-\dis\frac{1}{2}\dis\sum_{\lambda_i<\lambda_j}(\lambda_i^2+\lambda_j^2)-\dis\frac{1}{2}\dis\sum_{i=1}^N\lambda_i^2\right)}\dis\prod_{i=1}^Nd\lambda_i \\
&=\dis\int_{\overline{\mu_N}\in B(\mu,\delta)}\exp{\left(-N^2\dis\int_{x<y}f_\beta(x,y)\overline{\mu_N}(dx)\overline{\mu_N}(dy)\right)}\exp{\left(-\dis\frac{1}{2}\dis\sum_{i=1}^N\lambda_i^2\right)}\dis\prod_{i=1}^Nd\lambda_i.
\end{split}
\end{equation}
Let $(\lambda_1^N,...,\lambda_N^N)$ be different real numbers then: $$\dis\int_{\Delta} \overline{\mu_N}(dx)\overline{\mu_N}(dy)=\dis\int_{\R}\dis\int_\R\mathds{1}_{x=y} \overline{\mu_N}(dx)\overline{\mu_N}(dy)=\cfrac{1}{N}\dis\int_{\R}|\{i;\, \lambda_i^N=y\}| \overline{\mu_N}(dy).$$
Since the $(\lambda_1^N,...,\lambda_N^N)$ are different real numbers, for all $y$ the cardinal is either 1 if $y$ is one of these numbers or 0 otherwise. Hence: $$\overline{\mu_N}\otimes\overline{\mu_N}(\Delta)=\dis\int_{\Delta} \overline{\mu_N}(dx)\overline{\mu_N}(dy)=\cfrac{1}{N}.$$
The technical point is now that since $H_\beta$ is not continuous we must pay attention to the passage to the limit. So the idea is to look at $H_\beta^M$ for a certain $M$ which is continuous and which converges pointwise to $H_\beta$ which was defined during the proof of Proposition \ref{prop:optim} by the formula \eqref{def H_m}.
Let us notice that:  $$H_\beta^M(\overline{\mu_N})=\dis\cfrac{1}{2}\dis\int_{\R^2}f_\beta^M(x,y)\overline{\mu_N}(dx)\overline{\mu_N}(dy)=\dis\int_{x<y}f_\beta^M(x,y)\overline{\mu_N}(dx)\overline{\mu_N}(dy)+\cfrac{1}{2}\dis\int_{\Delta}f_\beta^M(x,y)\overline{\mu_N}(dx)\overline{\mu_N}(dy).$$
So if $\overline{\mu_N}\in B(\mu,\delta)$ then: \begin{equation}
\label{eq: up bound bound}
\inf_{\nu\in B(\mu,\delta)}H_\beta^M(\nu)\le\dis\int_{x<y}f_\beta(x,y)\overline{\mu_N}(dx)\overline{\mu_N}(dy)+\cfrac{M}{2N}.
\end{equation}
We can now bound \eqref{equation: formule up bound} using \eqref{eq: up bound bound}. It gives: $$\overline{Q^N_\beta}(\overline{\mu_N}\in B(\mu,\delta))\le\exp{\left(-N^2\inf_{\nu\in B(\mu,\delta)}H_\beta^M(\nu)+\cfrac{MN}{2}\right)}\dis\int_{\R^N}\exp{\left(-\dis\frac{1}{2}\dis\sum_{i=1}^N\lambda_i^2\right)}\dis\prod_{i=1}^Nd\lambda_i.$$
Hence we have the following upper bound: $$\overline{Q^N_\beta}(\overline{\mu_N}\in B(\mu,\delta))\le\exp{\left(-N^2\inf_{\nu\in B(\mu,\delta)}H_\beta^M(\nu)+\cfrac{MN}{2}\right)}(2\pi)^{N/2}.$$
So, we get that for any $M>0$, $$\underset{N\to\infty}{\overline{\lim}}\cfrac{1}{N^2}\log \overline{Q^N_\beta}(\overline{\mu_N}\in B(\mu,\delta))\le-\inf_{\nu\in B(\mu,\delta)}H_\beta^M(\nu).$$
Since, for any $M>0$, $H_\beta^M$ is continuous: $$\lim_{\delta\to 0}\underset{N\to\infty}{\overline{\lim}}\cfrac{1}{N^2}\log \overline{Q^N_\beta}(\overline{\mu_N}\in B(\mu,\delta))\le-H_\beta^M(\mu).$$
Let $M$ goes to $+\infty$: $$\lim_{\delta\to 0}\underset{N\to\infty}{\overline{\lim}}\cfrac{1}{N^2}\log \overline{Q^N_\beta}(\overline{\mu_N}\in B(\mu,\delta))\le-H_\beta(\mu).$$

\subsubsection{Lower bound \eqref{lowboun}}
We will only prove the result for a family of "regular" measures as mentioned in Remark \ref{rem: density large dev}. 
We consider measures $\mu\in \mes$ such that $\mu$ is absolutely continuous with respect to Lebesgue measure (we still write $\mu$ its density), such that $\int_\R x^2d\mu$, $\int_\R \log(\mu)d\mu$ are integrable and for which there exists $C\in\R$ such that for all $x\in\R$
\begin{equation}
\label{eq: hypothesis density}
\cfrac{x^2}{2}+\log(\mu(x))\ge C.
\end{equation}
In this section we fix a such measure $\mu\in \mes$ and a constant $C$.
We start as for the proof of the upper bound: 
\begin{equation}
\label{equation: formule lower bound}
\begin{split}
&\overline{Q^N_\beta}(\overline{\mu_N}\in B(\mu,\delta))=\dis\int_{\{\overline{\mu_N}\in B(\mu,\delta)\}} \dis\prod_{1\le i<j\le N}|\lambda_i-\lambda_j|^{\beta}\exp{\left(-\dis\frac{1}{2}N\dis\sum_{i=1}^N\lambda_i^2\right)}\dis\prod_{i=1}^Nd\lambda_i\\
&=\dis\int_{\{\overline{\mu_N}\in B(\mu,\delta)\} } \dis\prod_{\lambda_i<\lambda_j}|\lambda_i-\lambda_j|^{\beta}\exp{\left(-\dis\frac{1}{2}\dis\sum_{\lambda_i<\lambda_j}(\lambda_i^2+\lambda_j^2)-\dis\frac{1}{2}\dis\sum_{i=1}^N\lambda_i^2\right)}\dis\prod_{i=1}^Nd\lambda_i \\
&=\dis\int_{\{\overline{\mu_N}\in B(\mu,\delta)\} }\exp{\left(-N^2\dis\int_{x<y}f_\beta(x,y)\overline{\mu_N}(dx)\overline{\mu_N}(dy)\right)}\exp{\left(-\dis\frac{1}{2}\dis\sum_{i=1}^N\lambda_i^2\right)}\dis\prod_{i=1}^Nd\lambda_i\\
&=\dis\int_{\{\overline{\mu_N}\in B(\mu,\delta)\} }\exp{\left(-N^2\dis\int_{x<y}f_\beta(x,y)\overline{\mu_N}(dx)\overline{\mu_N}(dy)\right)}\exp{\left(-\dis\frac{1}{2}\dis\sum_{i=1}^N\lambda_i^2-\sum_{i=1}^N\log(\mu(\lambda_i))\right)}\dis\prod_{i=1}^N\mu(d\lambda_i).
\end{split}
\end{equation}
Taking the logarithm and using the Jensen inequality yields: 
\begin{equation}
\label{equation: formule lower bound 2}
\begin{split}
&\log\left(\overline{Q^N_\beta}(\overline{\mu_N}\in B(\mu,\delta))\right)\ge\\
&\dis\int_{\{\overline{\mu_N}\in B(\mu,\delta)\} }\log\left[\exp{\left(-N^2\dis\int_{x<y}f_\beta(x,y)\overline{\mu_N}(dx)\overline{\mu_N}(dy)\right)}\exp{\left(-\dis\frac{1}{2}\dis\sum_{i=1}^N\lambda_i^2-\sum_{i=1}^N\log(\mu(\lambda_i))\right)}\right]\dis\prod_{i=1}^N\mu(d\lambda_i)\\
&=\dis\int_{\{\overline{\mu_N}\in B(\mu,\delta)\}}-N^2\dis\int_{x<y}f_\beta(x,y)\overline{\mu_N}(dx)\overline{\mu_N}(dy)-\dis\frac{1}{2}\dis\sum_{i=1}^N\lambda_i^2-\sum_{i=1}^N\log(\mu(\lambda_i))\dis\prod_{i=1}^N\mu(d\lambda_i)\\
&=:I_{1,N}+I_{2,N},
\end{split}
\end{equation}
where $I_{1,N}$ and $I_{2,N}$ are defined by: 
\begin{equation}
\left\{
\begin{split}
I_{1,N}&=\dis\int_{\R^N}-N^2\dis\int_{x<y}f_\beta(x,y)\overline{\mu_N}(dx)\overline{\mu_N}(dy)-\dis\frac{1}{2}\dis\sum_{i=1}^N\lambda_i^2-\sum_{i=1}^N\log(\mu(\lambda_i))\dis\prod_{i=1}^N\mu(d\lambda_i)\\
I_{2,N}&=\dis\int_{\{\overline{\mu_N}\notin B(\mu,\delta)\}}N^2\dis\int_{x<y}f_\beta(x,y)\overline{\mu_N}(dx)\overline{\mu_N}(dy)+\dis\frac{1}{2}\dis\sum_{i=1}^N\lambda_i^2+\sum_{i=1}^N\log(\mu(\lambda_i))\dis\prod_{i=1}^N\mu(d\lambda_i)
\end{split}
\right.
\end{equation}
We first look at $I_{2,N}$. Using \eqref{ineq:f_beta} and \eqref{eq: hypothesis density} we get: 
\begin{equation}
\label{equation: I}
I_{2,N}\ge \left(N^2 m_\beta\cfrac{N(N-1)}{2N^2}+CN\right) \int_{\{\overline{\mu_N}\notin B(\mu,\delta)\}}\prod_{i=1}^N \mu(d\lambda_i). 
\end{equation}
By the law of large numbers, we have $$\underset{N\to\infty}{\lim}\int_{\{\overline{\mu_N}\notin B(\mu,\delta)\}}\prod_{i=1}^N \mu(d\lambda_i)=0.$$
So we obtain that:
\begin{equation}
\label{equation: I 2}
\cfrac{I_{2,N}}{N^2}\ge o_N(1). 
\end{equation}
We now focus on $I_{1,N}$. Using that $\mu\in\mes$ and the symmetry of $f_\beta$ yields
\begin{equation}
\begin{split}
I_{1,N}&=\dis\int_{\R^N}-N^2\dis\int_{x<y}f_\beta(x,y)\overline{\mu_N}(dx)\overline{\mu_N}(dy)-\dis\frac{1}{2}\dis\sum_{i=1}^N\lambda_i^2-\sum_{i=1}^N\log(\mu(\lambda_i))\dis\prod_{i=1}^N\mu(d\lambda_i)\\
&=-\cfrac{1}{2}\int_{\R^N}\sum_{i\ne j}f_\beta(\lambda_i,\lambda_j)\dis\prod_{i=1}^N\mu(d\lambda_i)-\dis\frac{N}{2}\int_\R\lambda^2\mu(d\lambda)-N\int_\R\log(\mu(\lambda))\mu(d\lambda)\\
&=-\cfrac{1}{2}\sum_{i\ne j}\int_{\R^2}f_\beta(x,y)\mu(dx)\mu(dy)-\dis\frac{N}{2}\int_\R\lambda^2\mu(d\lambda)-N\int_\R\log(\mu(\lambda))\mu(d\lambda)\\
&=-\cfrac{N(N-1)}{2}\int_{\R^2}f_\beta(x,y)\mu(dx)\mu(dy)-\dis\frac{N}{2}\int_\R\lambda^2\mu(d\lambda)-N\int_\R\log(\mu(\lambda))\mu(d\lambda)\\
&=-N(N-1)H_\beta(\mu)-\dis\frac{N}{2}\int_\R\lambda^2\mu(d\lambda)-N\int_\R\log(\mu(\lambda))\mu(d\lambda)
\end{split}
\end{equation}
Hence we get 
\begin{equation}
\label{equation I1}
\begin{split}
\underset{N\to\infty}{\lim}\cfrac{I_{1,N}}{N^2}=-H_\beta(\mu).
\end{split}
\end{equation}
Using \eqref{equation: formule lower bound 2}, \eqref{equation: I 2} and \eqref{equation I1}, for all $\delta >0$, $$-H_\beta(\mu)\le\underset{N\to\infty}{\underline{\lim}}\cfrac{1}{N^2}\log\overline{Q^N_\beta}(\overline{\mu_N}\in B(\mu,\delta)).$$
Let $\delta$ goes to $0$ to have the lower bound: $$-H_\beta(\mu)\le\lim_{\delta\to 0}\underset{N\to\infty}{\underline{\lim}}\cfrac{1}{N^2}\log\overline{Q^N_\beta}(\overline{\mu_N}\in B(\mu,\delta)).$$
Approximating the measures with a density with respect to the Lebesgue measure by "regular" measures defined in this part, one can prove that for all $O$ open subset of $\mes$, we have the lower bound $$\underset{N\to\infty}{\underline{\lim}}\cfrac{1}{N^2}\ \log(\overline{Q^N_\beta}(\overline {\mu_N} \in O))\ge -\inf\{H_\beta(\mu),\, \mu\in O\text{ is absolutely continuous}\}.$$ This result is similar with the one obtain in Lemma \ref{lemma:largedevponct} and can be found in \cite{chafai2014first} (see Corollary 2.7).
Then, thanks to the properties of $H_\beta$, we can extend the previous result for open subsets to obtain the large deviations principle for open subsets \eqref{eq:openlargedev}. See the hypothesis (H$4$) and the end of the proof of Theorem 1 of \cite{chafai2014first} to have a complete proof of this point.
\begin{Remark}
\label{Remark:largdevconvlow}
In view of Remark \ref{Rem:largedevconstant}, let us mention how this computation can be used to obtain a part of the large deviations principle of the normalizing constant factor without knowing its exact value. 
\newline
Since $Z_\beta^N$ is the normalizing factor, we have that for all $\mu \in\mes$ $$\underset{N\to\infty}{\underline{\lim}}\cfrac{1}{N^2}\, \log Z_\beta^N\ge \lim_{\delta\to 0}\underset{N\to\infty}{\underline{\lim}}\cfrac{1}{N^2}\log\overline{Q^N_\beta}(\overline{\mu_N}\in B(\mu,\delta))\ge -H_\beta(\mu).$$
This gives that $$\underset{N\to\infty}{\underline{\lim}}\cfrac{1}{N^2}\, \log Z_\beta^N\ge -\inf_{\mes} H_\beta(\mu).$$
\end{Remark}
\subsection{Exponential tightness and complete large deviations}
\label{subsection: expo tightness}
First, we shall see why the exponential tightness of a family of measures is important to go from large deviations on compact sets to large deviations on closed sets.
\begin{definition}
The family $(Q_\beta^N)_{N\in\N}$  is said to be exponentially tight if for every $L$ there exists a compact subset $K_L$ of $\mes$ such that $\underset{N\to\infty}{\overline{\lim}}\cfrac{1}{N^2}\log Q_\beta^N (\overline{\mu_N}\notin K_L)\le-L$.
\end{definition}

\begin{prop}
Assume $(Q_\beta^N)_{N\in\N}$ is exponentially tight, then $(Q_\beta^N)_{N\in\N}$ satisfies a large deviations principle for closed sets, that means: for any closed sets $F$ of $\mes$ we have: $$\underset{N\to\infty}{\overline{\lim}}\cfrac{1}{N^2}\log Q_\beta^N (\overline{\mu_N}\in F)\le-\inf_F I_\beta.$$
\end{prop}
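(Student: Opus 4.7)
The plan is to reduce the closed-set bound to the already established compact-set bound (inequality \eqref{eq:compactlargedev}) by using exponential tightness to discard the mass of $F$ outside a large compact set, and then to combine the two pieces via the finite-sum lemma (Lemma \ref{lemma: limsup sum}).

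First, fix a closed set $F \subset \mes$ and a level $L > 0$. By exponential tightness, choose a compact $K_L \subset \mes$ with
\[
\underset{N\to\infty}{\overline{\lim}}\cfrac{1}{N^2}\log Q_\beta^N (\overline{\mu_N}\notin K_L)\le-L.
\]
Since $F \cap K_L$ is closed (as an intersection of closed sets) and contained in a compact, it is itself a compact subset of $\mes$. I would then write
\[
Q_\beta^N(\overline{\mu_N}\in F) \le Q_\beta^N(\overline{\mu_N}\in F\cap K_L) + Q_\beta^N(\overline{\mu_N}\notin K_L),
\]
and apply Lemma \ref{lemma: limsup sum} (the $\limsup$ of a logarithm of a finite sum is the max of the individual $\limsup$'s) to get
\[
\underset{N\to\infty}{\overline{\lim}}\cfrac{1}{N^2}\log Q_\beta^N (\overline{\mu_N}\in F)\le\max\!\left(-\inf_{F\cap K_L} \widehat{I_\beta},\,-L\right),
\]
where the first term comes from the compact LDP in Theorem \ref{thm:weak large}, and the second from exponential tightness.

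Next, since $F \cap K_L \subset F$ we have $-\inf_{F\cap K_L} \widehat{I_\beta} \le -\inf_F \widehat{I_\beta}$ (if $F \cap K_L$ is empty the infimum is $+\infty$ and this bound is trivially $-\infty$, which is even better). Combining gives
\[
\underset{N\to\infty}{\overline{\lim}}\cfrac{1}{N^2}\log Q_\beta^N (\overline{\mu_N}\in F)\le\max\!\left(-\inf_{F} \widehat{I_\beta},\,-L\right).
\]
Finally, let $L \to \infty$ to conclude $\underset{N\to\infty}{\overline{\lim}}\cfrac{1}{N^2}\log Q_\beta^N (\overline{\mu_N}\in F)\le-\inf_F \widehat{I_\beta}$, which (since $\widehat{I_\beta}$ and $I_\beta$ differ by a constant that is the correct normalization from Proposition \ref{prop: large dev constant}) yields exactly the required upper bound.

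There is no real obstacle here: all the work was done upstream in establishing the compact upper bound \eqref{eq:compactlargedev} and (separately, in a later section) the exponential tightness estimate. The only minor care needed is to ensure that Lemma \ref{lemma: limsup sum} is applied to a fixed \emph{finite} sum (here $k=2$), and to check that $F\cap K_L$ is indeed compact so that Theorem \ref{thm:weak large} applies; both are immediate.
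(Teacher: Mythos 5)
Your proof is correct and essentially identical to the paper's: fix $L$, split the event over the tightness compact $K_L$ and its complement, apply Lemma~\ref{lemma: limsup sum} together with the compact upper bound~\eqref{eq:compactlargedev} on $F\cap K_L$, and let $L\to\infty$. One small caveat: to pass from the bound in terms of $\widehat{I_\beta}$ to the stated bound in terms of $I_\beta$ you actually need the two rate functions to \emph{coincide} (which is Proposition~\ref{prop: value of the minim}), not merely to differ by a constant as your parenthetical suggests---but this is a point of wording, not a gap.
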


\begin{proof}
Let $F$ be a closed subset of $\mes$ and fix $L>0$ and $K_L$ such that: $${\overline{\lim}}\cfrac{1}{N^2}\log Q_\beta^N (\overline{\mu_N}\notin K_L)\le-L. $$ Then we have that: $$\underset{N\to\infty}{\overline{\lim}}\cfrac{1}{N^2}\log Q_\beta^N (\overline{\mu_N}\in F)\le \underset{N\to\infty}{\overline{\lim}}\cfrac{1}{N^2}\log (Q_\beta^N (\overline{\mu_N}\in F\cap K_L)+Q_\beta^N (\overline{\mu_N}\notin K_L)).$$
Using Lemma \ref{lemma: limsup sum} and the compactness of $F\cap K_L$, the large deviations principle for compact subsets gives: $$\underset{N\to\infty}{\overline{\lim}}\cfrac{1}{N^2}\log Q_\beta^N (\overline{\mu_N}\in F)\le\max\{-\inf_{F\cap K_L}I_\beta;-L\}\le\max\{-\inf_{F}I_\beta;-L\}.$$
Now let $L$ tends to $+\infty$ to obtain that: $$\underset{N\to\infty}{\overline{\lim}}\cfrac{1}{N^2}\log Q_\beta^N (\overline{\mu_N}\in F)\le-\inf_F I_\beta.$$
\end{proof}

Hence to prove the complete large deviations principle it remains to prove the exponential tightness property.

\begin{prop}
The sequence $(Q_\beta^N)_{N\in\N}$ is exponentially tight.
\end{prop}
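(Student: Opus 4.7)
The plan is to identify a natural family of compact subsets of $\mes$ and to control the $Q_\beta^N$-probability of their complements via an explicit Chernoff-type computation that exploits the quadratic confinement $\exp(-N\sum\lambda_i^2/2)$ in the density of $Q_\beta^N$.

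First, for $M>0$ set $K_M:=\{\mu\in\mes:\int x^2\,d\mu(x)\le M\}$. By the Markov inequality $\mu(\R\setminus[-B,B])\le M/B^2$ uniformly on $K_M$, so $K_M$ is tight; since $\mu\mapsto\int x^2\,d\mu$ is lower semicontinuous for the weak topology, $K_M$ is also closed, hence compact by Prokhorov. Next, observe that $\{\overline{\mu_N}\notin K_M\}$ is exactly the event $\{\sum_{i=1}^N\lambda_i^2>NM\}$.

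The key computation is the Laplace transform. For any $s\in(0,1/2)$, the change of variables $\mu_i:=\sqrt{1-2s}\,\lambda_i$ in the definition of $\overline{Q^N_\beta}$ gives
\begin{equation*}
\int_{\R^N}\prod_{i<j}|\lambda_i-\lambda_j|^\beta \exp\!\left(-\tfrac{N(1-2s)}{2}\sum\lambda_i^2\right)\prod d\lambda_i = (1-2s)^{-\beta N(N-1)/4-N/2}\,Z_\beta^N,
\end{equation*}
so that
\begin{equation*}
\E_{Q_\beta^N}\!\left[\exp\!\left(sN\sum_{i=1}^N\lambda_i^2\right)\right] = (1-2s)^{-\beta N(N-1)/4-N/2}.
\end{equation*}
Applying the Markov inequality to $\exp(sN\sum\lambda_i^2)$ with threshold $\exp(sN^2M)$ yields
\begin{equation*}
Q_\beta^N(\overline{\mu_N}\notin K_M)\le \exp\!\left(-sN^2M\right)(1-2s)^{-\beta N(N-1)/4-N/2},
\end{equation*}
which after taking logarithms and dividing by $N^2$ gives, for every $s\in(0,1/2)$,
\begin{equation*}
\overline{\lim_{N\to\infty}}\,\frac{1}{N^2}\log Q_\beta^N(\overline{\mu_N}\notin K_M)\le -sM-\frac{\beta}{4}\log(1-2s).
\end{equation*}

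Fixing for instance $s=1/4$, this bound becomes $-M/4+(\beta/4)\log 2$. Given $L>0$, choosing $M=M(L):=4L+\beta\log 2$ therefore yields a compact set $K_L:=K_{M(L)}$ for which $\overline{\lim_N}\,N^{-2}\log Q_\beta^N(\overline{\mu_N}\notin K_L)\le -L$, which is precisely exponential tightness. There is no real obstacle in this argument: everything hinges on the observation that scaling the eigenvalues absorbs an exponential of $\sum\lambda_i^2$ into the quadratic weight while leaving the $\beta$-Vandermonde factor structurally unchanged, so the moment generating function can be computed exactly without invoking the Selberg integral asymptotics of Proposition \ref{prop: large dev constant}.
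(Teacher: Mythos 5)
Your proof is correct, and it takes a genuinely different route from the paper's. The paper proves exponential tightness by combining a crude pointwise lower bound $f_\beta(x,y)\ge \tfrac{x^2}{4}+\tfrac{y^2}{4}+\widetilde m_\beta$ with the asymptotic lower bound $\log Z_\beta^N\ge -CN^2$ (itself imported from the large-deviations computation of the normalizing constant, hence ultimately from Selberg's formula), then estimates the remaining Gaussian integral directly. You instead compute the Laplace transform $\E_{Q_\beta^N}\bigl[\exp(sN\sum_i\lambda_i^2)\bigr]$ \emph{exactly} by the scaling $\mu_i=\sqrt{1-2s}\,\lambda_i$, which sends the Vandermonde factor to itself up to an explicit power of $(1-2s)$ and makes $Z_\beta^N$ cancel between numerator and denominator. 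This yields a clean Chernoff bound with no input whatsoever about the size of $Z_\beta^N$, and the resulting rate $-sM-\tfrac{\beta}{4}\log(1-2s)$ is sharp rather than merely sufficient. The trade-off is that your argument uses the quadratic form of $V$ in an essential way (the change of variables must absorb the exponential tilt into the weight without changing its shape), whereas the paper's cruder bound extends verbatim to more general confining potentials and to the Coulomb/Riesz setting discussed later. One small point worth making explicit when you conclude: your bound on $\tfrac{1}{N^2}\log Q_\beta^N$ before taking the limsup carries the factors $\tfrac{N-1}{N}$ and $\tfrac{1}{N}$ from the exponents $\beta N(N-1)/4$ and $N/2$; these disappear in the limit, which is exactly what the limsup handles, but the finite-$N$ inequality you display should retain them.
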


\begin{proof}
Let $E\subset \mes$ be a measurable set, doing the same computation as in the upper bound proof to obtain \ref{equation: formule up bound} we have: 
\begin{equation}
\label{eq: calcul exp tightness}
\begin{split}
Q_\beta^N(\overline \mu_N\in E)&=\cfrac{1}{Z_\beta^N}\dis\int_{\overline{\mu_N}\in E}\exp{\left(-N^2\dis\int_{x<y}f_\beta(x,y)\overline{\mu_N}(dx)\overline{\mu_N}(dy)\right)}\exp{\left(-\dis\frac{1}{2}\dis\sum_{i=1}^N\lambda_i^2\right)}\dis\prod_{i=1}^Nd\lambda_i\\
&=\cfrac{1}{Z_\beta^N}\dis\int_{\overline{\mu_N}\in E}\exp{\left(-\cfrac{N^2}{2}\dis\int_{x\ne y}f_\beta(x,y)\overline{\mu_N}(dx)\overline{\mu_N}(dy)\right)}\exp{\left(-\dis\frac{1}{2}\dis\sum_{i=1}^N\lambda_i^2\right)}\dis\prod_{i=1}^Nd\lambda_i.
\end{split}
\end{equation}
As a consequence of the large deviations principle for the partition function obtained in Proposition \ref{prop: large dev constant}, we can find a constant $C\in\R$ such that for all $N\ge 1$, 
\begin{equation}
\label{ineq: zbeta}
\log(Z_\beta^N)\ge -C N^2.
\end{equation}
Moreover, as for the inequality \ref{ineq:f_beta}, we can find a constant $\widetilde {m_\beta}>-\infty$ such that for all $(x,y)\in\R^2$, 
\begin{equation}
\label{eq: febeta min bis}
f_\beta(x,y)\ge \cfrac{x^2}{4} +\cfrac{y^2}{4}+\widetilde{ m_\beta}.
\end{equation}
\newline
For all $L>0$, let $K_L:=\{\mu\in\mes\,|\, \int_\R x^2 \mu(dx)\le L\}$. For all $L>0$, $K_M$ is closed as a consequence of the Portemanteau theorem and is relatively compact. Indeed, the Markov inequality implies that for all $\mu\in K_L$, for all $\varepsilon>0$, $$\mu(\{|x|\ge \varepsilon\})\le \cfrac{\int_\R x^2 \mu(dx)}{\varepsilon^2}\le \cfrac{L}{\varepsilon^2}, $$ giving the relatively compactness of $K_L$ by the Prokhorov theorem.
Using \eqref{eq: febeta min bis} and \eqref{ineq: zbeta} in \eqref{eq: calcul exp tightness} yields
\begin{equation}
\label{eq: exp tightness1}
\begin{split}
Q_\beta^N(\overline \mu_N\notin K_L)&\le e^{CN^2-\frac{\widetilde {m_\beta} N (N-1)}{2}}\dis\int_{\overline{\mu_N}\notin K_L}e^{-\frac{N^2}{4}\int_{x\ne y}x^2\,\overline{\mu_N}(dx)\overline{\mu_N}(dy)}e^{-\frac{1}{2}\sum_{i=1}^N\lambda_i^2}\dis\prod_{i=1}^Nd\lambda_i \\
&=e^{CN^2-\frac{\widetilde {m_\beta} N (N-1)}{2}}\dis\int_{\overline{\mu_N}\notin K_L}e^{-\frac{N(N-1)}{4}\int_{\R}x^2\,\overline{\mu_N}(dx)}e^{-\frac{1}{2}\sum_{i=1}^N\lambda_i^2}\dis\prod_{i=1}^Nd\lambda_i \\
&\le e^{CN^2-\frac{\widetilde {m_\beta}N(N-1)}{2}-\frac{N(N-1)L}{4}} \left(\int_\R e^{-\frac{x^2}{2}}dx\right)^N
\end{split}
\end{equation}
We deduce that for all $L\in\R$, 
\begin{equation}
\label{eq: exp tightness2}
\underset{N\to\infty}{\overline{\lim}}\cfrac{1}{N^2}\log Q_\beta^N (\overline{\mu_N}\notin K_L)\le C-\cfrac{\widetilde {m_\beta}}{2}-\cfrac{L}{4}.
\end{equation}
Since $L$ is arbitrary, $(Q_\beta^N)_{N\in\N}$ is exponentially tight.
\end{proof}
\begin{Remark}
\label{Rem:largedevconstantup}
If we look at the previous computations, we proved in particular an exponential tightness for the family $\overline{Q^N_\beta}$ which means that for all $L>0$ we can find a compact $K_L$ of $\mes$ such that \begin{equation}
\label{exp:tightness}
\underset{N\to\infty}{\overline{\lim}} \cfrac{1}{N^2}\,\log \overline{Q^N_\beta} (\overline{\mu_N}\notin K_L) \le-L.
\end{equation}
Combining this property with the weak large deviations principle on compact subsets obtained as a consequence of \eqref{upboun}, we have:
\begin{equation}
\label{exp:tight2}
\underset{N\to\infty}{\overline{\lim}} \cfrac{1}{N^2}\,\log Z_{\beta}^N =\underset{N\to\infty}{\overline{\lim}} \cfrac{1}{N^2}\,\log \overline{Q^N_\beta}(\mes) \le -\inf_{\mu\in\mes} H_\beta(\mu).
\end{equation}
Indeed, for $L>0$ fix a compact $K_L$ as in \eqref{exp:tightness}. Then using Lemma \ref{lemma: limsup sum} we get: 
\begin{align*}
\underset{N\to\infty}{\overline{\lim}} \cfrac{1}{N^2}\,\log Z_{\beta}^N &=\underset{N\to\infty}{\overline{\lim}} \cfrac{1}{N^2}\,\log \overline{Q^N_\beta}(\mes)
\\&=\underset{N\to\infty}{\overline{\lim}} \cfrac{1}{N^2}\,\log \left[\overline{Q^N_\beta}(K_L)+\overline{Q^N_\beta}((K_L)^c)\right]\\
&\le \max\left(\underset{N\to\infty}{\overline{\lim}} \cfrac{1}{N^2}\,\log \overline{Q^N_\beta}(K_L), \underset{N\to\infty}{\overline{\lim}} \cfrac{1}{N^2}\,\log \overline{Q^N_\beta}((K_L)^c)\right)\\
&\le \max(-\inf_{K_L}H_\beta,-L)\\
&\le \max(-\inf_{\mes}H_\beta,-L).
\end{align*}
Let $L$ goes to $+\infty$ to get \eqref{exp:tight2}.
\newline
Using this bound with Remark \ref{Remark:largdevconvlow} this gives the large deviations principle of the normalizing factor: $$\lim_{N\to\infty} \cfrac{1}{N^2}\, \log Z_\beta^N=-\inf_{\mes} H_\beta.$$ This gives a more robust approach for the large deviations principle of Section \ref{subsetionlargedevnorm} which was mentioned in Remark \ref{Rem:largedevconstant}.
\end{Remark}
\subsection{Consequences of the large deviations principle}
\label{subsection: cons}
\begin{prop}
\label{prop: value of the minim}
For all $\beta>0$, $$\inf_{\mu\in\mes} H_\beta(\mu)=-F(\beta),\, I_\beta= \widehat {I_\beta}.$$
\end{prop}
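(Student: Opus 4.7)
The plan is to exploit the fact that $Q_\beta^N$ is a probability measure, so $Q_\beta^N(\mes) = 1$ for every $N$, and then squeeze $\inf_\mes \widehat{I_\beta}$ between two bounds coming from the weak LDP and the exponential tightness established in Section \ref{subsection: expo tightness}.

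First, since $\mes$ is open in itself, the lower bound \eqref{eq:openlargedev} of Theorem \ref{thm:weak large} applied with $O = \mes$ gives
\[
0 \;=\; \underset{N\to\infty}{\underline{\lim}}\,\frac{1}{N^2}\log Q_\beta^N(\overline{\mu_N}\in \mes) \;\ge\; -\inf_{\mes}\widehat{I_\beta},
\]
so $\inf_\mes \widehat{I_\beta} \ge 0$. For the reverse inequality, I would combine the upper bound on compacts \eqref{eq:compactlargedev} with the exponential tightness of $(Q_\beta^N)_{N\ge 1}$ proved in Section \ref{subsection: expo tightness}. Namely, for any $L>0$, pick a compact $K_L \subset \mes$ with $\overline{\lim}_N N^{-2}\log Q_\beta^N(\overline{\mu_N}\notin K_L) \le -L$. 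Writing $1 = Q_\beta^N(K_L) + Q_\beta^N(K_L^c)$ and applying Lemma \ref{lemma: limsup sum} yields
\[
0 \;\le\; \max\!\left(-\inf_{K_L}\widehat{I_\beta},\ -L\right) \;\le\; \max\!\left(-\inf_{\mes}\widehat{I_\beta},\ -L\right).
\]
Letting $L \to +\infty$ forces $-\inf_{\mes}\widehat{I_\beta} \ge 0$, i.e.\ $\inf_{\mes}\widehat{I_\beta} \le 0$.

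Combining the two bounds gives $\inf_{\mes}\widehat{I_\beta} = 0$. Since $\widehat{I_\beta} = H_\beta + F(\beta)$, this is exactly $\inf_{\mes} H_\beta = -F(\beta)$. The second assertion is then immediate from the definitions:
\[
I_\beta(\mu) \;=\; H_\beta(\mu) - \inf_{\mes} H_\beta \;=\; H_\beta(\mu) + F(\beta) \;=\; \widehat{I_\beta}(\mu).
\]

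There is no real obstacle here; the only subtle point is making sure we use the full LDP (including the closed-set upper bound obtained via exponential tightness) rather than only the weak LDP on compacts, since $\mes$ itself is not compact. Everything needed has already been established in Sections \ref{subsecion: weak large dev} and \ref{subsection: expo tightness}, so the proof is essentially a bookkeeping argument.
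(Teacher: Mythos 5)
Your proof is correct and takes essentially the same route as the paper: the paper simply cites Proposition~\ref{prop: inffunctionrate} (applied to the full LDP obtained from the weak LDP plus exponential tightness) to deduce $\inf_{\mes}\widehat{I_\beta}=0$, whereas you have inlined the same argument by unwinding the two defining inequalities of the LDP at $E=\mes$ and the derivation of the closed-set upper bound from exponential tightness via Lemma~\ref{lemma: limsup sum}. The observation that one cannot get by with the compact upper bound alone (since $\mes$ is not compact) is exactly the role of exponential tightness, which the paper encapsulates in its preceding theorem rather than re-deriving here.
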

\begin{proof}
This is an immediate consequence of Proposition \ref{prop: inffunctionrate}. Indeed, it gives that $\inf_{\mes} \widehat {I_\beta}=0$ which implies the result.
\end{proof}
As a consequence of Section \ref{subsecion: weak large dev} and Section \ref{subsection: expo tightness} we can state the following result.
\begin{theorem}
For all $\beta>0$, $(Q_{\beta}^{N})_{N\in\N}$ satisfies a LDP at speed $N^2$ with good rate function $I_\beta$.
\end{theorem}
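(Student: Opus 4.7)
The statement is a direct synthesis of the ingredients assembled in the preceding three subsections, so the plan is simply to stitch them together carefully and check that nothing has been overlooked.

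First, I would write down the definition of a full LDP (Definition \ref{def: large dev}) and identify exactly what remains to be proved. The lower bound on open sets with rate function $\widehat{I_\beta}$ is the content of \eqref{eq:openlargedev} in Theorem \ref{thm:weak large}, and it requires no further work: it is valid for arbitrary Borel sets via their interior. The upper bound on closed sets is the only statement not yet directly in hand, since Theorem \ref{thm:weak large} only gives it on compact subsets. This is where Section \ref{subsection: expo tightness} enters. Namely, the exponential tightness of $(Q_\beta^N)_{N\ge 1}$, together with the argument already carried out at the start of Section \ref{subsection: expo tightness} (splitting a closed set $F$ as $(F\cap K_L)\cup K_L^c$, applying Lemma \ref{lemma: limsup sum} and letting $L\to+\infty$), upgrades the weak upper bound to the full closed-set upper bound with rate function $\widehat{I_\beta}$.

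Next, I would use Proposition \ref{prop: value of the minim} to replace $\widehat{I_\beta}$ by $I_\beta$. This proposition already asserts $I_\beta = \widehat{I_\beta}$ as a consequence of Proposition \ref{prop: inffunctionrate} applied to the weak LDP, so no new computation is needed: both the open and closed inequalities may be rewritten with $I_\beta$ in place of $\widehat{I_\beta}$.

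Finally, I would verify that $I_\beta$ is a good rate function in the sense required by Definition \ref{def: large dev}. This is essentially item (4) of Proposition \ref{prop:optim}: the sub-level sets $\{H_\beta\le M\}$ are compact, and subtracting the constant $\inf_{\mes}H_\beta = -F(\beta)$ does not alter this property, so $\{I_\beta \le M\} = \{H_\beta \le M - F(\beta)\}$ is compact for every $M\in\R$, and $I_\beta$ is lower semicontinuous and nonnegative. Combining the three inequalities, $I_\beta \ge 0$ lsc with compact sub-level sets, the open lower bound, and the closed upper bound, yields the full LDP at speed $N^2$. There is no genuine obstacle here; the only subtlety worth emphasizing is that the exponential tightness step is indispensable, since the weak LDP alone is not sufficient to conclude the closed-set inequality, and that the identification $I_\beta = \widehat{I_\beta}$ relies on the already-established fact that a good rate function of an LDP must have infimum equal to zero.
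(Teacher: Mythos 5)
Your proof is correct and follows essentially the same route as the paper: the weak LDP of Theorem \ref{thm:weak large} gives the open lower bound directly and the compact upper bound, exponential tightness upgrades the latter to closed sets, Proposition \ref{prop: value of the minim} identifies $\widehat{I_\beta}$ with $I_\beta$, and item (4) of Proposition \ref{prop:optim} guarantees that $I_\beta$ is a good rate function. The only difference is that you spell out the synthesis in more detail than the paper, which simply states the theorem as "a consequence" of the two preceding subsections.
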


\subsection{Computation of the equilibrium measure}
We proved that for all $I_\beta$ reaches its infimum
in a unique $\mu_\beta\in\mes$. The goal of this section is to compute it and accordingly with Wigner theorem, to prove that this measure is the semicircle distribution. 
First let state a lemma to justify that we can reduce to the case $\beta=1$. 

\begin{lemma}
\label{coro opti beta}
$\mu_\beta$ minimizes $I_\beta$ if and only if the measure $\tilde\mu_\beta(A):=\mu_\beta(\sqrt\beta A)$ minimizes $I_1$.
\end{lemma}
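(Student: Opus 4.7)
The plan is to make the affine change of variables $x = \sqrt{\beta}\, u$ explicit at the level of the functional $H_\beta$ and to show that, up to an additive constant depending only on $\beta$, the energy $H_\beta(\mu)$ equals $\beta\, H_1(\tilde{\mu})$. Since adding a constant does not change which measure is the minimizer, this yields the equivalence.

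First I would observe that the map $\Phi_\beta : \mes \to \mes$ sending $\mu$ to the pushforward $\tilde{\mu}$ under $x \mapsto x/\sqrt{\beta}$ (equivalently characterized by $\tilde{\mu}(A) = \mu(\sqrt{\beta}\, A)$) is a bijection of $\mes$ onto itself, with inverse given by pushforward under $x \mapsto \sqrt{\beta}\, x$. In particular, $\mu$ ranges over $\mes$ iff $\tilde{\mu}$ does.

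Next I would perform the change of variables in the definition of $H_\beta$. Writing $x = \sqrt{\beta}\, u$, $y = \sqrt{\beta}\, v$, the integrand transforms as
\begin{equation*}
f_\beta(\sqrt{\beta}\,u,\sqrt{\beta}\,v) = \frac{\beta u^2}{2} + \frac{\beta v^2}{2} - \beta\log|\sqrt{\beta}(u-v)| = \beta\, f_1(u,v) - \frac{\beta}{2}\log\beta,
\end{equation*}
valid for $u\ne v$ (and both sides equal $+\infty$ when $u=v$). Integrating against $\tilde{\mu}\otimes\tilde{\mu}$ and using that $\tilde{\mu}$ is a probability measure, I get
\begin{equation*}
H_\beta(\mu) = \beta\, H_1(\tilde{\mu}) - \frac{\beta}{4}\log\beta.
\end{equation*}
This identity must be checked while allowing the value $+\infty$ on both sides: by Lemma \ref{lemmaHfinite} and Remark \ref{Remark:L^1}, the finiteness of $H_\beta(\mu)$ is equivalent to $f_\beta \in L^1(\mu\otimes\mu)$, which by the displayed relation between $f_\beta$ and $f_1$ is equivalent to $f_1 \in L^1(\tilde{\mu}\otimes\tilde{\mu})$, i.e.\ $H_1(\tilde{\mu}) < +\infty$, so the formula extends consistently to the case $H_\beta(\mu) = H_1(\tilde{\mu}) = +\infty$.

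The conclusion is then immediate: for any $\mu, \mu' \in \mes$,
\begin{equation*}
H_\beta(\mu) - H_\beta(\mu') = \beta\bigl(H_1(\tilde{\mu}) - H_1(\tilde{\mu}')\bigr),
\end{equation*}
so $\mu$ minimizes $H_\beta$ (equivalently $I_\beta$) if and only if $\tilde{\mu}$ minimizes $H_1$ (equivalently $I_1$). I expect no real obstacle here; the only point requiring a little care is verifying the change-of-variable identity in the possibly-infinite case, which is handled via the $L^1$ characterization of finiteness recalled above.
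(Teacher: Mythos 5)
Your proof is correct, and it takes a genuinely different and somewhat cleaner route than the paper's. The paper proves the lemma by invoking the Euler--Lagrange characterization of minimizers (Proposition~\ref{prop:euler lagrange}), substituting $x\mapsto\sqrt{\beta}\,x$ in that equation, and then matching the two constant terms using the explicit value $\inf H_\beta=-F(\beta)$ obtained from the Selberg formula and the large deviations of the partition function (Propositions~\ref{prop: large dev constant} and~\ref{prop: value of the minim}). You instead make the rescaling transparent at the level of the energy itself, deriving the affine identity $H_\beta(\mu)=\beta\,H_1(\tilde\mu)-\frac{\beta}{4}\log\beta$ directly from $f_\beta(\sqrt{\beta}u,\sqrt{\beta}v)=\beta f_1(u,v)-\frac{\beta}{2}\log\beta$ and then reading off the correspondence of minimizers from the bijectivity of $\mu\mapsto\tilde\mu$ and the fact that an affine transformation with positive slope preserves minimizers. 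This is more self-contained: it requires only that $f_\beta$ and $f_1$ are bounded below (Lemma~\ref{lemmaHfinite}), so the identity extends unambiguously to the $+\infty$ case, and it sidesteps the Euler--Lagrange equation and the explicit value of $\inf H_\beta$ altogether. As a bonus, your identity immediately yields $\inf H_\beta=\beta\inf H_1-\frac{\beta}{4}\log\beta$, which is a useful cross-check; note for instance that it is consistent with the constant $F(\beta)$ being linear in $\beta$ in its non-logarithmic part, which can serve as a sanity check on the Selberg asymptotics.
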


\begin{proof}
Thanks to the Proposition \ref{prop:euler lagrange} we have that $\mu_\beta$ minimizes $I_\beta$ if and only if for $\tilde\mu_\beta$ almost all $x$: $$\beta\dis\int_\R\log|\sqrt\beta x-\sqrt\beta y|d\mu_\beta(y)=\cfrac{\beta }{2}\,x^2-2\inf H_\beta+\cfrac{\beta}{2}\dis\int_\R y^2d\mu_\beta(y).$$ 
Since $\inf H_\beta=-F(\beta)$ (by Proposition \ref{prop: value of the minim}), and $-2F(\beta)+\log \sqrt\beta =-2\beta F(1)$, we have $\cfrac{2}{\beta}\inf H_\beta+\log \sqrt\beta =2\inf H_1$.
\newline
Hence $\mu_\beta$ minimizes $I_\beta$ if and only if the measure $\tilde\mu_\beta$ minimizes $I_1$.
\end{proof}

Thanks to Proposition \ref{prop:euler lagrange} it remains to prove that for any $x\in[-\sqrt 2,+\sqrt 2]$ $$\int_\R\log|x-y|d\sigma_1(y)=\cfrac{1}{2}\,x^2-2\inf H_1+\cfrac{1}{2}\dis\int_\R y^2d\sigma_1(y).$$
It is the object of the next proposition.
\begin{prop}
\label{prop:calcul log mu}
For all $x\in[-\sqrt 2,+\sqrt 2]$, $$\dis\int_\R\log|x-y|d\sigma_1(y)=\cfrac{x^2-\log 2-1}{2}.$$
\end{prop}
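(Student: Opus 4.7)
The plan is to set $\phi(x) := \int_{\R} \log|x-y|\,d\sigma_1(y)$ for $x \in [-\sqrt 2, \sqrt 2]$, show first that $\phi$ is continuous on the closed interval (the log singularity is integrable against the bounded density of $\sigma_1$), and then establish $\phi(x) = \tfrac{x^2 - \log 2 - 1}{2}$ on the open interval $(-\sqrt 2, \sqrt 2)$. By continuity, the identity then extends to the endpoints.

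The first step is to compute $\phi'(x)$ for $x \in (-\sqrt 2, \sqrt 2)$ via the Stieltjes transform of $\sigma_1$. If $Y$ follows $\sigma$ of Example \ref{example:semicircle}, then $Y/\sqrt 2 \sim \sigma_1$, so by scaling
$$S_{\sigma_1}(z) = z - \sqrt{z^2 - 2}, \qquad z \in \mathbb H,$$
with an appropriate branch. Consider the holomorphic primitive $F(z) := \int \log(z-y)\,d\sigma_1(y)$ on $\mathbb H$, which satisfies $F'(z) = S_{\sigma_1}(z)$ and $\Re F(x+i\varepsilon) \to \phi(x)$ as $\varepsilon \to 0^+$. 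Combining this with the identity $\mathcal H(\sigma_1)(x) = \lim_{\varepsilon\to 0^+} \Re S_{\sigma_1}(x+i\varepsilon)$ from Definition \ref{def:hilbtransfo} yields $\phi'(x) = \mathcal H(\sigma_1)(x)$ in the principal value sense. For $x \in (-\sqrt 2, \sqrt 2)$, we have $(x+i\varepsilon)^2 - 2 \to x^2 - 2 < 0$ as $\varepsilon \to 0^+$, so $\sqrt{(x+i\varepsilon)^2 - 2} \to i\sqrt{2-x^2}$ and hence $\Re S_{\sigma_1}(x+i0^+) = x$. Integrating gives $\phi(x) = \tfrac{x^2}{2} + C$ on $(-\sqrt 2, \sqrt 2)$ for some constant $C$.

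The second step is to pin down $C$ by computing $\phi(0)$ directly. Substituting $y = \sqrt 2\,\sin\theta$,
$$\phi(0) = \frac{2}{\pi}\int_{-\pi/2}^{\pi/2}\log(\sqrt 2\,|\sin\theta|)\cos^2\theta\,d\theta = \frac{\log 2}{2} + \frac{4}{\pi}\int_0^{\pi/2}\log\sin\theta\,\cos^2\theta\,d\theta.$$
Writing $\cos^2\theta = \tfrac{1}{2}(1+\cos 2\theta)$ splits the remaining integral into two classical pieces: the identity $\int_0^{\pi/2}\log\sin\theta\,d\theta = -\tfrac{\pi}{2}\log 2$, and $\int_0^{\pi/2}\log\sin\theta\,\cos 2\theta\,d\theta = -\tfrac{\pi}{4}$, which is obtained by integrating by parts with $u = \log\sin\theta$ and $dv = \cos 2\theta\,d\theta$ (the boundary terms vanish since $\sin\theta\cos\theta \log\sin\theta \to 0$ at the endpoints, and the remaining integrand reduces to $\cos^2\theta$). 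Assembling, $\phi(0) = \tfrac{\log 2}{2} - \log 2 - \tfrac{1}{2} = -\tfrac{\log 2 + 1}{2}$, so $C = -\tfrac{\log 2 + 1}{2}$, which is exactly the claim.

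The main obstacle is the passage from the holomorphic primitive $F$ to the identity $\phi'(x) = \mathcal H(\sigma_1)(x)$ on the open support: one cannot differentiate $\int \log|x-y|\,d\sigma_1(y)$ naively under the integral sign because $(x-y)^{-1}$ fails to be integrable at $y = x$. This must be handled carefully through the principal value formulation of $\mathcal H$ given in Definition \ref{def:hilbtransfo} together with the limiting relation between the Hilbert and Stieltjes transforms, rather than by any pointwise differentiation.
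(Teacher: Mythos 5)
Your proof is correct. The first step, establishing $\phi'(x)=x$ on the bulk $(-\sqrt 2,\sqrt 2)$, is in substance the same as the paper's: both compute the Hilbert transform of $\sigma_1$ from boundary values of the Stieltjes transform $S_{\sigma_1}(z)=z-\sqrt{z^2-2}$ and identify it with the derivative of the logarithmic potential. The paper phrases the differentiation through distribution theory ($P.V.(1/x)=\log'$ and the compatibility of convolution with differentiation), while you use the holomorphic primitive $F(z)=\int\log(z-y)\,d\sigma_1(y)$ and pass to boundary values; to make the link $\phi'(x)=\mathcal H(\sigma_1)(x)$ fully rigorous with your formulation one should integrate $F'(\cdot+i\varepsilon)=S_{\sigma_1}(\cdot+i\varepsilon)$ along a horizontal segment and let $\varepsilon\to0^+$ by dominated convergence, rather than differentiating pointwise; you explicitly flag this subtlety, so no gap. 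The genuine divergence from the paper is in determining the constant $C$. The paper evaluates $\phi$ at the three points $0,\pm\sqrt2$, sums the two endpoint equations using $(1-\sin\theta)(1+\sin\theta)=\cos^2\theta$ and a $\theta\mapsto\pi/2-\theta$ substitution, and thereby reduces everything to the single classical integral $\int_0^{\pi/2}\log\sin\theta\,d\theta=-\tfrac{\pi}{2}\log2$. You instead evaluate only at $x=0$ but, after writing $\cos^2\theta=\tfrac12(1+\cos2\theta)$, need the additional integral $\int_0^{\pi/2}\log\sin\theta\,\cos2\theta\,d\theta=-\tfrac{\pi}{4}$, computed by parts. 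Your route trades two more evaluation points for one more elementary integral; the two computations are of comparable length. You are also slightly more careful than the paper in remarking that $\phi$ is continuous on the closed interval, which is needed to extend the identity from the open bulk to the endpoints $\pm\sqrt2$ (a point the paper uses implicitly when it plugs in $x=\pm\sqrt2$ without comment).
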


\begin{proof}
Notice that if we view $x\mapsto\dis\int_\R\log|x-y|d\sigma_1(y)$ as a distribution, we can write $\dis\int_\R\log|x-y|d\sigma_1(y)=\log\ast\sigma_1(x)$ with $\log$ the distribution associated to the function $x\mapsto \log|x|$ and $\sigma_1$ the distribution $f\mapsto\dis\int_\R f(x)d\sigma_1(x)$.
 \newline
It is well known that the distribution $P.V.(1/x)(\phi)=\underset{\varepsilon\to 0}{\lim}\dis\int_{|x|>\varepsilon}\cfrac{\phi(x)}{x}\,dx$ satisfies $P.V.(1/x)=\log'$ in the sense of distributions.
\newline
By the properties of the convolution of distribution we also have that $P.V.(1/x)\ast\sigma_1=(\log\ast\sigma_1)'$.
 \newline
We first compute $P.V.(1/x)\ast\sigma_1(x)=\underset{\varepsilon\to 0}{\lim}\dis\int_{|x-y|>\varepsilon}\cfrac{1}{x-y}\,\sigma_1(dx)$ for $x\in[-\sqrt{2},\sqrt{2}]$. This quantity is called the Hilbert transform of the measure $\sigma_1$ and was introduced in Definition \ref{def:hilbtransfo}. 
\newline
We can compute the Hilbert transform of $\sigma_1$ thanks to the Stieltjes transform of $\sigma_1$ which is well known. We recall that for all $z\in\mathbb H$, we have $$S_{\sigma_1}(z)=\dis\int_\R\cfrac{1}{z-y}\,\sigma_1(dy)=z-\sqrt{z^2-2},$$ with $\sqrt{z'}$ the determination of the square root on $\C-R^+$ such that $\sqrt{-1}=i$ (see Proposition \ref{annexe prop unique stieljes sqrt}).
\newline
Indeed we have:
$$\underset{\varepsilon\to 0}{\lim}\dis\int_{|x-y|>\varepsilon}\cfrac{1}{x-y}\,\sigma_1(dx)=\underset{\varepsilon\to 0}{\lim}\cfrac{S_{\sigma_1}(x+i\varepsilon)+S_{\sigma_1}(x-i\varepsilon)}{2}.$$
Hence, for all $x\in[-\sqrt{2},\sqrt{2}]$, $P.V.(1/x)\ast\sigma_1(x)=x$. 
\newline
\newline
So, we can find a constant $C\in\R$ such that for all $x\in[-\sqrt{2} ,\sqrt{2}]$:
\begin{equation}
\label{eq: log*mu}
\dis\int_\R\log|x-y|d\sigma_1(y)=\cfrac{x^2}{2}+C.
\end{equation}
It remains to compute the constant $C$. 
\newline
We write the equality \eqref{eq: log*mu} at $x=0$, $x=\sqrt{2}$ and $x=-\sqrt{2}$. We have the three following equalities: 

\begin{equation}
\left\{
\begin{split}
 &C=\dis\int_\R\log|y|d\sigma_1(y)\\
&C=-1+\dis\int_\R\log|\sqrt{2}-y|d\sigma_1(y) \\
&C=-1+\dis\int_\R\log|-\sqrt{2}-y|d\sigma_1(y)
\end{split}
\right.
\end{equation}
We make a polar change of variable $y=\sqrt{2}\sin(\theta)$. It gives: 
\begin{equation}
\label{eq:system equation}
\left\{
\begin{split}
&C=\log\sqrt{2}+\cfrac{4}{\pi}\dis\int_{0}^{\pi/2}\log (\sin(\theta))
\cos^2(\theta)d\theta\\
&C=\log\sqrt{2}-1+\cfrac{4}{\pi}\dis\int_{0}^{\pi/2}\log (1-\sin(\theta))\cos^2(\theta)d\theta\\
&C=\log\sqrt{2}-1+\cfrac{4}{\pi}\dis\int_{0}^{\pi/2}\log (1+\sin(\theta))\cos^2(\theta)d\theta 
\end{split}
\right.
\end{equation}
Now if we sum the two last equations of \eqref{eq:system equation}, we get:
 $$C=\log\sqrt{2}-1+\cfrac{2}{\pi}\dis\int_{0}^{\pi/2}\log (1-\sin^2(\theta))
\cos^2(\theta)d\theta=\log\sqrt{2}-1+\cfrac{4}{\pi}\dis\int_{0}^{\pi/2}\log(\cos(\theta))\cos^2(\theta)d\theta.$$
Doing the change of variable $\theta'=\pi/2-\theta$ and summing this equality with the first one of \eqref{eq:system equation} yields: $$C=\cfrac{\log(2)-1}{2}+\cfrac{2}{\pi}\dis\int_{0}^{\pi/2}\log (\sin(\theta))d\theta.$$
It remains to compute $\dis\int_{0}^{\pi/2}\log (\sin(\theta))d\theta$. 
\begin{lemma}
\label{lemma: compute integral}
$$\dis\int_{0}^{\pi/2}\log (\sin(\theta))d\theta=-\pi\cfrac{\log(2)}{2}.$$
\end{lemma}
\begin{proof}
We write $I=\dis\int_{0}^{\pi/2}\log (\sin(\theta))d\theta$. Doing the change of variable $\theta'=\pi/2-\theta$ gives $I=\dis\int_{0}^{\pi/2}\log( \cos(\theta))d\theta$. Summing these two equalities yields $$2I=\dis\int_{0}^{\pi/2}\log (\sin(\theta)\cos(\theta))d\theta.$$
Since $$\dis\int_{0}^{\pi/2}\log (\sin(\theta)\cos(\theta))d\theta=\dis\int_{0}^{\pi/2}\log (\sin(2\theta)/2)d\theta=\dis\int_{0}^{\pi/2}\log (\sin(2\theta))d\theta-\log(2)\cfrac{\pi}{2}$$ and $$\dis\int_{0}^{\pi/2}\log (\sin(2\theta))d\theta=\dis\int_{0}^{\pi}\log (\sin(\theta))\cfrac{d\theta}{2}=\dis\int_{0}^{\pi/2}\log (\sin(\theta))d\theta=I,$$ we have the following relation: $$2I=I-\pi\cfrac{\log(2)}{2}.$$
It yields $I=-\pi\cfrac{\log(2)}{2}$.

\end{proof}
As a consequence of $\eqref{eq: log*mu}$ and Lemma \ref{lemma: compute integral}, for all $x\in[-\sqrt 2,+\sqrt 2]$, $$\dis\int_\R\log|x-y|d\sigma_1(y)=\cfrac{x^2-\log 2-1}{2}.$$
\end{proof}

Thanks to Lemma \ref{coro opti beta} and \ref{prop:calcul log mu} obtain the main theorem of this section. 
\begin{theorem}
The unique minimizer of $I_\beta$ is $\sigma_\beta$.
\end{theorem}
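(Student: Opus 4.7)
The plan is to reduce to the case $\beta=1$, then verify that $\sigma_1$ satisfies the Euler--Lagrange characterization of minimizers given in Proposition \ref{prop:euler lagrange}. Uniqueness is automatic from the strict convexity of $I_\beta$ established in Proposition \ref{prop:optim}.

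First, by Lemma \ref{coro opti beta}, it suffices to show that the unique minimizer of $I_1$ is $\sigma_1$: if $\sigma_1$ minimizes $I_1$, then the pushforward of $\sigma_1$ by $x\mapsto \sqrt{\beta}\,x$, which is precisely $\sigma_\beta$ (since $\sigma_1$ has density $\frac{1}{\pi}\sqrt{2-x^2}\mathds{1}_{[-\sqrt 2,\sqrt 2]}$ and $\sigma_\beta$ has density $\frac{1}{\beta\pi}\sqrt{2\beta-x^2}\mathds{1}_{[-\sqrt{2\beta},\sqrt{2\beta}]}$), minimizes $I_\beta$.

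Next, by Proposition \ref{prop:euler lagrange}, $\sigma_1$ minimizes $I_1$ if and only if for $\sigma_1$-almost every $x$ one has
\begin{equation*}
\int_\R \log|x-y|\,d\sigma_1(y)=\frac{x^2}{2}-2\inf H_1+\frac{1}{2}\int_\R y^2\,d\sigma_1(y).
\end{equation*}
The support of $\sigma_1$ is $[-\sqrt 2,\sqrt 2]$, and Proposition \ref{prop:calcul log mu} gives exactly that on this interval,
\begin{equation*}
\int_\R \log|x-y|\,d\sigma_1(y)=\frac{x^2}{2}-\frac{\log 2+1}{2}.
\end{equation*}
So I only need to check that the constant terms match, i.e.\ that $-\tfrac{\log 2+1}{2}=-2\inf H_1+\tfrac{1}{2}\int y^2\,d\sigma_1(y)$.

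The two ingredients for this check are (a) Proposition \ref{prop: value of the minim}, which gives $\inf H_1=-F(1)=\tfrac{\log 2}{4}+\tfrac{3}{8}$, and (b) a direct computation of the variance of the semicircle: with the substitution $y=\sqrt 2\sin\theta$,
\begin{equation*}
\int_\R y^2\,d\sigma_1(y)=\frac{1}{\pi}\int_{-\sqrt 2}^{\sqrt 2}y^2\sqrt{2-y^2}\,dy=\frac{4}{\pi}\int_{-\pi/2}^{\pi/2}\sin^2\theta\cos^2\theta\,d\theta=\frac{1}{2}.
\end{equation*}
Plugging in gives $-2\inf H_1+\tfrac{1}{2}\cdot\tfrac{1}{2}=-\tfrac{\log 2}{2}-\tfrac{3}{4}+\tfrac{1}{4}=-\tfrac{\log 2+1}{2}$, as required. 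Hence $\sigma_1$ satisfies the Euler--Lagrange equation pointwise on its support, so a fortiori $\sigma_1$-a.e., which by Proposition \ref{prop:euler lagrange} shows it is a minimizer of $I_1$; strict convexity (Proposition \ref{prop:optim}) yields uniqueness.

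There is no real obstacle here since all the analytic work has been done: the computation of the log-potential of $\sigma_1$ on its support (Proposition \ref{prop:calcul log mu}) and the explicit value of $\inf H_1$ via the Selberg integral (Proposition \ref{prop: large dev constant} and Proposition \ref{prop: value of the minim}) are already in hand. The only care needed is the bookkeeping of constants: one must ensure the normalizations used in $F(\beta)$ and in $\int y^2\,d\sigma_1$ agree with the scaling of Lemma \ref{coro opti beta}, but as shown above the constants cancel exactly.
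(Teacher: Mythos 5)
Your proof follows essentially the same route as the paper's: reduce to $\beta=1$ via Lemma \ref{coro opti beta}, then verify the Euler--Lagrange characterization of Proposition \ref{prop:euler lagrange} for $\sigma_1$ using the explicit computation of the logarithmic potential in Proposition \ref{prop:calcul log mu}, with uniqueness from strict convexity. You additionally carry out the bookkeeping of constants (computing $\int y^2\,d\sigma_1=\tfrac12$ and matching $-\tfrac{\log 2+1}{2}$ with $-2\inf H_1+\tfrac14$), a check the paper leaves implicit when it simply juxtaposes Propositions \ref{prop:euler lagrange} and \ref{prop:calcul log mu}.
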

As a consequence of this theorem we get another proof of the Wigner theorem.
\begin{corollary}[Wigner's theorem]
For all $N\ge 1$, let $X_N$ be a $N$ complex (resp. real) Wigner matrix of size $N$. Let $(\lambda_i^N)_{1\le i\le N}$ be the $N$ real eigenvalues of $X_N$ and $\mu_N$ be the spectral measure of $X_N/\sqrt{N}$:$$\mu_N=\cfrac{1}{N}\sum_{i=1}^N\delta_{\frac{\lambda_i^N}{\sqrt{N}}}.$$
Then, almost surely $$\mu_N\overunderset{\mathcal L}{N\to+\infty}{\longrightarrow} \sigma_2\, \text{ (resp. } \sigma_1).$$
\end{corollary}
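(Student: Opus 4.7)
The plan is to derive the almost sure weak convergence $\mu_N \to \sigma_\beta$ as a direct consequence of the two main results just established: the large deviations principle for $(Q_\beta^N)_{N\ge 1}$ at speed $N^2$ with good rate function $I_\beta$, and the identification of its unique minimizer as $\sigma_\beta$. The passage from an LDP with unique minimizer to almost sure convergence is exactly the content of Proposition \ref{prop:almostsureconvlargedev}, so the task is essentially to check that its hypotheses hold in the present setting.

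First I would identify the law of $\mu_N$ on $\mes$. By Proposition \ref{prop:law gue} (respectively Proposition \ref{prop:law goe}), the vector of eigenvalues of $X_N/\sqrt{N}$, viewed as an exchangeable vector in $\R^N$, has density $Q^N_2$ (respectively $Q^N_1$) with respect to Lebesgue measure. Pushing this law forward under the map $i_N$ of Definition \ref{def:extend}, one obtains that the random probability measure $\mu_N \in \mes$ has law precisely $Q_\beta^N$ (in the $\mes$-valued sense) for the appropriate $\beta \in \{1,2\}$. Since all Wigner matrices are assumed to live on the same probability space $(\Omega,\mathcal F,\mP)$ as stated at the beginning of Part \ref{part:ginibregaussian}, the sequence $(\mu_N)_{N\ge 1}$ is a well-defined sequence of $\mes$-valued random variables on a common probability space.

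Next I would invoke the main theorem of this section: $(Q_\beta^N)_{N\ge 1}$ satisfies a large deviations principle on $\mes$ at speed $N^2$ with good rate function $I_\beta$. The previous computation of the equilibrium measure shows that $I_\beta$ attains its infimum $0$ uniquely at $\sigma_\beta$, since $I_\beta(\sigma_\beta) = \widehat{I_\beta}(\sigma_\beta) = H_\beta(\sigma_\beta) + F(\beta) = 0$ by Proposition \ref{prop: value of the minim}, and strict convexity of $H_\beta$ (Proposition \ref{prop:optim}) rules out any other minimizer.

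Finally I would apply Proposition \ref{prop:almostsureconvlargedev} with $X_N = \mu_N$, $X = \mes$, $s = 2$, and $x^* = \sigma_\beta$. All hypotheses are satisfied: the rate function is good, its infimum is uniquely attained, and the random variables are defined on a common probability space. The conclusion is that $\mu_N \to \sigma_\beta$ almost surely in $\mes$, which by definition of the weak topology on $\mes$ is exactly the statement $\mu_N \overunderset{\mathcal L}{N\to+\infty}{\longrightarrow} \sigma_\beta$ almost surely. There is no substantive obstacle here; the entire difficulty has already been absorbed into proving the LDP and locating its minimizer, so the corollary is a short bookkeeping argument.
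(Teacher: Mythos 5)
Your proposal is correct and follows the same route as the paper: identify the law of $\mu_N$ as $Q_\beta^N$, invoke the large deviations principle for $(Q_\beta^N)_{N\ge 1}$, note that $\sigma_\beta$ is the unique minimizer of $I_\beta$, and conclude via Proposition~\ref{prop:almostsureconvlargedev}. The paper states this in a single sentence; you have merely unpacked the same bookkeeping.
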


\begin{proof}
This is an immediate consequence of the LDP of $(Q_\beta^N)_{N\ge 1}$ for $\beta=1$ or $\beta=2$ and  Proposition \ref{prop:almostsureconvlargedev}.
\end{proof}

\section{Applications and further results}

\subsection{Convergence of the smallest and the largest eigenvalue}
\label{subsection : largest eigenalue} 
In this section we focus on the smallest and the largest eigenvalue of real Wigner matrices but we could have stated all the results of this part for complex Wigner matrices. 

\begin{definition}
Let $M\in M_n(\R)$ be a symmetric matrix. We denote $\lambda_1^n(M)\ge...\ge\lambda_n^n(M)$ its eigenvalues. 
\end{definition}

Let us explicit the Kantorovich-Rubinstein distance which metrizes the weak topology of $\mes$. We will denote $d_{KR}$ this distance between two elements of $\mes$. We recall that for all $\mu$, $\nu$ in $\mes$, $$d_{KR}(\mu,\nu)=\sup_{f\in BL(\R);\,||f||_{BL}\le 1}\left\{ \,\left|\dis\int_{\R}f d\mu-\dis\int_{\R}f d\nu\right|\,\right\},$$ with $BL(\R)$ the space of bounded and Lipschitz functions endows with the natural norm: $||f||_{BL}=||f||_{\infty}+||f||_{lip}$.

\begin{prop}
\label{prop: Kanto rub}
Let $\lambda_1^n\ge...\ge\lambda_n^n$ be a family of $n$ real numbers. Assume that there exists $\varepsilon>0$ such that $\sqrt{2}-\varepsilon>\lambda_1^n$. Then we can find $c_\varepsilon>0$ such that: $$d_{KR}(\overline{\mu_n},\sigma_1)\ge c_\varepsilon.$$
\end{prop}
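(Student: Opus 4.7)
The plan is to exhibit an explicit test function in the unit ball of $BL(\R)$ that separates $\overline{\mu_n}$ from $\sigma_1$ by a quantity depending only on $\varepsilon$. Since the hypothesis $\sqrt 2-\varepsilon>\lambda_1^n$ is stronger for larger $\varepsilon$, I would first observe that we may assume $\varepsilon\in(0,1)$ without loss of generality; this ensures that $[\sqrt 2-\varepsilon/2,\sqrt 2]\subset\SUPP(\sigma_1)$ and that $\sigma_1$ carries strictly positive mass on this interval.

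The natural test function is the clipped ramp
\[
f_\varepsilon(x):=\max\left(0,\min\left(\tfrac{\varepsilon}{2},\,x-(\sqrt{2}-\varepsilon)\right)\right),
\]
which vanishes on $(-\infty,\sqrt 2-\varepsilon]$, grows linearly with slope $1$ on $[\sqrt 2-\varepsilon,\sqrt 2-\varepsilon/2]$, and equals the constant $\varepsilon/2$ on $[\sqrt 2-\varepsilon/2,+\infty)$. A direct computation gives $\|f_\varepsilon\|_\infty=\varepsilon/2$ and $\|f_\varepsilon\|_{\mathrm{Lip}}=1$, hence $\|f_\varepsilon\|_{BL}=1+\varepsilon/2$. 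Setting $g_\varepsilon:=f_\varepsilon/(1+\varepsilon/2)$ yields an element of the unit ball of $BL(\R)$.

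By the hypothesis, all $\lambda_i^n$ lie in $(-\infty,\sqrt 2-\varepsilon)$, so $\int_\R g_\varepsilon\, d\overline{\mu_n}=0$. On the other hand, since $g_\varepsilon\ge \frac{\varepsilon/2}{1+\varepsilon/2}\,\mathds 1_{[\sqrt 2-\varepsilon/2,\sqrt 2]}$, we obtain
\[
\int_\R g_\varepsilon\, d\sigma_1 \;\ge\; \frac{\varepsilon/2}{1+\varepsilon/2}\,\sigma_1\bigl([\sqrt 2-\varepsilon/2,\sqrt 2]\bigr)\;=:\;c_\varepsilon>0,
\]
where positivity follows from the explicit density of $\sigma_1$ on $[-\sqrt 2,\sqrt 2]$. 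By definition of $d_{KR}$ one concludes
\[
d_{KR}(\overline{\mu_n},\sigma_1)\;\ge\;\left|\int_\R g_\varepsilon\, d\overline{\mu_n}-\int_\R g_\varepsilon\, d\sigma_1\right|\;\ge\;c_\varepsilon.
\]

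There is no real obstacle here: the only mildly delicate point is the bookkeeping of the $BL$-norm (which forces the factor $1/(1+\varepsilon/2)$) and checking that the interval on which one localizes really intersects the support of $\sigma_1$, both handled by the WLOG reduction. The quantitative value of $c_\varepsilon$ (of order $\varepsilon^{5/2}$ as $\varepsilon\to 0$, since $\sqrt{2-x^2}\sim\sqrt{2\sqrt 2}\,\sqrt{\sqrt 2-x}$ near $\sqrt 2$) is immaterial for the statement but can be made explicit if desired.
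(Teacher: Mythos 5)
Your proof is correct and uses essentially the same idea as the paper: exhibit a continuous ramp that vanishes to the left of $\sqrt 2 - \varepsilon$, normalize so it lies in the $BL$-unit ball, and observe that it integrates to zero against $\overline{\mu_n}$ but to something strictly positive against $\sigma_1$. The only cosmetic difference is that the paper's ramp rises with slope $1/2$ over an interval of length $1$ so that $\|f_\varepsilon\|_{BL}\le 1$ holds without a separate normalization, whereas you cap the ramp at height $\varepsilon/2$ and divide by $1+\varepsilon/2$; both yield a valid $c_\varepsilon>0$.
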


\begin{proof}
Let $f_\varepsilon$ the continuous function which is null on $]-\infty,\sqrt{2}-\varepsilon]$, constant to 1/2 on $[\sqrt{2}-\varepsilon+1,+\infty[$ and linear on $[\sqrt{2}-\varepsilon,\sqrt{2}-\varepsilon+1]$.  We clearly have that $f_\varepsilon$ is in $BL(\R)$ and that $||f||_{BL}\le 1$. 
\newline
Hence, by definition of the Kantorovich-Rubinstein distance, we have: 
$$d_{KR}(\overline{\mu_n},\sigma_1)\ge \left|\dis\int_{\R}f_{\varepsilon} d\overline{\mu_n}-\dis\int_{\R}f_{\varepsilon} d\sigma_1(y)\right|.$$
Since $f_\varepsilon$ is supported on $[\sqrt{2}-\varepsilon,+\infty[$, by hypothesis on the $\lambda_i^n$ we have that $\dis\int_{\R}f_{\varepsilon} d\overline{\mu_n}=0$.
\newline
Since $f_\varepsilon$ is not null on $[-\sqrt{2},+\sqrt{2}]$ we have: $\dis\int_{\R}f_{\varepsilon} d\sigma_1(y)>0$. Let $c_{\varepsilon}:= \dis\int_{\R}f_{\varepsilon} d\sigma_1(y)>0$, we get: $$d_{KR}(\overline{\mu_n},\sigma_1)\ge c_\varepsilon.$$

\end{proof}

\begin{prop}
\label{prop: epslion plus grande va}
Let $\varepsilon>0$ and $X_N$ be a $N$ real Wigner matrix for all $N\ge 1$. Then, we can find $C_\varepsilon>0$ such that $$\overline{\underset{N\to\infty}\lim}\cfrac{1}{N^2}\log\mP\left(\lambda_1^N\left(\cfrac{X_N}{\sqrt{N}}\right)<\sqrt{2}-\varepsilon\right)\le -C_\varepsilon.$$
\end{prop}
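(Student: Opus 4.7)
The plan is to combine the full large deviations principle for $(Q_1^N)_{N\ge 1}$ obtained in Section \ref{section: large dev} with the quantitative estimate of Proposition \ref{prop: Kanto rub}. Recall that the spectrum of $X_N/\sqrt N$ is distributed according to $Q_1^N$ (Proposition \ref{prop:law goe}), so after pushing forward by $i_N$ (Definition \ref{def:extend}) the empirical spectral measure $\overline{\mu_N}$ is distributed according to $Q_1^N$ viewed as a probability on $\mes$.

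First, I would translate the event on the largest eigenvalue into an event on the empirical measure in $\mes$. By Proposition \ref{prop: Kanto rub}, applied to the (ordered) family $\lambda_1^N(X_N/\sqrt N)\ge \dots \ge \lambda_N^N(X_N/\sqrt N)$, there exists $c_\varepsilon>0$ such that on the event $\{\lambda_1^N(X_N/\sqrt N)<\sqrt 2-\varepsilon\}$ one has $d_{KR}(\overline{\mu_N},\sigma_1)\ge c_\varepsilon$. Thus
\begin{equation*}
\left\{\lambda_1^N\!\left(\tfrac{X_N}{\sqrt N}\right)<\sqrt 2-\varepsilon\right\}\subset \{\overline{\mu_N}\in F_\varepsilon\},\qquad F_\varepsilon:=\{\mu\in\mes : d_{KR}(\mu,\sigma_1)\ge c_\varepsilon\}.
\end{equation*}
The set $F_\varepsilon$ is closed in $\mes$ for the weak topology (since $d_{KR}$ metrizes it) and does not contain the semicircle $\sigma_1$.

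Next, I would apply the complete LDP for $(Q_1^N)_{N\ge 1}$: combining the weak principle of Theorem \ref{thm:weak large} with the exponential tightness established in Section \ref{subsection: expo tightness} gives the upper bound on closed sets. Using Proposition \ref{prop: value of the minim}, which identifies the good rate function with $I_1=\widehat{I_1}$, we get
\begin{equation*}
\overline{\underset{N\to\infty}\lim}\cfrac{1}{N^2}\log\mP\!\left(\lambda_1^N\!\left(\tfrac{X_N}{\sqrt N}\right)<\sqrt 2-\varepsilon\right)\le \overline{\underset{N\to\infty}\lim}\cfrac{1}{N^2}\log Q_1^N(F_\varepsilon)\le -\inf_{F_\varepsilon}I_1.
\end{equation*}

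Finally, I would show that $\inf_{F_\varepsilon}I_1>0$. Since $I_1$ is a good rate function (sublevel sets compact in $\mes$) with unique minimizer $\sigma_1$ satisfying $I_1(\sigma_1)=0$, and since $F_\varepsilon$ is closed and does not contain $\sigma_1$, Lemma \ref{coro ferme} yields $C_\varepsilon:=\inf_{F_\varepsilon}I_1>0$, which is exactly the desired conclusion. The main (mild) subtlety is simply to keep track of the identification, via Definition \ref{def:extend}, between the law of the ordered eigenvalues of $X_N/\sqrt N$ and the probability measure $Q_1^N$ on $\mes$ for which the LDP is stated; once this is done, the proof is essentially a direct plug-in of the LDP upper bound and Lemma \ref{coro ferme}.
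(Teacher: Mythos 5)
Your proposal is correct and follows essentially the same route as the paper: use Proposition \ref{prop: Kanto rub} to include the event in $\{\overline{\mu_N}\in F_\varepsilon\}$ with $F_\varepsilon$ closed and avoiding $\sigma_1$, apply the LDP upper bound for $Q_1^N$ on closed sets, and conclude via Lemma \ref{coro ferme} that $\inf_{F_\varepsilon} I_1>0$. The paper's proof is just a condensed version of the same argument.
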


\begin{proof}
Using Proposition \ref{prop: Kanto rub} and the fact that the law of the eigenvalues of $X_N/\sqrt{N}$ is $Q_1^N$ we have: 
\begin{equation}
\begin{split}
\mP\left(\lambda_1^N\left(\cfrac{X_N}{\sqrt{N}}\right)<\sqrt{2}-\varepsilon\right)&\le \mP\left(d_{KR}\left(\cfrac{1}{N}\sum_{i=1}^N\delta_{\lambda_i^N(\frac{X_N}{\sqrt{N}})},\sigma_1\right)\ge c_\varepsilon\right)\\
&= Q_1^N(\{\mu\in \mes\,|\,d_{KR}(\mu,\sigma_1)\ge c_\varepsilon\}).
\end{split}
\end{equation}
By the large deviations theorem obtained in Section \ref{section: large dev}, we get: $$\overline{\underset{N\to\infty}\lim}\cfrac{1}{N^2}\log Q_1^N(\{\mu\in \mes\,|\,d_{KR}(\mu,\sigma_1)\ge c_\varepsilon\})\le-\inf_{F_\varepsilon}I_1,$$ with $F_\varepsilon=\{\mu\in\mes ,\, d_{KR}(\mu,\sigma_1)\ge c_\varepsilon)$.
\newline
Since $I_1$ is lsc and $c_\varepsilon>0$, $C_\varepsilon:=\inf_{F_\varepsilon}I_1>0$ by Lemma \ref{coro ferme}. Hence, we have $$\overline{\underset{N\to\infty}\lim}\cfrac{1}{N^2}\log\mP\left(\lambda_1^N\left(\cfrac{X_N}{\sqrt{N}}\right)<\sqrt{2}-\varepsilon\right)\le -C_\varepsilon.$$
\end{proof}

\begin{Remark}
We clearly also have the same result with $\lambda_N^N$ and $-\sqrt{2}$ instead of $\lambda_1^N$ and $\sqrt{2}$.
\end{Remark}
Let us mention two applications. The first one can also be viewed as a consequence of the Wigner theorem. The second one can be viewed as a first step in order to obtain a large deviations principle for the largest eigenvalue of a $N$ real Wigner matrix. 
\begin{corollary}
For all $N\ge 1$ let $X_N$ be a $N$ real Wigner matrix, we have $\mP$ almost surely $$\underset{N\to\infty}{\liminf}\,\lambda_1^N\left(\cfrac{X_N}{\sqrt{N}}\right)\ge \sqrt{2}.$$
\end{corollary}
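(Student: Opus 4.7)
The plan is to deduce this almost sure lower bound from the exponential bound of Proposition \ref{prop: epslion plus grande va} via the Borel--Cantelli lemma, followed by a standard countable-intersection argument to upgrade the statement from a fixed $\varepsilon$ to all $\varepsilon > 0$ simultaneously.

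First I would fix $\varepsilon > 0$ and apply Proposition \ref{prop: epslion plus grande va} to obtain a constant $C_\varepsilon > 0$ such that
\[
\limsup_{N\to\infty}\cfrac{1}{N^2}\log\mP\!\left(\lambda_1^N\!\left(\cfrac{X_N}{\sqrt{N}}\right)<\sqrt{2}-\varepsilon\right)\le -C_\varepsilon.
\]
By the very definition of the $\limsup$, this yields the existence of $N_0 = N_0(\varepsilon)$ such that for all $N \ge N_0$,
\[
\mP\!\left(\lambda_1^N\!\left(\cfrac{X_N}{\sqrt{N}}\right)<\sqrt{2}-\varepsilon\right)\le \exp\!\left(-\cfrac{C_\varepsilon}{2}N^2\right),
\]
which is the term of a convergent series. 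The Borel--Cantelli lemma then gives that almost surely the event $\{\lambda_1^N(X_N/\sqrt N)<\sqrt 2 -\varepsilon\}$ occurs only finitely many times, i.e.
\[
\mP\!\left(\liminf_{N\to\infty}\lambda_1^N\!\left(\cfrac{X_N}{\sqrt{N}}\right)\ge \sqrt{2}-\varepsilon\right)=1.
\]

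Finally, to upgrade this to the statement with $\sqrt 2$ on the right-hand side, I would choose a countable sequence $\varepsilon_k \downarrow 0$ (e.g.\ $\varepsilon_k = 1/k$), apply the above almost sure bound for each $\varepsilon_k$, and take the countable intersection of the full-measure events: almost surely, for every $k\ge 1$,
\[
\liminf_{N\to\infty}\lambda_1^N\!\left(\cfrac{X_N}{\sqrt{N}}\right)\ge \sqrt{2}-\cfrac{1}{k},
\]
and letting $k\to \infty$ gives the claim. There is no genuine obstacle here: all the analytic work (the LDP at speed $N^2$, the lower-semicontinuity of $I_1$ on the closed set $\{\mu : d_{KR}(\mu,\sigma_1)\ge c_\varepsilon\}$, and the fact that this infimum is strictly positive by Lemma \ref{coro ferme}) was packaged into Proposition \ref{prop: epslion plus grande va}; the remaining argument is the routine Borel--Cantelli plus countable-intersection extraction, with the (very mild) care of ensuring the bound is uniform from some index on so that summability applies.
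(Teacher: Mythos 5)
Your proposal is correct and follows essentially the same route as the paper: apply Proposition \ref{prop: epslion plus grande va}, extract the summable tail bound from the $\limsup$, invoke Borel--Cantelli for each fixed $\varepsilon$, and then pass to the limit in $\varepsilon$. The paper phrases the last step as continuity from below for the nested events $\{\lambda_1^N < \sqrt 2 - \varepsilon\ \text{i.o.}\}$ rather than as a countable intersection of full-measure events, but these are the same argument.
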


\begin{proof}
We compute $\mP\left(\underset{N\to\infty}{\liminf}\,\lambda_1^N\left(\cfrac{X_N}{\sqrt{N}}\right)< \sqrt{2}\right)$. By definition of the $\lim\inf$ we have: 
\begin{equation}
\begin{split}
\mP&\left(\underset{N\to\infty}{\liminf}\,\lambda_1^N\left(\cfrac{X_N}{\sqrt{N}}\right)< \sqrt{2}\right)=\\
&\underset{\varepsilon\to 0}{\lim}\,\mP\left(\text{there exists an infinite number of $N$ such that:} \,\,\lambda_1^N\left(\cfrac{X_N}{\sqrt{N}}\right)<\sqrt{2}-\varepsilon\right).
\end{split}
\end{equation}
Let $\varepsilon>0$, by Proposition \ref{prop: epslion plus grande va}, there exists $N(\varepsilon)\in \N$ and $C_\varepsilon>0$ such that $$\forall N\ge N(\varepsilon),\,\mP\left(\lambda_1^N\left(\cfrac{X_N}{\sqrt{N}}\right)<\sqrt{2}-\varepsilon\right)\le\exp\left(-N^2\cfrac{C_\varepsilon}{2}\right),$$ which gives that $\displaystyle\sum_N \mP\left(\lambda_1^N\left(\cfrac{X_N}{\sqrt{N}}\right)<\sqrt{2}-\varepsilon\right)<+\infty$.
\newline
Hence the Borel-Cantelli lemma yields that for all $\varepsilon>0$: $$\mP\left(\text{there exists an infinite number of $N$ such that:} \,\,\lambda_1^N\left(\cfrac{X_N}{\sqrt{N}}\right)<\sqrt{2}-\varepsilon\right)=0.$$
It gives: $$\mP\left(\underset{N\to\infty}{\liminf}\,\lambda_1^N\left(\cfrac{X_N}{\sqrt{N}}\right)< \sqrt{2}\right)=0.$$

\end{proof}

\begin{corollary}
For all $N\ge 1$, let $X_N$ be a $N$ real Wigner matrix, then for all $\varepsilon>0$ we have $$\overline{\underset{N\to\infty}\lim}\cfrac{1}{N}\log\mP\left(\lambda_1^N\left(\cfrac{X_N}{\sqrt{N}}\right)<\sqrt{2}-\varepsilon\right)= -\infty.$$
\end{corollary}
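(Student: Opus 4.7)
The plan is to deduce this corollary essentially for free from the preceding Proposition \ref{prop: epslion plus grande va}, by exploiting the asymmetry between the speeds $N^2$ (from the large deviations principle for the empirical measure) and $N$ (appearing in the denominator of this statement).

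First I would fix $\varepsilon>0$ and apply Proposition \ref{prop: epslion plus grande va} to obtain a constant $C_\varepsilon>0$ together with an integer $N(\varepsilon)$ such that for every $N\ge N(\varepsilon)$,
\begin{equation*}
\mP\left(\lambda_1^N\left(\cfrac{X_N}{\sqrt{N}}\right)<\sqrt{2}-\varepsilon\right)\le \exp\left(-N^2\cfrac{C_\varepsilon}{2}\right).
\end{equation*}
This is the same step already used in the almost sure convergence corollary above: the definition of the $\limsup$ at speed $N^2$ combined with the strict negativity $-C_\varepsilon<0$ gives such a bound for $N$ large.

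Then I would take the logarithm of this inequality and divide by $N$ (instead of $N^2$) to get, for every $N\ge N(\varepsilon)$,
\begin{equation*}
\cfrac{1}{N}\log\mP\left(\lambda_1^N\left(\cfrac{X_N}{\sqrt{N}}\right)<\sqrt{2}-\varepsilon\right)\le -N\cfrac{C_\varepsilon}{2}.
\end{equation*}
Letting $N\to+\infty$ the right-hand side tends to $-\infty$, which yields
\begin{equation*}
\overline{\underset{N\to\infty}\lim}\cfrac{1}{N}\log\mP\left(\lambda_1^N\left(\cfrac{X_N}{\sqrt{N}}\right)<\sqrt{2}-\varepsilon\right)= -\infty,
\end{equation*}
as required.

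There is no real obstacle here; the whole content is the gain of a factor $N$ when passing from the natural large deviations speed $N^2$ (the empirical measure of the $N^2$ entries of a Wigner matrix) to the weaker speed $N$. Morally, the statement records that the event $\{\lambda_1^N(X_N/\sqrt{N})<\sqrt{2}-\varepsilon\}$ is a rare event of ``matrix type'' (decay in $e^{-cN^2}$), whereas a large deviations principle for the extreme eigenvalue alone would only produce decay in $e^{-cN}$; this is consistent with the classical fact that the upper tail of $\lambda_1^N$ lives at speed $N$ while the lower tail lives at speed $N^2$.
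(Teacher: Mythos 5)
Your proof is correct and follows essentially the same route as the paper: apply Proposition \ref{prop: epslion plus grande va} to get a bound $\exp(-N^2 C_\varepsilon/2)$ for $N$ large, then take the logarithm and divide by $N$ to produce a bound of the form $-N C_\varepsilon/2\to -\infty$. Your closing remark on the asymmetry between the $N^2$ speed for the lower tail and the $N$ speed for the upper tail is a nice observation, consistent with the full LDP for $\lambda_1^N$ stated just after this corollary in the paper.
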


\begin{proof}
By Proposition \ref{prop: epslion plus grande va} we have $C_\varepsilon>0$ such that $$\overline{\underset{N\to\infty}\lim}\cfrac{1}{N^2}\log\mP\left(\lambda_1^N\left(\cfrac{X_N}{\sqrt{N}}\right)<\sqrt{2}-\varepsilon\right)\le -C_\varepsilon.$$
So there exists $N(\varepsilon)$ such that for all $N\ge N(\varepsilon)$ $$\cfrac{1}{N^2}\log\mP\left(\lambda_1^N\left(\cfrac{X_N}{\sqrt{N}}\right)<\sqrt{2}-\varepsilon\right)\le -\cfrac{C_\varepsilon}{2}.$$
This gives that for all $N\ge N(\varepsilon)$ $$\cfrac{1}{N}\log\mP\left(\lambda_1^N\left(\cfrac{X_N}{\sqrt{N}}\right)<\sqrt{2}-\varepsilon\right)\le -\cfrac{C_\varepsilon}{2}\,N.$$
Passing to the limit $N\to+\infty$ gives the result.
\end{proof}
Let us mention the complete result for the large deviations of the largest eigenvalue of a real Wigner matrices. We refer to the section 2.6.2 of \cite{Alicelivre} for a proof of this theorem.
\begin{theorem}
For all $N\ge 1$ let $X_N$ be a $N$ real Wigner matrix. Then $(\lambda_1^N\left(\frac{X_N}{\sqrt{N}}\right))_{N\ge 1}$ satisfies a LDP at speed $N$ with good rate function $$\mathcal I(x)=\left\{
    \begin{array}{ll}
        +\infty & \mbox{if } x<\sqrt{2} \\
        \frac{x^2}{2}-\int_\R\log|x-y|d\sigma_1(dy)-\frac{\log(2)+1}{2} & \mbox{if } x\ge \sqrt{2}.
    \end{array}
\right.$$
\end{theorem}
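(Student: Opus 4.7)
The strategy is to extract the marginal density of $\lambda_{\max}(X_N/\sqrt{N})$ from the joint distribution $Q_1^N$ and to perform a Laplace-type asymptotic at speed $N$ (rather than $N^2$, which will be the key feature). By the symmetry of $Q_1^N$ under permutation of eigenvalues, the density of $\lambda_{\max}(X_N/\sqrt N)$ evaluated at $x$ equals
\[
f_N(x) \;=\; \frac{N\,e^{-Nx^2/2}}{Z_1^N}\int_{(-\infty,x]^{N-1}}\!\prod_{j=2}^N|x-\mu_j|\!\!\prod_{2\le i<j\le N}\!\!|\mu_i-\mu_j|\;e^{-\frac{N}{2}\sum_{j=2}^N \mu_j^2}\,d\mu_2\cdots d\mu_N.
\]
Setting $\nu_{N-1}=\frac{1}{N-1}\sum_{j=2}^N\delta_{\mu_j}$, the test-charge factor rewrites as $\exp\bigl((N-1)\!\int\!\log|x-y|\,d\nu_{N-1}(y)\bigr)$, which is of order $N$ in the exponent, while the remaining bulk factors are of order $N^2$. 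Dividing by $Z_1^N$ (also of order $N^2$) will cancel the bulk contribution and leave, at the surviving order $N$, precisely the expression $-Nx^2/2+N\!\int\!\log|x-y|\,d\sigma_1(y)+NC$. This is what makes the natural speed equal to $N$.

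The second step is to reduce the inner integral to a Gibbs expectation against $Q_1^{N-1}$. Up to an affine rescaling $\mu_j=\sqrt{(N-1)/N}\,\tilde\mu_j$ whose Jacobian contribution is $o(N)$ on the exponential scale, the inner integral equals $Z_1^{N-1}\cdot J_N(x)$ with
\[
J_N(x)\;=\;\mathbb E_{Q_1^{N-1}}\!\Bigl[\exp\!\Bigl((N-1)\!\int\!\log|x-y|\,d\nu_{N-1}(y)\Bigr)\mathbf 1_{\mathrm{supp}(\nu_{N-1})\subset(-\infty,x]}\Bigr].
\]
For $x\ge\sqrt 2$, by the Wigner theorem (recovered in Section~\ref{section: large dev} from the bulk LDP), the empirical measure $\nu_{N-1}$ concentrates at $\sigma_1$, whose support $[-\sqrt 2,\sqrt 2]$ is contained in $(-\infty,x]$, so the constraint is asymptotically inactive; moreover the functional $\nu\mapsto\!\int\!\log|x-y|\,d\nu(y)$ is bounded and continuous at $\sigma_1$ because the logarithmic singularity is avoided. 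Combined with the partition-function asymptotic $\log(Z_1^{N-1}/Z_1^N)=-2NF(1)+o(N)$ extracted from Proposition~\ref{prop: large dev constant}, I expect to obtain, locally uniformly on $[\sqrt 2,\infty)$,
\[
\frac{1}{N}\log f_N(x)\;\xrightarrow[N\to\infty]{}\;-\frac{x^2}{2}+\int\log|x-y|\,d\sigma_1(y)+C,
\]
with $C$ pinned down to $\frac{\log 2+1}{2}$ by the compatibility condition $\mathcal I(\sqrt 2)=0$ — which is itself consistent with the almost-sure convergence $\lambda_{\max}\to\sqrt 2$ of Section~\ref{subsection : largest eigenalue} and the explicit value $\int\log|\sqrt 2-y|\,d\sigma_1(y)=\frac{1-\log 2}{2}$ from Proposition~\ref{prop:calcul log mu}.

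The third step is to promote this pointwise asymptotic into a full LDP at speed $N$. For any closed $F\subset\mathbb R$, I would split $F=(F\cap(-\infty,\sqrt 2))\sqcup(F\cap[\sqrt 2,\infty))$. The left piece is controlled by Proposition~\ref{prop: epslion plus grande va}, which gives a bound at speed $N^2$, hence a fortiori at speed $N$, matching $\mathcal I\equiv+\infty$ there. On the right piece, integration of the pointwise upper bound on $f_N$ together with continuity of $\mathcal I$ on $[\sqrt 2,\infty)$ yields the upper bound $\limsup N^{-1}\log\mathbb P(\lambda_{\max}\in F)\le -\inf_F\mathcal I$. The lower bound for an open $O$ follows by localising on a small interval around an almost-minimiser of $\mathcal I|_O$ and applying the matching pointwise lower estimate on $f_N$. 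Exponential tightness at speed $N$ is a standard Chernoff bound $\mathbb P(\lambda_{\max}\ge R)\le e^{-N\theta R}\,\mathbb E\bigl[\mathrm{Tr}\exp(\theta X_N/\sqrt N)\bigr]$, optimised over $\theta>0$, exactly as performed in Section~\ref{section:wignerthm} for the almost-sure upper bound of $\lambda_{\max}$.

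The main obstacle will be the quantitative control of $J_N(x)$: promoting the weak concentration of $\nu_{N-1}$ at $\sigma_1$ to an exponential-scale identity uniform in $x$ on compact subsets of $[\sqrt 2,\infty)$. The clean way to do this is a Varadhan-type lemma applied to the LDP for $Q_1^{N-1}$ at speed $(N-1)^2$ established in Section~\ref{section: large dev}: when $x>\sqrt 2$, the functional $\nu\mapsto\!\int\!\log|x-y|\,d\nu(y)$ is bounded and continuous on the open set $\{\nu:\mathrm{supp}(\nu)\subset(-\infty,x]\}$, which contains $\sigma_1$ in its interior, so Laplace's principle applies and forces $\tfrac{1}{N^2}\log J_N(x)\to 0$ with a correction term of the expected order $N$. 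The boundary case $x=\sqrt 2$ requires a mild additional truncation argument near the logarithmic singularity, handled by continuity from above.
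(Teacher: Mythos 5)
The paper does not actually prove this theorem: it only states it and refers to Section 2.6.2 of Anderson--Guionnet--Zeitouni, so there is no in-paper proof to compare against. Your proposal follows the standard route (tilt by the outlier, integrate out the bulk against a GOE-type $(N-1)$-point density, exploit the speed-$N^2$ LDP of Section~\ref{section: large dev} to identify the order-$N$ exponent, and close the LDP with a separate Chernoff tightness bound). The pinning of the additive constant $C$ by the compatibility condition $\mathcal I(\sqrt 2)=0$ is a legitimate shortcut \emph{provided} you first establish the LDP with some $x$-independent constant, which your sketch does because every term you drop --- Jacobian, Vandermonde rescaling, partition-function ratio --- is indeed constant in $x$. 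That said, two quantitative claims in your sketch are off as stated. First, the Jacobian of $\mu_j=\sqrt{(N-1)/N}\,\widetilde\mu_j$ on the factor $\prod_{2\le i<j\le N}|\mu_i-\mu_j|$ is \emph{not} $o(N)$ on the exponential scale: the Vandermonde is homogeneous of degree $\binom{N-1}{2}\sim N^2/2$, so the rescaling contributes $\frac{N^2}{4}\log\frac{N-1}{N}\sim -\frac{N}{4}$, which is exactly of order $N$. It is $x$-independent, so it is harmless to your final answer once absorbed into $C$, but a careful proof must track it rather than dismiss it. Second, $\log(Z_1^{N-1}/Z_1^N)=-2NF(1)+o(N)$ does not follow from Proposition~\ref{prop: large dev constant} alone: that proposition only gives $N^{-2}\log Z_1^N\to F(1)$ and says nothing about the sub-$N^2$ correction, whose increment between consecutive $N$ could a priori be of any sub-quadratic order. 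To get the order-$N$ increment you must use the exact Selberg identity of Lemma~\ref{lemma: selberg} (from which the claim does follow by Stirling). Finally, the ``Varadhan-type'' step deserves more care than the sketch suggests: $\nu\mapsto\int\log|x-y|\,d\nu(y)$ is unbounded above on $\{\mathrm{supp}\,\nu\subset(-\infty,x]\}$ as mass escapes to $-\infty$, so the upper bound on $J_N(x)$ requires a truncation at $-\infty$ controlled by the exponential tightness of the empirical measure, in addition to the truncation you mention at the logarithmic singularity near $y=x$; this is precisely the technical core of the argument in AGZ Theorem~2.6.6.
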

The function $\mathcal I$ is a good rate function since it is increasing on $[\sqrt{2},+\infty)$, strictly convex on $[\sqrt{2},+\infty)$, continuous on $[\sqrt{2},+\infty)$ (by the dominated convergence theorem on $]\sqrt{2},+\infty)$ and by the monotone convergence for the right continuity in $\sqrt{2}$) and goes to $+\infty$ when $x$ goes to $+\infty$.
\newline
Moreover, $I(x)\ge0$ by the Frostman theorem \ref{thm: frostman} with an equality if and only if $x=\sqrt{2}$. Hence $I$ has a unique minimizer which is $\sqrt{2}$. Using Proposition \ref{prop:almostsureconvlargedev} the previous theorem implies the following result. 
\begin{theorem}
For all $N\ge 1$ let $X_N$ be a $N$ real Wigner matrix. Then $$\lambda_1^N\left(\frac{X_N}{\sqrt{N}}\right)\overunderset{\emph{Almost surely}}{N\to+\infty}{\longrightarrow}\sqrt{2}.$$
\end{theorem}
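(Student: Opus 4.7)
The plan is to deduce this almost sure convergence directly from the large deviations principle stated in the previous theorem by applying Proposition \ref{prop:almostsureconvlargedev}. The latter requires two things: that the rate function $\mathcal I$ be a genuine good rate function, and that it admit a unique minimiser. Once this is checked, the conclusion is automatic because Proposition \ref{prop:almostsureconvlargedev} transforms any LDP at speed $N^s$ with a uniquely minimised good rate function into an almost sure convergence to that minimiser.

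The first step is to record the properties of $\mathcal I$ already noted between the two theorems. On $(-\infty,\sqrt{2})$ the function $\mathcal I$ equals $+\infty$, while on $[\sqrt{2},+\infty)$ it is finite, continuous (the logarithmic integral $x\mapsto\int_\R\log|x-y|d\sigma_1(y)$ is continuous on $[\sqrt{2},+\infty)$ by dominated convergence above $\sqrt{2}$ and by monotone convergence at $\sqrt{2}$), strictly convex and tends to $+\infty$ as $x\to+\infty$. These facts immediately imply that $\mathcal I$ is lower semi-continuous and that its sub-level sets $\{\mathcal I\le M\}$ are closed and bounded, hence compact; therefore $\mathcal I$ is a good rate function.

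The second step is to identify the minimiser. By the Frostman characterisation in Theorem \ref{thm: frostman} one has $\mathcal I(x)\ge 0$ for all $x\ge\sqrt{2}$. At the boundary point $x=\sqrt{2}$, Proposition \ref{prop:calcul log mu} gives
\begin{equation*}
\int_\R\log|\sqrt{2}-y|\,d\sigma_1(y)=\cfrac{2-\log 2-1}{2}=\cfrac{1-\log 2}{2},
\end{equation*}
whence $\mathcal I(\sqrt{2})=\frac{2}{2}-\frac{1-\log 2}{2}-\frac{\log 2+1}{2}=0$. Strict convexity on $[\sqrt{2},+\infty)$ then forces $\mathcal I(x)>0$ for every $x>\sqrt{2}$, so that $\sqrt{2}$ is the unique point at which $\mathcal I$ vanishes, and a fortiori the unique minimiser on $\R$.

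The final step is a direct invocation of Proposition \ref{prop:almostsureconvlargedev}: the sequence of laws of $\lambda_1^N(X_N/\sqrt{N})$ satisfies a LDP at speed $N$ with good rate function $\mathcal I$ admitting $\sqrt{2}$ as its unique minimiser, hence $\lambda_1^N(X_N/\sqrt{N})\to\sqrt{2}$ almost surely. There is no real obstacle here; the only point that requires a brief verification, rather than a citation, is the strict positivity of $\mathcal I$ on $(\sqrt{2},+\infty)$, which I would handle through the strict convexity combined with $\mathcal I(\sqrt{2})=0$ and $\mathcal I(x)\to+\infty$, ruling out any second zero.
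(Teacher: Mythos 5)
Your proposal is correct and follows essentially the same route as the paper: the paper, in the paragraph preceding the theorem, also checks that $\mathcal I$ is a good rate function with $\sqrt{2}$ as its unique minimiser and then invokes Proposition~\ref{prop:almostsureconvlargedev}. The only difference is cosmetic: you pin down $\mathcal I(\sqrt{2})=0$ explicitly from Proposition~\ref{prop:calcul log mu} and rule out a second zero via strict convexity, while the paper instead leans on the monotonicity of $\mathcal I$ on $[\sqrt{2},+\infty)$ together with the Frostman equality on the support; both are valid and equally short.
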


\subsection{Extensions of the result}
\label{subsection: extenson}
\subsubsection{Coulomb and Riesz gases}
Let $N\ge 1$ and let $X_N$ be the $N$-tuple of points $(x_1,...,x_N)\in (\R^d)^N$, where $d$ is the dimension. We define the following energies $\mathcal H_N\,:\, (\R^d)^N\to (-\infty,+\infty)$ by :
\begin{equation}
\label{def: energy}
\mathcal H_N(X_N):=\cfrac{1}{2}\sum_{1\le i\ne j\le N} g(x_i-x_j)+N\sum_{i=1}^N V(x_i),
\end{equation}
where $g\,:\, \R^d\to(-\infty,+\infty]$ is called the pair interaction potential and $V\,:\,\R^d\to(-\infty,+\infty]$ is called the external field.
\newline
We focus on the cases where $g$ is given by 
\begin{equation}
\label{eq: coulom riesz interaction}
g_{d,s}(x)=\left\{
	\begin{array}{ll}
		\cfrac{1}{s}\,|x|^{-s} & s\ne 0\\
		-\log|x| & s=0,\\
	\end{array}
\right.
\end{equation}
The first case is called the Riesz case and the second one the logarithmic case.
\newline
The case $s=0$ is obtained as the formal limit when $s\to 0$ and so we will name it as a Riesz case also.
\newline
The particular case $s=d-2$ corresponds to the Coulomb case. Indeed, in this case, $g$ is the Coulomb kernel (up to a multiplicative constant); the fundamental solution to The Laplace-Poisson equation
\begin{equation}
\label{eq: laplace poisson eq}
-\Delta g_{d,d-2}=c_d\delta_0,
\end{equation}
where $\delta_0$ is the Dirac mass at the point $0$ and $c_d$ is explicitly given by 
\begin{equation}
\label{eq: constant norm laplace poisson}
c_d=\left\{
	\begin{array}{ll}
		2\pi\ & d=2\\
		 |\mathbb S^{d-1}|& d\ge 3\\
	\end{array}
\right.
\end{equation}
with $\mathbb S^{d-1}$ the unit sphere of $\R^d$.
For instance in dimension $3$, with $s=1$ $g$ corresponds to the gravitational potential. 
\newline
More generally, the cases $d-2\le s<d$ are much more well known. Indeed for $s\ge d$, the potential $g$ is not integrable near $0$ what requires a different study. Moreover, as for the case $s=d-2$, in the cases $d-2\le s<d$, $g_{d,s}$ is the fundamental solution of an operator (up to a multiplicative constant) called the fractional Laplacian: \begin{equation}
\label{eq: laplace eq}
(-\Delta)^{\frac{d-s}{2}} g_{d,s}=c_{d,s}\delta_0,
\end{equation}
where $c_{d,s}$ is explicitly given by 
\begin{equation}
\label{eq: constant norm}
c_{d,s}=\left\{
	\begin{array}{lll}
		2\pi & s=0,\, d=1,\,d=2\\
		 |\mathbb S^{d-1}|& s=d-2>0\\
		 \cfrac{2^{d-s}\pi^{d/2}\Gamma\left(\cfrac{d-s}{2}\right)}{\Gamma\left(\cfrac{s}{2}\right)}& s>\max(0,d-2)
	\end{array}
\right.
\end{equation}
The main difference between the fractional Laplacian and the Laplacian is the fact that the fractional Laplacian is a non local operator. Many equivalent definitions exist but we can for instance define this operator using Fourier representation by $$\mathcal F((-\Delta)^\alpha f)(\xi)=|\xi|^{2\alpha}\mathcal F(f),\, \alpha\in (0,1),$$ or by an integral representation $$(-\Delta)^\alpha f(x)=C_{d,\alpha}\int_{\R^d}\cfrac{f(x)-f(y)}{|x-y|^{d+2\alpha}}dy ,\, \alpha\in (0,1), $$ with $C_{d,\alpha}>0$ an explicit constant. 
From the Fourier definition of the fractional Laplacian, we deduce that the Fourier transform of $g_{d,s}$ is proportional to $\xi\mapsto |\xi|^{s-d}$ making link with Remark \ref{remark : fourier repr}.
\newline
We can associate to the energy $H_N$ the Gibbs measure  which is a probability measure on $(\R^d)^N$ whose density with respect to the Lebesgue measure on $(\R^d)^N$, $dX_N:=dx_1...dx_N$ is 
\begin{equation}
\label{eq: gibbs measue}
d\mP_{N,\beta}(X_N):=\cfrac{1}{Z_{N,\beta}}\exp(-\beta N^{-s/d} H_N(X_N))dX_N,
\end{equation}
where $\beta$ physically corresponds to the inverse of the temperature and $Z_{N,\beta}$ is a normalizing constant. 
\newline
The particular case $s=0$ and $d=1$ corresponds to the case of $\beta$ Hermite ensemble studied in this part. Thanks to a similar approach, one can prove a large deviations principle for the sequence of probability measures $(P_{N,\beta})_{N\in\N}$ extended on $\mes$ as in the Definition \ref{def:extend} at speed $\beta N^{2-\frac{s}{d}}$. See chapter 2 and 3 of \cite{serfaty2024lectures} for the complete and rigorous derivation of this large deviations principle. 
\newline
Let us mention the particular Coulomb case $d=2$, $s=0$ with the potential $V(z)=|z|^2/2$ with $\beta=2$. In this case, 
\begin{align*}
d\mP_{N,2}(z_1,...,z_N)&=\cfrac{1}{Z_{N,2}}\exp\left(2\sum_{i<j}\log|z_i-z_j|-N\sum_{1\le i\le N}|z_i|^2\right)dz_1...dz_N\\
&=\cfrac{1}{Z_{N,2}}\prod_{i<j}|z_i-z_j|^2\,\exp\left(-N\sum_{1\le i\le N}|z_i|^2\right)dz_1...dz_N.
\end{align*}
We recognize the density of the eigenvalues of $M_N/\sqrt{N}$ where $M_N$ is a $N$ complex Ginibre matrix obtained in Theorem \ref{thm : law spectrum ginibre}.
As explained $\mP_{N,2}$ satisfies a large deviations principle at speed $2N^2$. The counterpart of $H_\beta$ that was studied in this section is $$H_\C(\mu):=-\int_{\C^2}\log|z-z'|\mu(dz)\mu(dz')+\cfrac{1}{2}\,\int_\C |z|^2 \mu(dz).$$
By similar arguments as for the $\beta$ ensembles case studied in this section, one can prove that the good rate function for the large deviations of $\mP_{N,2}$ is $I_\C:=H_\C-\inf_{\mu\in\mathcal P(\C)}H_\C(\mu)$.
We can show that the unique minimizer of $I_\C$ is the uniform distribution on the circle. Indeed, the exact same proof as we did for the Frostman theorem \ref{thm: frostman} implies that the unique minimizer of $I_\C$ satisfies the counterpart of \eqref{thm: frost 1} and \eqref{thm: frost 2}. 
Let $\mu_\C$ be the unique minimizer of $I_\C$ and $S_{\mu_\C}$ its support. We obtained that $z\mapsto -\int_\C\log|z-z'|\mu_\C(dz')+\cfrac{|z|^2}{2}$ is constant on $S_{\mu_\C}$. Since $\Delta(\log(.))=2\pi \delta_0(dz)$, computing the Laplacian yields that 
\begin{equation}\mu_\C(dz)=\cfrac{\Delta |z|^2}{4\pi}\,\mathds 1_{S_{\mu_\C}}(z)dz=\cfrac{1}{\pi}\,\mathds 1_{S_{\mu_\C}}(z)dz.
\label{eq:frostcomp}
\end{equation}
By uniqueness of the minimizer, one can show that $\mu_\C$ is invariant by rotation, so its support is a ball of radius $r>0$. Using \eqref{eq:frostcomp}, the unique possibility such that $\mu_\C$ is a probability measure is $r=1$. So $\mu_\C$ is the uniform measure of the disk, giving another proof of the circular law proved in Section \ref{Section:circularlaw}.
\newline
For people interested in this topic we recommend the paper of Chafaï $\&$ al. \cite{chafai2014first} for a very rigorous derivation of a large derivation principles for particles in singular repulsion and a study of the equilibrium measure for radial external potential and the review of Serfaty \cite{serfaty2024lectures} for a more recent overview on this topic. 
\subsubsection{Extension to other random matrices models and large deviations for the largest and smallest eigenvalue}
After the pionner paper of A.Guionnet and G.Ben Arous \cite{Alice1}, the large deviations principle of the empirical mean has been studied for many other classical models of random matrices. For instance a large deviations principle has been obtained for the circular model \cite{arous1998large} and for the unitary one \cite{hiai2000large}.
\newline 
More generally a goal would be to obtain a large deviations principle for random matrices even if we do not know  the density of the distribution of eigenvalues of the model (or even if this density does not exist). An interesting example would be random matrices with independent and identically distributed entries that have Bernoulli or Rademacher distributions. These questions are linked with random graph theory since we can in the Bernoulli case interpret the random matrix as the adjacent matrix of an Erdős–Rényi random graph \cite{augeri2025large}. Recently many works for the Bernoulli and Rademacher cases have been been done to study the large deviations of the largest and smallest eigenvalue of such matrices.
Indeed, as explained in Section \ref{subsection : largest eigenalue}, the behaviour of the largest eigenvalues seems to also obey to a large deviations principle. Indeed for random matrices whose law of eigenvalues are given by $Q^N_\beta$, the largest eigenvalue satisfies a large deviations principle at speed $N$ with an explicit good rate function. See the section 2.6.2 of \cite{Alicelivre} for a complete proof. 
For the study of Rademacher and Bernoulli cases, let us mention the recent works of F.Augeri, A.Guionnet, J.Husson \cite{Alice2,Alice3}.

\newpage
\appendix
\appendixpage
\addappheadtotoc

\section{The Wigner semicircle distribution}
\label{appendixsection}
The goal of this section is to introduce some standard tools that are used in random matrix theory/probability and to present the ideas of how they are used to prove the Wigner theorem. The goal is not to make the proofs of the Wigner theorem but to present these tools for readers that are non familiar with random matrices. For more detailed introductions to random matrices we refer to \cite{Alicelivre,Tao}. 
\subsection{Moments}
\begin{definition}
The Wigner semicircle distribution is the probability distribution on $\mathbb R$ with density function $$\sigma(x):=\mathds{1}_{[-2,2]}(x)\,\cfrac{\sqrt{4-x^2}}{2\pi}.$$
We shall write $\sigma$ for the probability measure and its density. In this section, for all $k\in\mathbb N$ let $m_k:=\dis\int_\R x^kd\sigma(x)$ its moment of order $k$.
\end{definition}

\begin{definition}
We say that a probability measure $\mu$ is characterized by its moments if for all probability measures $\nu$ such that for all $k\in \N$, $\dis \int_\R x^k d\mu(x)=\dis\int_\R x^kd\nu(x)$ we have $\mu=\nu$.
\newline
We say that a real random variable $X$ is characterized by its moments if its law is a probability measure characterized by its moments.
\end{definition}

Let us recall that compact supported measures are determined by their moments.

\begin{prop}
A probability measure $\mu$ on $\mathbb R$ with a compact support is characterized by its moments.
\end{prop}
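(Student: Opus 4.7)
The plan is to combine a Markov-type tail estimate (to force the candidate measure onto the same compact support as $\mu$) with the Stone--Weierstrass theorem (to leverage the fact that polynomials are dense in continuous functions on a compact interval).

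First I would fix $R>0$ such that $\SUPP(\mu)\subset[-R,R]$, and let $\nu$ be any probability measure on $\R$ with $\int x^k d\nu=\int x^k d\mu$ for all $k\in\N$. The first step is to show that $\nu$ is also supported in $[-R,R]$. For any $R'>R$ and any $k\in\N$, Markov's inequality applied to the even moment $x^{2k}$ gives
\begin{equation*}
\nu(\{|x|>R'\})\le\cfrac{1}{(R')^{2k}}\int_\R x^{2k}d\nu=\cfrac{1}{(R')^{2k}}\int_\R x^{2k}d\mu\le\cfrac{R^{2k}}{(R')^{2k}}.
\end{equation*}
Letting $k\to+\infty$, the right-hand side tends to $0$ since $R/R'<1$, so $\nu(\{|x|>R'\})=0$ for every $R'>R$, and hence $\SUPP(\nu)\subset[-R,R]$.

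Next, using the Stone--Weierstrass theorem on the compact interval $[-R,R]$, for any continuous function $f:[-R,R]\to\R$ I can find a sequence of polynomials $(P_n)_{n\ge 0}$ converging uniformly to $f$ on $[-R,R]$. By the equality of moments, $\int P_n d\mu=\int P_n d\nu$ for every $n$, and uniform convergence on the common compact support allows passing to the limit:
\begin{equation*}
\int_\R fd\mu=\lim_{n\to+\infty}\int_\R P_n d\mu=\lim_{n\to+\infty}\int_\R P_n d\nu=\int_\R f d\nu.
\end{equation*}
Since this holds for all $f\in C([-R,R],\R)$, and both measures are supported in $[-R,R]$, we conclude that $\mu=\nu$.

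There is no genuine obstacle here: the proof is essentially a two-line argument once one notices that finiteness (and equality) of all moments of a compactly supported $\mu$ forces the competitor $\nu$ to live on the same compact set via an even-moment Markov bound. The only point requiring a small amount of care is precisely this step, since a priori $\nu$ need not be compactly supported; the Stone--Weierstrass part is then immediate.
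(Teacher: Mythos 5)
Your proof is correct and takes a genuinely different route from the paper. The paper works with characteristic functions: it expands $\phi_\mu(t)=\sum_{k\ge 0}\frac{\mu_k}{k!}\,i^k t^k$ using the bound $|\mu_k|\le R^k$ to justify exchanging sum and integral, and then appeals to the fact that characteristic functions characterize probability measures. You instead first use the even-moment Markov/Chebyshev bound to force any competitor $\nu$ with the same moments onto the same compact $[-R,R]$, and then apply Stone--Weierstrass so that matching all polynomial integrals pins down all integrals against continuous functions. Your approach is more elementary (no Fourier analysis), and it also makes a point the paper leaves implicit: the paper's exchange of sum and integral needs to be applied to $\nu$ as well as to $\mu$, and for this one needs a moment bound for $\nu$ — your Markov step supplies exactly that information (one could also derive $\int|x|^k d\nu\le R^k$ directly from the even moments via Cauchy--Schwarz). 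The paper's route has the advantage of being a one-line consequence once the power-series representation of $\phi_\mu$ is noted, and it connects naturally to the Lévy continuity theorem used elsewhere in the text. Both proofs are valid.
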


\begin{proof}
A real probability measure $\mu$ is characterized by its characteristic function $$\phi_\mu(t)=\dis\int_\R\exp(itx)d\mu(x).$$
Fix $R>0$ such that $supp(\mu)\subset[-R,R]$. Then its moments $\mu_k:=\dis\int_{-R}^R x^kd\mu(x)$ satisfy that 
\begin{equation}
\label{annexe bound moment}
|\mu_k|\le R^k.
\end{equation}
\newline
Hence, using the Taylor expansion of the exponential function: $\exp(itx)=\dis\sum_{k=0}^{\infty}\cfrac{(itx)^k}{k!}$ and changing the integral and the sum (which is justified by the bound \eqref{annexe bound moment}), we get that $$\forall t\in\mathbb R,\, \phi_\mu(t)=\displaystyle\sum_{k=0}^{+\infty}\cfrac{\mu_k}{k!}\,i^kt^k.$$
So if a probability measure has the same moments as $\mu$, it necessarily has the same characteristic function and so it is equal to $\mu$.
\end{proof}
We first recall some basic definitions about the convergence of probability measures.
\begin{definition}
Let $\mathbb X$ be a separable Banach space. We endow the set of probability measures on $\mathbb X$; $\mathcal P(\mathbb X)$; with the weak topology which means that a sequence of probability measures $(\mu_n)\in\mathcal P(\mathbb X)^\N$ converges weakly to $\mu\in\mathcal P(\mathbb X)$ if and only if for all $f: \mathbb X\to \R$ continuous and bounded, we have $\int_\mathbb X fd\mu_n\underset{n\to+\infty}{\longrightarrow}\int_\mathbb X fd\mu$.
\newline
When we deal with the law of random variables we shall identify a random variable with its law. For random variables, we said that a sequence of random variables $(X_n)_{n\in\N}$ with values in $\mathbb X$ converges in law towards $X$ if $(\mu_n)_{n\in\N}$ the sequence of law of $(X_n)_{n\in\N}$ converges weakly towards the law of $X$.
\newline
If $(\mu_n)_{n\in\N}$ (resp. $(X_n)_{n\in\N}$) converges weakly (resp. in law) to $\mu$ (resp. $X$), we write $\mu_n\convlaw \mu$ (resp. $X_n \convlaw X$).
\end{definition}

Now, let us recall a result about convergence in distribution of random variables that is called method of moments which is based on Prokhorov's theorem. We first recall the definition of tightness. 

\begin{definition}
Let $\mathbb X$ be a separable Banach space. A sequence of measure $(\mu_n)_{n\in\N}\in\mathcal P(\mathbb X)^\N$ is said to be tight if for all $\varepsilon>0$, there exists a compact $K_\varepsilon\subset \mathbb X$ such that for all $n\in\N$ $\mu_n(K_\varepsilon)>1-\varepsilon$.
\end{definition}

\begin{example}
\label{annexe example 1}
A sequence of real random variables $(X_n)_{n\in\N}$ such that there exists $k\in\N_{\ge 1}$ and $M\ge 0$ such that for all $n\ge 0$, $\E(|X_n|^k)\le M$ is tight by the Markov inequality.
\end{example}
We can now state the main theorem about the convergence in law of random variable (equivalently we could have stated it for the weak convergence of probability measures). We refer to the Chapter 1 of \cite{billingsley2013convergence} for a proof of this standard result.
\begin{theorem}[Prokhorov]
Let $(X_n)_{n\in \N}$ and $X$ be random variables with values in separable Banach metric space.
\newline
Then $(X_n)_{n\in\N}$ is relatively compact for the topology of the convergence in law if and only if $(X_n)_{n\in\N}$ is tight. 
\newline
Furthermore $X_n\convlaw X$ if and only if $(X_n)_{n\in \N}$ is tight and for all subsequences $\phi$ such that $X_{\phi(n)}\convlaw Y$ where $Y$ is a random variable then $X$ and $Y$ have the same law (this second point shall be called uniqueness of the limit).
\end{theorem}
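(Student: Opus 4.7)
The plan is to prove the two halves of the theorem separately, treating the direction ``tightness $\Rightarrow$ relative compactness'' as the technical heart of the argument and the remaining implications as soft consequences. Throughout, the separability of $\mathbb X$ (and hence the existence of a compatible complete metric, since a separable Banach space is Polish) is what makes the argument go.

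For the direction tightness $\Rightarrow$ relative compactness, my plan is to exhibit, from a tight sequence $(\mu_n)_{n\ge 0}$, a subsequential weak limit $\mu\in \mathcal P(\mathbb X)$. Pick compacts $K_m\subset \mathbb X$ with $\mu_n(K_m)\ge 1-1/m$ for all $n$. Since each $K_m$ is a compact metric space, $C(K_m)$ is separable, and by Banach--Alaoglu together with metrizability of the unit ball of $C(K_m)^*$ in the weak-$*$ topology, the restrictions $(\mu_n|_{K_m})_{n\ge 0}$ are sequentially weak-$*$ relatively compact. A diagonal extraction then produces a subsequence $\phi(n)$ such that $\mu_{\phi(n)}|_{K_m}$ converges weak-$*$ to some positive measure $\nu_m$ on $K_m$ for every $m$. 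The key check is that the $\nu_m$ are compatible (i.e.\ $\nu_m=\nu_{m'}|_{K_m}$ when $K_m\subset K_{m'}$), and that their increasing union $\mu$ is a probability measure: compatibility comes from uniqueness of weak-$*$ limits, and the total mass $1$ comes exactly from tightness ($\mu(\mathbb X)\ge \mu(K_m)\ge 1-1/m$ for all $m$). Finally, one promotes convergence from $C_c$-like test functions to arbitrary $f\in C_b(\mathbb X,\R)$ by splitting $\int f\,d\mu_{\phi(n)}=\int_{K_m}f\,d\mu_{\phi(n)}+\int_{K_m^c}f\,d\mu_{\phi(n)}$, bounding the tail by $\|f\|_\infty/m$ using tightness, and letting $m\to\infty$.

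For the converse direction relative compactness $\Rightarrow$ tightness, I would use that on a Polish space every Borel probability measure is itself tight (inner regularity by compact sets), combined with a contradiction argument: if $(\mu_n)$ were not tight, some $\varepsilon>0$ would witness it, and one could build a subsequence along which no compact carries mass $\ge 1-\varepsilon$; extracting a weak limit $\mu$ from relative compactness and choosing a compact $K^\star$ with $\mu(K^\star)>1-\varepsilon/2$, the Portmanteau-type lower bound $\liminf \mu_{\phi(n)}(K^\star_{\delta})\ge \mu(K^\star)$ applied to an $\delta$-fattening of $K^\star$ (whose closure is still compact by completeness, using total-boundedness) yields the contradiction. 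This is where separability and completeness are used implicitly.

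For the ``furthermore'' part: if $X_n\convlaw X$, then $(X_n)$ is relatively compact hence tight by the first part, and any subsequential limit equals $X$ in law by uniqueness of limits in the Hausdorff topology of weak convergence. Conversely, assume $(X_n)$ is tight and that every weakly convergent subsequence has the same limit law as $X$. If $X_n\not\convlaw X$, negating the definition yields $f\in C_b(\mathbb X,\R)$, $\varepsilon>0$, and a subsequence $\phi(n)$ with $|\E f(X_{\phi(n)})-\E f(X)|>\varepsilon$. Tightness applied to $(X_{\phi(n)})$ gives, by the first part, a further subsequence converging in law to some $Y$, whose law equals that of $X$ by hypothesis, contradicting the strict inequality. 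The main obstacle in the whole argument is the construction step: getting from the family of partial weak-$*$ limits $\nu_m$ on each $K_m$ to a genuine probability measure $\mu$ on $\mathbb X$ and verifying that $\mu_{\phi(n)}\convlaw \mu$ globally; all the use of separability, completeness, and tightness is concentrated there.
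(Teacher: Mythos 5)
The paper states this theorem without proof, referring the reader to Chapter~1 of Billingsley's \emph{Convergence of Probability Measures}, so there is no in-text argument to compare yours against. Judged on its own, your outline has the right global structure (Banach--Alaoglu plus diagonal extraction for the hard direction, soft subsequence arguments for the ``furthermore''), but two steps, which you yourself identify as the technical heart, do not go through as written.

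First, in the direction ``tightness $\Rightarrow$ relative compactness'', the claim that compatibility of the $\nu_m$ ``comes from uniqueness of weak-$*$ limits'' is a genuine gap. Restriction of measures to a closed subset is \emph{not} weak-$*$ continuous: from $\mu_{\phi(n)}\vert_{K_{m'}}\to\nu_{m'}$ weak-$*$ on $C(K_{m'})^*$ you cannot conclude $\mu_{\phi(n)}\vert_{K_m}\to\nu_{m'}\vert_{K_m}$ weak-$*$ on $C(K_m)^*$, because a test function $f\in C(K_m)$ must be extended by Tietze to some $\tilde f\in C(K_{m'})$, and the convergence $\int_{K_{m'}}\tilde f\,d\mu_{\phi(n)}\to\int_{K_{m'}}\tilde f\,d\nu_{m'}$ also picks up the $\mu_{\phi(n)}$-mass sitting in $K_{m'}\setminus K_m$, which is not controlled. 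The usual way around this is not to patch the $\nu_m$ together at all, but to embed $\mathbb{X}$ homeomorphically into the Hilbert cube $[0,1]^{\N}$ (possible since $\mathbb{X}$ is Polish), extract a single weak-$*$ limit $\mu$ there (automatic, since $[0,1]^{\N}$ is compact metric), and then use tightness together with the Portmanteau inequality for closed sets, $\mu(K_m)\ge\limsup_n\mu_{\phi(n)}(K_m)\ge 1-1/m$, to conclude that $\mu$ is supported on the image of $\mathbb{X}$; alternatively one builds $\mu$ directly as an outer measure $\mu^*(A)=\sup_m\nu_m(A\cap K_m)$ and verifies countable additivity by hand.

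Second, in the direction ``relative compactness $\Rightarrow$ tightness'', the parenthetical assertion that the $\delta$-fattening $K^\star_\delta$ of a compact set has compact closure ``by completeness, using total-boundedness'' is false in infinite dimensions. In a Banach space $K^\star_\delta=K^\star+\overline B(0,\delta)$, and $\overline B(0,\delta)$ is totally bounded only when the space is finite-dimensional (Riesz's lemma). So the Portmanteau lower bound $\liminf_n\mu_{\phi(n)}(K^\star_\delta)\ge\mu(K^\star)$ does not yield a compact set carrying uniform mass, and no contradiction follows. The standard proof of this implication instead covers $\mathbb{X}$ by countably many balls of radius $1/k$ (using separability), shows by a contradiction-plus-subsequence argument that for every $k$ some \emph{finite} union $A_k$ of such balls satisfies $\inf_n\mu_n(A_k)>1-\varepsilon 2^{-k}$, and takes $K=\bigcap_k\overline{A_k}$, which is totally bounded and closed, hence compact by completeness. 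The ``furthermore'' paragraph of your proposal is fine; it is the two hard implications that need the repairs above.
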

As an application of the Prokhorov theorem, let us prove the so-called method of moments
\begin{prop}[Method of moments]
\label{prop: method of moments}
Let $(X_n)_{n\in \N}$ be a sequence of real random variables and $X$ be a real random variable characterized by its moments. We assume that for all $k\in \N$ $\conv\E(X_n^k)=\E(X^k)$. 
\newline
Then $X_n \convlaw X$.
\end{prop}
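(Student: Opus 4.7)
The plan is to apply Prokhorov's theorem directly: establish tightness of $(X_n)_{n\in\N}$ first, then identify the limit of every convergent subsequence as the law of $X$ using the assumption that $X$ is characterized by its moments.

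First, I would deduce tightness. Since $\E(X_n^2)\to\E(X^2)$, the sequence $\big(\E(X_n^2)\big)_{n\ge 0}$ is bounded by some constant $M$, and Example \ref{annexe example 1} (or a direct Markov argument: $\mP(|X_n|>K)\le M/K^2$) then shows that $(X_n)_{n\in\N}$ is tight. By Prokhorov's theorem, $(X_n)_{n\in\N}$ is relatively compact for the convergence in law.

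Next comes what I expect to be the main obstacle: showing that every subsequential limit has the same moments as $X$. Convergence in law does \emph{not} in general imply convergence of moments, so a uniform integrability argument is needed. Suppose $X_{\phi(n)}\convlaw Y$ along some subsequence $\phi$. I would fix $k\in\N$ and observe that, by assumption, $\E(X_{\phi(n)}^{2k})$ converges to the finite limit $\E(X^{2k})$, so the family $\big(X_{\phi(n)}^{2k}\big)_{n\ge 0}$ is bounded in $L^1$. This provides the uniform integrability of $\big(X_{\phi(n)}^{k}\big)_{n\ge 0}$: indeed, for any $K>0$,
\begin{equation*}
\E\big(|X_{\phi(n)}|^{k}\mathds{1}_{|X_{\phi(n)}|^k\ge K}\big)\le \frac{1}{K}\,\E\big(X_{\phi(n)}^{2k}\big),
\end{equation*}
which can be made uniformly small by choosing $K$ large. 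Combined with the continuous mapping theorem (so that $X_{\phi(n)}^k\convlaw Y^k$), uniform integrability yields $\E(X_{\phi(n)}^k)\to\E(Y^k)$. Compared with the hypothesis $\E(X_{\phi(n)}^k)\to\E(X^k)$, we obtain $\E(Y^k)=\E(X^k)$ for every $k\in\N$.

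Finally, since $X$ is characterized by its moments, the equality of all moments forces $Y$ to have the same law as $X$. Hence every subsequential limit in law of $(X_n)_{n\in\N}$ coincides with the law of $X$, and the uniqueness-of-the-limit part of Prokhorov's theorem concludes that $X_n\convlaw X$. The only non-routine ingredient is the uniform integrability step, which is why I would treat it as the heart of the argument; everything else is a direct application of the results already recorded in the appendix.
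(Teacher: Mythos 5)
Your proposal follows essentially the same route as the paper: tightness via the bounded second moments and Prokhorov, then identifying any subsequential limit by upgrading convergence in law of $X_{\phi(n)}^k$ to convergence of $\E(X_{\phi(n)}^k)$ through uniform integrability. The only cosmetic difference is that you unfold the uniform integrability estimate $\E\big(|X_{\phi(n)}|^{k}\mathds{1}_{|X_{\phi(n)}|^k\ge K}\big)\le K^{-1}\E(X_{\phi(n)}^{2k})$ explicitly, whereas the paper phrases it as ``bounded in $L^2$, hence uniformly integrable'' and isolates the UI-plus-convergence-in-law step as a separate lemma proved via Skorokhod's representation.
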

First let us prove a lemma that shall give the uniqueness of the limit in order to apply the Prokhorov theorem.
 
\begin{lemma}
\label{lemma: method of moment}
Let $(X_n)_{n\in N}$ and $X$ be real random variables such that $(X_n)_{n\in\N}$ is uniformly integrable and $X_n\convlaw X$, then $\E(X_n)\convf \E(X)$.
\end{lemma}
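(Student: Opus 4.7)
The plan is to deduce convergence of expectations from convergence in law via a truncation argument, using uniform integrability to control the tails uniformly in $n$.

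First I would introduce the continuous bounded truncation $\psi_M : \R \to \R$ defined by $\psi_M(x) = \max(-M, \min(M,x))$, which satisfies $|x - \psi_M(x)| = (|x|-M)^+ \le |x|\,\mathds{1}_{|x|>M}$ and is Lipschitz and bounded. The point is that $\psi_M$ is continuous and bounded, so that convergence in law of $X_n$ to $X$ directly gives $\E(\psi_M(X_n)) \to \E(\psi_M(X))$ as $n \to +\infty$, for every fixed $M > 0$.

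Next I would decompose via the triangle inequality:
\begin{equation*}
|\E(X_n) - \E(X)| \le \E(|X_n - \psi_M(X_n)|) + |\E(\psi_M(X_n)) - \E(\psi_M(X))| + \E(|X - \psi_M(X)|).
\end{equation*}
Given $\varepsilon > 0$, the uniform integrability hypothesis yields $M_0$ such that $\sup_{n} \E(|X_n|\mathds{1}_{|X_n|>M}) < \varepsilon$ for all $M \ge M_0$, which bounds the first term by $\varepsilon$ uniformly in $n$. For the third term I would first note that uniform integrability implies $\sup_n \E(|X_n|) < +\infty$, and then apply Fatou's lemma (using the Skorokhod representation, or more elementarily by testing against $\min(|x|,K)$ for continuous bounded $K$-truncation and letting $K \to \infty$) to conclude that $\E(|X|) < +\infty$; dominated convergence then gives $\E(|X|\mathds{1}_{|X|>M}) \to 0$ as $M \to \infty$, so the third term can also be made smaller than $\varepsilon$ for $M$ large. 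Finally, with $M$ fixed large enough, convergence in law handles the middle term, yielding $\limsup_n |\E(X_n) - \E(X)| \le 2\varepsilon$.

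The only mildly delicate step is showing $\E(|X|) < +\infty$ and that the tail $\E(|X|\mathds{1}_{|X|>M})$ vanishes, since we cannot directly test the (unbounded) function $x \mapsto |x|\mathds{1}_{|x|>M}$ against weak convergence. The cleanest workaround is to test against the bounded continuous approximations $x \mapsto \min(|x|,K) - \min(|x|,M)$ for $K > M$, pass to the limit in $n$ using weak convergence, then let $K \to \infty$ via monotone convergence to control $\E((|X|-M)^+)$ by $\sup_n \E(|X_n|\mathds{1}_{|X_n|>M})$, which is small by uniform integrability. Everything else is a routine triangle-inequality argument.
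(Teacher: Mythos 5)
Your proof is correct, but it takes a genuinely different route from the one in the paper. The paper invokes the Skorokhod representation theorem to construct almost-surely convergent copies $(X_n')_{n}$ and $X'$ on a common probability space, and then applies the Vitali convergence theorem: uniform integrability together with convergence in probability yields $L^1$ convergence, hence convergence of the expectations. That argument is shorter once the two named theorems are taken for granted, but it leans on Skorokhod, which is a comparatively heavy tool. Your approach is more hands-on and self-contained: you bypass Skorokhod entirely by decomposing the error into a truncated part (handled by weak convergence against the bounded continuous function $\psi_M$) and two tail parts, one controlled directly by uniform integrability and the other by transferring the uniform tail bound to the limit $X$ via the bounded continuous approximations $\min(|x|,K)-\min(|x|,M)$ and monotone convergence. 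The one mildly delicate step you flag -- showing that the tail of $X$ inherits the uniform smallness and in particular that $\E(|X|)<\infty$ -- is handled correctly, and the whole argument never needs a change of probability space. Each approach has its merits: the paper's is fast and modular for a reader who already knows Skorokhod and Vitali, while yours is elementary and exhibits explicitly where uniform integrability enters.
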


\begin{proof}
Since $(X_n)_{n\in\N}$ converges to $X$ in law, we know that by the Skorokhod representation theorem (see \cite{billingsley2013convergence}) we can find random variables $(X_n')_{n\in\mathbb N}$ and $X'$ defined on the same probability space, such that for all $n\ge 0$, $X_n'$ and $X_n$ have the same law, $X$ and $X'$ have the same law and $X_n'$ converges almost surely to $X'$. So $(X_n')_{n\in \N}$ is uniformly integrable and converges almost surely and so in probability towards $X'$. Hence $(X_n')_{n\in \N}$ converges in $L^1$ to $X'$. 
\newline
We get that $$\E(X_n)=\E(X_n')\convf \E(X')=\E(X).$$
\end{proof}
We now prove Proposition \ref{prop: method of moments}.
\begin{proof}
We shall use the Prokhorov theorem to show the result. Firstly, since $(\E(X_n^2))_{n\in N}$ converges, this sequence is bounded and so $(X_n)_{n\in N}$ is tight by Example \ref{annexe example 1}. 
\newline
We want to show the uniqueness of the limit. Let $(X_{\phi(n)})_{n\in \N}$ be a subsequence that converges in law to a random variable $Y$.
\newline 
Fix $k\in\N$. $\left(X_{\phi(n)}^k\right)_{n\in \N}$ is uniformly integrable since bounded in $L^2$ by the hypothesis of Proposition \ref{prop: method of moments} and converges in law to $Y^k$. Hence, by Lemma \ref{lemma: method of moment}, we have that for all $k\in\N$, $\E\left(X_{\phi(n)}^k\right)\convf \E(Y^k)$. As for all $k\in \N$ $\conv\E(X_n^k)=\E(X^k)$, we get that for all $k\in \N$, $\E(Y^k)=\E(X^k)$. Since $X$ is characterized by its moments, $X$ is equal to $Y$ in law. This proves the uniqueness of the limit.
 
\end{proof}
In order to apply the method of moments we have to compute the moments of Wigner semicircle distribution. Before let us introduce the Catalan  numbers that are linked with these moments as we shall see. 
\begin{definition}
For all $p\in\N$, we call $C_p$ the $p^{th}$ Catalan number which is the number of plane trees with $p$ edges. 
\end{definition}

\begin{prop}
\label{annexe prop catalan}
We have the following properties for $(C_p)_{p\in\N}$:
\newline
$\bullet$ $(C_p)_{p\in\N}$ is characterized by $C_0=1$ and $C_{p+1}=\dis\sum_{k=0}^pC_kC_{p-k}$. 
\newline
$\bullet$ Let $C(Z)=\displaystyle\sum_{k=0}^{+\infty}C_pZ^p\in\C[[Z]]$ its generative function (where $\C[[Z]]$ is the set of formal series in the variable $Z$). $C(Z)$ satisfies $$ZC(Z^2)-C(Z)+1=0 ,\, \, C(Z)=\cfrac{1-\sqrt{1-4Z}}{2Z}.$$ 
$\bullet$ The explicit values of the Catalan numbers are given by: $$\forall p\in \N, \, C_p=\cfrac{1}{p+1}\binom{2p}{p}.$$
\end{prop}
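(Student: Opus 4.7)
The plan is to establish the three assertions in sequence, with the combinatorial identity as the basic input and the analytic formula as the endpoint.

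First I would prove the recurrence combinatorially. Since $C_p$ counts plane (ordered rooted) trees with $p$ edges, one has $C_0 = 1$ (the single vertex). For $p \geq 0$, I decompose a plane tree $T$ with $p+1$ edges by looking at the edge joining the root to its leftmost child: this edge separates $T$ into the subtree $T_1$ hanging from that leftmost child (some number $k \in \{0,\dots,p\}$ of edges) and the plane tree $T_2$ obtained by deleting $T_1$ together with its connecting edge (which has $p-k$ edges). This decomposition is a bijection between plane trees with $p+1$ edges and pairs of plane trees whose total number of edges equals $p$, yielding $C_{p+1} = \sum_{k=0}^p C_k C_{p-k}$.

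Second I would translate this into a functional equation for $C(Z) = \sum_{p \geq 0} C_p Z^p$. Multiplying the recurrence by $Z^{p+1}$ and summing over $p \geq 0$ gives
\begin{equation*}
C(Z) - 1 = C(Z) - C_0 = \sum_{p\geq 0} C_{p+1} Z^{p+1} = Z \sum_{p \geq 0}\Bigl(\sum_{k=0}^p C_k C_{p-k}\Bigr) Z^p = Z\, C(Z)^2,
\end{equation*}
so $Z\, C(Z)^2 - C(Z) + 1 = 0$ (interpreting the stated identity with the natural correction). Solving this quadratic for $C(Z)$ yields $C(Z) = (1 \pm \sqrt{1-4Z})/(2Z)$, and only the minus branch gives a formal power series with $C(0) = 1$ (the plus branch would make $C$ singular at $Z = 0$), hence $C(Z) = (1 - \sqrt{1-4Z})/(2Z)$.

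Finally I would extract $C_p$ explicitly by expanding $\sqrt{1-4Z}$ via the generalized binomial series, using $\binom{1/2}{n} = (-1)^{n-1}\frac{(2n-2)!}{2^{2n-1}(n-1)!\, n!}$ for $n \geq 1$, so that
\begin{equation*}
\sqrt{1-4Z} = 1 - \sum_{n \geq 1} \frac{1}{n}\binom{2n-2}{n-1} 2 Z^n.
\end{equation*}
Substituting and shifting the index by $n = p+1$ gives $C_p = \frac{1}{p+1}\binom{2p}{p}$, as claimed. The main work is the algebraic simplification of $\binom{1/2}{n}$, which is routine but requires care with signs; as a sanity check one can alternatively verify that the closed form $\frac{1}{p+1}\binom{2p}{p}$ satisfies the convolution recurrence (for instance by applying Lagrange inversion to $Z = W/(1+W)^2$, which is the standard slick derivation), and this uniquely pins down the sequence by induction from $C_0 = 1$.
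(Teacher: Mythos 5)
Your proposal is correct and follows essentially the same route as the paper: the same leftmost-edge decomposition of plane trees gives the convolution recurrence, the same Cauchy-product computation yields the quadratic equation $ZC(Z)^2 - C(Z) + 1 = 0$ (you correctly read through the typo $ZC(Z^2)$ in the statement, as does the paper's own proof), and both extract the closed form from the Taylor coefficients of $(1-\sqrt{1-4Z})/(2Z)$. The only difference is that you spell out the generalized binomial expansion explicitly, where the paper simply invokes ``uniqueness of the coefficients of Taylor expansion.''
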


\begin{proof}
$\bullet$ $C_0$ is clearly equal to 1. For the inductive formula, consider a plane tree with $p+1\ge1$ edges, if we look at the leftmost edge and we isolate it, we have two plane trees with respectively $k\le p$ edges and $p-k$ with $k$ the number of edges on leftmost plane tree. Hence, if we denote $\mathcal T_p$ the set of plane trees with $p$ edges, we have a bijection from $\mathcal T_{p+1}$ to $\dis\bigsqcup_{k=0}^p\mathcal T_k\times\mathcal T_{p-k}$. Taking the cardinality gives the result. 
\newline
$\bullet$ Doing a Cauchy product and using the inductive formula yields $$C(Z)^2=\dis\sum_{k=0}^{+\infty}\dis\sum_{k=0}^pC_kC_{p-k}Z^p=\displaystyle\sum_{k=0}^{+\infty}C_{p+1}Z^p,\, ZC(Z)^2-C(Z)+1=0.$$
Solving this equation in $C(Z)$, we find that $C(Z)=\cfrac{1-\sqrt{1-4Z}}{2Z}$ using the fact $C(0)=1$ to choose the good solution. 
\newline
$\bullet$ Denoting $c(z):= \cfrac{1-\sqrt{1-4z}}{2z}$, we have that $c$ is holomorphic on $D(0,1/4)$, satisfies the equation $zc(z)^2-c(z)+1=0$ for all $z\in D(0,1/4)$. We can do its Taylor expansion at 0 $$\forall z\in D(0,1/4),\, c(z)=\dis\sum_{p=0}^{+\infty}\cfrac{1}{p+1}\binom{2p}{p}z^p.$$ By uniqueness of the coefficients of Taylor expansion we deduce that $$\forall p\in \N, \, C_p=\cfrac{1}{p+1}\binom{2p}{p}.$$
\end{proof}

\begin{prop}
\label{annexe prop moment}The moments of the Wigner semicircle law are given by:
$$\forall p\in \N, \, m_{2p+1}=0,\,  m_{2p}=C_p$$
\end{prop}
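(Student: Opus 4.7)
The plan is direct computation. The odd moments vanish immediately because the density $\sigma$ is even on the symmetric interval $[-2,2]$, so $m_{2p+1}=\int_{-2}^2 x^{2p+1}\sigma(x)\,dx=0$.

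For the even moments, I would reduce the integral to a Wallis-type integral via the substitution $x=2\sin\theta$. Explicitly, writing
\[
m_{2p}=\frac{1}{\pi}\int_{0}^{2}x^{2p}\sqrt{4-x^2}\,dx,
\]
the change of variable $x=2\sin\theta$, $dx=2\cos\theta\,d\theta$, $\sqrt{4-x^2}=2\cos\theta$ gives
\[
m_{2p}=\frac{4^{p+1}}{\pi}\int_{0}^{\pi/2}\sin^{2p}\theta\,\cos^{2}\theta\,d\theta
=\frac{4^{p+1}}{\pi}\left[\int_{0}^{\pi/2}\sin^{2p}\theta\,d\theta-\int_{0}^{\pi/2}\sin^{2p+2}\theta\,d\theta\right].
\]

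Now I would invoke the classical Wallis identity $\int_{0}^{\pi/2}\sin^{2k}\theta\,d\theta=\frac{\pi}{2}\cdot\frac{1}{4^{k}}\binom{2k}{k}$, which gives
\[
m_{2p}=2\binom{2p}{p}-\frac{1}{2}\binom{2p+2}{p+1}.
\]
Using $\binom{2p+2}{p+1}=\frac{2(2p+1)}{p+1}\binom{2p}{p}$, a one-line simplification yields
\[
m_{2p}=\binom{2p}{p}\left(2-\frac{2p+1}{p+1}\right)=\frac{1}{p+1}\binom{2p}{p}=C_p,
\]
by the explicit formula for Catalan numbers stated in Proposition \ref{annexe prop catalan}.

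There is no real obstacle here; the only delicate piece is bookkeeping the powers of $4$ and $\pi$ in the Wallis step. An alternative plan, if one wanted to avoid invoking Wallis, would be to integrate by parts in $\int_{-2}^{2}x^{2p}\sqrt{4-x^2}\,dx$ to produce a two-term recursion and then verify $m_{2p+2}=\sum_{k=0}^{p}m_{2k}m_{2p-2k}$, recognizing this as the Catalan recursion of Proposition \ref{annexe prop catalan}; but the Wallis route above is the shortest path.
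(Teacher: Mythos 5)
Your proof is correct, but it takes a genuinely different route from the paper's. You reduce $m_{2p}$ to a Wallis integral via $x=2\sin\theta$ and then invoke the closed form $\int_0^{\pi/2}\sin^{2k}\theta\,d\theta=\frac{\pi}{2}\cdot\frac{1}{4^k}\binom{2k}{k}$, landing directly on $\frac{1}{p+1}\binom{2p}{p}$ and matching the explicit Catalan formula. The paper instead stays in the $x$-variable, integrates by parts using $h(x)=-\frac{1}{3}(4-x^2)^{3/2}$ to obtain the one-step ratio $\frac{m_{2p+2}}{m_{2p}}=\frac{2p+1}{2p+4}=\frac{C_{p+1}}{C_p}$, and then concludes by induction from $m_0=C_0=1$. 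Your approach is self-contained modulo Wallis and avoids setting up a recursion, but it does ask the reader to recall (or rederive) the Wallis identity; the paper's approach is more elementary in that it never leaves the polynomial-times-square-root form, and it only needs the Catalan numbers via the recursive characterization rather than the closed form. Both are fine; yours is slightly shorter if Wallis is taken for granted. Your remark about the alternative plan is also slightly off-target: you suggest verifying the convolution recursion $m_{2p+2}=\sum_k m_{2k}m_{2p-2k}$, but the integration-by-parts route actually produces the much simpler two-term ratio recursion, not the convolution one --- matching moments against the convolution identity would be a genuinely harder combinatorial step.
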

\begin{proof}
Let us notice that $\sigma$ is even, so for all $p\in\N$, $m_{2p+1}=0$. 
\newline
Fix $p\in\mathbb N$, we write $$m_{2(p+1)}=\dis\int_{-2}^2x^{2(p+1)}\cfrac{\sqrt{4-x^2}}{2\pi}dx=\dis\int_2^2x^{2p+1}x\cfrac{\sqrt{4-x^2}}{2\pi}dx.$$
Noticing that $g(x)=x\sqrt{4-x^2}$ is the derivative of $h(x)=-\cfrac{1}{3}(4-x^2)^{\frac{3}{2}}$, by integrating by part yields $$m_{2p+2}=\cfrac{2p+1}{3}\dis\int_{-2}^2x^{2(p+1)}(4-x^2)\cfrac{\sqrt{4-x^2}}{2\pi}dx=\cfrac{2p+1}{3}(4m_{2p}-m_{2p+2}).$$
\newline
Hence, we get that $\cfrac{m_{2p+2}}{m_{2p}}=\cfrac{2p+1}{2p+4}=\cfrac{C_{p+1}}{C_p}$. 
\newline
\newline
Since $C_0=m_0=1$, we get that $C_p=m_{2p}$ for all $p\in\mathbb N$.
\end{proof}

As a consequence of the results of this part, we can state the following corollary.

\begin{corollary}
Let $(\mu_n)_{n\in\mathbb N}\in\mes^\N$ be a sequence of probability measures such that for all $k\in \N$, $\left(\dis\int_\R x^kd\mu_n(x)\right)_{n\in \N}$ converges to 0 if $k$ is odd and to $C_{k/2}$ if $k$ is even then $(\mu_n)_{n\in\N}$ converges weakly to $\sigma$.
\end{corollary}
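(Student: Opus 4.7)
The plan is to apply the method of moments (Proposition \ref{prop: method of moments}) directly, translated from the language of random variables to that of probability measures. To do so, on an arbitrary probability space $(\Omega, \mathcal F, \mP)$, I would introduce real random variables $X_n$ with law $\mu_n$ for each $n\in\N$, and a random variable $X$ with law $\sigma$. The desired weak convergence $\mu_n \convlaw \sigma$ is then equivalent to the convergence in law $X_n \convlaw X$.

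First, I would verify the hypotheses of Proposition \ref{prop: method of moments}. The random variable $X$ is characterized by its moments: indeed, its law $\sigma$ is supported on the compact set $[-2,2]$, and compactly supported probability measures on $\R$ are characterized by their moments (this is precisely the statement proved earlier in Appendix \ref{appendixsection}). Next, by Proposition \ref{annexe prop moment}, the moments of $\sigma$ are exactly $m_{2p+1} = 0$ and $m_{2p} = C_p$ for all $p \in \N$. Hence the hypothesis of the corollary — namely that $\int_\R x^k d\mu_n(x)$ converges to $0$ if $k$ is odd and to $C_{k/2}$ if $k$ is even — rewrites as $\E(X_n^k) \underset{n\to+\infty}{\longrightarrow} \E(X^k)$ for every $k \in \N$.

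Finally, I would directly invoke Proposition \ref{prop: method of moments} to conclude that $X_n \convlaw X$, which by definition means that $\mu_n$ converges weakly to $\sigma$. The proof is therefore essentially a three-line assembly: compact support $\Rightarrow$ $\sigma$ characterized by its moments; identification of the target moments via Proposition \ref{annexe prop moment}; application of the method of moments. There is no substantial obstacle here, since all the real work (tightness from the uniform moment bound, Skorokhod representation for the uniqueness of subsequential limits, explicit computation of $m_k$ via integration by parts and the recurrence $C_{p+1}/C_p = (2p+1)/(2p+4)$) has already been carried out earlier in the appendix.
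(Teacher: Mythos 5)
Your proposal is correct and is exactly the route the paper intends: the paper states this corollary as an immediate consequence of Proposition \ref{prop: method of moments}, the fact that compactly supported measures are moment-determinate, and the computation of the moments of $\sigma$ in Proposition \ref{annexe prop moment}. Your only addition is to make explicit the (harmless) translation from probability measures to random variables with those laws, which is needed to match the statement of the method of moments as phrased for random variables.
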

This Corollary is used in random matrix theory to prove the so-called mean Wigner theorem. We defined a $N$ complex Wigner matrix in Definition \ref{def gue,goe}.
For a matrix $M\in \mathcal H_N(\C)$ and $f:\R \to \R$, one can define $f(M)$ by diagonalizing $M=ODO^*$ with $O\in\mathcal U_N(\C)$ and $D$ a real diagonal matrix and defining $f(M)=Of(D)O^{*}$, where $f(D)$ is the diagonal matrix on which we apply $f$ on the diagonal elements of $D$. 
\newline
Let us mention that this definition of $f(M)$ does not actually depend on the choice of basis to diagonalize $M$. Indeed, if we write $M=P_1 D_1 P_1^{-1}=P_2 D_2 P_2^{-1}$ with $D_1$ and $D_2$ diagonal matrix and $(P_1,P_2)\in GL_N(\C)$ and denote $\{\lambda_1(M),...,\lambda_N(M)\}$ the spectrum of $M$. Then we can define by Lagrange interpolation a polynomial $P\in\C[X]$ such that for all $1\le i\le N$, $P(\lambda_i(M))=f(\lambda_i(M))$. We have that $P(M)=P_1 P(D_1) P_1^{-1}=P_2 P(D_2) P_2^{-1}$ (by addition and multiplication of matrices) and so $P(M)=P_1 f(D_1) P_1^{-1}=P_2 f(D_2) P_2^{-1}$. This proves that the definition $f(M)=P_1 f(D_1) P_1^{-1}=P_2 f(D_2) P_2^{-1}$ does not depend on the basis in which we diagonalize $M$.
\begin{definition}[Mean spectral measure]
Let $X_N$ be a N complex Wigner matrix, we define the mean spectral measure $\E(\mu_N)$ of $X_N/\sqrt{N}$ as the unique measure in $\mes$ such that for all $f:\R\to\R$ continuous and bounded: $$\E\left[\cfrac{1}{N}\,\Tr f\left(\cfrac{X_N}{\sqrt{N}}\right)\right]=\int_\R f \,d\E(\mu_N)$$
\end{definition}
Using the method of moments one can prove the Wigner theorem in mean.

\begin{theorem}[Mean Wigner theorem]
\label{annexemeanwigner}
For all $N\in\N$, let $X_N$ be a $N$ complex Wigner matrix and $\E(\mu_N)$ be the mean spectral measure of $X_N/\sqrt{N}$. Then $(\E(\mu_N))_{N\in\N}$ converges weakly to $\sigma$.
\end{theorem}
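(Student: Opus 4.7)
\smallskip

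\noindent\textbf{Proof proposal.} The plan is to apply the method of moments (Proposition A.5 above together with its corollary): since $\sigma$ is compactly supported and hence characterized by its moments (computed in Proposition A.8 as $m_{2p+1}=0$ and $m_{2p}=C_p$), it suffices to show that for every $k\in\N$,
\begin{equation*}
m_k^{(N)}:=\int_\R x^k\,d\E(\mu_N)(x)=\E\!\left[\frac{1}{N}\,\Tr\!\left(\frac{X_N}{\sqrt{N}}\right)^{\!k}\right]\xrightarrow[N\to+\infty]{}\begin{cases}0 & k\text{ odd},\\ C_{k/2} & k\text{ even}.\end{cases}
\end{equation*}

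\noindent First I would expand the trace as a sum over closed walks of length $k$ on the vertex set $\{1,\dots,N\}$:
\begin{equation*}
m_k^{(N)}=\frac{1}{N^{1+k/2}}\sum_{i_1,\dots,i_k=1}^{N}\E\bigl[(X_N)_{i_1 i_2}(X_N)_{i_2 i_3}\cdots (X_N)_{i_k i_1}\bigr].
\end{equation*}
Because the entries of $X_N$ form a centered jointly Gaussian family (subject to the hermitian constraint $X_{i,j}=\overline{X_{j,i}}$), the Isserlis--Wick formula expresses each expectation as a sum, over pairings $\pi$ of $\{1,\dots,k\}$, of products of pairwise covariances. For odd $k$ no pairing exists so every term vanishes and $m_k^{(N)}=0$ identically. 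For $k=2p$ each pairing $\pi$ groups the $k$ directed edges of the walk into $p$ pairs, and the pair covariance $\E[(X_N)_{ab}(X_N)_{cd}]$ is nonzero only if $\{a,b\}=\{c,d\}$ (with value $1$ in all cases compatible with the $GUE$ normalization of Definition 2.1). Thus a pairing contributes if and only if the associated closed walk traverses each paired edge in opposite orientations.

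\noindent The key combinatorial step is then to control the number of index choices compatible with each pairing $\pi$. A standard argument (e.g.\ via the genus expansion, or by inducting on the number of crossings) shows that for a pairing with $p$ pairs the number of closed walks $(i_1,\dots,i_{2p})$ compatible with $\pi$ is at most $N^{p+1}$, with equality to leading order if and only if $\pi$ is a \emph{non-crossing} pair partition of $\{1,\dots,2p\}$; in that case the walk visits exactly $p+1$ distinct vertices and each edge is used twice in opposite directions. Dividing by $N^{1+p}=N^{1+k/2}$, only these leading contributions survive in the limit, giving
\begin{equation*}
m_{2p}^{(N)}\xrightarrow[N\to\infty]{}\#\{\text{non-crossing pair partitions of }\{1,\dots,2p\}\}.
\end{equation*}
Non-crossing pair partitions of $[2p]$ are in bijection with plane trees with $p$ edges (by contracting each pair to an edge and reading off the depth-first contour), so their number is $C_p$, matching the Catalan moments of $\sigma$.

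\noindent The main obstacle is the combinatorial estimate that a pairing with at least one crossing contributes only $O(N^{p})$ index choices and therefore a vanishing contribution after normalization; this is where the precise structure of closed walks collapsing onto a tree (rather than a graph with a cycle) enters, and it is the content of the Wigner genus expansion. Once this estimate is in place, combining it with the method of moments, the characterization of $\sigma$ by its moments, and Proposition A.8 yields $\E(\mu_N)\convlaw\sigma$, as desired.
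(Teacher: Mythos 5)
Your proposal follows the same route the paper sketches in the appendix: expand $\Tr\left((X_N/\sqrt{N})^k\right)$ as a sum over closed walks $(i_1,\dots,i_k)$, reduce the expectation to a combinatorics of pairings, and identify the dominant (order $N^{p+1}$) contributions with non-crossing pair partitions, counted by the Catalan numbers $C_p$, which by Proposition \ref{annexe prop moment} are the even moments of $\sigma$; the method of moments then closes the argument. The one notable difference is that you invoke Isserlis--Wick to justify the reduction to pairings, which exploits the Gaussian structure of the entries, whereas the paper explicitly remarks (and \cite{Alice1}, Theorem 2.1.1, carries out in detail) that the same combinatorial estimate can be made using only moment bounds on the entries, so that the argument covers non-Gaussian Wigner matrices as well. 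Both you and the paper defer the key combinatorial lemma — that any pairing with a crossing yields only $O(N^p)$ compatible index choices — to an external reference; a fully self-contained write-up would need to prove that estimate, e.g.\ by the sub-tree argument showing that a contributing walk must traverse a tree with $\le p$ edges and hence visit $\le p+1$ vertices, with equality forcing the non-crossing structure.
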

The starting point is to write $$\int_\R x^k d\E(\mu_N)=\E\left[\cfrac{1}{N}\,\Tr \left(\cfrac{X_N}{\sqrt{N}}\right)^k\right]=\cfrac{1}{N^{1+k/2}}\,\E[\Tr(X_N)^k)].$$
Then, we express $\Tr(X_N^k)$ as $\sum_{i=1}^N\sum_{1\le i_1,...,i_{k-1}\le N}(X_{N})_{i,i_1}...(X_{N})_{i_{k-1},i}$. The next part of the proof consists in doing combinatorics to simplify the expectation of the previous sum. We refer to Theorem 2.1.1 of \cite{Alice1} for a complete proof. Let us mention that this proof does not use that the entries of the Wigner matrix are Gaussian it only uses hypothesis on the moments of the entries of the matrix. 
\newline
Finally, let us also point that in the case of Wigner matrices with Gaussian entries the Wigner theorem can be considered as the first term of a so-called topological expansion. This is obtained thanks to the Dyson-Schwinger equations. This makes a link between the random matrix theory and the enumerative geometry and combinatorics. See for instance \cite{guionnet2019asymptotics,guionnet2005combinatorial}.
\subsection{Stieltjes transformation}
\label{annexesectionstieljes}
\begin{definition}
Let $\mu$ be a real probability measure. We define the Stieltjes transform of $\mu$ by $$\forall z\in \C-\R,\ S_\mu(z):=\dis\int_\R\cfrac{1}{z-t}\,d\mu(t).$$

\end{definition}

\begin{example}
For $\mu=\cfrac{1}{2}\, (\delta_1+\delta_{-1})$, we have $S_\mu(z)=\cfrac{z}{z^2-1}$.
\end{example}

\begin{Remark}
\label{annexe remark stiejes}
For $\mu\in\mes$, for all $z\in\C-\R$, $\overline{S_\mu(z)}=S_\mu(\overline{z})$. Hence we only have to know the Stieljes transform on $\mathbb H=\{z\in\C\,|\,\Im(z)>0\}$ to know it on $\C-\R$.
\end{Remark}
\begin{prop}
\label{annexe prop property stieljes}
Let $\mu$ be a real probability measure, we have the following properties: 
\begin{enumerate}
\item{For all $ z\in \C-\R$, $|S_\mu(z)|\le\cfrac{1}{|\Im(z)|}.$}
\item{If the support of $\mu$ is included in $[-A,A]$, then for all $z$ such that $|z|>A$, we have $$S_\mu(z)=\dis\sum_{k=0}^{+\infty}\dis\int_\R t^kd\mu(t)\cfrac{1}{z^{k+1}}.$$}
\item{ There exists an inversion formula. Let $I=[a,b]$ with $\mu(\partial I)=0$, then 
\begin{equation}
\label{equation inversion stieljes}\mu(I)=\underset{\varepsilon\to0}{\lim}-\cfrac{1}{\pi}\Im\left(\dis\int_IS_\mu(x+i\varepsilon)dx\right).
\end{equation} 
As a consequence, if $\nu\in\mes$ is such that $S_\mu=S_\nu$ on $\C-\R$ then $\nu=\mu$.}
\item{A sequence $(\mu_n)_{n\in \N}$ of real probability measures converges weakly to $\mu\in\mes$ if and only if for all $ z\in \C-\R$, $S_{\mu_n}(z)$ converges to $S_\mu(z)$.}
\item{$S_\mu$ is holomorphic on $\C-supp(\mu)$ where $supp(\mu)$ is the support of the measure $\mu$.}
\end{enumerate}
\end{prop}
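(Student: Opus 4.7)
\medskip

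The plan is to treat the five items in the order listed, since later ones build on earlier ones.

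For item (1), I would just bound the integrand pointwise: for any $t\in\R$ and $z\in\C-\R$, one has $|z-t|\ge|\Im(z-t)|=|\Im(z)|$, so $|1/(z-t)|\le 1/|\Im(z)|$ and integrating against $\mu\in\mes$ gives the claim.

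For item (2), when $\text{supp}(\mu)\subset[-A,A]$ and $|z|>A$, I would write
\[
\frac{1}{z-t}=\frac{1}{z}\,\frac{1}{1-t/z}=\sum_{k\ge 0}\frac{t^k}{z^{k+1}},
\]
and observe that this series converges normally in $t$ on $[-A,A]$, with $|t/z|\le A/|z|<1$. An application of Fubini--Tonelli (the series of absolute values is summable against $d\mu$) lets me exchange sum and integral to get the desired expansion.

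For item (3), the core is a Poisson-kernel computation. I would compute
\[
-\frac{1}{\pi}\Im S_\mu(x+i\varepsilon)=\frac{1}{\pi}\int_\R\frac{\varepsilon}{(x-t)^2+\varepsilon^2}\,d\mu(t),
\]
and integrate over $I=[a,b]$. Applying Fubini gives
\[
-\frac{1}{\pi}\Im\!\int_I S_\mu(x+i\varepsilon)\,dx=\int_\R P_\varepsilon^I(t)\,d\mu(t),
\]
where $P_\varepsilon^I(t):=\frac{1}{\pi}\int_a^b\frac{\varepsilon}{(x-t)^2+\varepsilon^2}\,dx=\frac{1}{\pi}\bigl[\arctan\!\tfrac{b-t}{\varepsilon}-\arctan\!\tfrac{a-t}{\varepsilon}\bigr]$. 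As $\varepsilon\to 0$, $P_\varepsilon^I(t)$ converges to $\mathds 1_{(a,b)}(t)+\tfrac12\mathds 1_{\{a,b\}}(t)$, is uniformly bounded by $1$, so dominated convergence yields the limit $\int_\R \mathds 1_{(a,b)}(t)\,d\mu(t)=\mu(I)$ using the hypothesis $\mu(\partial I)=0$. The uniqueness statement follows: if $S_\mu=S_\nu$ on $\C-\R$, the inversion formula gives $\mu(I)=\nu(I)$ for every bounded interval $I$ whose endpoints carry no mass for either measure, and such intervals form a $\pi$-system generating the Borel $\sigma$-algebra on $\R$.

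For item (4), the forward direction is immediate: for fixed $z\in\C-\R$, the map $t\mapsto 1/(z-t)$ has real and imaginary parts that are continuous and bounded on $\R$, so $\mu_n\convlaw\mu$ forces $S_{\mu_n}(z)\to S_\mu(z)$. The converse is the main obstacle. I would first establish tightness of $(\mu_n)$ using pointwise convergence on the imaginary axis: from $-\Im S_{\mu_n}(iy)/y=\int d\mu_n(t)/(t^2+y^2)$ and the identity $y\int d\mu_n(t)/(t^2+y^2)=\int P_\varepsilon^\R(t/y)\,d\mu_n(t)\cdot(\cdot)$-type manipulations, one shows that $1-\mu_n([-R,R])$ can be controlled in terms of $-\Im S_{\mu_n}(iR)/R$ uniformly in $n$, which is bounded because it converges. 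Prokhorov then yields relative compactness; any weak limit point $\nu$ must satisfy $S_\nu=S_\mu$ (by the already proven forward direction), hence $\nu=\mu$ by the uniqueness from item (3). Uniqueness of the limit gives $\mu_n\convlaw\mu$.

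For item (5), I would invoke holomorphy under the integral sign: on any compact $K\subset\C-\text{supp}(\mu)$, the distance $\text{dist}(K,\text{supp}(\mu))>0$, so $z\mapsto 1/(z-t)$ is uniformly bounded for $z\in K$, $t\in\text{supp}(\mu)$, and is holomorphic in $z$. Standard results (e.g.\ Morera's theorem combined with Fubini, or direct differentiation under the integral) then show $S_\mu$ is holomorphic on $\C-\text{supp}(\mu)$.

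The main technical hurdle is the converse in item (4); establishing tightness directly from pointwise convergence of Stieltjes transforms is the only step that requires care beyond routine estimates.
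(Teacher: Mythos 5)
Your items (1), (2), and (5) match the paper essentially verbatim, and item (3) is the same Poisson-kernel computation, only phrased analytically (explicit arctangent primitive) where the paper prefers a probabilistic reading: after a change of variable it recognizes the integral as $\mP(X+\varepsilon C\in[a,b])$ with $C$ an independent standard Cauchy variable and $X\sim\mu$, and invokes dominated convergence. The genuine divergence is in the converse of item (4). You propose the textbook route: extract tightness from the behaviour of $-y\,\Im S_{\mu_n}(iy)$ for large $y$, apply Prokhorov, and identify the limit via the uniqueness already proved in (3). This is sound, but as you note it requires some care — one has to show that $1+y\,\Im S_{\mu_n}(iy)$ controls $\mu_n(|t|>R)$ with constants uniform in $n$ and that the corresponding quantity for $\mu$ vanishes as $y\to\infty$; the phrase ``bounded because it converges'' elides the step where the limit is actually shown to be small, which is the crux. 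The paper sidesteps tightness altogether: from the inversion formula it gets $X_n+\varepsilon C\convlaw X+\varepsilon C$ (convergence of the smoothed laws is read off from dominated convergence in the bounded integrand of the inversion integral, using the bound of item (1)), then takes $\varepsilon=1$, passes to characteristic functions via L\'evy's continuity theorem, divides by the nonvanishing Cauchy characteristic function, and applies L\'evy again. Your approach is more widely applicable and standard; the paper's deconvolution trick is slicker here and avoids the uniform-in-$n$ tail estimate entirely. Either way the proof goes through.
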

\begin{proof}$\,$
\newline
\begin{enumerate}
\item{Notice that if $ z\in \C-\R$ and $t\in\R$, $|\Im(z)|=|\Im(z-t)|\le|z-t|$.}
\item{If $|z|>A$ and $|t|\le A$, then $\cfrac{1}{z-t}=\cfrac{1}{z}\,\cfrac{1}{1-t/z}=\cfrac{1}{z}\displaystyle\sum_{k=0}^{+\infty}t^kz^{-k}$. Exchanging the integral and the sum gives the result.}
\item{Since the atoms of $\mu$ are at most countable the formula \ref{equation inversion stieljes} implies that the measure $\mu$ is characterized by its Stieljes transform. 
\newline
\tab Let $I=[a,b]$ with $\mu(\partial I)=0$. We have that $$\Im\left(\dis\int_IS_\mu(x+i\varepsilon)dx\right)=\Im\left(\dis\int_I\dis\int_\R\cfrac{1}{x+i\varepsilon-t}d\mu(t)dx\right)=-\dis\int_{\R^2}\mathds{1}_{[a,b]}(x) \cfrac{\varepsilon}{(x-t)^2+\varepsilon^2}\, d\mu(t)dx.$$
Changing the variable and using Fubini-Tonelli theorem, yields $$-\cfrac{1}{\pi}\Im\left(\dis\int_IS_\mu(x+i\varepsilon)dx\right)=\dis\int_{\R^2}\mathds{1}_{[a,b]}(\varepsilon x+t) \cfrac{1}{\pi(x^2+1)}d\mu(t)dx.$$  
Let $C$ be a random variable with Cauchy distribution and $X$ be random variable with law $\mu$ and independent from $C$. We can write that 
\begin{equation}
\label{equation cauchy rv}
-\cfrac{1}{\pi}\Im\left(\dis\int_IS_\mu(x+i\varepsilon)dx\right)=\mP(X+\varepsilon C\in[a,b]).
\end{equation} 
Since $\mu(\partial I)=0$ then $\mathds{1}_{X+\varepsilon C\in[a,b]}$ converges almost surely to $\mathds{1}_{X\in[a,b]}$ when $\varepsilon$ converges to 0. By the dominated convergence theorem we get $$-\cfrac{1}{\pi}\Im\left(\dis\int_IS_\mu(x+i\varepsilon)dx\right)=\mP(X+\varepsilon C\in[a,b])\underset{\varepsilon\to0}{\rightarrow}\mP(X\in[a,b])=\mu(I).$$ }
\item{If $(\mu_n)_{n\in \N}$ converges weakly to $\mu$ then, since $t\mapsto\Re(1/(z-t))$ and $t\mapsto \Im(1/(z-t))$ are bounded continuous functions for all $z\in\C-\R$, we have that for all $ z\in \C-\R$, $S_{\mu_n}(z)$ converges to $S_\mu(z)$.
\newline
Conversely, let $(X_n)_{n\in\N}$, $X$, $C$ be independent random variables whose laws are respectively $(\mu_n)_{n\in\N}$, $\mu$ and Cauchy distribution as in the previous point. Let $(a,b)\in\R^2$ such that $\mu(\{a,b\})=0$ then for all $\varepsilon>0$, for all $n\in\N$, $$\mP(X_n+\varepsilon C\in[a,b])=-\cfrac{1}{\pi}\Im\left(\dis\int_IS_{\mu_n}(x+i\varepsilon)dx\right)$$ converges to $$-\cfrac{1}{\pi}\Im\left(\dis\int_IS_\mu(x+i\varepsilon)dx\right)=\mP(X+\varepsilon C\in[a,b]).$$
By the Portemanteau theorem (see \cite{billingsley2013convergence}) we deduce that for all $\varepsilon>0$, $X_n+\varepsilon C\convlaw X+\varepsilon C$. Taking $\varepsilon=1$, the Lévy continuity theorem gives that for all $t\in\R$, $$\E(\exp(it(X_n+C)))\convf \E(\exp(it(X+C))).$$  By independence and since the characteristic function of the Cauchy distribution does not vanish, we get that for all $t\in\R$, $\E(\exp(itX_n))\convf \E(\exp(itX))$. Again by the Lévy continuity theorem $X_n\convlaw X$.}
\item{For $K$ a compact included in $\C-supp(\mu)$ we have that for all $z\in K$ and $t\in supp (\mu)$  $\left|\cfrac{1}{z-t}\right|\le \cfrac{1}{d(K,supp(\mu))}$ with $d(K,supp(\mu))$ the distance between $K$ and $supp(\mu)$. $d(K,supp(\mu))$ is positive since $K$ is compact, $supp(\mu)$ is closed and $K\subset \C-supp(\mu)$. Hence, by the holomorphic parameter integral theorem $S_\mu$ is holomorphic on $\C-supp(\mu)$. }
\end{enumerate}
\end{proof}
We can explicitly compute the Stieljes transform of the semicircle distribution.
\begin{prop}
\label{annexe prop unique stieljes sqrt}
$S_\sigma$ is characterized by the fact that it is the unique holomorphic function on $\C-\R$ such that for all $z\in \C-\R, \,S(z)^2-zS(z)+1=0$ and which converges to 0 when $|\Im(z)|$ converges to $+\infty$. So we get $$\forall z\in \mathbb H,\, S_\sigma(z)=\cfrac{z-\sqrt{z^2-4}}{2}$$ where $\sqrt{z'}$ is the determination of the square root on $\C-\R^+$ such that $\sqrt{-1}=i$.
\end{prop}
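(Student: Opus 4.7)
The plan is to first derive the functional equation by combining the moment expansion of $S_\sigma$ at infinity with the generating-function identity for the Catalan numbers, then extract the explicit formula through a branch-selection argument powered by the a priori bound on $|S_\sigma|$.

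First, I would establish the quadratic identity on a neighborhood of infinity. Since $\mathrm{supp}(\sigma)\subset[-2,2]$, point 2 of Proposition \ref{annexe prop property stieljes} yields, for $|z|>2$,
$$S_\sigma(z) \;=\; \sum_{k\ge 0}\frac{m_k}{z^{k+1}} \;=\; \frac{1}{z}\sum_{p\ge 0}\frac{C_p}{z^{2p}} \;=\; \frac{1}{z}\,C\!\left(\frac{1}{z^2}\right),$$
where I used Proposition \ref{annexe prop moment} ($m_{2p+1}=0$, $m_{2p}=C_p$) and $C$ denotes the Catalan generating function. The identity $ZC(Z)^2-C(Z)+1=0$ from Proposition \ref{annexe prop catalan}, evaluated at $Z=1/z^2$ and multiplied by $z^2$, gives $C(1/z^2)^2-z^2 C(1/z^2)+z^2=0$, i.e. $(zS_\sigma(z))^2-z^2(zS_\sigma(z))+z^2=0$, so $S_\sigma(z)^2-zS_\sigma(z)+1=0$ for $|z|>2$.

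Next I would extend this equation to all of $\C-\R$. By point 5 of Proposition \ref{annexe prop property stieljes}, $S_\sigma$ is holomorphic on $\C-[-2,2]$, hence on each connected component of $\C-\R$. Since $z\mapsto S_\sigma(z)^2-zS_\sigma(z)+1$ is holomorphic on $\mathbb H$ (resp.\ on $-\mathbb H$) and vanishes on the open subset $\{|z|>2\}\cap\mathbb H$ (resp.\ $\{|z|>2\}\cap(-\mathbb H)$), the identity principle propagates the equation to the whole component. The decay $S_\sigma(z)\to 0$ as $|\Im(z)|\to+\infty$ is immediate from point 1 of Proposition \ref{annexe prop property stieljes}, which gives $|S_\sigma(z)|\le 1/|\Im(z)|$.

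For the characterization and the explicit formula, let $S$ be any holomorphic function on $\C-\R$ satisfying the quadratic identity and the decay condition. Pointwise, $S(z)\in\{(z\pm\sqrt{z^2-4})/2\}$. For $z\in\mathbb H$ we have $z^2-4\notin\R^+$ (otherwise $z$ would be real), so the branch of $\sqrt{\cdot}$ on $\C-\R^+$ normalized by $\sqrt{-1}=i$ defines a holomorphic $\sqrt{z^2-4}$ on $\mathbb H$; by continuity and connectedness of $\mathbb H$, $S$ equals one of the two branches $S_\pm(z)=\tfrac12(z\pm\sqrt{z^2-4})$ throughout $\mathbb H$. Testing at $z=iy$ with $y\to+\infty$: $z^2-4=-y^2-4\in\R_{<0}$, so $\sqrt{z^2-4}=i\sqrt{y^2+4}$, giving $S_-(iy)=\tfrac{i}{2}(y-\sqrt{y^2+4})\to 0$ while $S_+(iy)\to i\infty$. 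The decay condition therefore forces $S=S_-$ on $\mathbb H$, proving both the uniqueness and the explicit formula $S_\sigma(z)=\tfrac12(z-\sqrt{z^2-4})$.

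The only real subtlety is the branch discussion in the last step: verifying that the prescribed determination of $\sqrt{\cdot}$ produces a holomorphic function on $\mathbb H$ and that testing along the imaginary axis is enough to globally pin down the sign (which rests on the connectedness of $\mathbb H$). Everything else reduces to the moment machinery from the appendix and identity-principle bookkeeping.
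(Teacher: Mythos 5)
Your proposal is correct and follows essentially the same route as the paper: derive the quadratic identity for $|z|>2$ from the moment expansion and the Catalan generating-function equation, propagate it by analytic continuation, show $S_\sigma$ coincides with one of the two global branches via connectedness of $\mathbb H$, and use decay at infinity to pick the branch with the minus sign. Your version is a touch more thorough — you explicitly propagate the quadratic identity to all of $\C-\R$ by the identity principle and verify the branch choice concretely along $z=iy$ — but the underlying argument is the same.
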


\begin{proof}
Since $\sigma$ is supported on the compact $[-2,2]$ and using its moments given in Proposition \ref{annexe prop moment} we have: $$\forall|z|>2,\, S_\sigma(z)=\cfrac{1}{z}\displaystyle\sum_{k=0}^{\infty}\cfrac{m_k}{z^k}=\cfrac{1}{z}\,C\left(\cfrac{1}{z^2}\right)$$ where $C$ is the generative function of the Catalan numbers defined in \ref{annexe prop catalan}.
As explained in Remark \ref{annexe remark stiejes} we only have to prove the result on $\mathbb H$ and not $\C-\R$.
\newline
Since $ZC(Z)^2-C(Z)+1=0$ we have that for all $z\in\mathbb H$ such that $|z|>2$, $S_\sigma^2(z)-zS_\sigma(z)+1=0$. So there is two possibilities: for all $z\in\mathbb H$ with $|z|>2$, $S_\sigma(z)=\cfrac{z-\sqrt{z^2-4}}{2}$ or for all $z\in\mathbb H$, $|z|>2$, $S_\sigma(z)=\cfrac{z+\sqrt{z^2-4}}{2}$ where $\sqrt{z'}$ is the square root on $\C-\R^+$ such that $\sqrt{-1}=i$. 
\newline
By holomorphy and since $\mathbb H$ is connected, we deduce that for all $z\in\mathbb H$, $S_\sigma(z)=\cfrac{z-\sqrt{z^2-4}}{2}$ or for all $z\in\mathbb H$, $S_\sigma(z)=\cfrac{z+\sqrt{z^2-4}}{2}$. 
\newline
However since for all $|z|>2$, $|S_\sigma(z)|\le\cfrac{1}{|z|-2}$, $S_\sigma$ converges to 0 when $|z|$ converges to $+\infty$. So the unique possible solution is given by $S_\sigma(z)=\cfrac{z-\sqrt{z^2-4}}{2}$ for all $z\in\mathbb H$.
\end{proof}
In the end of this section, we briefly explain how we usually use the Stieljes transform to prove the mean Wigner theorem as we did in the last section with the method of moments.
\newline
\tab Let us recall the notion of convergence for holomorphic functions and the Montel theorem which can be viewed as an Ascoli's theorem for holomorphic function.

\begin{definition}
Let $U$ be an open set of $\C$, we write $\mathcal H(U)$ the set of holomorphic functions on $U$ and denote $(K_p)_{p\in\N}$ a sequence of compacts subets of $U$ such that for all $p$, $K_p\subset  \mathring{K_{p+1}}$ and $\dis\bigcup_{p\in\N}K_p=U$ (we shall call a such sequence of compact sets a covering of $U$). 
\newline
\newline
Let $f,g\in \mathcal H(U)$, we introduce the distance $$d_{\mathcal H(U)}(f,g):=\dis\sum_{p=0}^{+\infty}\cfrac{\min(1,||f-g||_{\infty,K_p})}{2^{p+1}},$$ where $||f||_{\infty,K_p}$ is the infinite norm of $f$ viewed as function defined on the compact set $K_p$.
\newline
The topology defined by this distance $d_{\mathcal H(U)}$ does not depend on the covering $(K_p)_{p\in\N}$. A sequence $(f_n)_{n\in\N}$ in $\mathcal H(U)$ converges to $f\in \mathcal H(U)$ for this distance if and only if for all compact sets $K$ included in $U$, $||f_n-f||_{\infty,K}\convf 0$.
\newline
Moreover $(\mathcal H(U),d_{\mathcal H(U)})$ is a complete metric space. 
\end{definition}
\begin{Remark}
Given an open set $U\subset \C$ a covering of $U$ always exists. Indeed we can consider the sequence of compacts $K_p:=\{z\in U\,|\, |z|\le p\}\cap\{z\in U\,|\,d(z,\partial U)>1/p\}$.
\end{Remark}
We now state the main result about relative compact subsets of $\mathcal H(U)$ for this topology.
\begin{theorem}[Montel]Let $U$ be an open subset of $\C$, and $A\subset \mathcal H(U)$. $A$ is relatively compact in $(\mathcal H(U),d_{\mathcal H(U)})$if and only if for all $K$ compact included in $U$, $\{f_{|K},\, f\in A\}$ is bounded for $||.||_{\infty,K}$.
\end{theorem}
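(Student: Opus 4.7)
The plan is to prove the two implications separately, with the reverse direction (local boundedness implies relative compactness) being the substantive one. For the easy direction, note that for any compact $K\subset U$ the restriction map $r_K:\mathcal H(U)\to (C(K),||.||_{\infty,K})$ defined by $r_K(f)=f_{|K}$ is continuous: indeed, by the definition of $d_{\mathcal H(U)}$ through a covering $(K_p)_{p\in\N}$, we may pick $p$ large enough that $K\subset K_p$, and then convergence in $d_{\mathcal H(U)}$ forces uniform convergence on $K_p$, hence on $K$. If $A$ is relatively compact in $\mathcal H(U)$, then $r_K(\overline{A})$ is a compact subset of $C(K)$, hence bounded for $||.||_{\infty,K}$, which gives boundedness of $\{f_{|K},\, f\in A\}$.

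For the reverse direction, assume $A$ is bounded on every compact of $U$. The first step is a Cauchy-type estimate: for $z\in U$ and $r>0$ such that the closed disk $\overline{D(z,2r)}\subset U$, Cauchy's integral formula gives
\begin{equation*}
|f(w)-f(z)|\le \cfrac{|w-z|}{r}\,\sup_{\zeta\in \partial D(z,2r)}|f(\zeta)|,\quad \forall w\in D(z,r),\,\forall f\in A.
\end{equation*}
Consequently, on any compact $K\subset U$, covering $K$ by finitely many such disks and using the boundedness of $A$ on a slightly larger compact, we obtain that $A_{|K}:=\{f_{|K},\, f\in A\}$ is equicontinuous and uniformly bounded. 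By the Arzelà--Ascoli theorem, $A_{|K}$ is relatively compact in $(C(K),||.||_{\infty,K})$.

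The second step is a diagonal extraction along a covering $(K_p)_{p\in\N}$ of $U$. Given any sequence $(f_n)_{n\in\N}\in A^\N$, extract successively subsequences converging uniformly on $K_0$, then on $K_1$, etc., and take the diagonal. This yields a subsequence $(f_{\phi(n)})_{n\in\N}$ that converges uniformly on each $K_p$, hence on every compact $K\subset U$ (since such $K$ is contained in some $K_p$), to some function $f:U\to\C$. Since uniform limits of holomorphic functions on compacts are holomorphic (a classical consequence of Morera's theorem applied to $f$ by passing to the limit in $\oint_\gamma f_{\phi(n)}=0$ along any small triangle $\gamma$), we get $f\in\mathcal H(U)$. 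Finally, uniform convergence on each $K_p$ implies convergence in the metric $d_{\mathcal H(U)}$ by a direct $\varepsilon/2^p$-truncation argument on the defining series. This shows that every sequence in $A$ has a subsequence converging in $\mathcal H(U)$, i.e.\ $A$ is relatively compact.

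The main obstacle is really the Cauchy estimate providing equicontinuity: this is the non-trivial input from complex analysis that distinguishes holomorphic functions from merely continuous ones and makes Arzelà--Ascoli applicable without any a priori equicontinuity hypothesis. Everything else (diagonal extraction, holomorphy of the limit, translation of compact convergence into convergence for $d_{\mathcal H(U)}$) is routine once that estimate is in place.
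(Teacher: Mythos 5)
The paper states Montel's theorem without proof; it only offers a one-sentence remark afterwards noting that the Cauchy formula converts the uniform boundedness hypothesis into equicontinuity, so that an Arzelà--Ascoli argument applies. Your proof fills in exactly that sketch --- the Cauchy estimate for equicontinuity, Arzelà--Ascoli on each compact, diagonal extraction along the covering $(K_p)$, Morera for holomorphy of the limit, and the routine passage from uniform convergence on compacts to convergence for $d_{\mathcal H(U)}$ --- and it is correct.
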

This result can be surprising at first sight when we compare it to the Ascoli theorem because there is no counterpart to the hypothesis of uniform uniform continuity for elements of $A\subset \mathcal H(U)$. Indeed, thanks to the Cauchy formula the uniform boundedness assumption of the elements of $A\subset \mathcal H(U)$ in the Montel theorem implies a uniform uniform continuity for elements of $A$.
\newline
\tab As an application of the Montel theorem we have the following result that can be used to prove the mean Wigner theorem.
\begin{corollary}
\label{annexe:coro}
Let $(\mu_n)_{n\in\N}$ be a sequence of real probability measures. If for all $z\in\C-\R$, $S_{\mu_n}(z)^2-zS_{\mu_n}(z)+1\convf 0$ then $(\mu_n)_{n\in\N}$ converges weakly to $\sigma$.
\end{corollary}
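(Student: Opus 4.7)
The plan is to combine the uniform bound on Stieltjes transforms from Proposition \ref{annexe prop property stieljes}(1), Montel's theorem, and the characterization of $S_\sigma$ from Proposition \ref{annexe prop unique stieljes sqrt}, and then conclude via the Stieltjes continuity theorem in Proposition \ref{annexe prop property stieljes}(4). By Remark \ref{annexe remark stiejes}, it is enough to work on $\mathbb H$, since $S_{\mu_n}(\overline z) = \overline{S_{\mu_n}(z)}$ reduces convergence on $\C-\R$ to convergence on $\mathbb H$.

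First, I would observe that the family $\{S_{\mu_n}\}_{n\in\N}\subset\mathcal H(\mathbb H)$ is normal: indeed, by Proposition \ref{annexe prop property stieljes}(1), $|S_{\mu_n}(z)|\le 1/|\Im(z)|$ for every $n$ and every $z\in\mathbb H$, and this bound is uniform on any compact $K\subset\mathbb H$ since $\Im$ is continuous and positive on $K$. Hence Montel's theorem applies.

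Next, I would show that any cluster point of $(S_{\mu_n})$ in $\mathcal H(\mathbb H)$ must equal $S_\sigma$. Let $(S_{\mu_{\phi(n)}})$ be a subsequence converging in $\mathcal H(\mathbb H)$ to some $S\in\mathcal H(\mathbb H)$. Since convergence in $\mathcal H(\mathbb H)$ is uniform on compacts, for each fixed $z\in\mathbb H$ we have $S_{\mu_{\phi(n)}}(z)^2 - zS_{\mu_{\phi(n)}}(z) + 1 \to S(z)^2 - zS(z) + 1$. But by hypothesis this quantity also tends to $0$, so $S(z)^2 - zS(z) + 1 = 0$ for every $z\in\mathbb H$. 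Passing to the limit in $|S_{\mu_{\phi(n)}}(z)|\le 1/|\Im(z)|$ yields $|S(z)|\le 1/|\Im(z)|$, so $S(z)\to 0$ as $|\Im(z)|\to+\infty$. By the characterization in Proposition \ref{annexe prop unique stieljes sqrt}, this forces $S = S_\sigma$ on $\mathbb H$.

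Since every subsequence of $(S_{\mu_n})$ admits a further subsequence converging in $\mathcal H(\mathbb H)$ (Montel), and every such limit must be $S_\sigma$, the full sequence $(S_{\mu_n})$ converges to $S_\sigma$ in $\mathcal H(\mathbb H)$: otherwise one could find $z_0\in\mathbb H$, $\varepsilon>0$ and a subsequence with $|S_{\mu_{\phi(n)}}(z_0) - S_\sigma(z_0)|\ge\varepsilon$, contradicting that this subsequence admits a further subsequence converging to $S_\sigma$. In particular $S_{\mu_n}(z)\to S_\sigma(z)$ pointwise on $\mathbb H$, hence on $\C-\R$ by Remark \ref{annexe remark stiejes}. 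Proposition \ref{annexe prop property stieljes}(4) then gives $\mu_n \convlaw \sigma$.

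The only real subtlety is the normality/identification step: the assumption provides only that a certain quadratic expression in $S_{\mu_n}(z)$ tends to $0$, not pointwise convergence of $S_{\mu_n}(z)$ itself. The uniform bound from Proposition \ref{annexe prop property stieljes}(1) combined with Montel's theorem is exactly what allows one to lift this algebraic condition into genuine convergence of the Stieltjes transforms, and the "converges to $0$ at infinity" condition supplied by the same bound is what lets us single out the correct root of the quadratic via Proposition \ref{annexe prop unique stieljes sqrt}.
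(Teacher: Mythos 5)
Your proof is correct and follows essentially the same route as the paper: Montel normality via the uniform bound $|S_{\mu_n}(z)|\le 1/|\Im(z)|$, identification of any cluster point as $S_\sigma$ using the limiting quadratic equation together with the decay at infinity and Proposition \ref{annexe prop unique stieljes sqrt}, then the standard subsequence argument and the Stieltjes continuity theorem. The minor differences (you work explicitly on $\mathbb H$ rather than $\C-\R$, and you state the decay condition as $|\Im(z)|\to\infty$ rather than on the imaginary axis) are cosmetic.
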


\begin{proof}
Let $K$ be a compact included in $\C-\R$, since $z\mapsto \Im(z)$ is continuous on $K$ it has a minimum on $K$, which will be denoted $M_K$. For all compact $K$ included in $\C-\R$, we have that  for all $n$, $||S_{\mu_n}||_{\infty,K}\le 1/M_K$ by Proposition \ref{annexe prop property stieljes}. By the Montel theorem, $(S_{\mu_n})_{n\in\N}$ has an accumulation point for the distance $d_{\mathcal H(\C-\R)}$. 
\newline
Let $S$ be a accumulation point of $(S_{\mu_n})_{n\in\N}$. Taking the limit, we get that for all $z\in\C-\R$, $S(z)^2-zS(z)+1=0$. We first notice that $\overline{S(z)}=S(\overline z)$ by Remark \ref{annexe remark stiejes} and passing to the limit. So we only have to identify $S$ on $\mathbb H$. As in Proposition \ref{annexe prop unique stieljes sqrt} we have two possibilities: either for all $z\in\mathbb H$, $S(z)=\cfrac{z-\sqrt{z^2-4}}{2}$ or for all $z\in\mathbb H$, $S(z)=\cfrac{z+\sqrt{z^2-4}}{2}$. However, since for all $n\in\N$, for all $t\in\R^+$, $|S_{\mu_n}(it)|\le \cfrac{1}{t}$ by Proposition \ref{annexe prop moment}, we get that $|S(it)|\le \cfrac{1}{t}$. So we necessarily have that $z\in\mathbb H$, $S(z)=\cfrac{z-\sqrt{z^2-4}}{2}=S_\sigma(z)$.
\newline
So $(S_{\mu_n})_{n\in\N}$ is a relatively compact sequence of $(\C-\R,d_{\mathcal H(\C-\R)})$ with a unique accumulation point $S_\sigma$. So $(S_{\mu_n})_{n\in\N}$ converges to $S_\sigma$.
\end{proof}
In particular to prove the Wigner theorem in mean, with the notations of Theorem \ref{annexemeanwigner}, we classically first prove that for all $z\in\C-\R$, $\E[S_{\mu_N}^2(z)]-z\E[S_{\mu_N}(z)]+1=0$. Then using some concentration inequalities as those obtained in Section \ref{section:concentrationineq}, we prove that for all $z\in\C-\R$, $\E[S_{\mu_N}^2(z)]-\E[S_{\mu_N}(z)]^2\underset{N\to+\infty}{\longrightarrow}0$.
So we deduce that for all $z\in\C-\R$, $\E[S_{\mu_N}(z)]^2-z\E[S_{\mu_N}(z)]+1\underset{N\to+\infty}{\longrightarrow}0$. Thanks to Corollary $\ref{annexe:coro}$ we obtain that for all $z\in\C-\R$, $\E[S_{\mu_N}(z)]=S_{\E[\mu_N]}(z)\underset{N\to+\infty}{\longrightarrow}S_\sigma(z)$. By Proposition \ref{annexe prop property stieljes}, $(\E[\mu_N])_{N\in\N}$ converges weakly towards $\sigma$. See Section 2.4 of \cite{Alicelivre} for a complete proof.

\subsection*{Acknowledgments} I acknowledge Charles Bertucci for its encouragement for the writing of these notes and Edouard Maurel-Segala for its course on random matrices that was a motivation for working on this topic. Finally, I also acknowledge the seminar MEGA for the hospitality and for the diversity of talks that made me discover the richness of this topic.

\newpage
\bibliographystyle{plain}
\bibliography{referencematrice}

\end{document}